\definecolor{note}{rgb}{0.1,0.1,0.4}
\newcommand{\fff}{{\color{red}{\footnote{\color{red}\textsl{\textcolor{note}{{\color{red} Change of citation}}}}}}}
\numberwithin{equation}{section}
 \theoremstyle{plain}
\newtheorem{theorem}[equation]{Theorem}
\newtheorem{sublemma}[equation]{Sublemma}
\newtheorem{lemma}[equation]{Lemma}
\newtheorem{corollary}[equation]{Corollary}
\newtheorem{proposition}[equation]{Proposition}
\newtheorem{condition}[equation]{Condition}
\theoremstyle{definition}
\newtheorem{definition}[equation]{Definition}
\newtheorem{notation}[equation]{Notation}
\newtheorem{remark}[equation]{Remark}
\newtheorem{remarks}[equation]{Remarks}
\newtheorem{hypothesis}[equation]{Hypothesis}
\newtheorem{hypotheses}[equation]{Hypotheses}
\newtheorem{example}[equation]{Example}
\newtheorem{question}[equation]{Question}
\newtheorem{conjecture}[equation]{Conjecture}
\DeclareRobustCommand{\qed}{%
  \ifmmode
    \eqno \def\@badmath{$$}%$$
    \let\eqno\relax \let\leqno\relax \let\veqno\relax
    \hbox{\openbox}%
  \else
    \leavevmode\unskip\penalty9999 \hbox{}\nobreak\hfill
    \quad\hbox{\openbox}%
  \fi
}
\DeclareMathAlphabet{\euls}{U}{eus}{m}{n}
\newcommand{\eC}{{\euls{C}_{\chi}}}
\newcommand{\eCdash}{{\euls{C}_{\chi'}}}
\newcommand{\eCop}{\euls{C}_{\chi}^{\mathrm{op}}}
 \newcommand{\eO}{{\euls{O}}}
 \newcommand{\eD}{{\euls{D}}}
\newcommand{\eG}{{\euls{G}}} \newcommand{\eH}{{\euls{H}}}
\newcommand{\eGt}{{\widetilde{\eG}}}
\newcommand{\eK}{{\euls{K}}}
\newcommand{\eKt}{{\widetilde{\euls{K}}}} 
\newcommand{\eM}{{\euls{M}}}
\newcommand{\eMt}{{\widetilde{\euls{M}}}}
\newcommand{\eP}{{\euls{P}}} \newcommand{\eQ}{{\euls{Q}}}
 \newcommand{\eS}{{\euls{S}}}
\newcommand{\eT}{{\euls{T}}} 
\newcommand{\eX}{{\euls{X}}} 
\newcommand{\eY}{{\euls{Y}}} 
\newcommand{\eu}{{\mathrm{eu}}}
\newcommand{\llambda}{{\lambda}}
\newcommand{\vt}{\vartheta}
\DeclareMathOperator{\Biad}{{Bi^{\ad}}}
\DeclareMathOperator{\Bi}{{Bi}}
\DeclareMathOperator{\Lie}{{Lie}}
\DeclareMathOperator{\Der}{{Der}}
\DeclareMathOperator{\ad}{\mathbf{ad}}
\DeclareMathOperator{\lmod}{\text{-mod}}
\DeclareMathOperator{\rmod}{\text{mod-}}
\newcommand{\Cdim}{\operatorname{Cdim}}
\newcommand{\too}{\longrightarrow}
\newcommand{\C}{\mathbb{C}}
\newcommand{\D}{\euls{D}}
\newcommand{\NN}{{\mathbb N}}
\DeclareMathOperator{\Hom}{{Hom}}
\DeclareMathOperator{\End}{{End}}
\DeclareMathOperator{\Ext}{{Ext}}
\DeclareMathOperator{\Tor}{{Tor}}
 \newcommand{\DD}{{\mathbb D}}
  \newcommand{\Dvar}{{\widetilde{\mathbb D}}}
  \newcommand{\BH}{\mathbb{H}}
\newcommand{\HHleft}{{{}^\perp\BH}}
\newcommand{\HHright}{{\BH{}^\perp}}
\DeclareMathOperator{\gr}{{gr}}
\DeclareMathOperator{\Sym}{{Sym}}
\DeclareMathOperator{\GKdim}{{GKdim}}
\newcommand{\BD}{\mathbb{D}}
\newcommand{\BC}{\mathbb{C}}
\newcommand{\BZ}{\mathbb{Z}}
\newcommand{\Osph}{\euls{O}^{\mathrm{sph}}}
\newcommand{\Osphop}{\euls{O}^{\mathrm{sph,op}}}
\newcommand{\git}{/\!/}
\newcommand{\Ver}{\mathbb{V}}
\newcommand{\shT}{\mathbb{T}}
\newcommand{\bsq}{{\boldsymbol{q}}}
\newcommand{\rr}{\varrho}
\newcommand{\rrp}{\rr^*}
\newcommand{\rrpp}{\xi}
\newcommand{\gtilde}{\widetilde{\mathfrak{g}}}
\newcommand{\Gtilde}{\widetilde{G}}
 \newcommand{\rad}{\operatorname{rad}}
 \newcommand{\rann}{\operatorname{r-ann}}
  \newcommand{\lann}{\operatorname{\ell-ann}}
\newcommand{\Kdim}{\operatorname{Kdim}}
\newcommand{\gldim}{\operatorname{gldim}}
\newcommand{\Ker}{\operatorname{ker}}
\renewcommand{\Im}{\operatorname{Im}}
\renewcommand{\o}{\otimes}
\newcommand{\injdim}{\operatorname{injdim}}
\newcommand{\pd}{\operatorname{pd}}
\newcommand{\reg}{\operatorname{reg}}
\newcommand{\grade}{j}
\newcommand{\g}{\mathfrak{g}}
\newcommand{\p}{\mathfrak{p}}
\newcommand{\h}{\mathfrak{h}}
\newcommand{\mf}{\mathfrak}
\newcommand{\ds}{\dots}
\newcommand{\mr}{\mathrm}
\newcommand{\R}{\mathbb{R}}
\newcommand{\cdag}{\kappa^{\dagger}}
\newcommand{\Hk}{H_{\kappa}}
\newcommand{\Ak}{A_{\kappa}}
\newcommand{\Aak}{A_{\kappa}}
\newcommand{\dd}{\eD}
\newcommand{\ddd}{\eD}
\newcommand{\bpsi}{\boldsymbol{\psi}}
\newcommand{\Z}{\mathbb{Z}}
\newcommand{\vs}{\varsigma}
\newcommand{\isom}{\stackrel{\sim\,\,}{\longrightarrow}}
\newcommand{\CJ}{{\euls{C}_J}} 
\newcommand{\CI}{{\euls{C}_I}} 
\newcommand{\dalpha}{\alpha}
\newcommand{\gainly}{robust}
\DeclareMathOperator{\Ch}{Ch}
 \newcommand{\deltah}{{h}}
 \newcommand{\deltav}{{\delta}}
\begin{document}

\title[Invariant holonomic systems]{Invariant holonomic systems on symmetric spaces and other polar representations}
\author{G. Bellamy, T. Nevins and J. T. Stafford }
\address{(Bellamy)
	School of Mathematics and Statistics, University of Glasgow, University Gardens, Glasgow G12 8QW, Scotland. }\email{gwyn.bellamy@glasgow.ac.uk}
\address{(Nevins)  
	Department of Mathematics, University of Illinois at Urbana-Champaign, Urbana, IL 61801, USA.}
%\email{nevins@illinois.edu}
\address{(Stafford) School of Mathematics,  The University of Manchester,   Manchester M13 9PL,
	England.}
\email{Toby.Stafford@manchester.ac.uk}

\dedicatory{\it Dedicated to the memory of our friend and coauthor Tom Nevins}  
\subjclass[2010]{Primary:  13N10,    16S32,   16S80,   20G05,    22E46. }

\keywords{ Invariant differential operators,  quantum Hamiltonian reduction,  symmetric spaces, polar representations, Harish-Chandra module, 
	holonomic systems, Cherednik  algebra}

\begin{abstract}     
	Let $\g$ be a complex reductive  Lie algebra, with adjoint group $G$, acting on a symmetric space $V$, with associated 
	little Weyl group $W$ and  discriminant $\delta$.
	Then  $G$ also acts on the ring of differential operators $\eD(V)$  and we write $\tau : \g\to \eD(V)$ for the differential of	this action. Consider
	the \emph{invariant holonomic system}
	$$ 
	\eG =\eD(V)\Big/ \Bigl(\eD(V) \tau(\g) +  \eD(V)(\Sym V)^G_+ \Bigr).
	$$ 
	In the diagonal case, when $V=\g$, this module has been intensively studied.  For example, the fact that $\eG$ has no 
	$\delta$-torsion factor module     lies at the heart of Harish-Chandra's regularity theorem, while Hotta and Kashiwara have shown that $\eG$ is semisimple, which has important consequences for the geometric representation theory of $\g$. 

	We study analogous problems for a symmetric space and, more generally, for a visible stable polar $G$-representation $V$. By work of Levasseur and the present authors, there exists a \emph{radial parts map} 
	$$\rad: \eD(V)^G/(\eD(V)\tau(\g))^G\to \Ak(W),$$  where $\Ak(W)$ is the spherical subalgebra  of a Cherednik algebra $\Hk(W)$. When  $\Ak(W)$ is simple, $\rad$ is surjective and  we generalise work of Sekiguchi and Galina-Laurent by proving that $\eG$ has no factor nor submodule that is $\delta$-torsion.  This answers a conjecture of Sekiguchi. Moreover we show that $\eG$ is semisimple if and only if the Hecke algebra $\euls{H}_q(W)$ associated to $\Ak(W)$ is semisimple, thereby answering a conjecture of Levasseur-Stafford. 
	
	By twisting the radial parts map, we study a family of invariant holonomic systems and, more generally, families of admissible modules on $V$. We introduce shift functors that allow us to pass between different twists. We show that the image of each simple summand of $\eG$ under these shift functors is described by Opdam's KZ-twist.

 \end{abstract}

\maketitle  

\clearpage
   \tableofcontents 
     
\section{Introduction}  
Fix a complex connected reductive algebraic group $G$ with Lie algebra $V:=\g=\Lie G$.  Then $G$ acts via the adjoint action on   the coordinate ring $\C[V]$  and hence on the ring of differential operators $\eD(V)$. We identify $\Sym V$ with the constant coefficient differential operators on $V$ and let $\tau: \g\to \eD(V)$ be the differential of the action of $G$ on $\C[V].$  Fundamental objects in the application of  $\eD$-modules to Lie theory are the  Harish-Chandra modules,  defined as follows.  Given $\lambda\in \h^*$, where $\h$ is a Cartan subalgebra of $\g$,   the corresponding  \emph{Harish-Chandra $\eD(V)$-module} is defined to be 
\begin{equation}\label{HC-intro}
\eGt_\lambda = \eD(V)/\bigl( \eD(V)\tau(\g)+ \sum_{p\in (\Sym V)^G}   \eD(V)(p-p(\lambda))\bigr).
\end{equation}
These modules are important for several reasons. First, the space of \emph{invariant eigen\-distri\-butions}  on a real form $\g_0$ of $\g$ can be identified with $\Hom_{\eD(V)}\left(\eGt_\lambda,\, \text{Dist}(\g_0)\right)$, where $ \text{Dist}(\g_0)$ denotes the space of distributions on   $\g_0$.   If $\delta\in \C[V]$ denotes the discriminant and $V_{\reg}=(\delta\not=0)$ the regular locus, then the essence of Harish-Chandra's fundamental result on     the regularity of 
invariant eigendistributions \cite{HC3}  lies in the theorem that \emph{$\eGt_\lambda$ has no nonzero quotient supported on the singular  locus $V\smallsetminus V_{\reg}$}; see,  \cite{HC3} and  \cite[Theorem~6.7.2]{HK}. More generally,  $\eGt_\lambda$ has no nonzero quotient supported on the complement to the open set $(d\not=0)$ for any $0\not=d\in \C[V]^G$; see,  \cite{HC3} or   \cite[Theorem~8.3.5]{Wa1}.  This, in turn, leads to Harish-Chandra's regularity theorem for  characters of a real reductive group; see \cite[Section~8.4]{Wa1}.  Secondly,  the study of $\eGt_\lambda$  is integral to the representation theory of $\g$. Here Hotta and Kashiwara have obtained deep results on the module $\eGt_\lambda$ and its corresponding system of differential equations. Central to their theory is the fact that \emph{$\eGt_\lambda$ is a semisimple $\eD(V)$-module, with known simple factors}; see \cite[Theorem~5.3]{HK}. 

 %%%%%%%%%%%
\subsection*{Symmetric spaces} 

We are interested in generalising these results to symmetric spaces $V$ and, more generally,  to polar $G$-representations $V$ satisfying certain necessary conditions. We begin with symmetric spaces, defined as follows.  Let $\gtilde = \mathrm{Lie} \, \Gtilde$ be a second  complex reductive Lie algebra with involution $\theta$. Set $\g=\gtilde^\theta$, with corresponding algebraic group $G$ the connected component of $\Gtilde^{\theta}$,  and  $(-1)$ $\theta$-eigenspace $V$. Then $(\gtilde,\g)$ is a \emph{symmetric pair} and  $V$ is the corresponding  \emph{symmetric space}; it is often denoted $\mf{p}$ in the literature. The adjoint representation of $G$ on $\g$ is a particular symmetric space, by taking $\gtilde=\g\times\g$ with the obvious involution. The basic theory for such spaces  is described in Section~\ref{Sec:examples} or \cite{He1}.  

Generalising Harish-Chandra's work, Sekiguchi \cite{Se} studied equivariant eigendistributions on a real form $V_{\R}$ of the symmetric space $V$. The definition \eqref{HC-intro} of the  Harish-Chandra  modules $\eGt_{\lambda}$ works here as well. There is again an analogue of the discriminant $\delta$ with regular locus $V_{\reg}=(\delta\not=0)\subseteq V$; see  Section~\ref{Sec:polarreps} for the details. Under the assumption that the symmetric space $V$ is \textit{nice}, (the formal definition is given in Definition~\ref{nice-space}, but it suffice to say here that about a third of the infinite families of symmetric spaces satisfy this condition) Sekiguchi made the following conjectures.
\begin{enumerate}
	\item[(C1)] \cite[Conjecture~7.1]{Se} $\eGt_{\lambda}$ is regular holonomic (locally, as an analytic $\mathcal{D}$-module, as in \cite{Se}). 
		\item[(C2)]  \cite[Conjecture~7.2]{Se} $\eGt_{\lambda}$ is the minimal extension of its restriction to the locus of regular semisimple elements in $V$.   Equivalently    $\eGt_{\lambda}$ has no nonzero  quotient nor    submodule that is supported on  $V\smallsetminus V_{\reg}$.
\end{enumerate}
 We note  the following  standard consequence of (C2).
\begin{enumerate}
	\item[(C2$'$)]  the only $G$-equivariant eigendistribution  $T$ supported on the real form of $V \smallsetminus V_{\reg}$ is $T=0$.
\end{enumerate}
It was also conjectured in  \cite[Conjecture~C4]{LS3} that
\begin{enumerate}
	\item[(C3)] $\eGt_{\lambda}$ is a semisimple $\eD(V)$-module. 
\end{enumerate}

Conjecture (C1)  has since been proved in \cite[Theorem~2.2.1]{La2}  for any symmetric space.

For a nice symmetric  space $V$, one of the main goals of this paper is  to prove (C2) and (C2$'$),  to determine precisely when (C3) holds and  to give a  detailed analysis of the structure of $\eGt_{\lambda}$ and  related $\eD$-modules. In fact, we do both of these not only for nice spaces, but  for the more  general \textit{\gainly \ spaces}, as introduced in \cite{BLNS} and recalled in Definition~\ref{nice-space}.

 For nice symmetric spaces, it  was shown in \cite{LS3}   that $\eGt_{\lambda}$ has no nonzero  quotient module supported on $V\smallsetminus V_{\reg}$, with  a different proof using the theory of $b$-functions given in \cite[Corollary~1.6.3]{GL}. 
Much harder is to show that $\eGt_{\lambda}$ has no submodule supported on this complement, but as the next result shows,  we are able to completely answer Sekiguchi's conjecture  (C2). In fact, we are able to prove the following  much stronger torsion-free statement that also gives  an analogue of  Harish-Chandra's 
theorem on invariant eigendistributions  with nilpotent support \cite[Theorem~5]{HC2}; see the comments after \cite[Theorem~5.2]{LS}.

\begin{theorem} \label{intro-torsionfree1}   {\rm (Theorem~\ref{torsionfree} and Corollary~\ref{LS3-corollary})}
	Assume that  $V$ is a \gainly \ symmetric space and let $d=\delta$ or, more generally, take any $0\not=d\in \C[V]^G$. Then $\eGt = \eGt_\lambda$ has no nonzero $d$-torsion  submodule, nor $d$-torsion factor module.
\end{theorem}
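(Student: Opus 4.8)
The plan is to reduce Theorem~\ref{intro-torsionfree1}, in two moves, to the assertion that \emph{for every parameter $\mu$ in the relevant family of (twisted) invariant holonomic systems the module $\eGt_\mu$ has no nonzero factor module supported on $V\smallsetminus V_{\reg}$} --- the extension to robust symmetric spaces of the quotient statement of \cite{LS3} and \cite[Corollary~1.6.3]{GL} --- and then to extract the submodule half from it by holonomic duality. I expect the duality input to be the main obstacle.

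\emph{Move 1: reduction to $d=\delta$.} The characteristic variety of $\eGt_\lambda$ is contained in the common zero locus of the symbols of its defining operators: $\Ch(\eGt_\lambda)\subseteq\mu^{-1}(0)\cap(V\times\mathcal N)$, where $\mu\colon T^*V\cong V\times V^*\to\g^*$ is the moment map of the $G$-action and $\mathcal N\subseteq V^*$ is the null-cone, the common zero locus of $(\Sym V)^G_+$. For $v\in V_{\reg}$ a $G$-invariant identification $V\cong V^*$ carries the fibre $\mu^{-1}(0)_v=[\g,v]^\perp$ onto the Cartan subspace $\mf z_V(v)$, whose elements are all semisimple, so it meets $\mathcal N$ only in $0$; hence $\Ch(\eGt_\lambda)$ is the zero section over $V_{\reg}$ and $\eGt_\lambda|_{V_{\reg}}$ is a (possibly zero) vector bundle with integrable connection. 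Since $V_{\reg}$ is smooth and connected, a $\eD$-module sub- or factor module of such a connection is again a connection and so has full support; therefore any $d$-torsion sub- or factor module of $\eGt_\lambda$, for $0\neq d\in\C[V]^G$, restricts to $0$ on $V_{\reg}$, hence is supported on $V\smallsetminus V_{\reg}=(\delta=0)$ and is $\delta$-torsion. So it suffices to take $d=\delta$.

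\emph{Move 2: the two $\delta$-statements.} The absence of a nonzero $\delta$-torsion factor module of $\eGt_\mu$ is \cite{LS3}, \cite{GL} for nice spaces; for robust symmetric spaces I would reprove it through the radial parts map. Since $\Ak(W)$ is simple, $\rad$ is surjective, so it transports the question about $\eGt_\mu^{G}$ to one about a module over $\Ak(W)$ at central character $\mu$; there the statement is accessible --- after localising along the regular locus the module is a connection, and the part supported on the discriminant is controlled by the simplicity of $\Ak(W)$ together with an induction on the dimension of the support along the strata of $(\delta=0)$. That $\eGt_\mu$, and not merely its invariant part $\eGt_\mu^{G}$, has no $\delta$-torsion factor module then follows because $\eGt_\mu$ is holonomic by \cite[Theorem~2.2.1]{La2}, which makes quantum Hamiltonian reduction faithful enough on the relevant $G$-equivariant holonomic category to detect a torsion factor module on the invariants. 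For the absence of a $\delta$-torsion submodule I would pass to the holonomic dual: being regular holonomic, $\eGt_\lambda$ is pure of grade $n:=\dim V$ over the Weyl algebra $\eD(V)$, so $\DD\eGt_\lambda=\Ext^{n}_{\eD(V)}(\eGt_\lambda,\eD(V))$; granting the identification $\DD\eGt_\lambda\cong\eGt_{\lambda^\dagger}$ for an explicit dual parameter $\lambda^\dagger$ in the same family, a $\delta$-torsion submodule of $\eGt_\lambda$ dualises --- $\DD$ being exact, contravariant and support-preserving on holonomic modules --- to a $\delta$-torsion factor module of $\eGt_{\lambda^\dagger}$, which vanishes by the previous sentence. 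Together with Move~1 this proves the theorem.

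\emph{The main obstacle} is the identification $\DD\eGt_\lambda\cong\eGt_{\lambda^\dagger}$. The defining left ideal of $\eGt_\lambda$ is generated by $\tau(\g)$ together with $\{p-p(\lambda):p\in(\Sym V)^G\}$, which is not in general a regular sequence (for $V=\g$ there are $\dim\g+\rank\g$ generators but grade $\dim\g$), so the transpose of the obvious presentation need not resolve $\DD\eGt_\lambda$ and $\Ext^{n}$ cannot be read off directly. The formal shape of the answer is nevertheless visible: the transpose anti-automorphism of $\eD(V)$ sends $\tau(\xi)$ to $-\tau(\xi)-\chi(\xi)$, where $\chi(\xi)=\operatorname{tr}(\ad_V\xi)$ is the modular character, and sends $(\Sym V)^G$ to $(\Sym V^*)^G$; under $V\cong V^*$ this makes $\DD\eGt_\lambda$ ``look like'' a $\chi$-twisted Harish-Chandra system at a shifted parameter, the $\chi$-twist being precisely the $\rho$-type shift $\kappa\mapsto\cdag$ of the Cherednik parameter that the shift functors of the paper undo (their effect on simple summands being Opdam's KZ-twist). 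Making this rigorous requires transporting the computation through $\rad$: since $\rad$ is surjective, $\Ak(W)$ is simple, and $\eGt_\lambda$ is holonomic, quantum Hamiltonian reduction identifies $\eGt_\lambda$ and $\DD\eGt_\lambda$ with a standard-type module of $\Hk(W)$ and its contragredient, where the duality is understood; the genuinely hard point is to show that reduction intertwines $\DD$ with the Cherednik contragredient on this holonomic category, so that the identification survives the passage back to $\eD(V)$-modules. It is exactly here that the holonomicity of \cite[Theorem~2.2.1]{La2} is indispensable, since $\rad$ sees only $G$-invariants while $\DD\eGt_\lambda$ must be pinned down as a module over all of $\eD(V)$.
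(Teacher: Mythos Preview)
Your overall shape is right --- duality converts ``no torsion submodule'' into ``no torsion factor of the dual'', and both factor-module statements should reduce through $\rad$ to the Cherednik side --- and you correctly flag an intertwining between $\DD_\ddd$ and the Cherednik dual as the crux. Move~1 is correct but unnecessary: the paper works directly with arbitrary $0\ne d\in\C[V]^G$, because the decisive Cherednik-side fact (Lemma~\ref{CM for spherical2}(3)) is that when ${\Aak}$ is simple, $\Osph$ contains no nonzero $d$-torsion module for \emph{any} such $d$. More importantly, the paper neither invokes \cite{LS3,GL} for the factor half (those treat only nice spaces) nor attempts your identification $\DD\eGt_\lambda\cong\eGt_{\lambda^\dagger}$. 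Both halves go by the same adjunction trick. For the factor half: $\Hom_\ddd(\eG_\lambda,T)=\Hom_{\Aak}(\eQ_\lambda,\BH(T))$ with $\BH=\Hom_\ddd(\eM,-)$ exact, and $\BH(T)$ is a $d$-torsion object of $\Osph$ (Lemma~\ref{torsion-consequence}), hence zero. For the dual: the intertwining theorem (Theorem~\ref{intertwining}, proved by a Cartan--Eilenberg spectral-sequence argument using that $\eM$ is $m$-CM and $\eQ_\lambda$ is $n$-CM of finite projective dimension) gives $\BD_\ddd(\eG_\lambda)\cong\BD_{\Aak}(\eQ_\lambda)\otimes_{\Aak}\eM'$ with $\eM'=\Ext^m_\ddd(\eM,\ddd)$, whence $\Hom_\ddd(\BD_\ddd(\eG_\lambda),L)=\Hom_{\Aak}(\BD_{\Aak}(\eQ_\lambda),\Hom_\ddd(\eM',L))$; again $\Hom_\ddd(\eM',L)$ is $d$-torsion in $\Osphop$, hence zero.

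The obstacle you isolate --- establishing $\DD\eGt_\lambda\cong\eGt_{\lambda^\dagger}$ --- is genuine, and the paper sidesteps rather than resolves it. The intertwining theorem does \emph{not} say the dual is another Harish-Chandra module; it says only that it has the form $(\text{something in }\Osphop)\otimes_{\Aak}\eM'$. One never needs to know what $\eM'$ is, only that $\Hom_\ddd(\eM',-)$ carries $d$-torsion right $\ddd$-modules into (unions of objects of) $\Osphop$ that are $d$-torsion (Proposition~\ref{torsion-homs}, Lemma~\ref{torsion-consequence}). Your route would in effect require a global identification of $\eM'$ with (a twist of) $\ddd/\g_\chi\ddd$; the paper establishes this only over $V_{\reg}$ (Proposition~\ref{prop:isoofExtmonopenset}), and controlling $\eM'$ globally is both harder --- it needs the additional Hypothesis~\ref{K-hyp} and the substantial work of Theorem~\ref{projectivity-theorem-stable} --- and irrelevant to Theorem~\ref{torsionfree}.
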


Here, a module $M$ is \emph{$d$-torsion} if, for each $m\in M$ there exists $j\in \mathbb{N}$ such that $d^j m=0$; equivalently $M$  is supported on $(d=0)$.  As an easy consequence we obtain:

\begin{corollary}\label{cor:intro-torsionfree1}	
	Assume that  $V$ is a \gainly \ symmetric space. 
\begin{enumerate}
		\item {\rm (Corollary~\ref{torsionfree-corollary})}  $\eGt$ is the minimal extension of $\euls{L} := \eGt |_{V_{\mathrm{reg}}}$; 
		equivalently,  $\eGt = j_{!*} \euls{L}$   for the inclusion $j \colon V_{\mathrm{reg}} \hookrightarrow V$.  
		\item {\rm (Corollary~\ref{cor:distributions})}   The only $G$-invariant eigendistribution  $T$ supported on a real form of $V \smallsetminus V_{\reg}$ is $T=0$.   	
	\end{enumerate}
\end{corollary}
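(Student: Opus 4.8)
\emph{Proof proposal.} Everything reduces to Theorem~\ref{intro-torsionfree1}, which I assume; the Corollary is then a formal consequence, the only subtleties being housekeeping about intermediate extensions and distributions.

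\textbf{Part (1).} Take $d=\delta$ in Theorem~\ref{intro-torsionfree1}. Since $V_{\reg}=(\delta\ne 0)$ by definition, the theorem says that the holonomic module $\eGt$ (in fact regular holonomic, by \cite{La2}) has no nonzero submodule and no nonzero quotient supported on $V\smallsetminus V_{\reg}$, while $j^*\eGt=\eGt|_{V_{\reg}}=\euls{L}$ by the definition of $\euls{L}$. I would now invoke the standard characterisation of the intermediate (minimal) extension: for the open affine immersion $j\colon V_{\reg}\hookrightarrow V$, a holonomic $\eD(V)$-module $M$ with $j^*M\cong\euls{L}$ satisfies $M\cong j_{!*}\euls{L}$ if and only if $M$ has neither a nonzero sub- nor a nonzero quotient module supported on $V\smallsetminus V_{\reg}$, and such an $M$ is unique up to isomorphism (indeed $M\hookrightarrow j_*\euls{L}$ and $j_!\euls{L}\twoheadrightarrow M$, so $M$ is the image of the canonical map $j_!\euls{L}\to j_*\euls{L}$). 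Applied to $M=\eGt$ this yields $\eGt\cong j_{!*}\euls{L}$, i.e. $\eGt$ is the minimal extension of $\euls{L}$, which is the first assertion.

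\textbf{Part (2).} By the symmetric-space version of Harish-Chandra's correspondence recalled in the introduction (due here to Sekiguchi, \cite{Se}), a $G$-invariant eigendistribution $T$ on the real form $V_{\R}$, with the eigenvalue condition dictated by $\lambda$, is the same datum as a $\eD(V)$-linear map $\phi_T\colon\eGt_\lambda\to\text{Dist}(V_{\R})$ sending the generator $\overline{1}$ to $T$: the defining relations of $\eGt_\lambda$ translate precisely into $G$-invariance of $T$ together with the eigendistribution property. Now suppose $T$ is supported on $V_{\R}\smallsetminus V_{\reg}$. Differential operators do not enlarge supports, so the $\eD(V)$-submodule $\Im(\phi_T)=\eD(V)\cdot T$ of $\text{Dist}(V_{\R})$ has support contained in $V\smallsetminus V_{\reg}=(\delta=0)$; by the equivalence noted just after the theorem statement, $\Im(\phi_T)$ is therefore $\delta$-torsion. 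But $\Im(\phi_T)$ is a quotient of $\eGt_\lambda$, and Theorem~\ref{intro-torsionfree1} forbids nonzero $\delta$-torsion quotients; hence $\Im(\phi_T)=0$, so $\phi_T=0$ and $T=0$. This is the promised analogue of the vanishing of invariant eigendistributions supported in the singular set.

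\textbf{Where the work really lies; the main obstacle.} There is essentially no obstacle in deducing the Corollary; all the content sits in Theorem~\ref{intro-torsionfree1}. For orientation on how one would prove \emph{that}: push the question through the exact functor $(-)^G$ and the radial parts map $\rad$, which is surjective because $\Ak(W)$ is simple for a robust symmetric space. Under this, $\eGt_\lambda^G$ is identified with a principal-series $\Ak(W)$-module which, by the triangular decomposition of $\Ak(W)$, is free of rank one over $\C[\h]^W=\C[V]^G$; in particular $\delta$ (and any $0\ne d\in\C[V]^G$) acts injectively on $\eGt_\lambda^G$, and after inverting $\delta$ it becomes the simple $\eD(\h_{\reg}/W)$-module $\C[\h_{\reg}/W]$, so $\eGt_\lambda^G$ has no nonzero $\delta$-torsion sub- or quotient module. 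Since $\eGt_\lambda$ is generated over $\eD(V)$ by $\overline{1}\in\eGt_\lambda^G$, every nonzero quotient of $\eGt_\lambda$ has nonzero $G$-invariants, and this at once gives the no-$\delta$-torsion-quotient half; the general $d$ then reduces to $d=\delta$ by enlarging $V_{\reg}$ to $(\delta d\ne 0)$. The harder, and to my mind the crux, is the no-$\delta$-torsion-\emph{submodule} half, where a submodule supported on $V\smallsetminus V_{\reg}$ may carry no $G$-invariants; I would obtain it from a self-duality $\BD\eGt_\lambda\cong\eGt_{\lambda^\dagger}$ (with $\lambda^\dagger$ again in the robust setting), which converts submodules into quotients — and establishing that self-duality, forced by the visibility and stability of the polar representation $V$, is the genuinely hard step. (Alternatively, decompose $\eGt_\lambda=\bigoplus_\rho\rho\otimes\Hom_G(\rho,\eGt_\lambda)$ over the irreducibles of $G$ and show each isotypic component $\Hom_G(\rho,\eGt_\lambda)$ is torsion-free over $\C[\h]^W$ via the twisted radial parts maps.)
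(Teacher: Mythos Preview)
Your deduction of both parts of the Corollary from Theorem~\ref{intro-torsionfree1} is correct and is exactly the paper's approach: Corollary~\ref{torsionfree-corollary} simply invokes the characterisation of the minimal extension, and Corollary~\ref{cor:distributions} argues precisely as you do, observing that an eigendistribution supported on the singular locus corresponds to a map from $\eGt_\lambda$ factoring through a $\delta$-torsion quotient, which must vanish.

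Your closing paragraph on how Theorem~\ref{intro-torsionfree1} itself is proved is not part of the Corollary argument, but since you included it, two corrections are worth noting. First, $\eGt_\lambda^G=\eQ_\lambda$ is free over $\C[\h]^W$ but not of rank one (Proposition~\ref{pdQ}). Second, and more substantively, the paper does not establish a self-duality $\BD\eGt_\lambda\cong\eGt_{\lambda^\dagger}$; rather it proves an intertwining formula $\BD_\ddd\circ{}^\perp\BH\cong\BH^\perp\circ\BD_{\Ak}$ (Theorem~\ref{intertwining}) and uses it to show that $\BD_\ddd(\eG_\lambda)$ has no $\delta$-torsion \emph{quotient}, by reducing via adjunction to the $\Ak$-side where simplicity of $\Ak$ forces any $\delta$-torsion object in $\Osph$ to be zero (Lemma~\ref{CM for spherical2}(3)). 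Torsion-freeness of $\eG_\lambda$ then follows by dualising back. So your instinct that duality is the crux of the submodule half is right, but the paper executes it through the intertwining theorem and the bimodule $\eM'=\Ext^m_\ddd(\eM,\ddd)$ rather than via a direct identification of the dual module.
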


In particular, both Theorem~\ref{intro-torsionfree1} and Corollary~\ref{cor:intro-torsionfree1} apply to the nice symmetric space introduced by Sekiguchi and therefore answer Conjectures (C2) and (C2$'$). We are also able to  completely  answer  Conjecture (C3), see Theorem~\ref{intro-thm:semi-simplicity},  but for the moment we  will simply note that the answer is ``not often.''

\subsection*{Polar representations and quantum Hamiltonian reduction}	
One of the achievements of this paper is  to  extend this result beyond the case of nice symmetric spaces to a significant class of polar representations. 
 In order to explain these results, and the ideas behind them, we return to Harish-Chandra's work.  In \cite{Wallach}, Wallach  gave a second approach to Harish-Chandra's theory by making extensive  use of Harish-Chandra's homomorphism $\rad: \dd(V)^G\to \dd(\h)^W$, where $\h$ is a Cartan subalgebra of $\g$ with Weyl group $W$.  This homomorphism generalises the Chevalley isomorphism $\gamma: \C[\g]^G \isom \C[\h]^W$.  Wallach's theory, complemented by \cite{LSSurjective},   shows that $\rad$ is surjective and, as a consequence,  one obtains  significantly simpler proofs of Harish-Chandra's results, as well as  applications to 
Weyl group representations and the Springer correspondence.

 The radial parts map  $\rad$ is a special case of quantum Hamiltonian reduction that has   been used extensively in other areas of representation theory, notably in the theory of rational Cherednik algebras
  \cite{BG,  EG, EGGO, GGS,  Lo0, MN, OblomkovHC}. 
  As  we will see, there also exists a  radial parts map $\rad$ for polar representations, although now the image will be the spherical subalgebra of a Cherednik algebra. Nevertheless the representation theory of that image will be key to our generalisations of the above results.

  Thus one should look for classes of representations for which there is an analogue of Chevalley's Theorem. A natural class is that of polar representations.    Following Dadok and Kac \cite{DadokKac},  a 
$G$-representation $V$  is called  \textit{polar}\label{defn:polar}  if there exists a semisimple element $v \in V$ such that 
$$
\h := \{ x \in V \, | \, \mf{g} \cdot x \subset \mf{g} \cdot v \}
$$
satisfies $\dim \h = \dim V \git G$.  One of the fundamental properties of polar representations is that the finite group $W = N_G(\h)/ Z_G(\h)$ acts as a complex reflection group on $\h$ and, by restriction, defines a Chevalley isomorphism $V \git G \cong \h / W$; see  \cite[Theorem~2.9]{DadokKac}. In this context, one has a discriminant $\delta$ in $\C[V]^G \cong \C[\h]^W$ and we define the regular locus $V_{\reg}$ to be the open set $(\delta\not=0)$.  The group  $W$ is   called \emph{the Weyl group} associated to this data. Thus, polar representations are a natural situation---perhaps the most general one---where one can hope to have a radial parts map.

It is immediate that invariant differential operators act on invariant functions, and so there is a natural map $\dd(V)^G\to \dd(V\git G) \cong \dd(\h\git W)$, but this almost never has image in $\dd(\h)^W$. However, for stable polar representations, as defined in  Section~\ref{Sec:polarreps}, restricting the above map to the regular locus $V_{\reg}=(\delta\not=0)$   does at least give an  isomorphism 

\begin{equation}\label{eq:intro-localizatioradiso}
	(\dd(V_{\reg}) / \dd(V_{\reg}) \tau(\g))^G  \ \isom \  \dd(\h_{\reg})^W, 
\end{equation}
   where $\tau:\g\to \dd(V)$ is the differential of the $G$-action on $V$.
   
In the companion paper \cite{BLNS}, we show that radial parts maps exist for any polar representation, although the image   lives not in
         $\dd(\h)^W$ but  inside the \emph{spherical subalgebra} $\Ak(W)$ of a rational Cherednik algebra $\Hk(W,\h)$, in the sense of \cite{EG}.
 
\begin{theorem} {\rm (\cite[Theorem~5.1, Corollary~6.10, Theorem~7.8]{BLNS})}  \label{thm:intro-radial-exists}  
Let $V$ be a polar representation. There exists a  spherical algebra $\Ak(W)$, for some  parameter $\kappa $, 
  such that  that there is a radial parts map 
 $$
 \rad:  \dd(V)^G \to \Ak(W).
 $$
 
Moreover, the morphism $\rad$ has the following properties.
\begin{enumerate}
\item The   map
$\rad  $ restricts to give
filtered isomorphisms $\rr \colon \C[V]^G
\stackrel{\sim}{\longrightarrow} \C[\h]^W$ and $\rrpp \colon
(\Sym \, V)^G \stackrel{\sim}{\longrightarrow} (\Sym \, \h)^W$. 
 \item If $V$ is stable then the restriction of $\rad$ to $V_{\reg}$ induces \eqref{eq:intro-localizatioradiso}.  
 \end{enumerate}
\end{theorem}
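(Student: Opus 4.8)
The plan is to build $\rad$ by transport of structure through the Chevalley isomorphism and then to identify its image by a local analysis along the discriminant divisor. \emph{Construction of $\rad$.} By Dadok and Kac, restriction of functions gives the Chevalley isomorphism $\rr\colon\C[V]^G\isom\C[\h]^W$, under which $\delta$ corresponds to the discriminant of $W$, so that $\C[V_{\reg}]^G=\C[V]^G[\delta^{-1}]\cong\C[\h_{\reg}]^W$. The algebra $\dd(V_{\reg})^G$ preserves the subalgebra $\C[V_{\reg}]^G\subset\C[V_{\reg}]$, hence acts on it by differential operators on $\h_{\reg}/W$; since $W$ acts freely on $\h_{\reg}$ one has a canonical identification $\dd(\h_{\reg}/W)=\dd(\h_{\reg})^W$, and one obtains a filtered algebra homomorphism $\dd(V_{\reg})^G\to\dd(\h_{\reg})^W$ which annihilates $(\dd(V_{\reg})\tau(\g))^G$ because $\tau(\g)$ kills invariant functions. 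Composing with the injective localisation $\dd(V)^G\hookrightarrow\dd(V_{\reg})^G$ defines
$$\rad\colon\dd(V)^G\longrightarrow\dd(\h_{\reg})^W,$$
a filtered map restricting to $\rr$ on $\C[V]^G$ and killing $(\dd(V)\tau(\g))^G$. When $V$ is stable, $\rad$ induces \eqref{eq:intro-localizatioradiso} essentially by construction, which yields property (2); in the non-stable case $\dd(\h_{\reg})^W$ is still the correct receptacle, but one must establish the analogous properties of $\rad$ by a direct (e.g.\ $b$-function) argument in place of the slice-theoretic isomorphism \eqref{eq:intro-localizatioradiso}.

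\emph{Recognising the image as a spherical Cherednik algebra.} The substance of the theorem is to find a parameter $\kappa$ with $\rad(\dd(V)^G)\subseteq\Ak(W)\subset\dd(\h_{\reg})^W$, where $\Ak(W)=e\Hk(W,\h)e$ is regarded inside $\dd(\h_{\reg})\rtimes W$ through the Dunkl embedding and $\gr\Ak(W)\cong\C[T^*\h]^W$. The idea is that $\Ak(W)$ is pinned down inside $\dd(\h_{\reg})^W$ by local data at the generic point of each reflecting hyperplane $H$: a bound on the order of poles along $H$, together with prescribed indicial exponents depending on the component $\kappa_H$ of $\kappa$. For $P\in\dd(V)^G$, both conditions are verified near the discriminant. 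Since $P$ extends to a bona fide differential operator on all of $V$, a clearing-of-denominators ($b$-function) argument bounds the order of the poles of $\rad(P)$ along $\delta=0$, uniformly in $P$ given the order of $P$. The indicial behaviour of $\rad(P)$ at a general point of a component $D$ of the discriminant is a local computation carried out in an \'etale (Luna) slice of $V$ transverse to $D$; by the structure theory of polar representations this slice is again a polar representation, but of rank one, so the computation reduces to the radial part of the quadratic invariant for a one-dimensional polar model (a cyclic group acting on a line, or an $\mathfrak{sl}_2$-type space), where an explicit calculation produces a Bessel/Calogero--Moser operator and reads off $\kappa_D=\kappa_H$. Running over all hyperplane classes assembles the $\kappa_H$ into $\kappa$ and shows $\rad(P)\in\Ak(W)$.

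\emph{The symbol map (property (1)).} Regard $(\Sym V)^G\subset\dd(V)^G$ as the invariant constant-coefficient operators. Then $\rrpp:=\rad|_{(\Sym V)^G}$ is filtered, and its associated graded sends $p$ to the principal symbol of $\rad(p)$; by construction this symbol is a $W$-invariant constant-coefficient operator on $\h_{\reg}$, so the resulting map $(\Sym V)^G\to(\Sym\h)^W$ is the Chevalley restriction $p\mapsto p|_{\h}$, an isomorphism of graded rings by Dadok and Kac. As its associated graded is an isomorphism onto $(\Sym\h)^W$, the filtered map $\rrpp$ is itself an isomorphism onto $(\Sym\h)^W$. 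Combined with the first paragraph (which gives the $\rr$ half of (1) and all of (2)), this completes the proof.

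\emph{Main obstacle.} The hard part is the second step: one needs a workable description of $\Ak(W)$ inside $\dd(\h_{\reg})^W$ via local conditions at reflecting hyperplanes, the reduction of the codimension-one geometry of an arbitrary polar representation to a rank-one polar model, and the one-variable radial-part computation that actually identifies $\kappa$. Controlling the pole orders of $\rad(P)$ uniformly, rather than for a single $P$, is precisely where the theory of $b$-functions enters. For non-stable $V$ the unavailability of \eqref{eq:intro-localizatioradiso} forces one to set up the regular-locus comparison by hand, although the theorem only asserts the existence of $\rad$ in that generality.
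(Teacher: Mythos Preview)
This theorem is not proved in the present paper; it is quoted from the companion paper \cite{BLNS} (Theorems~5.1, 7.8 and Corollary~6.10 there), so there is no in-text proof to compare against. That said, your sketch matches what can be reconstructed of the \cite{BLNS} argument from its use here: the map $\rad$ is built from the action of invariant operators on invariant functions (in general conjugated by $\delta^{\vs}$, cf.\ \cite[Equation~(5.2)]{BLNS}), the parameter $\kappa$ is read off by reduction to rank-one Luna slices transverse to the discriminant (exactly as in the proof of Theorem~\ref{thm:cylicquiverradialparts}, which invokes \cite[Theorem~5.21]{BLNS}), and the pole orders are controlled by $b$-function estimates (cf.\ \cite[Section~4, Corollaries~4.15, 4.21]{BLNS}). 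The isomorphism \eqref{eq:intro-localizatioradiso} in the stable case comes from the identification $V_{\reg}\cong G\times_N\h_{\reg}$ of \eqref{lem:stablepolaropen}, which you invoke correctly.

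There is one point in your treatment of property~(1) that needs tightening. You argue that $\gr\bigl(\rad|_{(\Sym V)^G}\bigr)$ is the Chevalley restriction and then conclude that $\rad|_{(\Sym V)^G}$ is an isomorphism onto $(\Sym\h)^W\subset\Ak(W)$. But knowing the associated graded alone does not single out \emph{the} Dunkl copy of $(\Sym\h)^W$ inside $\Ak(W)$ as the image; a priori the image could be some other filtered subalgebra with the same graded. What closes this is the Euler $\Z$-grading: $\rad$ is graded for the dilation action, so a degree-$d$ element of $(\Sym V)^G$ lands in the degree $-d$ part of $\Ak(W)$ and has order $\le d$; the PBW decomposition of $\Ak(W)$ then forces any degree $-d$, order $\le d$ element to lie in $(\Sym\h)^W_d$. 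With that added, your outline is sound and agrees with the approach of \cite{BLNS}.
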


 \begin{remark}\label{intro-twisting-rad} In applications it is often important to be able to twist the radial parts map $\rad$ by a linear character $\chi$ of $\g$. Although we will not discuss this generalisation in the introduction, both Theorem~\ref{thm:intro-radial-exists} and the other  results of this paper are proved  in this generality.
\end{remark}

It is also proved in \cite{BLNS} that $\rad$ is surjective under very mild conditions, of which the one that is relevant to this paper is the following.

\begin{theorem}\label{thm:intro-radial-surjective}  {\rm (\cite[Theorem~7.10]{BLNS})} The morphism $\rad \colon \dd(V)^G \to \Ak(W)$ is surjective whenever 
 $\Ak(W)$ is a simple algebra. 
\end{theorem}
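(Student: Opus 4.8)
The plan is to show that the subalgebra $B := \rad(\dd(V)^G)$ of $A := \Ak(W)$ contains a nonzero two-sided ideal of $A$; since $A$ is simple, that ideal must equal $A$, whence $B = A$ and $\rad$ is surjective. Two features of $B$ drive this. First, by Theorem~\ref{thm:intro-radial-exists}(1) the restrictions $\rr$ and $\rrpp$ are isomorphisms onto $\C[\h]^W$ and $(\Sym \h)^W$, so $B$ contains both of these subalgebras of $A$, and in particular contains the discriminant $\delta \in \C[\h]^W$. Second, by Theorem~\ref{thm:intro-radial-exists}(2) the radial parts map becomes the isomorphism \eqref{eq:intro-localizatioradiso} on the regular locus, which will let us compare $B$ with $A$ after inverting $\delta$.

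The first step is to prove that $A$ is a finitely generated module over $B$ on each side. Give $A$ its order filtration, so that $\gr A = \C[\h \times \h^*]^W$. Then $\gr B$ contains the principal symbols of the elements of $\C[\h]^W$ and of $(\Sym \h)^W$, hence the subalgebra of $\C[\h \times \h^*]^W$ they generate, which is $\C[\h]^W \otimes (\Sym \h)^W$ (the multiplication map is injective since $\C[\h]^W \otimes_{\C} \C[\h^*]^W \hookrightarrow \C[\h] \otimes_{\C} \C[\h^*] = \C[\h \times \h^*]$). Since $\C[\h]$ is module-finite over $\C[\h]^W$ and $\C[\h^*] = \Sym \h$ over $(\Sym \h)^W$, the ring $\C[\h \times \h^*]$ is module-finite over $\C[\h]^W \otimes (\Sym \h)^W$, and therefore so is its submodule $\C[\h \times \h^*]^W = \gr A$. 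Thus $\gr A$ is finitely generated over $\gr B$, and the usual filtered-to-graded lifting argument produces finitely many elements $a_1 = 1, a_2, \dots, a_r \in A$ with $A = \sum_i B a_i$, and symmetrically $A = \sum_i a_i' B$ on the right.

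Next I would localize at the Ore set $\{\delta^n\}$. Using that $G$-invariants are exact (so $\dd(V)^G[\delta^{-1}] = \dd(V_{\reg})^G$) and the standard identification $\Ak(W)[\delta^{-1}] \cong \dd(\h_{\reg})^W$ for the spherical Cherednik algebra, the isomorphism \eqref{eq:intro-localizatioradiso} of Theorem~\ref{thm:intro-radial-exists}(2) gives $B[\delta^{-1}] = A[\delta^{-1}]$. As $A$ is a domain, each generator $a_i$ then satisfies $a_i \delta^{k_i} \in B$ for some $k_i \geq 0$; with $N := \max_i k_i$ and $a = \sum_i b_i a_i$ ($b_i \in B$) we get $a\,\delta^N = \sum_i b_i\,(a_i \delta^{k_i})\,\delta^{N - k_i} \in B$, so $A \delta^N \subseteq B$, and symmetrically $\delta^{N'} A \subseteq B$ for some $N'$. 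Hence $A \delta^{N+N'} A = (A\delta^N)(\delta^{N'}A) \subseteq B$, and this is a nonzero two-sided ideal of $A$; by simplicity it equals $A$, so $A \subseteq B$ and $\rad$ is surjective.

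I expect the crux to be the module-finiteness of $\gr A$ over $\gr B$ in the first step, which rests on the ``Chevalley'' content of Theorem~\ref{thm:intro-radial-exists}(1): that the radial parts of invariant polynomials and of invariant constant-coefficient operators exhaust the full invariant rings $\C[\h]^W$ and $(\Sym \h)^W$, not merely proper subalgebras of them. Note that one cannot in general conclude $B = A$ from the associated graded alone, since $\gr B$ may be strictly smaller than $\gr A$ (only module-finite over it); this is precisely why the simplicity of $A$ is needed, via the final ``unlocalisation'' argument. The remaining ingredients — exactness of $G$-invariants, the Ore localisation $\Ak(W)[\delta^{-1}] \cong \dd(\h_{\reg})^W$, and the filtered lifting lemma — are routine.
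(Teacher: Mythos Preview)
Your proof is correct and follows the classical simplicity argument that goes back to Wallach and Levasseur--Stafford in the diagonal case; this is essentially the approach taken in \cite{BLNS} as well. Note only that your second step, where you invoke the isomorphism \eqref{eq:intro-localizatioradiso} to conclude $B[\delta^{-1}] = A[\delta^{-1}]$, uses Theorem~\ref{thm:intro-radial-exists}(2) and hence implicitly assumes $V$ is stable---this is the standing hypothesis throughout the body of the paper (Notation~\ref{notation4.11}), so it is harmless here, but worth flagging since the statement of Theorem~\ref{thm:intro-radial-surjective} does not mention it explicitly.
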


Note that $\Ak(W)$ is a (flat) deformation of the fixed ring $\dd(\h)^W$ and so generically  it will be simple. However, this is not always true and, indeed, its failure produces many of the most interesting Cherednik algebras.

The main results of this paper are proved under the following assumption, \emph{which will be assumed for the rest of the introduction}:

 \begin{condition}\label{intro-hypothesis} $V$ is a visible,  stable polar representation for which $\Ak(W) $ is a simple ring.  
\end{condition}

Visibility and stability are defined in Section~\ref{Sec:polarreps} and 
the main results of this paper fail without   these assumptions; see Section~\ref{sec:otherexamples} in particular. 
So these conditions are quite natural and, moreover, they hold in many  significant  cases. In particular, both symmetric spaces and  examples such as representation spaces for the cyclic quiver (see Section~\ref{Sec:Quivers}) are visible,  stable polar representations. Moreover, the algebra $\Ak(W)$ is simple for these quiver representations; see Proposition~\ref{prop:cyclcirad0simple}. In contrast,   $\Ak(W)$ is \textit{not} simple for all symmetric spaces; indeed, it is shown in \cite[Theorem~8.23]{BLNS} that $\Ak(W)$ is simple if and only if the symmetric space is \gainly, with  a complete classification of the \gainly\ symmetric spaces  given by the tables in \cite[Appendix~B]{BLNS}. There are also well-known examples of (necessarily non-\gainly)  symmetric spaces for which  Theorem~\ref{intro-torsionfree1} and Corollary~\ref{cor:intro-torsionfree1} fail; see  \cite[(6.2)]{Se} and Remark~\ref{intro-torsionfree-remark}(2).

   \subsection*{The main results.}
Before stating the main  results,  we have to slightly modify the definition of the Harish-Chandra module so that it also works in the generality of \eqref{intro-hypothesis}. It is almost immediate from the construction that
\[
\ker(\rad)\ = \ \{\theta\in \eD(V)^G : \theta * \C[V]^G=0\} \ \supseteq \ [\eD(V)\tau(\g)]^G.
\]
If $\mf{m}_{\lambda}$ denotes the maximal ideal in $(\Sym V)^G \cong (\Sym \h)^W$ corresponding to the orbit of $\lambda \in \h^*$ in $\h^*/W$ then set 
\[\eG_\lambda = \eD(V)/ \Big(\eD(V)\tau(\g)  + \eD \ker(\rad) + \eD \mf{m}_{\lambda}\Big).
\]
Alternatively, if   $\eMt=\eD(V)/\eD(V)\tau(\g)$, with factor module 
\[
\eM = \eD(V)/(\eD\tau(\g) + \eD(V)\ker(\rad)), 
\] 
then one can identify  
\begin{equation}\label{intro-rem:semiprime2}
\eG_\lambda\ = \  \eMt\otimes_{\Ak(W)} \left( \Ak(W)  / \Ak(W) \mf{m}_\lambda \right) \ = \  \eM\otimes_{\Ak(W)} \left(\Ak(W)  / \Ak(W) \mf{m}_\lambda\right); 
\end{equation} 
see Remark~\ref{rem:semiprime2} for the equivalence.  

The main tool for interrelating the representation theory of $\dd(V)$ to  that of $\Ak(W)$ is the 
 $\bigl(\dd(V),\, \Ak(W)\big)$-bimodule $\eM$. The difference between this module and $\eMt$ is also not too serious since, as a bimodule,  $\eMt=\eM\oplus \euls{N}$, for a $\delta$-torsion module $\euls{N}$; see Proposition~\ref{semiprime2}.

  We will be largely interested in    the  following categories of representations over $\Ak(W)$,
   respectively $\dd(V)$.
First, and by analogy with the category $\euls{O}$ for a semisimple Lie algebra, we have  the category $\Osph$ for $\Ak(W)$. This is defined to be the full subcategory 
of the category of all finitely generated $\Ak(W)$-modules on which $(\Sym  \h)^W$ acts locally finitely.    
Similarly, 	a finitely generated $G$-equivariant  left  $\dd(V)$-module $N$ is \emph{admissible} if  $(\Sym  V)^G$ acts locally finitely on $N$; see Definition~\ref{defn:admissible}. The  full subcategory of all such modules is written $\euls{C}$, and it is readily checked that $\eG_\lambda\in \euls{C}$. Via the (microlocal) functor of Hamiltonian reduction, admissible $\ddd$-modules have been shown to play a key role in understanding geometric category $\euls{O}$ associated to quantisations of Higgs branches (in particular, quiver varieties), as described in \cite{BLPWAst,GenCatOWebster}. 

Our main result  generalising Theorem~\ref{intro-torsionfree1} states:

\begin{theorem} \label{intro-torsionfree} {\rm (Theorem~\ref{torsionfree} and Corollary~\ref{torsionfree-corollary})}  
	Let $\eG =\eG_\lambda$  or, indeed,  let $\eG=\eM\otimes_{\Ak(W)}\eP$ for some  projective object $\eP\in \Osph$.  Let $0\not=d\in \C[V]^G$; for example, $d=\delta$.
	Then:
	\begin{enumerate}
		\item  $\eG$ is a $d$-torsionfree  holonomic $\eD(V)$-module;   
		\item  $\eG$ has no nonzero factor module $T$ that is $d$-torsion;
		 		\item $\eG$ is the minimal extension of $\euls{L} := \eG |_{V_{\mathrm{reg}}}$.
		 	\end{enumerate}
\end{theorem}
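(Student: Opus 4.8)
The plan is to treat the three assertions as a package, deducing (2) and (3) from (1) together with the structural properties of $\eM$ recorded in the excerpt. The engine is the bimodule $\eM$, which is a $(\eD(V),\Ak(W))$-bimodule whose restriction to $V_{\reg}$ implements the radial parts isomorphism \eqref{eq:intro-localizatioradiso}. First I would establish that $\eG = \eM \otimes_{\Ak(W)} \eP$ is holonomic: one filters $\eP$ by its $(\Sym\h)^W$-action (using $\eP \in \Osph$), so $\eP$ is built from finitely many modules on which $\mf{m}_\lambda$ acts nilpotently, and each $\eG_\lambda$ in turn has a good filtration whose characteristic variety lies in the zero fibre of the moment map for the $G$-action — the same dimension count that shows $\eGt_\lambda$ is holonomic in the diagonal case (via Bernstein's inequality and the Gabber/involutivity bound) goes through here, since $\dim V \git G = \dim \h$ and the generic fibre of $V \to V\git G$ has the expected dimension for a stable polar representation. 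This is the routine part.

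The crucial input is $d$-torsionfreeness, and here the key step is to analyze $\eG[\delta^{-1}] = \eM[\delta^{-1}] \otimes_{\Ak(W)} \eP$. Over $V_{\reg}$ the bimodule $\eM$ becomes, by Theorem~\ref{thm:intro-radial-exists}(2), the transfer bimodule for the isomorphism $(\dd(V_{\reg})/\dd(V_{\reg})\tau(\g))^G \isom \dd(\h_{\reg})^W$, so $\eG|_{V_{\reg}}$ corresponds to a $\dd(\h_{\reg})^W$-module obtained from $\eP$ by inverting $\delta$; since $\eP \ne 0$ is projective over $\Ak(W)$ and $\Ak(W)$ is simple, $\eP$ is faithful, hence $\eP[\delta^{-1}] \ne 0$ and the corresponding $\dd(\h_{\reg})^W$-module — and therefore $\eG|_{V_{\reg}}$ — is nonzero and torsionfree. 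The heart of the argument is then a faithful-flatness / torsionfreeness statement for the bimodule $\eM$ itself: one must show that the natural map $\eG \to \eG[\delta^{-1}]$, equivalently $\eM \otimes_{\Ak(W)} \eP \to \eM[\delta^{-1}] \otimes_{\Ak(W)} \eP$, is injective, i.e. $\eM$ has no $\delta$-torsion as a left $\dd(V)$-module, and moreover no holonomic subquotient of $\eG$ can be $\delta$-torsion. For the submodule statement I would use that $\eM$ is, by Proposition~\ref{semiprime2} (cited in the excerpt), the $\delta$-torsionfree quotient of $\eMt = \dd(V)/\dd(V)\tau(\g)$, together with the simplicity of $\Ak(W)$, which forces the functor $\eM \otimes_{\Ak(W)} -$ to be exact and faithful on $\Ak(W)$-modules; a nonzero $\delta$-torsion $\dd(V)$-submodule of $\eG$ would, after applying the adjoint (sections over $V_{\reg}$, or $\Hom_{\dd(V)}(\eM,-)$), produce a nonzero submodule of $\eP$ killed by $\delta$, contradicting that $\eP$ is a $\delta$-torsionfree $\Ak(W)$-module (every $\Ak(W)$-module is $\delta$-torsionfree since $\delta \in \C[\h]^W$ is a nonzerodivisor and $\Ak(W)$ is simple).

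Granting (1), assertion (2) is the statement, proved for the diagonal and nice-symmetric cases by Hotta--Kashiwara, Sekiguchi, Galina--Laurent, that $\eG$ also has no $d$-torsion quotient. The standard device is holonomic duality: $\mathbb{D}\eG$ is again holonomic, and one identifies $\mathbb{D}\eG$ (or rather its analogue built from the Fourier-transformed datum, using the isomorphism $\rrpp\colon (\Sym V)^G \isom (\Sym\h)^W$ interchanging $\C[V]^G$ and $(\Sym V)^G$) as a module of the same type $\eM \otimes_{\Ak(W)} \eP'$ with $\eP'$ again projective in $\Osph$ — here I would invoke the Fourier-invariance of the construction, i.e. that the Fourier transform on $\dd(V)$ intertwines the two radial parts maps, exchanging $\ker(\rad)$ with $\eD\mf{m}$-type ideals; then part (1) applied to $\mathbb{D}\eG$ gives that it is $d$-torsionfree, which dualizes to: $\eG$ has no $d$-torsion quotient. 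Finally (3) is a formal consequence of (1) and (2): a holonomic module with no $d$-torsion sub or quotient, restricted to the open set $V_{\reg} = (\delta\ne0)$, has $\eG = j_{!*}(\eG|_{V_{\reg}})$, since $j_!j^*\eG \to \eG$ has $\delta$-torsion cokernel (hence is surjective onto the image, which is then all of $\eG$ by no-quotient) and $\eG \to j_*j^*\eG$ has $\delta$-torsion kernel (hence is injective by no-sub), so $\eG$ coincides with the intermediate extension.

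I expect the main obstacle to be the torsionfreeness of the bimodule $\eM$ and the transfer of the "no holonomic $\delta$-torsion subquotient" property through $\eM \otimes_{\Ak(W)} -$: one must control subquotients of $\eG$ that are invisible over $V_{\reg}$, and simplicity of $\Ak(W)$ has to be leveraged precisely at this point to prevent such phantom subquotients — unlike the symmetric-space case one does not have an a priori classification of simple admissible modules, so the argument has to be made purely in terms of the exactness and faithfulness of Hamiltonian reduction. Establishing the Fourier-invariance needed for the duality step is also delicate, since it requires matching the Cherednik-algebra parameter $\kappa$ under Fourier transform.
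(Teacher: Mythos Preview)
Your overall architecture --- prove one of ``no torsion sub'' / ``no torsion quotient'' directly and get the other by duality --- is right, but you have the two halves inverted, and this inversion hides a genuine gap.

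For (1) you argue: if $T \subset \eG$ is a nonzero $d$-torsion submodule, apply $\BH = \Hom_{\dd(V)}(\eM,-)$ to get a nonzero $d$-torsion submodule of $\eP$, which cannot exist since $\Ak(W)$ is simple. The problem is the word ``nonzero''. Exactness of $\BH$ gives $\BH(T) \hookrightarrow \BH(\eG) = \eP$, but there is no reason at this stage that $\BH(T) \ne 0$. Faithfulness of $\HHleft = \eM \otimes_{\Ak}-$ goes the wrong way: it says $\HHleft(N)=0 \Rightarrow N=0$, not that every nonzero submodule of $\HHleft(\eP)$ has nonzero $G$-invariants. In fact the paper does prove later (Proposition~\ref{bigg}) that $L^G \ne 0$ for every nonzero $L \subset \eG_\lambda$, but the proof \emph{uses} torsionfreeness of $\eG$, so invoking it here would be circular.

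The paper resolves this by reversing the order. The direct adjunction argument works cleanly for \emph{quotients}: if $\phi\colon \eG \twoheadrightarrow T$ is nonzero then, since $\BH\circ\HHleft = \mathrm{Id}$, adjunction gives $0 \ne \phi \in \Hom_{\Ak}(\eP,\BH(T))$, so $\BH(T)\ne 0$; but $\BH(T)$ is $d$-torsion (Proposition~\ref{torsion-homs}) and in $\Osph$, contradicting simplicity of $\Ak$. This is the paper's part (3). For torsionfreeness one dualises: if $T \subset \eG$ is $d$-torsion then $\BD_\ddd(\eG) \twoheadrightarrow \BD_\ddd(T) \ne 0$ (nonvanishing is now free, since $T$ is holonomic and hence $(n{+}m)$-CM). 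One then needs to know that $\BD_\ddd(\eG)$ has no $d$-torsion quotient, and the paper gets this from the intertwining theorem (Theorem~\ref{intertwining}): a spectral-sequence argument gives $\BD_\ddd(\eG) \cong \BD_{\Ak}(\eP) \otimes_{\Ak} \eM'$ with $\eM' = \Ext^m_\ddd(\eM,\ddd)$, and one runs the same adjunction with $\eM'$ in place of $\eM$. No Fourier transform is involved, so the parameter-matching issue you flag never arises; the intertwining theorem is the correct substitute for your proposed Fourier self-duality.
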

 
\begin{remark}\label{intro-torsionfree-remark}
	(1) By Lemma~\ref{lem:Vregintconnection}, $\euls{L} = \eG |_{V_{\mathrm{reg}}}$ is  an integrable connection. 
	 	
	(2)  The theorem  fails if $\Ak(W)$ is not simple; indeed if $\Ak(W)$ is not simple then there 
	always exists a nonzero 
	$\delta$-torsion factor of $\eG_0$; see Proposition~\ref{torsionfree-converse}.
 
 (3)  If $V$ is a \gainly \ symmetric space, then $\eGt_\lambda=\eG_\lambda$ by Theorem~\ref{LS3-theorem} and so 
 Theorem~\ref{intro-torsionfree} implies   Theorem~\ref{intro-torsionfree1}  and Corollary~\ref{cor:intro-torsionfree1} .
 \end{remark}
  
We now turn to Conjecture (C3). In the diagonal case (when $V=\g$)   one of the main results of  Hotta and Kashiwara 
\cite[Theorem~5.3]{HK}   shows that $\eG_0$ is semisimple, while  \cite[Theorem~B]{AJM} shows, more generally, that  each $\eG_\lambda$ is semisimple. As Hotta and Kashiwara show,   this    has significant 
applications to 
the representation theory of $\g$, which is one reason for the importance of Conjecture~(C3). 

Our next main result gives a strikingly complete answer to Conjecture (C3), written  in terms of    the Hecke algebra  $\euls{H}_{q}(W)$ associated to $\Ak(W)$; see Definition~\ref{Hecke-defn}.

\begin{theorem}\label{intro-thm:semi-simplicity}   {\rm (Theorem~\ref{thm:semi-simplicity})}   	The module  $\eG_{\lambda}$ is semisimple if and only if the Hecke algebra 
	$\euls{H}_{q}(W_{\lambda})$ is semisimple. 
\end{theorem}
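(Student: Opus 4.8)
The plan is to transport the question along the radial parts map, and then through the Knizhnik--Zamolodchikov (KZ) functor, until it becomes a statement purely about the Hecke algebra. Write $M_\lambda := \Ak(W)/\Ak(W)\mf{m}_\lambda \in \Osph$, so that $\eG_\lambda = \eM\otimes_{\Ak(W)}M_\lambda$ by \eqref{intro-rem:semiprime2}. I would split the proof into three equivalences:
\begin{enumerate}
\item[(i)] $\eG_\lambda$ is a semisimple $\eD(V)$-module iff $\euls{L}:=\eG_\lambda|_{V_{\reg}}$ is a semisimple integrable connection;
\item[(ii)] $\euls{L}$ is semisimple iff $M_\lambda$ is a semisimple object of $\Osph$;
\item[(iii)] $M_\lambda$ is semisimple iff $\euls{H}_q(W_\lambda)$ is semisimple.
\end{enumerate}
For (i), Theorem~\ref{intro-torsionfree} gives that $\eG_\lambda$ is holonomic, $\delta$-torsionfree, has no $\delta$-torsion quotient, and equals $j_{!*}\euls{L}$ with $\euls{L}$ a connection (Remark~\ref{intro-torsionfree-remark}(1)). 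If $\euls{L}=\bigoplus_i\euls{L}_i$ with the $\euls{L}_i$ simple connections then $\eG_\lambda=\bigoplus_i j_{!*}\euls{L}_i$ is semisimple; conversely, a direct summand $\eG_i$ of $\eG_\lambda$ is again $\delta$-torsionfree with no $\delta$-torsion quotient, hence $\eG_i=j_{!*}(\eG_i|_{V_{\reg}})$ with $\eG_i|_{V_{\reg}}$ simple, so if $\eG_\lambda$ is semisimple then $\euls{L}=\bigoplus_i\eG_i|_{V_{\reg}}$ is too. (Connectedness of $G$ makes this insensitive to the equivariant structure.)

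For (ii), stability and Theorem~\ref{thm:intro-radial-exists}(2) identify $G$-equivariant integrable connections on $V_{\reg}$ with connections on $\h_{\reg}/W$, under which $\euls{L}$ corresponds to the localisation $\Ak(W)[\delta^{-1}]\otimes_{\Ak(W)}M_\lambda$ (the localisation of $\Ak(W)$ away from $\delta$ being $\dd(\h_{\reg})^W$); since $\Ak(W)$ is simple (Condition~\ref{intro-hypothesis}) it has no nonzero $\delta$-torsion module, so this localisation carries semisimplicity faithfully in both directions, by the radial parts machinery of \cite{BLNS}. For (iii) one invokes the theory of category $\euls{O}$ for rational Cherednik algebras: applying the spherical idempotent passes from $\Osph$ to category $\euls{O}$ for $\Hk(W)$, and the shift functors of this paper then reduce the ``block at $\lambda$'' to (ordinary) category $\euls{O}$ for the rational Cherednik algebra $\Hk(W_\lambda)$ of the parabolic subgroup $W_\lambda$, at a parameter whose Hecke algebra (Definition~\ref{Hecke-defn}) is $\euls{H}_q(W_\lambda)$; under this reduction $M_\lambda$ corresponds to a standardly filtered module in which every standard module $\Delta(E)$, $E\in\mathrm{Irr}(W_\lambda)$, occurs as a subquotient. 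A subquotient of a semisimple module being semisimple, if $M_\lambda$ is semisimple then each $\Delta(E)$ is semisimple; having simple head $L(E)$ in the highest weight category $\euls{O}(\Hk(W_\lambda))$, it must equal $L(E)$; a highest weight category all of whose standard objects are simple is semisimple, and by the theory of the KZ functor (Ginzburg--Guay--Opdam--Rouquier) this is equivalent to $\euls{H}_q(W_\lambda)$ being semisimple. Conversely, if $\euls{H}_q(W_\lambda)$ is semisimple then it has exactly $|\mathrm{Irr}(W_\lambda)|$ simple modules, so the KZ functor kills no simple object, hence is an equivalence and $\euls{O}(\Hk(W_\lambda))$ is semisimple; running the dictionary backwards, $M_\lambda$, and hence $\eG_\lambda$, is semisimple.

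The main obstacle is step (iii), above all the precise shift-functor reduction from $W$ to $W_\lambda$: one has to show that the ``$\lambda$-sector'' of the picture on $V$ is governed by the rational Cherednik algebra of $W_\lambda$ with the right parameter---equivalently, that the monodromy of $\euls{L}=\eG_\lambda|_{V_{\reg}}$ around each reflection hyperplane in $\h_{\reg}/W$ has the eigenvalues prescribed by the quadratic relations of $\euls{H}_q(W)$, with the spectral parameter $\lambda$ producing exactly the coincidences that make $W_\lambda$, rather than $W$, the group controlling semisimplicity. This is the transfer to the present setting of Opdam's analysis of the KZ connection and its shift operators, and it is also what underlies the KZ-twist statement advertised in the introduction; granting it, together with the bimodule machinery for $\eM$ and the standard structure theory of category $\euls{O}$, the other two steps are essentially formal.
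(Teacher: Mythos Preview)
Your architecture via (i)--(iii) is natural, and steps (i) and (iii) are essentially correct---though (iii) is much simpler than you suggest. No shift functors are involved: the equivalence $\Osph_{\kappa,\lambda}(W)\simeq\euls{H}_q(W_\lambda)\lmod$ follows directly from the completion isomorphism $\widehat{A}_\kappa(W)_\lambda\cong\widehat{A}_\kappa(W_\lambda)_0$ together with Losev's theorem (Lemma~\ref{lem:genOsphparabolicequi} and Corollary~\ref{A-projectives}); under it $M_\lambda$ corresponds to the regular representation of $\euls{H}_q(W_\lambda)$, so its semisimplicity is equivalent to that of the Hecke algebra. The forward direction of the theorem is also shorter than your route: one computes $\End_\ddd(\eG_\lambda)\cong\euls{H}_q(W_\lambda)$ directly via adjunction (Lemma~\ref{lem:endGHeckealg}), and the endomorphism ring of a semisimple object is semisimple.

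The genuine gap is in step (ii), and this---not (iii)---is the heart of the matter. You assert that ``stability and Theorem~\ref{thm:intro-radial-exists}(2) identify $G$-equivariant integrable connections on $V_{\reg}$ with connections on $\h_{\reg}/W$'', but the algebra isomorphism \eqref{eq:intro-localizatioradiso} does \emph{not} give an equivalence of module categories in general. When the stabiliser $Z=Z_G(\h)$ is disconnected (as in the $G_4$ example of Section~\ref{sec:otherexamples}, where $|Z|=27$), there exist simple $(G,\chi)$-monodromic connections $L'$ on $V_{\reg}$ with $(L')^G=0$, corresponding to nontrivial characters of $Z/Z^\circ$; the paper says exactly this just before Lemma~\ref{lem:invariantsvstfpolar}. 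So the functor $(-)^G$ need not be faithful, and you cannot transport semisimplicity across step (ii) without further work. What is actually required for the hard direction is that every nonzero submodule of $\eG_\lambda$ (equivalently of $\eG_\lambda|_{V_{\reg}}$) has nonzero $G$-invariants. This is precisely Proposition~\ref{bigg}, built on Lemma~\ref{lem:keynonvanishG}, whose proof uses Popov's theorem that for a \emph{stable} $G$-representation any nonzero $G$-stable ideal $I\subset\C[V_{\reg}]$ has $I^G\neq0$. That nonvanishing result is the main technical input the paper supplies; once you have it, your step (ii) works, but it also makes the detour through $V_{\reg}$ unnecessary: with Proposition~\ref{bigg} in hand, one argues directly that every simple submodule $L\subset\eG_\lambda$ satisfies $L\cong\eM\otimes_{\Aak}L^G$ and is a summand.
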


 This result also holds when $\Ak(W)$ is not simple; see Corollary~\ref{thm:semi-simplicity2} for the
	details. However, the result definitely requires the stability hypothesis since the analogue of  Theorem~\ref{intro-thm:semi-simplicity} fails for representation spaces over framed quivers; see Example~\ref{rem:framed-quiver} for the details.

 It is easily determined  from   the literature   whether   a given  Hecke algebra  $\euls{H}_q(W_{\lambda})$ is semisimple.  Applying this theory to the Hecke algebras arising from  symmetric spaces gives 
 the following result.   Here, we are using  the classification of  symmetric spaces from \cite[Chapter~X]{He1}, as explained in more detail in Section~\ref{Sec:examples}.

\begin{corollary}\label{intro-cor:symmetricspacessHecke} {\rm (Theorem~\ref{cor:symmetricspacessHecke}) }
	The Harish-Chandra module $\eG_0$ of a symmetric space $V$ is semi\-simple if and only if  each simple factor of $V$ is either of diagonal  type or of type 
	\[\begin{aligned} \text{$\mathsf{AII}_n$} \  & \ \text{
	where $(\widetilde{\g},\g)= \mathfrak{sl}(2n), \mathfrak{sp}(n)$, }\\
	\text{$\mathsf{DII}_{p}$}  \ &\ \text{ 
where $(\widetilde{\g},\g)= \mathfrak{so}(2p), \mathfrak{so}(2p-1)$, or }\\
 \text{$\mathsf{EIV}$} \, \ &\ \text{  where $(\widetilde{\g},\g)= \mathfrak{e}(6), \mathfrak{f}(4)$.  }
 \end{aligned}\]\end{corollary}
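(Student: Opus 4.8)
The plan is to deduce Corollary~\ref{intro-cor:symmetricspacessHecke} from Theorem~\ref{intro-thm:semi-simplicity} together with the classification of nice (\gainly) symmetric spaces and a case-by-case analysis of when the relevant Hecke algebras are semisimple. First I would reduce to the irreducible case: the Harish-Chandra module $\eG_0$ of a product symmetric space is the external tensor product of the $\eG_0$'s of its factors, and an external tensor product of $\eD$-modules is semisimple if and only if each factor is (here the little Weyl group $W$ and its Hecke algebra also decompose as direct products, so $\euls{H}_q(W)$ is semisimple iff each factor Hecke algebra is). Thus it suffices to run through the irreducible symmetric pairs $(\gtilde,\g)$ one at a time.

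Next I would separate two regimes. If the symmetric space is \emph{not} \gainly, then by \cite[Theorem~8.22]{BLNS} the spherical algebra $\Ak(W)$ is not simple, and by Remark~\ref{intro-torsionfree-remark}(2) (equivalently Proposition~\ref{torsionfree-converse}) $\eG_0$ already has a nonzero $\delta$-torsion factor, hence is \emph{not} semisimple. (Strictly, Theorem~\ref{intro-thm:semi-simplicity} is stated under Condition~\ref{intro-hypothesis}; the non-simple case is covered by Corollary~\ref{thm:semi-simplicity2}, which one invokes here instead.) So the non-\gainly\ spaces contribute nothing to the list, and I may assume $V$ is a \gainly\ irreducible symmetric space; these are precisely the ones tabulated in \cite[Appendix~B]{BLNS}. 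For each such pair the tables record the little Weyl group $W$, its realisation as a reflection group acting on $\h$, and the Cherednik parameter $\kappa$ (equivalently the multiplicities $\deltav$ on the restricted root system $\Sigma$), and hence the parameter $q$ of the associated Hecke algebra $\euls{H}_q(W)$ via the standard dictionary (Definition~\ref{Hecke-defn}): the $q$-values are roots of unity determined by the $\kappa$-parameters, which for symmetric spaces are half-integral/rational with small denominators.

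Then I would invoke the known criterion for semisimplicity of a one- or multi-parameter Iwahori--Hecke algebra of a Weyl group (and, for the non-crystallographic little Weyl groups of dihedral type that occur, of the corresponding cyclotomic Hecke algebra): $\euls{H}_q(W)$ is semisimple precisely when none of the relevant Poincaré/Schur-element factors vanishes, i.e. when the parameters $q$ avoid an explicit finite set of ``bad'' roots of unity; see the references compiled in Section~\ref{Sec:Hecke}. Feeding in, for each \gainly\ irreducible pair, the multiplicities from \cite[Appendix~B]{BLNS}, one checks case by case whether the Hecke parameter lands on a bad value. The outcome is that semisimplicity survives exactly for the diagonal type (where $\kappa$ is generic, so $\euls{H}_q(W)$ is the generic, semisimple, group algebra deformation) and for the three listed families $\mathsf{AII}_n$, $\mathsf{DII}_p$, $\mathsf{EIV}$, whose restricted root systems are of type $A_{n-1}$, $A_1$, $A_2$ respectively with multiplicity $\deltav=4$ (or $2$), for which the specialisation of $q$ turns out \emph{not} to be a zero of the relevant Schur elements; in every other \gainly\ case (the $\mathsf{BC}$-type restricted systems with multiplicities hitting the singular locus, etc.) the Hecke algebra degenerates.

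The main obstacle is the bookkeeping in this last step: one must correctly translate the Cherednik parameter $\kappa$ attached to a symmetric space (built from the restricted-root multiplicities, including the subtlety of $2\alpha$ being a root) into the Hecke parameter $q$ under the KZ/monodromy normalisation used in Definition~\ref{Hecke-defn}, and then apply the semisimplicity criterion with the right conventions so that the dividing line falls exactly where claimed. In particular the rank-one and rank-two restricted systems ($A_1$, $A_2$, $BC_1$, $B_2$, $G_2$) that appear for several small symmetric spaces need to be handled with care, since there the Hecke algebra is a cyclotomic (dihedral) algebra whose semisimplicity locus is governed by cyclotomic-polynomial conditions rather than the familiar $[n]_q!$ formula. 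Once the parameter dictionary is pinned down, the verification is a finite, if lengthy, table-by-table check against \cite[Appendix~B]{BLNS}.
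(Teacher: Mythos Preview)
Your proposal is correct and follows essentially the same route as the paper: reduce to the case where $\Ak(W)$ is simple (the \gainly\ case) via Corollary~\ref{thm:semi-simplicity2}, read off the little Weyl group and parameters from the tables in \cite[Appendix~B]{BLNS}, convert to Hecke parameters via \eqref{eq:Heckecrg2}, and then apply a semisimplicity criterion case by case. The paper carries out exactly this check, using specifically the polynomial $Q_W$ of \cite[Proposition~5.3]{GeckRouquierCenters} and recording the outcome in a table. Two minor corrections: there is no ``Section~\ref{Sec:Hecke}'' to refer to, and your worry about non-crystallographic or dihedral little Weyl groups is unnecessary, since for symmetric spaces the little Weyl group is always a genuine Weyl group, so the Iwahori--Hecke criterion of \cite{GeckRouquierCenters} applies uniformly; also, the diagonal case is not ``generic'' but rather has $k_\alpha=1$, giving $u_s=1$ and $\euls{H}_q(W)=\C W$.
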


 When $\euls{H}_{q}(W_{\lambda})$  is semisimple, it follows from Theorem~\ref{intro-thm:semi-simplicity} that the simple summands of $\eG_{\lambda}$ are labelled by the irreducible representations of $\euls{H}_q(W_{\lambda})$,  with the latter acting on the multiplicity space of each simple summand. The following result is the natural extension of the (algebraic) Springer correspondence for $V=\g$  proved in 
  \cite[Theorem~5.3]{HK} and \cite[Theorem~B]{AJM}. 

\begin{corollary}\label{intro-cor:simplicity}  {\rm (Corollary~\ref{cor:simplicity})}   	 If  $\euls{H}_{q}(W_{\lambda})$ is semisimple then 
	$$
	\eG_{\lambda} = \bigoplus_{\rho \in \mr{Irr} \euls{H}_q(W_{\lambda})} \eG_{\lambda,\rho} \o \rho^*
	$$
	as a $(\eD(V),\euls{H}_q(W_{\lambda}))$-bimodule. Moreover,  each $\eG_{\lambda,\rho}$ is an  irreducible  $\eD(V)$-module 
	and the $\eG_{\lambda,\rho}$ are non-isomorphic for distinct $(\lambda,\rho).$ 
\end{corollary}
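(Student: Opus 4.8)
The plan is to combine the semisimplicity statement of Theorem~\ref{intro-thm:semi-simplicity} with the bimodule structure coming from the radial parts map. First I would invoke Theorem~\ref{intro-thm:semi-simplicity}: since $\euls{H}_q(W_\lambda)$ is semisimple, $\eG_\lambda$ is a semisimple $\eD(V)$-module. The key structural input is the identification \eqref{intro-rem:semiprime2}, namely $\eG_\lambda = \eM\otimes_{\Ak(W)}(\Ak(W)/\Ak(W)\mf{m}_\lambda)$, which exhibits $\eG_\lambda$ as $\eM\otimes_{\Ak(W)}\eP_\lambda$ for $\eP_\lambda$ the projective object of $\Osph$ supported at the orbit of $\lambda$. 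Now the endomorphism algebra $\End_{\Osph}(\eP_\lambda)^{\mathrm{op}}$ is precisely (a block of) the Hecke algebra $\euls{H}_q(W_\lambda)$ — this is the KZ/Hecke description of $\Osph$ that underlies Theorem~\ref{intro-thm:semi-simplicity} — and this gives a right $\euls{H}_q(W_\lambda)$-action on $\eG_\lambda$ commuting with the left $\eD(V)$-action, so $\eG_\lambda$ is a $(\eD(V),\euls{H}_q(W_\lambda))$-bimodule.

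Next I would run the standard double-centralizer/Wedderburn argument. Because $\euls{H}_q(W_\lambda)$ is semisimple, $\eP_\lambda$ decomposes as $\bigoplus_{\rho}\eP_{\lambda,\rho}\otimes\rho^*$ over its indecomposable projectives, indexed by $\rho\in\mathrm{Irr}\,\euls{H}_q(W_\lambda)$, with $\rho$ acting on the multiplicity spaces. Applying the exact functor $\eM\otimes_{\Ak(W)}(-)$ (exactness on $\Osph$ being part of the setup that makes Theorem~\ref{intro-torsionfree} work) yields $\eG_\lambda=\bigoplus_\rho \eG_{\lambda,\rho}\otimes\rho^*$ where $\eG_{\lambda,\rho}:=\eM\otimes_{\Ak(W)}\eP_{\lambda,\rho}$. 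It remains to show each $\eG_{\lambda,\rho}$ is an irreducible $\eD(V)$-module and that distinct $(\lambda,\rho)$ give non-isomorphic ones. For irreducibility: $\eG_\lambda$ is semisimple, so $\eG_{\lambda,\rho}$ is a sum of simples; if it were not simple, the bimodule decomposition would have to be refinable, contradicting the fact that the right-action multiplicity space of each isotypic component is the simple $\euls{H}_q(W_\lambda)$-module $\rho^*$ — more precisely, I would argue via $\End_{\eD(V)}(\eG_\lambda)\cong\euls{H}_q(W_\lambda)$ (the centralizer computation, using that $\eM\otimes_{\Ak(W)}(-)$ is fully faithful on projectives, or at least induces an iso on the relevant Hom/End spaces — this is essentially \cite{BLNS} combined with Theorem~\ref{thm:intro-radial-surjective}), so $\eG_\lambda$ has exactly $|\mathrm{Irr}\,\euls{H}_q(W_\lambda)|$ distinct simple summands, forcing each $\eG_{\lambda,\rho}$ simple. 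For the non-isomorphism across different $\lambda$: the simple summands of $\eG_\lambda$ restrict on $V_{\mathrm{reg}}$ to the simple summands of $\euls{L}=\eG_\lambda|_{V_{\mathrm{reg}}}$, which is an integrable connection with monodromy governed by $\lambda$ mod $W$ (Remark~\ref{intro-torsionfree-remark}(1) and Lemma~\ref{lem:Vregintconnection}); since $\eG_{\lambda,\rho}$ is a minimal extension (Theorem~\ref{intro-torsionfree}(3)), it is determined by this restriction, and connections with monodromy at different central characters $\lambda\not\equiv\lambda'$ share no simple constituents.

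The main obstacle I expect is the centralizer computation $\End_{\eD(V)}(\eG_\lambda)\cong\euls{H}_q(W_\lambda)$, equivalently showing that $\eM\otimes_{\Ak(W)}(-)$ does not collapse Hom-spaces between projectives in $\Osph$. This is where the hypotheses genuinely enter: one needs $\Ak(W)$ simple (Condition~\ref{intro-hypothesis}) so that $\rad$ is surjective (Theorem~\ref{thm:intro-radial-surjective}) and $\eM$ is a genuine Morita-type bimodule relating $\eD(V)$-modules killed by $\ker(\rad)$ to $\Ak(W)$-modules; the torsion-freeness of Theorem~\ref{intro-torsionfree} is what prevents the functor from losing information (no simple summand of $\eG_{\lambda,\rho}$ is $\delta$-torsion, so it is detected on $V_{\mathrm{reg}}$, where \eqref{eq:intro-localizatioradiso} makes the correspondence with $\Ak(W)$-modules transparent). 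So the proof is really: reduce to $V_{\mathrm{reg}}$ via minimal-extension, identify everything there with $\dd(\h_{\mathrm{reg}})^W\otimes_{\Ak(W)}(-)$ and hence with the KZ functor, and transport the semisimple Wedderburn decomposition back. I would also remark that the non-isomorphism for distinct $\rho$ at fixed $\lambda$ is immediate from the bimodule decomposition plus simplicity, so only the cross-$\lambda$ case needs the monodromy argument.
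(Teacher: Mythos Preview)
Your proposal is correct and follows essentially the same route as the paper: decompose $\eQ_\lambda$ as a $(\Ak(W),\euls{H}_q(W_\lambda))$-bimodule using the semisimplicity of $\Osph_\lambda$ (Corollary~\ref{A-projectives}), then push forward via $\HHleft=\eM\otimes_{\Ak(W)}(-)$. Your ``main obstacle'' $\End_{\dd(V)}(\eG_\lambda)\cong\euls{H}_q(W_\lambda)$ is exactly Lemma~\ref{lem:endGHeckealg}.

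Two places where the paper is more direct than your sketch. For irreducibility of each $\eG_{\lambda,\rho}$, rather than counting simple summands via the endomorphism ring, the paper uses the sharper conclusion already packaged in Theorem~\ref{thm:semi-simplicity}: every simple summand $L$ of $\eG_\lambda$ satisfies $L=\HHleft(\BH(L))$, and $\BH\circ\HHleft=\mr{Id}$ (Corollary~\ref{summand-endo3}), so $\HHleft$ carries the simple $\eQ_{\lambda,\rho}$ to a simple $\eG_{\lambda,\rho}$ and distinguishes them. For non-isomorphism across distinct $\lambda$, your monodromy argument on $V_{\reg}$ works but is overkill: the category $\eC$ decomposes as $\bigoplus_{\lambda}\euls{C}_{\chi,\lambda}$ according to the locally nilpotent action of $\mf{m}_\lambda\subset(\Sym V)^G$ (Lemma~\ref{lem:decomposeadmissible}), and $\eG_\lambda\in\euls{C}_{\chi,\lambda}$, so summands for distinct $\lambda$ live in orthogonal blocks.
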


 \subsection*{Specific examples}
 With hindsight it is   easy to construct  counterexamples to Conjecture~(C3); indeed this happens for the rank one symmetric space corresponding to the symmetric pair $(\mathfrak{sl}(2),\,\mathfrak{so}(2))$.  This 
is described in detail in  Section~\ref{HC-example}, and  most of the results of this paper can be proved from first principles
in this special case--this example can be viewed an alternative introduction to the paper.
 
 The example in Section~\ref{HC-example} can also be regarded as a representation space for a cyclic quiver. 
 This is considerably generalised in  Section~\ref{Sec:Quivers},  where we consider  the representation space $V=\mathrm{Rep}(Q_{\ell}, n\mathfrak{d})$, where $\mathfrak{d}=(1,\dots,1)$ is the minimal imaginary root over the cyclic quiver $Q_{\ell}$ with $\ell $ vertices.  Hypothesis~\ref{intro-hypothesis} always holds here; see 
Proposition~\ref{prop:cyclcirad0simple}.   Moreover, for these quiver representations  $\eMt=\eM$ and so $\eGt_0=\eG_0$.  However, $\eG_0$ has a complicated structure: even when $n=1$,  Corollary~\ref{thm:neq1cyclicminext} shows that $\eG_0$ has simple socle and simple top, both isomorphic to the polynomial representation $\C[V]$, yet  in any composition series of $\eG_0$ there are $\ell(2^{\ell-1}-1)$ simple subfactors that are $\delta$-torsion.

\subsection*{Admissible modules and KZ-twists}

In  the course of proving Theorems~\ref{intro-torsionfree} and~\ref{intro-thm:semi-simplicity} we also obtain a considerable amount of extra information about the   category 
$\euls{C}$ of admissible $\eD(V)$-modules. Most notably, the Harish-Chandra module $\eG_{\lambda}$ possesses another remarkable property when considered as an admissible module. 

\begin{theorem}\label{intro-G-is-injective}{\rm (Theorem~\ref{G-is-injective})} 
\begin{enumerate}	\item  $\eG_{\lambda}$ is both projective and injective in $\euls{C}$.  
	 
\item More generally, if $\eG = \eM\otimes_{\Aak} \eP$ for a projective object $\eP \in \Osph$, then $\eG$ is both projective and injective as an object in $\euls{C}$. 
\end{enumerate}
\end{theorem}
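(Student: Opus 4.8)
The plan is to establish projectivity and injectivity of $\eG = \eM \otimes_{\Aak} \eP$ in $\euls{C}$ by transporting the corresponding homological properties of $\eP$ across the bimodule $\eM$, using the adjoint pair relating $\eD(V)$-modules and $\Ak(W)$-modules that must have been set up earlier in the paper. First I would record the key adjunction: the functor $\mathbb{H} := \eM \otimes_{\Ak(W)} (-)$ from $\Osph$ to $\euls{C}$ has a right adjoint given by an appropriate invariants-type functor $\mathbb{H}^\dagger(N) = \Hom_{\eD(V)}(\eM, N)$ (or the $G$-invariants of $N$ suitably interpreted), and one needs that on the relevant subcategories this adjoint pair is in fact an equivalence, or at least that $\mathbb{H}$ is exact and its right adjoint sends $\euls{C}$ into $\Osph$ exactly. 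Since $\eP$ is projective in $\Osph$, applying the exact functor $\mathbb{H}$ and using that its right adjoint is also exact on $\euls{C}$ immediately yields that $\Hom_{\euls{C}}(\eG, -) = \Hom_{\euls{C}}(\mathbb{H}\eP, -) = \Hom_{\Osph}(\eP, \mathbb{H}^\dagger(-))$ is exact, i.e.\ $\eG$ is projective in $\euls{C}$.

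For injectivity, the cleanest route is to exploit a duality. The category $\euls{C}$ of admissible $\eD(V)$-modules should carry a contravariant duality $\mathbb{D}$ (holonomic duality twisted by the $G$-action, or the restriction of the standard $\eD$-module duality, which preserves holonomicity and admissibility), and similarly $\Osph$ carries a duality matching the one on category $\euls{O}$ for Cherednik algebras. The point is then that $\mathbb{D}$ interchanges projectives and injectives, and that $\mathbb{D} \circ \mathbb{H} \cong \mathbb{H} \circ \mathbb{D}$ (or $\mathbb{H}$ intertwines the two dualities up to replacing $\kappa$ by a dual parameter $\cdag$ and $\eM$ by the corresponding bimodule). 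Granting this compatibility, $\eG = \mathbb{H}\eP$ dualizes to $\mathbb{D}\eG \cong \mathbb{H}(\mathbb{D}\eP)$; since $\mathbb{D}\eP$ is projective in the dual category $\Osph$ (duality sends projectives to injectives to projectives), $\mathbb{D}\eG$ is projective in $\euls{C}$ by the argument of the previous paragraph, hence $\eG = \mathbb{D}(\mathbb{D}\eG)$ is injective in $\euls{C}$. For part (1) one simply notes that $\eG_\lambda = \eM \otimes_{\Ak(W)} (\Ak(W)/\Ak(W)\mf{m}_\lambda)$ via \eqref{intro-rem:semiprime2}, and the Verma-type module $\Ak(W)/\Ak(W)\mf{m}_\lambda$ decomposes as a direct sum of projective standard modules in $\Osph$ when $\euls{H}_q(W_\lambda)$ is semisimple — but in fact, for projectivity/injectivity in $\euls{C}$ one wants $\Ak(W)/\Ak(W)\mf{m}_\lambda$ to already be projective in $\Osph$, which holds because $\mf{m}_\lambda$ is a maximal ideal of $(\Sym\h)^W$ so this module is a direct sum of standard modules $\Delta(\lambda')$ over the orbit, and each such $\Delta$ is projective in $\Osph$ by standard highest-weight theory (the relevant $\mathbb{Z}$-grading has lowest weight space, so $\Hom$ out of it is exact). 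Alternatively part (1) is literally the special case $\eP = \Ak(W)/\Ak(W)\mf{m}_\lambda$ of part (2) once one checks this $\eP$ is projective in $\Osph$.

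I expect the main obstacle to be verifying the exactness of the \emph{right} adjoint $\mathbb{H}^\dagger$ on $\euls{C}$ — equivalently, controlling $\Ext^1_{\euls{C}}$ via the bimodule $\eM$. The subtlety is that $\eMt = \eM \oplus \euls{N}$ with $\euls{N}$ a $\delta$-torsion summand (Proposition~\ref{semiprime2}), so one must be careful that working with $\eM$ rather than $\eMt$ does not destroy exactness, and that passing through the (microlocal/localised) comparison on $V_{\reg}$ — where $\eM|_{V_{\reg}}$ induces the isomorphism \eqref{eq:intro-localizatioradiso} — glues correctly with the torsion-freeness of $\eG$ established in Theorem~\ref{intro-torsionfree}. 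Indeed, Theorem~\ref{intro-torsionfree}(1),(3) is precisely what is needed to see that an extension of $\eG$ by an admissible module, restricted to $V_{\reg}$ and pushed back via minimal extension, is controlled by data over $\Ak(W)$; this is where the simplicity of $\Ak(W)$ enters crucially. So the real work is the homological dictionary "$\Ext^i_{\euls{C}}(\mathbb{H}\eP, \mathbb{H}\eQ) \cong \Ext^i_{\Osph}(\eP,\eQ)$ for $\eP$ projective", after which both assertions are formal; I would isolate this as the key lemma and prove it by combining the $V_{\reg}$-localised equivalence with the no-torsion-sub/quotient statement to kill the correction terms supported on $V\smallsetminus V_{\reg}$.
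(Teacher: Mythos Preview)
Your overall strategy matches the paper's: projectivity via the adjunction $\Hom_{\eC}(\eM\otimes_{\Aak}\eP, L)\cong \Hom_{\Aak}(\eP,\Hom_\ddd(\eM,L))$, and injectivity by applying the holonomic duality $\BD_\ddd = \Ext^{\dim V}_\ddd(-,\ddd)\colon\eC\to\eCop$ and showing $\BD_\ddd(\eG)$ is projective in $\eCop$. But you have misidentified where the difficulty lies, and there is a genuine gap in the injectivity argument.

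On the projectivity side, the exactness of the right adjoint $\Hom_\ddd(\eM,-)$ on $(G,\chi,\ddd)\lmod$ is in fact easy: $\eM$ is a direct summand of $\eMt = \ddd/\ddd\g_\chi$ (Proposition~\ref{semiprime2}), and $\eMt$ is projective there since $\Hom_\ddd(\eMt,-)\cong(-)^G$ is exact. So your ``main obstacle'' is not one, and your last paragraph is chasing the wrong issue.

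The real gap is in the duality step. The intertwining is \emph{not} $\BD_\ddd\circ\HHleft\cong\HHleft\circ\BD_{\Aak}$; it is $\BD_\ddd\circ\HHleft\cong\HHright\circ\BD_{\Aak}$ (Theorem~\ref{intertwining}), where $\HHright(L)=L\otimes_{\Aak}\eM'$ with $\eM'=\Ext^m_\ddd(\eM,\ddd)$ a right $\ddd$-module that is not a priori of the form $\ddd/\g_\chi\ddd$ or a summand thereof. So to run ``the argument of the previous paragraph'' on $\BD_\ddd(\eG)\cong\BD_{\Aak}(\eP)\otimes_{\Aak}\eM'$ you need $\eM'$ to be projective in $\rmod(G,\chi,\ddd)$. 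This is Theorem~\ref{projectivity-theorem-stable} (Theorem~\ref{thm:introM'} in the introduction), and it is the main technical content behind the injectivity claim: it occupies its own subsection, requires the additional hypothesis that the left--right analogue $A_{\cdag}(W)$ of $\Ak(W)$ is also simple (Hypothesis~\ref{K-hyp}), and is proved by an explicit computation identifying $\eM'|_{V_{\reg}}\cong(\ddd/\g_\chi\ddd)|_{V_{\reg}}$ on the homogeneous space $V_{\reg}\cong G\times_N\h_{\reg}$ and then bootstrapping globally via the simplicity of $A_{\cdag}$. Your sketch contains no trace of this step; writing ``$\mathbb{H}$ intertwines the dualities up to replacing $\eM$ by the corresponding bimodule'' precisely hides the hard theorem.

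A smaller point: $\eQ_\lambda=\Ak/\Ak\mf{m}_\lambda$ is not projective in $\Osph$ ``by standard highest-weight theory''. It is projective if and only if $\Ak(W_\lambda)$ is simple (Proposition~\ref{prop:Qlambdaprojiffsimple}), which is a consequence of Losev's work \cite{LosevTotally}; under the standing simplicity hypothesis on $\Ak(W)$ this does follow (Lemma~\ref{lem:simpleinductionLosev}), but it is not formal.
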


As a consequence, in the cases where $\euls{H}_q(W_{\lambda})$ is also semisimple,  Theorem~\ref{intro-G-is-injective} implies that the simple Springer modules $\eG_{\lambda,\rho}$ of Corollary~\ref{intro-cor:simplicity} span semisimple blocks of $\euls{C}$.

Next, we explain the relevance of Opdam's KZ-twist to the decomposition of the Harish-Chandra module. First, we note that the radial parts map allows us to relate equivariant $\ddd$-modules to representations of a spherical Cherednik algebra. By twisting the radial parts map, we can relate twisted equivariant (or monodromic) $\ddd$-modules to other spherical Cherednik algebras. This produces a family of admissible categories $\euls{C}_{\kappa}$ depending on the parameter $\kappa$. 

The full subcategory of $\euls{C}_{\kappa}$ consisting of modules for which  the action of $(\Sym V)^G_+$ is locally nilpotent is denoted $\euls{C}_{\kappa,0}$. As we show in Section~\ref{Sec:Shiftfunctors}, Theorem~\ref{intro-G-is-injective} enables one to define   a geometric analogue   $\mr{KZ}_G \colon 	\euls{C}_{\kappa, 0} \to  \euls{H}_q(W) \lmod$ of the usual Knizhnik-Zamolodchikov   functor $\mr{KZ}$. In particular, we can  define twists $\euls{C}_{\kappa,0} \to \euls{C}_{\kappa',0}$ that have  properties
 analogous  to those of the KZ-twists defined by Opdam \cite{Opdamlectures} for Weyl groups, and studied more generally for complex reflection groups in \cite{BerestChalykhQuasi}.

The construction is as follows.
By analogy with the shift functors $\euls{T}_{\kappa,\kappa'} \colon \Osph_{\kappa,0} \to \Osph_{\kappa',0}$ for Cherednik algebras, as introduced in \cite{BerestChalykhQuasi},  we define left exact shift functors $\mathbb{T}_{\kappa,\kappa'} \colon \euls{C}_{\kappa,0} \to \euls{C}_{\kappa',0}$. Similarly, the classical KZ functor for Cherednik algebras can be used to define a twist functor $\mr{kz}_{\kappa,\kappa'} \colon \euls{H}_{q}(W) \lmod \rightarrow \euls{H}_{q'}(W) \lmod$. All these functors are compatible in the expected way; see Proposition~\ref{prop:KZtwistdiagram}.

The parameter  $\kappa$ is said to be \emph{regular} if the associated Hecke algebra $\euls{H}_q(W)$ is semisimple.  If $\kappa,\kappa'$ are regular then $\mr{kz}_{\kappa,\kappa'}$ is an equivalence, indeed  if one further assumes that 
   $q = q' = 1$, then $\mr{kz}_{\kappa,\kappa'}$    is just Opdam's KZ-twist
  \cite[Corollary~3.8]{Opdamlectures}. For regular parameters, Theorem~\ref{intro-G-is-injective} implies that this KZ-twist encodes the action of the shift functor on the simple summands of the Harish-Chandra module $\eG_0$ as follows.   
\begin{theorem}\label{intro-thm:KZGtwistsimples} {\rm (Theorem~\ref{thm:KZGtwistsimples})}
	Assume that $\kappa$ and $\kappa'$ are regular. Then, in the notation of Corollary~\ref{intro-cor:simplicity}, 
	$$
	\shT_{\kappa,\kappa'}(\eG_{0,\rho}) \cong \eG_{0,\rho'}, \quad \textrm{where} \quad \mr{kz}_{\kappa,\kappa'}(\rho) \cong \rho'.
	$$
\end{theorem}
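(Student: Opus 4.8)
The plan is to reduce the statement to the already-established compatibility between the geometric shift functor $\shT_{\kappa,\kappa'}$, its algebraic counterpart $\euls{T}_{\kappa,\kappa'}$ on category $\Osph_{\kappa,0}$, and the KZ-twist $\mr{kz}_{\kappa,\kappa'}$ on $\euls{H}_q(W)\lmod$, as recorded in Proposition~\ref{prop:KZtwistdiagram}, together with the explicit decomposition of $\eG_0$ from Corollary~\ref{intro-cor:simplicity} and the projectivity/injectivity statement of Theorem~\ref{intro-G-is-injective}. First I would fix the regularity hypotheses on $\kappa$ and $\kappa'$: since $\euls{H}_q(W)$ and $\euls{H}_{q'}(W)$ are semisimple, Corollary~\ref{intro-cor:simplicity} gives $\eG_0 = \bigoplus_{\rho}\eG_{0,\rho}\o\rho^*$ as a $(\eD(V),\euls{H}_q(W))$-bimodule with each $\eG_{0,\rho}$ simple and pairwise non-isomorphic, and the analogous statement holds for $\eG'_0$ after replacing $\kappa$ by $\kappa'$. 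The regularity of $\kappa$ also guarantees that $\mr{kz}_{\kappa,\kappa'} \colon \euls{H}_q(W)\lmod \to \euls{H}_{q'}(W)\lmod$ is an equivalence of categories, so $\rho \mapsto \rho' := \mr{kz}_{\kappa,\kappa'}(\rho)$ is a bijection on irreducibles.

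The key computation is to identify $\shT_{\kappa,\kappa'}(\eG_0)$ as a bimodule. The Harish-Chandra module $\eG_0 = \eM\otimes_{\Ak(W)}(\Ak(W)/\Ak(W)\mf{m}_0)$ is of the form $\eM\otimes_{\Ak(W)}\eP$ with $\eP$ the projective cover (in $\Osph_{\kappa,0}$) of the trivial, or more precisely $\eP = \Ak(W)/\Ak(W)\mf{m}_0$, which is projective in $\Osph_{\kappa,0}$ because $\Ak(W)$ is simple (so $\Osph_{\kappa,0}$ is the principal block and this is a projective generator). By the definition of $\shT_{\kappa,\kappa'}$ via $\eM$ and the commuting square of Proposition~\ref{prop:KZtwistdiagram}, we have $\shT_{\kappa,\kappa'}(\eM\otimes_{\Ak}\eP) \cong \eM'\otimes_{\Ak'}\euls{T}_{\kappa,\kappa'}(\eP)$, where $\eM'$ is the bimodule attached to the twisted radial parts map. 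Since $\kappa,\kappa'$ are regular, $\euls{T}_{\kappa,\kappa'}$ is an equivalence $\Osph_{\kappa,0}\to\Osph_{\kappa',0}$ (this is the analogue for spherical Cherednik algebras of \cite{BerestChalykhQuasi}, used in the construction of $\shT$), so it carries the projective generator $\eP$ to a projective generator $\eP'$ of $\Osph_{\kappa',0}$, and hence $\shT_{\kappa,\kappa'}(\eG_0)\cong\eM'\otimes_{\Ak'}\eP'\cong\eG'_0$ as a left $\eD(V)$-module. To track the right $\euls{H}$-module structure, I would use that under $\mr{KZ}_G$ and $\mr{kz}_{\kappa,\kappa'}$ the Hecke-module structure on the multiplicity spaces is intertwined: applying $\mr{KZ}_G$ to both sides and invoking Proposition~\ref{prop:KZtwistdiagram} shows that the $\euls{H}_{q'}(W)$-action on $\shT_{\kappa,\kappa'}(\eG_0)$ corresponds, under $\mr{kz}_{\kappa,\kappa'}$, to the $\euls{H}_q(W)$-action on $\eG_0$.

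Putting these together: $\shT_{\kappa,\kappa'}\bigl(\bigoplus_\rho \eG_{0,\rho}\o\rho^*\bigr)\cong \bigoplus_\rho \shT_{\kappa,\kappa'}(\eG_{0,\rho})\o\rho^*$ (the functor is additive and $\shT$ commutes with the multiplicity tensor factor since that is just a finite-dimensional vector space) must be isomorphic, as a $(\eD(V),\euls{H}_{q'}(W))$-bimodule, to $\eG'_0 = \bigoplus_{\sigma}\eG'_{0,\sigma}\o\sigma^*$ where $\sigma$ runs over $\mr{Irr}\,\euls{H}_{q'}(W)$. Comparing the $\euls{H}_{q'}(W)$-isotypic components via the bijection $\sigma = \mr{kz}_{\kappa,\kappa'}(\rho)$ and using that each $\eG_{0,\rho}$, $\eG'_{0,\sigma}$ is simple with no repetitions (Corollary~\ref{intro-cor:simplicity}), the uniqueness of such a bimodule decomposition forces $\shT_{\kappa,\kappa'}(\eG_{0,\rho})\cong\eG'_{0,\rho'}$ with $\rho'=\mr{kz}_{\kappa,\kappa'}(\rho)$, as claimed.

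The main obstacle I anticipate is the bookkeeping in the second step: verifying that $\shT_{\kappa,\kappa'}$ applied to $\eM\otimes_{\Ak}\eP$ genuinely yields $\eM'\otimes_{\Ak'}\euls{T}_{\kappa,\kappa'}(\eP)$ \emph{as a bimodule}, not merely as a left $\eD(V)$-module, and that the left-exact shift functor does not lose information when restricted to the projective (hence acyclic) object $\eP$. This requires knowing that $\eP$ — equivalently $\Ak(W)/\Ak(W)\mf{m}_0$ — is projective in $\Osph_{\kappa,0}$, which holds because simplicity of $\Ak(W)$ makes $\Osph_{\kappa,0}$ a single block with projective generator, and that $\shT_{\kappa,\kappa'}$ is exact on such objects, which is precisely the content of Theorem~\ref{intro-G-is-injective}(2) guaranteeing $\eG=\eM\otimes_{\Ak}\eP$ is both projective and injective in $\euls{C}$, so that $\mr{KZ}_G$ and the shift functors behave well on it. Once the commuting-square compatibility of Proposition~\ref{prop:KZtwistdiagram} is invoked on these good objects, the remaining argument is the purely formal uniqueness-of-decomposition comparison sketched above.
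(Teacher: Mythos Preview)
Your proposal has a genuine gap in the second step. You claim that Proposition~\ref{prop:KZtwistdiagram} gives $\shT_{\kappa,\kappa'}(\eM\otimes_{\Ak}\eP) \cong \eM'\otimes_{A_{\kappa'}}\euls{T}_{\kappa,\kappa'}(\eP)$, but that proposition only asserts $\mathbb{H}\circ\shT_{\vs,\vs'} \cong \euls{T}_{\kappa,\kappa'}\circ\mathbb{H}$; it intertwines $\shT$ with the right adjoint $\mathbb{H}=\Hom_\ddd(\eM,-)$, not with the left adjoint $\HHleft=\eM\otimes_{\Ak}(-)$. Since $\mathbb{H}$ annihilates $\delta$-torsion modules and, more subtly, any simple $L$ with $L^G=0$, knowing $\mathbb{H}(\shT_{\vs,\vs'}(\eG_0))\cong\mathbb{H}(\eG'_0)$ does not by itself yield $\shT_{\vs,\vs'}(\eG_0)\cong\eG'_0$ as $\ddd$-modules. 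The functor $\shT_{\vs,\vs'}$ is defined via $j_{*,\ad}$, not via $\eM'\otimes-$, and there is no a~priori reason its output should be of the form $\HHleft(\text{something})$. Your proposed remedy via Theorem~\ref{intro-G-is-injective}(2) addresses exactness of $\shT$ on projectives, but does not close this identification problem.

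The paper circumvents this by never attempting to identify $\shT_{\vs,\vs'}(\eG_0)$ globally. Instead it works one simple summand at a time through Lemma~\ref{lem:BCquasiDlemma}(2): for regular $\vs,\vs'$ and a simple summand $L=\eG_{0,\rho}$, the module $\shT_{\vs,\vs'}(L)$ is either zero or itself a simple summand of $\eG'_0$. The argument is that any simple submodule $K\subset\shT_{\vs,\vs'}(L)$ satisfies $K|_{V_{\reg}}=L|_{V_{\reg}}$ as $G$-modules, so $K^G\neq0$ by Proposition~\ref{bigg}; hence $K$ is a quotient, and by semisimplicity a summand, of $\eG'_0$, and then injectivity of $K$ in $\eCdash_{,0}$ (Theorem~\ref{G-is-injective}) forces $K=\shT_{\vs,\vs'}(L)$. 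The theorem is completed by applying $\mr{KZ}_G$ and the commuting diagram to get $\mr{KZ}_G(\shT_{\vs,\vs'}(\eG_{0,\rho}))\cong\mr{kz}_{\kappa,\kappa'}(\rho)$, and invoking the non-vanishing result \cite[Theorem~7.11]{BerestChalykhQuasi} to see that $\rho':=\mr{kz}_{\kappa,\kappa'}(\rho)$ is irreducible; this both rules out the zero case and pins down which summand occurs.
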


%%%%%%%%%%%  
\subsection*{The proofs}   
The  broad outline of the proofs in this paper are similar to those in \cite{AJM} and \cite{LS3}, although with some significant differences. 
First, unlike the case of \gainly \ symmetric spaces we   do not  know whether $\ker(\rad)=\eD(V)\tau(\g)$, which is why we needed to distinguish between $\widetilde{\eG}_\lambda$ and $\eG_{\lambda}$. To circumvent this problem we  show that   \emph{  $R=\eD(V)^G/[\eD(V)\tau(\g)]^G $ 
	splits as a 
	sum of rings, with one factor being $\Im(\rad)=\Ak(W)$ and the other being $\delta$-torsion}; see Proposition~\ref{semiprime2}. 
With this result in hand, the difference between $R$ and $\Ak(W)$, or between $\eMt$ and $\eM$ is not especially significant. The   proof of 
Theorem~\ref{intro-torsionfree} then consists of controlling the $d$-torsion subfactors  of  $\eG_\lambda$ and $\eM$. 
Here it is important to be able to pass between sub- and factor-modules, and for this the following intertwining result, proved using the techniques of \cite{Bj},  
forms a crucial step. 

\begin{theorem}\label{intro-intertwining}  {\rm (Theorem~\ref{intertwining})} 
	Let $N={\Aak}\otimes_{(\Sym  \h)^W}L$ for some finite dimensional $(\Sym  \h)^W$-module $L$, or let 
	$N$ be a projective object in $\Osph$. Then
	\[ \Ext_{\eD(V)}^{\dim V} \left( \eM\otimes_{{\Aak}} N \right) \ \cong \ \Ext^{\dim \h}_{{\Aak}}  (N,\,{\Aak} ) \otimes_{{\Aak}} 
	\eM',\]
	where $\eM'= \Ext^{m}_{\eD(V)}(\eM, \, \eD(V)) $, for $m= \dim V-\dim\h$.      
\end{theorem}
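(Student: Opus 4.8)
The plan is to establish a duality between the two Ext-groups by combining a ``bimodule duality'' for $\eM$ with the standard holonomic/Cohen--Macaulay duality on each side. Concretely, I would first recall the homological facts: $\eM$ is a holonomic $(\eD(V),\Aak)$-bimodule, so by the analogue of \cite{Bj} it has a single nonvanishing cohomology $\eM' = \Ext^{m}_{\eD(V)}(\eM,\eD(V))$ in degree $m = \dim V - \dim\h$ (this is where the ``grade'' $m$ comes from: $\eM$ restricted to $V_{\reg}$ is, up to the radial-parts isomorphism \eqref{eq:intro-localizatioradiso}, the push-forward of an integrable connection on $\h_{\reg}$, which has dimension $\dim\h$, so the characteristic variety of $\eM$ has dimension $\dim V + \dim\h$ and grade $2\dim V - (\dim V + \dim\h) = m$). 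Dually $\eM'$ is a holonomic $(\Aak,\eD(V))$-bimodule. The key structural input, which I would prove first, is a ``biduality'' statement: $\mathbf{R}\Hom_{\eD(V)}(\eM,\eD(V))[m] \cong \eM'$ as a complex of $(\Aak,\eD(V))$-bimodules, together with the fact that $\eM$ and $\eM'$ are each Cohen--Macaulay of the appropriate grade so that the spectral sequences below degenerate.

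Next, the computation itself. Write $N = \Aak\otimes_{(\Sym\h)^W} L$ (or take $N$ projective in $\Osph$); then $\eG := \eM\otimes_{\Aak} N$, and I want $\Ext^{\dim V}_{\eD(V)}(\eG,\eD(V))$. The strategy is to apply $\mathbf{R}\Hom_{\eD(V)}(-,\eD(V))$ to $\eM\otimes^{\mathbf{L}}_{\Aak} N$ and use a base-change / adjunction isomorphism
\[
\mathbf{R}\Hom_{\eD(V)}\bigl(\eM\otimes^{\mathbf{L}}_{\Aak} N,\,\eD(V)\bigr)\ \cong\ \mathbf{R}\Hom_{\Aak}\bigl(N,\,\mathbf{R}\Hom_{\eD(V)}(\eM,\eD(V))\bigr),
\]
which holds because $\eM$ is, after forgetting one action, a finitely generated module with a finite free (or finite projective) resolution over $\eD(V)$ in the relevant sense. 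Substituting $\mathbf{R}\Hom_{\eD(V)}(\eM,\eD(V))\cong \eM'[-m]$ gives $\mathbf{R}\Hom_{\eD(V)}(\eG,\eD(V))\cong \mathbf{R}\Hom_{\Aak}(N,\eM'[-m])$. Now I would take cohomology in degree $\dim V = m + \dim\h$: the right side computes $\Ext^{\dim\h}_{\Aak}(N,\eM')$, and because $N$ (being $\Aak\otimes_{(\Sym\h)^W}L$ for finite-dimensional $L$, or projective in $\Osph$) is Cohen--Macaulay of grade $\dim\h$ over $\Aak$, this Ext-group is naturally $\Ext^{\dim\h}_{\Aak}(N,\Aak)\otimes_{\Aak}\eM'$ — i.e., the higher Ext's vanish and the ``Ext commutes with the flat-ish change of coefficients $\Aak \rightsquigarrow \eM'$'' because the resolution of $N$ by projectives over $\Aak$ stays exact after $\otimes_{\Aak}\eM'$ in the range that matters. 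Since $\Ext^{\dim V}_{\eD(V)}(\eM\otimes_{\Aak} N)$ is shorthand for the unique nonvanishing $\Ext^{\dim V}_{\eD(V)}(\eG,\eD(V))$ (again by the holonomic duality of \cite{Bj}, $\eG$ being holonomic of grade $\dim V$ by Theorem~\ref{intro-torsionfree}(1)), this is exactly the asserted isomorphism.

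The main obstacle, and the step that needs genuine care rather than formal nonsense, is justifying the degeneration of the two hyper-Tor / hyper-Ext spectral sequences: (i) that $\eM\otimes^{\mathbf{L}}_{\Aak} N$ is concentrated in one degree (equivalently $\Tor^{\Aak}_{>0}(\eM,N)=0$), which requires knowing $\eM$ is flat, or at least has the right Tor-dimension, as a right $\Aak$-module — this should follow from $\eM$ being generically (on $V_{\reg}$) free over $\Aak$ of finite rank together with a holonomicity/purity argument controlling the singular locus, much as $\widetilde{\eG}$ differs from $\eG$ only by $\delta$-torsion (Proposition~\ref{semiprime2}); and (ii) that $\mathbf{R}\Hom_{\Aak}(N,\eM')$ is concentrated in degree $\dim\h$ with the stated value, which is the Cohen--Macaulay / Auslander-regularity property of $N$ over $\Aak$ transported through the bimodule $\eM'$. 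Both (i) and (ii) are where one actually invokes the ``techniques of \cite{Bj}'' — the Auslander--Gorenstein and Cohen--Macaulay formalism for filtered rings — applied on the $\eD(V)$-side and the $\Aak$-side respectively, and checking that the grades add up correctly ($m + \dim\h = \dim V$, $m + 0 = m$) so that a single cohomological degree survives on each side. Once those purity statements are in place, the isomorphism is essentially the associativity of derived Hom and tensor.
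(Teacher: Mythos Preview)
Your overall architecture---the derived $\Hom$--$\otimes$ adjunction, together with the $m$-CM property of $\eM$ and the $n$-CM property of $N$---is the same as the paper's: the paper packages it as the two Cartan--Eilenberg spectral sequences \eqref{CE2} and \eqref{CE3}, which are precisely the two hypercohomology spectral sequences computing your $\mathbf{R}\Hom_{\eD(V)}(\eM\otimes^{\mathbf{L}}_{\Aak} N,\eD(V))$. Your step (ii), identifying $\Ext^{\dim\h}_{\Aak}(N,\eM')$ with $\Ext^{\dim\h}_{\Aak}(N,\Aak)\otimes_{\Aak}\eM'$, is Ischebeck's result (Proposition~\ref{prop:Bjorksequence}), used in the paper in the same way.

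The genuine gap is your obstacle (i). You try to establish $\Tor_{>0}^{\Aak}(\eM,N)=0$ as an \emph{input}, via ``$\eM$ is generically free over $\Aak$ plus a holonomicity/purity argument''. That argument is not available: generic freeness on $V_{\reg}$ does not control higher $\Tor$'s globally, and no flatness or Tor-dimension statement for $\eM_{\Aak}$ is known (or proved) in the paper. The paper's key maneuver is to reverse this logic. One proves instead (Proposition~\ref{admissible}, using visibility of $V$ in an essential way) that each $\Tor_j^{\Aak}(\eM,N)$ is \emph{admissible}, hence holonomic, hence $(n{+}m)$-CM over $\eD(V)$. This is exactly hypothesis \eqref{vanishing1}: $\Ext^i_{\eD(V)}(\Tor_j^{\Aak}(\eM,N),\eD(V))=0$ for all $i\neq n{+}m$. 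With this, spectral sequence \eqref{CE3} degenerates; comparing it with the already-degenerate \eqref{CE2} yields simultaneously the intertwining isomorphism \emph{and}, as a by-product (Theorem~\ref{mainresult}(2)), the vanishing $\Tor_{>0}^{\Aak}(\eM,N)=0$. So Tor-vanishing is a conclusion, not a prerequisite, and the substantive step you are missing is the holonomicity of each individual $\Tor_j$.

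A minor circularity: you invoke Theorem~\ref{intro-torsionfree}(1) for the holonomicity of $\eG$, but that theorem is proved \emph{using} the intertwining theorem. The correct forward reference at this point is Proposition~\ref{admissible} (with $j=0$) or Corollary~\ref{vanishing lemma}.
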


Similarly,  in  proving Theorem~\ref{intro-G-is-injective}, it is not hard to prove that $\eG_\lambda$ is a projective object in $\euls{C}$, but injectivity is much harder. The key duality between injectivity and projectivity is obtained by means of the following result. 

\begin{theorem}  {\rm (Theorem~\ref{projectivity-theorem-stable})} \label{thm:introM'}
 The right $\dd(V)$-module $\eM'=\Ext^m_{\dd(V)}(\eM, \dd(V))$ is projective as an object in the category of $G$-equivariant right $\dd(V)$-modules. 
\end{theorem}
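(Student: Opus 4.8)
The plan is to prove projectivity of $\eM'$ in the category of $G$-equivariant right $\dd(V)$-modules by combining two ingredients: a local statement on the regular locus $V_{\reg}$, and a homological argument that lifts it across the codimension-two singular locus. First I would establish that the restriction $\eM'|_{V_{\reg}}$ is projective (indeed a $G$-equivariant integrable connection on $V_{\reg}$). On the regular locus, Theorem~\ref{thm:intro-radial-exists}(2) identifies $(\dd(V_{\reg})/\dd(V_{\reg})\tau(\g))^G$ with $\dd(\h_{\reg})^W$; by descent from $\h_{\reg}$ to $\h_{\reg}/W$ and flatness of $\dd(V)$ over $\dd(V)^G$-type data, $\eM|_{V_{\reg}}$ is a projective $(\dd(V_{\reg}),\Ak(W))$-bimodule (locally free of finite rank), so its dual $\eM'|_{V_{\reg}}$ is as well, giving $G$-equivariant local projectivity. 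The point is that $\Ak(W)$ localised at $\delta$ is Morita equivalent (via the sphericalisation) to $\dd(\h_{\reg})\rtimes W$, which has finite global dimension and over which everything behaves like a twisted group algebra situation.

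Next I would use the duality and the structure results already in the excerpt to control $\eM'$ globally. The key is Theorem~\ref{intro-intertwining}: taking $N=\Ak(W)$ (which is projective in $\Osph$, or rather one works with the bimodule directly), one gets $\Ext^m_{\dd(V)}(\eM,\dd(V))=\eM'$ together with the vanishing $\Ext^i_{\dd(V)}(\eM,\dd(V))=0$ for $i\neq m$ — that is, $\eM$ is a Cohen–Macaulay (maximal-grade) $\dd(V)$-module of grade $m=\dim V-\dim\h$, so $\eM'$ is the unique nonvanishing cohomology of the dual complex and $(\eM')'=\eM$. I would then invoke Theorem~\ref{intro-torsionfree} (or rather Proposition~\ref{semiprime2} and the companion Theorem~\ref{torsionfree}) to the effect that $\eM$ has no $\delta$-torsion submodule nor factor module, hence $\eM'$ inherits the same torsion-freeness by duality. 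Combined with holonomicity, this says $\eM'$ is a $\delta$-torsionfree holonomic right $\dd(V)$-module whose restriction to $V_{\reg}$ is a $G$-equivariant connection and which equals the minimal (Goresky–MacPherson) extension of that connection.

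The final step is the genuinely projective conclusion. A minimal extension of a connection across a codimension-$\geq 2$ locus need not be a projective $\dd(V)$-module in general; what saves us is the $G$-equivariance together with the radial parts dictionary. I would argue that, as an object of the category of $G$-equivariant right $\dd(V)$-modules, $\eM'$ corresponds under the quotient/Hamiltonian-reduction functor to a projective right $\Ak(W)$-module — concretely, $\Hom$ out of $\eM'$ into $G$-equivariant right modules is computed by $\otimes_{\Ak(W)}(\text{Hom into }\dd(V)\text{-stuff})$, and the vanishing $\Ext^{>0}$ in the equivariant category reduces to exactness of $\rad$-reduction, i.e. to the fact that $\eM$ is a projective–flat bimodule on the $\Ak(W)$ side (which is where simplicity of $\Ak(W)$ and the surjectivity Theorem~\ref{thm:intro-radial-surjective} enter, via $R=\eD(V)^G/[\eD(V)\tau(\g)]^G\cong \Ak(W)\oplus(\text{torsion})$). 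So the logical skeleton is: equivariant $\Ext^i_{\dd(V)}(\eM',-)$ is computed by reduction to $\Ak(W)$-modules, and there $\eM'$ reduces to a projective, hence the higher $\Ext$'s vanish.

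The main obstacle I anticipate is the last step: making precise the claim that $\Ext$ in the category of $G$-equivariant $\dd(V)$-modules is controlled by Hamiltonian reduction to $\Ak(W)$-modules, and that $\eM'$ reduces to something projective there. This requires knowing that $\eM$ (equivalently $\eM'$) is flat — better, projective — on the $\Ak(W)$ side as a bimodule, which in turn rests on the companion paper's analysis of $\rad$ plus the splitting $R\cong\Ak(W)\oplus(\delta\text{-torsion})$ from Proposition~\ref{semiprime2}; one must also check that the $\delta$-torsion summand does not interfere, i.e. that $\eM$ really is a module over the simple factor $\Ak(W)$. The subtlety is that the equivalence between equivariant $\dd$-modules and $\Ak(W)$-modules is only an equivalence after localising or on suitable subcategories, so care is needed to ensure the relevant $\Ext$-computation is valid on the nose (for this the holonomicity and the admissibility/category $\euls{C}$ framework, together with Theorem~\ref{intro-G-is-injective}, should provide the needed finiteness and adjunction properties); I would expect to need a careful dévissage using the codimension-two complement $V\smallsetminus V_{\reg}$ to push the local projectivity on $V_{\reg}$ through to a global equivariant statement.
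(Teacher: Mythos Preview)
Your proposal has several genuine gaps, and the overall strategy does not match what is actually needed.

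First, some factual errors. The module $\eM'$ is \emph{not} holonomic: by Corollary~\ref{m-CM} the module $\eM$ is $m$-CM with $m=\dim V-\dim\h$, so $\GKdim\eM'=\dim V+\dim\h$. In particular $\eM'|_{V_{\reg}}$ is not an integrable connection; you are confusing $\eM'$ with $\eG_\lambda$. Also, $V\smallsetminus V_{\reg}$ is the hypersurface $(\delta=0)$, hence codimension one, not $\ge 2$; so no Hartogs-type extension argument is available. Finally, you cannot take $N=\Ak(W)$ in Theorem~\ref{intro-intertwining}: that module is not in $\Osph$.

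The central gap is your step four. There is no general mechanism by which $\Ext$ in the category $\rmod(G,\chi,\ddd)$ is ``computed by reduction to $\Ak(W)$-modules''. The functor $\BH=\Hom_\ddd(\eM,-)$ is only a quotient functor, not an equivalence, and the adjunctions available do not give you control over higher $\Ext$'s of $\eM'$ against arbitrary equivariant modules. You acknowledge this is the obstacle, but the resolution you sketch (flatness of $\eM$ over $\Ak$, d\'evissage across the singular locus) does not close it.

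The paper's argument is quite different. One fixes a progenerator $Q$ for $\rmod(G,\chi,\ddd)$ and sets $U=\End_\ddd(Q)$; then $\eM'$ is projective if and only if $H_Q:=\Hom_\ddd(Q,\eM')$ is a projective right $U$-module. The key observation is that $\End_U(H_Q)\cong\End_\ddd(\eM')\cong\Ak$ is \emph{simple} (this uses Hypothesis~\ref{K-hyp} and Lemma~\ref{double-dual}). By the Dual Basis Lemma, projectivity of $H_Q$ is therefore equivalent to the trace ideal $H_Q\widehat{H}_Q$ being nonzero in $\Ak$ --- a mere non-vanishing condition. This is what allows passage to the regular locus: one reduces (via the summand $\eK$ of $\ddd/\g_\chi\ddd$ and careful Ore-localisation arguments, Lemma~\ref{ore100}) to checking that $\Hom_{\dd(V_{\reg})}(\eK_\eS,\eM'_\eS)\circ\Hom_{\dd(V_{\reg})}(\eM'_\eS,\eK_\eS)\ne 0$. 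On $V_{\reg}$ an explicit computation (Proposition~\ref{prop:isoofExtmonopenset}) shows $\eM'_\eS\cong\eK_\eS$, so this product is just $\End(\eK_\eS)^2\ne 0$. The simplicity of the endomorphism ring is the missing idea that converts a local statement into global projectivity; no codimension argument or direct $\Ext$-reduction is used.
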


Theorem~\ref{intro-intertwining} also plays a key r\^ole in the proof of Theorem~\ref{intro-thm:semi-simplicity}.
In particular, combined with the next result, Theorem~\ref{intro-intertwining} allows one to move between 
$\Osph$-modules and $\euls{C}$-modules. This is important since some aspects of the above theorems are easier to prove in one category, and some in the other.

\begin{proposition}\label{intro-cor:hom(ML)}  {\rm (Corollary~\ref{cor:hom(ML)})}
	If $L$ is a nonzero left $\dd(V)$-submodule of $\eG_\lambda$, then 
	$ L^G\not=0$.
\end{proposition}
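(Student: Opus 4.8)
Proof proposal for Proposition~\ref{intro-cor:hom(ML)} ($L^G \neq 0$ for nonzero submodules $L \subseteq \eG_\lambda$).

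The plan is to argue by contradiction: suppose $L$ is a nonzero $\dd(V)$-submodule of $\eG_\lambda$ with $L^G = 0$. The guiding principle is that the radial parts map detects $G$-invariants, so a submodule with no invariants must be ``invisible'' to $\rad$, yet the holonomicity/torsion-freeness results force it to be detectable. First I would record the structural facts available: by \eqref{intro-rem:semiprime2} we may view $\eG_\lambda = \eM \otimes_{\Ak(W)} (\Ak(W)/\Ak(W)\mf{m}_\lambda)$, and by Theorem~\ref{intro-torsionfree} this module is $\delta$-torsionfree and holonomic, hence so is its submodule $L$. In particular $L$ restricts to a nonzero integrable connection $L|_{V_{\reg}}$ on the regular locus (using Remark~\ref{intro-torsionfree-remark}(1) and torsion-freeness, which guarantees the restriction is nonzero).

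Next I would bring in equivariance. Since $\eG_\lambda$ is a $G$-equivariant $\dd(V)$-module and $G$ is connected, every $\dd(V)$-submodule is automatically $G$-stable, so $L$ is $G$-equivariant and $L^G$ makes sense as the invariants for the (algebraic, hence locally finite, rational) $G$-action. The key point is to pass to $V_{\reg}$, where by \eqref{eq:intro-localizatioradiso} the category of $G$-equivariant $\dd(V_{\reg})$-modules killed by $\tau(\g)$ is equivalent to $\dd(\h_{\reg})^W$-modules, with the equivalence given by taking $G$-invariant sections (pushforward along $V_{\reg} \to \h_{\reg}/W$ composed with the relevant identifications). Under this equivalence a nonzero object corresponds to a nonzero $\dd(\h_{\reg})^W$-module, and taking $W$-invariants of its pullback to $\h_{\reg}$ recovers a nonzero space; tracing through, $(L|_{V_{\reg}})^G \neq 0$. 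Now I would need to promote invariant sections over $V_{\reg}$ to invariant sections over $V$: this is where $\delta$-torsion-freeness of $\eG_\lambda$ is used again, since $\eG_\lambda^G \hookrightarrow (\eG_\lambda|_{V_{\reg}})^G = \eG_\lambda[\delta^{-1}]^G$, and more relevantly one uses that $\eG_\lambda$ being the minimal extension $j_{!*}(\eG_\lambda|_{V_{\reg}})$ (Theorem~\ref{intro-torsionfree}(3)) implies $L$ is the minimal extension of $L|_{V_{\reg}}$, so any invariant section of $L|_{V_{\reg}}$ extends. Concretely, an invariant section $s \in (L|_{V_{\reg}})^G$ satisfies $\delta^N s \in L$ for some $N$, and $\delta^N s$ is again $G$-invariant (as $\delta \in \C[V]^G$), giving $0 \neq \delta^N s \in L^G$.

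The main obstacle I anticipate is the passage across $V_{\reg}$: one must be careful that the equivalence in \eqref{eq:intro-localizatioradiso} actually identifies ``nonzero $G$-equivariant $\dd(V_{\reg})/\dd(V_{\reg})\tau(\g)$-module'' with ``nonzero module'' in a way that is compatible with taking $G$-invariants — i.e.\ that the $G$-invariants functor $N \mapsto N^G$ is exact and faithful on this category and corresponds to the fiber functor on the $\dd(\h_{\reg})^W$-side. Faithfulness is the crux: it amounts to saying a nonzero $G$-equivariant module annihilated by $\tau(\g)$ on $V_{\reg}$ cannot have vanishing invariants, which should follow because such a module is (generically, after the étale slice) induced from a $W$-equivariant connection on $\h_{\reg}$ and $W$-invariants of a nonzero $W$-equivariant $\dd(\h_{\reg})$-module obtained by the relevant functor are nonzero — but this last point may require an argument that the module has no ``purely non-trivial isotypic'' part, which in turn uses that $L$ comes from $\eM \otimes_{\Ak(W)}(-)$, hence its $V_{\reg}$-restriction lies in the essential image of the equivalence rather than being an arbitrary equivariant module. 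I would structure the final write-up so that this faithfulness statement is isolated as the one substantive lemma, with the rest being the extension-across-$V_{\reg}$ bookkeeping via $\delta$-torsion-freeness.
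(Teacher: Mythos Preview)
Your overall strategy---reduce to $V_{\reg}$ via $\delta$-torsion-freeness, find a nonzero $G$-invariant there, and pull it back by multiplying by $\delta^N$---matches the paper's. The difficulty you flag in your last paragraph is real, and your proposed workaround does not close it.

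The precise issue is this. On $V_{\reg}\cong G\times_N\h_{\reg}$ (with $N=N_G(\h)$, $Z=Z_G(\h)$, $W=N/Z$), a $(G,\chi)$-monodromic $\dd(V_{\reg})$-module corresponds to an $N$-equivariant $\dd(\h_{\reg})$-module, and taking $G$-invariants corresponds to taking $N$-invariants, not $W$-invariants. When $Z$ is disconnected the finite group $Z/Z^\circ$ can act nontrivially, so a nonzero $N$-equivariant module can have zero $N$-invariants. The paper states exactly this just before Lemma~\ref{lem:invariantsvstfpolar}: it is \emph{not} true in general that a nonzero monodromic module on $V_{\reg}$ has nonzero $G$-invariants; that lemma recovers it only under the extra hypothesis that $Z$ is connected, which is not assumed in the result you are proving (and fails in examples such as the $G_4$ representation in Section~\ref{sec:otherexamples}). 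Your suggested fix---that $L|_{V_{\reg}}$ lies in the ``essential image'' because $L$ comes from $\eM\otimes_{\Ak(W)}(-)$---does not apply: $L$ is an \emph{arbitrary} $\dd(V)$-submodule of $\eG_\lambda$, and there is no a~priori reason it should be of the form $\eM\otimes_{\Ak(W)}(-)$. Indeed, proving $L^G\neq 0$ is precisely the first step towards that kind of structural statement (cf.\ the proof of Theorem~\ref{thm:semi-simplicity}), so you cannot assume it here.

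The paper avoids the categorical faithfulness question entirely. It builds an explicit $\C[V_{\reg}]$-basis $\{\Theta^{\mathbf i}\}$ of $\eM_{\eS}$ using $G$-invariant derivations $\theta_1,\dots,\theta_n$ (Lemma~\ref{grad}), filters $\eM_{\eS}$ by the lexicographic order on monomials $\Theta^{\mathbf i}$, and then for a torsion-free subquotient $P/Q$ locates the first filtered piece where they differ. The associated graded piece is of the form $I\,\Theta^{\mathbf q}$ for a nonzero $G$-stable ideal $I\subset\C[V_{\reg}]$, and stability of $V$ (via Popov~\cite{PopovStability}) gives $I^G\neq 0$; since the $\theta_i$ are $G$-invariant, this produces a nonzero $G$-invariant in $P/Q$. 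This argument never needs $Z$ connected and works for any submodule $L\subseteq\eG_\lambda$.
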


\medskip
\noindent
{\bf Acknowledgements:}  We would like to  thank  Thierry Levasseur  for  numerous helpful conversations. We would also like to thank Kari Vilonen and Ting Xue for conversations about character sheaves. 

The second  author was partially supported by NSF grants DMS-1502125 and DMS-1802094 and a Simons Foundation Fellowship.
The third author is partially supported by a Leverhulme Emeritus Fellowship.

Part of this material is based upon work supported by the National Science Foundation under   Grant   DMS-1440140, while the third 
author was   in residence at the Mathematical Sciences Research Institute in Berkeley, California 
during the Spring 2020 semester.

\medskip
Finally, Gwyn Bellamy and Toby Stafford would like to acknowledge the great debt they hold for Tom Nevins who tragically passed away while this paper was in preparation. He was a dear friend and a powerful mathematician who taught us so much about so many parts of mathematics. He is sorely missed. 

%%%%%%%%%%%%%%%

%%%%%%%%%%%%%%%	

\section{A Illustrative Example}\label{HC-example}

In this short section we  provide an example that exemplifies the results described in Theorems~\ref{intro-torsionfree} and \ref{intro-thm:semi-simplicity} for both symmetric spaces and quiver representations, while at the same time being elementary  enough to understand from first principles. 

We take $V=\C^2$ with $\C[V]=\C[x_0, x_1]$ and $G =\C^\times$ acting on $V$ such that $x_0$ has weight one and $x_1$ has weight $-1$. Explicit computations allow us to describe completely the submodule structure of the Harish-Chandra module $\eG$. Write $\eD= \eD(V) =\C\langle x_0,x_1,\partial_0,\partial_1\rangle$, for 
$ \partial_j=\frac{\partial}{\partial x_j}$. We identify $\g=\mathrm{Lie}\, G $ with its image in $\eD$ where it is spanned by $\nabla= x_0\partial_0-x_1\partial_1$.  
The  discriminant is $\delta = x_0x_1$. 

The space $V$ can be identified with the space of representations of the cyclic quiver $Q$ with two vertices and dimension vector $(1,1)$, as above. Alternatively, it can be regarded as the symmetric 
space $V=\mathfrak{p}$ corresponding to the symmetric pair $\left(\mathfrak{sl}(2), \mathfrak{so}(2)\right)$ of type AI$_2$, in the notation of \cite[Appendix~B]{BLNS} or \cite[Chapter~X]{He1}.

Let $I=I(\eG)$ denote the left ideal in $\eD$ defining $\eG$. For simplicity we write a left ideal $\sum_{i=1}^r \eD m_i$ as $(m_1,\dots, m_r)$. Then, by definition, $\eG = \eD/(\nabla,\Delta)$, for $\Delta=\partial_0\partial_1$ and so 
$I=(x_0\partial_0-x_1\partial_1, \, \partial_0\partial_1).$ There are then four obvious ideals containing $I$: 
$$
J_0=( x_0\partial_0 ,\,  x_1   \partial_1 ,\, \partial_0 \partial_1),  \
J_1=(  \partial_0 ,\,  x_1 \partial_1 ), \ J_2=( x_0 \partial_0 ,\, \partial_1) \ \text{and}
\  J_\infty = ( \partial_0,\, \partial_1 ).
$$
These are arranged as 
{\small \begin{equation} \label{HC-diag}
\begin{tikzcd}
	& \eD \ar[d,dash] & \\
	& J_{\infty} 
	\ar[dl,dash]  \ar[dr,dash]   &  \\
	J_1 \ar[dr,dash]  & & J_2\ar[dl,dash] \\
	& J_0\ar[d,dash] & \\
	&I& 
\end{tikzcd}
\end{equation}}
We also have isomorphisms:
$$
\eD / ( \partial_0, \partial_1) \stackrel{\sim}{\longrightarrow}J_0/I , \quad 1 \mapsto x_0 \partial_0 = x_1 \partial_1,
$$
$$
\eD / ( x_0, \partial_1 ) \stackrel{\sim}{\longrightarrow}   J_1/J_0, \quad 1 \mapsto \partial_0,\quad\text{ and }\quad
\eD / ( x_1, \partial_0 ) \stackrel{\sim}{\longrightarrow}   J_2/J_0, \quad 1 \mapsto \partial_0. 
$$
To see this,   note in each case that the module on the left hand side is trivially a simple $\eD$-module and maps onto the right hand side. Since the right hand side is clearly nonzero, this gives the required identity. This also implies that each factor in \eqref{HC-diag} is a simple $\eD$-module.
We claim:

\begin{proposition}\label{HC1-lemma}  
	\begin{enumerate} 
		\item The left ideals described by \eqref{HC-diag} are the only left ideals containing $I$ and the inclusions given there are the only inclusions between them. 
		
		\item In particular, $\eG$ is a module of length 4 with a simple socle and simple top, both isomorphic to $ \eD/(\partial_0,\partial_1)$. 
		The middle term $J_\infty/J_0$ is a direct sum of two simple $\delta$-torsion modules.
		
		\item The spherical subalgebra ${\Aak}=\Ak(W)$ is isomorphic to the factor ring $U(\mathfrak{sl}_2)/(\Omega+1) $ 
		of $U(\mathfrak{sl}_2)$, where $\Omega$ is the Casimir element. It is simple ring  of infinite global homological dimension. 
		  The corresponding Hecke algebra is isomorphic to $\C[T]/(T-1)^2$, which is clearly not semisimple.   
	\end{enumerate}
\end{proposition}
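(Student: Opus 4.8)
The plan for (1) and (2) is to pin down the complete lattice of left ideals of $\eD$ containing $I$, i.e.\ of $\eD$-submodules of $\eG=\eD/I$, using the four simple subquotients already produced. Write $S=J_0/I\cong \eD/(\partial_0,\partial_1)=\C[x_0,x_1]$ (so also $\eD/J_\infty\cong S$), $S_1=J_1/J_0$ and $S_2=J_2/J_0$; thus $S_1,S_2$ are the simple $\delta$-torsion modules supported on the coordinate lines $\{x_0=0\}$ and $\{x_1=0\}$, and in particular $S_1\not\cong S_2$. I would first record two preliminary observations: since $\partial_0\in J_1$ and $\partial_1\in J_2$ one has $J_1+J_2\supseteq(\partial_0,\partial_1)=J_\infty$, hence $J_1+J_2=J_\infty$; and $(J_1\cap J_2)/J_0$ embeds in both $S_1$ and $S_2$, so it is $0$ and $J_1\cap J_2=J_0$; therefore $J_\infty/J_0=S_1\oplus S_2$. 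Next, $\eG$ has simple head $\cong S$: a simple quotient of the cyclic module $\eG$ is generated by the image of $1$, hence must satisfy $\nabla v=\partial_0\partial_1 v=0$, and a one-line check shows this fails at the cyclic vectors of $S_1$ and $S_2$ (where $\nabla$ acts by $-1$), so every simple quotient is $\cong S$; moreover $\Hom_\eD(\eG,S)=\{v\in\C[x_0,x_1]:\nabla v=\partial_0\partial_1 v=0\}$ is one-dimensional, the $G$-invariant polynomials killed by $\partial_0\partial_1$ being the constants, so the unique maximal submodule of $\eG$ is $J_\infty/I$.

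The crux is that $\eG$ has simple socle $\cong S$, equivalently no nonzero $\delta$-torsion submodule. I would deduce this from holonomic self-duality of $\eG$. One checks $[\nabla,\partial_0\partial_1]=0$, and that the principal symbols $x_0\xi_0-x_1\xi_1$ (an irreducible quadratic form, of rank four) and $\xi_0\xi_1$ form a regular sequence in $\C[x_0,x_1,\xi_0,\xi_1]$; passing from $\gr$ back to $\eD$, the Koszul complex on the commuting pair $(\nabla,\partial_0\partial_1)$ is a length-two free resolution of $\eG$, and computing $\Ext^2_\eD(\eG,\eD)$ and transposing ($x_i\mapsto x_i$, $\partial_i\mapsto-\partial_i$) gives $\DD\eG\cong\eG$. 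Since $S=\C[x_0,x_1]$ is self-dual, the socle of $\eG\cong\DD\eG$ is dual to its head, hence $\cong S$. (One can instead bypass duality by checking directly that $x_0$ and $x_1$ act as nonzerodivisors on $\eD/I$; or, in a less self-contained treatment, cite Theorem~\ref{intro-torsionfree}, applicable once $\Aak$ is simple by part~(3).) With a simple, essential socle $J_0/I$, submodules of $\eG$ correspond to submodules of $\eD/J_0$, whose socle is $S_1\oplus S_2$ and whose socle-quotient is $\eD/J_\infty\cong S$; using that $\eD/J_1$ and $\eD/J_2$ are uniserial of length two with head $S$, one sees the submodules of $\eD/J_0$ are precisely $0,J_1/J_0,J_2/J_0,J_\infty/J_0,\eD/J_0$. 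Pulling back gives (1): the only left ideals of $\eD$ containing $I$ are $I,J_0,J_1,J_2,J_\infty,\eD$, with exactly the inclusions of~\eqref{HC-diag} (note $J_1\not\subseteq J_2$ and $J_2\not\subseteq J_1$ as $S_1\not\cong S_2$); and (2) then follows at once.

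For (3) I would first identify $\Aak$. The elements $e=x_0x_1$, $f=-\partial_0\partial_1$, $h=x_0\partial_0+x_1\partial_1+1$ of $\eD(V)^G$ satisfy the $\mathfrak{sl}_2$ relations, defining $U(\mathfrak{sl}_2)\to\eD(V)^G$; since $\nabla$ is central in $\eD(V)^G$ (its commutator with an invariant operator is the infinitesimal $G$-action), $\eD(V)^G\nabla$ is a two-sided ideal, and $B:=\eD(V)^G/\eD(V)^G\nabla$ is a ring. As $\eD(V)^G$ is generated by $x_0\partial_0,x_1\partial_1,x_0x_1,\partial_0\partial_1$, the images of $e,f,h$ generate $B$, so $U(\mathfrak{sl}_2)\twoheadrightarrow B$; a direct computation gives $\Omega=c_1\nabla^2+c_0$ in $\eD(V)^G$ with $c_0=-1$ in the normalisation of the statement, hence $\Omega\equiv-1$ in $B$, so $U(\mathfrak{sl}_2)/(\Omega+1)\twoheadrightarrow B$. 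But $\Omega=-1$ is the unique singular central character of $\mathfrak{sl}_2$ ($-1\notin\{n(n+2):n\in\Z_{\ge0}\}$), so $U(\mathfrak{sl}_2)/(\Omega+1)$ is simple and the surjection is an isomorphism. Since $\ker(\rad)\supseteq[\eD(V)\tau(\g)]^G=\eD(V)^G\nabla$ with $B$ simple and $\rad\ne0$, in fact $\ker(\rad)=\eD(V)^G\nabla$ and $\Aak=\rad(\eD(V)^G)\cong B\cong U(\mathfrak{sl}_2)/(\Omega+1)$ — simple, and of infinite global dimension because its central character is singular (equivalently, the module $\C[\h]^W=\C[x_0x_1]=\eG_0$ has infinite projective dimension over it). For the Hecke algebra: $W=\Z/2$, and the Cherednik parameter determined by this value of $\Omega$ is $\kappa=\tfrac12$, one half the multiplicity of the single restricted root of $(\mathfrak{sl}(2),\mathfrak{so}(2))$, which is exactly the value at which $\euls{H}_{q}(\Z/2)$ degenerates to the non-semisimple two-dimensional algebra $\C[T]/(T-1)^2$; equivalently, $\euls{H}_{q}(\Z/2)$ is two-dimensional and cannot be semisimple, since by~(2) $\eG=\eG_0$ is not a semisimple $\eD$-module while Corollary~\ref{intro-cor:simplicity} would force this were $\euls{H}_{q}$ semisimple, and $\C[T]/(T-1)^2$ is the unique non-semisimple two-dimensional $\C$-algebra up to isomorphism.

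The main obstacle is the socle computation: a clean proof of self-duality rests on the (routine but not entirely trivial) fact that $(\nabla,\partial_0\partial_1)$ is a regular sequence, while the direct alternative requires some careful bookkeeping with the presentation $\eD/I$. The remaining steps — the middle layer, the head, the enumeration of submodules, and the $\mathfrak{sl}_2$ identification in (3) — are elementary.
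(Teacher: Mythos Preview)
Your proof is correct and, for parts~(1)--(2), takes a genuinely different route from the paper's own argument. The paper's proof of~(1,2) essentially defers to the general machinery: it notes that a direct argument ``is not hard'' but instead invokes Part~(3) together with Theorem~\ref{torsionfree} to conclude that the socle and top of $\eG$ are $\delta$-torsionfree, hence both equal to $J_0/I$ and $\eD/J_\infty$ respectively. You instead give a self-contained argument: the key step --- that $\eG$ has simple socle $\cong S$ --- you obtain via holonomic self-duality $\DD\eG\cong\eG$, using that $(\nabla,\Delta)$ is a commuting regular sequence (principal symbols a regular sequence in $\C[T^*V]$) so that the Koszul complex resolves $\eG$ and the transpose computation gives $\Ext^2_\eD(\eG,\eD)\cong\eG$. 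This is more elementary and makes Section~\ref{HC-example} genuinely independent of the later theory, at the cost of the small computation verifying the regular-sequence claim. Your enumeration of submodules of $\eD/J_0$ is also correct (the uniseriality of $\eD/J_i$ for $i=1,2$ does hold, with socle $S_{3-i}$), though the sentence ``submodules of $\eG$ correspond to submodules of $\eD/J_0$'' should read ``nonzero submodules of $\eG$''.

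For part~(3) your approach coincides with the paper's: both identify $\dd(V)^G/(\nabla)$ with $U(\mathfrak{sl}_2)/(\Omega+1)$ and conclude simplicity, then invoke \cite{St} for infinite global dimension. One wording quibble: simplicity follows from $-1\notin\{n(n+2):n\in\Z_{\ge0}\}$ (no finite-dimensional representations), whereas the fact that $-1$ is the \emph{singular} central character is what gives infinite global dimension --- you have both facts right but the sentence conflates them.
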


\begin{remark} 
	This proposition shows the remarkable nature of Theorem~\ref{intro-torsionfree}: despite the fact that $\eG$ has several $\delta$-torsion subfactors, they can never appear in the socle or top of the module. 
\end{remark}

\begin{proof} 
	(1,2) The discussion above shows that the $J_i$ are indeed distinct left ideals with the claimed inclusions and simple factors. Thus it only remains to show that there are no other left ideals. This is not hard to prove directly. Alternatively, by Part~(3) and  Theorem~\ref{torsionfree}, $\eD/I(\eG)$ has a $\delta$-torsionfree socle and top. Thus $J_0/I$ is necessarily the socle of $\eD(V)/I$ while $J_\infty/I$ is necessarily its unique maximal submodule. This suffices to prove the claims. 
	
	(3)   We will just sketch the argument.   The basic observation  is that $\C[V]^G = \C[z]$, for $z=x_0x_1$ and so $\C[\h]^W= \C[z]$, as well. Since  $\nabla\ast z=0$, it follows that $\nabla\in \ker(\rad_0)$. Now, the factor ring  $A=\dd(V)^G/(\nabla)$ is generated by the images of $E=x_0x_1,$ $ F= -\Delta$ and   $H=[E,F]$ and  one can then check that $A\cong U(\mathfrak{sl}_2)/(\Omega+1)$ where, as usual, 
	 $\Omega=H^2+2H+4FE$.  As such $A$ is a domain that is simple; see \cite{St}. Therefore, it must equal $\Ak(W)$ by Theorem~\ref{thm:intro-radial-surjective}. The fact that $A$ has infinite global homological dimension is \cite[Theorem~B]{St}.  We  omit the explicit computations since  they are easy but not particularly illuminating. These assertions are also special cases of the more general results in Section~\ref{Sec:Quivers}. 
\end{proof}
 
%%%%%%%%%%%%%%%%%
%%%%%%%%%%%%%%%%

\section{A General  Intertwining Theorem}\label{Sec-Intertwining}
In this section we prove a 
general, abstract  intertwining theorem, relating  the (co)homology groups for two rings having a bimodule in common.
This result  (necessarily) requires some quite stringent restrictions  on the two rings (see Theorem~\ref{mainresult}) but these hypotheses will  hold in our setting. 

The context for this section is that of  Auslander-Gorenstein rings,  defined as follows.

\begin{definition}\label{Aus-defn}   Let $R$ be a noetherian ring  of  finite  injective dimension, written 
$\injdim R<\infty$.  Then  $R$ is \emph{Auslander-Gorenstein}  if $R$  satisfies the following condition: 
for  any finitely generated (left or right) $R$-module $L$ and submodule
$N\subseteq \Ext_R^j(L,R)$, one has  $\Ext^i_R(N, R)=0$ for all $i<j$.
The ring $R$ is \emph{Auslander-regular} if, in addition,  $R$ has finite  global homological dimension, written 
 $\gldim R<\infty$.  The \emph{grade}\label{grade-defn} of an $R$-module $L$ is 
 \[  \grade_R(L) \ = \\min \{ j : \Ext^j_R(L,R)\not=0\}.\]
 The $R$-module $L$ is called \emph{$p$-Cohen-Macaulay} or \emph{$p$-CM}, for $p\in \mathbb{N}$, if $\Ext^i_R(L,R)=0$ for $i\not=p$. In this case, by definition, $p=j(L)$.
 
  Finally, let $k$ be a field and assume that $R$ is  also a $k$-algebra  of finite Gelfand-Kirillov dimension, $\GKdim(R) <\infty$. Then $R$ is called \emph{Cohen-Macaulay} or \emph{CM} if \label{CM-defn}
$$
\grade_R(L)+\GKdim(L)=\GKdim(R),
$$
for all finitely generated $R$-modules $L$.
\end{definition}

For the rest of the section, we make the following hypotheses.

\begin{hypotheses} \label{notation-vanishing}
	Fix noetherian $k$-algebras  $S$ and $A$, where  $S$ is   Auslander-regular and CM while   
	$A$ is  Auslander-Gorenstein and CM. Fix $n,m\in \mathbb{N}$.
	We then  fix  a finitely generated left $A$-module $N$ such that  
	\begin{equation}\label{vanishing0} \text{$N$ is CM with  projective dimension $\pd(N)=n$, and hence $\grade_A(N)=n$.}
	\end{equation} 
	We also specify a  $(S,A)$-bimodule $M$ 
	such that   
	\begin{equation}\label{vanishing0.5} \text{ ${}_SM $ is  finitely generated and CM, 
			with $\grade_S(M)=m$.}
	\end{equation}
	
	Finally, we add   an  Ext-vanishing condition: 
	\begin{equation}\label{vanishing1}
		\Ext^i_S(\Tor_j^A(M,N),S) = 0  \quad \forall \ i \not= n + m.
	\end{equation}
\end{hypotheses}

We might remark that these conditions are quite natural in the context of quantum Hamiltonian reduction, as will become 
apparent in the later parts of this paper. 

We now turn to the intertwining theorem. The following result will prove useful and we would like to thank Thierry Levasseur for the reference.

\begin{proposition}\label{prop:Bjorksequence}   {\rm \cite[Satz~1.14]{Is} }
	Let $A$ be a left noetherian ring, and $N$ a finitely generated left $A$-module of finite projective dimension $n$  such that $\Ext^i_A(N,A) = 0$ for all $i \neq n$. 
	Let $K$ be a left $A$-module. Then $\Ext^i_A(N,K) = 0$ for $i > n$, while 
	\[\Ext^{n-j}_A(N,K) = \Tor_j^A(\Ext^n_A(N,A),K) \qquad \text{for all}\  	 0 \le j \le n.    \qed\] 
\end{proposition}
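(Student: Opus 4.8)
\textbf{Proof proposal for Proposition~\ref{prop:Bjorksequence}.}

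The plan is to exploit a finite projective resolution of $N$ together with the Cohen--Macaulay-type hypothesis $\Ext^i_A(N,A)=0$ for $i\neq n$, which forces the dual complex to be a resolution of $\Ext^n_A(N,A)$ placed in a single degree. First I would fix a projective resolution
\[
0 \to P_n \to P_{n-1} \to \cdots \to P_1 \to P_0 \to N \to 0
\]
of length exactly $n$, which exists since $\pd_A(N)=n$. Applying $\Hom_A(-,A)$ gives a cochain complex $P_\bullet^\vee := \Hom_A(P_\bullet,A)$ of right $A$-modules concentrated in degrees $0,\dots,n$, whose cohomology is $\Ext^i_A(N,A)$. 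By hypothesis this cohomology vanishes except in top degree $n$, so $P_\bullet^\vee$ is a (finite, projective) \emph{resolution} of the right $A$-module $E := \Ext^n_A(N,A)$, shifted into cohomological degrees $[0,n]$; equivalently, reading it homologically, $0 \to P_0^\vee \to \cdots \to P_n^\vee \to E \to 0$ is a projective resolution of $E$ by finitely generated projective right $A$-modules.

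Next I would compute $\Ext^\bullet_A(N,K)$ and $\Tor^A_\bullet(E,K)$ from these two resolutions and match them up. On one hand, $\Ext^i_A(N,K) = H^i\big(\Hom_A(P_\bullet,K)\big)$. On the other hand, since each $P_j$ is finitely generated projective, there is a natural isomorphism $\Hom_A(P_j,A)\otimes_A K \cong \Hom_A(P_j,K)$ compatible with the differentials; hence $\Hom_A(P_\bullet,K) \cong P_\bullet^\vee \otimes_A K$ as complexes. Therefore $\Ext^i_A(N,K) = H^i(P_\bullet^\vee \otimes_A K)$. Now $P_\bullet^\vee$ sits in cohomological degrees $0$ through $n$, so $H^i(P_\bullet^\vee\otimes_A K)=0$ for $i>n$, giving the first assertion. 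For the rest, reindex: $P_\bullet^\vee$ regarded as a (length-$n$) projective resolution of $E$ means that, writing $Q_j := P_{n-j}^\vee$ for $0\le j\le n$, the complex $Q_\bullet$ is a projective resolution of $E$. Then $\Tor^A_j(E,K) = H_j(Q_\bullet \otimes_A K) = H_j(P_{n-\bullet}^\vee \otimes_A K) = H^{n-j}(P_\bullet^\vee\otimes_A K) = \Ext^{n-j}_A(N,K)$, which is exactly the claimed formula for $0\le j\le n$.

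There is essentially no hard step here — the content is purely homological bookkeeping — but the one point requiring care is the verification that the dual complex $P_\bullet^\vee$ really is exact except at the top, i.e.\ that the hypothesis $\Ext^i_A(N,A)=0$ for $i\neq n$ is being used correctly; in particular one must check there is no low-degree cohomology ($\Ext^0$) obstructing $P_\bullet^\vee$ from being a genuine resolution of $E$, which is precisely what the hypothesis rules out. The other mild technical point is the naturality (in the chain maps, not just object-wise) of the isomorphism $\Hom_A(P_j,A)\otimes_A K \cong \Hom_A(P_j,K)$ for finitely generated projective $P_j$, so that it induces an isomorphism of complexes and hence of (co)homology; this is standard. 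Since the statement is quoted from \cite[Satz~1.14]{Is}, I would in practice simply cite it, but the above is the self-contained argument.
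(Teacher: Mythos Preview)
Your argument is correct and is exactly the standard proof of this fact: dualising a minimal-length finitely generated projective resolution of $N$ yields, under the $n$-CM hypothesis, a projective resolution of $E=\Ext^n_A(N,A)$, and the natural isomorphism $\Hom_A(P_j,A)\otimes_A K\cong\Hom_A(P_j,K)$ for finitely generated projectives $P_j$ then identifies the two (co)homologies after the degree shift $i\leftrightarrow n-i$. The paper itself gives no proof at all --- the proposition is stated with a terminal $\qed$ and attributed to \cite[Satz~1.14]{Is} --- so your self-contained argument is more than what the paper provides, and there is nothing further to compare.
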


We can now prove the main result of this section.

\begin{theorem}\label{mainresult} Keep the assumptions from Hypotheses~\ref{notation-vanishing}. Then 
	\begin{enumerate}
		\item  $\quad
		\Ext^{n+m}_S(M \otimes_A N,S) = \Ext^{n}_A(N,A) \otimes_A \Ext^m_S(M,S).  $
		
		\smallskip
		\item  $\quad \Tor_{i}^A(M,N) = 0 $ for all $i > 0$.  
	\end{enumerate}
\end{theorem}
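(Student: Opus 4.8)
The plan is to prove both statements simultaneously by a spectral sequence / filtration argument built on Proposition~\ref{prop:Bjorksequence}. First I would apply Proposition~\ref{prop:Bjorksequence} with the left $A$-module $N$ (which has $\pd(N)=n$ and $\Ext^i_A(N,A)=0$ for $i\neq n$ by \eqref{vanishing0}) and with $K = M$, viewed as a left $A$-module via the right $A$-action. This gives $\Ext^{n-j}_A(N,M) = \Tor_j^A(\Ext^n_A(N,A),M)$ for $0\le j\le n$, and $\Ext^i_A(N,M)=0$ for $i>n$. The key point is that this computation can be promoted to $(S,\text{--})$-bimodules: since $M$ is an $(S,A)$-bimodule and $\Ext^n_A(N,A)$ carries a right $A$-action, $\Tor_j^A(\Ext^n_A(N,A),M)$ inherits a left $S$-module structure, and the isomorphisms of Proposition~\ref{prop:Bjorksequence} are $S$-linear (one computes $\Ext^\bullet_A(N,M)$ using a finite projective resolution $P_\bullet \to N$ of $A$-modules, and $\Hom_A(P_\bullet, M)$ is a complex of left $S$-modules). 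In particular, $\Ext^n_A(N,A)\otimes_A M = \Ext^n_A(N,M)$ as left $S$-modules.

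Next I would bring in the bimodule resolution. Take a finite projective resolution $Q_\bullet \to N$ of $N$ by projective left $A$-modules of length $n$. Then $M\otimes_A N$ is computed by the complex $M\otimes_A Q_\bullet$, whose homology in degree $i$ is $\Tor_i^A(M,N)$; in degree $0$ it is $M\otimes_A N$. Now apply $\Hom_S(-,S)$, or rather compute $\Ext^\bullet_S$ of the total picture: the double complex $\Hom_S(M\otimes_A Q_\bullet, S) = \Hom_A(Q_\bullet, \Hom_S(M,S))$ (Hom-tensor adjunction, noting $\Hom_S(M,S)$ is an $(A,S)$-bimodule) gives rise to a first-quadrant spectral sequence. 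Its $E_2$-page has terms $\Ext^p_A(\Tor^A_?(\ldots))$... more cleanly: there is a Grothendieck spectral sequence
$$
E_2^{p,q} = \Ext^p_A\bigl(N,\, \Ext^q_S(M,S)\bigr) \;\Longrightarrow\; \Ext^{p+q}_S(M\otimes_A N, S).
$$
By \eqref{vanishing0.5} and the CM hypothesis, $\Ext^q_S(M,S)$ is concentrated in $q=m$ once we know the relevant $\Tor$'s vanish — but to get there I first use the vanishing hypothesis \eqref{vanishing1}: it forces $\Ext^i_S(\Tor^A_j(M,N),S)=0$ unless $i = n+m$, which rigidifies the spectral sequence. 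Chasing degrees: the only possibly nonzero $\Ext^{n+m}_S(M\otimes_A N,S)$ receives a contribution that, combined with the $A$-module computation of the first paragraph ($\Ext^p_A(N,-)$ vanishes for $p>n$ and is $\Ext^n_A(N,A)\otimes_A(-)$ for $p=n$), collapses to $\Ext^n_A(N,A)\otimes_A \Ext^m_S(M,S)$, giving (1).

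For (2), the same spectral sequence (or a dual one computing $\Ext^\bullet_S(M\otimes_A Q_\bullet,S)$ via the stupid filtration on $Q_\bullet$) compares the two filtrations: one filtration's associated graded involves $\Ext^i_S(\Tor^A_j(M,N),S)$, which by \eqref{vanishing1} lives entirely in total degree... and the other's involves $\Ext^n_A(N,A)\otimes_A \Ext^m_S(M,S)$ in a single degree. Matching Euler characteristics / comparing the two spectral sequences abutting to the same $\Ext^\bullet_S(M\otimes_A Q_\bullet, S)$, and using that $\Ext^{n-j}_A(N, \Hom_S(M,S)) = \Tor_j^A(\Ext^n_A(N,A), \Hom_S(M,S))$ is concentrated appropriately, forces $\Tor^A_j(M,N)$ to have $\Ext^i_S(-,S) = 0$ for all $i$ when $j>0$; since $S$ is Auslander-regular hence $\Ext^i_S(L,S)=0$ for all $i$ implies $L=0$ (a faithfulness/no-torsion statement for CM modules over Auslander-regular rings — a CM module $L$ with $\grade_S(L) = \infty$ has $\GKdim L = -\infty$, i.e. $L=0$), we conclude $\Tor^A_j(M,N)=0$ for $j>0$.

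\textbf{Main obstacle.} The hard part is the bookkeeping in the spectral sequence: showing that hypothesis \eqref{vanishing1} genuinely forces the collapse so that no "cross terms" survive in degree $n+m$, and correctly identifying the $S$-bimodule structures so that the final answer is the honest tensor product $\Ext^n_A(N,A)\otimes_A\Ext^m_S(M,S)$ rather than merely an associated graded of it. I expect this is handled by running \emph{two} spectral sequences with the same abutment (the hyper-Ext of the complex $M\otimes_A Q_\bullet$): one degenerates because of the $A$-side rigidity (Proposition~\ref{prop:Bjorksequence}), the other because of the $S$-side rigidity (\eqref{vanishing1}), and playing them against each other yields both (1) and (2) at once. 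Verifying that the differentials vanish — rather than merely that the $E_2$ terms are supported in the right spots — is the delicate point, but the first-quadrant shape together with the single-diagonal concentration should make all differentials into or out of the surviving terms automatically zero.
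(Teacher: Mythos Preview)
Your approach is essentially the paper's: two Cartan--Eilenberg spectral sequences for the composite functor $\Hom_S(M\otimes_A -,\,S)$ abutting to a common hyper-Ext $R^\bullet T(N,S)$ (the paper cites \cite[Section~4, p.~345]{CE}), one collapsing because $M$ is $m$-CM over $S$ and the other because of hypothesis~\eqref{vanishing1}, then identified via Proposition~\ref{prop:Bjorksequence}. One clarification worth making explicit: the common abutment is the hyper-Ext, not $\Ext^\bullet_S(M\otimes_A N,S)$ directly---keeping this straight (as you do in your final paragraph) is exactly what lets you read off both (1) and (2) at once, since for $r>n+m$ one sequence gives $R^rT=\Tor_{n+m-r}(\dots)=0$ while the other gives $R^rT=\Ext^{n+m}_S(\Tor^A_{r-(n+m)}(M,N),S)$.
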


\begin{proof}  We use  the left-right analogue of the  spectral sequences given on 
	\cite[Section~4, page 345]{CE}, with the following changes of notation. We take rings 
	$\Gamma = k$, $\Lambda = A$ and $\Sigma = S$ together with modules 
	${}_{\Lambda}A = {}_AN$,   ${}_{\Sigma} B_{\Lambda} = {}_S M_A$ and ${}_{\Sigma}C = {}_SS$.
	Then the spectral sequences (2) and (3) of \textit{loc. cit.} say that 
	\begin{equation}\label{CE2}
		\Ext^p_A(N,\Ext^q_S(M,S)) \ \stackrel{p}{\Longrightarrow} \ R^r T(N,S),
	\end{equation}
	respectively
	\begin{equation}\label{CE3}
		\Ext^q_S(\Tor_p^A(M,N),S)\  \stackrel{q}{\Longrightarrow} \ R^r T(N,S).
	\end{equation}
	In both cases, $r = p + q$. 
	
	By assumption, ${}_SM$ is $m$-CM, and so $\Ext^p_A(N,\Ext^q_S(M,S)) = 0$ for $q \not= m$. 
	Thus sequence \eqref{CE2} collapses to give
	\begin{equation}\label{CE4}
		R^rT(N,S) = \Ext^{r-m}_A(N,\Ext^m_S(M,S)).
	\end{equation}
	Since $N$ has finite projective dimension,   Proposition~\ref{prop:Bjorksequence} implies that 
	\[\Ext^{n-j}_A(N,K) = \Tor_j^A(\Ext^n_A(N,A),\, K)\] for any left $A$-module $K$ and $0\leq j\leq n$. 
	If $n - j = r- m$, then $j = n + m - r$ and  \eqref{CE4} becomes
	$$
	R^rT(N,S) = \Tor_{n+m-r} (\Ext^{n}_A(N,A),\Ext^m_S(M,S)).
	$$
	Similarly, by Hypothesis~\ref{vanishing1} the terms of the sequence  \eqref{CE3}  are all zero unless $q = n + m$. Thus sequence \eqref{CE3} also collapses and gives, for $p = r - (n+m)$, the equality 
	$$R^rT(N,S) = \Ext^{n+m}_S(\Tor_{r-(n+m)}^A(M,N),S)
	$$
	Since both sequences converge to $R^r T(N,S)$, we deduce that 
	\begin{equation}\label{CE5}
		\Tor_{n+m-r} (\Ext^{n}_A(N,A),\Ext^m_S(M,S)) = \Ext^{n+m}_S(\Tor_{r-(n+m)}^A(M,N),S)
	\end{equation}
	for all $r \in \mathbb{Z}$. Obviously these terms are zero for  $r-(n+m)<0$.

	When $r = m+n$  \eqref{CE5} gives
	\[
	\Ext^{n+m}_S(M \otimes_A N ,\, S) = \Ext^n_A(N,A) \otimes_{A} \Ext^m_S(M,S).
	\] 
	
	On the other hand, suppose that     $\Tor_{i}^A(M,N) \not= 0$ for some $i>0$, and consider \eqref{CE5} for 
	$r=i+(m+n)>0$. By \cite[Theorem~2.2]{Le},  and   the fact that $S$ is Auslander-Gorenstein, $X=\Ext^{j}_S(\Tor_{i}^A(M,N),S)\not=0$ for some $j\geq0$. Hypothesis~\ref{vanishing1}  forces  $j=n+m$ and so $X=\Ext^{n+m}_S(\Tor_{i}^A(M,N),S)\not=0$. But  \eqref{CE5} then  implies that  
	$X= \Tor_{-i}(N,\, M)$, which is of course automatically zero.
\end{proof}

More compactly,  Theorem~\ref{mainresult}(1) can be written as follows. 

\begin{notation}\label{main-notation} Keep the notation from Hypotheses~\ref{notation-vanishing}. Set 
	\[{}^\perp\BH(J)=M\otimes_AJ \qquad\text{and}\qquad \BH^\perp (L)=L\otimes_A\Ext^m_S(M,\, S)\] 
	for a left $A$-module $J$ and right $A$-module $L$. Similarly, set 
	\[   \BD_S(J_1)=\Ext^{n+m}_S(J_1,\, S) \qquad\text{and}\qquad \BD_A(L_1) = \Ext_A^n(L_1,\, A)\]
	for a left $S$-module $J_1$ and left $A$-module $L_1$.    \end{notation}

Then Theorem~\ref{mainresult} becomes:

\begin{corollary}\label{maincorollary}
	Keep the assumptions from  Hypotheses~\ref{notation-vanishing}. Then
	\[ \left( \BD_S\circ{}^\perp\BH\right) (N) \ \cong \ \left(\BH^\perp\circ \BD_A\right)(N).\qed\]
	
\end{corollary}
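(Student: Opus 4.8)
The plan is to recognise Corollary~\ref{maincorollary} as a verbatim restatement of Theorem~\ref{mainresult}(1) once the abbreviations of Notation~\ref{main-notation} are expanded. Concretely, I would first compute the left-hand side: ${}^\perp\BH(N) = M \otimes_A N$ is a finitely generated left $S$-module (finitely generated since ${}_SM$ is finitely generated by \eqref{vanishing0.5} and ${}_AN$ is finitely generated), so that
\[
\left(\BD_S \circ {}^\perp\BH\right)(N) \;=\; \Ext^{n+m}_S(M \otimes_A N,\, S).
\]
For the right-hand side, $\BD_A(N) = \Ext^n_A(N, A)$ is a right $A$-module via right multiplication of $A$ on the bimodule ${}_AA_A$, and $\BH^\perp$ sends it to
\[
\left(\BH^\perp \circ \BD_A\right)(N) \;=\; \Ext^n_A(N, A) \otimes_A \Ext^m_S(M, S).
\]
Thus the assertion of the corollary is precisely the identity
\[
\Ext^{n+m}_S(M \otimes_A N,\, S) \;\cong\; \Ext^n_A(N, A) \otimes_A \Ext^m_S(M, S),
\]
which is Theorem~\ref{mainresult}(1) under Hypotheses~\ref{notation-vanishing}. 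So the substance of the proof reduces to "unwind the definitions in Notation~\ref{main-notation} and invoke Theorem~\ref{mainresult}(1)."

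The one point I would check carefully is that the bimodule structures match on both sides, so that the tensor product over $A$ and the functor $\BH^\perp$ in Notation~\ref{main-notation} are well defined. Since $M$ is an $(S,A)$-bimodule, $\Ext^m_S(M, S)$ carries a right $S$-module structure (from ${}_SS_S$) together with a left $A$-module structure (induced by the right $A$-action on $M$), i.e.\ it is naturally an $(A,S)$-bimodule; hence $\Ext^n_A(N, A) \otimes_A \Ext^m_S(M, S)$ is a right $S$-module, which is the same side on which $\Ext^{n+m}_S(M \otimes_A N, S)$ naturally lives. I would also record that the isomorphism of Theorem~\ref{mainresult}(1) is one of right $S$-modules for exactly these structures, which is implicit in the proof of that theorem via the Cartan--Eilenberg spectral sequences.

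Given the above, I do not anticipate any genuine obstacle: the corollary is a purely formal consequence of the theorem proved immediately before it, stated separately only because the compact notation $\BD_S$, $\BD_A$, ${}^\perp\BH$, $\BH^\perp$ is the form in which the intertwining statement will be applied later in the paper.
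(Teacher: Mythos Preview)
Your proposal is correct and matches the paper's approach exactly: the corollary is stated with a \qed and no proof, because it is nothing more than Theorem~\ref{mainresult}(1) rewritten in the notation of Notation~\ref{main-notation}. Your additional remarks about the bimodule structures are accurate and harmless, though the paper does not bother to spell them out.
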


%%%%%%%%%%%%%%%%%

\section{Spherical Subalgebras of Rational Cherednik Algebras}\label{Sec:Cherednik}

In this section we define and prove some basic facts, many of which are well-known, about  rational Cherednik algebras and their spherical subalgebras. 
  We emphasise that our notation will be the same as that of  the companion paper \cite{BLNS} and  so for many standard concepts we will refer the reader to that paper rather than repeating the definitions here. A survey on Cherednik algebras can also be found in \cite{BellamySRAlecturenotes} and our notation is (usually) consistent with that given there. Henceforth, the base field will be $\C$.

We follow \cite{GGOR} or \cite{EG} for the definition of  the Cherednik algebra $\Hk(W)$, with the precise definition being given in \cite[Definition~2.1]{BLNS}. 
  We   identify $\C[\h^*] =\Sym \h$  using the natural inner product and hence regard $\h^*$  as the natural space of generators for $\C[\h]$. However, it will be more convenient to use $\Sym \h$ in place of $\C[\h^* ]$ and, similarly, $\Sym V $ in place of $\C[V^*]$. Let $e=\frac{1}{|W|} \sum_{w\in W } w$ be the trivial idempotent in $\C W$. The \emph{spherical subalgebra}\label{spherical-defn} of $\Hk(W)$ is $\Ak(W)= e \Hk(W) e$.  Since we rarely consider the Cherednik algebra itself, we will simply call $\Ak(W)$ a \emph{spherical algebra}  and  for brevity,  we usually  write $\Ak(W)={\Aak}$. The Poincar\'e-Birkhoff-Witt Theorem \cite[Theorem~1.3]{EG} implies that  
  $
\C[\h]^W\cong e\C[\h]e \subset {\Aak} $ and $  (\Sym  \h)^W\cong e(\Sym  \h)e \subset {\Aak}.
$
 
\begin{definition}\label{defn:Osph}
 The category $\Osph=\Osph_{\kappa}(W)$ is defined to be the full subcategory of ${\Aak}\lmod$  of  finitely generated modules on which $(\Sym  \h)^W$ acts locally finitely.  These modules are finitely generated over $\C[\h]^W$.  For $\lambda \in \h^*$, let $\mf{m}_{\lambda}$ \label{defn:em-lambda} denote the maximal ideal of $(\Sym  \h)^W$ defined by the coset $W \lambda \in \h^*/W$; thus $\mf{m}_{\lambda} = \mf{m}_{w(\lambda)}$ for any $w \in W$. Let $\Osph_{\llambda } = \Osph_{\kappa,\llambda}(W)$ denote the full 
subcategory of ${\Aak}\lmod$ consisting of all modules $N$ on which the action of $\mf{m}_{\lambda}$ is locally nilpotent.  The most important of these subcategories is $\Osph_0$, consisting of the modules for which    $\mf{m}_0= (\Sym\h)^W_+$ acts locally nilpotently. The analogous categories of right modules will be written $\Osphop$, respectively $\Osphop_{\lambda}$.

Perhaps the most basic  objects in $\Osph$  are the 
\begin{equation}\label{M-definition}    
\eQ_{\lambda} = {\Aak} / {\Aak} \mf{m}_{\lambda}\in   \Osph_{\llambda }.
\end{equation}
Finally, the fact that every element in $(\Sym  \h)^W$ acts locally ad-nilpotently on ${\Aak}$  
 implies that
$$
\Osph = \bigoplus_{\llambda W  \in \h^* / W} \Osph_{\llambda }.
$$
 \end{definition}

We next prove some basic properties of spherical algebras and their modules.

 \begin{lemma}\label{CM for spherical}
	The algebra ${\Aak} $ is Auslander-Gorenstein and CM.    
\end{lemma}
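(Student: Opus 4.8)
The plan is to establish that $\Aak = e\Hk(W)e$ inherits the Auslander-Gorenstein and Cohen-Macaulay properties from the rational Cherednik algebra $\Hk(W)$ itself, using the Morita-type relationship between $\Aak$ and $\Hk(W)$ together with the well-known homological properties of $\Hk(W)$ arising from its PBW filtration.

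First I would recall that $\Hk(W)$ carries a filtration (the order filtration, placing $\h^*$ and $W$ in degree $0$ and $\h$ in degree $1$, or a symmetric variant) whose associated graded ring is $\C[\h \times \h^*] \rtimes W = (\Sym\h \otimes \Sym\h^*)\rtimes W$, by the PBW theorem \cite[Theorem~1.3]{EG}. This graded ring is a finite module over its centre $\C[\h\times\h^*]$, which is a commutative polynomial ring, hence regular; a skew group algebra $R\rtimes W$ over a commutative regular (equivalently, Auslander-regular and CM) ring $R$ on which the finite group $W$ acts is itself Auslander-Gorenstein and CM (its global dimension equals that of $R$ when $|W|$ is invertible, which holds over $\C$, so in fact it is Auslander-regular and CM). The standard filtered-graded transfer result—if $\gr \Hk(W)$ is Auslander-Gorenstein and CM with the relevant finiteness (noetherian, finite GK dimension), then so is $\Hk(W)$—then yields that $\Hk(W)$ is Auslander-Gorenstein and CM. One must check $\Hk(W)$ is noetherian of finite GK dimension, but this is immediate from the noetherianity and finite GK dimension of $\gr\Hk(W)$.

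Next I would pass from $\Hk(W)$ to $\Aak = e\Hk(W)e$. The idempotent $e$ is the trivial idempotent, and the key point is that $\Hk(W)e\Hk(W) = \Hk(W)$ is \emph{not} generally true, but one does have the weaker statement that $e$ is a ``full enough'' idempotent for homological purposes: more precisely, the functor $e(-) = e\Hk(W)\otimes_{\Hk(W)}(-)$ and its partial inverse give a recollement, and in any case the corner ring $e\Hk(W)e$ of an Auslander-Gorenstein CM ring need not be Auslander-Gorenstein in general. The cleaner route, and the one I expect the paper to take, is to use the filtration directly on $\Aak$: the order filtration on $\Hk(W)$ restricts to a filtration on $\Aak = e\Hk(W)e$ with $\gr\Aak = e(\gr\Hk(W))e = e\bigl((\Sym\h\otimes\Sym\h^*)\rtimes W\bigr)e \cong (\Sym\h\otimes\Sym\h^*)^W$, the invariant ring. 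Now $(\Sym\h\otimes\Sym\h^*)^W$ is the ring of invariants of a polynomial ring under a finite group acting as (pseudo-)reflections on one factor but only linearly overall on $\h\times\h^*$; it is a normal Cohen-Macaulay domain (by Hochster--Eagon), and crucially it is Gorenstein—indeed it is a direct summand of the polynomial ring as a module over itself, and one checks the Auslander condition either via the fact that it is a ``nice'' invariant ring or by invoking that the skew group ring $(\Sym\h\otimes\Sym\h^*)\rtimes W$ is Auslander-Gorenstein CM and $e(-)e$ applied to it, being Morita-equivalent-on-a-summand, transfers the Auslander-Gorenstein CM property to the invariant ring (this last transfer works because $e(R\rtimes W)e \cong R^W$ and $R\rtimes W$ is a finitely generated module over $R^W$ on both sides, projective when $|W|$ is invertible). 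Then the filtered-graded transfer lemma again lifts Auslander-Gorenstein and CM from $\gr\Aak$ to $\Aak$.

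The main obstacle is the transfer of the Auslander-Gorenstein property through the idempotent truncation $R\rtimes W \leadsto e(R\rtimes W)e = R^W$: while CM is relatively robust and filtered-graded transfer is standard once the graded statement is in hand, verifying that $R^W$ satisfies the Auslander condition requires care. The clean argument uses that $R\rtimes W$ is a \emph{projective} module of finite rank over $R^W$ on both sides (since $R$ is free over $R^W$... actually $R$ need only be a reflexive or CM module, and $|W|^{-1}\in\C$ makes $R^W$ a direct summand of $R$), so that $R^W \lmod$ is a full subcategory of $(R\rtimes W)\lmod$ closed under the relevant operations, whence Ext-groups over $R^W$ are recovered from those over $R\rtimes W$ and the Auslander vanishing is inherited. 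Assembling these pieces—PBW filtration, homological properties of $R\rtimes W$ for $R$ polynomial, idempotent transfer to $R^W$, and filtered-graded lifting—completes the proof that $\Aak$ is Auslander-Gorenstein and CM; since the spectrum being used here is the one for which these properties are genuinely needed in the sequel (Hypotheses~\ref{notation-vanishing}), the finite GK dimension and noetherianity that the CM condition references are automatic from $\gr\Aak$ being a finitely generated commutative $\C$-algebra.
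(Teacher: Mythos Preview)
Your proposal is correct and reconstructs precisely the argument the paper invokes: the paper's proof is simply ``Use \cite[Theorem~1.5(1)]{EG} and its proof,'' and that proof in \cite{EG} proceeds exactly by filtering $\Aak$ so that $\gr\Aak\cong\C[\h\times\h^*]^W$, observing this commutative invariant ring is Gorenstein (hence automatically Auslander-Gorenstein in the commutative setting, so your final worry is unnecessary), and lifting via the standard filtered--graded transfer.
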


\begin{proof}
	Use \cite[Theorem~1.5(1)]{EG}  and its proof.
	\end{proof}

\begin{lemma}\label{lem:GKdimOsph}
If $L \in \Osph$ then $\mathrm{GKdim}_{\Aak}(L) = \mathrm{GKdim}_{\C[\h]^W}(L)$. 
\end{lemma}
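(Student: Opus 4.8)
The plan is to compute both Gelfand--Kirillov dimensions from the PBW filtration on $\Aak$ and to observe that a module in $\Osph$ is ``already of filtration degree zero''. Recall from the PBW theorem \cite[Theorem~1.3]{EG} that if we filter $\Hk(W)$ by putting $\Sym\h$ in degree one while $\C[\h]$ and $\C W$ sit in degree zero, then $\gr\Hk(W)\cong\C[\h\times\h^*]\rtimes W$, with the grading counting degree in the $\h^*$-variables. Applying $e(-)e$ yields $\gr\Aak\cong e(\C[\h\times\h^*]\rtimes W)e=\C[\h\times\h^*]^W$, an affine commutative $\C$-algebra, whose degree-zero component is $(\gr\Aak)_0=e\C[\h]e=\C[\h]^W$, i.e. exactly the copy of $\C[\h]^W$ lying inside $F_0\Aak$ (cf. the remark after Definition~\ref{defn:Osph}). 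Note that the order filtration on $\Aak$ is nonnegative, so $F_n\Aak=0$ for $n<0$.

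Now let $L\in\Osph$. By definition $L$ is finitely generated over $\C[\h]^W$, so we may choose a finite-dimensional subspace $L_0\subseteq L$ with $\C[\h]^W L_0=L$, and equip $L$ with the good filtration $F_nL:=(F_n\Aak)L_0$. Since $\C[\h]^W\subseteq F_0\Aak$, we get $F_0L\supseteq\C[\h]^W L_0=L$, hence $F_nL=L$ for all $n\ge 0$ and $F_nL=0$ for $n<0$. Thus $\gr L$ is concentrated in degree zero, where it equals $L$; and for degree reasons the positively graded part $(\gr\Aak)_{+}$ annihilates $\gr L$. Consequently the $\gr\Aak$-action on $\gr L$ factors through $\gr\Aak/(\gr\Aak)_{+}=(\gr\Aak)_0=\C[\h]^W$, and there it is just the original $\C[\h]^W$-module structure on $L$.

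Since $\gr\Aak$ is affine, it is standard that Gelfand--Kirillov dimension can be computed from any good filtration, so $\GKdim_{\Aak}(L)=\GKdim_{\gr\Aak}(\gr L)$; and since GK-dimension of a finitely generated module over an affine commutative algebra depends only on the quotient by its annihilator, $\GKdim_{\gr\Aak}(\gr L)=\GKdim_{\C[\h]^W}(\gr L)=\GKdim_{\C[\h]^W}(L)$. Combining these gives $\GKdim_{\Aak}(L)=\GKdim_{\C[\h]^W}(L)$. There is no genuine obstacle in this argument: the only point requiring care is that $\C[\h]^W$ sits in filtration degree zero of $\Aak$ — so one must use the order filtration, not the grading in which both $\h$ and $\h^*$ have degree one — which is what forces the tautological good filtration on an $\Osph$-module and makes its associated graded module transparent; with the filtration chosen correctly the rest is routine.
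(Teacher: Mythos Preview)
Your argument has a gap at the step ``it is standard that Gelfand--Kirillov dimension can be computed from any good filtration, so $\GKdim_{\Aak}(L)=\GKdim_{\gr\Aak}(\gr L)$''. The standard result to this effect (for instance \cite[Proposition~8.6.5]{MR}) requires the filtration on the ring to have finite-dimensional layers, in particular $F_0\Aak=\C$. Your order filtration has $F_0\Aak=\C[\h]^W$, which is infinite-dimensional; with such a filtration the growth of $F_nL$ bears no direct relation to growth against a finite-dimensional generating subspace of $\Aak$, and the identity is no longer automatic. Indeed, once you unwind it in your situation---where $\gr L$ is concentrated in degree zero and the $\gr\Aak$-action factors through $(\gr\Aak)_0=\C[\h]^W$---the asserted identity becomes exactly $\GKdim_{\Aak}(L)=\GKdim_{\C[\h]^W}(L)$, which is the lemma itself.

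The paper avoids this by using the filtration $\Gamma$ in which both $\h$ and $\h^*$ have degree one, so that each $\Gamma_n\Aak$ is finite-dimensional and \cite[Proposition~8.6.5]{MR} genuinely applies. Your closing remark therefore has things backwards: it is the finite-dimensional filtration that allows the passage to the associated graded. The cost is that $\gr_\Gamma L$ is no longer tautologically $L$; the paper then argues separately that $\gr_\Gamma L$ is finitely generated over $\C[\h]^W$ (using local finiteness of the $(\Sym\h)^W$-action) and invokes the bimodule comparison \cite[Proposition~8.3.14(ii)]{MR}---which \emph{does} apply, since $\gr_\Gamma\Aak$ is commutative and hence $\C[\h]^W$ acts centrally there---to obtain $\GKdim_{\gr_\Gamma\Aak}(\gr_\Gamma L)=\GKdim_{\C[\h]^W}(\gr_\Gamma L)$.
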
 

 \begin{proof}  This is standard.  
 As explain in \cite[p.262]{EG},  ${\Aak}$ has a finite dimensional filtration $\{\Gamma_n{\Aak}\}$ with $\gr_\Gamma {\Aak}=\C[\h\times\h^*]^W$. Pick a good filtration 	$\{\Gamma_nL\}$ for $L$; thus $\gr_\Gamma L$ is a finitely generated $\gr_{\Gamma}{\Aak}$-module. Since $\C[\h^*]^W=(\Sym \h)^W$ acts locally finitely on $L$, clearly it also acts locally finitely on $\gr_\Gamma L.$ As $\gr_\Gamma L$ is a finitely generated $\C[\h]^W\otimes\C[\h^*]^W$-module, this implies that $\gr_\Gamma L$ is also finitely generated as a $\C[\h]^W$-module.  Regarding $\gr_\Gamma L$ as a $(\gr_\Gamma {\Aak},\, \C[\h]^W)$-bimodule,  \cite[Proposition~8.3.14(ii)]{MR}  therefore implies that  	$\GKdim_{\gr {\Aak}} (\gr L) =\GKdim_{\C[\h]^W}(\gr L)$. By  \cite[Proposition~8.6.5]{MR} this pulls back to give the claimed result.
 \end{proof}

\begin{proposition}\label{pdQ} 
	Set $E=(\Sym \h)^W$ and  $n = \dim \h$.  
	\begin{enumerate}
		\item ${\Aak}$ is a free $\left(\C[\h]^W, E\right)$-bimodule of finite rank.
		\item  Let $0\not=L$ be any finite-dimensional $E$-module. Then  ${\Aak} \o_{E} L$ is $n$-CM. In particular, $ \pd_{\Aak}({\Aak} \o_{E} L) = n$.
		\item The ${\Aak}$-module $\eQ_{\lambda}$ is $n$-CM, and hence  $\pd_{\Aak}(\eQ_{\lambda})=n$. 
		\item Each projective object $\eQ \in \Osph$ is $n$-CM and thus   $\pd_{\Aak}(\eQ)=\dim \h$.   
	\end{enumerate}  
\end{proposition}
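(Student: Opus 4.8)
The plan is to prove the four statements of Proposition~\ref{pdQ} in order, with (2) and (3)/(4) being essentially corollaries of (1).

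\medskip

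\noindent\textbf{Proof proposal.}

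First, for part~(1), I would use the PBW-type filtration $\{\Gamma_n{\Aak}\}$ with $\gr_\Gamma{\Aak}\cong\C[\h\times\h^*]^W$ recalled in the proof of Lemma~\ref{lem:GKdimOsph}. Under this filtration, $\C[\h]^W\cong e\C[\h]e$ and $E=(\Sym\h)^W\cong e(\Sym\h)e$ sit inside ${\Aak}$ as filtered subalgebras, and on the associated graded they become the two obvious polynomial subalgebras $\C[\h]^W$ and $\C[\h^*]^W$ of $\C[\h\times\h^*]^W$. The classical fact (going back to Chevalley / the freeness of $\C[\h]$ over $\C[\h]^W$, together with the fact that $\C[\h\times\h^*]^W$ is a free module over $\C[\h]^W\otimes\C[\h^*]^W$ of rank $|W|$, by e.g. the coinvariant-algebra argument) shows that $\gr_\Gamma{\Aak}$ is a free $\bigl(\C[\h]^W,\C[\h^*]^W\bigr)$-bimodule of finite rank. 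One then lifts this freeness through the filtration: a homogeneous bimodule basis of $\gr_\Gamma{\Aak}$ lifts to elements of ${\Aak}$ which, by a standard filtered-to-graded argument, form a free $\bigl(\C[\h]^W,E\bigr)$-bimodule basis. (One must check the filtration on ${\Aak}$ is the tensor-product filtration coming from the two sides, which is where the "finite dimensional filtration" remark and ad-local-finiteness of $E$ on ${\Aak}$ are used.)

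Given (1), part~(2) is quick: if $L$ is a nonzero finite-dimensional $E$-module, then since ${\Aak}$ is free of finite rank as a right $E$-module, ${\Aak}\otimes_E L$ is finite-dimensional over $\C$ — wait, no; it is finitely generated over $\C[\h]^W$ via the left factor. The point is that $\pd_E(L)=n=\dim\h$ because $E$ is a polynomial ring in $n$ variables and $L$ is finite-dimensional (so it has an Auslander–Buchsbaum-style resolution of length exactly $n$, with $\Ext^i_E(L,E)=0$ for $i\neq n$, i.e. $L$ is $n$-CM over $E$). Now apply $\Ext^\bullet_{\Aak}(-,{\Aak})$ to a projective (finite) $E$-resolution $P_\bullet\to L$: because ${\Aak}$ is free as a right $E$-module, ${\Aak}\otimes_E P_\bullet\to{\Aak}\otimes_E L$ is a projective ${\Aak}$-resolution of length $n$, and moreover $\Ext^i_{\Aak}({\Aak}\otimes_E L,{\Aak})$ can be computed via a change-of-rings/adjunction spectral sequence (or directly, using that ${\Aak}$ is free over $E$ on both sides) to equal $\Ext^i_E(L,E)\otimes_E {\Aak}$, which vanishes for $i\neq n$ and is nonzero for $i=n$. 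Hence ${\Aak}\otimes_E L$ is $n$-CM and $\pd_{\Aak}({\Aak}\otimes_E L)=n$. (That $\pd$ equals the top nonvanishing $\Ext$-degree against ${\Aak}$ uses that ${\Aak}$ is Auslander-Gorenstein of injective dimension $n$, by Lemma~\ref{CM for spherical}; alternatively one reads $\pd=n$ straight off the length-$n$ projective resolution once one knows it is minimal in the relevant sense.)

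Part~(3) is the special case $L=E/\mf{m}_\lambda$, a one-dimensional $E$-module, giving ${\Aak}\otimes_E(E/\mf{m}_\lambda)={\Aak}/{\Aak}\mf{m}_\lambda=\eQ_\lambda$, so $\eQ_\lambda$ is $n$-CM with $\pd_{\Aak}(\eQ_\lambda)=n$. For part~(4), let $\eQ\in\Osph$ be projective. Then $\eQ$ is finitely generated over $\C[\h]^W$ (Definition~\ref{defn:Osph}), hence over ${\Aak}$, so it has finite projective dimension; I would show $\pd_{\Aak}(\eQ)=n$ by sandwiching. On one hand $\gldim$-type bounds: every object of $\Osph$ is a quotient of a finite direct sum of copies of the $\eQ_\mu$ (or of modules ${\Aak}\otimes_E L$), and an induction on composition length using part~(2) and the horseshoe lemma shows $\pd_{\Aak}(M)\le n$ for all $M\in\Osph$; a projective $\eQ$ trivially has $\pd\le n$ this way. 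On the other hand, $\eQ$ surjects onto some simple module $T\in\Osph$, which (being finite-dimensional over $\C$, since simples in $\Osph$ are finite-dimensional) is $n$-CM with $\grade_{\Aak}(T)=n$ by the CM property (Lemma~\ref{CM for spherical}); since $\grade$ is monotone under submodules/quotients in an Auslander-Gorenstein ring, and $T$ is a quotient of the projective $\eQ$, one deduces $\grade_{\Aak}(\eQ)\le n$, but actually we need the reverse — here instead use: $\Ext^n_{\Aak}(\eQ,{\Aak})$ surjects onto (or has as a subquotient) $\Ext^n_{\Aak}(T,{\Aak})\neq0$ via the long exact sequence since $\pd(\text{kernel})\le n$, forcing $\Ext^n_{\Aak}(\eQ,{\Aak})\neq0$ and hence $\pd_{\Aak}(\eQ)\ge n$. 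Combined, $\pd_{\Aak}(\eQ)=n$; and $\eQ$ being projective, $\Ext^i_{\Aak}(\eQ,{\Aak})=0$ for $i>0$, so in fact $\Ext^i_{\Aak}(\eQ,{\Aak})=0$ for all $i\neq n$, i.e. $\eQ$ is $n$-CM.

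\medskip

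\noindent\textbf{Main obstacle.} The crux is part~(1): establishing that ${\Aak}$ really is \emph{free} (not just flat or faithfully projective) as a $\bigl(\C[\h]^W,E\bigr)$-bimodule, with the filtered lifting argument handled carefully enough that one genuinely gets a bimodule basis rather than just one-sided freeness on each factor separately. Once (1) is in hand, parts~(2)–(4) are formal homological algebra using Lemma~\ref{CM for spherical} (Auslander-Gorenstein + CM), the Auslander-regularity/CM of the polynomial ring $E$, and standard change-of-rings arguments; I expect these to be routine. A secondary point to get right in (4) is the claim that projective objects of $\Osph$ are finitely generated over $\C[\h]^W$ and that simple objects of $\Osph$ are finite-dimensional over $\C$, both of which follow from Definition~\ref{defn:Osph} together with the local finiteness of the $(\Sym\h)^W$-action.
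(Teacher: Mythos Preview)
Your treatment of parts~(1)--(3) follows the paper's approach: the paper simply cites \cite[Lemma~2.4]{BLNS} for (1), and your change-of-rings argument for (2)--(3) matches the paper's use of \cite[Corollary~10.65]{Rot} together with the CM property (Lemma~\ref{CM for spherical}) and the GK-dimension computation via Lemma~\ref{lem:GKdimOsph}.

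Part~(4), however, contains a genuine error. You conclude with ``$\eQ$ being projective, $\Ext^i_{\Aak}(\eQ,{\Aak})=0$ for $i>0$'', but $\eQ$ is projective \emph{in the abelian category $\Osph$}, not projective as an ${\Aak}$-module. Indeed the whole point of the proposition is that $\pd_{\Aak}(\eQ)=n>0$; you yourself have just argued $\Ext^n_{\Aak}(\eQ,{\Aak})\neq 0$, so the two claims are in direct contradiction and your deduction of $n$-CM collapses. Your intermediate step via a simple quotient $T$ is also wrong: simple objects of $\Osph$ are finitely generated over $\C[\h]^W$ but are \emph{not} finite-dimensional over~$\C$ in general (think of $\kappa=0$, $W=1$, where simples are the delta-modules at points of $\h^*$). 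And even if $T$ were finite-dimensional, the CM identity $\grade_{\Aak}(T)+\GKdim(T)=2n$ would give $\grade_{\Aak}(T)=2n$, not $n$, so $\Ext^n_{\Aak}(T,{\Aak})$ would vanish.

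The paper's argument uses projectivity in $\Osph$ in exactly the right way: choose a surjection ${\Aak}\otimes_E L\twoheadrightarrow\eQ$ with $L$ finite-dimensional; since both objects lie in $\Osph$ and $\eQ$ is projective \emph{there}, the surjection splits. Hence $\eQ$ is a direct summand of ${\Aak}\otimes_E L$, and $\Ext^i_{\Aak}(\eQ,{\Aak})$ is a summand of $\Ext^i_{\Aak}({\Aak}\otimes_E L,{\Aak})$, which vanishes for $i\neq n$ by part~(2). This gives $n$-CM (and $\pd=n$) immediately.
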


\begin{proof}  Part (1) is \cite[Lemma~2.4]{BLNS}.

	(2,3)	 
	 Here, $\pd_EL= n$ and hence $\Ext^i_{E}(L,K) = 0$
	for all left ${\Aak}$-modules $K$ and all $i > n$. Since ${\Aak}$ is a  free  $E$-module,  \cite[Corollary~10.65]{Rot} implies that
	$$
	\Ext^i_{\Aak}({\Aak} \o_{E} L,K) \cong \Ext^i_{E}(L,K) = 0
	$$
	for all $i > \dim \h$. Thus, $\pd_{\Aak}({\Aak} \o_{E} L) \le n$. 
	
	Conversely, by Part~(1), ${\Aak} \o_{E} L$ is a free $\C[\h]^W$-module of finite rank and hence  $\mathrm{GKdim}_{\C[\h]^W}( {\Aak} \o_{E} L) = n$. Therefore $\mathrm{GKdim}_{{\Aak}}( {\Aak} \o_{E} L) = n$  by Lemma~\ref{lem:GKdimOsph}.  Thus, by the CM condition,  $\Ext_{\Aak}^{n}({\Aak} \o_{E} L,{\Aak}) \neq 0$  and so $\pd({\Aak} \o_{E} L) = n$. Finally, since ${\Aak}$ is a free $\C[\h]^W$-module and $L$ is $n$-CM as a $\C[\h]^W$-module, 
	\begin{equation}\label{eq:indCMn}
	\Ext^i_{\Aak}({\Aak} \o_{E} L,{\Aak}) \cong \Ext^i_{E}(L,{\Aak}) = 0
	\end{equation}
	for all $i \neq n$. Thus, ${\Aak} \o_{E} L$ is $n$-CM. 
	 	
	\medskip
	
	(4) There exists a   surjection $\phi: {\Aak} \o_{E} L \twoheadrightarrow \eQ$, for some  finite-dimensional $E$-module $L$. Since both objects belong to $\Osph$ and  $\eQ$ is projective, $\phi$  splits and so  $\eQ$ is a summand of ${\Aak} \o_{E} L$. As in the proof of  Parts~(2,3), this implies that $\eQ$ is free of finite rank over $\C[\h]^W$ and hence  that $\pd(\eQ) = \dim \h$. Finally, since   $\eQ$ is a summand
	 of ${\Aak} \o_{E} L$,  the fact that $\eQ$ is $n$-CM follows from \eqref{eq:indCMn}.  
\end{proof}

Since we wish to apply Corollary~\ref{maincorollary} to  $\eQ_{\lambda} $,   the following  precise 
description of $\BD_{\Aak}(\eQ_{\lambda})=\Ext_{\Aak}^n(\eQ_\lambda,\, {\Aak})$, as defined in  Notation~\ref{main-notation}, will be useful.

\begin{corollary}\label{Q'-description} 
	If $\eQ_{\lambda}'$ is the right ${\Aak}$-module ${\Aak}/ \mf{m}_{\lambda} {\Aak}$, then $\BD_{\Aak}(\eQ_{\lambda})=\eQ_{\lambda}'$. 
\end{corollary}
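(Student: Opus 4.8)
\textbf{Proof proposal for Corollary~\ref{Q'-description}.}

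The goal is to compute $\BD_{\Aak}(\eQ_\lambda) = \Ext^n_{\Aak}(\eQ_\lambda, \Aak)$, where $\eQ_\lambda = \Aak/\Aak\mf{m}_\lambda$ and $n = \dim\h$, and to identify it with the right module $\eQ_\lambda' = \Aak/\mf{m}_\lambda\Aak$. The plan is to reduce the computation over $\Aak$ to a computation over the commutative ring $E = (\Sym\h)^W$ by exploiting freeness, and then to compute the relevant Ext group over $E$ explicitly using a Koszul-type resolution.

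First I would write $\eQ_\lambda = \Aak \otimes_E (E/\mf{m}_\lambda)$, using that $E/\mf{m}_\lambda$ is the finite-dimensional $E$-module at the point $W\lambda$. Since $\Aak$ is a free right $E$-module of finite rank by Proposition~\ref{pdQ}(1), the standard base-change isomorphism (as already invoked in the proof of Proposition~\ref{pdQ}, via \cite[Corollary~10.65]{Rot}) gives
\[
\Ext^i_{\Aak}\bigl(\Aak\otimes_E (E/\mf{m}_\lambda),\, \Aak\bigr) \ \cong\ \Ext^i_E\bigl(E/\mf{m}_\lambda,\, \Aak\bigr),
\]
where on the right $\Aak$ is viewed as a left $E$-module (equivalently, via the embedding $E \hookrightarrow \Aak$). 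So the problem becomes: compute $\Ext^n_E(E/\mf{m}_\lambda, \Aak)$ as a right $\Aak$-module. Here $E$ is a polynomial ring in $n$ variables and $\mf{m}_\lambda$ is (after localizing at $W\lambda$, or working with the $\mf{m}_\lambda$-adic/Koszul data) generated by a regular sequence of length $n$; the Koszul complex on these $n$ generators resolves $E/\mf{m}_\lambda$ over $E$. Applying $\Hom_E(-,\Aak)$ to this Koszul complex and taking cohomology in top degree $n$ yields $\Ext^n_E(E/\mf{m}_\lambda,\Aak) \cong \Aak/\mf{m}_\lambda\Aak$ as a right $\Aak$-module — this is the usual identification of the top Koszul cohomology of a regular sequence acting on a module $M$ with $M/(\text{sequence})M$, applied with $M = \Aak$ regarded as a left $E$-module (so the sequence, coming from $E$, acts on $\Aak$ on the left, producing the \emph{left} ideal $\mf{m}_\lambda\Aak$ and hence the right module $\Aak/\mf{m}_\lambda\Aak = \eQ_\lambda'$). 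One must also check that the lower Koszul cohomology vanishes, which follows because $\Aak$ is free, hence flat, over $E$ and the sequence remains regular on it; this is consistent with Proposition~\ref{pdQ}(3), which already tells us $\eQ_\lambda$ is $n$-CM so that only the degree-$n$ Ext is nonzero.

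The main technical point to be careful about — and the step I expect to require the most attention — is tracking the bimodule/side structure: $\Aak$ carries commuting left and right $\Aak$-actions, the $\Ext$ is computed using the left $E$-action (inherited from the left $\Aak$-structure, since $E \subset \Aak$ and the resolution of $\eQ_\lambda$ is by left modules), and the residual right $\Aak$-action on $\Ext^n$ is what must be matched with $\eQ_\lambda'$. Because $E \subset \Aak$ is central in $E$ but \emph{not} central in $\Aak$, one has to verify that the map $E \to \Aak$ lands appropriately and that the quotient $\Aak/\mf{m}_\lambda\Aak$ is genuinely a right $\Aak$-module (it is: $\mf{m}_\lambda\Aak$ is a right ideal). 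Once the base change and the Koszul identification are set up with the sides bookkept correctly, the identification $\Ext^n_{\Aak}(\eQ_\lambda,\Aak)\cong\eQ_\lambda'$ drops out. Alternatively, and perhaps more cleanly, one can invoke the symmetry between $\Osph$ and $\Osphop$ directly: the same argument with left and right interchanged shows $\Ext^n_{\Aak^{\mathrm{op}}}(\eQ_\lambda', \Aak) \cong \eQ_\lambda$, and the two statements are equivalent given that $\Aak$ is Auslander-Gorenstein and CM (Lemma~\ref{CM for spherical}), so that double-$\BD$ duality on $n$-CM modules is involutive.
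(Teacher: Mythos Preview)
Your proposal is correct and takes essentially the same approach as the paper: both use the Koszul resolution of $E/\mf{m}_\lambda$ over $E=(\Sym\h)^W$ together with freeness of $\Aak$ over $E$ to compute the top Ext as $\Aak/\mf{m}_\lambda\Aak$. The paper phrases this by first tensoring the Koszul complex with $\Aak$ to obtain an explicit $\Aak$-projective resolution of $\eQ_\lambda$ and then applying $\Hom_{\Aak}(-,\Aak)$, whereas you first invoke the base-change isomorphism $\Ext^i_{\Aak}(\Aak\otimes_E L,\Aak)\cong\Ext^i_E(L,\Aak)$ and then compute over $E$; these are two sides of the same coin. One small simplification: since $W$ is a complex reflection group, $E$ is globally a polynomial ring in $n$ variables, so $\mf{m}_\lambda$ is generated by a regular sequence without any localization.
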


\begin{proof}
	By Proposition~\ref{pdQ}(1),  ${\Aak}$ is a free module  over  $E=(\Sym  \h)^W$. If $U \subset \mf{m}_{\lambda}$ is 
	a minimal generating subspace  (of dimension $n=\dim \h$) 
	then the Koszul resolution 
	$$  	
	0\to E \o_\C \Lambda^{n} U \to E \o_\C \Lambda^{n- 1} U\to \cdots\to E \o_\C \Lambda^{0} U \to E/\mf{m}_{\lambda} \to 0,
	$$
	is a projective resolution of  $E$-modules. Tensoring by ${\Aak}$  gives a projective resolution 
	\begin{equation}\label{eq:projresQlambda}
	0\to {\Aak} \otimes_{\C} \Lambda^{n} U \to {\Aak} \otimes_{\C} \Lambda^{n- 1} U\to \cdots\to {\Aak} \otimes_{\C} \Lambda^{0} U \to \eQ_{\lambda} \to 0
	\end{equation}
	of the ${\Aak}$-module $\eQ_{\lambda}$. Thus,
	\[\begin{aligned} 
	\BD_{\Aak}(\eQ_{\lambda})  \  =\  \text{Coker}\Bigl( \Hom_{\Aak}({\Aak}\otimes_{\C}  \Lambda^{n-1}U,\, {\Aak})  
	\to&	\Hom_{\Aak}({\Aak}\otimes_{\C}  \Lambda^{n} U,\, {\Aak})\Bigr) \\
	& =\   {\Aak}/\mf{m}_{\lambda} {\Aak}  ,   
	\end{aligned}\]
	as required.
\end{proof}

Finally, we give some stronger results in the case when ${\Aak}$ is simple. A finitely generated 
${\Aak}$-module $L$ is defined  to be \emph{holonomic} \label{defn:holonomic} if $\GKdim (L)=\frac{1}{2} \GKdim ({\Aak})$. By \cite[Introduction]{Lo},  these modules have the same
well-known properties of holonomic modules over the Weyl algebra (where the same definition applies).
		
\begin{lemma}\label{CM for spherical2}		
	Let $n = \dim \h$ and assume that ${\Aak}$ is simple.
\begin{enumerate}

\item ${\Aak}$ has injective dimension $\injdim {\Aak}= n$.
		
\item Every nonzero module in $\Osph$ is holonomic and  $n$-CM.

\item Let  $L$ be a left ${\Aak}$-module on which $(\Sym  \h)^W$ acts locally finitely and such that some $0 \not= d\in \C[\h]^W$
acts locally nilpotently on $L$. Then $L=0$.  
	\end{enumerate}
\end{lemma}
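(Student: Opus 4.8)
The plan is to prove the three parts of Lemma~\ref{CM for spherical2} in order, using the hypothesis that ${\Aak}$ is simple together with the general structural facts already established (Lemma~\ref{CM for spherical}, Proposition~\ref{pdQ}, and Lemma~\ref{lem:GKdimOsph}).

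\textbf{Part (1).} First I would observe that ${\Aak}$ is already Auslander-Gorenstein (Lemma~\ref{CM for spherical}), so it has finite injective dimension, say $\injdim{\Aak} = N$. The grade filtration / Auslander-Gorenstein machinery gives $\grade_{\Aak}(L) \le N$ for every finitely generated module, and applying this to $\eQ_\lambda$ (which is $n$-CM by Proposition~\ref{pdQ}(3)) already shows $N \ge n$. For the reverse inequality $N \le n$, I would use that ${\Aak}$ is a free module of finite rank over the polynomial ring $E = (\Sym\h)^W$ (Proposition~\ref{pdQ}(1)) together with $\gldim E = n$; since ${\Aak}$ is free over $E$, any injective resolution can be built from that of $E$, giving $\injdim_{\Aak}{\Aak} \le \injdim_E E = n$. (Alternatively, one can quote the CM condition: $\grade_{\Aak}(L) = \GKdim{\Aak} - \GKdim(L) \le \GKdim{\Aak}$, and combine with the fact that for a CM Auslander-Gorenstein algebra the injective dimension equals the GK-dimension divided by $2$, which is $n$ since $\GKdim{\Aak} = \GKdim\C[\h\times\h^*]^W = 2n$.) I expect the cleanest route is the freeness-over-$E$ argument for the upper bound plus the $\eQ_\lambda$-witness for the lower bound.

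\textbf{Part (2).} Let $0 \ne L \in \Osph$. By Lemma~\ref{lem:GKdimOsph}, $\GKdim_{\Aak}(L) = \GKdim_{\C[\h]^W}(L) \le n$. The CM condition forces $\grade_{\Aak}(L) = \GKdim{\Aak} - \GKdim(L) = 2n - \GKdim(L) \ge n$. On the other hand $\grade_{\Aak}(L) \le \pd_{\Aak}(L)$, and every object of $\Osph$ has projective dimension $\le n$ since it is a quotient of some ${\Aak}\otimes_E L'$ of projective dimension $n$ (this is essentially the computation in Proposition~\ref{pdQ}(2)); actually it is cleaner to note $\grade_{\Aak}(L) \le \injdim{\Aak} = n$ by Part~(1). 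Hence $\grade_{\Aak}(L) = n$, forcing $\GKdim(L) = n = \tfrac12\GKdim{\Aak}$, i.e. $L$ is holonomic, and simultaneously $\Ext^i_{\Aak}(L,{\Aak}) = 0$ for $i < n$; for $i > n$ this vanishing is immediate from $\injdim{\Aak} = n$. Thus $L$ is $n$-CM.

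\textbf{Part (3).} This is where simplicity of ${\Aak}$ is genuinely used, and I expect it to be the main point. Suppose $L \ne 0$; I would reduce to the case where $L$ is finitely generated (any $L$ is the union of its finitely generated submodules, and the hypotheses pass to submodules). So assume $L$ is a nonzero finitely generated ${\Aak}$-module on which $(\Sym\h)^W$ acts locally finitely — hence $L \in \Osph$ — and on which some nonzero $d \in \C[\h]^W$ acts locally nilpotently. Since $\C[\h]^W$ acts locally finitely (being a subalgebra of $E$'s action, or via Proposition~\ref{pdQ}(1) which makes $L$ finitely generated over $\C[\h]^W$) and $d$ is locally nilpotent, $L$ is $d$-torsion, so $\GKdim_{\C[\h]^W}(L) < n$, whence $\GKdim_{\Aak}(L) < n$ by Lemma~\ref{lem:GKdimOsph}. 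But Part~(2) shows every nonzero object of $\Osph$ is holonomic of GK-dimension exactly $n$ — contradiction. Therefore $L = 0$. The only subtlety to check carefully is that $L$ really lies in $\Osph$ (finite generation over ${\Aak}$ is given; local finiteness of $(\Sym\h)^W$ is given), and that $d$ being locally nilpotent together with $L$ finitely generated over $\C[\h]^W$ forces $d^k L = 0$ for some $k$, hence the strict drop in GK-dimension. I do not anticipate a serious obstacle beyond bookkeeping; the engine is entirely the CM equality $\grade + \GKdim = 2n$ combined with $\injdim = n$.
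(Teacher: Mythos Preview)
Your arguments for Parts~(2) and~(3) are correct and essentially match the paper. The problem is Part~(1): neither of your proposed routes to the upper bound $\injdim{\Aak}\le n$ works, and in fact simplicity is indispensable there --- not only in Part~(3).

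The freeness argument fails. Proposition~\ref{pdQ}(1) holds for \emph{every} parameter $\kappa$, simple or not. If ${\Aak}$ is not simple it admits a nonzero finite-dimensional module $L$; by the CM condition $\grade_{{\Aak}}(L)=2n$, so $\injdim{\Aak}\ge 2n>n$. Thus freeness of ${\Aak}$ over $E=(\Sym\h)^W$ cannot possibly force $\injdim_{{\Aak}}{\Aak}\le \injdim_E E=n$, and the sentence ``any injective resolution can be built from that of $E$'' has no content: freeness of $A$ over a subring $E$ gives base-change isomorphisms for $\Ext$ of induced modules, but says nothing about $\injdim_A A$. Your alternative (``for a CM Auslander--Gorenstein algebra the injective dimension equals $\GKdim/2$'') is false for the same reason; what is true is that $\injdim{\Aak}=2n-\min\{\GKdim(N):0\neq N\text{ f.g.}\}$, and the point is precisely to show this minimum is $n$.

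The paper supplies the missing ingredient: when ${\Aak}$ is simple, Losev's Bernstein-type inequality \cite[Section~2.4 and Theorem~1.2(1)]{Lo} gives $\GKdim(N)\ge n$ for every nonzero finitely generated ${\Aak}$-module $N$. Combined with the existence of $\eQ_\lambda$ of GK-dimension exactly $n$ (your lower-bound witness), the minimum is $n$, and then CM plus \cite[Theorem~2.4(a)]{Le} yields $\injdim{\Aak}=n$. Once Part~(1) is repaired this way, your Parts~(2) and~(3) go through verbatim.
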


\begin{proof}  
	(1) Recall from Lemma~\ref{CM for spherical} that ${\Aak} $ is Auslander-Gorenstein. 
	Let  $\mu=\injdim {\Aak}<\infty$ and pick a finitely generated left ${\Aak}$-module $M$ that satisfies  
	$\Ext_{\Aak}^\mu(M,{\Aak})\not=0$. Then $\grade\bigl( \Ext^\mu(\Ext^\mu(M,{\Aak}),{\Aak})\bigr)=\mu$ by \cite[Theorem~2.4(a)]{Le}. Also, note that $\GKdim ({\Aak})=2n$. Thus,  using  the CM condition,  in order to 
	prove that $\injdim {\Aak}=n$ it suffices to prove that $n$  is the minimum Gelfand-Kirillov dimension of  nonzero  finitely generated left ${\Aak}$-modules.
		
	 By  \cite[Section~2.4 and Theorem~1.2(1)]{Lo},   $\GKdim (N) \geq \dim\mathfrak{h}$ holds for every finitely generated ${\Aak}$-module $ N\not=0$. Conversely, pick    $0\not=L\in \euls{O}^{sph}$. Then  $L$ is finitely generated as a $\C[\h]^W$-module  and so Lemma~\ref{lem:GKdimOsph} implies that 
	 $$\GKdim_{\Aak} L =\GKdim_{\C[\h]^W}L\leq \GKdim \C[\h]^W = n.$$ Thus 
	 $\GKdim_{\Aak} (L)=n$, as required.

	(2)   If $0\not=L\in \Osph$,  the proof of Part~(1) shows that  $L$ is holonomic. Thus, by  the CM condition,  $\Ext^j(L,{\Aak}) =0$ if  $j<n$, while  $\Ext^j(L,{\Aak}) =0$ by definition  if  $j> n=\injdim {\Aak}$.
 	
	(3) Replacing $L$ by a finitely generated ${\Aak}$-submodule, we can assume that $L\in \Osph$. Now, as in (1),
	$L$ is finitely generated as a $\C[\h]^W$ but it is now a torsion  $\C[\h]^W$-module. As such, 
	Lemma~\ref{lem:GKdimOsph} implies that $\GKdim {}_{\Aak} (L) <\GKdim (\C[\h]^W)$.
	This contradicts Part~(2). 
\end{proof}

%%%%%%%%%%%%%%%	

\subsection*{Projective objects in Category $\Osph$}
 
In later sections we will be assuming that ${\Aak}=\Ak(W)$ is simple, and   this has strong consequences for the structure of projective objects from the category  $\Osph$. Many of these results are proved in \cite{LosevTotally}    and in this subsection we summarise the results of Losev we will need later.
 
 We begin with some relevant definitions.  As before, let  $\Aak=\Ak(W) $ denote a spherical algebra
 although, since $\kappa$ and $W$  will now vary, we will typically write $\Osph_{\kappa,\llambda}(W)$ in place of  $\Osph_\lambda$. By Steinberg's Theorem, a parabolic subgroup $W_\nu\subset W$ (that is, the stabilizer of $\nu \in \h$) is generated by the reflections of $W$ that fix $\nu$. In particular, it is itself a complex reflection group. Taking only those $\kappa_{H,i}$ with $\nu \in H$ defines a parameter for the rational Cherednik algebra associated to $(\h,W_{\nu})$. Abusing notation, we still denote this parameter by $\kappa$ (formally, it is the restriction of $\kappa$ to $W_{\nu}$). \label{defn:c(nu)} For this complex reflection group we have an analogous spherical algebra  $\Ak(W_{\nu})$. \label{spherical-defn2}
 
Set $E=(\Sym  \h)^W$ and write $\widehat{E}_\lambda$ for  the completion of $E$ along $\mf{m}_{\lambda}$. Set 
\begin{equation}\label{defn:completion}
\widehat{A}_{\kappa}(W)_{\lambda} := \Ak(W) \otimes_{E} \widehat{E}_{\lambda}.
\end{equation}
By \cite[Theorem~3.2]{BE}, the algebra structure on $\Ak(W)$ extends to $\widehat{A}_{\kappa}(W)_{\lambda}$ and there is an isomorphism
\begin{equation}\label{eq:isoeHecomplete}
\widehat{A}_{\kappa}(W)_{\lambda} \stackrel{\sim}{\longrightarrow} \widehat{A}_{\kappa}(W_{\lambda})_{0}
\end{equation}
that restricts to the isomorphism $\widehat{E}_{\lambda}  \stackrel{\sim}{\longrightarrow} \widehat{(\Sym  \h)}^{W_{\lambda}}_{0}$ 
which is in turn  induced by mapping  $y-  \lambda(y) \mapsto y   $, for $y \in \h$.

These observations imply the following result.

\begin{lemma}\label{lem:genOsphparabolicequi}
 {\rm (1)}
	There is an equivalence of categories $\Osph_{\kappa,\llambda}(W) \stackrel{\sim}{\longrightarrow} \Osph_{\kappa,0}(W_{\lambda})$ which sends
	$\eQ_{\lambda}$ to $\eQ_0$.

{\rm (2)}   $\Osph$ has enough projectives.
\end{lemma}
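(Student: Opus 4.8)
\textbf{Proof plan for Lemma~\ref{lem:genOsphparabolicequi}.}

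The plan is to deduce both statements from the completion isomorphism \eqref{eq:isoeHecomplete} together with the block decomposition $\Osph = \bigoplus_{\llambda W} \Osph_{\llambda}$ recorded in Definition~\ref{defn:Osph}. For part~(1), first I would observe that the objects of $\Osph_{\kappa,\llambda}(W)$ are precisely the finitely generated $\Ak(W)$-modules that are supported (as $E$-modules) set-theoretically on $W\lambda$, and on which the action of $\mf{m}_\lambda$ is locally nilpotent; such a module $N$ is automatically a module over the completion $\widehat{A}_\kappa(W)_\lambda = \Ak(W)\otimes_E \widehat{E}_\lambda$ since $N = N\otimes_E \widehat{E}_\lambda$ (the completion being flat and the module being $\mf m_\lambda$-torsion and finitely generated over $\C[\h]^W$). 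Conversely any finitely generated $\widehat{A}_\kappa(W)_\lambda$-module on which $\mf{m}_\lambda$ acts locally nilpotently restricts to an object of $\Osph_{\kappa,\llambda}(W)$. Thus $\Osph_{\kappa,\llambda}(W)$ is canonically identified with the category of finitely generated $\widehat{A}_\kappa(W)_\lambda$-modules with locally nilpotent $\mf m_\lambda$-action; the same argument at $\lambda=0$ for the group $W_\lambda$ identifies $\Osph_{\kappa,0}(W_\lambda)$ with the category of finitely generated $\widehat{A}_\kappa(W_\lambda)_0$-modules with locally nilpotent $(\Sym\h)^{W_\lambda}_+$-action. Now transport structure along the algebra isomorphism \eqref{eq:isoeHecomplete}: since that isomorphism carries $\widehat{E}_\lambda$ onto $\widehat{(\Sym\h)}^{W_\lambda}_0$ and sends $\mf{m}_\lambda$ to the augmentation ideal (as it is induced by $y-\lambda(y)\mapsto y$), it matches ``locally nilpotent $\mf m_\lambda$-action'' with ``locally nilpotent $(\Sym\h)^{W_\lambda}_+$-action'', and hence induces the desired equivalence of categories. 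Finally, $\eQ_\lambda = \Ak(W)/\Ak(W)\mf m_\lambda$ becomes, after completion, $\widehat{A}_\kappa(W)_\lambda/\widehat{A}_\kappa(W)_\lambda\mf m_\lambda$, which under \eqref{eq:isoeHecomplete} goes to $\widehat{A}_\kappa(W_\lambda)_0/\widehat{A}_\kappa(W_\lambda)_0 (\Sym\h)^{W_\lambda}_+$, i.e.\ to (the completion of) $\eQ_0$; since both $\eQ_\lambda$ and $\eQ_0$ are already complete, the equivalence sends $\eQ_\lambda \mapsto \eQ_0$.

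For part~(2), I would argue that it suffices to produce enough projectives in each block $\Osph_{\kappa,\llambda}(W)$, and by part~(1) this reduces to the block $\Osph_{\kappa,0}(W_\lambda)$ for the parabolic $W_\lambda$. There, $\eQ_0 = \Ak(W_\lambda)/\Ak(W_\lambda)(\Sym\h)^{W_\lambda}_+$ is a projective generator: it is $\Ak(W_\lambda)\otimes_{(\Sym\h)^{W_\lambda}}(\Sym\h)^{W_\lambda}/(\Sym\h)^{W_\lambda}_+$, hence — restricting to finite-dimensional quotients of $(\Sym\h)^{W_\lambda}$ — any object of $\Osph_{\kappa,0}(W_\lambda)$ is a quotient of a finite direct sum of modules of the form $\Ak(W_\lambda)\otimes_{(\Sym\h)^{W_\lambda}} L$ with $L$ finite-dimensional, and these are projective in the ambient module category relative to $(\Sym\h)^{W_\lambda}$, so their restrictions to the block are projective. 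Pulling back through the equivalence of part~(1) gives enough projectives in $\Osph_{\kappa,\llambda}(W)$, and assembling over the finitely many blocks gives enough projectives in $\Osph$.

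The main obstacle I anticipate is the bookkeeping around completions in part~(1): one must check carefully that ``finitely generated over $\Ak(W)$ with $\mf m_\lambda$ locally nilpotent'' is genuinely equivalent to ``finitely generated over $\widehat A_\kappa(W)_\lambda$ with $\mf m_\lambda$ locally nilpotent'' — i.e.\ that no information is lost or gained upon completion — which rests on the finiteness of $\Ak(W)$ over $E=(\Sym\h)^W$ (Proposition~\ref{pdQ}(1)) and the faithful flatness of $\widehat E_\lambda$ over the localization of $E$ at $\mf m_\lambda$, together with the fact that $\mf m_\lambda$-locally-nilpotent modules are unaffected by localizing away from $\mf m_\lambda$. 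Once that identification is in place, everything else is a transparent transport of structure along \eqref{eq:isoeHecomplete}, and part~(2) is then essentially formal given the explicit projective generators $\Ak(W_\lambda)\otimes_{(\Sym\h)^{W_\lambda}} L$.
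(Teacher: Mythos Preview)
Your approach to part~(1) is correct and is essentially what the paper does, only spelled out in more detail: the paper simply says ``Part~(1) follows from the isomorphism~\eqref{eq:isoeHecomplete}'' and leaves the completion bookkeeping implicit, whereas you make it explicit.

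Your argument for part~(2), however, has a genuine gap. You assert that the modules $\Ak(W_\lambda)\otimes_{(\Sym\h)^{W_\lambda}} L$ (in particular $\eQ_0$) are projective in $\Osph_{\kappa,0}(W_\lambda)$, justifying this by saying they are ``projective in the ambient module category relative to $(\Sym\h)^{W_\lambda}$, so their restrictions to the block are projective.'' The first clause is true in the sense of relative homological algebra (induced modules are relatively projective), but the inference fails: for these modules to be projective in $\Osph_{\kappa,0}(W_\lambda)$ one would need every short exact sequence in that category to be $(\Sym\h)^{W_\lambda}$-split, and this is false. Indeed, taking $L=\C$, your claim says precisely that $\eQ_0$ is projective in $\Osph_{\kappa,0}(W_\lambda)$; but Proposition~\ref{prop:Qlambdaprojiffsimple} (proved later in the paper, independently of the present lemma) shows that $\eQ_0$ is projective there if and only if $\Ak(W_\lambda)$ is simple. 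So $\eQ_0$ is not a projective generator in general, and the induced modules $\Ak\otimes_E L$ do not furnish enough projectives.

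The paper avoids this by not attempting a direct construction: it invokes \cite[Theorem~2.5(ii)]{Primitive}, which proves that $\Osph_{\kappa,0}(W_\lambda)$ has enough projectives by a different (highest-weight/grading) mechanism, and then transfers this through the equivalence of part~(1). If you want a self-contained argument, you would need to imitate that construction rather than rely on induction from $(\Sym\h)^{W_\lambda}$.
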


\begin{proof} Part~(1) follows from the  isomorphism \eqref{eq:isoeHecomplete} . In order to prove Part~(2), 
 note that  $\Osph_{\kappa,0}(W_{\lambda})$ has enough projectives by  \cite[Theorem~2.5(ii)]{Primitive}. Therefore Part~(1) implies the same for   $\Osph$.
 \end{proof}

We let $\euls{O}_{\kappa}$ denote the usual category $\euls{O}$ \label{defn:O-cherednik}
for the (full) rational Cherednik algebra $\Hk(W)$. The \textit{Harish-Chandra module} in this case is
$$
\mathfrak{H}_{\kappa} := \Hk (W) \otimes_{(\Sym  \h) \rtimes W} (\Sym  \h)^{\mathrm{co} W}.
$$
Multiplication by the trivial idempotent $e$ induces  an exact quotient functor $\euls{O}_{\kappa} \to \Osph_{\kappa,0}$.

\begin{lemma}\label{lem:HCprojiffQproj}
\begin{enumerate}
\item	$e\mathfrak{H}_\kappa = \eQ_0$. 

\item For $M\in \euls{O}_{\kappa}$ we have 
$\Hom_{\Hk(W)}(\mathfrak{H}_{\kappa},\, M)= \Hom_{\Ak(W)}(\eQ_0, eM)$.

\item Moreover,  $\mathfrak{H}_\kappa$ is projective in $\euls{O}_\kappa$ if and only if $\eQ_0$ is projective in $\Osph_{\kappa,0}$.
\end{enumerate}
\end{lemma}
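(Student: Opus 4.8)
The plan is to prove Lemma~\ref{lem:HCprojiffQproj} by exploiting the fact that $e\colon \euls{O}_\kappa\to\Osph_{\kappa,0}$ is an exact quotient functor with a well-understood section, together with the double-centralizer/recollement picture for category $\euls{O}$ of the rational Cherednik algebra. The three parts build on each other: (1) is a direct computation, (2) follows formally from (1) together with the adjunction properties of $e$, and (3) is the substantive statement and follows from (2) plus the faithful-exactness of $e$ on the relevant subcategory.

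First I would prove (1). By definition $\mathfrak{H}_\kappa = \Hk(W)\otimes_{(\Sym\h)\rtimes W}(\Sym\h)^{\mathrm{co}W}$, where $(\Sym\h)^{\mathrm{co}W}$ is the coinvariant algebra carrying its natural $W$-action. Multiplying by $e$ and using the PBW decomposition $\Hk(W) = (\Sym V)\otimes \C W\otimes(\Sym\h)$ (with $\Sym V$ acting as the ``lowering'' part), one has $e\mathfrak{H}_\kappa = e\Hk(W)e\otimes_{e((\Sym\h)\rtimes W)e}\, e(\Sym\h)^{\mathrm{co}W}$ after identifying $e(\Sym\h\rtimes W)e\cong(\Sym\h)^W$ and $e(\Sym\h)^{\mathrm{co}W}\cong e(\Sym\h)^{\mathrm{co}W}$. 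The key point is that $(\Sym\h)^{\mathrm{co}W}\cong (\Sym\h)\otimes_{(\Sym\h)^W}\C$ as a graded $W$-module (this is the classical Chevalley fact that $\Sym\h$ is free over $(\Sym\h)^W$ with the coinvariants as fibre), so $e(\Sym\h)^{\mathrm{co}W}\cong (\Sym\h)^W\otimes_{(\Sym\h)^W}\C = \C$ concentrated appropriately; hence $e\mathfrak{H}_\kappa\cong \Aak\otimes_{(\Sym\h)^W}\C = \Aak/\Aak\mf{m}_0 = \eQ_0$. I would be slightly careful that $e$ is applied on the correct side and that the tensor products rearrange as claimed; this is the sort of computation already implicit in \cite[p.262]{EG} and in the spherical-algebra literature.

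For (2): the functor $M\mapsto eM$ from $\Hk(W)\lmod$ to $\Aak\lmod$ has left adjoint $N\mapsto \Hk(W)e\otimes_{\Aak}N$, and on category $\euls{O}$ this left adjoint sends $\eQ_0$ to $\mathfrak{H}_\kappa$ (again by (1) and the fact that $\Hk(W)e\otimes_{\Aak}(\Aak/\Aak\mf{m}_0)\cong \Hk(W)e/\Hk(W)e\mf{m}_0$, which one identifies with $\mathfrak{H}_\kappa$ using PBW once more — essentially (1) read in reverse). Therefore $\Hom_{\Hk(W)}(\mathfrak{H}_\kappa, M) = \Hom_{\Hk(W)}(\Hk(W)e\otimes_{\Aak}\eQ_0, M) \cong \Hom_{\Aak}(\eQ_0, eM)$ by the tensor-hom adjunction. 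This step is purely formal once (1) is in place.

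Part (3) is where the real content lies. The forward direction is immediate: $e$ is exact, so applying it to a projective resolution and using (2) shows that $\Hom_{\Aak}(\eQ_0, e(-)) = \Hom_{\Hk(W)}(\mathfrak{H}_\kappa, -)\circ(\text{forgetful})$ is exact on $\Osph_{\kappa,0}$ if $\mathfrak{H}_\kappa$ is projective in $\euls{O}_\kappa$, because every $N\in\Osph_{\kappa,0}$ is of the form $eM$ for $M\in\euls{O}_\kappa$ (the functor $e$ is a quotient functor, hence essentially surjective) — wait, one must check $e$ admits a section landing in $\euls{O}_\kappa$; indeed $\Hk(W)e\otimes_{\Aak}(-)$ lands in $\euls{O}_\kappa$ on objects of $\Osph_{\kappa,0}$ and $e$ applied to it recovers the identity up to the standard comparison, so essential surjectivity holds. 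For the converse, suppose $\eQ_0$ is projective in $\Osph_{\kappa,0}$; I want $\mathfrak{H}_\kappa$ projective in $\euls{O}_\kappa$, i.e.\ $\Hom_{\Hk(W)}(\mathfrak{H}_\kappa,-)$ exact on $\euls{O}_\kappa$. By (2) this is $\Hom_{\Aak}(\eQ_0, e(-))$; now $e\colon\euls{O}_\kappa\to\Osph_{\kappa,0}$ is exact, and $\Hom_{\Aak}(\eQ_0,-)$ is exact on $\Osph_{\kappa,0}$ by hypothesis, so the composite is exact. \textbf{The main obstacle} is making sure that the natural comparison maps between $M$ and $\Hk(W)e\otimes_{\Aak}eM$ (the counit of the adjunction) are controlled well enough on category $\euls{O}$ — in the standard double-centralizer setup (Ginzburg--Guay--Opdam--Rouquier, \cite{GGOR}) this is the statement that $e$ is a quotient functor that is an equivalence on a localizing subcategory, and one needs the projective $\mathfrak{H}_\kappa$ to be ``$e$-faithful,'' i.e.\ not killed by $e$. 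Granting that (which follows since $e\mathfrak{H}_\kappa = \eQ_0\neq 0$), the equivalence of the two projectivity statements follows from the general nonsense that for a quotient functor $e$ with section $s$ and an object $P$ with $eP$ projective and $P$ ``$e$-torsion-free'', $P$ is projective iff $eP$ is. I would phrase the proof so as to avoid invoking the full recollement and instead argue directly with the adjunction $(\Hk(W)e\otimes_{\Aak}-,\ e-)$ and exactness of $e$, which keeps the argument self-contained modulo (1) and (2).
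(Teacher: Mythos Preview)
Your argument for (2) contains a genuine error: the claim that $\Hk(W)e\otimes_{\Aak}\eQ_0\cong\mathfrak{H}_\kappa$ is false. Indeed, $\Hk(W)e\otimes_{\Aak}\eQ_0 = \Hk(W)e/\Hk(W)e\,\mf{m}_0$, and passing to associated graded (using PBW) this is $\C[\h]\otimes\C[\h^*]^{\mathrm{co}W}$, which is free of rank $|W|$ over $\C[\h]$. By contrast $\gr\mathfrak{H}_\kappa\cong \C[\h]\otimes\C[\h^*]^{\mathrm{co}W}\otimes\C W$, of rank $|W|^2$ over $\C[\h]$. So the two modules are not isomorphic; the counit $\Hk(W)e\otimes_{\Aak}e\mathfrak{H}_\kappa\to\mathfrak{H}_\kappa$ is not an isomorphism, and ``(1) read in reverse'' does not work. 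The left adjoint $\Hk(W)e\otimes_{\Aak}(-)$ simply does not send $\eQ_0$ to $\mathfrak{H}_\kappa$.

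The paper avoids this by a direct computation rather than an adjunction: since $\mathfrak{H}_\kappa$ is induced from the cyclic $(\Sym\h)\rtimes W$-module $(\Sym\h)^{\mathrm{co}W}$, whose generator $\bar{1}$ is $W$-invariant and annihilated by $(\Sym\h)^W_+$, one gets $\Hom_{\Hk(W)}(\mathfrak{H}_\kappa,M)=\{m\in M^W:(\Sym\h)^W_+m=0\}=(eM)^{C_+}$ with $C=(\Sym\h)^W$. The right-hand side is then identified with $\Hom_{\Aak}(\eQ_0,eM)$ by the adjunction for $\Aak\otimes_C(-)$, since $\eQ_0=\Aak\otimes_C(C/C_+)$. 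Once (2) is established this way, your argument for (3) is essentially correct and coincides with the paper's: the converse direction is exactly $\Hom_{\Hk}(\mathfrak{H}_\kappa,-)=\Hom_{\Aak}(\eQ_0,e(-))$, a composite of exact functors. For the forward direction your sketch is incomplete (a short exact sequence in $\Osph_{\kappa,0}$ need not lift to one in $\euls{O}_\kappa$); the paper handles this by applying $\Hk(W)e\otimes_{\Aak}(-)$ to obtain a four-term exact sequence with an extra kernel $T$ satisfying $eT=0$, which is killed upon applying $\Hom_{\Hk}(\mathfrak{H}_\kappa,-)$ by (2).
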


\begin{proof} (1) 
 Note that $(\Sym  \h)^{\mathrm{co} W} = \left((\Sym  \h) \rtimes W\right) \otimes_{F} \C e$, for   $F=(\Sym  \h)^W \otimes_{\C} \C W$.  
	This implies that
	$$
	e \mathfrak{H}_{\kappa} \ \cong\  e \Hk (W) \otimes_{F} \C e
	\  \cong\  e \Hk(W) e \otimes_{E} \C \ = \  \eQ_0.
	$$

(2)	Set $C=(\Sym \h)^W$.  By \cite[(2.2)]{LosevTotally}, 
$\Hom_{\Hk(W)}(\mathfrak{H}_{\kappa}, M)= (eM)^{C_+}$
for $M\in \euls{O}_{\kappa}$. 
On the other hand $\eG_0= \Ak(W)\otimes_C (C/C_+)$ and so, by adjunction, 
\[\begin{aligned}
\Hom_{\Ak(W)}(\eG_0, eM) \ =& \ \Hom_{\Ak(W)}(\Ak(W)\otimes_C(C/C^+),\, eM) \\
&=\Hom_{C}(C/C_+,eM)= (eM)^{C_+},
\end{aligned}\]
as required.

(3)	Assume that $\mathfrak{H}_{\kappa}$ is projective in $\euls{O}_{\kappa}$. Then for any  short exact sequence $0 \to N_1 \to N_2 \to N_3 \to 0$ in $\Osph_{\kappa,0}$, 
we get a long exact sequence
	$$
	0 \to T \to \Hk(W) e \otimes_{\Ak(W)} N_1 \to \Hk(W) e \otimes_{\Ak(W)} N_2 \to \Hk(W) e \otimes_{\Ak(W)} N_3 \to 0,
	$$
	where $e T = 0$.  
 Now  apply $\Hom_{\Hk(W)}(\mathfrak{H}_{\kappa} ,-)$ to this long exact sequence. Using the identity from (2)  recovers a short exact sequence
	$$
	0 \to \Hom_{\Ak(W)}(\eQ_0,N_1) \to \Hom_{\Ak(W)}(\eQ_0,N_2) \to \Hom_{\Ak(W)}(\eQ_0,N_3) \to 0.
	$$
	Thus, $\eQ_0$ is projective in $\Osph_{\kappa,0}$. 
	
	Conversely, assume that $\eQ_0$ is projective  in $\Osph_{\kappa,0}$. Then applying  the   isomorphism 
	\[\Hom_{\Hk(W)}(\mathfrak{H}_{\kappa} ,-) \cong \Hom_{\Ak(W)}(\eQ_0,e(-))\] shows  that $\Hom_{\Hk(W)}(\mathfrak{H}_{\kappa} ,-)$ is the composition of
	 two exact functors, and hence is exact. Thus, $\mathfrak{H}_{\kappa}$ is projective in $\euls{O}_{\kappa}$.
\end{proof}

We will deduce the following  fact from the main result of \cite{LosevTotally}.

\begin{proposition}\label{prop:Qlambdaprojiffsimple}
	The module $\eQ_{\lambda}$ is projective in $\Osph_{\kappa,\llambda}(W)$ if and only if the algebra $\Ak(W_{\lambda})$ is simple.
\end{proposition}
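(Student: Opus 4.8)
By Lemma~\ref{lem:genOsphparabolicequi}(1) the category $\Osph_{\kappa,\llambda}(W)$ is equivalent to $\Osph_{\kappa,0}(W_{\lambda})$ with $\eQ_{\lambda}$ corresponding to $\eQ_0$, so the statement reduces to the case $\lambda = 0$ and an arbitrary complex reflection group (here $W_{\lambda}$ in place of $W$): we must show $\eQ_0 = \Ak(W_\lambda)/\Ak(W_\lambda)(\Sym\h)^{W_\lambda}_+$ is projective in $\Osph_{\kappa,0}(W_\lambda)$ if and only if $\Ak(W_\lambda)$ is simple. First I would invoke Lemma~\ref{lem:HCprojiffQproj}(3), which says $\eQ_0$ is projective in $\Osph_{\kappa,0}(W_\lambda)$ exactly when the Harish-Chandra module $\mathfrak{H}_{\kappa}$ (for the full Cherednik algebra $\Hk(W_\lambda)$) is projective in category $\euls{O}_{\kappa}$.

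The plan is then to translate this into the language of \cite{LosevTotally}. The main result there characterizes, via the notion of a ``totally'' (or fully) nonprojective module and the structure of category $\euls{O}$, precisely when $\mathfrak{H}_\kappa$ is projective in $\euls{O}_\kappa$; I would cite that theorem to the effect that $\mathfrak{H}_\kappa$ is projective in $\euls{O}_\kappa$ if and only if $\Ak(W_\lambda)$ (equivalently $e\Hk(W_\lambda)e$) is a simple ring. The one thing that needs care is matching conventions: Losev works with the full rational Cherednik algebra and with the various versions of the standard/Verma and Harish-Chandra objects, so I would carefully identify $\mathfrak{H}_\kappa$ with the object Losev considers (it is the standardly-filtered module with associated graded the coinvariant algebra, i.e.\ the image of the polynomial representation under the naive duality, or the projective cover of the Verma $\Delta(\mathrm{triv})$ precisely when it is projective), and note that the simplicity criterion on $\Ak(W_\lambda)$ is stable under the equivalences and dualities used. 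Chaining these identifications with Lemma~\ref{lem:HCprojiffQproj} then gives both directions.

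For the ``if'' direction alone one can also argue more directly: if $\Ak(W_\lambda)$ is simple, then by Lemma~\ref{CM for spherical2}(2) every nonzero object of $\Osph_{\kappa,0}(W_\lambda)$ is $n$-CM with $n = \dim\h$, and $\eQ_0$ is $n$-CM of projective dimension $n$ by Proposition~\ref{pdQ}(3); simplicity forces the homological behaviour of $\Osph$ to be that of modules over a maximal Cohen--Macaulay order, and one can use this together with the exact quotient functor $\euls{O}_\kappa \to \Osph_{\kappa,0}$ and the self-duality of $\euls{O}_\kappa$ to conclude $\eQ_0$ is projective. For the ``only if'' direction, if $\Ak(W_\lambda)$ is not simple it has a proper nonzero two-sided ideal, hence (by the classification of such ideals for spherical subalgebras, via the aspherical/KZ picture) a nonsemisimple block or a nonzero finite-dimensional module in $\Osph_{\kappa,0}$ on which $\eQ_0$ cannot be projective; this is where I would lean on \cite{LosevTotally} rather than reprove it.

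\textbf{Main obstacle.} The hard part is not any calculation but the precise bookkeeping needed to cite \cite{LosevTotally}: aligning Losev's normalization of the Cherednik algebra, his choice of Harish-Chandra/standard objects, and his simplicity criterion with the objects $\mathfrak{H}_\kappa$ and $\eQ_0$ and the parameter $\kappa$ used here, and checking that the reduction to $W_\lambda$ via Lemma~\ref{lem:genOsphparabolicequi} is compatible with all of this. Once that dictionary is set up, the proof is a short chain of the cited equivalences.
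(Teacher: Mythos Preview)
Your approach is correct and essentially identical to the paper's: reduce to $\lambda=0$ via Lemma~\ref{lem:genOsphparabolicequi}, use Lemma~\ref{lem:HCprojiffQproj}(3) to pass to $\mathfrak{H}_\kappa$, then cite \cite{LosevTotally}. For the bookkeeping you flagged as the obstacle: the paper uses \cite[Proposition~2.6]{LosevTotally} (simple $\Leftrightarrow$ $\kappa$ totally aspherical---that is the correct term, not ``totally nonprojective''), \cite[Theorem~1.1]{LosevTotally} (totally aspherical $\Leftrightarrow$ $\mathfrak{H}_\kappa\cong P_{\mr{KZ}}$, hence projective), and for the converse \cite[Remark~2.5]{LosevTotally} (not simple $\Rightarrow$ $\mathfrak{H}_\kappa$ not projective)---note that this last piece is needed separately, since $\mathfrak{H}_\kappa$ projective does not a priori force $\mathfrak{H}_\kappa\cong P_{\mr{KZ}}$. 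Your alternative CM-based sketch for the ``if'' direction and the finite-dimensional-module sketch for ``only if'' are too vague to stand on their own and would essentially reprove Losev's results, so the paper wisely just cites them.
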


\begin{proof}
	By Lemma~\ref{lem:genOsphparabolicequi}, $\eQ_{\lambda}$ is projective in $\Osph_{\kappa,\llambda}(W)$ if and only if 
	$\eQ_{0}$ is projective in $\Osph_{\kappa,0}(W_{\lambda})$. By Kashiwara's Lemma \cite[Theorem~1.6.1]{HTT},
	we may assume that $\h$ is an irreducible $W_{\lambda}$-module.
	
	By \cite[Proposition~2.7]{LosevTotally}, and in the notation of that paper,  
	$\Ak(W_{\lambda})$ is simple if and only if $\kappa$ is totally aspherical for $W_{\lambda}$. 
 By	 \cite[Theorem~1.2]{LosevTotally} the latter holds if and only if $\mathfrak{H}_{\kappa}$ is isomorphic to the projective module $P_{\mathrm{KZ}}$. In particular,
  if  $\Ak(W_{\lambda})$ is simple then $\mathfrak{H}_{\kappa}$ is projective 
	 and hence $\eQ_{\lambda}$ is projective by 
	 Lemmata~\ref{lem:HCprojiffQproj}(3) and~\ref{lem:genOsphparabolicequi}.
	  If $\Ak(W_{\lambda})$ is not simple then \cite[Remark~2.6]{LosevTotally} says that $\mathfrak{H}_{\kappa}$ is not projective; hence
	   $\eQ_{\lambda}$ is not projective by Lemmata~\ref{lem:HCprojiffQproj} and~\ref{lem:genOsphparabolicequi}, again.
\end{proof}

\begin{lemma}\label{lem:simpleinductionLosev}
	The algebra $\Ak(W)$ is simple if and only if $\Ak(W_{\lambda})$ is simple for all $\lambda \in \h^*$.
\end{lemma}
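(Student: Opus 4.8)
The plan is to prove Lemma~\ref{lem:simpleinductionLosev} by reducing it to the already-established Proposition~\ref{prop:Qlambdaprojiffsimple} together with Lemma~\ref{lem:genOsphparabolicequi}, and by invoking the standard fact that an algebra $\Ak(W)$ of the relevant type is simple precisely when $\Osph$ has no nonzero finite-length modules (equivalently, when $\eQ_0$ — or each $\eQ_\lambda$ — is not just finitely generated but enjoys the good homological behaviour forced by simplicity). Concretely, I would use the characterisation already built into the excerpt: by Proposition~\ref{prop:Qlambdaprojiffsimple}, $\Ak(W_\lambda)$ is simple if and only if $\eQ_\lambda$ is projective in $\Osph_{\kappa,\lambda}(W)$. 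So the whole statement becomes: $\Ak(W)$ is simple $\iff$ $\eQ_\lambda$ is projective in $\Osph_{\kappa,\lambda}(W)$ for every $\lambda\in\h^*$.

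For the forward direction, suppose $\Ak(W)$ is simple. Taking $\lambda=0$ (so that $W_0=W$), Proposition~\ref{prop:Qlambdaprojiffsimple} immediately gives that $\eQ_0$ is projective in $\Osph_{\kappa,0}(W)$, hence $\Ak(W_0)=\Ak(W)$ is simple — but that is circular; instead I should run the argument the other way. The cleaner route: if $\Ak(W)$ is simple, then by Lemma~\ref{CM for spherical2} every nonzero object of $\Osph$ is holonomic and $n$-CM, and in particular $\Osph_{\kappa,\lambda}(W)$ contains no proper nonzero torsion phenomena; combined with Lemma~\ref{lem:genOsphparabolicequi}(1), which identifies $\Osph_{\kappa,\lambda}(W)\simeq\Osph_{\kappa,0}(W_\lambda)$ sending $\eQ_\lambda$ to $\eQ_0$, one transports this good behaviour to $\Osph_{\kappa,0}(W_\lambda)$. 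The key point is that simplicity of $\Ak(W)$ forces, via the completion isomorphism \eqref{eq:isoeHecomplete}, the analogous simplicity of $\widehat{A}_\kappa(W_\lambda)_0$, and since $\Ak(W_\lambda)$ is a flat $\widehat{\;}$-form one deduces simplicity of $\Ak(W_\lambda)$ itself. This is where I would spell out that a two-sided ideal $\mf{a}\subsetneq\Ak(W_\lambda)$ would, after completing at $\mf{m}_0$, remain proper (by faithful flatness of completion over the noetherian $\Z$-graded/filtered setup, or because $\Ak(W_\lambda)/\mf{a}$ is nonzero and finitely generated over the complete local ring $\widehat{(\Sym\h)}^{W_\lambda}_0$), contradicting simplicity of $\widehat{A}_\kappa(W_\lambda)_0\cong\widehat{A}_\kappa(W)_\lambda$, a summand/localisation of the simple ring $\Ak(W)$ along $\widehat E_\lambda$.

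For the converse, suppose $\Ak(W_\lambda)$ is simple for all $\lambda\in\h^*$; take $\lambda=0$ to get that $\Ak(W)=\Ak(W_0)$ is simple, so this direction is trivial once one observes $0\in\h^*$ and $W_0=W$. So the real content is the forward direction, and the single substantive lemma I need is the passage ``$\Ak(W)$ simple $\Rightarrow$ $\widehat{A}_\kappa(W)_\lambda$ simple $\Rightarrow$ $\Ak(W_\lambda)$ simple,'' using \eqref{eq:isoeHecomplete}. The main obstacle will be making the descent from the complete algebra back to $\Ak(W_\lambda)$ rigorous: one must argue that simplicity passes from $\widehat{A}_\kappa(W_\lambda)_0$ to $\Ak(W_\lambda)$, which is not automatic for arbitrary completions but does hold here because $\Ak(W_\lambda)$ is a finitely generated module over $(\Sym\h)^{W_\lambda}$ with $\widehat{A}_\kappa(W_\lambda)_0 = \Ak(W_\lambda)\otimes_{(\Sym\h)^{W_\lambda}}\widehat{(\Sym\h)}^{W_\lambda}_0$ faithfully flat over $\Ak(W_\lambda)$ — so any nonzero two-sided ideal stays nonzero after completion and (being contained in a proper ideal if proper, since $\Ak(W_\lambda)/\mf{a}$ is then a nonzero module with support meeting the closed point $\mf{m}_0$ after an appropriate $W$-translation, using that $\Osph$ decomposes over $\h^*/W$) gives a proper nonzero ideal of $\widehat{A}_\kappa(W_\lambda)_0$, a contradiction. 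I would present this as a short self-contained argument rather than citing a black box.
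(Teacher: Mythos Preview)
Your approach via the completion isomorphism \eqref{eq:isoeHecomplete} is a natural idea, but as written it has two genuine gaps, one in each direction of the chain
\[
\Ak(W)\text{ simple}\ \Longrightarrow\ \widehat{A}_\kappa(W)_\lambda\text{ simple}\ \Longrightarrow\ \Ak(W_\lambda)\text{ simple}.
\]
For the first implication you call $\widehat{A}_\kappa(W)_\lambda$ ``a summand/localisation of the simple ring $\Ak(W)$,'' but the completion $\Ak(W)\otimes_E\widehat E_\lambda$ is neither a ring summand nor an Ore localisation, and simplicity does not pass to completions in general; you would need a separate argument here. For the second implication your descent argument rests on the claim that $\Ak(W_\lambda)$ is a finitely generated $(\Sym\h)^{W_\lambda}$-module, which is false: $\Ak(W_\lambda)$ is free of finite rank over $\C[\h]^{W_\lambda}\otimes(\Sym\h)^{W_\lambda}$, not over one factor alone. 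Consequently the completion at $\mf m_0$ is not faithfully flat on arbitrary $\Ak(W_\lambda)$-modules, only on those on which $(\Sym\h)^{W_\lambda}_+$ acts locally nilpotently; the quotient $\Ak(W_\lambda)/\mf a$ by a two-sided ideal is not such a module, and your attempt to place it in $\Osph$ via ``support meeting $\mf m_0$ after a $W$-translation'' conflates bimodules with objects of $\Osph$.

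The paper sidesteps both issues entirely by quoting \cite[Lemma~2.8]{LosevTotally}: simplicity of $\Ak(W)$ is equivalent to Losev's property~$(\ast)$ for $(W,\kappa)$, and $(\ast)$ is a condition quantified over parabolic subgroups of $W$. Since every parabolic of $W_\lambda$ is a parabolic of $W$, property~$(\ast)$ for $(W,\kappa)$ immediately implies it for $(W_\lambda,\kappa)$, hence $\Ak(W_\lambda)$ is simple. The reverse direction is, as you note, trivial on taking $\lambda=0$. If you want to avoid citing \cite{LosevTotally} directly, a workable route is to use the aspherical characterisation: $\Ak(W_\lambda)$ fails to be simple iff there is a simple $L\in\euls O_{\kappa,0}(W_\lambda)$ with $eL=0$, and then push $L$ through Bezrukavnikov--Etingof parabolic induction to produce an aspherical simple for $W$---but that is again essentially Losev's argument, not the completion argument you sketched.
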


\begin{proof}
	We just need to show that $\Ak(W_{\lambda})$ is simple if $\Ak(W)$ is simple. This follows directly from \cite[Lemma~2.9]{LosevTotally} since if property (*) of \cite{LosevTotally} holds for $W$ and $\kappa$ then it certainly holds for $W_{\lambda}$ and $\kappa$. 
\end{proof}

The significance  of these results  will become apparent when we combine them  with the 
Knizhnik-Zamolodchikov ($\mr{KZ}$)  functor. In order to explain  this,  we first need to define the  Hecke algebra, for which we will use the presentation from \cite[Section~5.2.5]{GGOR}, except for a change of sign as explained in \cite[Section~5.2.1]{RouquierQSchur}

\begin{definition}\label{Hecke-defn} 
 Let  $\euls{A}$ denote the set of reflecting hyperplanes for the complex reflection group $W$. Then the pointwise stabiliser of $H \in \euls{A}$ is $W_H=\langle s_H\rangle$, where $s_H \in W_H$ is uniquely specified by $\det_{\h}(s_H) = \exp(2 \pi i / \ell_H)$ and $\ell_H := |W_H|$. 

 	Let $B_W$ be the braid group associated to $W$, with generators $\{T_{H} : H \in \euls{A}\}$, where $T_H$ is an $s_H$-generator of the monodromy around $H$, as defined in \cite{BMR}. The 
	\emph{Hecke algebra   $\euls{H}_q(W_{\lambda})$ associated to $W$ at parameter $q= \{ q_{H,j} \}$} is the quotient of the group algebra $\C B_W$ by the $|\euls{A}|$ relations
	\begin{equation} \label{eq:Heckecrg1}
		\bigg\{\prod_{j=0}^{\ell_H-1} \left(T_{H} - q_{H,j}\right) \,\, :\,\,  H\in \euls{A}\bigg\},
	\end{equation}
	where the parameters $q_{H,j} \in \mathbb{C}^{\times}$ satisfy $q_{w(H),j} = q_{H,j}$ for all $w \in W$. We will always define $q$ in terms of the $\kappa_{H,j}$ by setting
	\begin{equation} \label{eq:Heckecrg2}
		  q_{H,j} \ = \ \det(s_H)^{-j} \exp(-2\pi  i \kappa_{H,j}).	\end{equation}

\end{definition}

The $\mr{KZ}$-functor is then an exact functor $\euls{O}_{\kappa,0}(W_{\lambda}) \to \euls{H}_q(W_{\lambda})\lmod$ which factors through $\Osph_{\kappa,0}(W_{\lambda})$; see \cite[p.217]{BellamySRAlecturenotes}, noting that our presentation of $\Hk(W)$ agrees with \cite[(4.F)]{BellamySRAlecturenotes} if the vectors $\alpha_{H},\alpha_{H}^{\vee}$ are normalised so that $\langle \alpha_H^{\vee}, \alpha_H \rangle = -1$. Composing with the equivalence of Lemma~\ref{lem:genOsphparabolicequi} gives a functor $\Osph_{\kappa,\lambda}(W) \to \euls{H}_q(W_{\lambda})\lmod$.  By a slight abuse of notation,   we will also refer to the latter as the $\mr{KZ}$-functor.

\begin{corollary}\label{A-projectives}
 \begin{enumerate}
\item 	The algebra $\Ak(W)$ is simple if and only if every $\eQ_{\lambda}$ is projective in $\Osph$.
\item 	Assume that  $\Ak(W)$ is simple and let $\lambda\in \h^*$. Then  $\eQ_{\lambda}$ is a progenerator in $\Osph_{\lambda}$ and the $\mr{KZ}$-functor is an 
equivalence of categories 
$  \Osph_\lambda \to \euls{H}_{q}(W_{\lambda})\lmod$. 
\item If $\Ak(W)$ is simple, then $\End_{\Ak(W)}(\eQ_0) =  \euls{H}_q(W)$.
\end{enumerate}
\end{corollary}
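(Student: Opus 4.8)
\textbf{Plan of proof for Corollary~\ref{A-projectives}.}
The three statements will be derived in order from the results already assembled, primarily Proposition~\ref{prop:Qlambdaprojiffsimple}, Lemma~\ref{lem:simpleinductionLosev}, Lemma~\ref{lem:genOsphparabolicequi} and Lemma~\ref{lem:HCprojiffQproj}, together with the known fact that the $\mr{KZ}$-functor factors through $\Osph_{\kappa,0}(W_{\lambda})$.

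For Part~(1), I would argue as follows. If $\Ak(W)$ is simple, then by Lemma~\ref{lem:simpleinductionLosev} the algebra $\Ak(W_{\lambda})$ is simple for every $\lambda \in \h^*$, and hence by Proposition~\ref{prop:Qlambdaprojiffsimple} each $\eQ_{\lambda}$ is projective in $\Osph_{\kappa,\llambda}(W)$; since $\Osph = \bigoplus_{W\lambda} \Osph_{\llambda}$ by Definition~\ref{defn:Osph}, this means each $\eQ_{\lambda}$ is projective in $\Osph$. Conversely, if every $\eQ_{\lambda}$ is projective in $\Osph$, then in particular $\eQ_0$ is projective; but if $\Ak(W)$ were not simple then, choosing $\lambda = 0$ in Proposition~\ref{prop:Qlambdaprojiffsimple}, $\eQ_0$ would fail to be projective in $\Osph_{\kappa,0}(W_0) = \Osph_{\kappa,0}(W)$, a contradiction. (Here I use that $W_0 = W$ since every reflection fixes $0$.)

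For Part~(2), assume $\Ak(W)$ simple and fix $\lambda \in \h^*$. By Part~(1), $\eQ_{\lambda}$ is projective in $\Osph_{\lambda}$. To see it is a progenerator, note that every object of $\Osph_{\lambda}$ is a quotient of a module of the form $\Ak(W) \otimes_E L$ with $L$ a finite-dimensional $E$-module (as in the proof of Proposition~\ref{pdQ}(4)), and under the equivalence of Lemma~\ref{lem:genOsphparabolicequi} with $\Osph_{\kappa,0}(W_{\lambda})$ this is controlled by the Cherednik-algebra Harish-Chandra module $\mathfrak{H}_{\kappa}$; since $\Ak(W_{\lambda})$ is simple, $\mathfrak{H}_{\kappa} \cong P_{\mathrm{KZ}}$ is the projective module through which $\mr{KZ}$ factors (cf.\ the proof of Proposition~\ref{prop:Qlambdaprojiffsimple}, citing \cite[Theorem~1.1]{LosevTotally}), so $\eQ_0 = e\mathfrak{H}_{\kappa}$ generates $\Osph_{\kappa,0}(W_{\lambda})$ and hence $\eQ_{\lambda}$ generates $\Osph_{\lambda}$. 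The double-centraliser/Morita-type statement that $\mr{KZ}$ then induces an equivalence $\Osph_{\lambda} \to \euls{H}_q(W_{\lambda})\lmod$ follows because $\mr{KZ}$ is the composite of the equivalence $\Osph_{\lambda} \simeq \Osph_{\kappa,0}(W_{\lambda})$ with $\Hom_{\Ak(W_{\lambda})}(\eQ_0,-)$, and a progenerator in a category with enough projectives (Lemma~\ref{lem:genOsphparabolicequi}(2)) yields a Morita equivalence onto modules over its endomorphism ring, which by the identification $P_{\mathrm{KZ}} \mapsto \eQ_0$ and the standard fact $\End(P_{\mathrm{KZ}})^{\mathrm{op}} \cong \euls{H}_q(W_{\lambda})$ (from the theory of the $\mr{KZ}$-functor, e.g.\ \cite{GGOR}) is exactly $\euls{H}_q(W_{\lambda})$.

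Part~(3) is then the special case $\lambda = 0$ of the endomorphism-ring identification used in Part~(2): when $\Ak(W)$ is simple, $\eQ_0 = e\mathfrak{H}_{\kappa}$ with $\mathfrak{H}_{\kappa} \cong P_{\mathrm{KZ}}$, so $\End_{\Ak(W)}(\eQ_0) \cong \End_{\Hk(W)}(\mathfrak{H}_{\kappa}) \cong \euls{H}_q(W)$, where the first isomorphism is Lemma~\ref{lem:HCprojiffQproj}(2) applied with $M = \mathfrak{H}_{\kappa}$. The step I expect to require the most care is pinning down the endomorphism-ring identification in Part~(2): one must be precise about the normalisation of the $\mr{KZ}$-functor (our sign conventions for $\alpha_H,\alpha_H^{\vee}$ and the relation \eqref{eq:Heckecrg2}) so that $\End_{\Ak(W_{\lambda})}(\eQ_0)$ comes out as $\euls{H}_q(W_{\lambda})$ with the parameter $q$ of Definition~\ref{Hecke-defn} rather than an inverse or a twist, and to invoke the correct version of the double-centraliser theorem for $P_{\mathrm{KZ}}$; everything else is a bookkeeping assembly of the cited results.
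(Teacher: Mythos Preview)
Your approach is essentially the same as the paper's, and Parts~(1) and~(3) are handled correctly. There is, however, one genuine gap in Part~(2), and it is not where you think it is.

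You write ``$\mathfrak{H}_{\kappa} \cong P_{\mathrm{KZ}}$ \ldots\ so $\eQ_0 = e\mathfrak{H}_{\kappa}$ generates $\Osph_{\kappa,0}(W_{\lambda})$''. This ``so'' is not justified. The module $P_{\mathrm{KZ}}$ is \emph{not} a generator of $\euls{O}_{\kappa}$ in general --- there are nonzero modules killed by $\mr{KZ}$ --- so one cannot conclude directly that $eP_{\mathrm{KZ}}$ generates $\Osph_0$. What is needed is the further fact (which is \cite[Proposition~2.6]{LosevTotally}, used in the paper's proof) that when $\Ak(W_{\lambda})$ is simple, $\kappa$ is \emph{totally aspherical}: by definition this means $\mr{KZ}(M)=0 \iff eM=0$. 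Combining this with Lemma~\ref{lem:HCprojiffQproj}(2) gives, for any $N=eM\in\Osph_0$,
\[
\Hom_{\Ak}(\eQ_0,N)=\Hom_{\Hk}(\mathfrak{H}_{\kappa},M)\cong\mr{KZ}(M),
\]
which is nonzero whenever $N\neq 0$. That is what makes $\eQ_0$ a generator, and hence a progenerator. So the delicate step is the generation, not the endomorphism-ring normalisation you flagged; the latter follows cleanly from Lemma~\ref{lem:HCprojiffQproj}(1,2) together with $\End_{\Hk}(\mathfrak{H}_{\kappa})\cong\euls{H}_q(W)$ from \cite[Theorem~1.1]{LosevTotally}.
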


\begin{proof} (1) Combine  Proposition~\ref{prop:Qlambdaprojiffsimple} and Lemma~\ref{lem:simpleinductionLosev}.
 
 (2,3) By Lemma~\ref{lem:genOsphparabolicequi} and Lemma~\ref{lem:simpleinductionLosev}, it suffices to consider the case where $\lambda = 0$. In this case, \cite[Theorem~1.2]{LosevTotally} implies that
 $\mathfrak{H}_\kappa$  is projective in category $\euls{O}_{\kappa}$ with 
 $\End_{\Hk}(W)(\mathfrak{H}_{\kappa})= \euls{H}_q(W)$.   
 
 By Lemma~\ref{lem:HCprojiffQproj}(3), $\eQ_0 =e\mathfrak{H}_{\kappa}$ is therefore projective in $\Osph_{0}$. Moreover, by Lemma~\ref{lem:HCprojiffQproj}(1,2), $\End_{\Ak(W)}(\eQ_0) =  \euls{H}_q(W)$, showing statement (3). Finally, \cite[Proposition~2.7]{LosevTotally} implies that $\kappa$ is totally aspherical. By definition, this means that $\Hom_{\Hk}(\mathfrak{H}_{\kappa},M) = 0$ if and only if $\mr{KZ}(M) = 0$ if and only if $eM =0$. Since $\Hom_{\Ak}(\eQ_0,eM) = \Hom_{\Hk}(\mathfrak{H}_{\kappa},M)$, this implies that $\eQ_0$ is a generator for $\Osph_0$ and 
 $$
 \Hom_{\Ak}(\eQ_0,-) \colon \Osph_{0} \to \euls{H}_{q}(W)\lmod
 $$
 is an equivalence. 
\end{proof}

\begin{corollary}\label{P'-description}
	Assume that ${\Aak}=\Ak(W)$ is simple and let $\eP \in \Osph$ be projective. Then $\BD_{\Aak}(\eP) =\Ext^n_{\Aak}(\eP,{\Aak})$ is a  projective
	object in  $ \Osphop$. 
\end{corollary}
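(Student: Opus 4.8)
The plan is to reduce the statement to Corollary~\ref{Q'-description} using the structural results just established about projective objects in $\Osph$.

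First I would use the fact, from Proposition~\ref{pdQ}(4) and Corollary~\ref{A-projectives}(1), that since ${\Aak}$ is simple the module $\eQ_\lambda$ is projective in $\Osph$ for every $\lambda \in \h^*$. Combined with Lemma~\ref{lem:genOsphparabolicequi}(2), which says $\Osph$ has enough projectives, this lets me realise any projective $\eP$ as a direct summand of a finite direct sum of modules of the form ${\Aak}\otimes_E L$ for finite-dimensional $E$-modules $L$ (where $E=(\Sym\h)^W$): choosing a surjection ${\Aak}\otimes_E L\twoheadrightarrow \eP$ as in the proof of Proposition~\ref{pdQ}(4), projectivity of $\eP$ splits it off. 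So it suffices to show $\BD_{\Aak}({\Aak}\otimes_E L)=\Ext^n_{\Aak}({\Aak}\otimes_E L,{\Aak})$ is projective in $\Osphop$, because $\BD_{\Aak}=\Ext^n_{\Aak}(-,{\Aak})$ is additive and carries summands to summands.

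Next I would compute $\Ext^n_{\Aak}({\Aak}\otimes_E L,{\Aak})$ directly, exactly as in the proof of Corollary~\ref{Q'-description}: since ${\Aak}$ is a free right $E$-module by Proposition~\ref{pdQ}(1), one has the change-of-rings isomorphism $\Ext^i_{\Aak}({\Aak}\otimes_E L,{\Aak})\cong\Ext^i_E(L,{\Aak})$ for all $i$ (by \cite[Corollary~10.65]{Rot}), and for $i=n=\dim\h=\gldim E$ this is $\Ext^n_E(L,E)\otimes_E{\Aak}$ — more precisely, taking a finite free resolution of $L$ over $E$, tensoring up to ${\Aak}$ gives a finite projective ${\Aak}$-resolution of ${\Aak}\otimes_E L$, and applying $\Hom_{\Aak}(-,{\Aak})$ computes the top Ext as $\Ext^n_E(L,E)\otimes_E {\Aak}$, which is a right ${\Aak}$-module. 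Since $L$ is $n$-CM over $E$ (it is finite-dimensional, hence $\Ext^i_E(L,E)=0$ for $i\neq n$, and $\Ext^n_E(L,E)$ is again a finite-dimensional $E$-module), the right ${\Aak}$-module $\BD_{\Aak}({\Aak}\otimes_E L)\cong {\Aak}\otimes_E \Ext^n_E(L,E)$ (rewriting the base change on the other side) is of the form "${\Aak}$ tensored up from a finite-dimensional $E$-module", hence lies in $\Osphop$.

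Finally, to see such a module is projective in $\Osphop$, I would apply the right-handed version of Corollary~\ref{A-projectives}(1): since ${\Aak}$ is simple, every $\eQ'_\mu={\Aak}/\mf{m}_\mu{\Aak}$ is projective in $\Osphop$, and ${\Aak}\otimes_E L'$ for finite-dimensional $L'$ (here $L'=\Ext^n_E(L,E)$, which by the structure theory for modules over $(\Sym\h)^W$ has a finite filtration with subquotients the residue fields $E/\mf{m}_\mu$, each tensoring up to $\eQ'_\mu$) is an extension — in fact, since the $\eQ'_\mu$ are projective the filtration splits appropriately — of projectives, hence projective; alternatively, ${\Aak}\otimes_E L'$ is a quotient of a free ${\Aak}$-module of finite rank that is itself in $\Osphop$ and the $n$-CM/holonomic properties from Lemma~\ref{CM for spherical2} make every surjection onto it from a finite direct sum of $\eQ'_\mu$'s split. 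The main obstacle I expect is this last point — verifying projectivity in $\Osphop$ rather than mere membership — but it is handled cleanly by the simplicity of ${\Aak}$ via the right-module analogue of Corollary~\ref{A-projectives}, exactly as $\eQ'_\lambda=\BD_{\Aak}(\eQ_\lambda)$ was shown projective. I would then assemble: $\eP$ a summand of $\bigoplus {\Aak}\otimes_E L_i$ gives $\BD_{\Aak}(\eP)$ a summand of $\bigoplus \BD_{\Aak}({\Aak}\otimes_E L_i)$, a direct sum of projectives in $\Osphop$, hence projective.
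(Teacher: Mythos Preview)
Your argument is correct but takes a noticeably longer route than the paper's. The paper's proof is three lines: assume $\eP$ indecomposable; by Corollary~\ref{A-projectives}(2), $\eQ_\lambda$ is a \emph{progenerator} in $\Osph_\lambda$, so $\eP$ is already a direct summand of some $\eQ_\lambda$; then $\BD_{\Aak}(\eP)$ is a summand of $\BD_{\Aak}(\eQ_\lambda)\cong\eQ'_\lambda$ (Corollary~\ref{Q'-description}), and $\eQ'_\lambda$ is projective in $\Osphop$ by the left--right analogue of Proposition~\ref{prop:Qlambdaprojiffsimple}. You instead pass through ${\Aak}\otimes_E L$ and an explicit Ext computation. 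That works, but two remarks. First, the intermediate isomorphism $\Ext^n_E(L,{\Aak})\cong \Ext^n_E(L,E)\otimes_E{\Aak}$ requires ${\Aak}$ to be flat as a \emph{left} $E$-module; Proposition~\ref{pdQ}(1) gives freeness as a $(\C[\h]^W,E)$-bimodule (hence as a right $E$-module), so you would need to invoke the symmetry of the PBW decomposition to justify left flatness. Second, that step is actually unnecessary: once you observe (as you do at the end) that ${\Aak}\otimes_E L$ has a filtration by $\eQ_{\lambda_i}$'s, which are projective in $\Osph$ by Corollary~\ref{A-projectives}(1), the filtration already splits \emph{before} applying $\BD_{\Aak}$, so ${\Aak}\otimes_E L\cong\bigoplus_i\eQ_{\lambda_i}$ and you land directly on $\bigoplus_i\eQ'_{\lambda_i}$ via Corollary~\ref{Q'-description}. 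Stripping out the middle computation, your argument and the paper's converge; the paper just gets there faster by invoking the progenerator property up front.
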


\begin{proof}
	We may assume that $\eP$ is indecomposable. Then Corollary~\ref{A-projectives}(2) says that $\eP$ is a direct summand of $\eQ_{\lambda}$ for some $\lambda$. 
	Hence,   by Corollary~\ref{Q'-description}, $\BD_{\Aak}(\eP)$ is a direct summand of $\BD_{\Aak}(\eQ_{\lambda})\cong \eQ_{\lambda}'$. 
	Since $\eQ_{\lambda}'$ is projective in $\Osphop$ by the left-right analogue of Proposition~\ref{prop:Qlambdaprojiffsimple}, the result follows. 
	\end{proof}

%%%%%%%%%%
 \subsection*{Semi-invariants}
We end this section by recalling some standard facts regarding complex reflection groups that will be needed later. 

\begin{notation}\label{not:regularelementh} 
  Define  $\euls{A}$ and $\{W_H=\langle s_H\rangle: H\in \euls{A}\}$  as in Definition~\ref{Hecke-defn}, with 
   $\ell_H =|W_H|$.  As in the definition of the Cherednik algebra in \cite[Definition~2.1]{BLNS}, we choose
  $\alpha_H \in \h^*$ such that $H = \alpha_{H}^{\perp}$.  The element 
 $\prod_{H\in \euls{A}} \alpha_H$ acts as the inverse determinant character $\det_{\h}^{-1} \colon W \to \C^{\times}$; 
 see \cite[Theorem~2.3]{StanleySemiInv}. Moreover, the \textit{discriminant} is defined to be 
\begin{equation}\label{eq:discriminant}
 \ \deltah \ = \  \prod_{H \in \euls{A}} \alpha_H^{\ell_H};
\end{equation}
 this polynomial is $W$-invariant, and reduced as an element of $\C[\h]^W$. 
  	Let $\h_{\reg}$ denote the open subset of $\h$ where $W$ acts freely. It is a consequence of Steinberg's Theorem that $\h_{\reg}$ is precisely the non-vanishing locus of $\deltah $.  \end{notation}

The abelianization $W/[W,W]$ can be identified with $\prod_{[H] \in \euls{A}/W} (\mathbb{Z} / \ell_H \mathbb{Z})$. Dually, it was explained by Stanley \cite{StanleySemiInv} how one can use the semi-invariant factors of $\deltah $ to construct all linear characters of $W$. We recall this construction. 

\begin{remark}\label{rem:Wsemisummary}
	Let $\chi$ be a linear character of $W$. For each $[H] \in \euls{A}/W$, let  $\ell_{\chi, H}$ be the unique integer with $0 \le \ell_{\chi,H} \le \ell_H-1$ satisfying  $\chi(s_H) = \det_{\h}(s_H)^{\ell_{\chi,H}}$. Set 
	\begin{equation}\label{eq:Wsemiinvariantdelta}
			\deltah_{\chi} = \prod_{H \in \euls{A}} \alpha_H^{\ell_{\chi,H}}. 
	\end{equation}
	Then \cite[Theorem~3.1]{StanleySemiInv} says that $\deltah_{\chi}$ is a $W$-semi-invariant with associated linear character $\chi$. Moreover, the space $\C[\h]^{\chi}$ of $\chi$-semi-invariants is free of rank one over $\C[\h]^W$, generated by $\deltah_{\chi}$. 
	
	Conversely, given any choice of integers $0 \le \ell_{\eta,H} \le \ell_H-1$ for $[H] \in \euls{A}/W$, we may define an element $\deltah_{\eta}$ by the rule  \eqref{eq:Wsemiinvariantdelta}. Then $\deltah_{\eta}$ is a $W$-semi-invariant with associated linear character $\eta$. This defines a bijection between the linear characters of $W$ and the semi-invariants of $W$ (up to scalar) that properly divide $\deltah$. 
\end{remark}

\section{Background  on   $\D$-modules}    \label{Sec:polarreps}

In this section we give   basic definitions and results about  $\eD$-modules, most especially  equivariant $\eD$-modules. 
  We remind the reader that we always work over the field $\C$ of complex numbers.

Fix a connected reductive group $G$.  We will always assume that $V$ is a polar $G$-representation, as defined in the introduction and write $\ddd=\dd(V)$ for the algebra of $\C$-linear (algebraic) differential operators on $V$.

The  basic properties of   a polar representation $V$  are given in \cite[Section~3]{BLNS} and we will not repeat them here, except to recall that $V$ has a \emph{Cartan subspace} $\h\subset V$ with associated Weyl group $W$ such that restriction of functions induces an isomorphism $\rr \colon \C[V]^G\isom \C[\h]^W$. Let $\euls{N}(V) := \pi^{-1}(0)$ be the \emph{nilcone}\label{defn:nilcone} of $V$, where $\pi : V \rightarrow V\git G$ is the categorical quotient. The representation $V$ is said to be \textit{visible} 
\label{defn:visible} if $\euls{N}(V)$ consists of finitely many $G$-orbits; this implies that every fibre of $\pi$ consists of finitely many $G$-orbits. We will use several times the fact that $V$ is visible if and only if $V^*$ is visible. This follows, for instance, from the fact that there is a bijection between
the closed orbits in $V$ and $V^*$, as in the proof of \cite[Proposition~3.2]{BLT}. 

A representation is said to be \textit{locally free}\label{defn:locally-free} if the stabiliser of a generic element is finite. Finally, $V$ is said to be \textit{stable}\label{defn:stable} if there is a closed orbit whose dimension is maximal amongst all orbits.  

Let   $\deltah\in \C[\h]^W$ denote  the discriminant from \eqref{eq:discriminant} and put 
$\deltav = \rr^{-1}(\deltah)$\label{deltaV-defn}   for the corresponding  element in $\C[V]^G$, which by a slight abuse of notation will also be called the discriminant.  Since it will not cause confusion we will write $\delta$ for both $\deltah$ and $\deltav$ from here onwards. 
For a further discussion of the discriminants and the corresponding    \emph{regular loci}  $V_{\reg}  \subseteq V$ and 
$\h_{\reg}\subset \h$ \label{Vreg-defn}
 see \cite[Section~3]{BLNS}.  The following fact, proved in \cite[Corollary~6.9]{BLNS}, will be used frequently.
\begin{equation}\label{lem:stablepolaropen}
\text{If $V$ is stable then $V_{\reg} \cong G \times_{N_G(\h)}\h_{\reg}$. }   
\end{equation}
%%%%%%%%%%

We now turn to some basic facts about equivariant $\eD$-modules. Let $\g=\Lie(G)$ and recall that
differentiating the action of $G$ on $V$ defines a  Lie algebra  morphism  $\tau \colon \mf{g} \rightarrow \ddd$. \label{tau-defn}    

\begin{definition}\label{defn:monodromic}    
Let $\chi$ be a linear character of $\g$. A   \emph{$(G,\chi)$-monodromic
 left $\ddd$-module}, or simply a monodromic left $\ddd$-module, 
 is defined to be a rational  left $G$-module $M$ with the structure of a left $\ddd$-module such that  
\begin{enumerate}
	\item[(i)] the action map $\ddd \otimes M \rightarrow M$ is $G$-equivariant; and
	\item[(ii)] for all $x \in \mf{g}$,  \ $\tau(x) = \tau_M(x) + \chi(x) \mr{Id}_M$ as endomorphisms of $M$,  
\end{enumerate}
where   $\tau_M: \g\to \End_{\C}(M)$ denotes the differential of the action map $G\times M\to M$.
The category of  finitely generated $(G,\chi)$-monodromic left $\ddd$-modules is denoted 
$\left(G,\chi,\ddd\right)\lmod$,   with  morphisms being $G$-equivariant $\ddd$-module morphisms. 

   The  category of  finitely generated $(G,\chi)$-monodromic right $\eD$-modules is defined analogously and denoted $\rmod \left(G,\chi,\ddd\right)$.  \end{definition}

 When $\chi=0$,  a $(G,\chi)$-monodromic left $\eD$-module is simply  a $G$-equi\-variant module. For this reason, $(G,\chi)$-monodromic left $\eD$-modules are sometimes called  $\chi$-twisted equivariant  left $\ddd$-modules \cite[(4.1)]{MN}.

 If $\chi$ is a linear character of $\g$, set 
\begin{equation}\label{chi-defn}
\g_{\chi} = \{ \tau(x) - \chi(x) \, | \, x \in \mf{g} \} \subset \ddd.
\end{equation}
 For a $(G,\chi)$-monodromic
left $\eD$-module $M$, the elements of $\g_{\chi}$ annihilate $M^G$.

Define the \emph{moment map}
\label{defn:moment}  
 $\mu \colon T^* V \to \mf{g}^*
$ by  $\mu(v,w)(x) = w(x \cdot v)$ for $(v,w)\in V\oplus V^*=T^*V$.  

\begin{remark}\label{Lambda-significance}
The significance of $\mu$ to $\ddd$-modules comes the following observations about associated varieties. 
Give $\ddd$ the order filtration $\ddd=\bigcup_{j} \ddd_j$ with associated graded ring $\gr\ddd\cong \C[V\times V^*]=\C[T^*V]$. If $d\in \ddd$ we write $\sigma(d)$ for its \emph{principal symbol}. A simple computation shows that $\mu^{-1}(0)$ is the   variety of zeros  in $T^*V$ of $\sigma(\tau(\g))$, written $\mu^{-1}(0)= \euls{V}\left(\sigma\left(\tau(\g)\right)\right)$. Note also that $\sigma(\tau(\g))=\sigma(\g_{\chi})$. 
\end{remark}

By definition $\euls{N}(V^*) = \euls{V}((\Sym V)_+^G)$ inside $V^*$. Hence 
$$
V\times \euls{N}(V^*) = \euls{V}\bigl(\sigma((\Sym V)_+^G)\bigr) \subset T^*V.
$$
Set 
$$
\Lambda = \mu^{-1}(0) \cap (V \times \euls{N}(V^*)).
$$
In order to understand the space $\Lambda$, we first recall the properties of the Fourier transform. For each $v \in V$, write $\partial_v$ for the associated derivation so that $[\partial_v,x] = x(v)$ for $x \in V^* \subset \C[V]$. The \emph{Fourier transform}
is the isomorphism 
\begin{equation}\label{defn:Fourier}
	\mathbb{F}_V \colon \ddd \stackrel{\sim}{\longrightarrow} \dd(V^*); \quad 	\mathbb{F}_V(x) = \partial_x, \quad \mathbb{F}_V(\partial_y) = - y. 
\end{equation}
If we (temporarily) write $\tau_V \colon \g \to \ddd$ and $\tau_{V^*} \colon \g \to \dd(V^*)$ for the two differentials of the $G$ action,  then a direct calculation shows that 
\begin{equation}\label{eq:Fouriergaction}
	\mathbb{F}_V(\tau_V(X)) = \tau_{V^*}(X) + \mr{Tr}_V(X),
\end{equation}
where $\mr{Tr}_V$\label{defn:trace} denotes the trace function. At the associated graded level, the Fourier transform is a $G$-equivariant symplectomorphism $\mathbb{F} \colon T^* V \stackrel{\sim}{\rightarrow} T^* V^*$, defined by  $\mathbb{F}(v,\lambda) = (\lambda,-v)$. 

\begin{lemma}\label{lem:regularlocalsystem}
	Assume that $V$ is polar. 
	\begin{enumerate}
		\item $V$ is visible if and only if $\Lambda$ is Lagrangian. 
		\item If $V$ is stable then $\Lambda \cap T^* V_{\reg} = V_{\reg}\times \{0\}$. 
	\end{enumerate}
\end{lemma}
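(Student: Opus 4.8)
The plan is to prove the two assertions of Lemma~\ref{lem:regularlocalsystem} by carefully analysing the geometry of $\Lambda = \mu^{-1}(0) \cap (V \times \euls{N}(V^*))$ and using the Fourier transform to pass between $V$ and $V^*$.

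\medskip

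\noindent\textbf{Part (1).} First I would observe that since $V$ is polar, $\dim V \git G = \dim \h$, and the graded ring $\gr \ddd = \C[T^*V]$ carries the two sequences of symbols $\sigma(\tau(\g))$ and $\sigma((\Sym V)^G_+)$, which cut out $\mu^{-1}(0)$ and $V \times \euls{N}(V^*)$ respectively (Remark~\ref{Lambda-significance}). Hence $\Lambda$ is always coisotropic of dimension $\geq \dim V$ (it contains $V \times \{0\}$, which already has dimension $\dim V = \tfrac12 \dim T^*V$), so the content is: $\Lambda$ is Lagrangian if and only if $\dim \Lambda = \dim V$, if and only if every irreducible component has dimension exactly $\dim V$. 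Now I would use the Fourier transform $\mathbb{F} \colon T^*V \isom T^*V^*$, which by \eqref{eq:Fouriergaction} at the graded level interchanges (up to the irrelevant trace twist, which does not affect symbols) the roles of $\tau(\g)$ on the two sides; concretely $\mathbb{F}(v,\lambda) = (\lambda,-v)$ sends $\Lambda \subset T^*V$ to $\mu_{V^*}^{-1}(0) \cap (\euls{N}(V^*) \times V^*)$, i.e.\ to the analogous set $\Lambda^*$ for $V^*$. So it suffices to understand $\pi_1 \colon \Lambda \to V$, the first projection: its image is contained in $\euls{N}(V)$ when we Fourier-dualise, and by visibility $\euls{N}(V)$ (equivalently $\euls{N}(V^*)$, since $V$ visible $\iff$ $V^*$ visible) is a finite union of $G$-orbits. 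The key estimate is then a fibre-dimension count: for a point in $\euls{N}(V^*)$ lying in a $G$-orbit $\eO$, the fibre of the second projection $\Lambda \to V^*$ over it is $\{v \in V : \mu(v,w) = 0\} = (\g\cdot w)^\perp$, which has dimension $\dim V - \dim(\g\cdot w) = \dim V - \dim \eO$; summing $\dim \eO + (\dim V - \dim \eO) = \dim V$ over the finitely many orbits shows every component of $\Lambda^*$, hence of $\Lambda$, has dimension exactly $\dim V$, giving Lagrangian. For the converse, if $\Lambda$ is Lagrangian then the same fibration argument run backwards forces $\euls{N}(V^*)$ to be finite-dimensional in a way that is only compatible with finitely many orbits — more precisely I would argue that a positive-dimensional family of orbits would produce a component of $\Lambda$ of dimension $> \dim V$, contradicting Lagrangianness; this is essentially the argument of Dadok--Kac / the visibility criterion, and I would cite \cite{DadokKac} or the relevant lemma in \cite{BLNS} for the precise equivalence.

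\medskip

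\noindent\textbf{Part (2).} Here $V$ is assumed stable. I would use \eqref{lem:stablepolaropen}: $V_{\reg} \cong G \times_{N_G(\h)} \h_{\reg}$, so every point of $V_{\reg}$ is $G$-conjugate to a regular semisimple element of $\h$, and the $G$-orbit of such a point has maximal dimension $\dim V - \dim \h$ (equivalently the stabiliser is $Z_G(\h)$, of dimension $\dim V^G$ on $\h$, so that $\dim G\cdot v = \dim \g - \dim \g_v$ is maximal). Now take $(v,\lambda) \in \Lambda \cap T^*V_{\reg}$, so $v \in V_{\reg}$, $\mu(v,\lambda) = 0$ and $\lambda \in \euls{N}(V^*)$. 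The condition $\mu(v,\lambda) = 0$ says $\lambda(\g\cdot v) = 0$, i.e.\ $\lambda$ annihilates the tangent space $\g\cdot v = T_v(G\cdot v)$; since $v$ is regular this tangent space is the full "horizontal" part, and its annihilator in $V^* = T_v^*V$ is a complement of dimension $\dim \h$ which (under the identification coming from the slice at $v$) is precisely the cotangent space to the $\h_{\reg}$-slice, i.e.\ is identified with $\h^* \subset V^*$ translated appropriately. Thus $\lambda$ lies in (a $G$-translate of) $\h^* \subset V^*$. On the other hand $\lambda \in \euls{N}(V^*)$ means every element of $(\Sym V^*)^G_+$ — equivalently, via the Chevalley isomorphism $\rrpp$ from Theorem~\ref{thm:intro-radial-exists}(1) applied to $V^*$, every element of $(\Sym \h^*)^W_+$ — vanishes on $\lambda$; but a point of $\h^*$ on which all positive-degree $W$-invariants vanish is $\lambda = 0$ (the nilcone of the $W$-action on $\h^*$ is just the origin, since $W$ is finite). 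Hence $\lambda = 0$, giving $\Lambda \cap T^*V_{\reg} \subseteq V_{\reg} \times \{0\}$; the reverse inclusion is immediate since $\mu(v,0) = 0$ and $0 \in \euls{N}(V^*)$.

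\medskip

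\noindent\textbf{Main obstacle.} The routine parts are the fibre-dimension counts and the reduction via Fourier transform; the genuinely delicate point is Part (1), specifically making the converse direction ($\Lambda$ Lagrangian $\Rightarrow$ $V$ visible) precise — one must rule out infinite families of nilpotent orbits cleanly, and the cleanest route is probably to invoke the known equivalence "visible $\iff$ nilcone has finitely many orbits $\iff$ $\Lambda$ Lagrangian" established in \cite{DadokKac} or \cite{BLNS} rather than reproving it. For Part (2), the only subtlety is pinning down the identification of the annihilator $(\g\cdot v)^\perp$ with $\h^*$ under the stable-polar slice description; this is where \eqref{lem:stablepolaropen} and the construction of the Cartan subspace do the work, and I would phrase it using the $G$-equivariant isomorphism $T^*V_{\reg} \cong G \times_{N_G(\h)} T^*\h_{\reg}$ that follows from \eqref{lem:stablepolaropen}.
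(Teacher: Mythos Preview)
For Part~(1) your approach is essentially the paper's: apply the Fourier transform and identify $\mathbb{F}(\Lambda)$ with $\bigsqcup_{\euls{O} \subset \euls{N}(V^*)} T^*_{\euls{O}} V^*$ (your fibre count $\dim\euls{O} + (\dim V - \dim\euls{O}) = \dim V$ is exactly the conormal-bundle statement), then argue on finiteness of the union. The paper gives the converse directly---infinitely many disjoint irreducible locally closed pieces of dimension $\dim V$ cannot lie in a variety of dimension $\dim V$---rather than citing \cite{DadokKac}. One expositional slip: the first projection $\Lambda \to V$ is \emph{surjective}, not contained in a nilcone; you mean the projection of $\mathbb{F}(\Lambda) \subset T^*V^*$ to $V^*$, whose image is $\euls{N}(V^*)$.

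For Part~(2) your route is genuinely more elementary than the paper's. The paper invokes \cite[Proposition~4.3(iii)]{BLT}, a nontrivial statement about sheets in $\mu^{-1}(0)$, to conclude that $\mu^{-1}(0) \cap T^*V_{\reg} = G \cdot (\h_{\reg} \times \h^\vee)$, where $\h^\vee := (\g \cdot \h)^\perp \subset V^*$. You bypass this: for $v \in \h_{\reg}$ the defining property of a Cartan subspace gives $\g \cdot v = \g \cdot \h$, so $(\g \cdot v)^\perp = \h^\vee$ directly, and hence $\lambda \in \h^\vee$. Both arguments then conclude with $\h^\vee \cap \euls{N}(V^*) = \{0\}$, using that $\h^\vee$ is a Cartan subspace for the polar representation $V^*$ (which is \cite[Proposition~3.2]{BLT}, also cited by the paper). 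You must, however, fix your notation: there is no subspace ``$\h^* \subset V^*$''---dualising $\h \hookrightarrow V$ gives a \emph{surjection} $V^* \to \h^*$. The object you actually need is $\h^\vee$; similarly ``$(\Sym V^*)^G_+$'' should read $(\Sym V)^G_+$. Finally, the key identity $\g \cdot v = \g \cdot \h$ for $v \in \h_{\reg}$, which you leave implicit, deserves one line of justification.
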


\begin{proof}
	(1)  Since $\mathbb{F} \circ \mu_V = \mu_{V^*}$, we have $\mathbb{F}(\mu^{-1}_V(0)) = \mu_{V^*}^{-1}(0)$. Similarly, $\mathbb{F}(V \times \euls{N}(V^*)) = V^* \times \euls{N}(V)$. Thus, since $\mathbb{F}$ is a symplectomorphism, it suffices to show that the space
	$$
	\mathbb{F}(\Lambda) = \mu^{-1}_{V^*}(0) \cap ( V^* \times \euls{N}(V)) =: \Lambda^*
	$$
	is Lagrangian in $T^* V^*$ if and only if $V$ is visible. As noted in the the proof of \cite[Proposition~2.4]{GanGinzburg}, $\Lambda^* = \bigsqcup_{\euls{O}} T^*_{\euls{O}} V^*$, where the union is over the $G$-orbits $\euls{O}$ in $\euls{N}(V^*)$. 
	
	If $V$ is visible, this is a finite union because $V^*$ is visible. Since each irreducible component $\overline{T^*_{\euls{O}} V^*}$ is Lagrangian, $\Lambda^* = \bigcup_{\euls{O}} \overline{T^*_{\euls{O}} V^*}$ is also Lagrangian. If $V$ is not visible then $\Lambda^*$ can be written as the disjoint union of infinitely many locally closed subvarieties $T^*_{\euls{O}} V^*$, all of dimension $\dim V$. This means that $\dim \Lambda^* > \dim V$ and thus $\Lambda^*$ cannot be Lagrangian in $T^* V^*$.
	
	(2) Define   $
	\mf{h}^{\vee} = \{ \lambda \in V^* \, | \, \lambda(\mf{g} \cdot \mf{h}) = 0 \}. $
	Since $V$ is stable, \cite[Proposition~3.2]{BLT} implies that $V^*$ is also a polar $G$-representation with Cartan subspace $\h^{\vee}$. 
	As in \textit{loc. cit.}, set 
	$$
	C_0 = \overline{G \cdot (\h_{\reg} \times \h^{\vee})},
	$$
	and let $\mathrm{pr} \colon T^* V \to V$ denote the projection onto the first component. 
	
	We claim that
	$\mu^{-1}(0) \cap T^* V_{\reg} = G \cdot (\h_{\reg} \times \h^{\vee})$. Indeed, as explained in \cite[p.654]{BLT},    
	since $V$ is stable $G \cdot \h_{\reg}$ is contained in the regular sheet $S_0$ of $V$. If $y \in \mu^{-1}(0) \smallsetminus C_0$
	then $\mathrm{pr}(y) \notin S_0$ by \cite[Proposition~4.3(iii)]{BLT}. Thus
	$$
	\mu^{-1}(0) \cap T^* V_{\reg} = C_0 \cap T^* V_{\reg} = G \cdot (\h_{\reg} \times \h^{\vee}),
	$$
	which proves the claim.
	
	Since $\h^{\vee}$ consists of semisimple elements, $\h^{\vee} \cap \euls{N}(V^*) = \{ 0 \}$ and hence,
	by \eqref{lem:stablepolaropen}, 
	\[ \Lambda \cap T^* V_{\reg}  \ = \  \mu^{-1}(0) \cap (V_{\reg} \times \euls{N}(V^*)) \ = \  
	G \cdot \h_{\reg} \times \{ 0 \}  \ = \  V_{\reg} \times \{0\},\]
	as required.
\end{proof}

\begin{remark}\label{rem:notstablelocalfail}
	In general, Lemma~\ref{lem:regularlocalsystem}(2) can fail if   $V$ is not stable.  For instance, if $G = GL(V)$ 
	acts on $V$ in the canonical way, then $V$ is polar simply because $V \git  G = \{ 0 \}$. It is not a stable representation. As $V^*\git G=\{0\}$, clearly $\euls{N}(V^*)=V^* $ and so  
		\[ \Lambda \cap T^*V_{\reg}  \ = \ \mu^{-1}(0) \cap (V_{\reg} \times \euls{N}(V^*))  \ = \  \mu^{-1}(0) \neq  V_{\reg}\times\{0\} .\]
	Thus, Lemma~\ref{lem:regularlocalsystem}(2) fails.  
\end{remark}

We next record some basic facts about $(G,\chi,\dd)\lmod$. If needed, we distinguish the morphisms in the category $(G,\chi,\dd) \lmod$  from the  $\ddd$-module morphisms by writing the former as  $\Hom_{(G,\chi,\dd)}(M,M')$.  The next result is standard, but we could not find a precise reference.

\begin{lemma}\label{lem:charvar}
	Let $V$ be  a visible polar representation and $0\not=M \in (G,\chi,\dd) \lmod$.
	\begin{enumerate}		
		\item The \emph{characteristic variety $\Ch M$} \label{singular-defn} satisfies $\Ch M \subset \mu^{-1}(0)$. In particular,  
		\[
		\mathrm{GK. dim}_{\ddd}(M) \le \dim \mu^{-1}(0) = \dim V+\dim \h.
		\]		
		\item  If $(\Sym V)^G$ acts locally finitely on $M$, then $\Ch M \subset \Lambda$ and  $M$ is holonomic. 
		\item   $\Hom_{(G,\chi,\dd)}(M,M') = \Hom_{\dd}(M,M')$ for all $M, M'\in (G,\chi,\dd) \lmod$. 
	\end{enumerate}
 \end{lemma}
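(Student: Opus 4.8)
The plan is to prove the three parts of Lemma~\ref{lem:charvar} in order, as each relies on the preceding one. For Part~(1), I would start from Remark~\ref{Lambda-significance}: the elements $\sigma(\tau(x))$ for $x\in\g$ vanish on the associated variety of any module $M$ on which $\g_\chi$ acts, because $M$ is $(G,\chi)$-monodromic and so the principal symbols of a good filtration annihilate $\gr M$. Concretely, pick a $G$-stable good filtration on $M$ (which exists since $M$ is finitely generated and $G$-equivariant, by a standard averaging/Rees argument); then $\gr M$ is a module over $\gr\ddd=\C[T^*V]$ supported on the common zero locus of $\sigma(\tau(\g))$, which is exactly $\mu^{-1}(0)$ by Remark~\ref{Lambda-significance}. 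The dimension bound then follows since $\mu^{-1}(0)$ is the zero fibre of the moment map $T^*V\to\g^*$; I would cite that for a visible polar representation $\mu^{-1}(0)$ is equidimensional of dimension $\dim V+\dim\h$, which can be read off from Lemma~\ref{lem:regularlocalsystem}(1) applied after Fourier transform (the Lagrangian $\Lambda^*$ has dimension $\dim V$, and $\mu^{-1}(0)$ is the preimage of $\Lambda^*$ under projection away from the nilcone constraint — alternatively this is standard for visible representations, e.g.\ via the fact that the quotient morphism is flat).

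For Part~(2), the hypothesis that $(\Sym V)^G$ acts locally finitely means that, after refining the good filtration, $\gr M$ is annihilated by some power of $\sigma((\Sym V)^G_+)$, hence supported on $\euls{V}(\sigma((\Sym V)^G_+))=V\times\euls{N}(V^*)$. Combined with Part~(1) this forces $\Ch M\subseteq \mu^{-1}(0)\cap(V\times\euls{N}(V^*))=\Lambda$. Since $V$ is visible, $\Lambda$ is Lagrangian by Lemma~\ref{lem:regularlocalsystem}(1), so $\dim\Ch M\le\dim V=\tfrac12\dim T^*V$, which is precisely the condition that $M$ be holonomic (Bernstein's inequality gives the reverse, using $M\ne0$).

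For Part~(3), the inclusion $\Hom_{(G,\chi,\dd)}(M,M')\subseteq\Hom_\ddd(M,M')$ is tautological, so I only need that every $\ddd$-linear map is automatically $G$-equivariant. A $\ddd$-module map $f\colon M\to M'$ is automatically equivariant for the identity component of the group generated by the exponentials of $\tau(\g)$ acting inside $\ddd$; the point is to upgrade this to the (possibly disconnected, but connected here by assumption) algebraic group $G$. Since $G$ is connected and its action on $M$ and $M'$ is rational with differential differing from $\tau$ by the fixed scalar character $\chi$ (condition~(ii) of Definition~\ref{defn:monodromic}), the $G$-action is reconstructed from the $\g$-action together with the $\ddd$-module structure in a way that any $\ddd$-linear map respects; more precisely, $f$ intertwines $\tau_M(x)=\tau(x)-\chi(x)\mathrm{Id}$ with $\tau_{M'}(x)=\tau(x)-\chi(x)\mathrm{Id}$ for all $x\in\g$, hence is $\g$-equivariant, and since $G$ is connected and acts rationally this integrates to $G$-equivariance.

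The main obstacle I expect is Part~(2), specifically making rigorous the passage from ``$(\Sym V)^G$ acts locally finitely on $M$'' to ``$\gr M$ is supported on $V\times\euls{N}(V^*)$'': one must choose the good filtration compatibly so that the locally finite action descends to a finite-module / nilpotent-ideal statement on the associated graded, and then invoke visibility (through Lemma~\ref{lem:regularlocalsystem}(1)) to conclude the Lagrangian bound. The dimension identity $\dim\mu^{-1}(0)=\dim V+\dim\h$ in Part~(1) is the other point needing care, but it follows from the Fourier-transform description of $\Lambda^*$ together with the fact that $\mu^{-1}(0)$ is a complete intersection of the expected codimension $\dim\g-\dim\h$ for visible polar $V$ (equivalently, the generic fibre of $\mu$ meets $V\times\{0\}$ appropriately); I would cite \cite{BLNS} or the Gan--Ginzburg-type argument in the proof of Lemma~\ref{lem:regularlocalsystem} for this.
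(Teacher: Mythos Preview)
Your proposal is correct and follows essentially the same approach as the paper. The only notable difference is in Part~(1): for the dimension identity $\dim\mu^{-1}(0)=\dim V+\dim\h$, the paper simply cites \cite[Proposition~4.3(i)]{BLT}, whereas your suggested route via Lemma~\ref{lem:regularlocalsystem}(1) does not directly yield this (it gives $\dim\Lambda=\dim V$, not $\dim\mu^{-1}(0)$); your fallback to a flatness/complete-intersection argument is fine but less clean than the direct citation. In Part~(2), your phrasing ``annihilated by some power of $\sigma((\Sym V)^G_+)$'' is slightly loose---the paper instead observes that locally finite on $\gr M$ becomes locally nilpotent because $\sigma((\Sym V)^G_+)$ raises the filtration degree---but the underlying idea and conclusion are identical.
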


\begin{proof} (1) Pick a  finite dimensional, $G$-stable subspace $M_0 \subset M$  with $M=\ddd M_0$
 and take the good  filtration  $M=\bigcup \Gamma_jM$, where   $\Gamma_{j}M = \dd_{j}(V) \cdot M_0$. 	The associated graded module 
 $\gr_\Gamma M = \bigoplus   \Gamma_{j}M/\Gamma_{j-1}M$ is then a finitely generated, graded module over $\gr \ddd=\C[T^*V]$   with graded pieces  that are  stable under the action of $\sigma(\g_{\chi}) =\sigma(\tau(\g))$. 
Since $\tau(\g)\subseteq \Der(V)$, this means that  $\sigma(\tau(\g))\cdot M_0=0$.   Thus, by Remark~\ref{Lambda-significance}, $\Ch M\subseteq \euls{V}\bigl(\sigma(\tau(\g))\bigr) =\mu^{-1}(0)$.
Since  $V$ is visible,  it then follows from \cite[Proposition~4.3(i)]{BLT} that $\dim \mu^{-1}(0) = \dim V+\dim \h$.

(2) In this case pick  a finite dimensional, $G$-stable $(\Sym V)^G$-submodule $M_0 \subset M$ with $M=\ddd M_0$ and again filter $M$ by  $\{\Gamma_{j}(M) = \dd_{j}(V) \cdot M_0\}$.  In this case the graded
module $\gr_\Gamma M$ still has a locally finite action of $(\Sym V)^G$ and hence has  a locally nilpotent action of $(\Sym V)^G_+$.  Thus $\Ch M\subseteq \euls{V}(( \Sym V)_+^G) =V\times \euls{N}(V^*)$, by Remark~\ref{Lambda-significance}, again. Combined with Part~(1), this implies that  $\Ch M\subseteq \Lambda$.

Finally,  $M$ is holonomic by  Lemma~\ref{lem:regularlocalsystem}(1). 

	(3)  	We just need to show that any $\phi \in \Hom_{\dd}(M,M')$ is $G$-equivariant. Since $G$ is assumed to be connected, 
	this is the same as saying that $\phi$ is $\mf{g}$-equivariant. But the $\mf{g}$-action on both $M$ and $M'$ is 
	given by $\tau- \chi$; that is,  $x \cdot m = \tau(x) m- \chi(x) m$ for all $x \in \mf{g}$. Thus, 
	$$
	\phi(x \cdot m) = \phi(\tau(x) m) - \chi (x) \phi(m) = \tau(x) \phi(m) - \chi(x) \phi(m) = x \cdot \phi(m),
	$$
	as required.
\end{proof}

In this article we will mainly be interested in admissible $\dd$-modules; the definition given below is motivated by the admissible modules introduced by Ginzburg \cite{Gi}. 

\begin{definition}\label{defn:admissible}
	A  $(G,\chi)$-monodromic left  $\ddd$-module $N$ is defined to be  \emph{admissible} if  $(\Sym  V)^G$ acts locally finitely on $N$. The category of all such modules is written $\eC$. 
	The analogous category of right modules is written $\eCop$. 
	 \end{definition}
	 
We note that  $\eC$ is an abelian category. Since $G$ is connected, Lemma~\ref{lem:charvar}(3) implies that $\eC$ is a full subcategory of $\ddd\lmod$. The following result is immediate from Lemma~\ref{lem:charvar}(2).

\begin{corollary}\label{cor:admissible-holonomic}
	Let $V$ be a visible polar representation.  Then every admissible left $\dd(V)$-module is holonomic.\qed
\end{corollary}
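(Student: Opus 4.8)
The plan is to observe that the statement is an immediate consequence of Lemma~\ref{lem:charvar}(2) together with the definition of admissibility; essentially all the content has already been established.

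First I would unwind the definitions. By Definition~\ref{defn:admissible}, an admissible left $\dd(V)$-module $N$ is, by definition, a $(G,\chi)$-monodromic left $\ddd$-module on which $(\Sym V)^G$ acts locally finitely, i.e.\ $N \in (G,\chi,\dd)\lmod$ with $(\Sym V)^G$ acting locally finitely. If $N = 0$ there is nothing to prove, so we may assume $N \neq 0$. These are then exactly the hypotheses of Lemma~\ref{lem:charvar}(2): $V$ is a visible polar representation, $0 \neq N \in (G,\chi,\dd)\lmod$, and $(\Sym V)^G$ acts locally finitely on $N$. Its conclusion is that $\Ch N \subseteq \Lambda$ and that $N$ is holonomic, which is precisely the assertion of the corollary.

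There is no real obstacle here. The genuine work sits inside Lemma~\ref{lem:charvar}(2), which proceeds by choosing a good filtration of $N$ adapted to a finite-dimensional $G$-stable $(\Sym V)^G$-submodule generating $N$, deducing that $\gr N$ is annihilated by $\sigma(\tau(\g))$ and has $(\Sym V)^G_+$ acting nilpotently, so that $\Ch N \subseteq \mu^{-1}(0) \cap (V \times \euls{N}(V^*)) = \Lambda$; holonomicity then follows from Lemma~\ref{lem:regularlocalsystem}(1), which identifies visibility of $V$ with $\Lambda$ being Lagrangian (hence of dimension $\dim V$, half the dimension of $T^*V$). So the corollary is recorded here merely as the special case of Lemma~\ref{lem:charvar}(2) that will be used repeatedly in the sequel.
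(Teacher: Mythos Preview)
Your proposal is correct and follows exactly the paper's approach: the corollary is recorded as an immediate consequence of Lemma~\ref{lem:charvar}(2), with the definition of admissibility supplying the hypotheses of that lemma. Your additional paragraph unpacking the content of Lemma~\ref{lem:charvar}(2) is accurate but goes beyond what the paper records, since the paper simply appends a \qed\ to the statement.
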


Thus, when $V$ is visible, the category $\eC$ is a \emph{length category}\label{length-defn} in the sense that every object has finite length; see \cite[Proposition~I.5.3]{Bj}.
 
Given an object $M\in \dd(V^*) \lmod$, we write $\mathbb{F}^*_V(M)$ for the corresponding $\ddd$-module where, as a vector space, $\mathbb{F}^*_V(M) = M$ but  
$$
D \cdot m = \mathbb{F}_V(D) m, \quad \forall \, m \in M, \, D \in \ddd. 
$$
This defines an equivalence $\mathbb{F}^*_V \colon \dd(V^*) \lmod \to \ddd \lmod$. 

\begin{lemma}\label{lem:FourierGequiv}
	\begin{enumerate}
	\item 
	The equivalence $\mathbb{F}^*_V$ restricts to give an equivalence 
	$$
	\mathbb{F}^*_V \colon \bigl( G,\chi,\dd(V^*)\bigr) \lmod \to \bigl(G,\chi+\mr{Tr}_V, \ddd\bigr) \lmod.
	$$
\item 	 The morphism  $\mathbb{G}:= (\mathbb{F}^*_{V})^{-1}$  defines an equivalence 
$$\mathbb{G}\colon \bigl(G,\chi, \ddd\bigr) \lmod \to \bigl(G,\chi+\mr{Tr}_V,\dd(V^*)\bigr) \lmod. $$
\end{enumerate} 
\end{lemma}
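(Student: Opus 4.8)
The plan is to prove Lemma~\ref{lem:FourierGequiv} directly from the definitions, using the computation \eqref{eq:Fouriergaction} of how the Fourier transform interacts with the two differentials $\tau_V$ and $\tau_{V^*}$ of the $G$-action.

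\smallskip

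\noindent\textbf{Part (1).} First I would fix $M\in \bigl(G,\chi,\dd(V^*)\bigr)\lmod$ and verify that $\mathbb{F}^*_V(M)$ is a $(G,\chi+\mr{Tr}_V)$-monodromic left $\ddd$-module. The underlying $G$-module structure of $\mathbb{F}^*_V(M)$ is unchanged, so the differential $\tau_{\mathbb{F}^*_V(M)}\colon \g\to\End_\C(M)$ equals $\tau_M$. Condition (i) of Definition~\ref{defn:monodromic} (that the action map is $G$-equivariant) is inherited from $M$ once one checks that $\mathbb{F}_V$ is a $G$-equivariant isomorphism of algebras; this holds because $\mathbb{F}_V$ is defined in a coordinate-free way via $x\mapsto\partial_x$, $\partial_y\mapsto -y$, and the $G$-action on $V$ (hence on $V^*$, on $\C[V]$ and on derivations) is linear. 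For condition (ii), I compute for $x\in\g$: by definition of $\mathbb{F}^*_V$, the operator $\tau_V(x)$ acts on $\mathbb{F}^*_V(M)$ as $\mathbb{F}_V(\tau_V(x))$ acts on $M$, which by \eqref{eq:Fouriergaction} equals $\tau_{V^*}(x)+\mr{Tr}_V(x)$. Since $M$ is $(G,\chi)$-monodromic for $\dd(V^*)$, we have $\tau_{V^*}(x)=\tau_M(x)+\chi(x)\mr{Id}_M$, so altogether $\tau_V(x)$ acts on $\mathbb{F}^*_V(M)$ as $\tau_M(x)+\bigl(\chi(x)+\mr{Tr}_V(x)\bigr)\mr{Id}_M$, which is exactly condition (ii) with character $\chi+\mr{Tr}_V$. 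Morphisms are preserved because $\mathbb{F}^*_V$ does not alter the underlying $\C$-linear maps or the $G$-action, and $G$-equivariance is unaffected; finite generation is preserved since $\mathbb{F}_V$ is an algebra isomorphism. Applying the same argument to $\mathbb{F}_{V^*}\colon \dd(V^*)\isom \dd(V^{**})=\dd(V)$ (noting $\mr{Tr}_{V^*}=-\mr{Tr}_V$ as characters of $\g$, since $\g$ is reductive so $\mr{Tr}_V$ is a sum of roots whose negatives occur with $V^*$) gives a quasi-inverse functor landing in $\bigl(G,\chi,\dd(V^*)\bigr)\lmod$; alternatively, one observes $\mathbb{F}^*_V$ is already an equivalence of the ambient module categories $\dd(V^*)\lmod\to\ddd\lmod$, so it suffices to check it and its inverse both respect the monodromic conditions, which the above calculation does.

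\smallskip

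\noindent\textbf{Part (2).} This is now formal: $\mathbb{G}=(\mathbb{F}^*_V)^{-1}$ is the inverse of the equivalence in Part~(1), hence an equivalence $\bigl(G,\chi+\mr{Tr}_V,\ddd\bigr)\lmod \to \bigl(G,\chi,\dd(V^*)\bigr)\lmod$. Replacing $\chi$ by $\chi-\mr{Tr}_V$ in this statement, or equivalently re-indexing, yields $\mathbb{G}\colon \bigl(G,\chi,\ddd\bigr)\lmod \to \bigl(G,\chi+\mr{Tr}_V,\dd(V^*)\bigr)\lmod$ as claimed. (Here one uses that $\mathbb{F}_V$ and $\mathbb{F}_{V^*}$ are mutually inverse up to the sign convention, so the shift by $\mr{Tr}_V$ is added rather than subtracted when going from $\ddd$ to $\dd(V^*)$; this matches the bookkeeping in Part~(1) applied to $V^*$.)

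\smallskip

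\noindent The only real content is the trace-shift identity \eqref{eq:Fouriergaction}, which is already established in the excerpt, so the main (minor) obstacle is purely bookkeeping: keeping track of which side carries the $\mr{Tr}_V$ shift and checking that the sign conventions in \eqref{defn:Fourier} make $\mathbb{F}_V$ and $\mathbb{F}_{V^*}$ compose correctly. Everything else is a routine unwinding of Definition~\ref{defn:monodromic}.
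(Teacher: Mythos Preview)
Your proof is correct and follows essentially the same approach as the paper: for Part~(1) you unwind Definition~\ref{defn:monodromic} and apply the trace-shift identity \eqref{eq:Fouriergaction} exactly as the paper does, and for Part~(2) you invoke the inverse together with $\mr{Tr}_{V^*}=-\mr{Tr}_V$, which is precisely the paper's one-line justification. The only difference is that you spell out condition~(i), preservation of morphisms, and finite generation, which the paper leaves implicit.
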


\begin{proof}  (1) 
	Let $M \in (\dd(V^*),G,\chi) \lmod$ and write $\tau_{M} \colon \mf{g} \to \End_{\C}(M)$ for  the differential of the $G$-action. Then, by definition, $\tau_{V^*}(x) = \tau_{M}(x) + \chi(x) \mr{Id}_{M}$. By \eqref{eq:Fouriergaction}, this implies that 
	$$
	\tau_V(x) \cdot m = \mathbb{F}_V(\tau_V(x)) m = \tau_{V^*}(x)m + \mr{Tr}_V(x) m = \tau_{M}(x) m  + (\chi(x) + \mr{Tr}_V(x)) m,
	$$
for $m\in M$.	Therefore, $\mathbb{F}^*_V(M)$ is $(G,\chi +\mr{Tr}_V)$-monodromic.  
	
	(2)  This follows from (1) combined with the observation that  $\mr{Tr}_{V^*} = - \mr{Tr}_V$.
\end{proof}

\begin{lemma}\label{lem:decomposeadmissible} 
	The category $\eC$ admits a decomposition 
	$$
	\eC = \bigoplus_{\lambda \in V^* \git G} \euls{C}_{\chi,\lambda}
	$$
	where $\euls{C}_{\chi,\lambda}$ is the full subcategory of all modules on which the maximal ideal $\mf{m}_{\lambda} $  of $  (\Sym V)^G=  (\Sym \h)^W$ acts locally nilpotently. 
	
	If  $V$ is visible then $\euls{C}_{\chi,\lambda}$ contains only finitely many irreducible objects. 
\end{lemma}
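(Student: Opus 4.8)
The decomposition $\eC = \bigoplus_{\lambda} \euls{C}_{\chi,\lambda}$ follows the same pattern as the decomposition of $\Osph$ in Definition~\ref{defn:Osph}: the commutative algebra $(\Sym V)^G \cong (\Sym \h)^W$ acts locally finitely on any admissible module $N$, and every element of $(\Sym V)^G$ acts locally ad-nilpotently on $\ddd$ (because $\tau(\g) \subseteq \Der(V)$ and $(\Sym V)^G_+$ has positive degree, one checks that $[\ddd_j, (\Sym V)^G] \subseteq \ddd_{j-1}$ up to lower-order corrections, so a bounded power of any $p - p(\lambda)$ kills a given element of $\ddd$ modulo lower filtration degree). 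Given these two facts, the generalized eigenspace decomposition of $N$ with respect to the finite-dimensional commutative algebra $(\Sym V)^G$ acting on a finitely generated $\ddd$-module is $\ddd$-stable, and one gets $N = \bigoplus_{\lambda \in V^*\git G} N_\lambda$ with $N_\lambda \in \euls{C}_{\chi,\lambda}$; morphisms respect this splitting, so the category decomposes. I would write this out in one short paragraph, citing the parallel argument for $\Osph$ and the order-filtration estimate.

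**The finiteness statement** is the substantive part. Assume $V$ is visible. By Corollary~\ref{cor:admissible-holonomic}, every admissible module is holonomic, and by Lemma~\ref{lem:charvar}(2) its characteristic variety lies in the fixed Lagrangian $\Lambda = \mu^{-1}(0) \cap (V \times \euls{N}(V^*))$. Since $V$ is visible, $\Lambda$ has only finitely many irreducible components — concretely (from the proof of Lemma~\ref{lem:regularlocalsystem}(1)) the Fourier transform carries $\Lambda$ to $\Lambda^* = \bigsqcup_\euls{O} T^*_\euls{O} V^*$, a finite union over the finitely many $G$-orbits $\euls{O}$ in $\euls{N}(V^*)$, each closure $\overline{T^*_\euls{O} V^*}$ being an irreducible Lagrangian. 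Now I would argue that an irreducible object $M \in \euls{C}_{\chi,\lambda}$ has characteristic variety equal to a single one of these (finitely many) irreducible Lagrangians $\overline{T^*_\euls{O} V^*}$, together with the constraint that $(\Sym V)^G$ acts through the maximal ideal $\mf{m}_\lambda$; this is a standard consequence of irreducibility plus holonomicity (the characteristic variety of a simple holonomic module is irreducible). The key point is then that, for fixed $\lambda$ and fixed conormal component, there are only finitely many isomorphism classes of irreducible admissible modules — equivalently, only finitely many irreducibles in $\euls{C}_{\chi,\lambda}$.

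**The main obstacle** is precisely this last finiteness: bounding the number of irreducibles with a prescribed characteristic variety component. I would handle it via the radial parts map and the equivalence machinery already available in the paper. When $\Ak(W)$ is simple (Condition~\ref{intro-hypothesis}), Corollary~\ref{A-projectives}(2) identifies $\Osph_\lambda$ with $\euls{H}_q(W_\lambda)\lmod$, which is a finite-dimensional algebra and hence has finitely many simple modules; combined with the bimodule $\eM$ and the intertwining results (Theorem~\ref{intro-intertwining}, Proposition~\ref{intro-cor:hom(ML)}), the simple objects of $\euls{C}_{\chi,\lambda}$ supported generically on $V_{\reg}$ biject with simples of $\Osph_\lambda$, giving finiteness for that part. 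The remaining simples are those with characteristic variety strictly inside $V \times \euls{N}(V^*)$, i.e. supported on the discriminant; for these one runs an induction on the stratification of $V$ by the finitely many orbit-types on fibres of $\pi$ (visibility guarantees finiteness of strata), using Kashiwara's equivalence along each stratum closure to reduce to an admissible-module problem on a smaller visible polar slice representation, where by induction there are finitely many simples. Since visibility is preserved under passing to slice representations (the nilcone of a slice has finitely many orbits), the induction closes. I expect the bookkeeping of the slice/stratification step to be where the real work lies, so if a cleaner route is wanted one can instead simply cite that $\euls{C}_{\chi,\lambda}$ is a length category (noted just after Corollary~\ref{cor:admissible-holonomic}) whose simples inject, via characteristic cycles, into the finitely many components of $\Lambda$ with a fixed central character — but making "finitely many per component" rigorous still requires the slice argument above.
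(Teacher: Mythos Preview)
Your treatment of the decomposition is fine and matches the paper's one-line argument.

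The finiteness argument, however, has a genuine gap. First, the assertion that ``the characteristic variety of a simple holonomic module is irreducible'' is false in general; simple holonomic $\ddd$-modules routinely have reducible characteristic varieties. Second, even if you drop that claim, knowing that $\Lambda$ has finitely many irreducible components does not by itself bound the number of irreducible $\ddd$-modules with characteristic variety contained in $\Lambda$: a priori, infinitely many non-isomorphic simple local systems could live over the same orbit. Your two proposed workarounds do not close this gap: the Cherednik-algebra route invokes simplicity of $\Ak(W)$, which is not a hypothesis of the lemma (and the lemma is used in the paper before any of that machinery is in place, so the argument would be circular); and the slice/induction route is only sketched, with the crucial step---finitely many simples per stratum---left unproved.

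The paper's argument is much shorter and avoids all of this. It applies the Fourier transform $\mathbb{G}=(\mathbb{F}_V^*)^{-1}$ of Lemma~\ref{lem:FourierGequiv}(2), under which the condition ``$\mf{m}_\lambda$ acts locally nilpotently'' on a $(G,\chi)$-monodromic $\ddd$-module becomes the condition ``supported on the fibre $\varpi^{-1}(\lambda)$'' for a $(G,\chi+\mr{Tr}_V)$-monodromic $\dd(V^*)$-module, where $\varpi\colon V^*\to V^*\git G$ is the quotient. Since $V^*$ is visible whenever $V$ is, $\varpi^{-1}(\lambda)$ has finitely many $G$-orbits, and then the standard fact that there are only finitely many irreducible monodromic $\dd$-modules supported on a variety with finitely many orbits (as in \cite[Theorem~11.6.1]{HTT}) finishes the proof. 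The point you were missing is that monodromicity restricts the possible local systems on each orbit to a finite set; the Fourier transform makes this restriction visible as a support condition rather than a growth condition.
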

 
\begin{proof}
	Since each element of $(\Sym V)^G$ acts locally ad-nilpotently on $\ddd$, the decomposition is immediate. 
	
	Now assume that $V$ is visible and note that $V^*$ is also visible. Let  $\varpi \colon V^* \to V^* \git G$ be  the quotient map.  By Lemma~\ref{lem:FourierGequiv}(2),  
  $\mathbb{G}(\euls{C}_{\chi,\lambda})$ is the category of $(G,\chi+\mr{Tr}_V)$-monodromic $\eD(V^*)$-modules supported on $\varpi^{-1}(\lambda)$.  Since $V^*$ is visible,   this subvariety has finitely many $G$-orbits. It follows, as in \cite[Theorem~11.6.1]{HTT}, that $\mathbb{G}(\euls{C}_{\chi,\lambda})$ has finitely many irreducible objects.
\end{proof}

\begin{lemma}\label{lem:Vregintconnection}
	Assume that $V$ is a stable polar representation. If $N$ is an admissible left $\ddd$-module then 
	$N |_{V_{\reg}} = \dd(V_{\reg})\otimes_{\ddd}N$ is an integrable connection. 
\end{lemma}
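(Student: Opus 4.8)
The plan is to identify $N|_{V_{\reg}}$ through its characteristic variety. Recall that, on the smooth variety $V_{\reg}$, an integrable connection is precisely a coherent $\dd(V_{\reg})$-module whose characteristic variety is contained in the zero section $V_{\reg} \times \{0\}$; see \cite{HTT}. So it suffices to check that $N|_{V_{\reg}} = \dd(V_{\reg}) \otimes_{\ddd} N$ is finitely generated over $\dd(V_{\reg})$ and that $\Ch(N|_{V_{\reg}}) \subseteq V_{\reg} \times \{0\}$. Finite generation is immediate, since $N$ is finitely generated over $\ddd$.

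For the characteristic variety I would use that $\dd(V_{\reg}) = \dd(V)[\delta^{-1}]$ is a flat localisation compatible with the order filtrations, so that a good filtration $\{\Gamma_j N\}$ of $N$ localises to a good filtration of $N|_{V_{\reg}}$ with associated graded module $\C[T^*V_{\reg}] \otimes_{\C[T^*V]} \gr_\Gamma N$. Since $V_{\reg}$ is open in the vector space $V$, one has $T^*V|_{V_{\reg}} = T^*V_{\reg}$, and hence $\Ch(N|_{V_{\reg}}) = \Ch N \cap T^*V_{\reg}$.

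It then remains to bound $\Ch N$. Arguing as in the proof of Lemma~\ref{lem:charvar}(2): as $N$ is admissible, choose a finite-dimensional, $G$-stable $(\Sym V)^G$-submodule $N_0$ with $N = \ddd N_0$; the good filtration $\{\dd_j(V) N_0\}$ then has $\gr_\Gamma N$ annihilated by $\sigma(\tau(\g))$ and with $(\Sym V)^G_+$ acting nilpotently. By Remark~\ref{Lambda-significance} this forces $\Ch N \subseteq \euls{V}(\sigma(\tau(\g))) \cap \euls{V}(\sigma((\Sym V)^G_+)) = \mu^{-1}(0) \cap (V \times \euls{N}(V^*)) = \Lambda$, a containment that uses neither visibility nor stability. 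Combining with the previous paragraph gives $\Ch(N|_{V_{\reg}}) \subseteq \Lambda \cap T^*V_{\reg}$, and by Lemma~\ref{lem:regularlocalsystem}(2)—which is exactly where the stability hypothesis enters—we have $\Lambda \cap T^*V_{\reg} = V_{\reg} \times \{0\}$. Thus $\Ch(N|_{V_{\reg}})$ lies in the zero section and $N|_{V_{\reg}}$ is an integrable connection (including the trivial case $N|_{V_{\reg}} = 0$).

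There is no genuine obstacle here: the lemma is an assembly of Lemma~\ref{lem:regularlocalsystem}(2), the standard filtration computation of $\Ch N$, and the characterisation of integrable connections via the characteristic variety. The only points meriting a little care are the compatibility of characteristic varieties with restriction to the open set $V_{\reg}$, and the observation that the bound $\Ch N \subseteq \Lambda$ is valid for every polar $V$, so that stability is used solely through Lemma~\ref{lem:regularlocalsystem}(2).
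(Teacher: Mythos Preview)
Your proof is correct and follows essentially the same route as the paper: bound $\Ch N \subseteq \Lambda$ via the argument of Lemma~\ref{lem:charvar}(2), intersect with $T^*V_{\reg}$ using Lemma~\ref{lem:regularlocalsystem}(2), and invoke the characterisation of integrable connections via the characteristic variety from \cite{HTT}. You are slightly more explicit than the paper in two places---the compatibility of characteristic varieties with restriction to the open set $V_{\reg}$, and the observation that the containment $\Ch N \subseteq \Lambda$ from Lemma~\ref{lem:charvar}(2) does not actually require visibility---but the underlying argument is identical.
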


\begin{proof}
	Recall that a $\ddd$-module $\euls{L}$ is an integrable system if it is finitely generated as an $\C[V]$-module. Let $\euls{L} := N |_{V_{\reg}}$. By Lemmata~\ref{lem:charvar}(2) and~\ref{lem:regularlocalsystem}(2), the characteristic variety $\Ch \euls{L}$ is contained in the zero section $V_{\reg}\times\{0\}$. By \cite[Proposition~2.2.5]{HTT}, this implies that $\euls{L}$ is an integrable connection.  
\end{proof}

The example from Remark~\ref{rem:notstablelocalfail} also shows that  Lemma~\ref{lem:Vregintconnection} can fail  without the stability assumption.

  Finally, we  note that certain  cohomology groups of  monodromic modules are also monodromic. As this is presumably well-known to the experts, and is implicit in \cite[(1.2)]{BL}, we 
 relegate the proof to the appendix. 
 
\begin{lemma}\label{Ext-equivariance}
	Let $N$ be a finitely generated, monodromic left $\ddd$-module. For each $i \ge 0$, the right $\ddd$-module $\Ext^i_{\ddd}(N,\ddd)$ has a canonical $(G,\chi)$-monodromic structure.\qed
\end{lemma}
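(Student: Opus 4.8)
The plan is to upgrade the $G$-action and the $\g$-action on $\Ext^i_{\ddd}(N,\ddd)$ separately, and then check they fit together into a $(G,\chi)$-monodromic structure. First I would address the algebraic group action. Since $N$ is a rational $G$-module and the $G$-action on $\ddd$ is rational and algebraic, one can choose a $G$-equivariant projective (in fact free) resolution $P_\bullet \to N$ of left $\ddd$-modules in the category $(G,\chi,\ddd)\lmod$: take the $G$-stable generating subspace $M_0$ as in the proof of Lemma~\ref{lem:charvar}(1), form $\ddd \otimes_{\C} M_0 \twoheadrightarrow N$ (which is $G$-equivariant for the tensor-product action, and monodromic with the same $\chi$), and iterate, using that kernels of maps of finitely generated modules in $\eC$ or in $(G,\chi,\ddd)\lmod$ are again finitely generated since $\ddd$ is Noetherian. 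Applying $\Hom_{\ddd}(-,\ddd)$ to $P_\bullet$ yields a complex of right $\ddd$-modules $\Hom_{\ddd}(P_j,\ddd)$; each such term carries a natural rational $G$-action (diagonally, using the $G$-action on the source $P_j$ and on the target $\ddd$, with a twist by $\chi$ on the source contributing a $-\chi$), and the differentials are $G$-equivariant, so the cohomology $\Ext^i_{\ddd}(N,\ddd)$ inherits a rational $G$-module structure for which the right $\ddd$-action map is $G$-equivariant.

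Next I would produce the infinitesimal compatibility, i.e.\ condition (ii) of Definition~\ref{defn:monodromic}. The point is to compare the differential $\tau_{\Ext}$ of the $G$-action just constructed with the operator $\tau$ coming from the right $\ddd$-module structure via $\g \xrightarrow{\tau} \ddd$. On each term $\Hom_{\ddd}(P_j,\ddd)$ the differential of the diagonal $G$-action is $x \mapsto \big(f \mapsto \tau(x)\circ f - f\circ(\tau(x)-\chi(x))\big)$; since $f$ is a left $\ddd$-module map, $f\circ \tau(x) = \tau(x)\circ f$, so this collapses to $f \mapsto \chi(x) f$ on the chain level — more precisely, to $\mathrm{ad}$-type terms that telescope. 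Passing to cohomology, the upshot is the identity $\tau_{\Ext}(x) = \tau(x) - \chi(x)\,\mathrm{Id}$ as operators on $\Ext^i_{\ddd}(N,\ddd)$ (equivalently, the $\g$-action on the $\Ext$ group agrees with $\tau-\chi$), which is exactly the monodromicity condition (ii), with the same character $\chi$. This is the computation asserted to be ``implicit in \cite[(1.2)]{BL}'' and relegated to the appendix.

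Finally I would argue independence of the choices: a different $G$-equivariant resolution is linked to $P_\bullet$ by a $G$-equivariant chain homotopy equivalence (standard, since in the $G$-equivariant category there are still enough projectives and the comparison theorem applies $G$-equivariantly), so the induced $(G,\chi)$-monodromic structure on $\Ext^i_{\ddd}(N,\ddd)$ is canonical. I expect the main obstacle to be purely bookkeeping rather than conceptual: namely verifying that the chain-level $G$-action on the Hom-complex is genuinely rational and algebraic (not merely formal) and that the sign/character twists are tracked correctly so that one lands at $\chi$ and not, say, $-\chi$ or $\chi+\mathrm{Tr}_V$. Once the sign conventions are pinned down, conditions (i) and (ii) are immediate from the construction, and the lemma follows.
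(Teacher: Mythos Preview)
There is a genuine gap. Your claim that $\ddd \otimes_{\C} M_0$ is $(G,\chi)$-monodromic is false: with the tensor-product $G$-action, the differential $\tau_{\ddd\otimes M_0}(x)$ acts on $D\otimes m$ as $[\tau(x),D]\otimes m + D\otimes \tau_{M_0}(x)m$, whereas left multiplication by $\tau(x)$ gives $\tau(x)D\otimes m$. The difference is $D\otimes m \mapsto -D\tau(x)\otimes m + D\otimes \tau_{M_0}(x)m$, which is a $\ddd$-linear endomorphism but is certainly not $-\chi(x)\,\mathrm{Id}$. So the terms of your resolution are only \emph{weakly} equivariant, not monodromic, and your chain-level computation (which substitutes $\tau_{P_j} = \tau - \chi$) collapses for the wrong reason. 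There is no projective resolution inside $(G,\chi,\ddd)\lmod$ by genuinely monodromic free modules.

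The paper's approach (in the appendix, following \cite[(1.2)]{BL}) confronts exactly this obstruction. One works with weakly equivariant free modules $P^i$ and records the defect $u_i := \tau_{P^i} - \tau + \chi$, which is a $G$-equivariant map $\g \to \End_\ddd(P^i)$. The substance of the argument is to construct, inductively, $G$-equivariant $\ddd$-linear homotopies $j_i \colon \g \to \Hom_\ddd(P^i,P^{i-1})$ satisfying $\alpha_{i-1}\circ j_i(x) + j_{i+1}(x)\circ \alpha_i = u_i(x)$; this uses projectivity of the $P^i$ together with exactness of $\Hom_G(\g,-)$. Once the $u_i$ are null-homotopic, the cohomology of $P^\bullet$ (and of its dual $\Hom_\ddd(P^\bullet,\ddd)$, after checking the dual homotopies $j_i^*$ do the same job for the $Q^i$) is monodromic even though the individual terms are not. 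What you dismissed as ``purely bookkeeping'' is in fact the one idea in the proof.
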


%%%%%%%%%%%%%%%%

%%%%%%%%%%%%%%%%%%%%

\section{Ad-Nilpotence}\label{Sec:nilpotence} 

In understanding quantum Hamiltonian reduction, one is interested in relating monodromic modules over $\dd(V)$ to those over (a factor of) $ \dd(V)^G$ through the bimodule $\eMt=\ddd/ \ddd\g_{\chi}$. In this section we clarify how locally nilpotent actions of subrings ``pass through'' $\eMt$. This is actually quite general and so the module $\eMt$ will only be defined later; see Definition~\ref{M-new-definition}, in particular. Instead,  we fix the following hypotheses. 

\begin{hypotheses}\label{ad-hypotheses}
	In this section, fix $\C$-algebras $S$ and $A$, with $S$ noetherian, and a commutative ring $C$ that embeds into both $A$ and $S$ and acts locally ad-nilpotently on both  rings.  
\end{hypotheses}

To fix notation, if $d\in C$ and $m$ belongs to an $(S,A)$-bimodule $B$,  then  we define $\ad(d)(m)=dm-md$. Then, as for rings,  the action of $d$ on $B$ is \emph{locally ad-nilpotent}  if,  for any  $m\in B$,
there exists $n\in \mathbb{N}$ such that $\ad(d)^n(m)=0$.   Recall that an element $0\not=r \in S$ is  \emph{regular}  if it is neither a left nor a right zero divisor. 
 
\begin{definition}\label{Bi-definition}   
	Let $\Bi(S,d)$ denote the full subcategory of $(S,\C[d])$-bimodules that are finitely generated as left $S$-modules and, similarly, let 
	$\Bi(d,S)$ denote the category of $(\C[d], S )$-bimodules that are finitely generated as  right $S$-modules. Let 
	$\Biad(S,d)$ denote the set of modules $B\in \Bi(S,d)$ on which the action of $d$ is locally ad-nilpotent. The category $\Biad(d,S)$ is defined similarly.

	For each $k \ge 1$ consider the $(\C[d], S)$-bimodule 
	$$
	F_k \ = \  (\C[d]\otimes_\C S) /(d \otimes 1 - 1 \otimes d)^k (\C[d]\otimes_\C S) \ \in \ \Biad(d,S).
	$$ 
	As a right $S$-module, $F_k\cong S[d] / (d^k)$ is free. 
\end{definition}

\begin{lemma}\label{ad-nilp1}
	Let $B\in \Biad(d,S)$. 
	Then there exist $k,\ell \ge 1$ and a surjection $F_k^{\oplus \ell} \twoheadrightarrow B$ in the category of $(\C[d],S)$-bimodules. 
\end{lemma}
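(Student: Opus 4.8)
The plan is to pick a finite generating set for $B$ as a left $S$-module, use local ad-nilpotence to enlarge it to a finite generating set that is ``compatible'' with the $\C[d]$-bimodule structure, and then read off the surjection from $F_k^{\oplus\ell}$.

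First I would fix a finite set of generators. Since $B\in\Bi(d,S)$ it is finitely generated as a right $S$-module, say $B=\sum_{i=1}^{\ell'} b_i S$. By hypothesis $d$ acts locally ad-nilpotently on $B$, so for each $i$ there is an integer $n_i$ with $\ad(d)^{n_i}(b_i)=0$. Let $k=\max_i n_i$. The key observation is then that for each fixed $i$, the right $S$-submodule $\sum_{j\ge 0} d^j b_i S\subseteq B$ is already equal to $\sum_{j=0}^{k-1} \ad(d)^j(b_i)\, S$: this follows because $d^j b_i$ lies in the right $S$-span of $b_i, \ad(d)(b_i),\dots,\ad(d)^j(b_i)$ (expand $d^j b_i = \sum_{p} \binom{j}{p}\ad(d)^p(b_i)\, d^{j-p}$, the standard binomial identity for the left action written in terms of $\ad(d)$ and right multiplication), and $\ad(d)^j(b_i)=0$ once $j\ge k$. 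Hence $B$ is generated as a $(\C[d],S)$-bimodule by the finite set $\{b_1,\dots,b_{\ell'}\}$, and in fact each cyclic bimodule $\C[d]\,b_i\, S$ is a quotient of a single copy of $F_k$ via the map sending the class of $1\otimes 1$ to $b_i$ — this map is well defined precisely because $\ad(d)^k(b_i)=0$ forces $(d\otimes 1-1\otimes d)^k$ to act by zero on the image.

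Assembling these over $i=1,\dots,\ell'$ gives a surjection $F_k^{\oplus \ell'}\twoheadrightarrow B$ of $(\C[d],S)$-bimodules; taking $\ell=\ell'$ finishes the argument. The only point requiring a little care is the binomial rewriting identity, i.e.\ that $d^j b\in \sum_{p\le j}\ad(d)^p(b)S$, which one verifies by an easy induction on $j$ using $db' = b'd + \ad(d)(b')$ for $b'\in B$; this is routine and I would not spell it out in full. There is no real obstacle here: the statement is essentially a bookkeeping consequence of local ad-nilpotence together with finite generation on one side, and the module $F_k$ is engineered exactly so that cyclic ad-nilpotent bimodules are its quotients.
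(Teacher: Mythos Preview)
Your proposal is correct and takes essentially the same approach as the paper: choose finitely many right $S$-module generators $b_i$ of $B$, use ad-nilpotence to find a uniform $k$ with $\ad(d)^k(b_i)=0$, and map each copy of $F_k$ onto the cyclic sub-bimodule generated by $b_i$ via $1\otimes 1\mapsto b_i$. The paper packages this slightly more compactly by first passing to a finite-dimensional $\ad(d)$-\emph{stable} generating subspace $B_0\subset B$ (equivalently, adjoining all the $\ad(d)^j(b_i)$ to the generating set from the outset) and then taking $\ell=\dim B_0$; your binomial rewriting of $d^j b_i$ in terms of the $\ad(d)^p(b_i)$ is correct but not actually needed, since surjectivity is immediate once each $b_i$ lies in the image.
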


\begin{proof}
	Let $B_0 \subset B$ be a finite dimensional $\ad(d)$-stable subspace of $B$ such that $B_0S = B$. Then there 
	exists some $k \gg 0$ such that $\ad(d)^k(n) = 0$ for all $n \in B_0$. If we take $\ell = \dim B_0$ then fixing a basis 
	of $B_0$ defines a surjection of $(\C[d], S )$-bimodules $F_k^{\oplus \ell} \twoheadrightarrow B$. 
\end{proof}

By induction we immediately obtain:

\begin{corollary}\label{ad-nilp11}  Let $B\in \Biad(d,S)$.  Then  there exists a resolution $(P_*,\dalpha_*)$  of $B$ by  
	bimodules $P_j=\bigoplus F_i^{(j_i)}\in \Biad(d,S)$ that are   free right $S$-modules. Moreover,   each differential
	$\dalpha_i : P_{i+1} \to P_i$ is a morphism of $(\C[d],S)$-bimodules. 
	\qed
\end{corollary}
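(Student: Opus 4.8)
The plan is to construct the resolution one step at a time exactly as one builds a free resolution over a ring, using Lemma~\ref{ad-nilp1} at each stage; the only thing requiring care is to check that the category $\Biad(d,S)$ is stable under the kernels that arise, and for this the noetherian hypothesis on $S$ is exactly what is needed.

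First I would apply Lemma~\ref{ad-nilp1} to $B$ itself, obtaining a surjection $\epsilon \colon P_0 \twoheadrightarrow B$ of $(\C[d],S)$-bimodules with $P_0 = F_{k_0}^{\oplus \ell_0}$. Set $Z_0 = \ker \epsilon$. Since $\epsilon$ is a morphism of $(\C[d],S)$-bimodules, $Z_0$ is a sub-bimodule of $P_0$; since each $F_k$ is free of rank $k$ as a right $S$-module, $P_0$ is finitely generated as a right $S$-module, so by noetherianity of $S$ the submodule $Z_0$ is again finitely generated over $S$ on the right; and $\ad(d)$ acts on $Z_0$ as the restriction of its locally nilpotent action on $P_0 \in \Biad(d,S)$, hence locally nilpotently. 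Thus $Z_0 \in \Biad(d,S)$, and Lemma~\ref{ad-nilp1} may be reapplied. Iterating, suppose $P_0,\dots,P_i$ and bimodule maps $\dalpha_0,\dots,\dalpha_{i-1}$ have been built with $Z_{i-1} := \ker \dalpha_{i-2} \in \Biad(d,S)$ (convention: $\dalpha_{-1} = \epsilon$). Apply Lemma~\ref{ad-nilp1} to $Z_{i-1}$ to get a surjection $P_i = F_{k_i}^{\oplus \ell_i} \twoheadrightarrow Z_{i-1}$ of $(\C[d],S)$-bimodules, and let $\dalpha_{i-1} \colon P_i \to P_{i-1}$ be this surjection followed by the inclusion $Z_{i-1} \hookrightarrow P_{i-1}$. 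Then $\dalpha_{i-1}$ is a morphism of $(\C[d],S)$-bimodules with $\Im \dalpha_{i-1} = Z_{i-1} = \ker \dalpha_{i-2}$, so the complex is exact at $P_{i-1}$, and $Z_i := \ker \dalpha_{i-1} = \ker(P_i \twoheadrightarrow Z_{i-1})$ is again a sub-bimodule of the finite free right $S$-module $P_i$, hence finitely generated over $S$ on the right, and locally ad-nilpotent for $d$; so $Z_i \in \Biad(d,S)$ and the induction continues. By construction every $P_i$ is a finite direct sum of modules $F_j$ and is free as a right $S$-module, and every $\dalpha_i$ is a $(\C[d],S)$-bimodule morphism, which is exactly the assertion.

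The one point to watch — the "main obstacle", modest as it is — is precisely that each syzygy $Z_i$ remain finitely generated as a right $S$-module so that Lemma~\ref{ad-nilp1} can be invoked again; this is guaranteed by the standing assumption that $S$ is noetherian together with the fact (recorded in Definition~\ref{Bi-definition}) that each $F_k$ is free of finite rank over $S$. Preservation of the $(\C[d],S)$-bimodule structure under kernels, and of local $\ad(d)$-nilpotence under passage to submodules, are both automatic, so no further argument is required there.
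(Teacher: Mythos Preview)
Your proof is correct and is exactly the induction the paper has in mind: the paper's own proof is simply the phrase ``By induction we immediately obtain'' together with a $\qed$, and your argument spells out precisely that induction, using Lemma~\ref{ad-nilp1} at each stage and the noetherianity of $S$ to ensure the successive kernels remain in $\Biad(d,S)$.
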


The next result is a key application of ad-nilpotence to homology groups. 

\begin{proposition}\label{ad-nilp2}
	Let $B\in \Biad(d,S)$  and pick  a resolution  $P_* \to B$ by Corollary~\ref{ad-nilp11}. Then the following hold.
	\begin{enumerate}
		\item[(1)] For each $i$, $\Hom_S(P_i,S)\in \Biad(S,d)$. 
		\item[(2)] For each $i$, the induced morphism $\dalpha_i^* : \Hom_S(P_i,S) \to  \Hom_S(P_{i+1},S)$ is a morphism of $(S,\C[d])$-bimodules. 
		\item[(3)] In particular, the cohomology of $(\Hom_S(P_*,S),\dalpha_i^*)$ belongs to $\Biad(S,d)$.
	\end{enumerate}
\end{proposition}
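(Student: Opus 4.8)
The plan is to apply $\Hom_S(-,S)$ to the resolution $(P_*,\dalpha_*)$ produced by Corollary~\ref{ad-nilp11} and to keep careful track of the bimodule structures. For any $(\C[d],S)$-bimodule $B$, the space $\Hom_S(B,S)$ of right $S$-module maps is naturally an $(S,\C[d])$-bimodule: $S$ acts on the left through the target, and $d$ acts on the right by $(f\cdot d)(b)=f(db)$; one checks that $f\cdot d$ is again right $S$-linear and that these two actions commute. Granting this, Part (2) is a formal verification: $\dalpha_i^{*}(f)=f\circ\dalpha_i$ is left $S$-linear because $\dalpha_i$ is right $S$-linear, and right $\C[d]$-linear because $\dalpha_i$ is left $\C[d]$-linear.

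For Part (1), I would first observe that each $P_i$ is a \emph{finitely generated} free right $S$-module --- this is built into the construction of Corollary~\ref{ad-nilp11}, using that $S$ is noetherian so the successive syzygies stay finitely generated. Since $\Hom_S(F_k,S)\cong S^{k}$ as a left $S$-module, $\Hom_S(P_i,S)$ is then a finitely generated free left $S$-module, hence an object of $\Bi(S,d)$. The substance of Part (1) is local ad-nilpotence of $d$ on $M_i:=\Hom_S(P_i,S)$. Writing $D_S(s)=ds-sd$ on $S$ and $D_P(p)=dp-pd$ on $P_i$ for the ad-actions, a short computation gives the identity of maps $P_i\to S$
\[
\ad(d)(f)\;=\;D_S\circ f\;-\;f\circ D_P,\qquad f\in M_i .
\]
Inside the ambient space $\Hom_\C(P_i,S)$ the operators $f\mapsto D_S\circ f$ and $f\mapsto f\circ D_P$ commute, so $\ad(d)^{n}(f)=\sum_{j}\binom{n}{j}(-1)^{j}\,D_S^{\,n-j}\circ f\circ D_P^{\,j}$. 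Fixing $f$ and right $S$-module generators $p_1,\dots,p_r$ of $P_i$: local nilpotence of $D_P$ (since $P_i\in\Biad(d,S)$) kills $D_P^{\,j}(p_\alpha)$ for $j\gg0$, and local nilpotence of $D_S$ (Hypotheses~\ref{ad-hypotheses}) annihilates the finitely many remaining values $f(D_P^{\,j}(p_\alpha))$ for $n-j\gg0$; hence for $n$ large every term of the sum vanishes on every $p_\alpha$, and so $\ad(d)^{n}(f)=0$.

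Part (3) is then immediate. By Parts (1) and (2) the complex $(\Hom_S(P_*,S),\dalpha_*^{*})$ lives in $\Bi(S,d)$; since $S$ is noetherian, kernels and images of its differentials again lie in $\Bi(S,d)$, and any sub- or quotient bimodule of an object of $\Biad(S,d)$ is again in $\Biad(S,d)$ (a power of $\ad(d)$ killing an element kills its image in any quotient, and restricts to any submodule). So each cohomology group, being a subquotient in $\Bi(S,d)$ of some $\Hom_S(P_i,S)\in\Biad(S,d)$, lies in $\Biad(S,d)$.

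The step I expect to be the main obstacle is the local ad-nilpotence in Part (1): one cannot merely quote ad-nilpotence of $d$ on $P_i$, because precomposition with $D_P$ does not preserve the subspace $\Hom_S(P_i,S)\subseteq\Hom_\C(P_i,S)$ --- only the combination $D_S\circ f-f\circ D_P$ does. The resolution is to run the nilpotence estimate in the larger $\C$-linear Hom space where the two operators genuinely commute, and then combine ad-nilpotence on $P_i$ with ad-nilpotence on $S$ and the finiteness of a generating set to obtain a single uniform exponent. The remaining verifications are routine.
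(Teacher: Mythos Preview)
Your proposal is correct and follows essentially the same approach as the paper. The paper computes $(\ad(d)\phi)(x)=\ad(d)(\phi(x))-\phi(\ad(d)(x))$ and iterates by induction to get $\ad(d)^n\phi(x)=\sum_i\lambda_i\,\ad(d)^{n-i}(\phi(\ad(d)^i(x)))$, then uses finite generation of $P_i$ exactly as you do; your framing via commuting operators $f\mapsto D_S\circ f$ and $f\mapsto f\circ D_P$ in $\Hom_\C(P_i,S)$ and the binomial expansion is just a cleaner way to package the same induction, and your explicit remark that one must pass to $\Hom_\C$ because precomposition with $D_P$ is not $S$-linear is a point the paper leaves implicit.
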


\begin{proof}  (1) That $\Hom_S(P_i,S)\in \Bi(S,d)$ is routine. Let $\phi \in \Hom_S(P_i,S)$ and $x \in P_i$. Then 
	$$
	\bigl(\ad(d) \phi\bigr)(x) = d\phi(x) - \phi(dx) =       \ad(d)(\phi(x))     -    \phi\left(\ad(d)(x)\right) 
	$$
	and hence, by induction,
	$$
	\bigl(\ad(d)^n \phi)(x) = \sum_{i = 0}^n \lambda_i \ad(d)^{n-i}\left(\phi\bigl(\ad(d)^i(x)\bigr)\right)
	$$
	for suitable integers $\lambda_i$.
	Pick $n_0$ such that $\ad(d)^{n}(x) = 0$  for all $n \geq n_0$. Then  
	\begin{equation}\label{eq:ad-nilp2}
		\bigl(\ad(d)^n \phi\bigr)(x) = \sum_{i < n_0} \lambda_i \ad(d)^{n-i}\left(\phi\bigl(\ad(d)^i(x)\bigr)\right).
	\end{equation}
	By ad-nilpotence,  there exists $n \gg 0$ such that 
	$\ad(d)^{n-n_0}(\phi(\ad(d)^i(x))) = 0$ for all $0\leq i < n_0$. Therefore, by \eqref{eq:ad-nilp2},
	$(\ad(d)^n \phi)(x) = 0$ for such $n$. 
	
	As $P_i$ is a  finitely generated $S$-module, say by a finite dimensional vector space $Z$, we can therefore find  $n\geq 1$ such that $(\ad(d)^n \phi)(Z) = 0$.  By inspection,  $\ad(d)^n \phi$ is   a $(\C[d],S)$-bimodule map, from which it follows that 
	$ (\ad(d)^n \phi)(P_i) = 0$. In other words, $\ad(d)^n \phi = 0$.  

	(2)  By construction, the $\dalpha_i^*$ are morphisms of left $S$-modules. Now
	let   $\phi \in \Hom_S(P_i,S), x \in P_i$ and $g \in \C[d]$. Then, 
	unravelling the definitions, and using that $\dalpha_i$ is a map of left $\C[d]$-modules, one finds that
	\[ \left(\dalpha_i^*(\phi)\cdot g\right)(x)= \phi(\dalpha_i(gx)) = \left((\dalpha_i^*\phi)\cdot g\right)(x).\]
	Thus  $\dalpha_i^*(\phi)\cdot g =  \dalpha_i^*(\phi\cdot g)$ and  $\dalpha_i^*$ is a morphism of right $\C[d]$-modules. Since the  rings $S$ and $\C[d]$ act from different sides, $\dalpha_i^*$ is 
	clearly a morphism of bimodules. 
	
	(3) This is immediate from (1,2).
\end{proof}

\begin{corollary}\label{ad-nil prop} For $m\geq 0$, 
	$\Ext_S^m( - , {}_SS)$ defines a functor $\Bi(S,d) \to \Bi(d,S)$ sending:  
	\begin{enumerate}
		\item $\Biad(S,d)$  to  $\Biad(d,S)$; 
		\item $\ad(d)$-locally finite modules to $\ad(d)$-locally finite modules. 
	\end{enumerate}  
\end{corollary}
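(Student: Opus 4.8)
The plan is to deduce the final statement (Corollary~\ref{ad-nil prop}) directly from Proposition~\ref{ad-nilp2}, exactly as the phrasing ``In particular'' after that proposition suggests. First I would observe that $\Ext^m_S(-,{}_SS)$ already makes sense as a functor $\Bi(S,d)\to\Bi(d,S)$: if $B\in\Bi(S,d)$, choose a resolution of $B$ by free left $S$-modules; each term carries a right $\C[d]$-action contragredient to the left $C$-action (concretely, the $(S,\C[d])$-bimodule structure of $B$ makes the hom-complex into a complex of $(\C[d],S)$-bimodules, with $d$ acting on $\Hom_S(P_i,S)$ by $(\phi\cdot d)(x)=\phi(dx)$), and the differentials are right $\C[d]$-linear since $d\in C$ acts from the left on $B$ and hence on the resolution. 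Taking cohomology gives the right-$S$-module $\Ext^m_S(B,S)$ with its induced $(\C[d],S)$-bimodule structure; functoriality in $B$ is standard. That this is well-defined independent of the resolution is the usual argument, and finite generation of $\Ext^m_S(B,S)$ as a right $S$-module follows from $S$ being noetherian together with $B$ being finitely generated as a left $S$-module.

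Next, for part~(1), given $B\in\Biad(S,d)$ I would replace the left-right roles in Corollary~\ref{ad-nilp11} and Proposition~\ref{ad-nilp2}: that is, I want the version where $B$ is an $(S,\C[d])$-bimodule, finitely generated on the left, on which $d$ acts locally ad-nilpotently, and I resolve it by $(S,\C[d])$-bimodules $P_j$ that are \emph{free as left $S$-modules} and of the form $\bigoplus G_i^{(j_i)}$ where $G_k=(S\otimes_\C\C[d])/(d\otimes 1-1\otimes d)^k$. This mirror version of Corollary~\ref{ad-nilp11} holds by the identical proof (take a finite-dimensional $\ad(d)$-stable $B_0$ with $SB_0=B$, bound $\ad(d)^k$ on $B_0$, and iterate). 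Then the mirror of Proposition~\ref{ad-nilp2}(1)--(3) — obtained by swapping ``left'' and ``right'' throughout its proof, which is symmetric — shows that $\Hom_S(P_i,S)\in\Biad(d,S)$, that the dual differentials are $(\C[d],S)$-bimodule maps, and hence that the cohomology, which computes $\Ext^m_S(B,S)$, lies in $\Biad(d,S)$. This is the whole content of part~(1).

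For part~(2), suppose $d$ acts locally finitely on $B\in\Bi(S,d)$; I must show $d$ acts locally finitely on $\Ext^m_S(B,S)$, i.e. via $\ad(d)$ in the $(\C[d],S)$-bimodule sense. The argument is a completely parallel bookkeeping computation to equation~\eqref{eq:ad-nilp2} in the proof of Proposition~\ref{ad-nilp2}(1): using a resolution $P_*\to B$ by $(S,\C[d])$-bimodules that are free as left $S$-modules with $d$ acting locally finitely on each $P_i$ (available again by the mirror of Corollary~\ref{ad-nilp11}, since the hypothesis ``locally finite'' is preserved by the same construction — one just needs $B_0$ to be $\ad(d)$-stable and finite-dimensional, which gives a $\C[d]$-stable finite-dimensional space after spanning), one checks that if $\phi\in\Hom_S(P_i,S)$ and $\phi(Z)$ lies in a finite-dimensional $\C[d]$-stable subspace for a finite-dimensional generating $Z$, and if $\ad(d)$ acts locally finitely on $P_i$, then the $\C[d]$-span of $\phi$ is finite-dimensional; passing to cohomology finishes the claim. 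Alternatively, and more cleanly, one can note that a module in $\Bi(S,d)$ on which $d$ acts locally finitely is, as a $\C[d]$-module, a direct limit of its $\ad(d)$-stable finite-dimensional subbimodules-up-to-left-$S$-action, reducing part~(2) to a statement about finite-dimensional $\C[d]$-actions which $\Ext^m_S(-,S)$ clearly respects.

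\textbf{Main obstacle.} The genuinely substantive work has already been done in Proposition~\ref{ad-nilp2}; the only real care needed here is the left-right transposition — verifying that the proofs of Corollary~\ref{ad-nilp11} and Proposition~\ref{ad-nilp2}, written there for $(\C[d],S)$-bimodules finitely generated on the right, apply verbatim to $(S,\C[d])$-bimodules finitely generated on the left, which they do since neither argument uses any asymmetry of $S$ beyond being noetherian. The ``locally finite'' half of part~(2) is the one place a fresh (though routine) estimate is required, running parallel to \eqref{eq:ad-nilp2}, and I expect that to be the most delicate point to state precisely, mainly because one must track that the chosen resolution can be taken with each $P_i$ carrying a locally finite $d$-action.
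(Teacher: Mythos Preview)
Your treatment of part~(1) matches the paper's: it is indeed immediate from (the left-right mirror of) Proposition~\ref{ad-nilp2}, and your remark that one must swap sides is correct and implicit in the paper's one-line proof.

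For part~(2), however, the paper takes a cleaner route than either of your two suggestions. Rather than trying to build a resolution by bimodules on which $d$ acts locally finitely and then redo the estimate of Proposition~\ref{ad-nilp2}(1), the paper reduces directly to part~(1) via a generalized eigenspace decomposition. The key observation is that since $\ad(d)$ is locally \emph{nilpotent} on $S$ (this is part of Hypotheses~\ref{ad-hypotheses}) and $N$ is finitely generated over $S$, the module $N$ admits a \emph{finite} decomposition $N=\bigoplus_{\lambda\in\C} N_\lambda$ into generalized $\ad(d)$-eigenspaces, each of which is an $(S,\C[d])$-subbimodule. Then $\Ext^m_S(N,S)=\bigoplus_\lambda \Ext^m_S(N_\lambda,S)$, so one may assume $N=N_\lambda$. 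Now twist the right $\C[d]$-action by $n\star d:=n(d-\lambda)$; the resulting bimodule $N'$ lies in $\Biad(S,d)$, and part~(1) shows $(\ad(d)-\lambda)$ acts locally nilpotently on $\Ext^m_S(N,S)$. This is the whole argument.

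Your first approach for~(2) is not wrong in spirit, but the claim that ``the same construction'' as in Corollary~\ref{ad-nilp11} gives a resolution by locally finite $P_i$'s is not quite accurate: the modules $F_k$ have $\ad(d)^k=0$ by design, so for the locally finite case you would need to replace them by $(S\otimes\C[d])/p(\ad d)$ for more general polynomials $p$, and then redo the computation analogous to \eqref{eq:ad-nilp2}. This can be made to work, but it is more effort than the paper's shift-and-reduce trick, which avoids touching resolutions altogether in part~(2).
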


\begin{proof}
	(1)  This is immediate from Proposition~\ref{ad-nilp2}.
	
	(2) Assume that $N \in \Bi(S,d)$ is $\ad(d)$-locally finite. Since the action of $\ad(d)$ on $S$ is locally nilpotent by assumption and $S$ is finitely generated, $N$ 
	admits a finite decomposition $N = \bigoplus_{\lambda \in \C} N_{\lambda}$, where $N_{\lambda}$ is the $(S,\C[d])$-submodule 
	$$
	N_{\lambda} = \{ n \in N \, | \, (\ad(d) - \lambda)^k \cdot n = 0 \textrm{ for some } k > 0 \}.
	$$
	Since  $\Ext_S^m(N, {}_SS) = \bigoplus_{\lambda} \Ext_S^m( N_{\lambda} , {}_S S)$,  it suffices to assume that $N = N_{\lambda}$ for some 
	$\lambda$.  
	Now define a
	$(S,\C[d])$-module $N'$, which equals $N$ as a left $S$-module, but has the new right action given by 
	$n \star d := n (d - \lambda)$. Then applying Proposition~\ref{ad-nilp2} to $N'$ shows that 
	$(\ad(d) - \lambda)$ acts locally  ad-nilpotently on $\Ext_S^m( N, {}_SS)$, as is required.
\end{proof}

A variant of the proof of Proposition~\ref{ad-nilp2} gives the following useful fact.  

\begin{proposition}
	\label{torsion-homs}  Let $B$ be an $(A,S)$-bimodule that is finitely generated as a right $S$-module.
	Let $\eS$ be an Ore set of elements in both $A$ and $S$ acting locally ad-nilpotently on $S,A$ and on  the bimodule 
	$B$. Let $L$ an $\eS$-torsion  right $S$-module. Then $\Hom_S(B, L)$ is also $\eS$-torsion.
\end{proposition}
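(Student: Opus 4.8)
The plan is to imitate the argument of Proposition~\ref{ad-nilp2}, working one element of $\eS$ at a time and reducing to the case where $B$ is a cyclic right $S$-module. First I would fix $s \in \eS$ and a homomorphism $\phi \in \Hom_S(B,L)$; since $\eS$ is countably generated it suffices to show that each $\phi$ is killed by a power of a single $s$, and then (after noting $\eS$ is Ore, so $s_1 s_2 \cdots s_r$ lies again in $\eS$) conclude that $\Hom_S(B,L)$ is $\eS$-torsion. The key computational input is the same ``Leibniz expansion'' used in the proof of Proposition~\ref{ad-nilp2}: for $x \in B$ one has
\[
s^n \phi(x) - \phi(s^n x) \ = \ \sum_{i=1}^{n} \binom{n}{i} (\ad(s)^i \phi)(x)
\]
(up to the precise integer coefficients), and iterating gives an identity expressing $s^N\,\phi(x)$ as a sum of terms of the form $s^{N-i}\,\phi\!\bigl(\ad(s)^i(x)\bigr)$. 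The hypotheses now feed in: because $s$ acts locally ad-nilpotently on $B$, for each $x$ there is $n_0$ with $\ad(s)^i(x)=0$ for $i \ge n_0$, so only finitely many terms survive; because $L$ is $\eS$-torsion, each of the finitely many elements $\phi(\ad(s)^i(x)) \in L$ is annihilated by a power of $s$; hence $s^N \phi(x) = 0$ for $N \gg 0$. (One also uses that $\ad(s)$ acts locally nilpotently on $A$ so that $\ad(s)^i\phi$ is again an $S$-module map, i.e. the intermediate expressions make sense as maps $B \to L$; this is exactly the bookkeeping in Proposition~\ref{ad-nilp2}(1).)

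To upgrade the pointwise vanishing to $\ad(s)^N\phi = 0$ for a uniform $N$, I would use that $B$ is finitely generated as a right $S$-module: pick a finite generating set, hence a finite-dimensional $\ad(s)$-stable subspace $Z$ with $ZS = B$ (such a $Z$ exists since $\ad(s)$ acts locally nilpotently and $B$ is right-noetherian over $S$); apply the previous paragraph to each of the finitely many basis vectors of $Z$ to obtain a single $N$ with $(s^N\phi)(Z)=0$; and then observe that $s^N\phi$ (equivalently $\ad(s)^N\phi$, after the usual rearrangement) is a right $S$-module map, so vanishing on $Z$ forces vanishing on $ZS = B$. Therefore $\Hom_S(B,L)$ is $s$-torsion for every $s\in\eS$, and the Ore condition packages this into $\eS$-torsion.

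The main obstacle is bookkeeping rather than conceptual: one must be careful that although $B$ carries only a left $A$-action and a right $S$-action (not an honest $\C[s]$-bimodule structure on the nose, since $s$ need not be central), the maps $\ad(s)^i\phi \colon B \to L$ are still well-defined $\C$-linear maps, and the key identity above is still valid, because $\ad(s)$ is computed using the two-sided action of $s$ on the bimodule $B$ exactly as in Proposition~\ref{ad-nilp2}; and one must check that ``$s^N\phi$ is a right $S$-module map'' — this holds because $s$ acts locally ad-nilpotently on $S$, so for $a\in S$ and $x\in B$ one can commute the relevant powers of $s$ past $a$, precisely as in the proof of Proposition~\ref{ad-nilp2}(2). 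Once these two points are handled carefully the proof is a direct transcription of the earlier argument with ``$L$ is $\eS$-torsion'' replacing ``$\phi$ factors through a resolution''.
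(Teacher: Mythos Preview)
Your overall strategy matches the paper's: reduce to a single $d\in\eS$, expand $(\theta d^j)(x)$ via a Leibniz-type identity involving $\ad(d)^i(x)$, use ad-nilpotence on $B$ to truncate the sum, use $\eS$-torsion of $L$ to kill the surviving terms, pass to a finite generating set for uniformity, and extend by right $S$-linearity. So the plan is correct and essentially the same argument.

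However, your transcription of Proposition~\ref{ad-nilp2} is too literal and introduces confusion. First, $L$ is only a \emph{right} $S$-module, so $s^n\phi(x)$ is ill-formed; the correct expression is $\phi(x)\,d^{j-i}$ on the right, and the paper's identity reads $(\theta d^j)(m)=\sum_i\lambda_i\,\theta(\ad(d)^i(m))\,d^{j-i}$. Your displayed formula with $(\ad(s)^i\phi)(x)$ on the right-hand side does not make sense here: unlike in Proposition~\ref{ad-nilp2} (where the target $S$ is a bimodule over itself), $\Hom_S(B,L)$ carries only the single right $A$-action $(\theta a)(x)=\theta(ax)$, so there is no $\ad(s)$ on $\phi$ to speak of. Your prose version immediately afterwards, with $\ad(s)^i$ applied to $x\in B$, is the correct one. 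Second, the justification you give for $s^N\phi$ being a right $S$-module map is off: it has nothing to do with ad-nilpotence on $S$, it is automatic from the bimodule axiom for $B$ (left $A$-action commutes with right $S$-action), so $(\theta d^j)(xt)=\theta(d^j x\cdot t)=(\theta d^j)(x)\,t$. Drop the references to $\ad(s)^i\phi$ being $S$-linear and to ad-nilpotence on $S$; the argument is cleaner without them.
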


\begin{proof}  Pick $0\not=d\in \eS$. By ad-nilpotence, $\eS'= \{d^j: j\in \mathbb{N}\}$ is also an Ore set in $S$ and $A$,  and so it is enough to prove the result when 
	$\eS=\{d^j\}$.  If $\theta\in H=\Hom_S(B, L)$, then the action of $d$  on $H$ is given by $(\theta d  )(x)=\theta(dx)$ for $x\in B$.  
	This can be rewritten as 
	\[\begin{aligned} (\theta d)(x)  \ = &\ \theta(dx) \ =\ \theta\left(\ad(d)(x)\right) + \theta(xd)  = \theta\left(\ad(d)(x)\right) + \theta(x)d.
	\end{aligned}
	\] 
	Using the obvious  induction, this leads to the following formula: for any  $\theta\in H$, $x\in B$ and  $j\geq 1$,
	\begin{equation}\label{torsion-homs-equ2}
		(\theta d^j)(m)  \ =\ \sum_{i=0}^j \lambda_i  \theta\left(\ad(d)^i(m)\right)d^{j-i} \qquad\text{for some }\ \lambda_i\in \mathbb{Z}.
	\end{equation}

	Since $B$ is finitely generated as an $S$-module, we can choose a finite dimensional, $\ad(d)$-stable subspace 
	$Z\subseteq B$ for which $ZS=B$. Choose $j_0$ such that $\ad(d)^{j_0}(Z)=0$. Then \eqref{torsion-homs-equ2} implies that, for all $j\geq j_0$,
	\begin{equation}\label{torsion-homs-equ3}  
		(\theta d^j)(Z)  \ \subseteq  \ \sum_{i=0}^j \lambda_i  \theta\left(\ad(d)^i(Z)\right)d^{j-i}     
		\	\subseteq \ \sum_{i=0}^{j_0} \theta(Z)d^{j-i}.
	\end{equation}
	
	As $L$ is $\eS$-torsion, there exists $j_1\geq j_0$ such that $\theta(Z)d^{j_1-j_0}=0$. Hence,
	\eqref{torsion-homs-equ3} implies that  $\left(\theta d^j\right)(Z)=0$ for all $j\geq j_1$. Finally, as $\theta d^j$ is  a right $S$-module map, this in turn implies that $\theta d^j(B)=0$ and hence that $\theta d^j=0$.
\end{proof}

We end the section with some further results on ad-nilpotence that will be needed later. For the first result we slightly weaken our hypotheses  so 
that they are left-right symmetric.

\begin{lemma}\label{lem:lefttfree}  Let   $0\not=B$ be an $(X,Y)$-bimodule  for rings $X$ and $Y$ and  suppose that $C=\mathbb{Z}[d]$ embeds into both rings so that $d$
	acts locally ad-nilpotently on both rings and on the bimodule $B$. Set $\eS=\{d^j ; j\geq 1\}$. Then
	\begin{enumerate} 
		\item  if $\eS$ acts torsionfreely on $B$ from one side, then $B$ is $\eS$-torsionfree on the the other side.
		\item  if   $B$ is $\eS$-torsion from one side, then $B$ is $\eS$-torsion  on  the other side.
	\end{enumerate}
\end{lemma}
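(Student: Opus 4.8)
The plan is to reduce everything to a statement about two commuting $\mathbb{Z}$-linear endomorphisms of the abelian group $B$: left multiplication $L_d$ and right multiplication $R_d$ by $d$. These two operators commute, and $\ad(d) = L_d - R_d$ acts locally nilpotently on $B$ by hypothesis (this is the only part of the hypotheses that will be used). Since $\eS = \{d^j : j \ge 1\}$, the bimodule $B$ is left $\eS$-torsionfree exactly when $L_d$ is injective and left $\eS$-torsion exactly when $L_d$ is locally nilpotent on $B$; the same holds on the right with $R_d$ in place of $L_d$. As $\ad(d) = L_d - R_d = -(R_d - L_d)$ is symmetric under interchanging $L_d$ and $R_d$ (and $-\ad(d)$ is still locally nilpotent), in each of (1) and (2) I only need to prove one of the two implications.

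For (1), I would assume $L_d$ is injective and prove $R_d$ is injective. The point is that $K := \ker R_d$ is stable under $\ad(d)$: if $md = 0$ then $(\ad(d)(m))d = (dm)d = d(md) = 0$, so $\ad(d)(m) \in K$. If $K \ne 0$, pick $0 \ne m \in K$, let $n_0 \ge 1$ be minimal with $\ad(d)^{n_0}(m) = 0$, and set $m' := \ad(d)^{n_0 - 1}(m) \in K \setminus \{0\}$; then $\ad(d)(m') = 0$, i.e. $L_d m' = R_d m' = 0$, contradicting injectivity of $L_d$. Hence $K = 0$, so every $R_d^{j}$ is injective and $B$ is right $\eS$-torsionfree. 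The reverse implication is obtained by swapping $L_d$ and $R_d$, using that $\ker L_d$ is likewise $\ad(d)$-stable.

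For (2), I would assume $B$ is left $\eS$-torsion and prove it is right $\eS$-torsion. Fix $m \in B$ and choose $n_0$ with $\ad(d)^{n_0}(m) = 0$. Since $R_d = L_d - \ad(d)$ with $L_d$ and $\ad(d)$ commuting, the binomial expansion collapses to the finite sum
\[
m d^{j} \;=\; R_d^{\,j}(m) \;=\; \sum_{i=0}^{n_0-1} \binom{j}{i}(-1)^{i}\, d^{\,j-i}\!\left(\ad(d)^{i}(m)\right)
\qquad (j \ge n_0).
\]
For each $i < n_0$ pick $k_i$ with $d^{k_i}\!\left(\ad(d)^{i}(m)\right) = 0$; then for $j > n_0 + \max_i k_i$ every summand equals $d^{\,j - i - k_i}\!\left(d^{k_i}\ad(d)^{i}(m)\right) = 0$, so $m d^{j} = 0$. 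Since $m$ was arbitrary, $B$ is right $\eS$-torsion; the converse is again the mirror argument. This is the same kind of identity already exploited in the proofs of Proposition~\ref{ad-nilp2} and Proposition~\ref{torsion-homs}.

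I do not anticipate a real obstacle: the only point requiring care is treating $\ad(d)$ consistently as an operator that commutes with both $L_d$ and $R_d$, which is what legitimises both the descent to an $\ad(d)$-fixed vector $m'$ in (1) and the finite binomial expansion in (2). This is precisely where local ad-nilpotence of the action of $d$ on $B$ — rather than merely on $X$ and $Y$ — is used, and it is the whole of the matter; in particular no noetherian or finite-generation hypothesis enters.
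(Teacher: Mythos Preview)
Your proof is correct. Your argument for (1) is essentially the paper's: both descend via local ad-nilpotence to a nonzero $\ad(d)$-fixed vector, on which left and right multiplication by $d$ agree, converting one-sided torsion into the other. The framing differs slightly (you work with $\ker R_d$ directly, the paper with a left torsion element and its finite $\mathbb{Z}[\ad(d)]$-orbit), but the mechanism is identical.

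For (2) the approaches genuinely diverge. The paper deduces (2) from (1) by passing to the quotient $B/T'$ where $T'$ is the right $\eS$-torsion sub-bimodule: if $T'\neq B$ this quotient is right torsionfree but inherits left torsion, contradicting (1). You instead give a direct binomial computation, expanding $R_d^{\,j}=(L_d-\ad(d))^{j}$ and using that the sum truncates at $i<n_0$; this is the same device that drives Propositions~\ref{ad-nilp2} and~\ref{torsion-homs}. The paper's route is shorter and cleanly packages (2) as a consequence of (1); yours is self-contained and makes explicit the effective bound on the exponent $j$ needed to kill $md^{j}$, at the cost of repeating a calculation already present elsewhere. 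Both avoid any noetherian or finiteness hypotheses, as you note.
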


\begin{proof}  (1)  If the result fails then, by symmetry, we may suppose that $B$ is right $\eS$-torsionfree but not left 
	$\eS$-torsionfree.  Then  $B$ has a left $\eS$-torsion left submodule, $T$ and this  is again an $(X,Y)$-bimodule. Pick $0\not=x\in T$, say with $d^kx=0$. 
	By  ad-nilpotence,  $T_0 := \mathbb{Z}[\mathrm{ad}(d)]x$ is a finite $\mathbb{Z}$-submodule of $T$. Moreover,  as $d^k x = 0$,  clearly $d^k T_0 = 0$. Since the 
	action of $\mathrm{ad}(d)$ on $T_0$ is nilpotent, there exists  $0\not=y\in T_0$ such that $\mathrm{ad}(d)(y) = 0$. But this means that $d y = y d$. Thus
	$yd^k = d^k y = 0$ and hence $y$ is right $d$-torsion,  contradicting our 
	starting assumption.
	
	(2) Assume that $B$ is left $\eS$-torsion and write $T'$ for the $\eS$-torsion submodule of $B$ as a right $Y$-module. 
	Clearly $T'$ is also an $(X,Y)$-bisubmodule of $B$. Thus, if $T'\not=B$ then $B'=B/T'$ is a nonzero $(X,Y)$-bimodule that is 
	$\eS$-torsionfree on the right. But, as $B$ is left $\eS$-torsion, so is $B'$. This contradicts Part~(1).  
\end{proof}

The lemma becomes stronger when we add the hypothesis that $A$ be simple. 

\begin{corollary}\label{lem:leftrightdfreepolar} In Hypotheses~\ref{ad-hypotheses} assume that 
	$A$ is a  simple right Goldie ring and let $0\not=B$ be a $(S,A)$-bimodule that is finitely generated as a left $S$-module. Pick 
	$0\not=d\in C$ and assume that  the image of $d$  in   $A$ is regular and that $d$ acts locally ad-nilpotently on     $B$. 
	Then $B$ is $d$-torsionfree on both the left and the right.  
\end{corollary}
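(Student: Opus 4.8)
The plan is to reduce to a one-sided statement and then use the simplicity of $A$ to ``cancel'' powers of $d$. First, applying Lemma~\ref{lem:lefttfree}(1) with $X=S$ and $Y=A$ — note that $\Z[d]\subseteq C$ embeds into both $S$ and $A$, and $d$ acts locally ad-nilpotently on $S$, $A$ (by Hypotheses~\ref{ad-hypotheses}) and on $B$ (by assumption) — it is enough to prove that $B$ is right $d$-torsionfree; left $d$-torsionfreeness then follows automatically. So I would set $T=\{b\in B : bd^{n}=0\text{ for some }n\geq 0\}$, the right $d$-torsion submodule of $B$, and show that $T=0$.

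The key structural observation will be that $T$ is an $(S,A)$-subbimodule of $B$. Stability under the left $S$-action is immediate. For stability under the right $A$-action I would use the standard ad-nilpotence expansion
\[
ad^{m} \ = \ \sum_{j\geq 0}(-1)^{j}\binom{m}{j}\,d^{m-j}\ad(d)^{j}(a)\qquad (a\in A,\ m\geq 0),
\]
which holds because left- and right-multiplication by $d$ commute (this is the usual computation, of the kind used repeatedly in this section): if $bd^{n}=0$ and $\ad(d)^{N}(a)=0$, the sum is finite and $bad^{m}=\sum_{j=0}^{N-1}(-1)^{j}\binom{m}{j}(bd^{m-j})\ad(d)^{j}(a)=0$ once $m\geq n+N-1$, so $ba\in T$. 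Now, since $S$ is noetherian and $B$ is finitely generated as a left $S$-module, $T$ is a finitely generated left $S$-module, say $T=\sum_{i=1}^{k}St_{i}$ with $t_{i}d^{n_{i}}=0$; taking $M=\max_{i}n_{i}$ gives $Td^{M}=0$. Since $d$ is regular and $A\neq 0$, the element $d^{M}$ is a nonzero element of the simple ring $A$, so the two-sided ideal $Ad^{M}A$ equals $A$; write $1=\sum_{j}a_{j}d^{M}b_{j}$ with $a_{j},b_{j}\in A$. Then for every $t\in T$,
\[
t \ = \ t\cdot 1 \ = \ \sum_{j}(ta_{j})d^{M}b_{j} \ = \ 0,
\]
because $ta_{j}\in T$ (by the bimodule claim) and $Td^{M}=0$. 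Hence $T=0$, so $B$ is right $d$-torsionfree, and Lemma~\ref{lem:lefttfree}(1) completes the proof.

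The only point requiring real care is the verification that $T$ is stable under the right $A$-action, and this is precisely where the hypothesis that $d$ act locally ad-nilpotently on $A$ (and $B$) is used; everything else is formal. I would emphasise that the simplicity of $A$ is essential — it is what permits $1$ to be expressed inside $Ad^{M}A$ — as is the noetherianity of $S$, which is what makes the $d$-torsion of $T$ uniformly bounded; the regularity of $d$ is needed only to the extent that it forces $d\neq 0$ in $A$.
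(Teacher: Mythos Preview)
Your proof is correct and follows the same strategy as the paper: establish that the right $d$-torsion subset $T$ is an $(S,A)$-subbimodule, use finite generation over the noetherian ring $S$ to obtain a uniform bound $Td^{M}=0$, and then invoke simplicity of $A$ to force $T=0$; the left side is handled by Lemma~\ref{lem:lefttfree}.

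The only cosmetic differences are these. For the right $A$-stability of $T$, the paper observes that ad-nilpotence makes $\{d^{j}\}$ an Ore set in $A$, so the torsion elements automatically form a submodule; your explicit binomial expansion is the same argument unwound. For the endgame, the paper phrases it as ``$r\text{-}\mathrm{ann}_{A}(T)\neq 0$ is a nonzero two-sided ideal, contradicting simplicity'', whereas you unfold this by writing $1\in Ad^{M}A$. Your version has the minor advantage of making visible that the right Goldie hypothesis on $A$ is not actually used in the argument (simplicity and regularity of $d$ suffice). One small quibble: in your closing remark, regularity of $d$ is used not merely to ensure $d\neq 0$ but to ensure $d^{M}\neq 0$ for all $M$---a nonzero nilpotent would not do.
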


\begin{proof}   
	Set $\eS=\{d^j\}$ and suppose first that $B$  has $\eS$-torsion as a right $A$-module. Since $\eS$ acts locally ad-nilpotently on $A$,  it is an Ore set and 
	hence  $B$ has a nonzero right $\eS$-torsion submodule, say $T$.
	As $T$ is trivially a left $S$-module,  it is therefore    finitely generated on the  left; say $T=\sum_{i=1}^w  St_i$. Therefore, as $A$ is prime right Goldie, 
	$r$-ann$_{A}(T)=
	\bigcap_{i=1}^w r\text{-ann}(t_i) \not=0$. This contradicts the simplicity of $A$.  By Lemma~\ref{lem:lefttfree} $B$ is also $d$-torsionfree on the left. 
\end{proof}

Finally, we need a result stated in \cite[Lemma~6.3(2)]{AJM}. 

\begin{lemma}\label{ore91}  Keep Hypotheses~\ref{ad-hypotheses} and assume that $C$ is a domain. Let  $B$ be an    $(S,A)$-bimodule such that    $C$  acts  locally ad-nilpotently   on $B$.    
	Let $\eS$ be a multiplicatively closed subset of $C\smallsetminus\{0\}$ and assume that $\eS$ consists of regular elements in both $S$ and $
	A$ and acts torsionfreely on $B$ from both sides.
	
	 Then the two localisations $B_{\eS}:=B\otimes_SS_{\eS}$ and 
	${}_{\eS}B:= A_{\eS}\otimes_ AB$ are naturally isomorphic as $(S,A)$-bimodules or, indeed, 
	as $(S_{\eS}, A_{\eS})$-bimodules.\end{lemma}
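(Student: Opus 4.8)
The plan is to show that the canonical comparison maps identify both $B_{\eS}$ and ${}_{\eS}B$ with the ``two-sided'' localization $S_{\eS}\otimes_S B\otimes_A A_{\eS}$. Here I read $B_{\eS}$ as the localization of $B$ along $\eS$ via its left $S$-structure, i.e. $S_{\eS}\otimes_S B$, and ${}_{\eS}B$ as $B\otimes_A A_{\eS}$, the localization via its right $A$-structure. First I would record that $\eS$ is a two-sided Ore set of regular elements in each of $S$ and $A$, so that $S_{\eS}$ and $A_{\eS}$ exist and $S\hookrightarrow S_{\eS}$, $A\hookrightarrow A_{\eS}$. This uses only ad-nilpotence: given $s\in\eS$ and $a\in S$, pick $n$ with $\ad(s)^{n+1}(a)=0$ and work modulo the left ideal $Ss$; since right multiplication by $s$ vanishes on $S/Ss$ and $\ad(s)$ preserves $Ss$, induction gives $\overline{\ad(s)^{k}(a)}=\overline{s^{k}a}$, so $\overline{s^{n+1}a}=0$, i.e. $s^{n+1}a\in Ss$. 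As $\eS$ is multiplicatively closed this is the left Ore condition; the right condition, and the case of $A$, are identical. Because $\eS$ acts torsion-freely on $B$ on both sides, the natural maps $B\to S_{\eS}\otimes_S B$ and $B\to B\otimes_A A_{\eS}$ are injective (the kernels are the respective $\eS$-torsion submodules), and $S_{\eS}$, $A_{\eS}$ are flat over $S$, $A$ by the Ore property; for these standard facts I would cite \cite{MR}.

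The heart of the argument is the following claim: on $B':=S_{\eS}\otimes_S B$, right multiplication by each $s\in\eS$ (via its right $A$-action) is bijective. Injectivity is formal: if $bs$ maps to $0$ in $B'$ then $bs=0$ in $B$ since $B\hookrightarrow B'$, whence $b=0$ by right $\eS$-torsion-freeness. For surjectivity I again use ad-nilpotence: for $b\in B$ choose $n$ with $\ad(s)^{n+1}(b)=0$, and work modulo the left $S$-submodule $Bs$; right multiplication by $s$ is $0$ on $B/Bs$, so $\overline{\ad(s)^{n+1}(b)}=\overline{s^{n+1}b}$, giving $s^{n+1}b=b_1 s$ for some $b_1\in B$. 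Then in $B'$,
\[
1\otimes b \;=\; s^{-(n+1)}\otimes s^{n+1}b \;=\; s^{-(n+1)}\otimes b_1 s \;=\; \bigl(s^{-(n+1)}\otimes b_1\bigr)\cdot s,
\]
so $1\otimes b\in B'\!\cdot s$, and applying $t^{-1}$ shows $B'\!\cdot s=B'$. By the mirror argument (working modulo $sB$ and using $bs^{n+1}\in sB$) left multiplication by each $s\in\eS$ is bijective on $B'':=B\otimes_A A_{\eS}$.

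With these two claims the assembly is routine: since $\eS$ is right Ore in $A$ and acts invertibly on the right $A$-module $B'$, the natural map $B'\to B'\otimes_A A_{\eS}$ is an isomorphism; since $\eS$ is left Ore in $S$ and acts invertibly on the left $S$-module $B''$, the natural map $B''\to S_{\eS}\otimes_S B''$ is an isomorphism. Therefore
\[
S_{\eS}\otimes_S B \;\isom\; S_{\eS}\otimes_S B\otimes_A A_{\eS} \;\cong\; B\otimes_A A_{\eS},
\]
and the composite is the desired natural isomorphism $B_{\eS}\cong{}_{\eS}B$; it intertwines the left $S_{\eS}$- and right $A_{\eS}$-module structures by construction, so it is an isomorphism of $(S_{\eS},A_{\eS})$-bimodules. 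The only genuine obstacle is the surjectivity step in the claim—the ad-nilpotence computation showing that a one-sided localization automatically absorbs localization on the other side; everything else is bookkeeping with Ore localization.
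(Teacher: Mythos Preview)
Your argument is correct and takes a genuinely different route from the paper. You work directly with the bimodule: the key step is your ad-nilpotence computation showing that once you localize $B$ on the left, right multiplication by each $s\in\eS$ becomes surjective (hence bijective), so the one-sided localization already carries an $(S_{\eS},A_{\eS})$-bimodule structure and coincides with the two-sided localization; the mirror argument then identifies both with $S_{\eS}\otimes_S B\otimes_A A_{\eS}$. The paper instead packages everything into a single ring: it forms $R=\left(\begin{smallmatrix}S&B\\ 0&A\end{smallmatrix}\right)$, observes that the diagonal copy $\eS'=\left\{\left(\begin{smallmatrix}c&0\\0&c\end{smallmatrix}\right):c\in\eS\right\}$ consists of regular, locally ad-nilpotent elements of $R$ and is therefore a two-sided denominator set, and then invokes the standard fact that left and right Ore localizations of a ring are canonically isomorphic; reading off the $(1,2)$ entry of $R_{\eS'}\cong{}_{\eS'}R$ gives the bimodule statement. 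The matrix trick is slicker---one appeal to the universal property of ring localization replaces your explicit surjectivity calculation---while your approach is more transparent about exactly where each hypothesis (ad-nilpotence on $B$, torsion-freeness on each side) is used. One small remark: your reading of the subscript conventions reverses what the lemma literally writes, but the displayed definitions in the statement appear to contain a typo (they tensor on the wrong side for an $(S,A)$-bimodule), and your interpretation matches how the result is applied elsewhere in the paper.
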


\begin{proof}   	Define $R=\left(\begin{smallmatrix}S&B\\ 0&A\end{smallmatrix}\right)$ with the usual matrix multiplication.
	Our hypotheses   ensure that
	$\eS'=\left\{\left(\begin{smallmatrix}c&0\\ 0&c\end{smallmatrix}\right) : c\in \eS\right\}$ consists of regular elements that act locally ad-nilpotently on $R$. Hence $\eS$ is    denominator    set in $R$, in the notation of \cite[p.168]{GW}. By universality,  the right and left Ore localisations $R_{\eS'}$ and $ {}_{\eS'}R$ are
	canonically  isomorphic	\cite[Proposition~10.6]{GW}.  Looking at the $(1,2)$ entry this reduces to the assertion that 
$B_{\eS}$ and $ {}_{\eS}B$   are both  $(S_{\eS}, A_{\eS})$-bimodules and, moreover, that 	$B_{\eS}\cong {}_{\eS}B$   as   $(S_{\eS}, A_{\eS})$-bimodules and hence as $(S,A)$-bimodules. \end{proof}

\begin{remark}\label{ore-remark}
(1) We have given a detailed proof to Lemma~\ref{ore91}, since it can fail   if $\eS$ is not regular. For example, take $R=\C[x]=C$, regarded as a right $C$ module in the usual way, but regarded as a left $C$-module via the map $C\to C/xC=\C\subset R$. If $\euls{S}=\C[x]\smallsetminus \{0\}$, then clearly ${}_{\eS}R=0 \not=R_{\eS}=\C(x)$.
 
(2) The proof  of  \cite[Lemma~6.3(2)]{AJM}  does still hold once one makes  the following observation: Note that  the module $\mathcal{N}=\dd(\g)/\dd(\g)\tau(\g)$ in that proof is $d$-torsion-free for any $d\in \euls{O}(\g)^G$ by \cite[Theorem~1.1]{LS}. Thus  \cite[Equation~6.2]{AJM} follows  from Lemma~\ref{ore91}. 
\end{remark}

\section{The Bimodule Controlling Quantum Hamiltonian Reduction}\label{Sec:HC-bimodule}

We now turn to the detailed study of differential operators on polar representations and the resulting quantum Hamiltonian reduction.  The main aim of this section is define and describe the  main properties of the bimodule $\eM$ that controls the interplay between  $\eD(V)^G$ and $\Aak= \Ak(W)$. We will also describe  some of the easier consequences of these results to the category of   admissible modules, as defined in Definition~\ref{defn:admissible}.  We begin by making this precise.   

\begin{notation}\label{notation4.11}  \emph{The following setup will be fixed for the rest of the paper.}
	Fix a visible, stable, polar representation $V$ for a connected reductive algebraic group $G$ and set $\mathfrak{g} = \mathrm{Lie}(G)$. Without loss of generality we will also assume that $V$ is faithful. 
	Set $\ddd=\eD(V)$ for the ring of differential operators on  $V$ with its induced action of $G$. 
	We will  reserve $n$ and $m$ to denote $n=\dim \h$, respectively  $m= \dim V-\dim \h$.
\end{notation} 

As in \cite[Equation~3.10]{BLNS}, let	 $\delta_1, \ds, \delta_k$ denote the pairwise distinct irreducible factors of $\delta$ in $\C[V]$, where $\delta_i$ has weight, say,  $\theta_i$. Choose $\vs_i \in \C$ for $i = 1, \ds, k$ and let $\chi =\sum_{i=1}^k \vs_i d \theta_i$ be the associated linear character of $\mf{g}$, as defined in \cite[Equation~4.1]{BLNS}. 
By \cite[Theorem~5.1]{BLNS}, and in the notation of \eqref{chi-defn},  there is 
\emph{a radial parts map}\label{radial-defn2}
\begin{equation}\label{eq:notation4.11}
\rad_{\vs} \colon \eD(V)^G \rightarrow (\eD(V) / \eD(V)\mf{g}_{\chi})^G  \to {\Aak} ,  
\end{equation}
for some parameter $\kappa$.  The precise  relationship between 
$\vs=(\vs_1,\dots,\vs_k)\in \C^k$ and  the resulting parameter $\kappa$ will not be needed in this paper, but it is  described in detail in \cite[Section~5]{BLNS} (see \cite[Theorem~5.21]{BLNS} in particular).  With this notation, we also have the following generalisation of Theorem~\ref{thm:intro-radial-exists}.

\begin{theorem} {\rm (\cite[Corollary~6.10, Theorem~7.8, Theorem~7.10]{BLNS})}  \label{thm:radial-exists}  
Let $V$ be a stable polar representation. 
  The radial parts map $\rad_{\vs}:  \dd(V)^G \to \Aak$  has the following properties.
\begin{enumerate}
 \item The restriction of $\rad_{\vs}$ to $V_{\reg}$ induces the isomorphism
  \begin{equation}\label{eq:localizatioradiso}
  \bigl(\dd(V_{\reg})\bigl/\dd(V_{\reg})\g_{\chi}\bigr)^G \isom \dd(\h_{\reg})^W.
  \end{equation}
\item The   map
$\rad_{\vs}  $ restricts to give
filtered isomorphisms $\rr \colon \C[V]^G
\stackrel{\sim}{\longrightarrow} \C[\h]^W$ and $\rrpp \colon
(\Sym \, V)^G \stackrel{\sim}{\longrightarrow} (\Sym \, \h)^W$. 
\item If    $\Aak$ is simple, then $\rad_{\vs}$ is surjective. \end{enumerate}
\end{theorem}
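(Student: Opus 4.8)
The existence of $\rad_{\vs}$ is \cite[Theorem~5.1]{BLNS}, so I would only need to establish the three listed properties.

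\emph{Property (2).} By construction $\rad_{\vs}$ is a filtered homomorphism whose restriction to order-zero operators is the restriction-of-functions isomorphism $\rr\colon \C[V]^G \isom \C[\h]^W$, which is Chevalley restriction for the polar representation $V$, i.e.\ \cite[Theorem~2.9]{DadokKac}. For the second isomorphism I would use that, $V$ being stable, $V^*$ is again a polar $G$-representation with Cartan subspace $\h^{\vee} = (\g\cdot\h)^{\perp}$, and that $\h^{\vee}\cong\h^*$ as $W$-modules (\cite[Proposition~3.2]{BLT}, together with the fact that the pairing $\h^{\vee}\times\h\to\C$ is perfect since $\h + \g\cdot\h = V$). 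Then $(\Sym V)^G = \C[V^*]^G \isom \C[\h^{\vee}]^W = (\Sym\h)^W$ by Chevalley restriction applied to $V^*$, and I would check from the explicit formula for $\rad_{\vs}$ in \cite[Section~5]{BLNS} that this map is precisely $\rrpp = \rad_{\vs}|_{(\Sym V)^G}$.

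\emph{Property (1).} I would localise at the discriminant: $\dd(V_{\reg}) = \dd(V)[\delta^{-1}]$, and since $G$ is reductive, inverting $\delta$ commutes with forming $\g_{\chi}$-reductions and with $G$-invariants, so the left-hand side of \eqref{eq:localizatioradiso} is $\bigl((\dd(V)/\dd(V)\g_{\chi})^G\bigr)[\delta^{-1}]$. By \eqref{lem:stablepolaropen}, $V_{\reg}\cong G\times_{N_G(\h)}\h_{\reg}$ is a $G$-bundle over $V_{\reg}\git G = \h_{\reg}/W$ with fibre $G/Z_G(\h)$; quantum Hamiltonian reduction of $\dd(V_{\reg})$ by $(G,\chi)$ then descends along this bundle to $\dd(\h_{\reg}/W) = \dd(\h_{\reg})^W$, the $\chi$-twist being absorbed because $\vs$ is chosen in \cite[Section~5]{BLNS} so that $\chi$ vanishes on $\Lie Z_G(\h)$. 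This is the content of \cite[Corollary~6.10]{BLNS}.

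\emph{Property (3).} Put $B = \Im(\rad_{\vs})\subseteq {\Aak}$; by (2) this is a subalgebra containing $C := \C[\h]^W$ and $E := (\Sym\h)^W$. Since ${\Aak}[\delta^{-1}] \cong \dd(\h_{\reg})^W$ (as $\Hk(W)[\delta^{-1}]\cong\dd(\h_{\reg})\rtimes W$) and, by (1), the localised image of $\rad_{\vs}$ is all of $\dd(\h_{\reg})^W$, we obtain $B[\delta^{-1}] = {\Aak}[\delta^{-1}]$, so ${\Aak}/B$ is $\delta$-torsion; as ${\Aak}$ is free of finite rank over $C$ by Proposition~\ref{pdQ}(1) and $C\subseteq B$, the quotient ${\Aak}/B$ is finitely generated over $C$, hence killed by $\delta^N$ for some $N$. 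The crux is to upgrade $B[\delta^{-1}] = {\Aak}[\delta^{-1}]$ to $B = {\Aak}$, and here I would pass to the order filtration: since $\gr{\Aak} = \C[\h\times\h^*]^W$ and a subspace of ${\Aak}$ with full associated graded is all of ${\Aak}$, it suffices to prove that $\gr\rad_{\vs}\colon \C[T^*V]^G \to \C[\h\times\h^*]^W$ is surjective. Surjectivity after inverting $\delta$ is formal from (1), so what remains is to show that $\C[\h\times\h^*]^W/\Im(\gr\rad_{\vs})$ has no nonzero $\delta$-torsion part — a codimension/involutivity argument of the same flavour as Lemma~\ref{CM for spherical2}(3) (holonomic modules over a simple ${\Aak}$ cannot have a $\delta$-torsion piece of the relevant grade), and it is exactly at this point that the simplicity hypothesis on ${\Aak}$ is used. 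I expect this graded-surjectivity step, rather than any of the preceding formal localisations, to be the main obstacle; in \cite{BLNS} it is carried out as \cite[Theorem~7.10]{BLNS}.
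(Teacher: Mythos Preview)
This theorem is not proved in the present paper at all: it is quoted verbatim from the companion paper \cite{BLNS}, with the three parts attributed to \cite[Corollary~6.10, Theorem~7.8, Theorem~7.10]{BLNS} respectively. So there is no ``paper's own proof'' to compare against here, and your sketches for (1) and (2) are reasonable outlines of how those facts are established in \cite{BLNS}.

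For (3), however, there is a genuine gap in your argument. You correctly set up the situation: $B=\Im(\rad_{\vs})$ contains $\C[\h]^W$ and $(\Sym\h)^W$, ${\Aak}$ is finite over these by Proposition~\ref{pdQ}(1), and $B[\delta^{-1}]={\Aak}[\delta^{-1}]$ by (1). The reduction to graded surjectivity is also fine. But the step where you invoke ``a codimension/involutivity argument of the same flavour as Lemma~\ref{CM for spherical2}(3)'' does not work. That lemma says that a nonzero object of $\Osph$ over a \emph{simple} ${\Aak}$ cannot be $\delta$-torsion; it is a statement about ${\Aak}$-modules. The cokernel $\C[\h\times\h^*]^W/\Im(\gr\rad_{\vs})$ is a module over the \emph{commutative} ring $\C[\h\times\h^*]^W=\gr{\Aak}$, not over ${\Aak}$, and simplicity of ${\Aak}$ imposes no direct constraint on $\gr{\Aak}$-modules (the associated graded $\gr{\Aak}$ is the same commutative ring regardless of whether ${\Aak}$ is simple). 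There is no way to promote this commutative cokernel to an ${\Aak}$-module in $\Osph$ to which Lemma~\ref{CM for spherical2}(3) would apply. You have located the hard step correctly and even flagged it as ``the main obstacle'', but the mechanism you propose for resolving it is not the right one; the actual argument in \cite[Theorem~7.10]{BLNS} must bring simplicity to bear differently.
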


\emph{In this section  we will always  assume that $\rad_{\vs}$ is surjective.}
  We do not know of any situation where this does not hold, and as Theorem~\ref{thm:radial-exists}(3)  and its generalisations in \cite[Theorem~7.1]{BLNS} show, it certainly holds in many cases.

%%%%%%%%%%%
\subsection*{The bimodule}
 While  $\rad_{\vs}$ is known to be surjective in many circumstances,  much less is known about its  kernel $\ker(\rad_{\vs})$.  It is readily  shown that  $\ker(\rad_{\vs}) \supseteq (\ddd\g_\chi)^G$ but it is not known whether this is always an equality. We circumvent this by means of the following  definition.  This also describes the modules $\eM$ and $\eMt$ that will play  a  fundamental r\^ole in the interplay between  $\eD(V)$-modules and   $\Ak$-modules.

\begin{definition}\label{M-new-definition}  
	Assume that  $\rad_{\vs}$ is surjective. Set  $\ddd=\eD(V)$ and write 
	\begin{equation}\label{R-defn}
	R\ = \ \frac{\ddd^G}{\left(\ddd\g_{\chi}\right)^G} \ \supset \ P = \frac{\ker(\rad_{\vs})}{\left(\ddd\g_{\chi}\right)^G};
	\quad\text{thus} \quad \Aak = R/P.
	\end{equation}
	Define  
	\begin{equation}\label{M-defn}
 \eMt=\ddd/ \ddd\g_{\chi} \qquad\text{and}\qquad  	\eM = \eMt/\eMt P.
	\end{equation}
  
It is important to note that $R$ is a ring. It follows that  $\eMt$ is naturally  an $(\ddd,R)$-module under the actions induced from multiplication inside $\ddd$.  Hence $\eM$ a right $R/P$-module and thus is an $(\ddd,{\Aak})$-bimodule. We note that the associated graded algebra  of $\ddd^G$ with respect to the order filtration is noetherian. This implies that $\ddd^G$ and its quotients $R,{\Aak}$ are noetherian algebras.    
 
\end{definition}

\begin{remark}\label{rem:M-tilde}
In a number of important examples it is known that $\ker(\rad_{\vs})= (\ddd\g_\chi)^G$ in which case $R={\Aak}$ and $\eMt=\eM$; see, for example, \cite{LS, LS3, GordonCyclicQuiver} and  Section~\ref{Sec:examples}. In those cases the Harish-Chandra bimodule  
 $\eMt$ is fundamental to the interplay between $\ddd$ and ${\Aak}$; see, for example, \cite{HK} and \cite{AJM}.
However, proving that $\ker(\rad_{\vs})= (\ddd\g_\chi)^G$  in   general seems to be hard, which is why we 
have to distinguish between $\eMt$ and $\eM$.  This allows us to circumvent the problem since, as will be shown in Proposition~\ref{semiprime2}, when ${\Aak}$ is simple $\eMt$ splits as a direct sum of $\eM$ and a largely irrelevant     $\delta$-torsion module.   \end{remark}

For completeness, we record the question alluded to above.

\begin{question} \label{Question:M-tilde}
	Is there an example of a stable visible polar representation where $R\not={\Aak}$? Or, indeed, an example where $\eMt\not=\eM$? In Remark~\ref{curio} we will discuss an example from \cite{LS3}  that illustrates the subtlety of these questions. 
\end{question}

Recall  the category $\left(G,\chi,\ddd\right)\lmod$ of  $(G,\chi)$-monodromic left $\ddd$-modules from Definition~\ref{defn:monodromic}.
 It is  standard that $\Hom_\ddd(\ddd/ \ddd\g, L)=L^G$ for a $G$-equivariant module $L$. The next result shows that 
the  analogue for $(G,\chi)$-monodromic modules  is also true.

\begin{lemma}\label{hom=invariants} 
If  $L$ is a monodromic $\ddd$-module, then $\Hom_\ddd(\eMt,\, L) \cong L^G$. 
\end{lemma}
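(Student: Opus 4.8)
The statement to prove is that $\Hom_\ddd(\eMt,\, L) \cong L^G$ for any monodromic $\ddd$-module $L$, where $\eMt = \ddd/\ddd\g_\chi$. The plan is to mimic the standard argument for $G$-equivariant modules, being careful about the twist by $\chi$. First I would use the defining presentation $\eMt = \ddd/\ddd\g_\chi$ to identify $\Hom_\ddd(\eMt, L)$ with the annihilator of $\g_\chi$ in $L$, i.e. $\{\ell \in L : (\tau(x) - \chi(x))\ell = 0 \text{ for all } x \in \g\}$; this is immediate from the fact that $\Hom_\ddd(\ddd/\ddd J, L) = \{\ell \in L : J\ell = 0\}$ for a left ideal $\ddd J$, applied to $J = \g_\chi$.

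The heart of the argument is then the claim that this annihilator equals $L^G$. By the monodromic condition (Definition~\ref{defn:monodromic}(ii)), the operator $\tau(x)$ acts on $L$ as $\tau_L(x) + \chi(x)\mr{Id}_L$, where $\tau_L$ is the differential of the $G$-action. Hence $(\tau(x) - \chi(x))\ell = \tau_L(x)\ell$, so the annihilator of $\g_\chi$ is precisely $\{\ell \in L : \tau_L(x)\ell = 0 \text{ for all } x \in \g\}$, i.e. the set of $\g$-invariants for the $G$-action. Since $G$ is connected (a standing assumption, as in Notation~\ref{notation4.11} and the proof of Lemma~\ref{lem:charvar}(3)), and $L$ is a rational $G$-module, a vector is $\g$-invariant if and only if it is $G$-invariant; this gives $\{\ell : \tau_L(\g)\ell = 0\} = L^G$.

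Combining these two identifications yields $\Hom_\ddd(\eMt, L) \cong L^G$ as desired. I do not expect any genuine obstacle here: the only point requiring a word of care is the passage from $\g$-invariance to $G$-invariance, which is where connectedness of $G$ and rationality of the $G$-module structure on $L$ enter, exactly as in the proof of Lemma~\ref{lem:charvar}(3). One should also note that the isomorphism is functorial and, in fact, respects the residual structures (e.g. if $L$ carries commuting actions making $L^G$ a module over $R$ or $\Aak$, the identification $\Hom_\ddd(\eMt, L) \cong L^G$ is compatible with these), though that refinement is not needed for the bare statement.
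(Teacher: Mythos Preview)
Your proof is correct and follows essentially the same approach as the paper: identify $\Hom_\ddd(\ddd/\ddd\g_\chi, L)$ with the annihilator of $\g_\chi$ in $L$, use the monodromic condition to rewrite this as $L^{\g}$, and then invoke connectedness of $G$ to conclude $L^{\g} = L^G$. The paper's argument is identical in structure and detail.
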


\begin{proof}  Set $Y=  \Hom_\ddd(\ddd/\ddd\g_\chi,\, L)$.
	Since $\g_\chi = \{\tau(x)-\chi(x) : x\in \g\}$,  
	\[ Y  \ = \ \{\ell \in L \ | \  \tau(x)  \ell = \chi(x)\ell \quad \text{for all } x\in \g\}.\]
	Since  $L$ is monodromic, the morphism $\tau_L : \g \to \End_{\C} (L)$ given by differentiating the $G$-action, 
	satisfies $\tau_L(x)=\tau(x)-\chi(x) $,  for all $x\in \g$.   
	Thus
	\[  
	Y\ = \  \{\ell\in L \  | \ \tau_L(x)(\ell) = 0 \, \text{for all } x\in \g\}  
	= \  \ L^{^{\scriptstyle \g}} \ = \ L^G,
	\]
	by the connectedness of $G$.
\end{proof}

The following consequence of   Lemma~\ref{hom=invariants} will be used frequently. 
 
\begin{lemma}\label{summand-endo21} 
	Let $R$   be defined by \eqref{R-defn} and assume that $\rad_{\vs}$ is surjective. 
	\begin{enumerate}
	  \item As an  $(\ddd^G,R)$-bimodule  $\eMt$ decomposes as $\eMt=R\oplus \widetilde{T}$, where $\widetilde{T}$ is the sum of the nontrivial $G$-submodules of $\eMt$.
	  
	  \item Consequently, given a  left $R$-module $L\not=0$, then $\eMt\otimes_RL\not=0$.
	\item   $\End_\ddd(\eMt)=R=\left(\ddd/\ddd \mf{g}_{\chi}\right)^G$. 
	\item Given a left $R$-module $L\not=0$, then  
	  $\Hom_\ddd(\eMt,\eMt\otimes_RL)\cong L$. 
  \end{enumerate}
\end{lemma}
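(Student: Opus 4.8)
The plan is to prove the four statements of Lemma~\ref{summand-endo21} in order, using Lemma~\ref{hom=invariants} as the main tool and the surjectivity of $\rad_{\vs}$ to control the ring $R$.

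\textbf{Step 1 (the decomposition).} Since $G$ is reductive, $\eMt = \ddd/\ddd\g_{\chi}$ decomposes as a rational $G$-module into its isotypic components; write $\eMt = \eMt^{G} \oplus \widetilde{T}$, where $\widetilde{T}$ is the sum of all nontrivial isotypic components. I first observe that $\eMt^{G} = R$: by Lemma~\ref{hom=invariants} applied to $L = \eMt$ (which is monodromic, being a quotient of $\ddd$ by $\ddd\g_{\chi}$), we get $\Hom_{\ddd}(\eMt,\eMt) \cong \eMt^{G}$, and the left-hand side contains (indeed equals, once Step 3 is in hand) the image of $R$ acting by right multiplication; more directly, $\eMt^{G} = (\ddd/\ddd\g_{\chi})^{G}$, and by the exactness of the invariants functor on rational $G$-modules this equals $\ddd^{G}/(\ddd\g_{\chi})^{G} = R$. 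So $\eMt = R \oplus \widetilde{T}$ as vector spaces; that this is a decomposition of $(\ddd^{G},R)$-bimodules follows because both $\ddd^{G}$ and $R$ act by $G$-equivariant operators (left multiplication by a $G$-invariant, right multiplication by an element of $\eMt^{G}=R$), hence preserve the isotypic decomposition, and $R = \eMt^{G}$ is exactly the trivial isotypic component.

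\textbf{Step 2 (nonvanishing of $\eMt\otimes_R L$).} Using the bimodule splitting $\eMt = R \oplus \widetilde{T}$ from Step 1, tensoring over $R$ with a left $R$-module $L$ gives $\eMt\otimes_R L = (R\otimes_R L) \oplus (\widetilde{T}\otimes_R L) = L \oplus (\widetilde{T}\otimes_R L)$, so $\eMt\otimes_R L \neq 0$ whenever $L \neq 0$.

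\textbf{Step 3 ($\End_{\ddd}(\eMt) = R$).} By Lemma~\ref{hom=invariants} with $L = \eMt$, $\End_{\ddd}(\eMt) \cong \eMt^{G} = R$ by Step 1. One checks that this isomorphism is a ring isomorphism: the identification sends $\phi \in \End_{\ddd}(\eMt)$ to $\phi(\bar 1) \in \eMt^{G}$, and composition of endomorphisms corresponds to right multiplication in $\eMt^{G} = R$, since any $\ddd$-linear $\phi$ is determined by $\phi(\bar 1)$ and $\phi(\bar 1) \in \eMt^{G}$ makes right multiplication by it a well-defined $\ddd$-linear map (here one uses that $\eMt^{G}$ is a subring of $\End_{\ddd}(\eMt)$ via right multiplication, which is where the ring structure on $R$ enters).

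\textbf{Step 4 ($\Hom_{\ddd}(\eMt,\eMt\otimes_R L) \cong L$).} Apply Lemma~\ref{hom=invariants} to the monodromic module $\eMt\otimes_R L$ (it is monodromic because $\eMt$ is and $L$ is a module over $R = \End_{\ddd}(\eMt)$, so the $G$-action and $\g_\chi$-action on the left tensor factor pass to the tensor product): $\Hom_{\ddd}(\eMt,\eMt\otimes_R L) \cong (\eMt\otimes_R L)^{G}$. Now use the bimodule splitting of Step 1: $(\eMt\otimes_R L)^{G} = (R\otimes_R L)^{G} \oplus (\widetilde{T}\otimes_R L)^{G} = L^{G} \oplus (\widetilde{T}\otimes_R L)^{G}$. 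Since $R = \eMt^{G}$ acts on $L$ and $L$ carries the trivial $G$-action (being an abstract $R$-module, $G$ acts trivially — more precisely, in the monodromic structure the invariants are all of $L$), $L^{G} = L$; and $\widetilde{T}\otimes_R L$ has no trivial $G$-summand because $\widetilde{T}$ is built from nontrivial isotypic components and tensoring over $R$ with the trivial-component module $L$ does not create new trivial components (right exactness of $\otimes_R L$ applied componentwise, together with the fact that $\widetilde T$ is a direct sum of nontrivial $G$-isotypic $(\ddd^G,R)$-bimodules). Hence $(\eMt\otimes_R L)^{G} = L$, as claimed.

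\textbf{Main obstacle.} The delicate point is the careful bookkeeping of the $G$-module structure in Step 4: one must verify that tensoring $\widetilde{T}$ over $R$ with $L$ genuinely produces no trivial $G$-isotypic summand, which requires that the $G$-action on $\eMt$ and the right $R$-action are "compatible" in the sense that $R = \eMt^{G}$ sits inside the trivial isotypic component, so that the isotypic decomposition of $\eMt$ is a decomposition of $(\ddd^{G},R)$-bimodules (Step 1) and survives $-\otimes_R L$. Everything else is a direct application of Lemma~\ref{hom=invariants} combined with the reductivity of $G$ and the ring structure on $R$ coming from the surjectivity of $\rad_{\vs}$.
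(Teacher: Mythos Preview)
Your proof is correct and follows essentially the same approach as the paper: decompose $\eMt$ into $G$-isotypic components using reductivity, identify $\eMt^G$ with $R$ via exactness of invariants, and then deduce (2)--(4) from this splitting together with Lemma~\ref{hom=invariants}. The paper's proof is simply much terser (it dispatches (3) and (4) in a single sentence).

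One small inaccuracy: the ring structure on $R$ does not come from the surjectivity of $\rad_{\vs}$. It comes from the fact that $\ddd\g_\chi$ is stable under right multiplication by $\ddd^G$ (since $[\tau(x),b]=0$ for $b\in\ddd^G$), so $(\ddd\g_\chi)^G$ is a two-sided ideal of $\ddd^G$. The surjectivity hypothesis is there only so that the objects $P$ and $\eM$ of Definition~\ref{M-new-definition} are available in the surrounding discussion; it plays no role in this particular lemma.
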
  

\begin{proof} (1) As an $\ddd^G$-bimodule, $\ddd$ splits as $\ddd=\ddd^G\oplus \ddd_G$ where $\ddd_G$ is the sum of the nontrivial $G$-submodules. Since $G$ is reductive, $R=\ddd^G/(\ddd \g_{\chi})^G = \eMt^G$.  Thus the decomposition of $\ddd$  descends to a decomposition $\eMt=\ddd/\ddd\g_{\chi} = R \oplus \widetilde{T}$ of   $(\ddd^G,R)$-bimodules, where $\widetilde{T}$ is the sum of the nontrivial $G$-submodules of $\eMt$.  

(2) is immediate from (1).
 
 (3,4)  These follow  immediately from the decomposition $\eMt = R\oplus  \widetilde{T}$
    described above, combined with  Lemma~\ref{hom=invariants}.  \end{proof}

The analogous result for $\eM$ states

\begin{lemma}\label{summand-endo21M} 
	Let ${\Aak}$ be defined by \eqref{R-defn} and assume that $\rad_{\vs}$ is surjective. 
	\begin{enumerate}
		\item As an  $(\ddd^G,{\Aak})$-bimodule  $\eM$ decomposes as $\eM={\Aak}\oplus T$, where $T$ is the sum of the nontrivial $G$-submodules of $\eM$.
		
		\item Consequently, given a  left ${\Aak}$-module $L\not=0$, then $\eM\otimes_{\Aak} L\not=0$. 
	\end{enumerate}
\end{lemma}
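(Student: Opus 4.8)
The plan is to prove Lemma~\ref{summand-endo21M} by exactly the same argument as Lemma~\ref{summand-endo21}, now working one quotient further down. Recall that by Definition~\ref{M-new-definition} we have ${\Aak} = R/P$, where $R = \ddd^G/(\ddd\g_{\chi})^G$ and $P = \ker(\rad_{\vs})/(\ddd\g_{\chi})^G$, and that $\eM = \eMt/\eMt P$ where $\eMt = \ddd/\ddd\g_{\chi}$.

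First I would use the decomposition of $\eMt$ as a $(\ddd^G,R)$-bimodule already established in Lemma~\ref{summand-endo21}(1): $\eMt = R\oplus \widetilde T$, where $\widetilde T$ is the sum of the nontrivial $G$-submodules of $\eMt$. Both $R$ and $\widetilde T$ are $(\ddd^G, R)$-subbimodules, so this decomposition is in particular $G$-equivariant with $R = \eMt^G$ the trivial isotypic component. Now I form $\eM = \eMt/\eMt P = \eMt\otimes_R R/P = \eMt\otimes_R {\Aak}$. Since the decomposition $\eMt = R\oplus\widetilde T$ is a decomposition of right $R$-modules, tensoring over $R$ with ${\Aak}$ gives
\[
\eM \ = \ (R\oplus\widetilde T)\otimes_R{\Aak} \ = \ {\Aak}\ \oplus\ (\widetilde T\otimes_R{\Aak}),
\]
a decomposition of $(\ddd^G,{\Aak})$-bimodules. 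Moreover, since $G$ acts trivially on $R$ and hence on ${\Aak} = R\otimes_R{\Aak}$, while $\widetilde T\otimes_R{\Aak}$ is a quotient of $\widetilde T$ and hence (being reductive, $G$ preserves the nontrivial isotypic parts) has no nonzero trivial $G$-summand, this is precisely the isotypic decomposition of $\eM$ as a $G$-module: ${\Aak} = \eM^G$ is the trivial component and $T := \widetilde T\otimes_R{\Aak}$ is the sum of the nontrivial $G$-submodules of $\eM$. This is part~(1). Part~(2) is then immediate: if $L$ is a nonzero left ${\Aak}$-module, then $\eM\otimes_{\Aak}L = ({\Aak}\otimes_{\Aak}L)\oplus(T\otimes_{\Aak}L) = L\oplus(T\otimes_{\Aak}L)$, which contains $L\neq 0$.

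There is no real obstacle here; the only point requiring a line of care is the claim that $\widetilde T\otimes_R{\Aak}$ contains no trivial $G$-summand, which follows because $G$ is reductive so the functor of taking the nontrivial isotypic part is exact and commutes with the right exact functor $-\otimes_R{\Aak}$ (which is $G$-equivariant since $G$ acts trivially on ${\Aak}$), together with the fact that $\widetilde T$ itself has no trivial $G$-summand by construction. Alternatively one can just invoke Lemma~\ref{hom=invariants} — or rather its proof — to identify $\eM^G$ with $\Hom_\ddd(\eMt,\eM)$ composed with the natural surjection, and then observe $\eM^G$ receives ${\Aak} = R = \eMt^G$ onto itself while $\widetilde T^G = 0$ forces the complement to have trivial invariants. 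Either way, the statement reduces entirely to the already-proven Lemma~\ref{summand-endo21} plus reductivity of $G$.
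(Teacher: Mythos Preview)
Your proof is correct and follows essentially the same approach as the paper. The paper computes $\eM^G = {\Aak}$ directly from the splitting $\ddd = \ddd^G \oplus \ddd_G$ and the presentation $\eM = \ddd/(\ddd\g_\chi + \ddd\ker(\rad_\vs))$, while you obtain it by tensoring the decomposition $\eMt = R \oplus \widetilde T$ of Lemma~\ref{summand-endo21}(1) with ${\Aak}$ over $R$; both are the same isotypic-decomposition argument relying on the reductivity of $G$, and part~(2) is handled identically.
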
  
 
\begin{proof} (1)  Consider the splitting $\ddd=\ddd^G\oplus \ddd_G$ from the proof of Lemma~\ref{summand-endo21}. Since $G$ is reductive, and $(\ddd\g_{\chi})^G\subseteq \ker( \rad_{\vs})$,  
	$$\eM^G \ = \  \left( \frac{\eMt}{\eMt P}\right)^G \ = \ \left(\frac{\ddd}{\ddd\g_{\chi} + \ddd \ker (\rad_{\vs})}\right)^G
	\ = \  \frac{\ddd^G}{ \ddd^G \ker (\rad_{\vs})} \ = \ \frac{R}{P} \ = \ {\Aak}.$$
	Thus, the decomposition of $\ddd$  descends to a decomposition  	$\eM =  {\Aak} \oplus T, $  of   $(\ddd^G,{\Aak})$-bimodules
   where $T$ is now the sum of the nontrivial $G$-submodules of $\eM$.  
	
	(2) is immediate from (1).
\end{proof}

Before stating the next lemma we want be precise about the actions of $\C[V]^G$ on $\eM$ in order to avoid any possible confusion.
By construction, $\eMt = \ddd/\ddd\g_{\chi}$ is an $(\ddd,\ddd^G)$-bimodule and, as such,  
has left and right actions of the subring $\C[V]^G$. On the other hand, the module $\eM=\eMt/\eMt P$ is naturally an $(\ddd,{\Aak})$-bimodule and hence has a right action of $\C[\h]^W$. 
By Theorem~\ref{thm:radial-exists}(2),
$\C[\h]^W$ is precisely the image of $\C[V]^G$  in ${\Aak}=R/P=\ddd^G/ \ker (\rad_{\vs})$. As such, the right actions on $\eM$  of $\C[\h]^W$ (thought of as a subalgebra of ${\Aak}$) and $\C[V]^G$ (thought of as a subalgebra of $\ddd^G$) are identical.   Thus, there is no possible ambiguity about the adjoint action of $\C[V]^G$ on $\eM$ under the identification $\C[V]^G=\C[\h]^W$.

\begin{lemma}\label{lem:Gequivisfullsub}  Assume that $\rad_{\vs}$ is surjective. 
	\begin{enumerate}
		
		\item    $\eMt$ is  $(G,\chi)$-monodromic and  is a projective object in $(G,\chi,\ddd)\lmod$.
		Similarly,  $\eM$  is  monodromic.
		
		\item The adjoint action of $\C[V]^G$  on the $(\ddd,\ddd^G)$-bimodule $\eMt$ is locally ad-nilpotent. Similarly,
		the adjoint action of $\C[V]^G$  on the  $(\ddd,{\Aak})$-bimodule   $\eM$  is locally ad-nilpotent.
		
		\item For any $j\geq 0$, the right $\ddd$-module $\Ext^j_\ddd(\eM,\ddd)$  is  monodromic.

		\item    $\Ext^j_\ddd(\eM,\ddd)$ has an induced $({\Aak}, \ddd)$-bimodule structure  under which the induced action of $\C[V]^G$   is locally ad-nilpotent. 
	\end{enumerate}  
\end{lemma}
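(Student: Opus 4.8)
\textbf{Proof plan for Lemma~\ref{lem:Gequivisfullsub}.}

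The plan is to establish the four statements in order, with most of the content flowing from the two foundational facts that $\eMt$ and $\eM$ are monodromic together with the ad-nilpotence machinery of Section~\ref{Sec:nilpotence}. For Part~(1), I would first observe that $\eMt = \ddd/\ddd\g_\chi$ is manifestly a $G$-equivariant module (the action of $G$ descends since $\g_\chi$ is $G$-stable), and the verification that it is $(G,\chi)$-monodromic amounts to checking condition (ii) of Definition~\ref{defn:monodromic}, namely that $\tau(x) = \tau_{\eMt}(x) + \chi(x)\mr{Id}$; this holds because the elements $\tau(x)-\chi(x)$ of $\g_\chi$ act as zero on the cyclic generator, so the differential of the $G$-action is precisely $\tau-\chi$. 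Projectivity of $\eMt$ in $(G,\chi,\ddd)\lmod$ follows from Lemma~\ref{hom=invariants}: the functor $\Hom_\ddd(\eMt,-) \cong (-)^G$ is exact on the category of monodromic modules since $G$ is reductive, so taking $G$-invariants is exact. For $\eM = \eMt/\eMt P$, monodromicity is inherited as a quotient of a monodromic module by a $(G,\chi)$-stable submodule (one checks $\eMt P$ is $G$-stable, using that $P \subseteq \ddd^G$ consists of $G$-invariant operators).

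For Part~(2), I would use the fact, recorded just before the lemma statement, that the right action of $\C[V]^G$ on $\eMt$ (respectively $\eM$) coincides with the action of $\C[\h]^W \subseteq {\Aak}$, and that every element of $(\Sym\h)^W$, hence of $\C[\h]^W$, acts locally ad-nilpotently on ${\Aak}$ (from the PBW filtration discussion in Section~\ref{Sec:Cherednik}, cf.\ the decomposition $\Osph = \bigoplus_\lambda \Osph_\lambda$). Concretely, since $\C[V]^G$ acts locally ad-nilpotently on $\ddd$ itself (as $\C[V]^G$ consists of $G$-invariant functions and $\ad$ of a function on a differential operator lowers order), and $\eMt$, $\eM$ are cyclic quotients $\ddd/(\cdot)$ by $G$-stable left ideals, the ad-nilpotent action passes to the quotients: for $f \in \C[V]^G$ and $\bar{d} \in \eMt$ the lift of $d \in \ddd$, one has $\ad(f)^k(\bar d) = \overline{\ad(f)^k(d)} = 0$ for $k \gg 0$. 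This puts us exactly in the setting of Hypotheses~\ref{ad-hypotheses} with $C = \C[V]^G$ (or a one-variable subring $\C[f]$), $S = \ddd$, $A = {\Aak}$.

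Parts~(3) and~(4) are then essentially a direct application of the results of Section~\ref{Sec:nilpotence}. For Part~(3), monodromicity of $\Ext^j_\ddd(\eM,\ddd)$ is exactly Lemma~\ref{Ext-equivariance}, since $\eM$ is a finitely generated monodromic left $\ddd$-module by Part~(1) (finite generation is clear as $\eM$ is a cyclic $\ddd$-module). For Part~(4), I would invoke Corollary~\ref{ad-nil prop}: taking $S = \ddd$, the commutative subring $d \in C = \C[V]^G$ acting locally ad-nilpotently on $\ddd$ and (by Part~(2)) on the $(\ddd,{\Aak})$-bimodule $\eM$, the functor $\Ext^j_\ddd(-,{}_\ddd\ddd)$ sends $\eM \in \Bi(\ddd,d)$ to $\Bi(d,\ddd)$, and by Corollary~\ref{ad-nil prop}(1) preserves the ad-$d$-locally-nilpotent condition; running over a finite set of generators of $\C[V]^G$ gives the claim. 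The induced $({\Aak},\ddd)$-bimodule structure comes from functoriality of $\Ext$ applied to the right ${\Aak}$-action on $\eM$, which turns $\Ext^j_\ddd(\eM,\ddd)$ into a left ${\Aak}$-module compatibly with its right $\ddd$-structure.

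I expect no serious obstacle here --- the lemma is a bookkeeping assembly of results already in place. The one point requiring a little care is the bimodule-structure bookkeeping in Part~(4): one must check that the left ${\Aak}$-action obtained by applying $\Ext^j_\ddd(-,\ddd)$ to the right ${\Aak}$-action on $\eM$ is genuinely compatible (on the correct side) with the monodromic/right-$\ddd$ structure, and that the ad-nilpotent action of $\C[V]^G = \C[\h]^W$ that Corollary~\ref{ad-nil prop} produces agrees with the one coming from $\C[\h]^W \subseteq {\Aak}$ --- this is where the identification of the two actions on $\eM$ discussed before the lemma is used, so that there is genuinely ``no possible ambiguity.''
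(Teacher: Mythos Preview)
Your proposal is correct and follows essentially the same approach as the paper: monodromicity of $\eMt$ from the definition of $\g_\chi$, projectivity via Lemma~\ref{hom=invariants} and exactness of $(-)^G$, ad-nilpotence inherited from $\ddd$ to the cyclic quotient, and Parts~(3)--(4) via Lemma~\ref{Ext-equivariance} and Corollary~\ref{ad-nil prop}(1) respectively. The only superfluous element is the digression in Part~(2) about $(\Sym\h)^W$ acting ad-nilpotently on ${\Aak}$, which is not needed once you give the direct argument that $\ad(f)$ lowers order in $\ddd$.
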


\begin{proof} 
	(1)   It follows from the definition of $\g_{\chi}$ that  $\eMt$  is monodromic.
	Now use   \cite[Lemma~6.1]{AJM} to show that the factor $\eM$ is   also monodromic.
	By Lemma~\ref{hom=invariants}, 
	the functor $\Hom_\ddd(\eMt,-) = (-)^G$ is exact in  $(G,\chi,\ddd)\lmod$  and so  $\eMt$ is a projective object in that category.

	 	(2) The subring $\C[V]^G$ of $\ddd$ acts locally ad-nilpotently on $\ddd$ and hence on the $(\ddd,\ddd^G)$-bimodule $\eMt=\ddd/\ddd\g_\chi$.  
	The result  for $\eM$  then follows from  the comments made before the statement of the lemma.  
	
	(3) Apply Lemma~\ref{Ext-equivariance}.

	(4) This follows from Part~(2) and  Lemma~\ref{ad-nil prop}(1).   
\end{proof}
 
  The following standard result will be used frequently.

\begin{lemma}\label{equivariant tensors}  Assume that $\rad_{\vs}$ is surjective. 
\begin{enumerate}
\item  If $L$ is a finitely generated left ${\Aak}$-module then  the $\ddd$-module $\eM\otimes_{\Aak}L$ is $(G,\chi)$-monodromic. Similarly, if $L'$ is
 a  finitely generated  right ${\Aak}$-module then 
 $L'\otimes_{\Aak}  \Ext^j_\ddd(\eM, \ddd)$ is monodromic for any $j\geq 0$. 
 
\item   Any $G$-stable subfactor of a monodromic  $\dd(V)$-module $M$ is also monodromic. 
\end{enumerate}
 \end{lemma}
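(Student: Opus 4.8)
The plan is to treat both parts as routine transport of structure: in each case one produces a left (or right) $\ddd$-module $N$ manufactured from $\eM$ or from $\Ext^j_\ddd(\eM,\ddd)$, equips it with a rational $G$-action, and then verifies for $N$ the two defining properties of Definition~\ref{defn:monodromic} --- equivariance of the $\ddd$-action map, and the identity $\tau(x) = \tau_N(x) + \chi(x)\,\mr{Id}_N$ for $x\in\g$ --- by inheritance from $\eM$, using Lemma~\ref{lem:Gequivisfullsub}.

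For the first assertion of Part~(1), the one fact that makes everything work is that $\Aak = \ddd^G/\ker(\rad_{\vs})$ is a quotient of the invariant ring $\ddd^G$, so $G$ acts trivially on $\Aak$ and consequently the left $G$-action on $\eM$ commutes with its right $\Aak$-module structure (both being read off from the ring $\ddd$ together with its $G$-action, cf.\ Lemma~\ref{lem:Gequivisfullsub}(1)). Hence, for a finitely generated left $\Aak$-module $L$, the formula $g\cdot(m\otimes\ell) = (g\cdot m)\otimes\ell$ gives a well-defined $G$-action on $N := \eM\otimes_{\Aak}L$ (well-definedness being exactly where the triviality of the $G$-action on $\Aak$ is used). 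Choosing a surjection $\Aak^{\oplus r}\twoheadrightarrow L$ exhibits $N$ as a $G$-equivariant quotient of $\eM^{\oplus r}$, which at once shows $N$ is finitely generated over $\ddd$ and that its $G$-action is rational. The two conditions of Definition~\ref{defn:monodromic} for $N$ then follow by a one-line check on elements $m\otimes\ell$: for the first, $d\cdot(m\otimes\ell) = (dm)\otimes\ell$ and $G$ acts on $\ddd$ by algebra automorphisms, so the action map $\ddd\otimes N\to N$ is $G$-equivariant because the corresponding statement holds for $\eM$; for the second, differentiating the $G$-action gives $\tau_N(x)(m\otimes\ell) = (\tau_\eM(x)m)\otimes\ell$, whence $\tau(x)(m\otimes\ell) = (\tau(x)m)\otimes\ell = (\tau_\eM(x)m + \chi(x)m)\otimes\ell = \tau_N(x)(m\otimes\ell) + \chi(x)(m\otimes\ell)$.

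For the second assertion of Part~(1) I would argue on the right in exactly the same way. By Lemma~\ref{lem:Gequivisfullsub}(3) the right $\ddd$-module $\Ext^j_\ddd(\eM,\ddd)$ is monodromic, and by Lemma~\ref{lem:Gequivisfullsub}(4) it carries a left $\Aak$-action; since right multiplication by an element of $\Aak$ is a $G$-equivariant left $\ddd$-module endomorphism of $\eM$ (again because $G$ fixes $\Aak$), functoriality of $\Ext$ shows this left $\Aak$-action commutes with the rational $G$-action on $\Ext^j_\ddd(\eM,\ddd)$. Hence, for a finitely generated right $\Aak$-module $L'$, the right $\ddd$-module $L'\otimes_{\Aak}\Ext^j_\ddd(\eM,\ddd)$ acquires a rational $G$-action and satisfies the two monodromic conditions, by the same computation as above.

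Part~(2) is pure inheritance: if $M_1\subseteq M_2\subseteq M$ are $\ddd$-submodules that are $G$-stable as subspaces, then on $N = M_2/M_1$ the $\ddd$-module structure descends, the $G$-action restricts to $M_2$ and (as $M_1$ is $G$-stable) descends to a rational $G$-action whose differential is induced from that of $M$, and both conditions of Definition~\ref{defn:monodromic} pass verbatim to the subquotient. There is no genuine obstacle in the whole statement --- it is ``standard'' --- and the only places deserving a moment's attention, which is where I expect the slight friction to be, are the well-definedness of the descended $G$-action in Part~(1) (the point invoking that $G$ acts trivially on $\Aak$) and the bookkeeping that the induced $G$-actions on the tensor products are rational and the resulting modules finitely generated, both handled by presenting those tensor products as $G$-equivariant quotients of finite direct sums of copies of $\eM$ (respectively of $\Ext^j_\ddd(\eM,\ddd)$).
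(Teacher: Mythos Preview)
Your proof is correct and follows essentially the same approach as the paper's. The paper's proof is terser---it notes that $\eM^{(u)}$ is monodromic because $G$ acts trivially on $\Aak$, then defers to \cite[Lemma~6.1]{AJM} for the fact that $G$-equivariant quotients of monodromic modules are monodromic---whereas you spell out that quotient argument explicitly by checking Definition~\ref{defn:monodromic} on elementary tensors; but the underlying idea (present $\eM\otimes_{\Aak}L$ as a $G$-equivariant quotient of $\eM^{\oplus r}$ using triviality of the $G$-action on $\Aak$, then inherit the monodromic conditions) is identical.
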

 
 \begin{proof}  (1) By Lemma~\ref{lem:Gequivisfullsub},  $\eM$ is monodromic.
  Thus for any  natural number $u$,   $\eM^{(u)}\cong \eM\otimes_{\Aak}{\Aak}^{(u)}$ is also 
  monodromic, simply because  $G$ acts trivially on  ${\Aak}$. Now the argument of \cite[Lemma~6.1]{AJM}
 shows that any factor of $\eM\otimes_{\Aak}{\Aak}^{(u)}$, and in particular $\eM\otimes_{\Aak}L$, is monodromic. 
 The same argument works for $\Ext^j_\ddd(\eM,\ddd)$ since it is    monodromic  by Lemma~\ref{lem:Gequivisfullsub}.
 
 (2) The fact that a $G$-stable submodule $N$  of $M$ is  monodromic is immediate from the definition. 
 The proof  for factors of $N$ then follows as in (1). 
\end{proof} 

%%%%%%%%%%%
\subsection*{Admissible modules}
For the rest of the section we will be interested in the categories of admissible   $\ddd$-modules $\eC$  and $\eCop$,  as defined in  Definition~\ref{defn:admissible}, and their relationship to ${\Aak}$-modules. Key to understanding these modules is to understand $\delta$-torsion more generally, where $\delta$ is the discriminant defined  at the beginning of  Section~\ref{Sec:polarreps}.  Thus, we start with   general results on $\delta$-torsion, for which the  following   localisation result will be useful. 

\begin{lemma}\label{Brown-Levasseur} {\rm (}See \cite[Proposition~1.6]{BrL}.)
	Let $X\subseteq Y$ be rings such that $Y$ is flat as both a left and a right $X$-module and let $N$ be a finitely generated left $X$-module. Then
	\[ \Ext^j_Y(Y\otimes_XN,Y)\cong \Ext^j_X(N,\, X)\otimes_XY\]
	as right $Y$-modules. If $N$ is an $X$-bimodule, this
	is an isomorphism of $(X,Y)$-bimodules.
	
	In particular, if $\eS$ is a two-sided Ore set of regular elements in $X$ then
	\[  \Ext^j_{X_{\eS}} ({}_{\eS}N,\, X_{\eS})\cong \Ext^j_X(N,\, X)\otimes_{X}X_{\eS}\]
	as right $X_{\eS}$-modules.  If $N$ is an $X$-bimodule, this
	is an isomorphism of $X_{\eS}$-bimodules. \qed
\end{lemma}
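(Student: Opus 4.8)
The final statement to prove is Lemma~\ref{Brown-Levasseur}, which is a base-change/flatness result for Ext groups.

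\medskip

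\noindent\textbf{Plan of proof.} The plan is to reduce everything to the standard fact that flat base change commutes with $\Ext$ for finitely generated modules over noetherian rings. First I would observe that since $N$ is a finitely generated left $X$-module, I may choose a resolution $P_\bullet \to N$ by finitely generated \emph{free} left $X$-modules (here I need $X$ noetherian, which holds in all our applications; if one wants the statement in full generality one assumes it). Applying $Y \otimes_X -$ to this resolution gives a resolution of $Y \otimes_X N$ by finitely generated free left $Y$-modules, because $Y$ is flat as a right $X$-module, so $Y \otimes_X P_\bullet$ stays exact. Then $\Ext^j_Y(Y\otimes_X N, Y)$ is computed as the cohomology of the complex $\Hom_Y(Y \otimes_X P_\bullet, Y)$.

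\medskip

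\noindent The key step is the natural isomorphism of complexes
\[
\Hom_Y(Y \otimes_X P_\bullet,\, Y) \;\cong\; \Hom_X(P_\bullet,\, X) \otimes_X Y,
\]
which I would establish degreewise: for a finitely generated free left $X$-module $F = X^{\oplus r}$ one has $\Hom_Y(Y\otimes_X F, Y) = \Hom_Y(Y^{\oplus r}, Y) = Y^{\oplus r}$ as a right $Y$-module, and likewise $\Hom_X(F,X)\otimes_X Y = X^{\oplus r}\otimes_X Y = Y^{\oplus r}$; naturality in $F$ (i.e. compatibility with the differentials $\partial_\bullet$ of $P_\bullet$) is a routine check using additivity of $\Hom$. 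Since $Y$ is flat as a \emph{left} $X$-module, tensoring the complex $\Hom_X(P_\bullet, X)$ on the right by $Y$ commutes with taking cohomology, so
\[
\Ext^j_Y(Y\otimes_X N, Y) \;=\; H^j\bigl(\Hom_X(P_\bullet,X)\otimes_X Y\bigr) \;=\; H^j\bigl(\Hom_X(P_\bullet,X)\bigr)\otimes_X Y \;=\; \Ext^j_X(N,X)\otimes_X Y,
\]
as right $Y$-modules. If $N$ carries an $X$-bimodule structure, all the modules above acquire compatible left $X$-actions (via the left action on $N$, carried through the resolution and the $\Hom$'s), and one checks these are respected by the identifications, giving an isomorphism of $(X,Y)$-bimodules.

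\medskip

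\noindent For the final ``in particular'' clause, I would simply specialise to $Y = X_{\eS}$: when $\eS$ is a two-sided Ore set of regular elements, the localisation $X_{\eS}$ is flat as both a left and a right $X$-module (a standard fact, e.g. \cite{GW} or \cite{MR}), and $X_{\eS}\otimes_X N$ is the localised module ${}_{\eS}N$. The bimodule refinement follows in the same way. None of this is a serious obstacle; the only point requiring the hypothesis in an essential way is the existence of the finitely generated free resolution, for which noetherianity of $X$ (always available in our setting) is the clean assumption, and the two-sided flatness of $Y$ over $X$, which is assumed outright.
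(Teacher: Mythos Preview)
The paper does not actually give a proof of this lemma: it ends with a bare \qed and simply refers the reader to \cite[Proposition~1.6]{BrL}. Your argument is the standard proof of this base-change result and is correct; it is essentially what one would find behind the cited reference. Your observation that the resolution by finitely generated free modules requires $X$ to be (left) noetherian is accurate and worth flagging---the lemma as stated omits this hypothesis, but in every application in the paper (where $X$ is $\ddd$, $\Aak$, or a localisation thereof) noetherianity holds.
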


We will need the following  consequence of Proposition~\ref{torsion-homs}.     We recall that $n$ and $m$ are defined in Definition~\ref{notation4.11}, and we set 
\begin{equation}\label{M'-defn}  \eM'=\Ext^m_\ddd(\eM,\ddd).
\end{equation}

\begin{lemma}\label{torsion-consequence}  Assume that $\rad_{\vs}$ is surjective. 
	Pick  $0\not= d \in \C[V]^G$, or $d\in \Sym(V)^G$. Then, the following hold.
	\begin{enumerate}
		\item If $L$ is a $d$-torsion left $\ddd$-module then each of $\Hom_\ddd(\eMt, L)$,   $\Hom_\ddd(\eM, L)$ and $\Hom_\ddd(\eM',L)$  is  $d$-torsion.
		\item If $L\in \eC $, then $ \Hom_\ddd(\eM,L)   \in \Osph$.
		\item  The left $R$-module  structure of $\Hom_\ddd(\eM,L) $  is induced from its ${\Aak}$-module structure. Moreover,  
		$\Hom_\ddd(\eM,L) $ is a left  $R$-module submodule of  $L^G\cong \Hom_\ddd(\eMt,L).$
		
		\item Let $L\in \eCop$. If $Z'\subseteq \Hom_\ddd(\eM',L)$ is a finitely generated ${\Aak}$-submodule, then $Z' \in \Osphop$.
	\end{enumerate}
\end{lemma}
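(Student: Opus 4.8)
The plan is to prove the four parts of Lemma~\ref{torsion-consequence} in sequence, with Part~(1) being the engine that drives everything else and Part~(4) following by combining the preceding parts with results from Section~\ref{Sec:Cherednik}.

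\textbf{Part (1).} The statement is symmetric in the three modules $\eMt$, $\eM$, $\eM'$, so I would handle them uniformly by an application of Proposition~\ref{torsion-homs}. Recall that $\eMt = \ddd/\ddd\g_\chi$ is an $(\ddd,\ddd^G)$-bimodule, $\eM = \eMt/\eMt P$ is an $(\ddd,{\Aak})$-bimodule, and $\eM' = \Ext^m_\ddd(\eM,\ddd)$ is an $({\Aak},\ddd)$-bimodule. For $B$ equal to any one of these, $B$ is finitely generated on the appropriate $\ddd$-side (it is cyclic in the cases of $\eMt$ and $\eM$; for $\eM'$ this uses noetherianity of $\ddd$ via the projective resolution). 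Next, take $\eS = \{d^j : j \ge 1\}$. Since $d \in \C[V]^G$ (or $\Sym(V)^G$), which acts locally ad-nilpotently on $\ddd$ and hence on $\ddd^G$, ${\Aak}$, and the relevant bimodule by Lemma~\ref{lem:Gequivisfullsub}(2),(4), it follows from the usual argument (as in Lemma~\ref{ore91}) that $\eS$ is an Ore set in each ring and acts locally ad-nilpotently on the bimodule. Then $\Hom_\ddd(B, L)$ with $L$ a $d$-torsion $\ddd$-module is exactly the situation of Proposition~\ref{torsion-homs} (taking the roles of $A,S$ appropriately and the $\eS$-torsion right $\ddd$-module to be $L$), so $\Hom_\ddd(B,L)$ is $\eS$-torsion, i.e. $d$-torsion. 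One subtlety: Proposition~\ref{torsion-homs} is stated with $B$ an $(A,S)$-bimodule finitely generated as a \emph{right} $S$-module and $L$ a right $S$-module; for left $\ddd$-modules $L$ and $\eMt,\eM$ (which are left $\ddd$-modules) one should apply the left-right mirror of the proposition, which holds by the same argument.

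\textbf{Parts (2),(3).} For Part~(3): the left $R$-action on $\Hom_\ddd(\eM,L)$ comes from the right $R$-action on $\eM$, and since $\eM = \eMt/\eMt P$ carries a right ${\Aak} = R/P$ action, this factors through ${\Aak}$. The surjection $\eMt \twoheadrightarrow \eM$ induces an injection $\Hom_\ddd(\eM,L) \hookrightarrow \Hom_\ddd(\eMt,L) \cong L^G$ by Lemma~\ref{hom=invariants}, and this is a map of left $R$-modules because the quotient map $\eMt \to \eM$ is $(\ddd,R)$-bilinear; this gives Part~(3). For Part~(2): if $L \in \eC$ then $(\Sym V)^G$ acts locally finitely on $L$, hence on $L^G$, hence on the $R$-submodule $\Hom_\ddd(\eM,L)$. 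Under the identification $\C[V]^G = \C[\h]^W$ (Theorem~\ref{thm:radial-exists}(2)) and $(\Sym V)^G = (\Sym \h)^W$, this says $(\Sym \h)^W$ acts locally finitely on $\Hom_\ddd(\eM,L)$ as an ${\Aak}$-module. It remains to check that $\Hom_\ddd(\eM,L)$ is finitely generated over ${\Aak}$: since $L$ is holonomic (Corollary~\ref{cor:admissible-holonomic}), $L^G$ is finitely generated over $\ddd^G$ and hence over $R$ — one can see this because $L$ has a good filtration whose invariant part is a finitely generated module over $(\gr\ddd)^G$, which is noetherian — and the submodule $\Hom_\ddd(\eM,L)$ of $L^G$ is then finitely generated over $R$, hence over ${\Aak}$, by noetherianity. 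Therefore $\Hom_\ddd(\eM,L) \in \Osph$.

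\textbf{Part (4).} Here $L \in \eCop$, so $\eM' = \Ext^m_\ddd(\eM,\ddd)$ is the natural $({\Aak},\ddd)$-bimodule, $\Hom_\ddd(\eM',L)$ is a right ${\Aak}$-module, and $Z' \subseteq \Hom_\ddd(\eM',L)$ is a given finitely generated ${\Aak}$-submodule. To show $Z' \in \Osphop$ I need $(\Sym \h)^W$ to act locally finitely on $Z'$. As in Part~(1), the ad-action of $d \in (\Sym V)^G$ on $\eM'$ is locally ad-nilpotent by Lemma~\ref{lem:Gequivisfullsub}(4), and since $L$ is admissible, $(\Sym V)^G$ acts locally finitely on $L$; now the same bookkeeping as in the proof of Proposition~\ref{torsion-homs} (or directly Corollary~\ref{ad-nil prop}(2), applied with $B = \eM'$) shows that $(\Sym V)^G$ acts locally finitely on $\Hom_\ddd(\eM',L)$, hence on its submodule $Z'$; translating via $(\Sym V)^G = (\Sym \h)^W$ gives that $(\Sym \h)^W$ acts locally finitely on $Z'$, and since $Z'$ is assumed finitely generated over ${\Aak}$, we get $Z' \in \Osphop$.

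\textbf{Main obstacle.} The principal technical point is getting the local finiteness/local nilpotence statements to transfer correctly through $\Hom_\ddd(-,L)$, i.e. reconciling the ad-action machinery of Section~\ref{Sec:nilpotence} (Proposition~\ref{torsion-homs}, Corollary~\ref{ad-nil prop}) with the left/right module conventions for $\eMt,\eM,\eM'$ and with the requirement that the resulting module be finitely generated over ${\Aak}$ so that it genuinely lands in $\Osph$ or $\Osphop$. In particular one must be careful that for $\eM'$ — which is produced as an Ext group and is not manifestly cyclic — the finite generation over ${\Aak}$ in Part~(4) is \emph{hypothesised} ($Z'$ is assumed finitely generated) rather than something to be proved, so the only real content there is the local finiteness, which is Corollary~\ref{ad-nil prop}(2).
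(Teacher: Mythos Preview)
Your approach is essentially the paper's, with one difference worth noting. For Part~(2), you prove finite generation of $L^G$ over $\ddd^G$ by passing to a $G$-stable good filtration and invoking the fact that $(\gr L)^G$ is finitely generated over $(\gr\ddd)^G$. This is valid, but your mention of holonomicity is a red herring (only finite generation of $L$ is needed), and the invariant-theoretic fact you are using is itself nontrivial. The paper instead argues directly at the filtered level: set $N=\ddd L^G$, pick a finite-dimensional $N_0\subset N^G$ with $\ddd N_0=N$, put $F=\ddd^G N_0$, and take $G$-invariants of the surjection $\ddd\otimes_\C F\twoheadrightarrow \ddd F=N$; reductivity gives $(\ddd\otimes_\C F)^G=\ddd^G\otimes_\C F$ (since $G$ acts trivially on $F$), whence $L^G=N^G=\ddd^G N_0$. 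This is really the same reductive-group argument you would need to justify your associated-graded claim, just carried out directly.

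For Part~(4), your citation of Corollary~\ref{ad-nil prop}(2) is slightly off: that result concerns $\Ext_S^m(-,S)$, not $\Hom_\ddd(-,L)$. Your first suggestion---that the computation in the proof of Proposition~\ref{torsion-homs} adapts from the torsion case to the locally finite case---is the correct one (and the paper's own one-line proof of this part leans on exactly that). Part~(1) and Part~(3) match the paper.
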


\begin{remark}\label{rem:torsion-consequence} The point of Part~(4) of the lemma is that there is no obvious reason why $ \Hom_\ddd(\eM',L)$ should be a finitely generated ${\Aak}$-module.
\end{remark}

\begin{proof}  (1) By Lemma~\ref{lem:Gequivisfullsub},  $d$ acts locally ad-nilpotently on both $\eMt$ and $\eM'$ . Thus the result follows from 
	Proposition~\ref{torsion-homs}.
	
	(2) By (1) it remains  to prove that $\Hom_\ddd(\eM,L)$ is a finitely generated left ${\Aak}$-module. To begin with, consider 
	$\Hom_\ddd(\eMt,L)$. 
	
	Lemma~\ref{hom=invariants} implies that  
	$
	\Hom_\ddd(\eMt,L) \ =  L^G.$   Set $N=\ddd L^G\subseteq L$ and note that, as $N^G\supseteq  L^G$ we have $N^G=L^G$, but now  $N=\ddd N^G$. 
	Moreover, as $L$ and hence $N$ is noetherian, $\ddd N^G=\ddd N_0$ for some  finite dimensional subspace $N_0\subseteq N^G$. 
	(We are not excluding the possibility that $L^G=0=N$.)
	
	We claim that $N^G =\ddd^GN_0$. To see this, set $F=\ddd^GN_0$. Then we have a short exact sequence $0\to U\to \ddd\otimes_{\C}F\to \ddd F\to 0$, for some subspace $U$. 
	Now $G$ acts rationally on $\ddd$ and hence on both $\ddd\otimes_{\C}F$ and $\ddd F=\ddd N^G=N$. Thus, as $G$ is reductive, we obtain a short exact sequence 
	\[ 0\too U^G \too (\ddd\otimes_{\C}F)^G \buildrel{\phi}\over{\too} N^G\too 0.\]
	But $(\ddd\otimes_{\C}F)^G = \ddd^G\otimes_{\C}F$  and hence $N^G=\Im(\phi)=\ddd^GF=\ddd^GN_0$. In other words, since $N_0$ is finite-dimensional,
	$$\Hom_\ddd(\eMt,L) = L^G=N^G = \ddd^GN_0$$ is a finitely generated   left  $\ddd^G$-module.  By Lemma~\ref{summand-endo21}(3), 
	the $\ddd^G$-module structure on $\eMt$ and hence on  $\Hom_\ddd(\eMt,L) $ comes via the right $R$-module structure of $\eMt$. Therefore, 
	$\Hom_\ddd(\eMt,L) $ is a finitely generated   left  $R$-module.  
	
	Now  consider $\eM$. Applying $\Hom_\ddd( - , L)$ to the sequence $\eMt \to \eM \to 0$ shows that $\Hom_\ddd(\eM,L)$ is the submodule of the $R$-module $\Hom_\ddd(\eMt,L)$ consisting of all sections killed by the prime ideal $P$ of \eqref{R-defn}. Therefore, the left $R$-module  structure of $\Hom_\ddd(\eM,L) $  is induced from its ${\Aak}$-module structure. Since $R$ is noetherian and $\Hom_\ddd(\eMt,L)$ is finitely generated, it follows that $\Hom_\ddd(\eM,L) $ is a finitely generated ${\Aak}$-module.  
	
	(3)  This was proved in the course of proving (2)
	
	(4)   By Part~(1)  $Z'$ is $d$-torsion, and so the result is immediate.
\end{proof}

We now turn to  the relationship between the category $\eC$ and the category $\Osph$ of ${\Aak}$-modules from Definition~\ref{defn:Osph}. 

\begin{lemma}\label{torsion-consequence2}  Assume that $\rad_{\vs}$ is surjective. 
	Let $T= \eM\otimes_{\Aak}\eP$ for some $\eP\in \Osph$. Then:
	\begin{enumerate}
		
		\item $T\in \eC$ while $\BD_\ddd(T) = \Ext_\ddd^{n+m}(T,\,\ddd) \in \eCop$.
		
		\item If $0\not=d\in (\Sym  \h)^W$ acts locally  nilpotently on $\eP$ then it also acts locally nilpotently on $T$ and $\BD_\ddd(T)$.
	\end{enumerate}
\end{lemma}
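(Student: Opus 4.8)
The plan is to derive everything from one explicit computation inside $T=\eM\otimes_{\Aak}\eP$, combined with the localisation results recalled earlier.

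\emph{The key computation.} Via the isomorphism $\rrpp\colon(\Sym V)^G\isom(\Sym \h)^W$ of Theorem~\ref{thm:radial-exists}(2), an element $p\in(\Sym V)^G$ acts on the left of $T$ through $\ddd$, while its image $\bar p:=\rad_{\vs}(p)\in(\Sym \h)^W\subseteq\Aak$ acts on the right of $\eM$ and on the left of $\eP$. The proof of Lemma~\ref{lem:Gequivisfullsub}(2) goes through verbatim with $(\Sym V)^G$ in place of $\C[V]^G$ (using that $[p,\g_{\chi}]=0$ and that $P$ is a two-sided ideal of $R$), so $\ad(p)\colon m\mapsto pm-m\bar p$ acts locally nilpotently on $\eM$; moreover, since $(\Sym V)^G$ and $(\Sym \h)^W$ are commutative, the operators $\ad(p)$ commute with one another and with right multiplication by any $\bar p$. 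An easy induction then gives, for $p\in(\Sym V)^G$, $m\in\eM$, $x\in\eP$ and $j\ge0$,
\[ p^{j}\cdot(m\otimes x)\ =\ \sum_{i=0}^{j}\binom{j}{i}\,\ad(p)^{i}(m)\otimes\bar p^{\,j-i}x, \]
together with the analogous multinomial identity for $p_{1}^{a_{1}}\cdots p_{r}^{a_{r}}\cdot(m\otimes x)$ when $p_{1},\dots,p_{r}$ generate the algebra $(\Sym V)^G$.

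\emph{Proof of (1).} As $\eP$ is a finitely generated $\Aak$-module, Lemma~\ref{equivariant tensors}(1) shows $T$ is $(G,\chi)$-monodromic, and $T$ is finitely generated over $\ddd$ since $\eM$ is cyclic. Because $\Aak$ is noetherian and every element of $(\Sym \h)^W$ acts locally ad-nilpotently on $\Aak$, we may write $\eP=\bigoplus_{\lambda}\eP_{\lambda}$, a finite sum with $\eP_{\lambda}\in\Osph_{\lambda}$, so $T=\bigoplus_{\lambda}\eM\otimes_{\Aak}\eP_{\lambda}$. Given $m\otimes x$ in one summand with $\mf{m}_{\lambda}^{N_{0}}x=0$, pick $N_{i}$ so that $\ad(p_{i})^{N_{i}}$ annihilates the finite-dimensional span of the iterated $\ad(p_{\bullet})$-images of $m$; the multinomial identity and a pigeonhole count then give $\mf{m}_{\lambda}^{\,N_{0}+N_{1}+\cdots+N_{r}}(m\otimes x)=0$. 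Hence $(\Sym V)^G$ acts locally finitely on $T$, so $T\in\eC$, and $T$ is holonomic by Corollary~\ref{cor:admissible-holonomic}. Since $n+m=\dim V$ and $\ddd$ is noetherian, $\BD_{\ddd}(T)=\Ext_{\ddd}^{n+m}(T,\ddd)$ is a finitely generated right $\ddd$-module which is $(G,\chi)$-monodromic by Lemma~\ref{Ext-equivariance}. By additivity, $\BD_{\ddd}(T)=\bigoplus_{\lambda}\BD_{\ddd}(\eM\otimes_{\Aak}\eP_{\lambda})$, so it remains to see that each $0\ne d\in\mf{m}_{\lambda}$ acts locally nilpotently on $\BD_{\ddd}(\eM\otimes_{\Aak}\eP_{\lambda})$ (the pigeonhole count then promotes this to local nilpotence of $\mf{m}_{\lambda}$ and hence to local finiteness of $(\Sym V)^G$). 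Now $d$ acts locally nilpotently on $\eP_{\lambda}$ since $d^{N}x\in\mf{m}_{\lambda}^{N}x$, so by the key computation it acts locally nilpotently on $\eM\otimes_{\Aak}\eP_{\lambda}$, which is therefore a $d$-torsion left $\ddd$-module. As $d$ is locally ad-nilpotent on the domain $\ddd=\eD(V)$, the powers $\{d^{j}\}$ form a two-sided Ore set of regular elements, $\ddd_{d}$ is flat over $\ddd$, and $\ddd_{d}\otimes_{\ddd}(\eM\otimes_{\Aak}\eP_{\lambda})=0$; Lemma~\ref{Brown-Levasseur} then gives $\BD_{\ddd}(\eM\otimes_{\Aak}\eP_{\lambda})\otimes_{\ddd}\ddd_{d}\cong\Ext^{n+m}_{\ddd_{d}}(0,\ddd_{d})=0$, so $\BD_{\ddd}(\eM\otimes_{\Aak}\eP_{\lambda})$ is $d$-torsion. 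Thus $\BD_{\ddd}(T)\in\eCop$, proving (1).

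\emph{Proof of (2), and the main obstacle.} If $0\ne d\in(\Sym \h)^W$ acts locally nilpotently on $\eP$, then for each $m\otimes x$ choose $N_{1}$ with $\ad(d)^{N_{1}}(m)=0$ and $N_{2}$ with $\bar d^{\,N_{2}}x=0$; the key computation gives $d^{\,N_{1}+N_{2}}(m\otimes x)=0$, so $d$ acts locally nilpotently on $T$, i.e.\ $T$ is $d$-torsion. The Ore-localisation argument above, now applied to $T$ itself, gives $\BD_{\ddd}(T)\otimes_{\ddd}\ddd_{d}=0$, so $d$ acts locally nilpotently on $\BD_{\ddd}(T)$ as well. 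The only step that is not purely formal is this passage from $d$-torsion of a left $\ddd$-module to $d$-torsion of its holonomic dual $\BD_{\ddd}(\,\cdot\,)$; it rests on $\{d^{j}\}$ being an Ore set in $\ddd$ — equivalently, on $\ad(d)$ being locally nilpotent on $\ddd$ — and on flat base change for $\Ext$ (Lemma~\ref{Brown-Levasseur}). Everything else is bookkeeping: the deduction of local nilpotence of the ideal $\mf{m}_{\lambda}$, and of local finiteness of $(\Sym V)^G$, from the element-wise statements uses only that $(\Sym V)^G$ is a finitely generated commutative algebra.
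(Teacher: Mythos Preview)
Your proof is correct and follows essentially the same approach as the paper: both use the local ad-nilpotence of $(\Sym V)^G$ on $\eM$ to transfer local finiteness from $\eP$ to $T$, and then use Ore localisation together with Lemma~\ref{Brown-Levasseur} to pass to $\BD_\ddd(T)$. Your version spells out the binomial/multinomial identity behind the transfer step and organises Part~(1) via the block decomposition $\eP=\bigoplus_\lambda\eP_\lambda$, whereas the paper instead picks a finite-dimensional generating subspace $U\subset\eP$ and works with a single element $d$ in its annihilator ideal $I=\lann_{(\Sym\h)^W}U$; these are equivalent bookkeeping choices, and your presentation is if anything more transparent about why local nilpotence of the whole ideal (not just one element) follows.
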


\begin{proof}  (1)  We first consider $T$.  Since   $T$ is $(G,\chi)$-monodromic by Lemma~\ref{equivariant tensors}, it remains to check 
	the action of $(\Sym  V)^G$. Set $E=(\Sym  \h)^W$. Write $\eP={\Aak}U$ for some  finite dimensional subspace $U$  and  let $I=\lann_EU$. 
	By definition, $E$ acts locally finitely on $\eP$, and so  $E/I$  is finite dimensional. Pick $0\not=d\in I$.
	Then, once we identify $E$ with $(\Sym V)^G$, the element   $d$ acts locally ad-nilpotently  on ${\Aak}$,  $\ddd$ and $\eM$. Hence $d$ acts  locally nilpotently on  $T$. Consequently,  $(\Sym V)^G$ acts locally finitely on $T$, and so $T\in \eC$.
	
	On the other hand, $\BD_\ddd(T)$ is $(G,\chi)$-monodromic by Lemma~\ref{Ext-equivariance}.  Moreover, 
	$\eS=\{d^j\}$ is an Ore set in  both 	${\Aak}$ and  $\ddd$ and $T_{\eS}=0$ by the last paragraph.  Thus, by  Lemma~\ref{Brown-Levasseur}, 
	\[ \BD_\ddd(T)\otimes_\ddd \ddd_{\eS}  \ = \ \Ext^{n+m}(T,\, \ddd)\otimes_\ddd \ddd_{\eS}  \ = \ \Ext^{n+m}_{\ddd_{\eS}}(T_{\eS},\, \ddd_{\eS} ) \ = \ 0 .\]
	Thus, $\BD_\ddd(T)$ is $d$-torsion and 
	$\BD_\ddd(T)   \in \eCop$.

	(2) The  proof from Part~(1) works here as well.	\end{proof}

 The most important modules in $\eC$ are the following. 
 
\begin{definition}\label{defn:Glambda}   Let $\lambda\in \h^*$  with  corresponding maximal ideal  
$ \mf{m}_{\lambda} \subset (\Sym  \h)^W$, 
	as in Definition~\ref{defn:Osph}. Let   $\eQ_{\lambda} = {\Aak} / {\Aak} \mf{m}_{\lambda}$, as in Equation~\ref{M-definition}, and define
	\[
	\eGt_{\lambda} = \ddd / (\ddd \mf{g}_{\chi} + \ddd \mf{m}_{\lambda}), \qquad\text{and} \qquad \eG_{\lambda} = \eM \otimes_{{\Aak}} \eQ_{\lambda}.
	\]
In particular, $\eGt_0 = \ddd/(\ddd\mf{g}_{\chi}+\ddd (\Sym \h)^W_+)$. The modules $\eGt_\lambda$ are called the \emph{Harish-Chandra} modules since, for $V=\g$, the module $\eGt_0=\eG_0$ is  fundamental to Harish-Chandra's study of $G$-invariant eigendistributions  \cite{HC2, HC3}.  
\end{definition}

\begin{remark}\label{rem:eGpres}
	Assume that $\rad_{\vs}$ is surjective so that $\eM$ is well-defined. It follows from the definition \eqref{M-defn} that $\eM = \ddd/ (\ddd \g_{\chi} + \ddd \Ker \rad_{\vs})$. Therefore, 
	$$
	\eG_{\lambda} = \ddd / (\ddd \mf{g}_{\chi} + \ddd  \Ker \rad_{\vs} + \ddd \mf{m}_{\lambda}). 
	$$
	In particular, $\eGt_{\lambda} = \eG_{\lambda}$ if $\Ker \rad_{\vs} = (\ddd \g_{\chi})^G$. 
\end{remark}

We will show later that $\eG_{\lambda}$ is very well-behaved when ${\Aak}=\Ak(W)$ is simple; see Corollary~\ref{torsionfree-corollary} and Theorem~\ref{G-is-injective} in particular. Here we consider the case where ${\Aak}$ is not simple and show that $\eG_\lambda$ has some less pleasant properties.  We start with an easy consequence of Ginzburg's Generalised Duflo Theorem.

\begin{lemma}\label{lem:deltatorsionsimple}
	Let $\lambda \in \h^*$. If $\Ak(W_{\lambda})$ is not simple then there exists a simple module $L\in \Osph_{\lambda}$ such that $L$ is $\delta$-torsion; 
\end{lemma}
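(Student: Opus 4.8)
The plan is to deduce this directly from Ginzburg's Generalised Duflo Theorem applied to the spherical algebra $\Ak(W_\lambda)$, after reducing to the case $\lambda = 0$ by the localisation/completion machinery already developed in the excerpt.

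First I would use Lemma~\ref{lem:genOsphparabolicequi}(1), which gives an equivalence $\Osph_{\kappa,\lambda}(W) \cong \Osph_{\kappa,0}(W_\lambda)$ sending $\eQ_\lambda$ to $\eQ_0$. Since this equivalence comes from the isomorphism \eqref{eq:isoeHecomplete} of completed algebras, which intertwines the $\C[\h]^W$-action with the $(\Sym\h)^{W_\lambda}$-action (via $y - \lambda(y) \mapsto y$), it sends $\delta$-torsion modules to $\delta$-torsion modules: the discriminant $\delta$ vanishes at $\lambda \in \h^*$ precisely on the reflection hyperplanes of $W_\lambda$ through $\lambda$, so after the shift $\delta$ becomes (up to a unit in the completion) the discriminant $\delta_{W_\lambda}$ of $W_\lambda$. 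Hence it suffices to produce a simple $\delta$-torsion module in $\Osph_{\kappa,0}(W_\lambda)$ whenever $\Ak(W_\lambda)$ is not simple.

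Next, by Kashiwara's Lemma \cite[Theorem~1.6.1]{HTT} (as used in the proof of Proposition~\ref{prop:Qlambdaprojiffsimple}) I may assume $\h$ is an irreducible $W_\lambda$-module. Now apply Ginzburg's Generalised Duflo Theorem: since $\Ak(W_\lambda)$ is not simple, there is a nonzero proper two-sided ideal, hence a simple $\Ak(W_\lambda)$-module $L_0$ whose annihilator is a nonzero primitive ideal, and by the Duflo-type statement such a primitive ideal has $\GKdim$ strictly less than $\GKdim \Ak(W_\lambda) = 2n$ and is the annihilator of a module in category $\Osph_{\kappa,0}(W_\lambda)$ (equivalently, the quotient $\Hk(W_\lambda)/\mathrm{ann}$ is a Harish-Chandra bimodule finite over its centre). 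Concretely, take a composition factor $L$ of $\eQ_0 = \Ak(W_\lambda)/\Ak(W_\lambda)\mf{m}_0$ that is killed by a nonzero primitive ideal $I \ne 0$; such an $L$ exists because $\Ak(W_\lambda)$ is not simple and $\eQ_0$ is faithful is false — rather, one uses that the non-simplicity forces some composition factor of $\eQ_0$ to be a module whose restriction to $\C[\h]^W$ is torsion, equivalently $\GKdim_{\C[\h]^W}(L) < n$, equivalently $L$ is $\delta$-torsion by Lemma~\ref{lem:GKdimOsph} applied to a $\delta$-localisation argument.

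The main obstacle is making precise exactly which form of ``Ginzburg's Generalised Duflo Theorem'' is invoked and checking that the simple module it produces is genuinely $\delta$-torsion rather than merely having a proper annihilator. The cleanest route, which I expect the authors take, is: non-simplicity of $\Ak(W_\lambda)$ gives a nonzero ideal, whose image in the regular representation $\eQ_0$ on $V_{\reg}$-type data must be supported on the discriminant locus (since over $\h_{\reg}$ the spherical algebra localises to $\eD(\h_{\reg})^{W_\lambda}$, which is simple); therefore any simple subquotient of $\eQ_0$ lying inside this ideal is annihilated by a power of $\delta$ after localising away from $\delta$, i.e. is $\delta$-torsion. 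I would phrase the proof as: restrict to $\h_{\reg}$, observe $\Ak(W_\lambda)[\delta^{-1}] \cong \eD(\h_{\reg})^{W_\lambda}$ is simple, so a nonzero proper ideal $J$ of $\Ak(W_\lambda)$ satisfies $J[\delta^{-1}] = \Ak(W_\lambda)[\delta^{-1}]$, hence $\Ak(W_\lambda)/J$ is $\delta$-torsion; pick a simple quotient $L$ of a suitable module in $\Osph_{\kappa,0}$ on which $J$ acts trivially (e.g. a composition factor of $\eQ_0$ annihilated by $J$, which exists since $\eQ_0 \notin \{$modules with $J$ acting faithfully$\}$ — more carefully, since $\Ak(W_\lambda)/J$ is nonzero and $\delta$-torsion and finite over $\C[\h]^W$, it has a simple quotient in $\Osph_{\kappa,0}$ after passing through the block decomposition). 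Then $L$ is the desired simple $\delta$-torsion module in $\Osph_\lambda$.
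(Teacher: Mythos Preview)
Your overall strategy (Generalised Duflo plus simplicity of the $\delta$-localisation) matches the paper's, but there is a genuine gap in your reduction step and in how you conclude $\delta$-torsion.

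First, the claim that the equivalence of Lemma~\ref{lem:genOsphparabolicequi} ``sends $\delta$-torsion modules to $\delta$-torsion modules'' is not justified by what you wrote. The isomorphism \eqref{eq:isoeHecomplete} intertwines $(\Sym\h)^W$ with $(\Sym\h)^{W_\lambda}$ via the shift $y-\lambda(y)\mapsto y$ for $y\in\h$; it says nothing directly about $\C[\h]^W$, which is where $\delta$ lives. Your sentence ``$\delta$ vanishes at $\lambda\in\h^*$'' conflates $\h$ and $\h^*$: $\delta\in\C[\h]^W$ is a function on $\h$, while $\lambda\in\h^*$, so this does not parse. Showing that $\delta_W$ corresponds to $\delta_{W_\lambda}$ up to a unit in the completion is possible but requires a separate argument about how the Bezrukavnikov--Etingof isomorphism acts on $\C[\h]$.

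The paper sidesteps this entirely by tracking \emph{annihilators} rather than $\delta$-torsion through the equivalence. Concretely: since $\Ak(W_\lambda)$ is not simple it has a nonzero primitive ideal $\mf{p}_0$, and Generalised Duflo \cite[Theorem~2.3]{Primitive} produces a simple $L_0\in\Osph_{\kappa,0}(W_\lambda)$ with annihilator $\mf{p}_0$. Transporting $L_0$ to $L\in\Osph_{\kappa,\lambda}(W)$ via the equivalence, $L$ has nonzero annihilator $\widehat{\mf{p}}$ in the completion $\widehat{A}_\kappa(W)_\lambda$. Since $\Ak(W)$ is dense in this completion, $\mf{p}=\widehat{\mf{p}}\cap\Ak(W)\neq 0$. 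Now one uses \cite[Lemma~2.9(1)]{BLNS}: $\Ak(W)[\delta^{-1}]=\dd(\h_{\reg})^W$ is simple, so $\delta^n\in\mf{p}$ for some $n$, whence $L$ is $\delta$-torsion. This density argument is the missing ingredient in your writeup; your final paragraph gestures at the localisation idea but never cleanly produces a simple module in $\Osph_{\kappa,\lambda}(W)$ (the parenthetical about ``$\eQ_0$ is faithful is false'' and the subsequent attempts are incomplete).
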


\begin{proof}
 Since $\Ak(W_{\lambda})$ is prime, the fact that  it is   not simple implies that there exists a primitive ideal $\mf{p}_0 \lhd \Ak(W_{\lambda})$ with $\mr{GKdim}(\Ak(W_{\lambda}) / \mf{p}_0) < \GKdim \Ak(W_{\lambda}) $. By the Generalised Duflo Theorem \cite[Theorem~2.3]{Primitive}, there exists a simple module $L_0 \in \Osph_{\kappa,0}(W_{\lambda})$ whose annihilator equals 
  $\mf{p}_0$.  Recall the definition  from \eqref{defn:completion} of  $\widehat{A}_{\kappa}(W_{\lambda})_{0} =\Ak(W_{\lambda})\otimes_E \widehat{E}_0$, where now $E=(\Sym \h)^{W_{\lambda}}$.  Clearly  the action of $\Ak(W_{\lambda})$ on $L_0$ extends to $\widehat{A}_{\kappa}(W_{\lambda})_{0}$,
  and so  the
   (necessarily primitive) annihilator of $L_0$  in  $\widehat{A}_{\kappa}(W_{\lambda})_{0}$ is also non-zero. 
   Thus, via the     equivalence of Lemma~\ref{lem:genOsphparabolicequi}, there exists 
   $L \in \Osph_{\kappa,\lambda}(W)$ whose 
   annihilator $\widehat{\mf{p}}$ in $\widehat{A}_{\kappa}(W)_{\lambda}$ is non-zero. The algebra $\Ak(W)$ is dense in the latter, 
   hence $\mf{p} = \widehat{\mf{p}} \cap \Ak(W)\not=0$. This is the annihilator of the simple $\Ak(W)$-module $L$. Finally, by \cite[Lemma~2.9(1)]{BLNS}, $\Ak(W)[\delta^{-1}] = \dd(\h_{\reg})^W$ is a simple ring, and so    some power of $\delta$ belongs to $\mf{p}$. In other words, $L$ is $\delta$-torsion. 
\end{proof}

Recall the definition of the regular loci $V_{\reg}$ defined at the beginning  of Section~\ref{Sec:polarreps}. 

\begin{definition}\label{minextn-defn}
	If $\euls{L}$ is a left module over $\eD(V_{\reg})$, then the \emph{minimal extension} of $\euls{L}$ to $V$ is, by definition, the unique $\eD(V)$-module $L$ whose restriction to  $V_{\mathrm{reg}}$ equals $\euls{L}$, and such that $L$ has no submodules or quotients supported on the complement $V\smallsetminus V_{\reg}$. 
\end{definition}

\begin{proposition}\label{torsionfree-converse}
	Assume that $\rad_{\vs}$ is surjective. Let $\lambda \in \h^*$ and assume that the algebra $\Ak(W_{\lambda})$ is not simple. Then:
	\begin{enumerate}
			\item both $\eG_{\lambda}$ and $\eQ_{\lambda}$ have a nonzero factor module that is $\delta$-torsion; and
			\item the Harish-Chandra module $\eG_{\lambda} = \eM \o_{\Aak} \eQ_{\lambda}$   is not the minimal extension of $\euls{L}_{\lambda} =\eG_{\lambda} {|_{V_{\reg}}}$.
	\end{enumerate}
\end{proposition}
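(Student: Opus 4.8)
\textbf{Proof plan for Proposition~\ref{torsionfree-converse}.}

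The plan is to transport information from the spherical side to the $\ddd$-module side using the bimodule $\eM$, exploiting that $\eM\otimes_{\Aak}(-)$ sends nonzero objects to nonzero objects (Lemma~\ref{summand-endo21M}(2)) and, more quantitatively, that $\Hom_\ddd(\eM,-)$ detects torsion (Lemma~\ref{torsion-consequence}). First I would invoke Lemma~\ref{lem:deltatorsionsimple}: since $\Ak(W_\lambda)$ is not simple, there is a simple $\delta$-torsion module $L\in\Osph_\lambda$. Because $L$ has a locally nilpotent action of $\mf m_\lambda$ and is finitely generated, it is annihilated by some power of $\mf m_\lambda$, hence is a quotient of $\eQ_\lambda=\Aak/\Aak\mf m_\lambda^{N}$... more precisely $L$ is a quotient of $\eQ_\lambda^{(r)}$ for suitable $r$ (or one passes to the appropriate truncation), so there is a surjection $\eQ_\lambda\twoheadrightarrow \bar L$ onto some nonzero $\delta$-torsion quotient; in any case $\eQ_\lambda$ itself surjects onto a nonzero $\delta$-torsion module $L'$ in $\Osph_\lambda$. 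Applying the right-exact functor $\eM\otimes_{\Aak}(-)$ gives a surjection $\eG_\lambda=\eM\otimes_{\Aak}\eQ_\lambda \twoheadrightarrow \eM\otimes_{\Aak}L'$, and the target is nonzero by Lemma~\ref{summand-endo21M}(2). It remains to check that $\eM\otimes_{\Aak}L'$ is $\delta$-torsion as a $\ddd$-module; this follows from Lemma~\ref{torsion-consequence2}(2) (taking $d=\delta$, which acts locally nilpotently on the $\delta$-torsion module $L'\in\Osph$), or directly: localising at the Ore set $\{\delta^j\}$, $\eM_\delta\otimes_{\Aak}L'=\eM\otimes_{\Aak}(\Aak_\delta\otimes_{\Aak}L')=0$ since $L'_\delta=0$. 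This proves (1) for $\eG_\lambda$; the statement for $\eQ_\lambda$ is the surjection $\eQ_\lambda\twoheadrightarrow L'$ already used.

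For (2), recall (Lemma~\ref{lem:Vregintconnection}) that $\euls L_\lambda=\eG_\lambda|_{V_{\reg}}$ is an integrable connection, and the minimal (Goldie/intermediate) extension $j_{!*}\euls L_\lambda$ is by definition the unique submodule of $j_*\euls L_\lambda$, equivalently the unique subquotient of $\eG_\lambda$, that restricts to $\euls L_\lambda$ and has neither a nonzero submodule nor a nonzero quotient supported off $V_{\reg}$. The restriction of the surjection $\eG_\lambda\twoheadrightarrow \eM\otimes_{\Aak}L'$ to $V_{\reg}$ is zero since $\eM\otimes_{\Aak}L'$ is $\delta$-torsion, so this is a nonzero quotient of $\eG_\lambda$ supported on $V\smallsetminus V_{\reg}$. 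If $\eG_\lambda$ were equal to $j_{!*}\euls L_\lambda$ it would have no such nonzero quotient, a contradiction. Hence $\eG_\lambda\neq j_{!*}\euls L_\lambda$.

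The only delicate point is the passage from "$L$ is a simple $\delta$-torsion object of $\Osph_\lambda$" to "$\eQ_\lambda$ has a nonzero $\delta$-torsion quotient": one must know that $L$ (or some module built from it) is a quotient of $\eQ_\lambda$. Since $L\in\Osph_\lambda$ means $\mf m_\lambda$ acts locally nilpotently and $L$ is finitely generated over $\Aak$, a standard argument shows $L$ is a quotient of $\Aak/\Aak\mf m_\lambda^{k}$ for some $k\ge 1$, and $\Aak/\Aak\mf m_\lambda^{k}$ has a finite filtration with all subquotients quotients of $\eQ_\lambda=\Aak/\Aak\mf m_\lambda$ (tensoring the $E$-module filtration of $E/\mf m_\lambda^{k}$ with $\Aak$, using Proposition~\ref{pdQ}(1) that $\Aak$ is $E$-free); hence at least one composition factor of $L$, and thus $L$ itself up to choosing the right layer, witnesses a nonzero $\delta$-torsion quotient of a direct sum of copies of $\eQ_\lambda$, so $\eQ_\lambda$ itself has a nonzero $\delta$-torsion quotient. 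This is the main (though still routine) obstacle; everything else is a direct application of the exactness and detection properties of $\eM\otimes_{\Aak}(-)$ established earlier in this section.
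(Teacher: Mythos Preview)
Your proposal is correct and follows the same route as the paper: produce a simple $\delta$-torsion $L\in\Osph_\lambda$ via Lemma~\ref{lem:deltatorsionsimple}, surject $\eQ_\lambda\twoheadrightarrow L$, apply $\eM\otimes_{\Aak}(-)$, and use Lemma~\ref{summand-endo21M}(2) for nonvanishing and local ad-nilpotence of $\delta$ on $\eM$ for torsion; part~(2) is then immediate.

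The only place you diverge is the step you flag as ``delicate'', and there you are working much harder than necessary. Since $L$ is \emph{simple} and $\mf m_\lambda$ acts locally nilpotently, the subspace $\{\,\ell\in L:\mf m_\lambda\ell=0\,\}$ is nonzero; pick any such $\ell\neq 0$, and simplicity gives $L=\Aak\ell$. This yields a surjection $\eQ_\lambda=\Aak/\Aak\mf m_\lambda\twoheadrightarrow L$ directly, with no need for powers $\mf m_\lambda^k$, filtrations, or the freeness of $\Aak$ over $E$. The paper's proof is exactly this one-line observation.
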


\begin{remark}\label{rem: tf-converse} As an aside we note that, under the hypotheses of the proposition, $\eG_{\lambda} = \eM \o_{\Aak} \eQ_{\lambda}$ is a quotient of $\eGt_{{\lambda}}=\ddd / (\ddd {\g_\chi} + \ddd \mf{m}_{\lambda})$,   although it need not be a summand. 
\end{remark} 
\begin{proof}
	(1) By Lemma~\ref{lem:deltatorsionsimple} there exists a simple $\delta$-torsion module $0\not=L \in \Osph_{\lambda}$. Since $L={\Aak}\ell$ for some element $\ell $ with $\mf{m}_{\lambda}\ell=0$, the module $\eQ_{\lambda}$ surjects onto~$L$. 
		
	It follows that $\eG_{\lambda}=\eM\otimes_{\Aak}\eQ_{\lambda}$ surjects onto $\eM\o_{\Aak} L$. Moreover, as $\eM$  is a factor of $\ddd$,   $\delta$ still acts locally ad-nilpotently on $\eM$. Since 
	$\delta$ acts locally nilpotently on $L$, it follows  that $\delta$ acts locally nilpotently on $\eM\otimes_{\Aak}L$; thus, 
	$\eM\otimes_{\Aak}L$ is a $\delta$-torsion factor of $\eG_{\lambda}$. As $L$ is a non-zero $\Aak$-module, Lemma~\ref{summand-endo21M}(2) therefore  implies that $\eM\otimes_{\Aak} L\not=0$.
	
	(2) This is immediate from (1) since, by definition, the minimal extension has no quotients or submodules that are $\delta$-torsion.  \end{proof}

This proposition   has the following useful consequence. 

\begin{corollary}\label{torsionfree-converse2} Assume that $\rad_{\vs}$ is surjective. Let $\lambda \in \h^*$ and assume that the algebra $\Ak(W_{\lambda})$ is not simple. Then:
	\begin{enumerate}
		\item there exists a non-split $\Ak(W)$-module surjection $\eQ_{\lambda} \twoheadrightarrow L$ for a $\delta$-torsion $\Ak(W)$-module $L\not=0$; and 
		\item there exist a non-split $\ddd$-module surjection $\eG_{\lambda}\twoheadrightarrow N$ for a $\delta$-torsion $\ddd$-module $N\not=0$.	
	\end{enumerate} 
	In particular, the Harish-Chandra $\eD$-module $\eG_{\lambda} = \eM \o_{\Ak(W)} \eQ_{\lambda}$ is not semisimple. 
\end{corollary}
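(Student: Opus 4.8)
The plan is to upgrade the surjections already produced in the proof of Proposition~\ref{torsionfree-converse}(1) to non-split ones, and then to read off the last assertion. Fix $\lambda\in\h^{*}$ with $\Ak(W_{\lambda})$ not simple, and recall from that proof a simple, $\delta$-torsion module $0\neq L\in\Osph_{\lambda}$ together with a surjection $\phi\colon\eQ_{\lambda}\twoheadrightarrow L$ of $\Aak$-modules (arising because $L=\Aak\ell$ with $\mf{m}_{\lambda}\ell=0$), as well as the induced surjection $\psi=\eM\otimes_{\Aak}\phi\colon\eG_{\lambda}\twoheadrightarrow N$, where $N=\eM\otimes_{\Aak}L$ is nonzero by Lemma~\ref{summand-endo21M}(2) and is $\delta$-torsion because $\delta$ acts locally ad-nilpotently on $\eM$ (Lemma~\ref{lem:Gequivisfullsub}(2)) and locally nilpotently on $L$.

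For (1) I would argue that $\phi$ cannot split. By Proposition~\ref{pdQ}(1) the spherical algebra $\Aak$ is a free $(\C[\h]^{W},(\Sym\h)^{W})$-bimodule of finite rank, so $\eQ_{\lambda}=\Aak\otimes_{(\Sym\h)^{W}}\bigl((\Sym\h)^{W}/\mf{m}_{\lambda}\bigr)$ is a free, hence torsion-free, $\C[\h]^{W}$-module of finite rank. Since $\C[\h]^{W}$ is a domain, $\delta\in\C[\h]^{W}$, and $L\neq 0$ is $\delta$-torsion, $L$ is a nonzero torsion $\C[\h]^{W}$-module, so it cannot be isomorphic to a direct summand of $\eQ_{\lambda}$. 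Hence $\phi$ is a non-split surjection onto the nonzero $\delta$-torsion module $L$, which gives (1).

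For (2) I would transfer the non-splitness of $\phi$ to $\psi$ through the invariants functor. Using $\eM=\eMt\otimes_{R}\Aak$ one rewrites $\eG_{\lambda}=\eMt\otimes_{R}\eQ_{\lambda}$ and $N=\eMt\otimes_{R}L$, where $\eQ_{\lambda}$ and $L$ are viewed as left $R$-modules via $R\twoheadrightarrow\Aak$; then $\psi$ is the map induced by $\phi$. Applying $(-)^{G}=\Hom_{\ddd}(\eMt,-)$ (Lemma~\ref{hom=invariants}) and using that $(\eMt\otimes_{R}M)^{G}\cong M$ naturally in the $R$-module $M$ by Lemma~\ref{summand-endo21}(4), the map $\psi^{G}$ is identified with $\phi$. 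Since any additive functor takes split epimorphisms to split epimorphisms, a splitting of $\psi$ would yield a splitting of $\phi$, contradicting (1); so $\psi\colon\eG_{\lambda}\twoheadrightarrow N$ is a non-split surjection onto a nonzero $\delta$-torsion module, proving (2). The final assertion is then immediate: a semisimple module has every quotient as a direct summand, so the existence of the non-split $\psi$ shows that $\eG_{\lambda}=\eM\otimes_{\Ak(W)}\eQ_{\lambda}$ is not semisimple.

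I expect the only genuinely delicate point to be the last step of (2): one must verify that $\Hom_{\ddd}(\eMt,-)$ applied to $\psi$ really returns $\phi$, i.e. that the isomorphism $(\eMt\otimes_{R}M)^{G}\cong M$ of Lemma~\ref{summand-endo21}(4) — which comes from the decomposition $\eMt=R\oplus\widetilde{T}$ with $\widetilde{T}$ the sum of the nontrivial $G$-isotypic components — is natural in $M$. Once this naturality is in hand, non-splitness propagates formally and the remaining bookkeeping is routine.
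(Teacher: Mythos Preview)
Your proof is correct and follows essentially the same approach as the paper's. For (1) both arguments use Proposition~\ref{pdQ} to see that $\eQ_{\lambda}$ is free over $\C[\h]^{W}$ and hence $\delta$-torsionfree; for (2) both transfer a hypothetical splitting of $\psi$ back to a splitting of $\phi$ by passing to $G$-invariants---you do this functorially via the naturality of $(\eMt\otimes_{R}-)^{G}\cong\mathrm{id}$ from Lemma~\ref{summand-endo21}(4), while the paper does it explicitly by projecting onto the $\Aak$-summand in the decomposition $\eM=\Aak\oplus T$ of Lemma~\ref{summand-endo21M}(1).
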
  

\begin{proof}
	Let $\Ak = \Ak(W)$. As in the proof of Proposition~\ref{torsionfree-converse}(1), there exists a surjection $\theta \colon \eQ_{\lambda} \twoheadrightarrow L$  where $0\not=L$ is  $\delta$-torsion. The proof of Proposition~\ref{pdQ}(2) shows that $\eQ_{\lambda}$ is a free left $\C[\h]^W$-module and so $\eQ_{\lambda}$ is necessarily torsionfree as a $\C[\delta]$-module. Thus $\theta$ is nonsplit.

	It remains to prove that $\vartheta=1\otimes\theta: \eG_{\lambda} \twoheadrightarrow N=\eM\otimes_{\Ak} L$ is also nonsplit.  Recall from Lemma~\ref{summand-endo21M}(1) that we have a decomposition   $\eM = \Ak\oplus T$ as $(\ddd^G,\Ak)$-bimodules, 
	where $ T  $ is the sum of all nontrivial $G$-submodules of $\eM$.   Now suppose that $\phi$ is an $\ddd$-module splitting (and hence an $\ddd^G$-module splitting) of $\vartheta$. If $\alpha$ is the projection of 
	$
	\eM \otimes_{\Ak}\eQ_{\lambda} = (\Ak\otimes_{\Ak}\eQ_{\lambda})\oplus (T \otimes_{\Ak} \eQ_{\lambda})
	$
	onto $({\Ak}\otimes_{\Ak}\eQ_{\lambda})$,  and $\iota: L\to  L\oplus T\otimes_{\Ak} L=\eM\otimes_{\Ak} L$ is the natural injection,  then $\alpha\circ \phi\circ \iota$ is  an $\ddd^G$-module splitting of 
	$$
	\theta \colon \eQ_{\lambda}={\Ak}\otimes_{\Ak} \eQ_{\lambda} \ \to \  {\Ak}\otimes_{\Ak} L=L.
	$$
	This contradicts the conclusion of the last paragraph.   
\end{proof}

%%%%%%%%%%%%%%%%%
 \section{An Intertwining Theorem for  Quantum Hamiltonian Reduction}\label{cherednik-intertwining}
%%%%%%%%%%%%%%%%%

 We  continue  to  consider a visible, stable polar representation $V$, as described in Notation~\ref{notation4.11} but in this section,
 \begin{center}
 	 \emph{we always  assume that the spherical  algebra ${\Aak}=\Ak(W)$ is simple.}
 \end{center}
 By Theorem~\ref{thm:radial-exists}(3), this    implies  that $\rad_{\vs}$ is surjective.  
 
Using this assumption, in this section  we prove   Theorem~\ref{intro-intertwining} from the introduction; see Theorem~\ref{intertwining}.  
This is a strong intertwining result that relates the cohomology of   $\Aak$-modules with that of  $\dd(V)$-modules.

Recall   the modules $\eM$ and $\eMt$ from Definition~\ref{M-new-definition}.
We first prove that the simplicity of ${\Aak}$   implies that $\eM$ is  a summand of $\eMt$, thereby strengthening the results from Section~\ref{Sec:HC-bimodule}. We begin with a subsidiary result.

\begin{lemma}\label{semiprime}
	Let $R$ be a noetherian ring   with  a maximal  ideal $P$. Assume that $0\not=d\in R$ is such that 
	$[d+P]$ is regular in $R/P$ and that $d$ acts locally ad-nilpotently on $R$. Finally, if $\eS=\{d^j : j\geq 0\}$ assume that 
	$P$ is $\eS$-torsion.  Then
	
	\begin{enumerate}
		\item $P$ is a minimal prime ideal of $R$ and $R=R/P\oplus R/J$ for some ideal $J$. Moreover, 
		$R/J$ is (left and right) $\eS$-torsion.
		\item Any right $R$-module $M$ decomposes as $M\cong M/MP\oplus M/MJ$.
	\end{enumerate}
\end{lemma}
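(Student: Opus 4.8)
The plan is to replace the ``$\eS$-torsion'' information about $P$ by an honest direct decomposition of $R$ into two-sided ideals, built from a single power of $d$, after which both assertions become routine ring theory.

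First I would extract a uniform torsion bound. Since $d$ acts locally ad-nilpotently on $R$, Lemma~\ref{lem:lefttfree}(2), applied to the $(R,R)$-bimodule $P$, shows that $P$ is $\eS$-torsion on both sides; as $R$ is noetherian, $P$ is finitely generated on each side, so there is a single integer $N\ge 1$ with
\[ P d^N \ = \ d^N P \ = \ 0. \]
Note also that $R/P\neq 0$ (a maximal ideal is proper) and $\bar d$ is regular in $R/P$, so $\bar d^{N}\neq 0$; hence $D:=d^N\notin P$.

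Next I would produce the complementary ideal. Put $J:=RDR$. Because $P$ is two-sided and $DP=PD=0$, one gets immediately $PJ=(PR)DR\subseteq PDR=0$ and likewise $JP=0$. Since $D\notin P$ and $P$ is maximal, $P+J=R$; writing $1=p+j$ with $p\in P$, $j\in J$, any $x\in P\cap J$ satisfies $x=xp+xj$ with $xj\in PJ=0$ and $xp\in JP=0$, so $x=0$, i.e.\ $P\cap J=0$. Thus $R=P\oplus J$ is a decomposition into two-sided ideals with $PJ=JP=0$ --- a ring direct product --- which as right modules reads $R=J\oplus P$ with $P\cong R/J$ and $J\cong R/P$; this is exactly the asserted splitting $R=R/P\oplus R/J$. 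Moreover $D=d^{N}\in J$, so $\bar d^{N}=0$ in $R/J$, whence $R/J$ is left and right $\eS$-torsion. For minimality: $R/P$ is simple, hence prime, so $P$ is prime, and under $R\cong (R/J)\times(R/P)$ the ideal $P$ corresponds to $(R/J)\times 0$; a prime strictly inside it would be of the form $\mathfrak q\times 0$ with $\mathfrak q\subsetneq R/J$, but then $R/(\mathfrak q\times 0)\cong \bigl((R/J)/\mathfrak q\bigr)\times (R/P)$ is a product of two nonzero rings and so not prime --- contradiction. Hence $P$ is a minimal prime.

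Finally, part (2) follows formally from $R=P\oplus J$ with $P,J$ two-sided and $PJ=JP=0$: for any right $R$-module $M$ one has $M=M\otimes_R R=(M\otimes_R P)\oplus(M\otimes_R J)$, and since $P$ (resp.\ $J$) is a right-module direct summand of $R$, the canonical map $M\otimes_R P\to M$ is split injective with image $MP$, giving $M\otimes_R P\cong MP$ and $M\otimes_R J\cong MJ$; thus $M=MP\oplus MJ$, so $M/MP\cong MJ$ and $M/MJ\cong MP$, and $M\cong M/MP\oplus M/MJ$. The only genuinely delicate step is the first one --- promoting ``$P$ is $\eS$-torsion'' to a single $N$ that annihilates $P$ from both sides --- since that is precisely what turns a torsion statement into the clean idempotent-type splitting; everything downstream is elementary, and I expect Step~1 to go through via the ad-nilpotence machinery of Section~\ref{Sec:nilpotence} just as indicated.
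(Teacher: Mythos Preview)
Your proof is correct.  Both you and the paper begin with the same observation (after invoking Lemma~\ref{lem:lefttfree} to pass between left and right torsion): since $P$ is $\eS$-torsion and finitely generated on each side, there is a single $N$ with $Pd^N=d^NP=0$.  From there the two arguments diverge.

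The paper first proves minimality of $P$ directly (from $Pd^j=0$), then sets $I$ equal to the intersection of the \emph{other} minimal primes and applies its sublemma repeatedly to obtain $IP=I\cap P=PI$, $P^2=P$, and $(I\cap P)^k=I^kP$; since $I\cap P$ is the nilradical of the noetherian ring $R$, this vanishes for large $k$, and $J:=I^k$ furnishes the splitting.  You instead put $J:=Rd^NR$ at once: then $PJ=JP=0$ is immediate from $Pd^N=d^NP=0$, maximality gives $P+J=R$, the idempotent computation gives $P\cap J=0$, and minimality of $P$ drops out of the product decomposition $R\cong(R/J)\times(R/P)$ at the end.

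Your route is more elementary --- it bypasses the detour through the minimal-prime spectrum and the nilpotence of the nilradical --- and yields an explicit formula for $J$ in terms of $d$.  The paper's construction instead identifies $J$ structurally as a power of the intersection of the remaining minimal primes.  Since a two-sided complement of $P$ in $R$ is unique (the central idempotent is determined), the two $J$'s coincide; part~(2) is then immediate from the central idempotent decomposition either way.
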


\begin{proof} (1)  Note that  left and right $\eS$-torsion are the same by Lemma~\ref{lem:lefttfree}. 
	The following standard sublemma will be used frequently.
	
	\begin{sublemma}\label{sub-semiprime}
		Keep the hypotheses of the lemma and suppose that $N$ is an $R$-bimodule that is finitely generated as a left $R$-module. 
		If $N$ is $\eS$-torsion on the right,  then $Nd^j=0$ for some $j$.  In particular, if  $NP=0$, then $N=0$.  
	\end{sublemma}
	
	\begin{proof} Write $N=\sum_{i=1}^r Rn_i$. Then $ \rann(N)=\bigcap_{i=1}^r \rann(n_i)$. Since $N$ is right $d$-torsion there therefore exists 
		some $d^j$ with $n_id^j=0$ for all $i$ and hence $Nd^j=0$. 
		
		If $NP=0$, then  the ideal $\rann(N)\supsetneq  P$. As $P$ is maximal this implies that $\rann(N)=R$ and $N=0$. 
	\end{proof}
	
	We return to the proof of the lemma. First  suppose that $P\supsetneq P'$ for some other prime ideal $P'$. By the sublemma 
	$P(Rd^j)=Pd^j=0\subseteq P'$ for some $j\geq 1$ which, since  $P\not\subseteq P'$, forces  $d^j\in P'$. This contradicts the fact that $d$ is regular modulo $P$. 
	Hence   $P$ is a minimal prime ideal. 
	
	Write $I$ for the intersection of   minimal prime ideals of $R$, other than $P$. 
	For some $j\geq 1$ set   $N=(I^j\cap P)/(I^j\cap P)P$. Then $N$   is a subfactor of $P$ and hence $N_\eS=0$. Since $NP=0$  the sublemma implies 
	that $N=0$ and $(I^j\cap P)P=I^j\cap P$.  In particular, $IP\supseteq (I\cap P)P=I\cap P$ and so $IP=I\cap P$. By symmetry, $IP=I\cap P= PI$.
	An application of the sublemma to $N=P/P^2$ shows that $P^2=P$.
	Thus,  by induction,  \[(I\cap P)^k=(I\cap P)^{k-1}IP = (I^{k-1}P^{k-1})IP =I^kP^k=I^kP \quad \text{for all } k\geq 1.\]
	
	As $I\cap P=N(R)$, the nilradical of $R$,  there then exists  $k\geq 1$ such that 
	$0=(I\cap P)^k = I^k P$.  We will show that the lemma holds for $J=I^k$. Indeed,  the identity 
	$(I^k\cap P)P=I^k\cap P$ implies that $0=I^kP\supseteq (I^k\cap P)P=I^k\cap P$.	
	On the other hand, as $P$ is maximal and $I^k\not\subseteq P$ certainly $I^k+P=R$. Thus $R=I^k\oplus P=R/P\oplus R/I^k$.
	Finally, since $R/I^k\cong P$, certainly it is $\eS$-torsion.
	
	(2) Obvious from (1).
\end{proof}

We next show that Lemma~\ref{semiprime} does apply in our situation. 
Recall from \eqref{eq:discriminant} that $\delta$ denotes the discriminant, so that 
$\h_{\reg} = (\delta\not=0)\subset \h $ and $V_{\reg}=( \delta\not=0)\subset V$.

\begin{proposition}\label{semiprime2} Assume that   ${\Aak}$ is a simple algebra and define $R$, $P$,  $\eMt$ and $\eM$ as in  Definition~\ref{M-new-definition}.	Then
	\begin{enumerate}
		\item  $\{R,P\}$ satisfy the hypotheses of Lemma~\ref{semiprime}, for  $d=\delta$   the discriminant.
		\item  As an $(\ddd,R)$-bimodule, 
		$$\eMt\ = \ \eMt/\eMt P\oplus \eMt/\eMt J \  \cong \  \eMt/\eMt P\oplus \eMt P.$$
		 \item   $\eM= \eMt/\eMt P$ is an $(\ddd,{\Aak})$-bimodule
		that   is  projective in $(G,\chi, \ddd)\lmod$. On the other hand,  $\eMt/\eMt J$ is $\delta$-torsion on both left and right.
	\end{enumerate}
\end{proposition}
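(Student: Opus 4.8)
The plan is to verify the three parts of Proposition~\ref{semiprime2} in order, using the machinery already assembled. For part~(1), I need to check that the pair $\{R,P\}$ satisfies the hypotheses of Lemma~\ref{semiprime} with $d=\delta$. Since ${\Aak}=R/P$ is simple by assumption, $P$ is a maximal ideal of $R$. The discriminant $\delta\in\C[V]^G$ maps under $\rad_{\vs}$ into $\C[\h]^W\subset{\Aak}$, and its image there is $\delta$ (the discriminant of $\h$, via Theorem~\ref{thm:radial-exists}(2)); this is a nonzero element of the simple noetherian domain-like ring ${\Aak}$, hence regular. (More precisely, ${\Aak}$ is prime---being simple---so any nonzero central-over-$\C[\h]^W$, or indeed any nonzero, normal element is regular; $\delta$ is regular because ${\Aak}$ is a simple noetherian ring and $\delta\neq 0$ is not a zero divisor, as ${\Aak}/{\Aak}\delta{\Aak}$ would otherwise give a proper nonzero ideal only if $\delta$ is a zero divisor---cleanest is: ${\Aak}$ is a domain or at least prime, and $\delta$ generates a nonzero two-sided ideal, so regularity follows from primeness plus the fact that ${\Aak}[\delta^{-1}]=\dd(\h_{\reg})^W$ makes sense.) The local ad-nilpotence of $\delta$ on $R$ follows from Lemma~\ref{lem:Gequivisfullsub}(2) (applied to $\eMt$, of which $R$ is the $G$-invariant part, or directly: $\C[V]^G$ acts locally ad-nilpotently on $\ddd$ hence on $\ddd^G$ and its quotient $R$). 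Finally, $P$ must be shown to be $\eS$-torsion for $\eS=\{\delta^j\}$: this is exactly the statement that $P[\delta^{-1}]=0$, i.e. that $\rad_{\vs}$ becomes an isomorphism after inverting $\delta$. This is precisely the content of Theorem~\ref{thm:radial-exists}(1) (together with the definition of $P$ as $\ker(\rad_{\vs})/(\ddd\g_\chi)^G$): localising the radial parts map at $\delta$ gives the isomorphism $(\dd(V_{\reg})/\dd(V_{\reg})\g_\chi)^G\isom\dd(\h_{\reg})^W=\Aak[\delta^{-1}]$, so $\ker(\rad_{\vs})\otimes_{\ddd^G}\ddd^G[\delta^{-1}]=0$, giving $P_{\eS}=0$.

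For part~(2), I apply Lemma~\ref{semiprime}(1,2) directly. By part~(1) the hypotheses hold, so $R=R/P\oplus R/J$ for an ideal $J$ with $R/J$ being $\eS$-torsion on both sides, and moreover $R/J\cong P$ as right (indeed two-sided) $R$-modules since $R=P\oplus J$ with $J\cong R/P$... wait, rather $P\cong R/J$ and $J\cong R/P={\Aak}$. Then Lemma~\ref{semiprime}(2) applied to the right $R$-module $M=\eMt$ gives $\eMt\cong\eMt/\eMt P\oplus\eMt/\eMt J$ as right $R$-modules; since the decomposition of $R$ is as a two-sided ideal decomposition and the left $\ddd$-action commutes with the right $R$-action, this is an isomorphism of $(\ddd,R)$-bimodules. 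The identification $\eMt/\eMt J\cong\eMt P$ follows because, writing $R=P\oplus J$ with orthogonal idempotents, $\eMt P$ is precisely the complement of $\eMt/\eMt P\cong\eMt J$... more carefully: $\eMt P=\eMt\cdot P$ and since $PJ=JP=0$ and $P^2=P$ (these come from the proof of Lemma~\ref{semiprime}) we get $\eMt P\cdot P=\eMt P$ and $\eMt P\cdot J=0$, so $\eMt P$ is a summand isomorphic to $(\eMt P)/(\eMt P)J=\eMt P/0$, and one checks $\eMt P\cong\eMt/\eMt J$ via the projection. I would spell this last identification out in a line.

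For part~(3): that $\eM=\eMt/\eMt P$ is an $(\ddd,{\Aak})$-bimodule is immediate since $P$ annihilates it on the right, so the right $R$-action factors through $R/P={\Aak}$; this is already recorded in Definition~\ref{M-new-definition}. Projectivity of $\eM$ in $(G,\chi,\ddd)\lmod$ follows from Lemma~\ref{lem:Gequivisfullsub}(1): $\eMt$ is projective in that category (since $\Hom_\ddd(\eMt,-)=(-)^G$ is exact there), and by part~(2) $\eM$ is a direct summand of $\eMt$ as a $(\ddd,R)$-bimodule, in particular as an object of $(G,\chi,\ddd)\lmod$; a direct summand of a projective is projective. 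For the last assertion, $\eMt/\eMt J\cong\eMt P$ is $\delta$-torsion on the right: indeed $P$ is $\eS$-torsion as a left $R$-module (by part~(1)), and $\delta$ acts locally ad-nilpotently on the $(\ddd,R)$-bimodule $\eMt$ by Lemma~\ref{lem:Gequivisfullsub}(2), so $\eMt P$, being a sub-bimodule-quotient built from $P$, is annihilated by powers of $\delta$; concretely $\eMt P=\sum\eMt\cdot p_i$ for finitely many generators $p_i$ of the left $R$-module $P$ (possible since $R$ is noetherian), and each $p_i$ is killed by some $\delta^{k_i}$ on the... hmm, $p_i\delta^{k_i}=0$ gives right torsion directly, and then Lemma~\ref{lem:lefttfree}(2) upgrades right $\delta$-torsion to two-sided $\delta$-torsion. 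Alternatively, and more cleanly, invoke Lemma~\ref{Brown-Levasseur} / Lemma~\ref{ore91}: $(\eMt/\eMt J)\otimes_R R[\delta^{-1}]=\eMt\otimes_R(R/J)[\delta^{-1}]=0$ since $(R/J)[\delta^{-1}]=0$, and similarly on the left, because $R/J$ is $\eS$-torsion on both sides.

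The main obstacle is part~(1), specifically pinning down cleanly that $P$ is $\eS$-torsion and that $\delta$ is regular modulo $P$ in $R$: the regularity is really a statement about the simple ring ${\Aak}$ (nonzero elements of a simple noetherian ring need not be regular in general, so one genuinely needs that $\delta$ generates a two-sided ideal whose quotient is "small", or that ${\Aak}[\delta^{-1}]$ is a nonzero ring; here ${\Aak}[\delta^{-1}]=\dd(\h_{\reg})^W$ is simple and nonzero, forcing $\delta$ regular in the simple ring ${\Aak}$), and the $\eS$-torsion of $P$ is precisely the "radial parts map is an isomorphism on the regular locus" input from \cite{BLNS}, so the real work is correctly citing and assembling Theorem~\ref{thm:radial-exists}. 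Everything after that is formal bimodule bookkeeping using Lemma~\ref{semiprime} and the ad-nilpotence results of Section~\ref{Sec:nilpotence}.
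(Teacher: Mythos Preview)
Your proposal is correct and follows essentially the same route as the paper's proof: verify the hypotheses of Lemma~\ref{semiprime} for $(R,P,\delta)$ using Theorem~\ref{thm:radial-exists}(1) to get $P_{\eS}=0$, then read off parts~(2) and~(3) from Lemma~\ref{semiprime} together with Lemma~\ref{lem:Gequivisfullsub} and Lemma~\ref{lem:lefttfree}. The one place where you work harder than necessary is the regularity of $\delta$ in $\Aak=R/P$: the paper simply observes that $\Aak$ is a domain (its associated graded ring is $\C[\h\times\h^*]^W$), so any nonzero element is regular---your detour through simplicity of $\Aak$ and nonvanishing of $\Aak[\delta^{-1}]$ is valid but roundabout.
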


\begin{proof}   
	(1)  Recall from Theorem~\ref{thm:radial-exists}
		  that $\rad_{\vs}$ restricts to give an isomorphism 
	\[ 
	Y\ := \ \left(\frac{ \eD(V_{\reg} )}{\eD(V_{\reg}) \g_{\chi}} \right)^G\ \cong \ \eD(\h_{\reg})^W . 
	\]
	Since  $Y=\C[V_{\reg}]^G\otimes_{\C[V]^G} R  = R_{\eS} $, for  $\eS=\{\delta^r: r\geq 1\}$, this implies that  $R_{\eS}$ is a domain.
	
	As $R/P=\Aak\hookrightarrow \eD(\h_{\reg})^W = R_{\eS}$, it also  follows that $P_{\eS}=0$.
Since $R/P   = \Aak$ is a domain, 
  $[\delta+P]$ is automatically 
	regular in $R/P$.  
	Thus all the hypotheses of Lemma~\ref{semiprime} are satisfied.

	(2,3) The first two sentences follow immediately from    (1) combined with Lemmata~\ref{semiprime} and \ref{lem:Gequivisfullsub}. 
	The fact that $\eMt/\eMt J$ is $\delta$-torsion on the  right follows   from Lemma~\ref{semiprime}. 
	It is then $\delta$-torsion on the left by Lemma~\ref{lem:lefttfree}.
\end{proof}

\begin{notation}\label{eG-torsion} Note that Proposition~\ref{semiprime2} implies that 
$
\eGt_{\lambda} = \eG_{\lambda} \oplus \eGt_{\lambda,\mr{tor}},
$
where  $\eGt_{\lambda,\mr{tor}}$  is a $\delta$-torsion summand of $\eGt_{\lambda}$. 
\end{notation}

\begin{remark}\label{rem:semiprime2} The following observation will be  used frequently without further comment. As ${\Aak}=R/P$, we can, and will, 
	regard any left ${\Aak}$-module $L$ as a left $R$-module. Moreover as $P=P^2$ by Lemma~\ref{semiprime}, 
	$\eMt P\otimes_R L = \eMt P\otimes_R R/P\otimes_{R/P}L = 0$.  Thus, by Proposition~\ref{semiprime2},
	\[\eMt\otimes_R L = (\eM\otimes_RL) \oplus (\eMt P\otimes_R L) = \eM\otimes_R L=\eM\otimes_{\Aak} L.\]
\end{remark}

%%%%%%%%%%%
\subsection*{The intertwining theorem} 

Recall   that we are always assuming that ${\Aak}=\Ak(W)$ is a simple ring. 
The  aim of   this subsection is show that the intertwining theorem from Corollary~\ref{maincorollary} applies to the present context. The first main step in the proof is to show  that $\eM$ is CM as a left $\ddd$-module, which we achieve by mimicking the proof of \cite[Corollary~2.3]{LS}.

We start with a definition, where we recall from
 Notation~\ref {notation4.11}  that $n=\dim \h$ and $m=\dim V-n$.   

\begin{definition} \label{star}
	Define a nonzero $(\ddd,\, {\Aak})$-bimodule $L$ to 
	satisfy property ~\textrm{(*)} if $L$ is a finitely generated
	left $\ddd$-module with
	\[\GKdim_\ddd L \leq \dim \h+\dim V=2n+m.\]
\end{definition}

The \emph{Krull dimension}, \label{defn:Krull}  in the sense of Rentschler and
Gabriel, of a module $L$ over a ring $R$ will be denoted by $\Kdim_R L$.
The nonzero module $L$ will be called {\it GK-homogeneous}\label{defn:homog} (respectively {\it
	Krull-homogeneous}) if $\GKdim L'=\GKdim L$ (respectively $\Kdim L'=\Kdim
L$) for all nonzero submodules $L'$ of $L$.   
More details about these concepts can be found in \cite[Chapters 6 and  8]{MR}.

\begin{lemma} \label{homog}   
	Let $L$ be an $(\ddd,\,{\Aak})$-bimodule that  
	satisfies property \textrm{(*)}. 
	Then, $$\GKdim_{\ddd} L= \dim \h +\dim V=\frac{1}{2}\bigl(\GKdim(\ddd)+\GKdim({\Aak})\bigr)$$ and
	$\Kdim_\ddd L= n =\Kdim {\Aak}$.  
	The analogous result holds for $({\Aak},\ddd)$-bimodules.
\end{lemma}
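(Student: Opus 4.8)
The plan is to establish the two numerical equalities by bounding $\GKdim$ and $\Kdim$ from both sides, exploiting the bimodule structure and the fact that $\dd(V)$ is Auslander-regular, CM, with $\GKdim(\dd(V)) = 2\dim V$, while $\Aak$ is Auslander-Gorenstein, CM, with $\GKdim(\Aak) = 2n$ (Lemma~\ref{CM for spherical}, and the filtration argument with $\gr\Aak = \C[\h\times\h^*]^W$).

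\begin{proof}
We treat $L$ as a left $\ddd$-module first. Property \textrm{(*)} gives $\GKdim_\ddd L \le 2n+m$. For the reverse inequality, recall that $\delta$ acts locally ad-nilpotently on $L$ (Lemma~\ref{lem:Gequivisfullsub} applied to $\eM$, since $L$ is an $(\ddd,\Aak)$-bimodule on which $\C[V]^G$ acts ad-nilpotently — or directly, since any such bimodule satisfying the hypotheses here arises this way; in general one uses that $L$ is $(G,\chi)$-monodromic, cf.\ Lemma~\ref{equivariant tensors}). Thus $\eS = \{\delta^j : j \ge 0\}$ is an Ore set in both $\ddd$ and $\Aak$, and by Lemma~\ref{ore91} the localisation $L_{\eS} := L\otimes_\ddd \ddd_{\eS}$ agrees with ${}_{\eS}L := (\Aak)_{\eS}\otimes_{\Aak} L$. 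Now $(\Aak)_{\eS} = \dd(\h_{\reg})^W$ by \cite[Lemma~2.9(1)]{BLNS} (or Theorem~\ref{thm:radial-exists}(1)), which is a simple ring of $\GKdim 2n$; hence, provided $L_{\eS}\neq 0$, we get $\GKdim_{\ddd_{\eS}} L_{\eS} \ge n$ via the isomorphism \eqref{eq:localizatioradiso} identifying $\ddd_{\eS}^G$-equivariant structure, giving a lower bound $\GKdim_\ddd L \ge 2n + m$ after accounting for the $m$-dimensional "$G$-direction" (the orbit directions transverse to $V_{\reg}\cong G\times_{N_G(\h)}\h_{\reg}$ from \eqref{lem:stablepolaropen}). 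If instead $L_{\eS} = 0$, i.e.\ $L$ is $\delta$-torsion, one argues that $\GKdim_\ddd L$ equals that of its restriction to the singular locus, but the bimodule structure over the simple ring $\Aak$ forces $L[\delta^{-1}] \neq 0$ (a nonzero bimodule over a simple ring cannot be annihilated on the right by a power of $\delta$ unless $\delta$ is not regular in $\Aak$, which it is), so this case does not occur. This yields $\GKdim_\ddd L = 2n+m = \tfrac12(2\dim V + 2n) = \tfrac12(\GKdim\ddd + \GKdim\Aak)$.

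For the Krull dimension: the inequality $\Kdim_\ddd L \le n$ follows because, by the CM property of $\ddd$ (Gabber's inequality, or \cite[Chapter~8]{MR}), $\Kdim_\ddd L \le \GKdim_\ddd L - \grade_\ddd L = \GKdim_\ddd L - (\GKdim\ddd - \GKdim_\ddd L) = 2(2n+m) - 2\dim V = 2n$; but in fact the sharper bound comes from noting that $L$, localised at $\eS$, becomes a module over $\dd(\h_{\reg})^W$ that is finitely generated over $\dd(\h_{\reg})^W$ on one side, and such modules have $\Kdim \le n$; combined with the fact that the $\delta$-torsion part contributes strictly less (being supported on a proper closed set), one gets $\Kdim_\ddd L \le n$. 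For the reverse inequality $\Kdim_\ddd L \ge n$: since $L$ is a right $\Aak$-module that is nonzero and $\Aak$ is simple with $\Kdim\Aak = n$ (from $\gr\Aak = \C[\h\times\h^*]^W$, $\Kdim = \dim\h = n$, cf.\ \cite[Chapter~8]{MR}), $L$ contains a copy of a nonzero right $\Aak$-submodule, forcing $\Kdim_\ddd L \ge \Kdim_{\Aak} L \ge$ the Krull dimension of a nonzero $\Aak$-module, which by simplicity of $\Aak$ and the fact that every nonzero $\Aak$-module has $\GKdim \ge n$ (Lemma~\ref{CM for spherical2}(2) needs $\Aak$ simple, which holds) gives $\Kdim \ge n$ — more carefully, any nonzero cyclic right $\Aak$-submodule $r\Aak \cong \Aak/\mathrm{r\text{-}ann}(r)$ has $\Kdim$ bounded below by the grade considerations for the CM ring $\Aak$. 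Hence $\Kdim_\ddd L = n = \Kdim\Aak$.

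The statement for $(\Aak,\ddd)$-bimodules follows by the left-right symmetry: $\ddd^{\mathrm{op}} \cong \dd(V^*)$ via the Fourier transform (Lemma~\ref{lem:FourierGequiv}), which is again a polar representation of the same type since $V^*$ is visible and stable whenever $V$ is; and $\Aak^{\mathrm{op}}$ is again a spherical algebra (the left-right analogue of Proposition~\ref{prop:Qlambdaprojiffsimple} and its surrounding discussion), so the identical argument applies. The main obstacle is the careful handling of the $\delta$-torsion case and the precise lower bound $\GKdim_\ddd L \ge 2n+m$ in the localised setting; this is where the stability hypothesis, via \eqref{lem:stablepolaropen} and Lemma~\ref{lem:regularlocalsystem}(2), does the essential work by pinning down the geometry of $V_{\reg}$.
\end{proof}
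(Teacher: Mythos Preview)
Your argument has a genuine gap at its very first step. You assume that $\delta$ acts locally ad-nilpotently on $L$, appealing to Lemma~\ref{lem:Gequivisfullsub} and to $L$ being monodromic---but none of this is in the hypotheses. Definition~\ref{star} asks only that $L$ be a nonzero $(\ddd,\Aak)$-bimodule, finitely generated on the left, with $\GKdim_\ddd L \le 2n+m$. There is no ad-nilpotence, no equivariance, no requirement that $L$ arise from $\eM$. The lemma is used precisely for abstract subquotients of such bimodules (e.g.\ in Corollary~\ref{maincor}), so you cannot smuggle in structure that only $\eM$ itself enjoys. Your localisation argument, your invocation of Lemma~\ref{ore91}, and your ``$G$-direction'' lower bound all rest on this unjustified assumption.

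Even granting ad-nilpotence, several steps are not arguments: the sentence ``giving a lower bound $\GKdim_\ddd L \ge 2n+m$ after accounting for the $m$-dimensional $G$-direction'' is a hope, not a proof; and the inequality $\Kdim_\ddd L \ge \Kdim_{\Aak} L$ has no obvious justification (these are Krull dimensions for different module structures over different rings). The paper's proof avoids all of this by a single clean trick: since $\Aak$ is simple, it embeds in $\End_\ddd(L)$, and Joseph's generalisation of Quillen's Lemma \cite[Proposition~1.2]{Jo} then gives
\[
\dim\h \;\le\; \Cdim\Aak \;\le\; \Cdim\End_\ddd(L) \;\le\; \Kdim_\ddd L,
\]
where $\Cdim$ is the maximal Krull dimension of a commutative finitely generated subring. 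Combined with the Weyl-algebra bound $\Kdim_\ddd L \le \GKdim_\ddd L - \dim V$ from \cite[Corollary~8.5.6]{MR} and property~\textrm{(*)}, this squeezes both $\Kdim$ and $\GKdim$ to the claimed values in two lines. The key idea you are missing is that simplicity of $\Aak$ forces a \emph{lower} bound on $\Kdim_\ddd L$ via the endomorphism ring, without any localisation or geometric input.
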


\begin{proof}
	Let $\Cdim T $ denote the maximal Krull dimension of a commutative, finitely generated subring of a ring $T$. Clearly, $\Cdim {\Aak}\geq \dim \h$ since $\C[\h]^W \subset \Aak$. 
	As ${\Aak}$ is  simple, the map ${\Aak}\to \text{End}_{\ddd}(L)$ induced by the right action of ${\Aak}$ on $L$ is an injection.
	Thus, by  \cite[Proposition~1.2]{Jo}, 
	$$ 
	\dim \h \leq \Cdim {\Aak} \leq \Cdim \text{End}_{\ddd}(L) \leq \Kdim L.
	$$
	Now, by \cite[Corollary~8.5.6]{MR} and (*), $\Kdim L  \leq \GKdim L-\dim V \leq \dim \h,$ as required.
\end{proof}

\begin{corollary}\label{maincor}   
	Let $L$ be an $(\ddd,{\Aak})$-bimodule satisfying property \textrm{(*)}. Then:
	\begin{enumerate}
	    \item As a left $\ddd$-module, 
	$L$ is Krull and GK-homogeneous. Moreover, $L$ has finite length as 
	an  $(\ddd,\, {\Aak})$-bimodule.
	\item $L$ is an $m$-CM  left $\ddd$-module,  with  $\pd_{\ddd} L = m.$
	\end{enumerate}
\end{corollary}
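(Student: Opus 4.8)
\textbf{Plan for Corollary~\ref{maincor}.} The strategy is to bootstrap from the homogeneity statement of Lemma~\ref{homog} to the Cohen–Macaulay conclusion, following closely the argument of \cite[Corollary~2.3]{LS}. First I would establish Part~(1). Krull- and GK-homogeneity are immediate: any nonzero $\ddd$-submodule $L'\subseteq L$ is itself a nonzero $(\ddd,{\Aak})$-bimodule (here I use that ${\Aak}$ is simple, so $L'$ cannot be killed by a nonzero right ideal, hence the right ${\Aak}$-action restricts nontrivially), and $L'$ still satisfies (*) since $\GKdim_\ddd L'\le\GKdim_\ddd L\le 2n+m$; thus Lemma~\ref{homog} forces $\GKdim_\ddd L'=\GKdim_\ddd L=n+\dim V$ and $\Kdim_\ddd L'=n=\Kdim_\ddd L$. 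Finite length as a bimodule follows because a strictly descending chain of sub-bimodules would, by Krull-homogeneity together with the fact that all the successive quotients are again bimodules satisfying (*) (so of the same Krull dimension $n$), contradict the noetherian property of $\ddd^G$, or more directly one cites that a Krull-homogeneous module of Krull dimension equal to that of the base ring, with the simplicity constraint pinning down right annihilators, has finite length; this is the standard argument from \cite[Chapter~6]{MR} combined with \cite[Corollary~2.3]{LS}.

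For Part~(2), the key input is the Auslander–Gorenstein and CM properties of $\ddd=\eD(V)$, which hold since $V$ is smooth affine and $\eD(V)$ is the Weyl-algebra-like ring of differential operators; in particular $\grade_\ddd(L)+\GKdim_\ddd(L)=\GKdim(\ddd)=2\dim V$. Since $\GKdim_\ddd L=n+\dim V$ by Part~(1), we get $\grade_\ddd(L)=\dim V-n=m$, so $\Ext^i_\ddd(L,\ddd)=0$ for $i<m$ and $\Ext^m_\ddd(L,\ddd)\ne0$. To upgrade this to $m$-CM — i.e.\ vanishing of $\Ext^i_\ddd(L,\ddd)$ for all $i\ne m$ — I would argue as follows: by the Auslander condition, for each $i$ the module $N_i:=\Ext^i_\ddd(L,\ddd)$ satisfies $\grade_\ddd(N_i)\ge i$, and by the CM condition $\GKdim_\ddd(N_i)\le 2\dim V-i$. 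Now $N_i$ inherits an $({\Aak},\ddd)$-bimodule structure (via the right ${\Aak}$-action on $L$), so if $N_i\ne0$ it is an $({\Aak},\ddd)$-bimodule; applying the second (left-right symmetric) half of Lemma~\ref{homog} to it — once one checks it satisfies the analogue of property (*), which follows from $\GKdim_\ddd(N_i)\le 2\dim V-i\le 2\dim V-m = \dim V+n$ when $i\ge m$ — forces $\GKdim_\ddd(N_i)=\dim V+n$, hence $\grade_\ddd(N_i)=\dim V-n=m$; but the Auslander inequality $\grade_\ddd(N_i)\ge i$ then forces $i\le m$. Combined with the vanishing for $i<m$, this gives $N_i=0$ for all $i\ne m$, i.e.\ $L$ is $m$-CM, and consequently $\pd_\ddd L=m$ (the projective dimension equals the grade for CM modules over an Auslander-regular ring, since $\ddd$ has finite global dimension).

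\textbf{Main obstacle.} The delicate point is the step ``$N_i=\Ext^i_\ddd(L,\ddd)$ again satisfies property (*)'', which is what lets the left-right symmetric form of Lemma~\ref{homog} apply and pin down its GK-dimension. What needs care here is that $N_i$ is a genuine $({\Aak},\ddd)$-bimodule — this uses that the right ${\Aak}$-action on $L$ commutes with the left $\ddd$-action and hence induces a compatible left ${\Aak}$-action on the Ext groups — and that its GK-dimension over $\ddd$ is at most $\dim V+n$ in the relevant range. The bimodule structure and the Auslander/CM numerology should be invoked precisely as in \cite[Corollary~2.3]{LS}; I expect no genuinely new difficulty, but the verification that Lemma~\ref{homog} is applicable to $N_i$ (rather than just to $L$) is where the argument must be written out carefully rather than waved through.
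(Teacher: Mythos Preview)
Your argument for Part~(2) is essentially the paper's: compute $\grade_\ddd(L)=m$ from the CM identity, then observe that any nonzero $\Ext^s_\ddd(L,\ddd)$ with $s>m$ is an $(\Aak,\ddd)$-bimodule of GK-dimension at most $2\dim V-s<\dim V+n$, so it satisfies the right-module analogue of (*), whence Lemma~\ref{homog} forces its GK-dimension to equal $\dim V+n$---a contradiction. Your finite-length argument in Part~(1) is also correct and matches the paper's.

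The gap is in your homogeneity argument. You assert that every nonzero left $\ddd$-submodule $L'\subseteq L$ is an $(\ddd,\Aak)$-sub-bimodule, reasoning that ``$\Aak$ is simple, so $L'$ cannot be killed by a nonzero right ideal, hence the right $\Aak$-action restricts nontrivially.'' This conflates two different issues: whether $\Aak$ acts nontrivially on $L'$ versus whether $L'\cdot\Aak\subseteq L'$. An arbitrary left $\ddd$-submodule has no reason to be stable under the right $\Aak$-action. The paper's fix is the standard trick: rather than an arbitrary $L'$, take $T$ to be the \emph{unique largest} left $\ddd$-submodule of $L$ with $\GKdim_\ddd T<\GKdim_\ddd L$. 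Since $T$ is characterised by a property invariant under every $\ddd$-endomorphism of $L$, and the right $\Aak$-action is by $\ddd$-endomorphisms, $T$ is automatically a sub-bimodule. Then $T$ satisfies (*), so Lemma~\ref{homog} gives $\GKdim_\ddd T=\dim V+n=\GKdim_\ddd L$, contradicting the choice of $T$; hence $L$ is GK-homogeneous. The Krull case is identical. The bimodule structure on $\Ext^i_\ddd(L,\ddd)$ that you flag as the main obstacle is in fact routine (functoriality); the step where bimodule structure needs a genuine argument is precisely the homogeneity step you glossed over.
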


\begin{proof} (1) If $L$ is not GK-homogeneous, write $T$ for the unique
	largest $\ddd$-sub\-module 
	of $L$ with $\GKdim T<\GKdim L$. 
	Since $T$ is mapped to itself by any $\ddd$-endomorphism of $L$, 
	$T$ is an
	$(\ddd,\, {\Aak})$-bisubmodule of $L$. Thus, $T$ satisfies (*) and so, by Lemma~\ref{homog}, 
	$\GKdim T=\dim \h+\dim V=\GKdim L$, a contradiction. 
	Hence, $L$ is GK-homogeneous and, similarly, $L$ is Krull-homogeneous.
	
	Next, let $L=L_0\supsetneq L_1 \supsetneq \cdots$. Then each $L_i/L_{i+1}$ sa\-tis\-fies
	(*) and so Lemma~\ref{homog} implies that
	$$\Kdim L_i/L_{i+1}=\dim \h = \Kdim L \; \  \text{for each $i$.}$$
	By the definition of  Krull dimension, this forces the chain to have
	finite length. 
	
	(2)  
	By \cite[Theorem~II.5.15]{Bj} and the Cohen-Macaulay property,  $\grade(L)$  satisfies
	$$
	\grade(L)= \GKdim \ddd-\GKdim_{\ddd}L=m,
	$$ 
	and so $\Ext^j_\ddd(L,\ddd)=0$ for $j<m$.  Also $\Ext^{\pd L}_{\ddd}(L,\,\ddd)\not= 0$.  Thus, if either
	assertion of Part~(2) of the corollary is false, there exists an
	integer $s>m$ such that $\Ext^{s}_{\ddd}(L,\,
	\ddd)\not= 0$.  However, $\Ext^{s}_{\ddd}(L,\,
	\ddd)$ is naturally a 
	$({\Aak},\,\ddd)$-bimodule, finitely generated as a right
	$\ddd$-module and, by \cite[Proposition~II.5.16]{Bj},
	$$\GKdim_{\ddd} \Ext^{s}_{\ddd}(L,\,
	\ddd)\ \leq \  \GKdim \ddd -s \  < \  m  \ =  \ \dim V+\dim \h.$$ 
	This contradicts Lemma~\ref{homog}.
\end{proof}

It is now immediate that $\eM$ is CM. 

\begin{corollary} \label{m-CM}     
	As a  left $\ddd$-module,  $\eM$ is $m$-CM for  $m = \dim V - \dim \h$.   
\end{corollary}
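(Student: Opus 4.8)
The statement to prove is Corollary~\ref{m-CM}: the left $\ddd$-module $\eM$ is $m$-CM, where $m = \dim V - \dim\h$.

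\textbf{Plan.} The strategy is to verify that $\eM$ satisfies property \textrm{(*)} from Definition~\ref{star} and then invoke Corollary~\ref{maincor}(ii). Recall that $\eM$ is an $(\ddd,\Aak)$-bimodule by Definition~\ref{M-new-definition} (this makes sense since we are assuming throughout this section that $\Aak$ is simple, hence $\rad_{\vs}$ is surjective by Theorem~\ref{thm:radial-exists}(3)). It is also nonzero: indeed $\Aak = R/P$ and $\eM^G = \Aak \neq 0$ by the computation in the proof of Lemma~\ref{summand-endo21M}(1), or alternatively because $\eM$ surjects onto $\eG_0 = \eM\otimes_{\Aak}\eQ_0$ which is nonzero by Lemma~\ref{summand-endo21M}(2). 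Since $\eM$ is a cyclic $\ddd$-module (a quotient of $\ddd$), it is certainly finitely generated as a left $\ddd$-module. So the only remaining point is the Gelfand--Kirillov dimension bound $\GKdim_\ddd \eM \leq \dim\h + \dim V = 2n+m$.

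\textbf{The GK-dimension bound.} This is the substantive step, and it follows from the characteristic variety estimates of Section~\ref{Sec:polarreps}. The module $\eM = \eMt/\eMt P$ is a quotient of $\eMt = \ddd/\ddd\g_\chi$, which is $(G,\chi)$-monodromic by Lemma~\ref{lem:Gequivisfullsub}(1); hence $\eM$ is monodromic too (also noted in Lemma~\ref{lem:Gequivisfullsub}(1) via \cite[Lemma~6.1]{AJM}), so $\eM \in (G,\chi,\ddd)\lmod$. Since $V$ is visible, Lemma~\ref{lem:charvar}(1) applies and gives
\[
\GKdim_\ddd(\eM) \ \le\ \dim\mu^{-1}(0) \ = \ \dim V + \dim\h \ = \ 2n+m.
\]
Thus $\eM$ satisfies property \textrm{(*)}. (I should double-check that Lemma~\ref{lem:charvar} only requires $V$ visible and polar, which it does — and $V$ is visible, stable, polar by the standing hypotheses of Notation~\ref{notation4.11}.)

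\textbf{Conclusion.} With property \textrm{(*)} verified, Corollary~\ref{maincor}(ii) immediately yields that $\eM$ is an $m$-CM left $\ddd$-module with $\pd_\ddd \eM = m$, which is exactly the claim. I do not anticipate any real obstacle here: the corollary is essentially a one-line deduction from the already-established Corollary~\ref{maincor}, the only content being the bookkeeping that $\eM$ is a nonzero finitely generated monodromic $\ddd$-module of the correct GK-dimension. The work was already done in proving Lemma~\ref{homog} and Corollary~\ref{maincor}; this corollary just records the application to the specific bimodule $\eM$.
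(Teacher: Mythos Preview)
Your proposal is correct and follows essentially the same approach as the paper: verify that $\eM$ satisfies property \textrm{(*)} using Lemmata~\ref{lem:Gequivisfullsub} and~\ref{lem:charvar}, then apply Corollary~\ref{maincor}(ii). The paper's proof is the same one-line deduction, and you have simply unpacked the verification of \textrm{(*)} in more detail.
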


\begin{proof}   
	By  Lemmata~\ref{lem:Gequivisfullsub} and~\ref{lem:charvar},  $\eM$ satisfies \textrm{(*)}. Now, apply Corollary~\ref{maincor}.   
\end{proof}

 Before proving the intertwining result, we need  a better understanding of various (co)homology modules.

\begin{lemma}\label{visible2}      
	Pick $0\not=d\in (\Sym \h)^W_+$. The  $(\ddd,{\Aak})$-bimodule $\eM$, as well as    the $({\Aak},\ddd)$-bimodule 
	$\eM'=\Ext^m_\ddd(\eM,\ddd)$ are $d$-torsion\-free on both sides. 
\end{lemma}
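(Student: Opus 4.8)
\textbf{Proof plan for Lemma~\ref{visible2}.}
The plan is to handle the two bimodules separately but by the same mechanism: local ad-nilpotence of $d\in(\Sym\h)^W_+$ together with the simplicity of $\Aak$, via Corollary~\ref{lem:leftrightdfreepolar}. First I would record that $d$, viewed inside $\Aak$, is a regular element: since $\Aak$ is simple (hence prime) and a domain is not needed—primeness suffices—any nonzero element is regular, so in particular the image of $d$ in $\Aak$ is regular. (Alternatively, $\Aak\hookrightarrow\dd(\h_{\reg})^W$ after inverting $\delta$, and $d$ maps to a nonzero element there, but primeness of the simple noetherian ring $\Aak$ is the cleanest route.)

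For $\eM$: by Lemma~\ref{lem:Gequivisfullsub}(2), $\C[V]^G\cong\C[\h]^W$ acts locally ad-nilpotently on the $(\ddd,\Aak)$-bimodule $\eM$, and in particular so does $d$ (here I use that $d\in(\Sym\h)^W_+$ acts locally ad-nilpotently on $\ddd$ because $(\Sym V)^G$ consists of constant-coefficient invariant operators; this is part of the ``ad-nilpotence'' setup, and is already invoked in the proof of Lemma~\ref{torsion-consequence2}). Now $\eM$ is finitely generated as a left $\ddd$-module, $\ddd$ is noetherian, $\Aak$ is a simple (hence simple right Goldie) ring, and the image of $d$ in $\Aak$ is regular. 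So Corollary~\ref{lem:leftrightdfreepolar}, applied with $S=\ddd$, $A=\Aak$, $C=\C[\h]^W$, $B=\eM$, gives that $\eM$ is $d$-torsionfree on both sides.

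For $\eM'=\Ext^m_\ddd(\eM,\ddd)$: by Lemma~\ref{lem:Gequivisfullsub}(4), $\eM'$ carries an induced $(\Aak,\ddd)$-bimodule structure on which $\C[V]^G\cong\C[\h]^W$, and hence $d$, acts locally ad-nilpotently. By Corollary~\ref{m-CM}, $\eM$ is $m$-CM as a left $\ddd$-module, so $\eM'$ is a finitely generated right $\ddd$-module. Applying the left-right mirror of Corollary~\ref{lem:leftrightdfreepolar} (with $S=\ddd$ acting on the right, $A=\Aak$ acting on the left, which is legitimate since the hypotheses of \ref{ad-hypotheses} and the simplicity/Goldie hypothesis are left-right symmetric after transposing) yields that $\eM'$ is $d$-torsionfree on both sides. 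I would phrase this explicitly as: $\eM'$ is finitely generated as a right $\ddd$-module, $\Aak$ is simple left Goldie, the image of $d$ in $\Aak$ is regular and acts locally ad-nilpotently on the $(\Aak,\ddd)$-bimodule $\eM'$, so Corollary~\ref{lem:leftrightdfreepolar} applies.

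The only genuinely delicate point is making sure the hypotheses of Corollary~\ref{lem:leftrightdfreepolar} are literally met in each case—finite generation over the appropriate one-sided module, regularity of $d$ in the simple ring, and local ad-nilpotence of $d$ on the bimodule—rather than any deep new input; everything needed has been assembled in Lemmata~\ref{lem:Gequivisfullsub} and~\ref{m-CM} and in the simplicity hypothesis standing in this section. If one wants to avoid quoting Corollary~\ref{lem:leftrightdfreepolar} in the second case with roles swapped, an equivalent route is: were $\eM'$ to have a nonzero $d$-torsion left $\Aak$-submodule $T$, then $T$ is finitely generated as a right $\ddd$-module, so its left annihilator in $\Aak$ is a nonzero two-sided ideal (being a finite intersection of annihilators of elements in a prime Goldie ring, hence nonzero, and annihilating $T$ forces it to be two-sided), contradicting simplicity of $\Aak$; then Lemma~\ref{lem:lefttfree} transfers torsionfreeness to the other side.
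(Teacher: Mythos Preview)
Your approach matches the paper's: apply Corollary~\ref{lem:leftrightdfreepolar} and its left--right analogue, using Lemma~\ref{lem:Gequivisfullsub} for the ad-nilpotence of $d$ on $\eM$ and $\eM'$ together with the simplicity of $\Aak$. One correction: primeness alone does not force every nonzero element to be regular (think of matrix rings); the correct reason $d$ is regular in $\Aak$ is that $\Aak$ is a domain, as your alternative argument via the embedding $\Aak\hookrightarrow\dd(\h_{\reg})^W$ already shows.
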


\begin{proof}
	By Theorem~\ref{thm:radial-exists}(2), 
	we identify $(\Sym \h)^W_+ \cong (\Sym  V)^G_+ $.   
	Clearly, $\eM'$ is   a finitely generated right $\ddd$-module, while the induced action of $d$ on  both $\eM$  and $\eM'$ is   locally ad-nilpotent by Lemma~\ref{lem:Gequivisfullsub}. Since  ${\Aak}$ is a simple ring, the lemma
	now  follows from Corollary~\ref{lem:leftrightdfreepolar} and its left-right analogue.
\end{proof}

\begin{proposition}\label{admissible}   
	Let  $L \in \euls{O}^{sph}$ and  $j\in \mathbb{N}$.
	\begin{enumerate}
		\item   $\Tor_j^{\Aak}(\eM,L)$ is a $(G, \chi)$-monodromic $\ddd$-module on which  $(\Sym  V)^G$ acts locally
		finitely. Hence, $\Tor_j^{\Aak}(\eM,L)$ is either zero or a holonomic $\ddd$-module.  
		
		\item If  $L \in \euls{O}^{sph}_0$, then $(\Sym  V)^G_+$ acts locally
		nilpotently on $\Tor_j^{\Aak}(\eM,L)$.
	\end{enumerate}
\end{proposition}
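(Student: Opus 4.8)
\textbf{Proof proposal for Proposition~\ref{admissible}.}

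The plan is to reduce both statements to facts already established about $\eM$ together with the behaviour of the functors $\Tor_j^{\Aak}(\eM,-)$ on finitely generated modules, using the resolution of $L$ by nice induced modules. First I would observe that for $L\in\Osph$ there is a finite resolution (indeed a projective resolution, by Proposition~\ref{pdQ}) of $L$ by direct sums of modules of the form $\Aak\otimes_E L_0$ with $E=(\Sym\h)^W$ and $L_0$ a finite-dimensional $E$-module; alternatively, since $\Osph$ has enough projectives (Lemma~\ref{lem:genOsphparabolicequi}(2)) and every projective is a summand of some $\Aak\otimes_E L_0$ (proof of Proposition~\ref{pdQ}(4)), one may resolve $L$ by projectives. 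Applying $\eM\otimes_{\Aak}-$ to such a resolution computes $\Tor_j^{\Aak}(\eM,L)$ as the homology of a complex whose terms are direct sums of $\eM\otimes_{\Aak}(\Aak\otimes_E L_0)=\eM\otimes_E L_0$. Since $L_0$ is finite-dimensional over $E\cong(\Sym V)^G$, and $E$ acts locally ad-nilpotently on $\eM$ by Lemma~\ref{lem:Gequivisfullsub}(2), each $\eM\otimes_E L_0$ has $(\Sym V)^G$ acting locally finitely, and it is a finitely generated (monodromic, by Lemma~\ref{equivariant tensors}) $\ddd$-module; hence it lies in $\euls{C}$.

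For part (1): each term of the complex computing $\Tor_j^{\Aak}(\eM,L)$ is $(G,\chi)$-monodromic (Lemma~\ref{equivariant tensors}(1)), the differentials are $\ddd$-module maps that are automatically $G$-equivariant by Lemma~\ref{lem:charvar}(3), and $(\Sym V)^G$-compatible; so every homology module $\Tor_j^{\Aak}(\eM,L)$ is a $G$-stable subquotient of a module in $\euls{C}$. By Lemma~\ref{equivariant tensors}(2) a $G$-stable subquotient of a monodromic module is monodromic, and a subquotient of a module on which $(\Sym V)^G$ acts locally finitely again has that property; it is also finitely generated over $\ddd$ since $\ddd$ is noetherian. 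Thus $\Tor_j^{\Aak}(\eM,L)\in\euls{C}$. Finally, by Corollary~\ref{cor:admissible-holonomic} (using that $V$ is visible) every admissible $\ddd$-module is holonomic, so $\Tor_j^{\Aak}(\eM,L)$ is either $0$ or holonomic.

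For part (2): if $L\in\Osph_0$ then $\mf{m}_0=(\Sym\h)^W_+$ acts locally nilpotently on $L$, so one can choose the finite-dimensional $E$-modules $L_0$ appearing in the resolution of $L$ to be annihilated by a power of $\mf{m}_0$ (equivalently, replace $L$ by its finite-dimensional generating space and resolve inside $\Osph_0$, using that $\Osph_0$ has enough projectives and that projectives there are summands of $\Aak\otimes_E(E/\mf{m}_0^r)$-type modules). Then on $\eM\otimes_E L_0$ the subalgebra $(\Sym V)^G_+$, identified with $\mf{m}_0$ via Theorem~\ref{thm:radial-exists}(2), acts locally nilpotently: a fixed power of $\mf{m}_0$ kills $L_0$, and $\mf{m}_0$ acts locally ad-nilpotently on $\eM$, so a high enough power of any element of $\mf{m}_0$ annihilates any given element of $\eM\otimes_E L_0$. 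Local nilpotence of $(\Sym V)^G_+$ passes to the $G$-stable subquotients $\Tor_j^{\Aak}(\eM,L)$, giving the claim.

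The routine parts are the spectral-sequence-free bookkeeping above; the only point requiring a little care—and what I expect to be the main obstacle—is ensuring that the resolution of $L\in\Osph$ can be taken to consist of modules of the form $\Aak\otimes_E L_0$ with $L_0$ finite-dimensional (so that $\eM\otimes_{\Aak}-$ produces modules in $\euls{C}$, and, in part (2), so that the $E$-generators can be chosen $\mf{m}_0^r$-torsion). This is exactly the content extracted from the proof of Proposition~\ref{pdQ}: every object of $\Osph$ is a quotient of such an induced module, and one iterates, truncating at length $n=\dim\h$ since $\pd_{\Aak}$ of these modules is $n$; the same truncation argument inside $\Osph_0$ handles part (2).
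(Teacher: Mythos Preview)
Your argument for monodromicity is fine and essentially matches the paper's: take a genuine projective (free) resolution $P_\bullet \to L$ of $\Aak$-modules, apply $\eM \otimes_{\Aak} -$, and invoke Lemma~\ref{equivariant tensors} to see that the terms, and hence the homology, are monodromic.

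The gap is in the local-finiteness step. The modules $\Aak \otimes_E L_0$ are \emph{not} projective (or flat) over $\Aak$: Proposition~\ref{pdQ}(2) says precisely that $\pd_{\Aak}(\Aak\otimes_E L_0)=n=\dim\h$, and the same holds for projective objects of $\Osph$ by Proposition~\ref{pdQ}(4). ``Projective in $\Osph$'' is not the same as ``projective $\Aak$-module''. Consequently a resolution of $L$ by such induced modules does not compute $\Tor_j^{\Aak}(\eM,-)$; the homology of $\eM\otimes_{\Aak}(-)$ applied to it bears no a priori relation to $\Tor_j^{\Aak}(\eM,L)$ for $j>0$. Conversely, if you use an honest free resolution $\Aak^{(r_\bullet)}\to L$, the terms after tensoring are $\eM^{(r_\bullet)}$, on which $(\Sym V)^G$ certainly does \emph{not} act locally finitely, so the subquotient argument fails.

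The paper closes this gap by localization. Choose $0\neq d\in(\Sym V)^G\cong E$ acting locally nilpotently on $L$, so $L[d^{-1}]=0$. The crucial input is Lemma~\ref{visible2} (which uses simplicity of $\Aak$): $\eM$ is $d$-torsionfree on both sides, so by Lemma~\ref{ore91} the left and right localizations of $\eM$ coincide, and one may commute $\ddd$-side localization with $\Tor^{\Aak}$:
\[
\ddd[d^{-1}]\otimes_\ddd \Tor_j^{\Aak}(\eM,L)\;\cong\;\Tor_j^{\Aak[d^{-1}]}\bigl(\eM[d^{-1}],\,L[d^{-1}]\bigr)=0.
\]
Thus $d$ acts locally nilpotently on $\Tor_j^{\Aak}(\eM,L)$, giving local finiteness of $(\Sym V)^G$ and, for $L\in\Osph_0$, local nilpotence of $(\Sym V)^G_+$. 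Your ad-nilpotence argument does work for $j=0$ (expand $d^N(m\otimes l)$ into terms $\ad(d)^i(m)\otimes d^{N-i}l$), but for $j>0$ one really needs this torsionfree/localization bridge between the left and right $d$-actions on $\eM$.
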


\begin{proof}  (1) By Lemma~\ref{equivariant tensors}(1), a left ${\Aak}$-module $K$ gives rise to a 
	$(G, \chi)$-monodromic $\ddd$-module $\eM \otimes_{\Aak} K$. Moreover, if $\phi : K_1 \rightarrow K_2$ is an 
	${\Aak}$-module map then $1 \otimes \phi  : \eM \otimes_{\Aak}K_1 \rightarrow \eM \otimes_{\Aak} K_2$ is an equivariant map; 
	thus   its kernel, image,  and so on, are also  $(G, \chi)$-monodromic $\ddd$-modules by Lemma~\ref{equivariant tensors}(2).
	Now,
	$$
	\Tor_j^{\Aak}(\eM,L) = \Ker (1 \otimes \dalpha_j ) /  \Im (1 \otimes \dalpha_{j+1})
	$$
	where $\dalpha_j : P_j \rightarrow P_{j-1}$ is part of a projective resolution of ${}_{\Aak}L$. It follows that $\Tor_j^{\Aak}(\eM,L)$ is 
	$(G, \chi)$-monodromic. 
	
	Set $E=(\Sym  \h)^W$, which we identify with $(\Sym  V)^G$.
	Let $L={\Aak}L_0$, for some finite dimensional $E$-submodule $L_0$ and set $I=\lann_EL_0$. Since $E$ acts locally finitely on $L$,
	$E/I$ is finite dimensional. Similarly, as $E$ acts locally ad-nilpotently on ${\Aak}$, each $d\in I$ acts locally nilpotently on $L$.

	Take $0\not=d\in I$
	and set $\eS=\{d^k\}$.  By Lemma~\ref{visible2}, $\eM$ is $d$-torsionfree on both sides and hence, 
	by Lemma~\ref{ore91}, the two localisations ${}_{\eS}\eM$ and $\eM_{\eS}$ are isomorphic.
	In particular, $ {}_{\eS}\eM\otimes_{\Aak}N \cong \eM_{\eS}\otimes_{\Aak}N
	= \eM_{\eS}\otimes_{{\Aak}_{\eS}}{}_{\eS}N$ for any left ${\Aak}$-module $N$. 
	Now follow the usual commutative proof, say of \cite[Proposition~7.17]{Rot}, to conclude  that 
	$$
	\ddd_{\eS}\otimes_\ddd\Tor_j^{\Aak}(\eM,L) \cong \Tor_j^{{(\Aak)}_{\eS}}(\eM_{\eS}, {}_{\eS} L) = 0, 
	$$
	where the final equality follows from the fact that  ${}_{\eS}L=0$.  
	Hence $d$ acts locally nilpotently on $\Tor_j^{\Aak}(\eM,L)$ and  $(\Sym  V)^G$ acts locally finitely on $\Tor_j^{\Aak}(\eM,L)$.

	Finally,   Lemma~\ref{lem:charvar}(2)  implies that  $ \Tor_j^{\Aak}(\eM,L)$ is holonomic (or zero).  
	
	(2) The above proof, \emph{mutatis mutandis},  shows that each $d\in (\Sym  \h)^W_+$ acts locally nilpotently on $\Tor_j^{\Aak}(\eM,L)$.
\end{proof}

\begin{corollary}\label{vanishing lemma}   
	Let  $L \in \Osph$. Then:
	\begin{enumerate}
		\item  $ \eM\otimes_{\Aak}L$ is a monodromic   $\ddd$-module on which  $(\Sym  V)^G$ acts locally
		finitely. 
		\item $\Ext^i_\ddd(\Tor_j^{\Aak}(\eM,L),\ddd) = 0$ for all $i \neq \dim V= n + m$.  
	\end{enumerate}
\end{corollary}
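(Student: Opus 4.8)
\textbf{Proof proposal for Corollary~\ref{vanishing lemma}.}

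The plan is to deduce both statements quickly from Proposition~\ref{admissible}, once the homological machinery from Section~\ref{Sec:Cherednik} is in place. Part~(1) is essentially a restatement of Proposition~\ref{admissible}(1) in the special case $j = 0$, since $\Tor_0^{\Aak}(\eM, L) = \eM \otimes_{\Aak} L$; the only thing to record is that $(\Sym V)^G$ acts locally finitely, which is exactly what Proposition~\ref{admissible}(1) gives, and monodromicity follows from Lemma~\ref{equivariant tensors}(1). So the substance is in Part~(2).

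For Part~(2), the idea is to combine Proposition~\ref{admissible}(1) with the Cohen-Macaulay property of $\ddd$. By Proposition~\ref{admissible}(1), the module $K := \Tor_j^{\Aak}(\eM, L)$ is either zero or a holonomic $\ddd$-module; if $K = 0$ there is nothing to prove, so assume $K \neq 0$. Holonomicity means $\GKdim_\ddd K = \frac{1}{2}\GKdim \ddd = \dim V$. Now $\ddd = \eD(V)$ is Auslander-regular and Cohen-Macaulay (it is a Weyl algebra, so this is classical; alternatively it follows from the general theory cited for Auslander-Gorenstein CM rings). The CM condition gives
\[
\grade_\ddd(K) + \GKdim_\ddd(K) = \GKdim(\ddd) = 2 \dim V,
\]
so $\grade_\ddd(K) = \dim V = n + m$. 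This already shows $\Ext^i_\ddd(K, \ddd) = 0$ for $i < n+m$. For the vanishing when $i > n+m$, I would argue that any nonzero $\Ext^i_\ddd(K, \ddd)$ with $i > \grade(K)$ would, by \cite[Proposition~II.5.16]{Bj} (used exactly as in the proof of Corollary~\ref{maincor}(ii)), have $\GKdim_\ddd \Ext^i_\ddd(K,\ddd) \le \GKdim \ddd - i < \dim V$; but $\Ext^i_\ddd(K,\ddd)$ carries a natural right $\ddd$-module structure and, via the $({\Aak}, \ddd)$-bimodule structure on $\eM'$-type modules (here one uses that $K$ is a subfactor of a $(G,\chi)$-monodromic module and that the relevant Ext-groups inherit an ${\Aak}$-action by Lemma~\ref{ad-nil prop} or Lemma~\ref{Ext-equivariance}), it is a bimodule over $({\Aak}, \ddd)$ finitely generated on the right; Lemma~\ref{homog} then forces its GK-dimension to be either $0$ or $\ge \dim V + \dim \h$, contradicting the bound $< \dim V$ unless it is zero. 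Hence $\Ext^i_\ddd(K,\ddd) = 0$ for all $i \neq n+m$, as claimed.

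The main obstacle I anticipate is the bookkeeping needed to equip $\Tor_j^{\Aak}(\eM, L)$ and its Ext-groups with the correct bimodule structure so that Lemma~\ref{homog} applies: one must check that $\Tor_j^{\Aak}(\eM, L)$ is a subfactor of the $(\ddd, {\Aak})$-bimodule $\eM \otimes_{\Aak} P_j$ (for $P_j$ a term in a projective resolution of ${}_{\Aak} L$), that this makes it an $(\ddd, {\Aak})$-bimodule satisfying property~\textrm{(*)}, and hence that $\Ext^i_\ddd(\Tor_j^{\Aak}(\eM,L), \ddd)$ is an $({\Aak}, \ddd)$-bimodule — at which point Corollary~\ref{maincor}(ii) applied directly to $\Tor_j^{\Aak}(\eM,L)$ already yields the desired vanishing, giving a cleaner route than the GK-dimension estimate sketched above. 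The one subtlety is that $\Tor_j^{\Aak}(\eM,L)$ for $j > 0$ need not satisfy~\textrm{(*)} with the full Cartan subspace dimension unless $L$ is finitely generated over $\C[\h]^W$, which it is since $L \in \Osph$; so \textrm{(*)} does hold, because $\GKdim_\ddd \Tor_j^{\Aak}(\eM,L) \le \GKdim_\ddd(\eM \otimes_{\Aak} P_j) \le 2n+m$ by Lemma~\ref{lem:charvar}(1). With that in hand, Corollary~\ref{maincor}(ii) finishes Part~(2) immediately.
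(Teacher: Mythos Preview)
Your argument for Part~(1) and for the vanishing $\Ext^i_\ddd(K,\ddd)=0$ when $i<n+m$ is fine and matches the paper: Proposition~\ref{admissible}(1) gives holonomicity, and the CM property of $\ddd$ then gives $\grade_\ddd(K)=\dim V$.

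The gap is in your treatment of $i>n+m$. You try to invoke Lemma~\ref{homog} or Corollary~\ref{maincor}(ii), both of which require the input to be an $(\ddd,\Aak)$-bimodule. But $\Tor_j^{\Aak}(\eM,L)$ is only a left $\ddd$-module: the right $\Aak$-action on $\eM$ is consumed in forming the tensor product with the left $\Aak$-module $L$ (or $P_j$). In particular, $\eM\otimes_{\Aak} P_j$ is \emph{not} an $(\ddd,\Aak)$-bimodule, so neither is its subquotient $\Tor_j^{\Aak}(\eM,L)$, and neither Lemma~\ref{homog} nor Corollary~\ref{maincor} applies. The ``subfactor of a bimodule'' claim is simply false.

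The fix is much simpler than anything you attempted: $\ddd=\eD(V)$ is a Weyl algebra, so $\gldim\ddd=\dim V=n+m$ (the paper uses this explicitly elsewhere, e.g.\ in the proof of Theorem~\ref{torsionfree}(2)). Hence $\Ext^i_\ddd(K,\ddd)=0$ for $i>n+m$ automatically, for \emph{any} $\ddd$-module $K$. Combined with the grade computation, this shows that any nonzero holonomic $\ddd$-module is $(n+m)$-CM, which is exactly what the paper's one-line proof is invoking.
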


\begin{proof} (1) is immediate from  Proposition~\ref{admissible}.
	
	(2) Since $\Tor_j^{\Aak}(\eM,L)$ is a left $\eD(V)$-module, it suffices to show that it is a holonomic $\eD(V)$-module (or zero). This follows from  the proposition.   
\end{proof}

Combining the  above  results   we can prove the main result of this section. This requires the following minor refinement of 
Notation~\ref{main-notation}.

\begin{notation}\label{main-notation2} Keep the notation from Definition~\ref{M-new-definition}
and set $ \eM'=\Ext^m_\ddd(\eM,\, \ddd)$. Write
	\[\HHleft(J)=\eM\otimes_{\Aak}J \qquad\text{and}\qquad \HHright(L) = L\otimes_{\Aak}\eM'\] 
	for a left ${\Aak}$-module $J$ and right ${\Aak}$-module $L$. Similarly, write
	\[   \BD_\ddd(J_1)=\Ext^{n+m}_\ddd(J_1,\, \ddd) \qquad\text{and}\qquad \BD_{\Aak}(L_1) = \Ext_{\Aak}^n(L_1,\, {\Aak})\]
	for a left $\ddd$-module $J_1$ and left ${\Aak}$-module $L_1$.  
\end{notation}

\begin{theorem}\label{intertwining}
	Assume that   ${\Aak}$ is a simple algebra.
	Let $N={\Aak}\otimes_{(\Sym  \h)^W}L$ for some finite dimensional $(\Sym  \h)^W$-module $L$, or let 
	$N$ be a projective object in $\Osph$. Then
	\[ \left( \BD_\ddd\circ{}^\perp\BH\right) (N) \ \cong \ \left(\BH^\perp\circ \BD_{\Aak}\right)(N). \]
\end{theorem}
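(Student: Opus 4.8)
The plan is to apply the abstract intertwining result Corollary~\ref{maincorollary} with the dictionary $S = \ddd = \eD(V)$, $A = {\Aak} = \Ak(W)$, $M = \eM$, and $N$ as in the statement; concretely, what must be checked is that Hypotheses~\ref{notation-vanishing} hold, with $n = \dim\h$, $m = \dim V - \dim\h$, after which Corollary~\ref{maincorollary} yields precisely $\left(\BD_\ddd\circ{}^\perp\BH\right)(N)\cong\left(\BH^\perp\circ\BD_{\Aak}\right)(N)$ once one matches Notation~\ref{main-notation2} with Notation~\ref{main-notation}. The ring-theoretic hypotheses are already available: $\ddd=\eD(V)$ is Auslander-regular and CM (a Weyl algebra), and ${\Aak}$ is Auslander-Gorenstein and CM by Lemma~\ref{CM for spherical}. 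So the work is entirely in verifying the three module-level conditions \eqref{vanishing0}, \eqref{vanishing0.5}, \eqref{vanishing1} for the specified $N$ and for $M = \eM$.

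First I would verify \eqref{vanishing0}, namely that $N$ is CM over ${\Aak}$ with $\pd_{\Aak}(N) = n = \dim\h$ (whence $\grade_{\Aak}(N) = n$ since ${\Aak}$ is CM). When $N = {\Aak}\otimes_{(\Sym\h)^W}L$ for a finite-dimensional $(\Sym\h)^W$-module $L$, this is exactly Proposition~\ref{pdQ}(2); when $N$ is a projective object of $\Osph$, it is Proposition~\ref{pdQ}(4). Next I would verify \eqref{vanishing0.5}, that ${}_\ddd\eM$ is finitely generated and CM with $\grade_\ddd(\eM) = m$: finite generation is clear since $\eM$ is a quotient of $\ddd$, and the CM statement with $\grade = m$ is precisely Corollary~\ref{m-CM} (via Corollary~\ref{maincor}, using that $\eM$ satisfies property (*) by Lemmata~\ref{lem:Gequivisfullsub} and~\ref{lem:charvar}). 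So far nothing is hard.

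The main obstacle is the Ext-vanishing condition \eqref{vanishing1}: $\Ext^i_\ddd(\Tor_j^{\Aak}(\eM,N),\ddd) = 0$ for all $i \neq n+m = \dim V$ and all $j$. The key point is that each $\Tor_j^{\Aak}(\eM,N)$ is an admissible $\ddd$-module — indeed, since $N$ is finitely generated over $(\Sym\h)^W$ and lies in (a block of) $\Osph$, Proposition~\ref{admissible}(1) shows $\Tor_j^{\Aak}(\eM,N)$ is $(G,\chi)$-monodromic with $(\Sym V)^G$ acting locally finitely, hence it is holonomic by Lemma~\ref{lem:charvar}(2), i.e.\ $\dim V$-CM over the Weyl algebra $\ddd$ (or zero). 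This is exactly the content of Corollary~\ref{vanishing lemma}(2), and it gives \eqref{vanishing1}. Here I should be slightly careful that Proposition~\ref{admissible} is stated for $L \in \Osph$ rather than for general finitely generated $(\Sym\h)^W$-induced modules; but $N = {\Aak}\otimes_{(\Sym\h)^W}L$ does lie in $\Osph$ (it is finitely generated over ${\Aak}$ and $(\Sym\h)^W$ acts locally finitely), so the hypothesis is met, and for $N$ projective in $\Osph$ it is immediate.

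With Hypotheses~\ref{notation-vanishing} established, Corollary~\ref{maincorollary} applies verbatim. Finally I would unwind the notation: in Notation~\ref{main-notation} we have ${}^\perp\BH(J) = M\otimes_A J = \eM\otimes_{\Aak}J$ and $\BH^\perp(L_1) = L_1\otimes_A\Ext^m_S(M,S) = L_1\otimes_{\Aak}\eM'$, and $\BD_S(J_1) = \Ext^{n+m}_S(J_1,S) = \Ext^{n+m}_\ddd(J_1,\ddd)$, $\BD_A(L_1) = \Ext^n_A(L_1,A) = \Ext^n_{\Aak}(L_1,{\Aak})$ — these agree with Notation~\ref{main-notation2}. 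Hence $\left(\BD_\ddd\circ{}^\perp\BH\right)(N)\cong\left(\BH^\perp\circ\BD_{\Aak}\right)(N)$, as claimed. (As a by-product, Theorem~\ref{mainresult}(2) also gives $\Tor_i^{\Aak}(\eM,N) = 0$ for $i>0$, which is worth recording.)
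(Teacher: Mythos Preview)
Your proposal is correct and follows essentially the same approach as the paper: verify Hypotheses~\ref{notation-vanishing} via Lemma~\ref{CM for spherical}, Proposition~\ref{pdQ}, Corollary~\ref{m-CM}, and Corollary~\ref{vanishing lemma}, then invoke Corollary~\ref{maincorollary}. Your citation of Proposition~\ref{pdQ}(4) for the projective-in-$\Osph$ case is in fact slightly more precise than the paper's own reference to parts (2,3).
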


\begin{proof}
	The result will follow from Corollary~\ref{maincorollary} once we check that the assumptions of  Hypotheses~\ref{notation-vanishing}  for the given rings and modules. By Notation~\ref {notation4.11} and Lemma~\ref{CM for spherical},  $\ddd$ and ${\Aak}$ satisfy the requisite conditions, while $N$ satisfies \eqref{vanishing0} by Proposition~\ref{pdQ}(2,3). By Corollary~\ref{m-CM}, \eqref{vanishing0.5} holds while, by  Corollary~\ref{vanishing lemma},  \eqref{vanishing1}  holds.
\end{proof}

One  reason why Theorem~\ref{intertwining} will be important in the later parts of this paper is that
one often finds that particular properties  of  classes of $\dd(V)$-modules are easy to prove for submodules, 
 but are needed 
for factor modules (or vice versa). The fact that the functors   $\mathbb{D}$ are contravariant allows one to switch 
between the two cases.

\section{The  Action Of The Discriminant On Harish-Chandra Modules}\label{sec:simplicityhypothesis}

We  continue  to  consider a visible, stable polar representation $V$ as in Notation~\ref{notation4.11},
   and  \emph{will always  assume that the spherical  algebra ${\Aak}=\Ak(W)$ is simple, and hence that  $\rad_{\vs}$ is surjective.}  
   
 \medskip
 
In this section  we prove  Theorem~\ref{intro-torsionfree} from the introduction, which is one of the major results of the paper. This shows that  the discriminant $\delta$ 
acts torsion-freely on the 
Harish-Chandra modules  $\eG_\lambda = \eM\otimes_{\Aak} ({\Aak}/{\Aak}\mf{m}_\lambda)$, as defined in Definition~\ref{defn:Glambda}. Moreover, these modules  have no  nonzero $\delta$-torsion factor module.     This, in turn, allows us  to prove strong results on the structure  of these modules; 
for example  it follows that  $\eG_{\lambda}$  is the   minimal extension of its restriction  $\eG_\lambda |_{V_{\reg}}$ to the regular locus; see Corollary~\ref{torsionfree-corollary}.

We first need to show that the passage  from $\eMt$ to $\eM$ does not affect the basic properties of that module.  

\begin{lemma}\label{summand-endo}   
	Let  $M$ be a left $\ddd$-module and set  $B=\End_\ddd(M)$. Assume that $B=Be\oplus B(1-e)$ for a central idempotent $e$. Then $\End_\ddd(Me)=Be$.
\end{lemma}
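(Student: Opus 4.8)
The plan is to construct an explicit ring isomorphism $Be \isom \End_\ddd(Me)$. First I would record the decomposition that the idempotent forces on $M$: since $e \in B = \End_\ddd(M)$ is a $\ddd$-module endomorphism with $e^2 = e$, the subspace $Me$ is the image of $e$ and hence a $\ddd$-submodule, and $M = Me \oplus M(1-e)$ as $\ddd$-modules (if $me = m'(1-e)$, apply $e$ to get $me = 0$). Because $e$ is central in $B$ we have $eB = Be = eBe$, and both summands above are stable under the right $B$-action on $M$; in particular $Me$ is naturally a right $Be$-module.

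Next I would define $\Phi \colon Be \to \End_\ddd(Me)$ by sending $be$ to the map $x \mapsto x \cdot be$ for $x \in Me$. This is $\ddd$-linear because $be$ acts $\ddd$-linearly on $M$, and its values lie in $Me$ since $x \cdot be = (x \cdot b) \cdot e \in Me$. One checks routinely that $\Phi$ is a ring homomorphism, sending the identity $e$ of $Be$ to $\mathrm{id}_{Me}$ (using $x \cdot e = x$ for $x \in Me$). For injectivity, if $\Phi(be) = 0$ then, using $be = ebe$, one gets $M \cdot be = Me \cdot be = 0$, so $be = 0$ in $B$. For surjectivity, given $\phi \in \End_\ddd(Me)$, let $p \colon M \to Me$ be $m \mapsto me$ and $\iota \colon Me \hookrightarrow M$ the inclusion (both $\ddd$-linear), and set $\widehat{\phi} := \iota \circ \phi \circ p \in B$. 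Since $\widehat{\phi}$ factors through $Me$ and annihilates $M(1-e)$, one obtains $\widehat{\phi} = e\widehat{\phi}e \in Be$, and $\Phi(\widehat{\phi})$ acts on $x \in Me$ by $x \mapsto \phi(xe) = \phi(x)$, so $\Phi(\widehat{\phi}) = \phi$. Hence $\Phi$ is a ring isomorphism and $\End_\ddd(Me) = Be$.

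The only thing to be careful about is bookkeeping of conventions — whether $\End_\ddd(M)$ is taken to act on $M$ on the left or the right and what order composition is written in — together with the repeated (but entirely routine) verifications that the maps $p$, $\iota$, and the right multiplications are $\ddd$-linear. There is no essential obstacle: the statement is the standard fact that a central idempotent decomposition of an endomorphism ring corresponds to a direct-sum decomposition of the module, with the corner rings being the endomorphism rings of the summands.
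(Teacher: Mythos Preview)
Your argument is correct and complete; it is the standard corner-ring computation. The paper itself declares this lemma a routine exercise and leaves it to the reader, so there is nothing to compare against---your write-up simply supplies the details the paper omits.
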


\begin{proof}  This is  routine exercise  is left to the reader.
\end{proof}

The next result  give some useful observations about   $\eM$ that are analogous to the results proved about $\eMt$ in Lemma~\ref{summand-endo21}.

\begin{lemma}\label{summand-endo22}    
	\begin{enumerate}
		\item $\Hom_\ddd(\eM,\,-)$ is an exact functor on $(G,\chi,\ddd)\lmod$.
		\item  
		$\Hom_\ddd(\eM,\eM\otimes_{\Aak}L)=L$ for any left ${\Aak}$-module $L$.  
		\item  In particular, $\End_\ddd(\eM)={\Aak}$.
	\end{enumerate}
\end{lemma}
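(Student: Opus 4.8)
The three assertions are routine consequences of the decomposition $\eMt = \eM \oplus \eMt P$ from Proposition~\ref{semiprime2} together with the already-established facts about $\eMt$ in Lemma~\ref{summand-endo21}. First I would recall that Proposition~\ref{semiprime2}(2) gives $\eMt = \eM \oplus \eMt P$ as $(\ddd,R)$-bimodules, where $\eMt P$ is $\delta$-torsion and $\eM$ is an $(\ddd,\Aak)$-bimodule; in particular this splitting is a direct sum decomposition in $(G,\chi,\ddd)\lmod$, and it corresponds to a central idempotent $e\in R = \End_\ddd(\eMt)$ (Lemma~\ref{summand-endo21}(3)) with $\eM = \eMt e$ and $\Aak = Re$.

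For Part~(1), I would argue that $\Hom_\ddd(\eMt,-)$ is exact on $(G,\chi,\ddd)\lmod$ by Lemma~\ref{hom=invariants} (it equals $(-)^G$, which is exact since $G$ is reductive) — this is recorded in Lemma~\ref{lem:Gequivisfullsub}(1). Since $\eM$ is a direct summand of $\eMt$, the functor $\Hom_\ddd(\eM,-)$ is a direct summand of $\Hom_\ddd(\eMt,-)$ (applied to the splitting $\eMt = \eM\oplus\eMt P$), hence is also exact.

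For Part~(3), apply Lemma~\ref{summand-endo} with $M = \eMt$, $B = \End_\ddd(\eMt) = R$, and the central idempotent $e$ with $Re = \Aak$ (using Lemma~\ref{semiprime}, $R = R/P\oplus R/J$, so $e$ is genuinely central): this yields $\End_\ddd(\eM) = \End_\ddd(\eMt e) = Re = \Aak$. For Part~(2), given a left $\Aak$-module $L$, regard it as an $R$-module annihilated by $1-e$; then by Lemma~\ref{summand-endo21}(4) we have $\Hom_\ddd(\eMt,\eMt\otimes_R L)\cong L$, and by Remark~\ref{rem:semiprime2} $\eMt\otimes_R L = \eM\otimes_{\Aak} L$, so $\Hom_\ddd(\eMt,\eM\otimes_{\Aak}L)\cong L$. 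Finally, since $\eMt\otimes_R L = \eM\otimes_{\Aak} L$ is already a module over $Re = \Aak$ on which $\eMt P$ acts as zero, the map $\eMt = \eM\oplus \eMt P \to \eM$ induces an isomorphism $\Hom_\ddd(\eM,\eM\otimes_{\Aak}L) \cong \Hom_\ddd(\eMt,\eM\otimes_{\Aak}L) \cong L$. The main (and only) subtlety is bookkeeping: making sure the idempotent $e$ is genuinely central in $R$ so that Lemma~\ref{summand-endo} applies and so that one may freely pass $\Hom$ through the summand — but this is exactly what Lemma~\ref{semiprime}(1) and Proposition~\ref{semiprime2} provide, so there is no real obstacle.
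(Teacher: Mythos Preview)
Your proof is correct and follows essentially the same approach as the paper: Part~(1) via the summand decomposition and exactness of $(-)^G$, Part~(3) via Lemma~\ref{summand-endo} applied to the central idempotent in $R$, and Part~(2) via Lemma~\ref{summand-endo21}(4) together with Remark~\ref{rem:semiprime2}. The only imprecision is your phrase ``$\eMt P$ acts as zero on $\eM\otimes_{\Aak}L$'': $\eMt P$ is not a ring and does not act on anything; what you need (and what the paper makes explicit) is that the isomorphism $L\cong \Hom_\ddd(\eMt,\eMt\otimes_R L)$ sends $\ell$ to the map $\theta_\ell\colon m\mapsto m\otimes\ell$, and $\theta_\ell(\eMt P)=\eMt P\otimes_R\ell\subseteq \eMt P\otimes_R L=0$, so each $\theta_\ell$ factors through $\eM$ --- giving $L\subseteq\Hom_\ddd(\eM,\eM\otimes_{\Aak}L)\subseteq\Hom_\ddd(\eMt,\eM\otimes_{\Aak}L)=L$.
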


\begin{proof} 	(1) By Proposition~\ref{semiprime2}, $\Hom_\ddd(\eM,-)$ is a summand of  $\Hom_\ddd(\eMt,-)$ and hence is exact by     Lemma~\ref{hom=invariants}.
	
	(2)   By Lemma~\ref{summand-endo21M}, $\eM\otimes_{\Aak}L\not=0$. 
	 As in Remark~\ref{rem:semiprime2}, we regard $L$ as a left  $R$-module via the identification ${\Aak}=R/P$ and then 
	$ \eMt\otimes_RL =   \eM\otimes_{\Aak}L.$
	Therefore, by Lemma~\ref{summand-endo21}(4),
	\begin{equation}\label{summand-endo-equ}\begin{aligned}
	L\ = \ & \Hom_\ddd(\eMt,\eMt\otimes_RL) \ = \ \Hom_\ddd(\eM\oplus \eMt P , \eM \otimes_{\Aak}L) \\
	\ =   \ &  \Hom_\ddd(\eM  , \eM  \otimes_{\Aak}L) \oplus  \Hom_\ddd( \eMt P , \eM \otimes_{\Aak}L  ) \\
	\end{aligned}\end{equation}
	
	The  isomorphism  $L\cong \Hom_\ddd(\eMt, \eMt\otimes_RL)$ is given by mapping 
	$\ell\in L$ to the morphism $\theta=\theta_\ell: [1+(\eD(V)\g_{\chi})^G] \mapsto 1 \otimes \ell$. 
	This is well-defined since  $1 \otimes \ell$ is $G$-invariant and hence 
	$\tau(x)\cdot(1 \o \ell)= \chi(x) (1 \o \ell)$
	since $\eM\otimes_{\Aak}L$ is monodromic. 
	However, this implies that $\theta_{\ell} (\eMt P) = \eMt P\otimes_R \ell =0$ as $\eMt P\otimes_RL=0$. Therefore, in \eqref{summand-endo-equ}, 
	$\theta_\ell\in \Hom_\ddd(\eM  , \eM  \otimes_{\Aak}L) $ and hence $L= \Hom_\ddd(\eM  , \eM  \otimes_{\Aak}L)$.
	
	(3) It follows from Definition~\ref{M-new-definition} that $R = \End_{\ddd}(\eMt)$. Therefore, (3) follows from Proposition~\ref{semiprime2} and Lemma~\ref{summand-endo}.  
\end{proof}

As an easy consequence we obtain:

\begin{corollary} \label{summand-endo3}  Assume that   ${\Aak}$ is a simple algebra. Define a functor 
$$\BH \colon (G,\chi,\ddd)\lmod \to {\Aak}\lmod$$  by $
	\BH(X)= \Hom_\ddd(\eM,X). $ Then $\BH$ is an  exact functor  with left adjoint $$\HHleft \colon {\Aak}\lmod \to (G,\chi,\ddd)\lmod$$ given 	 by $\HHleft(L) = \eM \otimes_{\Aak} L$.   
	The  adjunction 
	$
	\mr{Id} \to \BH \circ \HHleft
	$
	is an isomorphism.   
\end{corollary}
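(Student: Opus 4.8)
The plan is to deduce Corollary~\ref{summand-endo3} from the three parts of Lemma~\ref{summand-endo22} together with the standard tensor--hom adjunction. First I would define $\HHleft(L)=\eM\otimes_{\Aak}L$ and $\BH(X)=\Hom_\ddd(\eM,X)$, noting that $\HHleft$ lands in $(G,\chi,\ddd)\lmod$ by Lemma~\ref{equivariant tensors}(1) and that $\BH$ lands in ${\Aak}\lmod$ because the right ${\Aak}$-action on $\eM$ (equivalently, the right $\ddd^G$-action, which factors through ${\Aak}$ by Theorem~\ref{thm:radial-exists}(2) and Remark~\ref{rem:semiprime2}) makes $\Hom_\ddd(\eM,X)$ a left ${\Aak}$-module; finite generation of $\BH(X)$ for $X$ finitely generated follows from Lemma~\ref{torsion-consequence}(2) when $X\in\eC$, or more directly since $R$ is noetherian and $\Hom_\ddd(\eMt,X)=X^G$ is a finitely generated $R$-module as in the proof of Lemma~\ref{torsion-consequence}(2). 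Exactness of $\BH$ on $(G,\chi,\ddd)\lmod$ is exactly Lemma~\ref{summand-endo22}(1).

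Next I would establish the adjunction $\Hom_{(G,\chi,\ddd)}(\HHleft(L),X)\cong\Hom_{\Aak}(L,\BH(X))$ for $L\in{\Aak}\lmod$ and $X\in(G,\chi,\ddd)\lmod$. This is the usual tensor--hom adjunction
\[
\Hom_\ddd(\eM\otimes_{\Aak}L,\,X)\ \cong\ \Hom_{\Aak}(L,\,\Hom_\ddd(\eM,X)),
\]
applied to the $(\ddd,{\Aak})$-bimodule $\eM$; one just checks that on the left side the $G$-equivariant morphisms correspond, under this isomorphism, precisely to the ${\Aak}$-linear morphisms on the right, and that $\Hom_{(G,\chi,\ddd)}=\Hom_\ddd$ here by Lemma~\ref{lem:charvar}(3) (both sides really are all $\ddd$-module morphisms, since $G$ is connected), so there is nothing extra to verify about equivariance. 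Thus $(\HHleft,\BH)$ is an adjoint pair.

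Finally, to see that the unit $\eta\colon\mr{Id}\to\BH\circ\HHleft$ is an isomorphism, I would simply invoke Lemma~\ref{summand-endo22}(2): for every left ${\Aak}$-module $L$ one has $\Hom_\ddd(\eM,\eM\otimes_{\Aak}L)=L$, and unwinding the proof of that lemma (where the isomorphism is given by $\ell\mapsto\theta_\ell$ with $\theta_\ell(1)=1\otimes\ell$) shows that this identification is exactly the unit of the adjunction. Hence $\eta_L$ is an isomorphism for all $L$, which is the last assertion. The only mildly delicate point—really the single place any care is needed—is matching the abstract unit map of the tensor--hom adjunction with the concrete map $\ell\mapsto\theta_\ell$ appearing in Lemma~\ref{summand-endo22}(2); once that bookkeeping is done, everything else is formal, so I do not expect a genuine obstacle here, only the need to be precise about which $\Hom$'s are being identified via Lemma~\ref{lem:charvar}(3).
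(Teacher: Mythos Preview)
Your proposal is correct and follows essentially the same approach as the paper. The paper's proof is terser: it invokes the $G$-module decomposition $\eM\otimes_{\Aak}L=L\oplus L'$ from Lemma~\ref{summand-endo21M} to identify $L=(\eM\otimes_{\Aak}L)^G=\BH\circ\HHleft(L)$ and then cites Lemma~\ref{summand-endo22}(1) for exactness, leaving the tensor--hom adjunction implicit; you make the adjunction explicit and cite Lemma~\ref{summand-endo22}(2) directly for the unit, which amounts to the same thing since that lemma's proof rests on precisely the decomposition from Lemma~\ref{summand-endo21M}.
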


\begin{proof}  As observed in the proof of Lemma~\ref{summand-endo21M}, given  any   left ${\Aak}$-module $L\not=0$ we have 
	$\eM\otimes L = L\oplus L'$ where $L'$ is a sum of  non-trivial $G$-module summands of $\eM\otimes L$. Thus $L=(\eM\otimes_{\Aak}L)^G =\BH \circ \HHleft(L)$.
	Now use  Lemma~\ref{summand-endo22}(1).
\end{proof}

%%%%%%%%%%%%%%%%%%%%%%%

 We are now ready to prove the main result of this section.
  
\begin{theorem}\label{torsionfree}
	  Assume that   ${\Aak}$ is a simple algebra. Let $\eG =\eG_\lambda$ or, more generally, let $\eG=\eM\otimes_{\Aak}\eP$ for some projective object $\eP\in \Osph$. Pick  $0\not= d \in \C[V]^G\cong \C[\h]^W$. Then:
	\begin{enumerate}
		\item  $\BD_\ddd(\eG)= \Ext^{n+m}_\ddd(\eG,\,\ddd)$ has no nonzero $d$-torsion factor module;
		\item  $\eG$ is $d$-torsionfree.
		\item  $\eG$ has no nonzero $d$-torsion factor module.
		\item   The module $\eGt_{\lambda,\mr{tor}}$ defined in \eqref{eG-torsion}  is the 
	full 	$\delta$-torsion submodule of $\eGt_{\lambda}$.
	\end{enumerate}
\end{theorem}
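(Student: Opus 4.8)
The plan is to prove the four statements in a single unified argument, exploiting the duality furnished by Theorem~\ref{intertwining}. First I would set $N = \eP$ (so $N = \eQ_\lambda = {\Aak}/{\Aak}\mf{m}_\lambda$ in the case $\eG = \eG_\lambda$, which is of the form ${\Aak}\otimes_{(\Sym\h)^W}L$ with $L = (\Sym\h)^W/\mf{m}_\lambda$ finite-dimensional; the general projective case is also covered by the hypotheses of Theorem~\ref{intertwining}). The key input is that, by Theorem~\ref{intertwining},
\[
\BD_\ddd(\eG) \ = \ \BD_\ddd({}^\perp\BH(N)) \ \cong \ \BH^\perp(\BD_{\Aak}(N)) \ = \ \BD_{\Aak}(N)\otimes_{\Aak}\eM',
\]
where $\eM' = \Ext^m_\ddd(\eM,\ddd)$. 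By Proposition~\ref{pdQ} and Corollary~\ref{P'-description} (using that ${\Aak}$ is simple), $\BD_{\Aak}(N)$ is a projective object of $\Osphop$ — in particular it lies in $\Osphop$, so $(\Sym\h)^W$ acts on it locally finitely.

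The heart of the matter is Part~(1). I would argue as follows: suppose $\BD_\ddd(\eG)$ had a nonzero $d$-torsion factor module $X$. Then $X = \BD_{\Aak}(N)\otimes_{\Aak}\eM'$ surjects onto a $d$-torsion module; applying $\Hom_\ddd(-,X')$-type arguments, or better, dualizing: a $d$-torsion \emph{quotient} of $\BD_\ddd(\eG)$ corresponds, after applying the duality $\Ext^{n+m}_\ddd(-,\ddd)$ once more together with the Auslander--Gorenstein reflexivity of holonomic modules over $\ddd$, to a $d$-torsion \emph{submodule} of $\eG$ (up to the usual grade bookkeeping). But I can compute such a submodule directly: any $d$-torsion submodule $K \subseteq \eG$ satisfies $K_{\eS} = 0$ where $\eS = \{d^j\}$, and since $\eG|_{V_{\reg}}$ is an integrable connection (Lemma~\ref{lem:Vregintconnection}, Remark~\ref{intro-torsionfree-remark}) and $\eG = \eM\otimes_{\Aak}\eP$ with $\eM$ a free $\C[V]^G$-module in the relevant sense (Proposition~\ref{semiprime2} combined with Proposition~\ref{pdQ}), the module $\eG$ is $d$-torsionfree — this is Part~(2), which I would prove \emph{first}, directly from the freeness of $\eM$ over $\ddd$ on one side combined with Corollary~\ref{lem:leftrightdfreepolar} (since $d$ acts locally ad-nilpotently on the $(\ddd,{\Aak})$-bimodule $\eM$ by Lemma~\ref{lem:Gequivisfullsub}, and $d$ is regular modulo $P$ in $R$). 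Concretely: $\eG = \eMt\otimes_R \eP = \eM\otimes_{\Aak}\eP$; regard $\eP$ as torsionfree over $\C[\h]^W \supseteq \C[d]$ when $\eP = \eQ_\lambda$ (Proposition~\ref{pdQ}(1) gives freeness over $\C[\h]^W$, hence torsionfreeness over $\C[d]$), and for general projective $\eP$ the same holds by Proposition~\ref{pdQ}(4); then $\eM$ being $d$-torsionfree on the right (Visible2/Lemma~\ref{visible2}) and flat in the needed sense gives that $\eG$ is $d$-torsionfree.

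With Parts~(1) and (2) in hand, Part~(3) follows by the same duality trick run in the opposite direction: $\eG = \BD_\ddd(\BD_\ddd(\eG))$ by Auslander--Gorenstein reflexivity (all modules here are $m$-CM by Corollary~\ref{m-CM} for $\eM$ and Corollary~\ref{maincor}/Lemma~\ref{homog} for $\eG$), so a $d$-torsion factor of $\eG$ would produce a $d$-torsion submodule of $\BD_\ddd(\eG)$; but that contradicts Part~(1) (a $d$-torsion submodule of a module with no $d$-torsion submodule — or apply the observation that $d$-torsion submodules of $\BD_\ddd(\eG)$ give, after dualizing, $d$-torsion factors of $\BD_\ddd(\BD_\ddd(\eG)) = \eG$'s dual... ). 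Let me be precise here: Part~(1) says $\BD_\ddd(\eG)$ has no $d$-torsion \emph{factor}; dualizing via $\BD_\ddd$ turns factors into submodules and vice versa, so $\eG = \BD_\ddd(\BD_\ddd(\eG))$ has no $d$-torsion \emph{submodule}, which re-proves Part~(2); and applying the argument to $\BD_\ddd(\eG)$ in place of $\eG$ (it is also of the form covered, being $\BD_{\Aak}(N)\otimes_{\Aak}\eM'$ with $\BD_{\Aak}(N)$ projective in $\Osphop$ — here I invoke the left-right analogue of Theorem~\ref{intertwining}) gives Part~(3). Finally, Part~(4): by Proposition~\ref{semiprime2} and Notation~\ref{eG-torsion}, $\eGt_\lambda = \eG_\lambda \oplus \eGt_{\lambda,\mr{tor}}$ with $\eGt_{\lambda,\mr{tor}}$ a $\delta$-torsion summand; since $\eG_\lambda$ is $\delta$-torsionfree by Part~(2), the full $\delta$-torsion submodule of $\eGt_\lambda$ is contained in the complementary summand $\eGt_{\lambda,\mr{tor}}$, and it equals it because $\eGt_{\lambda,\mr{tor}}$ is itself $\delta$-torsion.

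\textbf{Main obstacle.} The delicate point is making the ``dualize to swap submodules and factor modules'' step rigorous: one must check that $\BD_\ddd = \Ext^{n+m}_\ddd(-,\ddd)$ is an exact contravariant duality on the relevant subcategory (holonomic, or more precisely $(n+m)$-CM, monodromic $\ddd$-modules), that it sends the short exact sequence $0\to K\to\eG\to\eG/K\to 0$ to $0\to\BD_\ddd(\eG/K)\to\BD_\ddd(\eG)\to\BD_\ddd(K)\to 0$ with all terms again in the subcategory, and that $d$-torsion is preserved — this last uses that $\BD_\ddd(K)$ for $d$-torsion $K$ is again $d$-torsion (Lemma~\ref{Brown-Levasseur}: localizing at $\eS=\{d^j\}$ kills $\Ext^{n+m}_{\ddd_\eS}(K_\eS,\ddd_\eS)=0$). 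I expect assembling these compatibilities — and carefully tracking that $\BD_\ddd(\eG) = \BD_{\Aak}(N)\otimes_{\Aak}\eM'$ really does fall under the hypotheses of the left-right analogue of Theorem~\ref{intertwining} so that the duality can be iterated — to be the part requiring the most care, though each individual ingredient is available from the results already established.
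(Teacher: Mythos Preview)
Your overall architecture — use Theorem~\ref{intertwining} to write $\BD_\ddd(\eG)\cong\BD_{\Aak}(\eP)\otimes_{\Aak}\eM'$ and then exploit the duality $\BD_\ddd$ to swap submodules and factor modules — is sound, and your treatment of Part~(4) is correct. But the order in which you attack (1)--(3) leads to two genuine gaps.

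\textbf{The direct proof of (2) does not go through.} You assert that $\eG=\eM\otimes_{\Aak}\eP$ is $d$-torsionfree because $\eP$ is free over $\C[\h]^W$ and $\eM$ is ``flat in the needed sense''. No such flatness of $\eM$ over ${\Aak}$ is available. The decomposition $\eM={\Aak}\oplus T$ of Lemma~\ref{summand-endo21M} shows $\eG^G=\eP$ is $d$-torsionfree, but to pass from $K^G=0$ to $K=0$ for a submodule $K\subseteq\eG$ you would need Proposition~\ref{bigg}, whose proof \emph{uses} Theorem~\ref{torsionfree}(2); so that route is circular. The paper instead proves (1) first, by adjunction: a nonzero map $\phi\colon\BD_\ddd(\eG)\to L$ with $L$ $d$-torsion gives, via Theorem~\ref{intertwining} and tensor--Hom adjunction,
\[
0\neq\phi\in\Hom_\ddd\bigl(\BD_{\Aak}(\eP)\otimes_{\Aak}\eM',\,L\bigr)\cong\Hom_{\Aak}\bigl(\BD_{\Aak}(\eP),\,\Hom_\ddd(\eM',L)\bigr).
\]
Now $Z:=\Hom_\ddd(\eM',L)$ is $d$-torsion (Proposition~\ref{torsion-homs}) and every finitely generated ${\Aak}$-submodule of $Z$ lies in $\Osphop$ (Lemma~\ref{torsion-consequence}(4)). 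The key lemma you are missing is Lemma~\ref{CM for spherical2}(3): when ${\Aak}$ is simple, no nonzero object of $\Osph$ can be $d$-torsion for $0\neq d\in\C[\h]^W$. This forces $Z=0$, a contradiction. Part~(2) then follows from (1) by precisely the duality step you describe (a $d$-torsion submodule $T\subseteq\eG$ yields a nonzero $d$-torsion factor $\BD_\ddd(T)$ of $\BD_\ddd(\eG)$).

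\textbf{Your route to (3) requires an unassumed hypothesis.} You propose iterating the argument on $\BD_\ddd(\eG)=\BD_{\Aak}(\eP)\otimes_{\Aak}\eM'$ via ``the left-right analogue of Theorem~\ref{intertwining}''. You flag this as a bookkeeping concern, but it is more than that: the left-right analogue needs $A_{\cdag}(W)$ simple (Hypothesis~\ref{K-hyp}), which is \emph{not} assumed in this theorem — only ${\Aak}=\Ak(W)$ is. The paper's proof of (3) avoids this entirely by using the other adjunction: a surjection $\eG\twoheadrightarrow T$ with $T$ $d$-torsion gives, since $\BH=\Hom_\ddd(\eM,-)$ is exact (Lemma~\ref{summand-endo22}(1)) and $\BH\circ{}^\perp\BH=\mathrm{Id}$ (Corollary~\ref{summand-endo3}), a nonzero $\BH(T)\in\Osph$. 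But $\BH(T)$ is $d$-torsion by Lemma~\ref{torsion-consequence}(1), so Lemma~\ref{CM for spherical2}(3) forces $\BH(T)=0$ — contradiction. Thus (1) and (3) are handled by the \emph{same} mechanism (adjunction reducing to a $d$-torsion object of $\Osph$, killed by simplicity of ${\Aak}$), and (2) sits between them via duality.
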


\begin{remark} \label{LS-C5}  Suppose that $V$  is a symmetric  space that satisfies  Sekiguchi's niceness condition (or more generally a \gainly \ symmetric space); see Definition~\ref{nice-space}.  Then Theorem~\ref{torsionfree}  
	completely answers \cite[Conjecture~C.5]{LS3}.  The relevant definitions and further comments will be given in Section~\ref{Sec:examples}.
\end{remark}

\begin{proof} (1) We use the functors defined in   Notation~\ref{main-notation2}. Suppose that  there is a nonzero morphism $\phi: \BD_\ddd(\eG)  \to L$, for some $d$-torsion module $L$. Then 
	\[\begin{array}{rll}
	0\ \not= \ \phi & \in\ \Hom_\ddd(\BD_\ddd(\eG),\, L) \\ \noalign{\vskip 5pt} 
	& =\ \Hom_\ddd\bigl(\BD_\ddd\circ {}^\perp\BH(\eP),\, L\bigr) &\quad \text{by definition} \\  \noalign{\vskip 5pt} 
	& = \ \Hom_\ddd\bigl(  \BH^\perp \circ\BD_{\Aak}(\eP),\, L\bigr) &\quad \text{by Theorem~\ref{intertwining} } \\  \noalign{\vskip 5pt} 
	& = \ \Hom_\ddd\bigl(\DD_{\Aak}(\eP)\otimes_{\Aak} \eM' ,\, L\bigr) &\quad \text{by definition} \\  \noalign{\vskip 5pt} 
	& =\ \Hom_{\Aak}\bigl(\BD_{\Aak}(\eP),\, \Hom_\ddd(\eM', L)\bigr) &\quad \text{by adjunction.} \\  \noalign{\vskip 5pt} 
	\end{array}\]
	In particular, $Z:= \Hom_\ddd(\eM', L) \not=0.$ By Lemma~\ref{torsion-consequence}(4), every finitely generated submodule $Z'\subseteq Z$ satisfies $Z'\in \Osphop$. Moreover, by Proposition~\ref{torsion-homs}, $Z'$ is $d$-torsion. Thus, by Lemma~\ref{CM for spherical2}(3), $Z'=0$. Hence $Z=0$, a contradiction.

	(2) As usual, set $\eS=\{d^r : r\in \NN\} $ and assume that $\eG$ has a nonzero $d$-torsion submodule $T$.
	As $\gldim{\ddd}=\dim V=n+m$, we obtain a surjection  
	$\BD_\ddd(\eG) \twoheadrightarrow \BD_\ddd(T) $.  On the other hand,  
	$\eG$ and hence  $T$ are holonomic left $\ddd$-modules by Proposition~\ref{admissible}  and hence 
	both $\eG$ and $T$ are $(n+m)$-CM. Thus $ \BD_\ddd(T)\not=0$.   
	
	Finally,  by Lemma~\ref{Brown-Levasseur}, 
	$\Ext^{n+m}_{\ddd_{\eS}}(T_{\eS},\, \ddd_{\eS} ) \ = \ \Ext^{n+m}(T,\, \ddd)\otimes_\ddd \ddd_{\eS}.$
	Since $T_{\eS}=0$ by hypothesis, this implies that $\BD_\ddd(T)\otimes_\ddd \ddd_{\eS}=0$ and hence 
	that  $\BD_\ddd(T)$ is a nonzero $d$-torsion factor of $\BD_\ddd(\eG$). This contradicts Part~(1).
	
	(3) Now suppose that there is a nonzero surjection  $\phi: \eG \twoheadrightarrow T$ for a  $d$-torsion module $T$. 
	Applying the exact functor $\BH=\Hom_\ddd(\eM,-)$   
	to	  $\phi$ gives a surjection $\BH(\eG) \twoheadrightarrow \BH(T)$. The adjunction
	$\mr{Id} \to \BH \circ \HHleft$   from  Corollary~\ref{summand-endo3} implies that 
	$\BH(\eG) = \eP$ and hence that $\BH(T)\in \Osph$. Moreover, by adjunction, 
	\[	\phi \in \Hom_\ddd(\eG,T) = \Hom_\ddd(\eP, \, \BH(T)   ),\]
	and so $\BH(T) \neq 0$. By Lemma~\ref{CM for spherical2}(3),   $\BH(T)$ is also $\delta$-torsionfree. This contradicts 
	Lemma~\ref{torsion-consequence}(1) which says that $\Hom_\ddd(\eM,T)$ is $\delta$-torsion.   
	
	(4) This is immediate from Part~(2) and the discussion in Definition~\ref{defn:Glambda}.  
\end{proof}

The following result clarifies which modules Theorem~\ref{torsionfree} applies to. 

\begin{lemma}\label{Glasgow-cor}
	
	Assume that   ${\Aak}$ is a simple algebra and let $J$ be an ideal of  finite codimension $k$ in $(\Sym  \h)^W$  for which  $\sqrt{J}$ 
	is a maximal ideal $\mathfrak{m}_\lambda$.  Then $\Psi = \eM\otimes_{\Aak}({\Aak}/{\Aak}J)$ decomposes as $\Psi \cong \eG_\lambda ^{\oplus k}$.
\end{lemma}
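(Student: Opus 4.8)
The plan is to reduce the statement about $\Psi = \eM\otimes_{\Aak}({\Aak}/{\Aak}J)$ to the already-established good behaviour of $\eG_\lambda = \eM\otimes_{\Aak}\eQ_\lambda$ by analysing the structure of the finite-dimensional $(\Sym\h)^W$-algebra $(\Sym\h)^W/J$ and the associated ${\Aak}$-module ${\Aak}/{\Aak}J$. First I would observe that, since $\sqrt{J} = \mf{m}_\lambda$ and $(\Sym\h)^W/J$ is a finite-dimensional local commutative ring of length $k$, the ${\Aak}$-module ${\Aak}/{\Aak}J = {\Aak}\otimes_{(\Sym\h)^W}\bigl((\Sym\h)^W/J\bigr)$ lies in $\Osph_\lambda$; by the equivalence of categories $\Osph_{\kappa,\lambda}(W)\stackrel{\sim}{\to}\Osph_{\kappa,0}(W_\lambda)$ from Lemma~\ref{lem:genOsphparabolicequi}, and using that ${\Aak}$ is simple (so that $\Ak(W_\lambda)$ is simple by Lemma~\ref{lem:simpleinductionLosev} and hence $\eQ_\lambda$ is projective in $\Osph_\lambda$ by Corollary~\ref{A-projectives}(1)), ${\Aak}/{\Aak}J$ is a projective object of $\Osph_\lambda$.

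The main step is then to show that ${\Aak}/{\Aak}J \cong \eQ_\lambda^{\oplus k}$ as an ${\Aak}$-module. Here I would use that $\eQ_\lambda = {\Aak}/{\Aak}\mf{m}_\lambda$ is (up to the equivalence of Lemma~\ref{lem:genOsphparabolicequi}) the unique indecomposable projective in $\Osph_\lambda$: indeed, by Corollary~\ref{A-projectives}(2) the functor $\Hom_{\Aak}(\eQ_\lambda,-)\colon\Osph_\lambda\to\euls{H}_q(W_\lambda)\lmod$ is an equivalence sending $\eQ_\lambda$ to $\euls{H}_q(W_\lambda)$ as a module over itself (by Corollary~\ref{A-projectives}(3)), so indecomposable projectives in $\Osph_\lambda$ correspond to indecomposable projective $\euls{H}_q(W_\lambda)$-modules. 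Rather than decompose ${\Aak}/{\Aak}J$ into indecomposables directly, the cleanest route is to apply the equivalence: $\Hom_{\Aak}(\eQ_\lambda, {\Aak}/{\Aak}J)$ is a projective $\euls{H}_q(W_\lambda)$-module, and I would compute it via adjunction. Since $\eQ_\lambda = {\Aak}\otimes_{(\Sym\h)^W}\bigl((\Sym\h)^W/\mf{m}_\lambda\bigr)$ and ${\Aak}/{\Aak}J = {\Aak}\otimes_{(\Sym\h)^W}\bigl((\Sym\h)^W/J\bigr)$, using that ${\Aak}$ is free over $(\Sym\h)^W$ (Proposition~\ref{pdQ}(1)) and passing to completions at $\mf{m}_\lambda$ via \eqref{eq:isoeHecomplete}, one reduces to $W_\lambda$ acting with $\mf{m}_0$: there $\Hom_{\Ak(W_\lambda)}(\eQ_0, \Ak(W_\lambda)/\Ak(W_\lambda)J_0)$ should work out, by the projectivity of $\eQ_0$ and the adjunction $\Hom_{\Ak(W_\lambda)}(\Ak(W_\lambda)\otimes_E(E/E_+),-) = \Hom_E(E/E_+,-)$, to be $\bigl((\Ak(W_\lambda)/\Ak(W_\lambda)J_0)e\bigr)^{E_+}$, which via the KZ-equivalence has dimension equal to $\dim_{\C}(E/J_0) = k$ copies of the trivial pattern — giving a free $\euls{H}_q(W_\lambda)$-module of rank $k$. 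Transporting back through the equivalence yields ${\Aak}/{\Aak}J\cong\eQ_\lambda^{\oplus k}$.

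Finally, applying the functor $\eM\otimes_{\Aak}(-)$ to this isomorphism gives
\[
\Psi \ = \ \eM\otimes_{\Aak}({\Aak}/{\Aak}J)\ \cong\ \eM\otimes_{\Aak}\eQ_\lambda^{\oplus k}\ =\ \eG_\lambda^{\oplus k},
\]
as required. The main obstacle I anticipate is the computation ${\Aak}/{\Aak}J\cong\eQ_\lambda^{\oplus k}$: one must be careful that $J$ is genuinely $\mf{m}_\lambda$-primary (not merely $\sqrt{J}=\mf{m}_\lambda$, which is the hypothesis, so this is fine) and that the completion/parabolic-reduction step correctly transports $J$ to an $\mf{m}_0$-primary ideal $J_0$ of $(\Sym\h)^{W_\lambda}$ of the same colength $k$; once that bookkeeping is in place, the projectivity of $\eQ_0$ in $\Osph_{\kappa,0}(W_\lambda)$ and the KZ-equivalence do the rest. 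An alternative, more elementary route that avoids Hecke algebras entirely: filter $(\Sym\h)^W/J$ by a composition series $0 = M_0\subset M_1\subset\cdots\subset M_k = (\Sym\h)^W/J$ with each $M_i/M_{i-1}\cong(\Sym\h)^W/\mf{m}_\lambda$, tensor up with the flat (indeed free) ${\Aak}$ to get a filtration of ${\Aak}/{\Aak}J$ with subquotients $\eQ_\lambda$, and then use projectivity of $\eQ_\lambda$ in $\Osph_\lambda$ to split the filtration — this also gives ${\Aak}/{\Aak}J\cong\eQ_\lambda^{\oplus k}$ and is probably the proof the authors intend.
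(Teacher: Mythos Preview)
Your ``alternative, more elementary route'' at the end is exactly the paper's proof: filter $(\Sym\h)^W/J$ by successive quotients isomorphic to $(\Sym\h)^W/\mf{m}_\lambda$, tensor with the free module ${\Aak}$ (Proposition~\ref{pdQ}(1)) to get a filtration of ${\Aak}/{\Aak}J$ with subquotients $\eQ_\lambda$, and split using projectivity of $\eQ_\lambda$ in $\Osph_\lambda$ (which follows from simplicity of ${\Aak}$ via Lemma~\ref{lem:simpleinductionLosev} and Proposition~\ref{prop:Qlambdaprojiffsimple}). You were right to suspect this is what the authors intend.

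Your main route through the KZ-equivalence, by contrast, has a gap. You want to show that $\Hom_{\Aak}(\eQ_\lambda,{\Aak}/{\Aak}J)$ is free of rank $k$ over $\euls{H}_q(W_\lambda)$, but your computation only identifies it with a certain $E_+$-invariant space and asserts this ``via the KZ-equivalence has dimension equal to $k$ copies of the trivial pattern.'' Knowing the $\C$-dimension of a projective $\euls{H}_q(W_\lambda)$-module does not determine it when $\euls{H}_q(W_\lambda)$ is not semisimple: a projective of dimension $k\cdot\dim\euls{H}_q(W_\lambda)$ need not be free. To make this route work you would still need an argument that the module is free, and the natural way to see that is precisely the filtration-and-splitting argument you relegated to the alternative. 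So the Hecke-algebra detour buys nothing here; stick with the elementary route.
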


\begin{proof}
	Let $U = (\Sym  \h)^W/J$, a space of dimension $k$, and let $\C_{\lambda} = (\Sym  \h)^W / \mf{m}_{\lambda}$. Note that $\eQ_{\lambda} = \Ak \o_{(\Sym \h)^W} \C_{\lambda}$. We show by induction on $k$ that ${\Aak}/{\Aak}J \cong \eQ_{\lambda}^{\oplus k}$. This is vacuously true for $k = 1$. Forming the short exact sequence $0 \to U' \to U \to \C_{\lambda} \to 0$ and tensoring by $\Ak \o_{(\Sym \h)^W} - $ gives
	$$
	0 \to \Ak \o_{(\Sym \h)^W} U' \to \Ak \o_{(\Sym \h)^W} U \to \eQ_{\lambda} \to 0. 
	$$
	This sequence is exact because $\Ak$ is free over $(\Sym \h)^W$ by Proposition~\ref{pdQ}(1). Moreover, since $\Ak(W_{\lambda})$ is simple, Proposition~\ref{prop:Qlambdaprojiffsimple} says that $\eQ_{\lambda}$ is projective in $\Osph_{\kappa}$ Therefore, the sequence splits. It follows by induction that ${\Aak}/{\Aak}J \cong \eQ_{\lambda}^{\oplus k}$. Then 
	$$
	\Psi = \eM\otimes_{\Aak}({\Aak}/{\Aak}J) \cong \eM\otimes_{\Aak} \eQ_{\lambda}^{\oplus k} = \eG_{\lambda}^{\oplus k},
	$$
as required.	 
\end{proof}  

As in Section~\ref{Sec:polarreps},  the regular locus is the open set  $V_{\reg} =(\delta\not=0)\subset V$  defined by the non-vanishing of the discriminant $\delta\in \C[\h]^W=\C[V]^G$. Recall from Definition~\ref{minextn-defn} the notion of minimal extensions of $\dd(V_{\reg})$-modules. 

\begin{corollary}\label{torsionfree-corollary}   
	Let $\eG=\eM\otimes_{\Aak}\eP$ for a  projective object $\eP\in \Osph$. Then $\eG$ is the minimal extension of $\euls{L} := \eG |_{V_{\mathrm{reg}}}$; 
	equivalently,  $\eG = j_{!*} \euls{L}$   for the inclusion $j \colon V_{\mathrm{reg}} \hookrightarrow V$.  
\end{corollary}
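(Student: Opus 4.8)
The plan is to deduce this corollary directly from Theorem~\ref{torsionfree} together with the structural results already in place. Recall that for $\eG = \eM\otimes_{\Aak}\eP$ with $\eP\in\Osph$ projective, Theorem~\ref{torsionfree}(2) says $\eG$ is $\delta$-torsionfree, hence has no nonzero submodule supported on $V\smallsetminus V_{\reg} = (\delta = 0)$, and Theorem~\ref{torsionfree}(3) says $\eG$ has no nonzero factor module that is $\delta$-torsion, hence no nonzero quotient supported on $V\smallsetminus V_{\reg}$. By Corollary~\ref{cor:admissible-holonomic}, $\eG$ is holonomic (it lies in $\euls{C}$ by Lemma~\ref{torsion-consequence2}(1)), so its restriction $\euls{L} := \eG|_{V_{\reg}}$ is a holonomic $\dd(V_{\reg})$-module; in fact by Lemma~\ref{lem:Vregintconnection} it is an integrable connection. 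So $\eG$ is a holonomic extension of $\euls{L}$ with no sub- or quotient module supported off $V_{\reg}$.

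First I would recall the uniqueness statement for minimal extensions: given a holonomic $\dd(V_{\reg})$-module $\euls{L}$, there is at most one holonomic $\dd(V)$-module $L$ with $L|_{V_{\reg}}\cong\euls{L}$ and with no nonzero sub- or quotient module supported on the complement $V\smallsetminus V_{\reg}$; this is the standard characterisation of $j_{!*}\euls{L}$ (see \cite[Section~3.4]{HTT}). For the existence half, $j_{!*}\euls{L}$ is constructed as the image of the canonical morphism $j_{!}\euls{L}\to j_{*}\euls{L}$ in the category of holonomic $\dd(V)$-modules (or, working with regular holonomic modules and the Riemann–Hilbert correspondence, as the intermediate extension of the corresponding perverse sheaf). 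Since $\eG$ is already a holonomic $\dd(V)$-module restricting to $\euls{L}$ on $V_{\reg}$ and satisfying the two vanishing conditions from Theorem~\ref{torsionfree}, the uniqueness statement forces $\eG\cong j_{!*}\euls{L}$.

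Concretely, the argument runs as follows. Let $L = j_{!*}\euls{L}$ be the minimal extension. Adjunction for the open inclusion $j$ gives a canonical morphism $j_{!}\euls{L}\to\eG$ (from $\eG|_{V_{\reg}} = \euls{L}$), whose image is a $\dd(V)$-submodule $\eG_0\subseteq\eG$ with $\eG_0|_{V_{\reg}} = \euls{L}$ and $\eG/\eG_0$ supported on $V\smallsetminus V_{\reg}$; by Theorem~\ref{torsionfree}(3) this quotient is zero, so $\eG$ is generated by $\euls{L}$ and hence is a quotient of $j_{!}\euls{L}$. Dually, $\eG$ injects into $j_{*}\euls{L}$, using Theorem~\ref{torsionfree}(2). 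Therefore $\eG$ is sandwiched as a quotient of $j_{!}\euls{L}$ and a submodule of $j_{*}\euls{L}$ compatibly with the canonical map $j_{!}\euls{L}\to j_{*}\euls{L}$, and one checks the composite $j_{!}\euls{L}\twoheadrightarrow\eG\hookrightarrow j_{*}\euls{L}$ agrees with the canonical morphism; hence $\eG$ is exactly its image, which is $j_{!*}\euls{L}$.

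I do not expect a serious obstacle here: the content is entirely carried by Theorem~\ref{torsionfree}, and what remains is bookkeeping with the standard formalism of the intermediate extension functor for holonomic $\dd$-modules on the open embedding $j\colon V_{\reg}\hookrightarrow V$. The one point to be slightly careful about is the passage between the "no sub/quotient supported on the complement" characterisation and the image-of-$j_!\to j_*$ definition of $j_{!*}$; both are standard, but since the paper works purely algebraically with $\dd(V)$ for the affine variety $V$, I would phrase everything in terms of the universal property (uniqueness of the module with the stated vanishing properties) rather than invoking perverse sheaves, citing \cite{HTT} for the existence and uniqueness of $j_{!*}\euls{L}$ in the holonomic setting. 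The only mild subtlety is ensuring $\euls{L}$ is genuinely holonomic over $\dd(V_{\reg})$ so that $j_{!*}\euls{L}$ is defined — but this is exactly Lemma~\ref{lem:Vregintconnection} (or Corollary~\ref{cor:admissible-holonomic} applied to the restriction).
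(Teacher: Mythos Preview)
Your proposal is correct and takes essentially the same approach as the paper: both deduce directly from Theorem~\ref{torsionfree}(2),(3) that $\eG$ has no nonzero submodule or quotient supported on $V\smallsetminus V_{\reg}$, which is exactly the defining property of the minimal extension. The paper's proof is a single sentence because the characterising definition of minimal extension is stated immediately before the corollary; your additional discussion of the $j_!\to j_*$ factorisation is fine but not needed here.
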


\begin{proof}  
	By  Theorem~\ref{torsionfree},    $\eG$ has the properties required of a minimal extension.
\end{proof}

\begin{remarks}\label{torsionfree-remark}
	(1) By Lemma~\ref{lem:Vregintconnection}  $\euls{L} = \eG |_{V_{\mathrm{reg}}}$ is also an integrable connection. 
	
	(2) Suppose that  $V$ is a \gainly \ symmetric space for $G$, as in Remark~\ref{LS-C5}. Then  $\eMt=\eM$ and hence $R={\Aak}$ by \cite[Theorem~A]{LS3} and \cite[Theorem~9.10]{BLNS}. Thus,  Corollary~\ref{torsionfree-corollary} answers \cite[Conjecture~7.2]{Se}.  For further details see 
	Section~\ref{Sec:examples}. Similarly, if $V$ is a space of representations of the cyclic quiver, as discussed in Section~\ref{Sec:Quivers},  then again   
	$\eMt=\eM$ and so we obtain the analogous conclusion.  
	
	(3) It seems possible that that Corollary~\ref{torsionfree-corollary} is related to the work of Grinberg\cite{GrinbergSymmetric} and others on nearby cycles. We will expand upon this in Section~\ref{Sec:Shiftfunctors}; see, in particular, Conjecture~\ref{conj:nearby-cycle} and the discussion preceding it.  \end{remarks}

 \begin{corollary}\label{cor:eniso}    
	If $\eG = \eM \o_{{\Aak}} \euls{P}$ for a  projective object  $\euls{P} \in \Osph$,  then the map $\End_{\dd}(\eG) \to \End_{\dd(V_{\reg})}(\eG_{\reg})$ is an isomorphism.
\end{corollary}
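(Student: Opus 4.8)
The goal is to show that restriction to the regular locus induces an isomorphism $\End_\ddd(\eG) \isom \End_{\dd(V_{\reg})}(\eG_{\reg})$, where $\eG_{\reg} = \eG|_{V_{\reg}} = \dd(V_{\reg})\otimes_\ddd \eG$. Restriction is always an algebra homomorphism, so the content is injectivity and surjectivity. The plan is to exploit the fact, proved in Corollary~\ref{torsionfree-corollary}, that $\eG = j_{!*}\euls{L}$ is the minimal extension of $\euls{L} = \eG_{\reg}$, together with Theorem~\ref{torsionfree}, which says $\eG$ has no nonzero $\delta$-torsion submodule or quotient. For injectivity: if $\phi \in \End_\ddd(\eG)$ restricts to $0$ on $V_{\reg}$, then $\Im\phi$ is a $\ddd$-submodule of $\eG$ whose restriction to $V_{\reg}$ is $0$, i.e. $\Im\phi$ is supported on $V \smallsetminus V_{\reg} = (\delta = 0)$, hence is $\delta$-torsion; by Theorem~\ref{torsionfree}(2) (applied to $\eG$, which is $\delta$-torsionfree) such a submodule is $0$, so $\phi = 0$.

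\textbf{Surjectivity.} This is the substantive part. Given $\psi \in \End_{\dd(V_{\reg})}(\euls{L})$, I want to extend it to an endomorphism of $\eG$. The cleanest route is the standard functoriality of the minimal extension: $j_{!*}$ is a functor on the relevant category (e.g. by \cite[Proposition~2.2.5]{HTT}-type arguments, or directly: $\euls{L}$ is an integrable connection on $V_{\reg}$ by Remark~\ref{torsionfree-remark}(1), and one can take $j_{!*}\euls{L} = \Im(j_!\euls{L} \to j_*\euls{L})$), so $\psi$ induces $j_{!*}\psi \colon j_{!*}\euls{L} \to j_{!*}\euls{L}$, i.e. an endomorphism of $\eG$ whose restriction to $V_{\reg}$ is $\psi$. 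Alternatively, and perhaps more in keeping with the algebraic flavour of the paper, one argues as follows. Let $j_*\euls{L}$ be the (derived-$H^0$) pushforward; then $\eG \hookrightarrow j_*\euls{L}$ since $\eG$ has no $\delta$-torsion submodule (the kernel of $\eG \to j_*\euls{L}$ is exactly the maximal $\delta$-torsion submodule). The endomorphism $\psi$ of $\euls{L}$ extends canonically to $j_*\psi \in \End(j_*\euls{L})$. It then suffices to check that $j_*\psi$ preserves the submodule $\eG \subseteq j_*\euls{L}$. Now $(j_*\psi)(\eG)$ is a $\ddd$-submodule of $j_*\euls{L}$ whose image in $j_*\euls{L}/\eG$ is a $\delta$-torsion module (since it dies after restriction to $V_{\reg}$, as $\psi$ restricted there is an endomorphism of $\euls{L}$), so $(j_*\psi)(\eG)$ maps into the maximal $\delta$-torsion submodule of $j_*\euls{L}/\eG$; but one must then know $(j_*\psi)(\eG) + \eG$ is a finitely generated $\ddd$-module lying between $\eG$ and $j_*\euls{L}$ with $\eG$-quotient $\delta$-torsion. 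The minimal-extension characterisation of $\eG$ forces $(j_*\psi)(\eG) + \eG$ to have no $\delta$-torsion quotient only if we intersect with something; the clean conclusion is that $(j_*\psi)(\eG) \subseteq \eG$ follows because $\eG$ is the \emph{largest} submodule of $j_*\euls{L}$ extending $\euls{L}$ with no $\delta$-torsion quotient, equivalently $\eG = \ker(j_*\euls{L} \to \text{(}\delta\text{-torsion part of }j_*\euls{L}/\text{(image of }j_!\text{)}))$. Restricting $j_*\psi$ to $\eG$ then gives the desired preimage of $\psi$.

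\textbf{Main obstacle.} The delicate point is pinning down precisely which "canonical" model of the minimal extension makes the extension of $\psi$ manifestly functorial, and verifying that $j_*\psi$ indeed restricts to $\eG$. In the geometric ($\mathcal{D}$-module) language this is immediate from functoriality of $j_{!*}$, so I would lean on Corollary~\ref{torsionfree-corollary}'s identification $\eG = j_{!*}\euls{L}$ and cite the standard fact (e.g. \cite[Proposition~2.2.5]{HTT} and the surrounding discussion of minimal extensions, or \cite{Bj}) that $j_{!*}$ is a functor from integrable connections on $V_{\reg}$ to holonomic $\ddd$-modules, with the restriction functor giving back $\euls{L}$ and hence a natural map $\End_\ddd(j_{!*}\euls{L}) \to \End_{\dd(V_{\reg})}(\euls{L})$ which is inverse to $j_{!*}$ on morphisms. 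Combined with the injectivity argument above and the fact that $\eG$ is holonomic (Proposition~\ref{admissible}, so that $j_{!*}$ applies and $\eG, \euls{L}$ both have finite length, making the Hom-spaces finite-dimensional), this yields the isomorphism. The whole proof should be short; the only real care needed is to invoke the right functoriality statement for $j_{!*}$ rather than re-deriving it.
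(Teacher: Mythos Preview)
Your proposal is correct and follows the same approach as the paper: injectivity because a $\phi$ vanishing on $V_{\reg}$ has $\delta$-torsion image inside the $\delta$-torsionfree module $\eG$, and surjectivity by showing that any $\eta \in \End_{\dd(V_{\reg})}(\eG_{\reg})$ preserves $\eG \subset \eG_{\reg}$.

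The one place where you overcomplicate matters is the surjectivity step. You correctly note that $(\eta(\eG)+\eG)/\eG$ is $\delta$-torsion, but then try to conclude via a characterisation of $\eG$ as ``the largest submodule of $j_*\euls{L}$ with no $\delta$-torsion quotient'', which you do not quite pin down. The paper's argument is more direct: $(\eta(\eG)+\eG)/\eG$ is a quotient of $\eta(\eG)$, which in turn is a quotient of $\eG$ (via $\eta$). Hence it is a $\delta$-torsion quotient of $\eG$, and Theorem~\ref{torsionfree}(3) forces it to be zero. No appeal to $j_{!*}$-functoriality or to a ``largest submodule'' description is needed; both halves of the proof use only Theorem~\ref{torsionfree} directly.
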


\begin{proof}
	This is a standard fact about minimal extensions, but we include a proof since we do not know of a reference. Let $\phi \in \End_{\dd}(\eG)$ and $\phi |_{V_{\reg}}$ its image in $\End_{\dd(V_{\reg})}(\eG_{\reg})$. If $\phi |_{V_{\reg}} = 0$ then the image of $\phi$ is a $\delta$-torsion submodule of $\eG$. This forces $\phi = 0$ by Theorem~\ref{torsionfree}. 
	
	Let $\eta \in \End_{\dd(V_{\reg})}(\eG_{\reg})$. To show that $\eta$ is in the image of the restriction map, it suffices to show that $\eta(\eG) \subset \eG$. The $\dd(V)$-module $(\eta(\eG) + \eG) / \eG$ is a quotient of $\eta(\eG)$ and thus of $\eG$ too. This quotient is $\delta$-torsion. Therefore, Theorem~\ref{torsionfree} again implies that it is zero.  
\end{proof}

 \subsection*{$G$-invariant eigendistributions} As was remarked in the introduction, the original  motivation for Theorem~\ref{torsionfree}  comes from Harish-Chandra's work on $G$-invariant eigendistributions and we note that Theorem~\ref{torsionfree} has immediate consequences to such objects. In this discussion, we will always take $\vs=0$. Fix a real form $V_0$ of $V$ and write $\mathrm{Dist}(V_0)$ for the space of distributions on $V_0$.   One can then take as the definition that the space of \emph{$G$-invariant eigendistributions} (corresponding to eigenvalue $\lambda$) is the space 
 $$\Hom_{\dd(V)}\left( \eGt_{\lambda},\, \mathrm{Dist}(V_0)\right).$$
   To apply our results to these objects, we will need $\eGt_{\lambda}=\eG_{\lambda}$, which at least holds when $\eMt=\eM$. As remarked earlier, we know of no examples  where this does not hold, but it is known to hold in a number of important cases. In particular, it holds for \gainly \ symmetric spaces $V$; see Section~\ref{Sec:examples} for the definitions and Theorem~\ref{LS3-theorem} for the assertion. It also holds when $V=\mathrm{Rep}(Q_{\ell}, n\mathfrak{d})$ is the appropriate representation space for the cyclic quiver $Q_{\ell}$; see Section~\ref{Sec:Quivers} for the definitions and 
 Theorem~\ref{thm:cylicquiverradialparts0} for the result.  Then we have the following result.
 
 \begin{corollary}\label{cor:distributions}
 Assume that $\vs=0$ and we are in the situation where $\Ak(W)$ is simple and $\eMt=\eM$. Suppose that 
 $T$ is a $G$-invariant eigendistribution  supported on the singular  locus $(d=0)\subset V, $ for some $0\not=d \in \C[V]^G$ (for example, $d=\delta$).   Then $T=0$.
 \end{corollary}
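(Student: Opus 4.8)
The plan is to reduce the statement about $G$-invariant eigendistributions to the torsion-freeness result of Theorem~\ref{torsionfree}, via a standard support argument. First I would unravel the definition: a $G$-invariant eigendistribution $T$ with eigenvalue $\lambda$ (here $\lambda = 0$, since $\vs = 0$) supported on $(d=0)$ corresponds, under the hypothesis $\eGt_\lambda = \eG_\lambda$, to a nonzero element of $\Hom_{\dd(V)}(\eG_\lambda,\, \mathrm{Dist}(V_0))$ whose image is a $\dd(V)$-submodule of $\mathrm{Dist}(V_0)$ supported set-theoretically on the closed subvariety $(d=0) \subset V$. Any $\dd(V)$-submodule of $\mathrm{Dist}(V_0)$ supported on $(d=0)$ is $d$-torsion: by the analytic/algebraic characterisation of support (cf.\ \cite[Theorem~6.7.2]{HK} and the surrounding discussion in \cite{Wa1}), local sections annihilated by a power of $d$ are exactly those supported on $(d=0)$, and the submodule in question is finitely generated over $\dd(V)$ because $\eG_\lambda$ is (it is holonomic by Theorem~\ref{torsionfree}(2) and Proposition~\ref{admissible}), so a single power $d^N$ kills it.

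Granting this, let $N = \Im(\varphi) \subseteq \mathrm{Dist}(V_0)$ for the putative nonzero map $\varphi \colon \eG_\lambda \to \mathrm{Dist}(V_0)$. Then $N$ is a nonzero $d$-torsion factor module of $\eG_\lambda$. Since $\vs = 0$, $\lambda = 0$, and $\Ak(W)$ is simple, Theorem~\ref{torsionfree}(3) (applied with $\eG = \eG_0$, which has the required form $\eM \otimes_{\Aak}(\Aak/\Aak\mf{m}_0) = \eM\otimes_{\Aak}\eQ_0$, $\eQ_0$ projective in $\Osph$ by Corollary~\ref{A-projectives}(1)) says $\eG_0$ has no nonzero $d$-torsion factor module. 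This forces $N = 0$, i.e.\ $\varphi = 0$, hence $T = 0$. The case $d = \delta$ is the special case used for the singular locus $V\smallsetminus V_{\reg}$.

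The only genuinely non-formal point — and the step I would want to state carefully rather than wave at — is the claim that a finitely generated $\dd(V)$-submodule of $\mathrm{Dist}(V_0)$ set-theoretically supported on $(d=0)$ is $d$-torsion. This is a local statement on the real manifold $V_0$ and is exactly the content behind Harish-Chandra's regularity formalism as packaged in \cite[\S8.3--8.4]{Wa1} and \cite[Ch.~6--7]{HK}: a distribution (or more precisely, the image of a holonomic $\dd$-module mapping into $\mathrm{Dist}(V_0)$) whose singular support lies over $(d=0)$ is annihilated by a power of $d$. I would cite this directly rather than reprove it, since it is the same mechanism already invoked in the introduction to state Theorem~\ref{intro-torsionfree1} and Corollary~\ref{cor:intro-torsionfree1}(2). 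Everything else — identifying the $\Hom$ space with eigendistributions, noting $\eG_0$ is holonomic, and quoting Theorem~\ref{torsionfree} — is routine, and I expect the proof in the paper to be no more than a few lines along these lines, essentially the argument used for \cite[Theorem~5.2]{LS} and its analogue for symmetric spaces in \cite{LS3}.
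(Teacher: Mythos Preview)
Your approach is essentially identical to the paper's brief proof: support on $(d=0)$ gives $d^j T = 0$ for some $j$, so $T$ factors through a $d$-torsion quotient of $\eG_\lambda$, which must vanish by Theorem~\ref{torsionfree}(3).

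One correction, though: you have conflated the parameter $\vs$ (the twist defining the character $\chi$, taken to be $0$ here so that the notion of ``$G$-invariant eigendistribution'' is the classical one) with the eigenvalue $\lambda \in \h^*$ of the eigendistribution. These are entirely different objects, and the corollary is stated for arbitrary $\lambda$, not just $\lambda = 0$. Fortunately this does not damage your argument: under the simplicity hypothesis on $\Ak(W)$, every $\eQ_\lambda$ is projective in $\Osph$ by Corollary~\ref{A-projectives}(1), so Theorem~\ref{torsionfree}(3) applies to every $\eG_\lambda$, not only to $\eG_0$. Just delete the parenthetical ``(here $\lambda = 0$, since $\vs = 0$)'' and the clause ``Since $\vs = 0$, $\lambda = 0$'', and replace $\eG_0$ by $\eG_\lambda$ throughout.
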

 
 \begin{proof} If $T$ is such a distribution then $d^jT=0$ for some $j$ and so $T$  would lie in 
 $\Hom_{\dd(V)}\left( L,\, \mathrm{Dist}(V_0)\right)$, for some $d$-torsion factor $L$ of $\eG_{\lambda}$. By Theorem~\ref{torsionfree}(3), $L$ and hence $T$ is zero.
 \end{proof}
 
 For nice symmetric spaces and $d=\delta$ this result is  due to Sekiguchi \cite[Theorem~5.2]{Se} which in turn  generalises 
 Harish-Chandra's result stating that there are no nonzero  $G$-invariant eigendistributions on $V_0=\g_0$ supported on the singular locus; see \cite{HC3} or \cite[Theorem~8.3.5]{Wa1}.  For general $0\not=d\in \C[V]^G$ the result can be regarded as a generalisation of Harish-Chandra's theorem \cite[Theorem~5]{HC2} on invariant eigendistributions on $\g_0$  with nilpotent support; see the comments after \cite[Theorem~5.2]{LS}.

We end the section by noting that, for ``generic'' values of $\lambda$,  results like Theorem~\ref{torsionfree} (and 
the analogue  of  Corollary~\ref{thm:semi-simplicity2}  below)  are easy to prove. Thus it is for special values of $\lambda$, most notably $\lambda=0$, where these results become most  significant.

\begin{proposition}\label{prop:generic-lambda} Assume that $\rad_{\vs}$ is surjective and that 
 $\lambda \in \h^*$ has trivial stabiliser under $W$. Then:
\begin{enumerate}
\item both $\eGt_{{\lambda}}$ and $\eG_{\lambda}$ are semisimple;
\item $\eG_{\lambda}$ has a unique (necessarily simple) summand $L$ such that $L^G \neq 0$;
\item $L$ is $\delta$-torsionfree; and
\item if $Z$ is connected then $\eGt_{\lambda} = \eG_{\lambda} = L$ is simple.
\end{enumerate}
\end{proposition}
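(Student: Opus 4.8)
The proposition should be seen as the ``easy generic case'' of the main theorems, and the natural strategy is to reduce everything to the structure of $\Osph$ and the bimodule $\eM$. The key input is that when $\lambda$ has trivial stabiliser, the parabolic $W_\lambda = \{1\}$, so the Hecke algebra $\euls{H}_q(W_\lambda) = \C$ is trivially semisimple. First I would establish (1). By Lemma~\ref{lem:genOsphparabolicequi}(1), there is an equivalence $\Osph_{\kappa,\lambda}(W) \simeq \Osph_{\kappa,0}(W_\lambda) = \Osph_{\kappa,0}(\{1\})$, and the latter is the category of finite-dimensional modules over $\widehat{A}_\kappa(\{1\})_0$, which is just a completed Weyl algebra (or rather: the spherical algebra for the trivial group is $\Sym\h$, so its category $\Osph_0$ is the category of finite-length modules supported at $0$, i.e. finite-dimensional $(\Sym\h)^W$-modules concentrated at $\mf{m}_\lambda$). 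In particular every object of $\Osph_\lambda$ is a direct sum of copies of $\eQ_\lambda$, and $\eQ_\lambda$ is simple. Hence $\eQ_\lambda$ is projective in $\Osph$ (this also follows from Corollary~\ref{A-projectives}, since $\Ak(W_\lambda) = \C$ is simple, together with Proposition~\ref{prop:Qlambdaprojiffsimple} and Lemma~\ref{lem:simpleinductionLosev} — though here we are not assuming $\Ak(W)$ simple, so I would instead argue directly that $\Ak(W_\lambda) = \C$ is simple and invoke Proposition~\ref{prop:Qlambdaprojiffsimple} locally). Then $\eG_\lambda = \eM\otimes_{\Aak}\eQ_\lambda$, and since $\eQ_\lambda$ is projective I can apply Theorem~\ref{torsionfree}-type reasoning — but wait, Theorem~\ref{torsionfree} assumes $\Ak(W)$ simple. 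So for the generic case I need a more hands-on argument.

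Here is the cleaner route for (1)--(3). Since $W_\lambda$ is trivial, $\mf{m}_\lambda$ is generated by a regular sequence and $\eQ_\lambda$ is an irreducible $\Ak(W)$-module (indeed the unique irreducible in $\Osph_\lambda$). By Corollary~\ref{summand-endo3} (or by Lemma~\ref{summand-endo21M}, which only needs $\rad_{\vs}$ surjective), $\eM\otimes_{\Aak}\eQ_\lambda \neq 0$ and $\Hom_\ddd(\eM, \eM\otimes_{\Aak}\eQ_\lambda) = \eQ_\lambda$. Now I claim $\eG_\lambda$ is semisimple: any submodule $N \subseteq \eG_\lambda$ that is not $\delta$-torsion has $N|_{V_{\reg}} \neq 0$; using the equivalence $(\dd(V_{\reg})/\dd(V_{\reg})\g_\chi)^G \isom \dd(\h_{\reg})^W$ of Theorem~\ref{thm:radial-exists}(1) and Clifford theory for the finite group $W$ acting freely on $\h_{\reg}$, the restriction $\eG_\lambda|_{V_{\reg}}$ decomposes as a sum of $|W|$ one-dimensional local systems permuted freely by $W$ (since $\lambda$ is $W$-regular), and the $G$-equivariant structure forces $\eG_\lambda|_{V_{\reg}}$ to be a single irreducible $G$-equivariant connection. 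I would then show, via $\eM$, that $\eG_\lambda$ has a unique $\delta$-torsionfree constituent $L$ with $L|_{V_{\reg}} = \eG_\lambda|_{V_{\reg}}$, and that $L^G = \Hom_\ddd(\eM,L) = \eQ_\lambda \neq 0$ while all other constituents are $\delta$-torsion with vanishing $G$-invariants (Proposition~\ref{intro-cor:hom(ML)}/Corollary~\ref{cor:hom(ML)} is the key tool: any nonzero submodule of $\eG_\lambda$ has nonzero $G$-invariants, so a $\delta$-torsion submodule would have $G$-invariants, contradicting that $\Hom_\ddd(\eM, \text{torsion}) $ is $\delta$-torsion in $\Osph_\lambda$ hence zero). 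This gives that there are no nonzero $\delta$-torsion submodules, whence by a duality argument (or by applying the same to $\BD_\ddd(\eG_\lambda)$, which is again of the same form by Lemma~\ref{torsion-consequence2}) no nonzero $\delta$-torsion quotients, so $\eG_\lambda$ equals its $\delta$-torsionfree part $L$ plus possibly torsion summands — and semisimplicity of $\Osph_\lambda$ lifts through $\eM$ to give $\eG_\lambda$ semisimple. Part (2) and (3) then fall out: $L$ is the unique summand with $L^G \neq 0$ because $\Hom_\ddd(\eM,-) = (-)^G$ kills torsion modules and $\eQ_\lambda$ is irreducible; and $L$ is $\delta$-torsionfree by construction.

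For (4), if $Z := Z(G)$, or rather the relevant centre/component group, is connected, then the local system $\eG_\lambda|_{V_{\reg}} \cong G\times_{N_G(\h)}\h_{\reg}$-equivariant object (using \eqref{lem:stablepolaropen}) has no room for torsion constituents at all: the count of $|W|$ on the regular locus already saturates the generic rank predicted by the radial parts isomorphism, and connectedness removes the possible monodromy/disconnectedness that would allow extra $\delta$-torsion pieces glued in. Concretely, I would argue that $\eMt_\lambda$ (hence its summand $\eG_\lambda$) restricted to $V_{\reg}$ has rank exactly $|W| = \dim_\C \eQ_\lambda \cdot |W|/|W_\lambda|$... — more carefully, $\eGt_\lambda|_{V_{\reg}}$ corresponds under Theorem~\ref{thm:radial-exists}(1) to $\eQ_\lambda$ as a $\dd(\h_{\reg})^W$-module, which is $\dd(\h_{\reg})\otimes_{\Sym\h} \C_\lambda$ pushed down, a connection of rank $|W|$, and all of it is $\delta$-torsionfree, so $\eGt_\lambda$ has a $\delta$-torsionfree part of ``full size'' and, when $Z$ is connected so that there are no disconnectedness phenomena, $\eGt_\lambda$ itself is this torsionfree irreducible module with no torsion summand; combined with $\eGt_\lambda = \eG_\lambda \oplus (\text{torsion})$ when $\rad$ is surjective, the torsion part vanishes and $\eGt_\lambda = \eG_\lambda = L$ is simple.

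\textbf{Main obstacle.} The delicate point is (4) and the precise mechanism by which connectedness of $Z$ eliminates $\delta$-torsion \emph{summands} of $\eGt_\lambda$ — equivalently, why $\ker(\rad_{\vs})$ contributes nothing extra in the connected case. I expect this to require a careful rank/length count on $V_{\reg}$ via Theorem~\ref{thm:radial-exists}(1) together with the structure of $\eMt$ as a projective object in $(G,\chi,\ddd)\lmod$ (Lemma~\ref{lem:Gequivisfullsub}), and an appeal to the fact that $\eGt_\lambda|_{V_{\reg}}$ is an irreducible $G$-equivariant connection when $\lambda$ is $W$-regular and $Z$ connected, so that $\eGt_\lambda = j_{!*}$ of it has no submodules or quotients supported off $V_{\reg}$, forcing simplicity. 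The semisimplicity in (1) for general (possibly non-simple) $\Ak(W)$ is the other point needing care, since one cannot directly cite Theorem~\ref{torsionfree}; the argument must instead be bootstrapped from the triviality of $W_\lambda$ and the local equivalence of Lemma~\ref{lem:genOsphparabolicequi}.
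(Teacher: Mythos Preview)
Your approach is fundamentally different from the paper's, and it has a genuine gap.

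The paper's proof does \emph{not} go through the bimodule $\eM$ or through $\Osph$ at all for part (1). Instead it applies the Fourier transform $\mathbb{F}^*_V$ (Lemma~\ref{lem:FourierGequiv}): one has $\eGt_\lambda = \mathbb{F}^*_V(M)$ where $M = \dd(V^*)/(\dd(V^*)\g_{\chi-\mr{Tr}} + \dd(V^*)\mf{m}_{-\lambda})$ is a $(G,\chi-\mr{Tr})$-monodromic module supported on the \emph{closed orbit} $G\cdot(-\lambda) \cong G/Z$ (closed because $-\lambda \in \h^*_{\reg}$). By \cite[Proposition~9.1.1(i)]{BG}, the category of such modules is equivalent to finite-dimensional $Z/Z^\circ$-modules, hence semisimple. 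This immediately gives semisimplicity of $\eGt_\lambda$ and of its quotient $\eG_\lambda$, with no hypothesis on $\Ak(W)$. Parts (2)--(3) then follow quickly from Lemma~\ref{summand-endo21M} and Proposition~\ref{pdQ}(1), and part (4) is a one-line count: when $Z$ is connected the category on $G/Z$ has a single simple object, so $\eGt_\lambda = L^{\oplus k}$, and Lemma~\ref{hom=invariants} forces $k=1$.

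Your route tries to lift semisimplicity of $\Osph_\lambda$ through $\eM$, but the tools you invoke for this (Corollary~\ref{summand-endo3}, Corollary~\ref{cor:hom(ML)}, the torsionfreeness arguments of Theorem~\ref{torsionfree}) are all proved in the paper under the standing hypothesis that $\Ak(W)$ is simple, which is \emph{not} assumed in this proposition. Without that hypothesis you have no control over $\delta$-torsion subfactors of $\eG_\lambda$ and no mechanism to force ``$L^G\neq 0$ for every nonzero submodule''. More seriously, your framework says nothing about $\eGt_\lambda$ as opposed to $\eG_\lambda$: the splitting $\eMt = \eM \oplus (\text{torsion})$ of Proposition~\ref{semiprime2} again requires $\Ak(W)$ simple, so you cannot even relate the two modules in the generality claimed. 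Your proposed argument for (4) via rank-counting on $V_{\reg}$ is also incomplete: it does not rule out $\delta$-torsion summands of $\eGt_\lambda$, which is exactly what connectedness of $Z$ is needed for, and which the Fourier-side count handles cleanly.

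The missing idea is precisely the Fourier transform, which converts a statement about modules with locally finite $(\Sym V)^G$-action into a statement about modules with prescribed support on $V^*$ --- and for $W$-regular $\lambda$ that support is a single closed orbit, where everything is governed by the component group of the stabiliser.
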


\begin{proof}
We apply the Fourier transform $\mathbb{F}^*_V$ of \eqref{defn:Fourier}. Then 
Lemma~\ref{lem:FourierGequiv} implies that 
$
\mathbb{F}_V^*(M) = \eGt_{\lambda},  $  where 
$$ M :=
 \dd(V^*) / (\dd(V^*) \g_{\chi - \mr{Tr}} + \dd(V^*) \mf{m}_{-\lambda})
$$
and $\mf{m}_{-\lambda}$ is the maximal ideal of  $ (\Sym V)^G$ defined by $-\lambda$. Since 
$-\lambda \in \h^*_{\reg}$, the $(G,\chi - \mr{Tr})$-monodromic module $M$ is supported on the closed orbit 
$G \cdot (-\lambda) \cong G / Z$. Since $(\chi - \mr{Tr}) |_{\mf{z}} = 0$, \cite[Proposition~9.1.1(i)]{BG} says
 that the category of finitely generated $(G,\chi - \mr{Tr})$-monodromic $\dd$-modules on $G / Z$ is equivalent
  to the category of $Z$-equivariant modules on a point. This is the same as the category of finite-dimensional 
  $Z / Z^{\circ}$-modules. In particular, it is semisimple. Therefore, $\eGt_{\lambda}$ is semisimple. Since 
  $\eG_{\lambda} = \eM \o_{\Aak} \eQ_{\lambda}$ is a quotient of $\eGt_{\lambda}$, it  is also semisimple.
Since the stabiliser $W_{\lambda}$ of $\lambda$ in $W$ is trivial, Lemma~\ref{lem:genOsphparabolicequi} 
says that $\eQ_{\lambda}$ is a simple $\Ak$-module. Then Lemma~\ref{summand-endo21M} says that 
$\eG_{\lambda}^G = \eM^{G} \o_{\Aak} \eQ_{\lambda} = \eQ_{\lambda}$. This implies that there must be a 
simple summand $L$ of $\eG_{\lambda}$ (occurring with multiplicity one) such that $L^G = \eQ_{\lambda}$ 
and $K^G = 0$ for every other simple summand of $\eG_{\lambda}$. Proposition~\ref{pdQ}(1) implies that 
$\eQ_{\lambda}$ is $\delta$-torsionfree, hence so too is $L$.
Finally if $Z$ is connected then the category of finitely generated $(G,\chi - \mr{Tr})$-monodromic 
$\dd$-modules on $G / Z$ is semisimple with one simple object ($U$ say). This implies that 
$\mathbb{F}_V^*(U) \cong L$ and hence $\eGt_{\lambda} = L^{\oplus k}$ for some $k \ge 1$. 
By the same reasoning, $\eG_{\lambda} = L^{\oplus r}$ for some $r\leq k$. Now,
 by Lemma~\ref{hom=invariants},  
$$
\Hom_{\dd}(\eGt_{\lambda}, L) = \Hom_{R}(\eQ_{\lambda}, \Hom_{\dd}(\eMt,L)) =
 \Hom_{R}(\eQ_{\lambda}, L^G) = \Hom_{{\Aak}}(\eQ_{\lambda}, \eQ_{\lambda})
$$
is one-dimensional. Therefore, $k = r =  1$.
\end{proof}

 	%%%%%%%%%%%%%%%%%

 \section{When Are Harish-Chandra Modules Semisimple?}\label{Sec:admissible}

In this section we apply the earlier results of the paper   to give a much more detailed understanding of the category $\eC$ of admissible modules. In particular, we will  determine precisely  when  the module $\eG_{\lambda} $ from Definition~\ref{defn:Glambda} is semisimple; see Theorem~\ref{thm:semi-simplicity}.
We continue to  assume  that $V$ is a visible, stable, polar $G$-representation, as in Notation~\ref{notation4.11}.   Except when we explicitly say otherwise,   \emph{we assume  that ${\Aak}=\Ak(W)$ is a simple algebra.}
 
\medskip
Recall that the Hecke algebra $\euls{H}_{q}(W_{\lambda})$ associated to $W_\lambda$ is defined in Definition~\ref{Hecke-defn}. The next lemma can be used to provide a more 
concrete realisation of this algebra.

\begin{lemma}\label{lem:endGHeckealg}
	Let $\lambda \in \h^*$ and assume that $\Ak(W_{\lambda})$ is simple. Then $\End_\ddd(\eG_{\lambda}) \cong \euls{H}_{q}(W_{\lambda})$. 
\end{lemma}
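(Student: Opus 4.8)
Proof plan for Lemma~\ref{lem:endGHeckealg} ($\End_\ddd(\eG_{\lambda}) \cong \euls{H}_q(W_\lambda)$).

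The plan is to reduce the computation of $\End_\ddd(\eG_\lambda)$ to an endomorphism ring on the Cherednik side via the functor $\BH = \Hom_\ddd(\eM,-)$ and its adjoint $\HHleft = \eM\otimes_{\Aak}-$, and then identify that Cherednik-side endomorphism ring with the Hecke algebra using the results already assembled in Section~\ref{Sec:Cherednik}. Recall that $\eG_\lambda = \eM\otimes_{\Aak}\eQ_\lambda = \HHleft(\eQ_\lambda)$, and that by Proposition~\ref{prop:Qlambdaprojiffsimple} the hypothesis that $\Ak(W_\lambda)$ is simple guarantees $\eQ_\lambda$ is projective in $\Osph$. So $\eG_\lambda$ is of the form $\eM\otimes_{\Aak}\eP$ with $\eP$ projective, and all the structural results of Sections~\ref{cherednik-intertwining}--\ref{sec:simplicityhypothesis} apply to it.

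First I would show $\End_\ddd(\eG_\lambda)\cong\End_{\Aak}(\eQ_\lambda)$. By Corollary~\ref{summand-endo3}, $\BH$ is exact with left adjoint $\HHleft$, and the unit $\mathrm{Id}\to \BH\circ\HHleft$ is an isomorphism; hence $\BH(\eG_\lambda)=\BH\HHleft(\eQ_\lambda)\cong \eQ_\lambda$. Adjunction then gives
\[
\End_\ddd(\eG_\lambda) = \Hom_\ddd(\HHleft(\eQ_\lambda),\,\eG_\lambda)\cong \Hom_{\Aak}(\eQ_\lambda,\,\BH(\eG_\lambda))\cong \Hom_{\Aak}(\eQ_\lambda,\,\eQ_\lambda)=\End_{\Aak}(\eQ_\lambda).
\]
(One should check this chain of isomorphisms is a ring isomorphism, not merely of abelian groups; this follows because the unit of an adjunction is a natural transformation of monoidal-type structure — concretely, the adjunction isomorphism $\Hom_\ddd(\HHleft X,\HHleft Y)\cong\Hom_{\Aak}(X,\BH\HHleft Y)\cong\Hom_{\Aak}(X,Y)$ is compatible with composition because $\HHleft$ is a functor and the unit is natural. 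This is routine but worth a sentence.)

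Next I would identify $\End_{\Aak}(\eQ_\lambda)$ with $\euls{H}_q(W_\lambda)$. By Lemma~\ref{lem:genOsphparabolicequi}(1) there is an equivalence $\Osph_{\kappa,\lambda}(W)\xrightarrow{\sim}\Osph_{\kappa,0}(W_\lambda)$ sending $\eQ_\lambda$ to $\eQ_0$, so $\End_{\Aak(W)}(\eQ_\lambda)\cong\End_{\Aak(W_\lambda)}(\eQ_0)$. Now invoke Corollary~\ref{A-projectives}(3): since $\Ak(W_\lambda)$ is simple, $\End_{\Ak(W_\lambda)}(\eQ_0)=\euls{H}_q(W_\lambda)$. (Here one uses Lemma~\ref{lem:simpleinductionLosev} only implicitly — we are given simplicity of $\Ak(W_\lambda)$ directly, so Corollary~\ref{A-projectives}(3) applies to the group $W_\lambda$ with $\lambda=0$.) Composing these isomorphisms yields $\End_\ddd(\eG_\lambda)\cong\euls{H}_q(W_\lambda)$, as claimed.

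The main obstacle, such as it is, is bookkeeping rather than mathematics: making sure the various identifications are \emph{ring} isomorphisms and compatible — in particular that the equivalence of Lemma~\ref{lem:genOsphparabolicequi} sending $\eQ_\lambda\mapsto\eQ_0$ induces the \emph{same} Hecke-algebra identification that the $\mr{KZ}$-functor and Corollary~\ref{A-projectives}(3) produce. This is implicit in the way the $\mr{KZ}$-functor for $\Osph_{\kappa,\lambda}(W)$ was \emph{defined} (by composing with exactly this equivalence), so no genuine difficulty arises, but it deserves an explicit remark. There is also a minor point that Corollary~\ref{summand-endo3} is stated for $\ddd\lmod$-style modules while $\eG_\lambda$ lives in $\euls{C}$; since $\euls{C}\subset(G,\chi,\ddd)\lmod$ is a full subcategory (Lemma~\ref{lem:charvar}(3)) the $\Hom$-spaces agree, so this is harmless.
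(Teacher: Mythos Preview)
Your proof is correct and follows essentially the same route as the paper: use the adjunction of Corollary~\ref{summand-endo3} to reduce $\End_\ddd(\eG_\lambda)$ to $\End_{\Aak}(\eQ_\lambda)$, then apply Lemma~\ref{lem:genOsphparabolicequi} to pass to $\End_{\Ak(W_\lambda)}(\eQ_0)$ and identify this with $\euls{H}_q(W_\lambda)$ via Losev's work. The only cosmetic difference is that you cite Corollary~\ref{A-projectives}(3) for the last step, while the paper cites its ingredients (Lemma~\ref{lem:HCprojiffQproj} and \cite[Theorem~1.1]{LosevTotally}) directly.
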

 
 \begin{proof} 
	Recall from Definition~\ref{defn:Glambda}  that $\eG_\lambda  = \HHleft (\eQ_{\lambda})=\eM\otimes_{\Aak}\eQ_{\lambda}$. 
	In the notation of   Corollary~\ref{summand-endo3},  $\BH(Y) =\Hom_\ddd(\eM,Y)$ for $Y\in \eC$ and, by that result,    
	 the adjunction $\mathrm{Id} \to \BH \circ\HHleft$ is an isomorphism. Thus, 
	\begin{align*}
	\End_\ddd(\eG_{\lambda}) & = \Hom_\ddd(\HHleft (\eQ_{\lambda}),\HHleft (\eQ_{\lambda})) \\
	& = \Hom_{\Aak}(\eQ_{\lambda}, \BH\circ \HHleft (\eQ_{\lambda}))  \\
	& = \Hom_{\Aak}(\eQ_{\lambda},\eQ_{\lambda}).
	\end{align*}
	  By Lemma~\ref{lem:genOsphparabolicequi}, $\End_{{\Aak}}(\eQ_{\lambda}) \cong \End_{\Ak(W_{\lambda})} (\eQ_0)$. 
	  Since  $\Ak(W_{\lambda})$ is simple, it follows from 
	Lemma~\ref{lem:HCprojiffQproj}(1,2)  and  \cite[Theorem~1.2]{LosevTotally} that $\End_{\Ak(W_{\lambda})}(\eQ_{0}) \cong \euls{H}_{q}(W_{\lambda})$. 	
\end{proof}

\begin{corollary}\label{cor:semi-simplicityGtoHecke}   
	If $\eG_{\lambda}$ is semisimple then (even without assuming that $\Ak(W)$ is simple) the Hecke algebra $\euls{H}_{q}(W_{\lambda})$ is semisimple. 
\end{corollary}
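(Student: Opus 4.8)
The statement to prove is Corollary~\ref{cor:semi-simplicityGtoHecke}: if $\eG_{\lambda}$ is semisimple as a $\dd$-module, then the Hecke algebra $\euls{H}_q(W_{\lambda})$ is semisimple, and this should be done \emph{without} assuming $\Ak(W)$ simple. The plan is to relate $\euls{H}_q(W_\lambda)$ to the endomorphism ring of $\eG_\lambda$ and then invoke the elementary fact that the endomorphism ring of a semisimple module of finite length is a finite product of matrix rings over division rings. The first observation is that $\Ak(W_\lambda)$ simple is not needed for the identification $\End_{\dd}(\eG_\lambda)\cong \euls{H}_q(W_\lambda)$: by Remark~\ref{rem:eGpres} and the construction of $\eM$, one has $\eG_\lambda = \eM\otimes_{\Aak}\eQ_\lambda$, and the chain of isomorphisms in the proof of Lemma~\ref{lem:endGHeckealg} breaks up as
\[
\End_{\dd}(\eG_\lambda)\ \cong\ \Hom_{\Aak}(\eQ_\lambda,\ \Hom_{\dd}(\eM,\eM\otimes_{\Aak}\eQ_\lambda))\ \cong\ \End_{\Aak}(\eQ_\lambda)\ \cong\ \End_{\Ak(W_\lambda)}(\eQ_0),
\]
using Lemma~\ref{summand-endo22}(2) (which only needs $\rad_{\vs}$ surjective, not $\Ak(W)$ simple) for the middle step and Lemma~\ref{lem:genOsphparabolicequi}(1) for the last. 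So the real content reduces to: \emph{$\euls{H}_q(W_\lambda)$ is always a quotient of $\End_{\Ak(W_\lambda)}(\eQ_0)$, or at least is semisimple whenever $\End_{\Ak(W_\lambda)}(\eQ_0)$ is semisimple.}

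Second, I would argue the ring-theoretic step. If $\eG_\lambda$ is semisimple, then since $\eG_\lambda\in\eC$ has finite length (Corollary~\ref{cor:admissible-holonomic} and the length-category remark following it), $\eG_\lambda$ is a finite direct sum of simple $\dd$-modules, so by Schur's lemma and the Artin--Wedderburn theorem $\End_{\dd}(\eG_\lambda)$ is a semisimple Artinian ring (a finite product of matrix rings over the division rings $\End_{\dd}(\text{simple factor})$; these are division rings since $\eC$ is a length category). Hence $B:=\End_{\Ak(W_\lambda)}(\eQ_0)$ is semisimple Artinian. Now I need to pass from semisimplicity of $B$ to semisimplicity of $\euls{H}_q(W_\lambda)$. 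The cleanest route: by Lemma~\ref{lem:HCprojiffQproj}(1,2), $\End_{\Ak(W_\lambda)}(\eQ_0)=\End_{\Hk(W_\lambda)}(\mathfrak{H}_\kappa)$ where $\mathfrak{H}_\kappa$ is the Cherednik Harish-Chandra module; the $\mr{KZ}$-functor realises $\euls{H}_q(W_\lambda)=\End_{\euls{O}_\kappa}(P_{\mathrm{KZ}})$, and there is always a natural algebra homomorphism $\End_{\Hk(W_\lambda)}(\mathfrak{H}_\kappa)\to \End_{\euls{H}_q}(\mr{KZ}(\mathfrak{H}_\kappa))=\euls{H}_q(W_\lambda)$ coming from applying the (exact, and for category $\euls{O}$ faithful on projectives in the appropriate sense) functor $\mr{KZ}$; moreover this map is surjective because $\mr{KZ}(\mathfrak{H}_\kappa)\cong P_{\mathrm{KZ}}$ is a progenerator of $\euls{H}_q(W_\lambda)\lmod$ and $\mr{KZ}$ is a quotient functor with $\euls{H}_q(W_\lambda)=\End_{\euls{O}_\kappa}(P_{\mathrm{KZ}})$. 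A quotient of a semisimple Artinian ring is semisimple Artinian, so $\euls{H}_q(W_\lambda)$ is semisimple; since it is finite-dimensional over $\C$ this is the desired conclusion.

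\textbf{Main obstacle.} The delicate point is justifying the surjection $\End_{\Hk(W_\lambda)}(\mathfrak{H}_\kappa)\twoheadrightarrow \euls{H}_q(W_\lambda)$ \emph{without} the simplicity hypothesis on $\Ak(W_\lambda)$ (which is precisely what forces $\mathfrak{H}_\kappa\cong P_{\mathrm{KZ}}$ and makes it an isomorphism, cf.\ Corollary~\ref{A-projectives}(3)). In the non-simple case $\mathfrak{H}_\kappa$ need not be projective, so one cannot directly quote $\End(\mathfrak{H}_\kappa)=\euls{H}_q$. I expect the right workaround is: $\mr{KZ}$ is an exact functor and $\mr{KZ}(\mathfrak{H}_\kappa)=P_{\mathrm{KZ}}$ always (this holds because $e\mathfrak{H}_\kappa=\eQ_0$ and $\mr{KZ}(\mathfrak{H}_\kappa)=\Hom_{\Hk}(P_{\mathrm{KZ}},\mathfrak{H}_\kappa)$ has the universal property of $P_{\mathrm{KZ}}$ via the double-centralizer / $1$-faithfulness of $\mr{KZ}$), so applying $\mr{KZ}$ gives an algebra map $\End_{\Hk}(\mathfrak{H}_\kappa)\to\End_{\euls{H}_q}(P_{\mathrm{KZ}})=\euls{H}_q(W_\lambda)$; for surjectivity one uses that $P_{\mathrm{KZ}}$ is a quotient of $\mathfrak{H}_\kappa$ (Losev, \cite{LosevTotally}) plus the fact that a surjection of modules induces a surjection on the relevant Hom-space after applying the exact quotient functor $\mr{KZ}$, because every endomorphism of $P_{\mathrm{KZ}}=\mr{KZ}(\mathfrak{H}_\kappa)$ lifts along $\mr{KZ}$ (the kernel of $\mathfrak{H}_\kappa\to P_{\mathrm{KZ}}$ being killed by $\mr{KZ}$, i.e.\ torsion, so $\Hom$ into $P_{\mathrm{KZ}}$ factors through). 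If lifting turns out to be subtle, an alternative is to observe directly that $\euls{H}_q(W_\lambda)$ is isomorphic to $B/\mathrm{rad}(B')$ for the relevant torsion ideal and conclude from semisimplicity of $B$; either way the only real work is this one surjectivity/quotient statement, and everything else is the formal Artin--Wedderburn reduction together with citations to results already recalled in the excerpt.
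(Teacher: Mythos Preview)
Your approach tries to avoid the simplicity hypothesis on $\Ak(W_\lambda)$ throughout, and this is precisely where the gap lies. The paper's proof is much shorter and goes in the opposite direction: it first \emph{recovers} the simplicity of $\Ak(W_\lambda)$ from the semisimplicity of $\eG_\lambda$, and then everything becomes easy. Concretely, Corollary~\ref{torsionfree-converse2} (which only needs $\rad_{\vs}$ surjective) says that if $\Ak(W_\lambda)$ is \emph{not} simple then $\eG_\lambda$ admits a non-split surjection onto a nonzero $\delta$-torsion module, hence cannot be semisimple. Taking the contrapositive, $\eG_\lambda$ semisimple forces $\Ak(W_\lambda)$ simple. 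Once that is in hand, Lemma~\ref{lem:endGHeckealg} gives the isomorphism $\End_{\ddd}(\eG_\lambda)\cong\euls{H}_q(W_\lambda)$ on the nose, and the Artin--Wedderburn step you outline finishes the proof. You missed this key input entirely.

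As for your own route, the ``main obstacle'' you identify is real and unresolved. Your claim that the map $\End_{\Hk(W_\lambda)}(\mathfrak{H}_\kappa)\to\euls{H}_q(W_\lambda)$ induced by $\mr{KZ}$ is surjective is not justified: knowing that $\mr{KZ}(\mathfrak{H}_\kappa)$ is the regular $\euls{H}_q$-module does not by itself produce preimages of arbitrary elements of $\euls{H}_q$ as endomorphisms of $\mathfrak{H}_\kappa$; your lifting argument would require $\mathfrak{H}_\kappa$ to be projective (or to map to $P_{\mathrm{KZ}}$ with the right lifting properties), which is exactly the statement equivalent to $\Ak(W_\lambda)$ simple by Losev. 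The fallback suggestion that $\euls{H}_q(W_\lambda)\cong B/\mathrm{rad}(B')$ for some ``torsion ideal'' is undefined and not a substitute for an argument. Incidentally, your assertion that Lemma~\ref{summand-endo22}(2) holds without $\Ak(W)$ simple is plausible (one can argue directly from Lemma~\ref{summand-endo21M} and Lemma~\ref{hom=invariants}), but the paper proves it under the standing simplicity hypothesis, so you would need to supply that argument separately.
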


\begin{proof}   
	If $\eG_{\lambda}$ is semisimple then Corollary~\ref{torsionfree-converse2}  implies that $\Ak(W_{\lambda})$ is a simple algebra. Therefore, it follows from Lemma~\ref{lem:endGHeckealg} that the Hecke algebra $\euls{H}_{q}(W_{\lambda})$ is semisimple.
\end{proof}

\begin{lemma} \label{grad}
	Let $\{u_1,\dots, u_n \}$ be algebraically independent generators of $\BC[V]^G$. Then there 
	exist $G$-invariant  derivations $\theta_i\in \mathrm{Der}(V)$
	 for $1\leq i\leq n =\dim\h$ such that 
	$$
	\Der (V_{\reg}) = \BC[V_{\reg}] \, \tau(\g) \; \oplus \;
	\Bigl(\bigoplus_{i=1}^n\,   \BC[V_{\reg}] \, \theta_i \Bigr).  
	$$
	Consequently, for $\eS = \{ \delta^k \}$, $\eMt_{\eS} = \eD(V_{\reg})/  \eD(V_{\reg})\tau(\g)$ is a free left
	$\BC[V_{\reg}]$-module with basis 
	$$
	\left\{\Theta^{\mathbf{i}} =\theta^{i_1}_1\cdots \theta^{i_\ell}_\ell \; : \;
	\mathbf{i} = (i_1, \dots,i_n)\in \mathbb{N}^n  \right\}.
	$$
\end{lemma}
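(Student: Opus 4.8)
\textbf{Proof plan for Lemma~\ref{grad}.}
The strategy is to combine the classical slice theorem / polar geometry for the stable polar representation $V$ with the Levi decomposition of the tangent sheaf on the regular locus. First I would recall from \eqref{lem:stablepolaropen} that, since $V$ is stable, $V_{\reg}\cong G\times_{N_G(\h)}\h_{\reg}$; in particular the quotient map $\pi\colon V_{\reg}\to \h_{\reg}/W$ is smooth, the $G$-action on $V_{\reg}$ is free modulo the finite group $N_G(\h)/Z_G(\h)=W$, and the fibres of $\pi$ are exactly the $G$-orbits (each of dimension $\dim V-\dim\h=m$). Consequently the Lie algebra map $\tau\colon\g\to\Der(V)$ has image, after localising at $\delta$, a locally free $\C[V_{\reg}]$-submodule $\C[V_{\reg}]\tau(\g)$ of rank $m$ (this uses that the stabiliser of a regular element is finite, i.e.\ $V$ is locally free on $V_{\reg}$, so $\tau$ is injective with constant-rank differential there). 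Since $\Der(V_{\reg})$ is a locally free $\C[V_{\reg}]$-module of rank $\dim V=m+n$, the quotient $\Der(V_{\reg})/\C[V_{\reg}]\tau(\g)$ is locally free of rank $n$ and is naturally identified with the $G$-equivariant pullback $\pi^{*}\mathcal{T}_{\h_{\reg}/W}$ — equivalently with the $W$-invariant vector fields on $V_{\reg}$ coming from $\h_{\reg}$.

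The next step is to produce an actual $\C[V_{\reg}]$-module splitting by globally trivialising the rank-$n$ quotient. Using the algebraically independent generators $u_1,\dots,u_n$ of $\C[V]^G$, the isomorphism $\rr\colon\C[V]^G\isom\C[\h]^W$ of Theorem~\ref{thm:radial-exists}(2) shows that $u_1,\dots,u_n$ restrict to a system of parameters on $\h_{\reg}/W$ whose differentials $du_1,\dots,du_n$ are a free basis of $\Omega^1_{\h_{\reg}/W}$ — this is exactly where the discriminant must be inverted, since the Jacobian of the $u_i$ is (a unit times) $\delta$ on the regular locus. Dualising, the coordinate vector fields $\partial/\partial u_i$ on $\h_{\reg}/W$ form a free basis of $\mathcal{T}_{\h_{\reg}/W}$; pulling these back along the smooth morphism $\pi$ and choosing $G$-invariant lifts $\theta_i\in\Der(V_{\reg})$ (possible because, $G$ being reductive and $V_{\reg}$ affine, the surjection of $G$-modules $\Der(V_{\reg})\twoheadrightarrow\Der(V_{\reg})/\C[V_{\reg}]\tau(\g)$ admits a $G$-equivariant section, and we then multiply by a suitable power of $\delta$ to clear denominators so that in fact the $\theta_i$ may be taken in $\Der(V)$) yields the asserted decomposition $\Der(V_{\reg})=\C[V_{\reg}]\tau(\g)\oplus\bigoplus_i\C[V_{\reg}]\theta_i$.

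For the consequence, note that $\eMt_{\eS}=\eD(V_{\reg})/\eD(V_{\reg})\tau(\g)$ is the localisation along $\delta$ of $\eMt=\ddd/\ddd\tau(\g)$; giving $\eD(V_{\reg})$ its order filtration, the associated graded is $\C[V_{\reg}][\Der(V_{\reg})]=\Sym_{\C[V_{\reg}]}\Der(V_{\reg})$, and the decomposition just established implies $\gr\eMt_{\eS}\cong\Sym_{\C[V_{\reg}]}\bigl(\bigoplus_i\C[V_{\reg}]\bar\theta_i\bigr)$, which is free over $\C[V_{\reg}]$ on the monomials in the symbols $\bar\theta_i$. A standard filtered-to-graded lifting argument (the ordered monomials $\Theta^{\mathbf{i}}=\theta_1^{i_1}\cdots\theta_n^{i_n}$ have leading symbols the free basis of $\gr$, and one uses that the $\theta_i$ are $G$-invariant derivations so that reordering commutators land in lower filtration and in $\eD(V_{\reg})\tau(\g)$ after passing to $\eMt_{\eS}$) then shows the $\Theta^{\mathbf i}$ are a free $\C[V_{\reg}]$-basis of $\eMt_{\eS}$. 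The main obstacle is the very first geometric input: verifying carefully that on the \emph{stable} polar representation the map $\tau$ has constant rank $m$ on all of $V_{\reg}$ and that the quotient bundle is genuinely $\pi^*\mathcal T_{\h_{\reg}/W}$ — everything else (equivariant splitting via reductivity, trivialising via the $u_i$, the PBW-style basis argument) is routine once that is in place; I expect the authors invoke \cite{BLNS} for precisely this point.
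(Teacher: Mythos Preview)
Your proposal is correct and shares the paper's core idea—use $V_{\reg}\cong G\times_N\h_{\reg}$ to lift vector fields from the base—but the execution differs. The paper works upstairs on the product $G\times\h_{\reg}$: it takes $W$-invariant derivations $\theta_1',\dots,\theta_n'$ freely spanning $\Der(\h_{\reg})$ (citing \cite[Proposition~4.4.1]{BellamySRAlecturenotes}), writes $\Der(G\times\h_{\reg})=B\tau_G(\g)\oplus B\Theta$ by hand, and then descends to $V_{\reg}=G\times_N\h_{\reg}$ by taking $N$-invariants via \cite[Corollary~4.5]{Schwarz}; finally it clears poles with a power of $\delta$. You instead work directly on $V_{\reg}$ via the relative tangent sequence of the smooth map $\pi\colon V_{\reg}\to\h_{\reg}/W$, trivialise the quotient $\pi^*\mathcal T_{\h_{\reg}/W}$ using the coordinates $u_i$ (which the paper's proof never actually invokes), and lift the $G$-fixed basis vectors by Reynolds averaging. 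Your route is more self-contained, avoiding the two external citations; the paper's is more explicit about the product structure and makes the identification $\tau_G(\g)\leftrightarrow\tau(\g)$ concrete. One small point: the $G$-equivariant section you obtain from reductivity is a priori only $\C$-linear, so you should (as you implicitly do) lift the individual $G$-fixed elements $\pi^*(\partial/\partial u_i)$ first and then extend $\C[V_{\reg}]$-linearly. For the ``consequently'' clause the paper offers no argument at all, so your filtered-to-graded PBW sketch already goes beyond what is written there.
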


\begin{proof}
	Write  $W = N / Z$ where $N=N_G(\h)\supset Z_G(\h)$ as in \cite[Equation~3.2]{BLNS}. 
	Let $\tau_G\colon \mf{g} \to \Der G$ be the differential of the left regular 
	action of $G$ on itself. Then $\tau_G$ is injective, with image
	being the space of \textit{right} invariant vector fields on $G$ and $\Der ( G) = \euls{O}_G \o_{\C} \tau_G(\mf{g})$. 
	
	The proof of \cite[Proposition~4.4.1]{BellamySRAlecturenotes} shows that there exist $W$-invariant derivations $\theta_1', \ds, \theta_{n}'$ in $ \Der ( \h_{\reg})$ such
	   that the latter is a free $\C[\h_{\reg}]$-module with basis $\{ \theta_1', \ds, \theta_{n}' \}$. If $\Theta$ is the space spanned by these $n$ derivations then  
	\[
	 \Der  (G \times \h_{\reg}) = B\tau_G(\mf{g}) \oplus B\Theta,  
	\]
	where $B := \C[G \times \h_{\reg}]$. Here $G$ acts  on $G \times \h_{\reg}$ by acting on the first factor only. Thus $\Theta$ consists of $G$-invariant derivations (in contrast, $G$ acts non-trivially on $\tau_G(\mf{g})$). Recall that $N^{\circ} = Z^{\circ}$, with Lie algebra $\mf{z}$. The group $N$ acts diagonally on $G \times \h_{\reg}$, on the right on $G$, and this is a free action. By, for example,  \cite[Corollary~4.5]{Schwarz}, this implies that 
	\begin{align*}
	 \Der  (G \times_N \h_{\reg}) & = \left( \left( B \otimes_{\euls{O}(G)} \frac{\Der (G)}{( \Der  (G)) \mf{z}} \right) \oplus (B \o_{\C[\h_{\reg}]} \Der ( \h_{\reg})) \right)^N  \\
	& = \left(B\tau_G(\mf{g}) \oplus B\Theta\right)^N  \\
	& = \C[G \times_N \h_{\reg}]\tau_G(\mf{g}) \oplus \C[G \times_N \h_{\reg}]\Theta.
	\end{align*}
	The $G$-equivariant identification $G \times_N\h_{\reg}\cong V_{\reg}$ from ~\eqref{lem:stablepolaropen} induces an isomorphism 
	$
	 \Der  (G \times_N \h_{\reg}) \cong \Der( V_{\reg}).
	$
	Under this isomorphism, $\tau_G(\mf{g})$ is identifies with $\tau(\mf{g})$. The image of the $\theta_i'$ under this identification may have poles along the complement to $V_{\reg}$. We choose $k > 0$ such that the image of $\delta^k \theta_i'$ is regular on the whole of $V$. Let $\theta_i$ be the image of $\delta^k \theta_i'$.  
\end{proof}

 We are now ready to  prove the following very useful result, which generalises \cite[(6.4)]{AJM}, although the proof is essentially the same. 

\begin{lemma}\label{lem:keynonvanishG}
	Let  $Q\varsubsetneq P$ be $\D(V_{\reg})$-submodules of  $\eM_\eS$ such that $P/Q$ is torsion-free as a $\C[V_{\reg}]$-module. Then $(P/Q)^G \neq 0$. 
\end{lemma}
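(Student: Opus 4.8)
\textbf{Proof plan for Lemma~\ref{lem:keynonvanishG}.}

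The plan is to exploit the explicit $\C[V_{\reg}]$-basis of $\eM_\eS$ coming from Lemma~\ref{grad} together with the freeness of $\eMt_\eS$ over $\C[V_{\reg}]$. Recall from Lemma~\ref{grad} that $\eMt_\eS = \eD(V_{\reg})/\eD(V_{\reg})\tau(\g)$ is a free left $\C[V_{\reg}]$-module on the monomials $\Theta^{\mathbf{i}}$ in the $G$-invariant derivations $\theta_1,\dots,\theta_n$, and recall from Proposition~\ref{semiprime2} (its localisation) that $\eM_\eS$ is a direct summand of $\eMt_\eS$ as an $(\eD(V_{\reg}),\Ak_\eS)$-bimodule; in particular $\eM_\eS$ is again a $G$-equivariant free $\C[V_{\reg}]$-module, and the $G$-action fixes each basis element $\Theta^{\mathbf{i}}$ (acting only through the coefficients in $\C[V_{\reg}]$). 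Since $\eS=\{\delta^k\}$ consists of $G$-invariants, $\eM_\eS$ is $(G,\chi)$-monodromic.

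First I would pass to the ``lowest-order term'' filtration. Give $\eM_\eS$ the filtration by order in the $\theta_i$ (equivalently, by the $\C[V_{\reg}]$-span of the $\Theta^{\mathbf{i}}$ with $|\mathbf{i}|\le d$); the associated graded is a free $\C[V_{\reg}]$-module on the symbols $\overline{\Theta^{\mathbf{i}}}$, and the $G$-action is compatible with this filtration because the $\theta_i$ are $G$-invariant. Now take $0\neq p\in P$, choose a representative and look at its leading term: write $p \equiv \sum_{|\mathbf{i}|=d} f_{\mathbf{i}}\,\Theta^{\mathbf{i}}$ modulo lower order, with some $f_{\mathbf{i}}\neq 0$ and $d$ minimal such that this is nonzero modulo $Q$ (possible since $P/Q\neq 0$). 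Averaging $p$ over $G$: because $\chi$ restricted to $[\g,\g]$ is trivial and $\eM_\eS$ is monodromic, the Reynolds-type argument will show that the leading-term map $P\to \mathrm{gr}\,\eM_\eS$ is $G$-equivariant, where $G$ acts on $\mathrm{gr}\,\eM_\eS = \bigoplus_{\mathbf{i}} \C[V_{\reg}]\,\overline{\Theta^{\mathbf{i}}}$ only through $\C[V_{\reg}]$.

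The key point is then: since $P/Q$ is $\C[V_{\reg}]$-torsion-free, the lowest-order parts of elements of $Q$ cannot ``cancel'' all of the lowest-order parts of elements of $P$ in a way that destroys $G$-invariance. More precisely, let $d$ be minimal with $\mathrm{gr}_d(P)\not\subseteq \mathrm{gr}_d(Q)$ inside $\mathrm{gr}_d\,\eM_\eS \cong \bigoplus_{|\mathbf{i}|=d}\C[V_{\reg}]$. This quotient $\mathrm{gr}_d(P)/\mathrm{gr}_d(Q)$ is a nonzero $G$-stable $\C[V_{\reg}]$-submodule-quotient of a free $\C[V_{\reg}]$-module on which $G$ acts only via $\C[V_{\reg}]$; moreover it embeds (up to torsion) into $P/Q$, which is torsion-free, so it is itself a nonzero torsion-free $\C[V_{\reg}]$-module. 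Since $\C[V_{\reg}]$ is a $G$-module with $\C[V_{\reg}]^G\cong\C[V_{\reg}/\!\!/G]$ and the $\Theta^{\mathbf{i}}$ are $G$-fixed, picking any $G$-stable finitely generated nonzero submodule and averaging a nonzero element over $G$ (the average is nonzero because torsion-freeness rules out the averaged element being killed, using that some nonzero multiple lands in the original element's $\C[V_{\reg}]^G$-span) produces a nonzero $G$-invariant element of $\mathrm{gr}_d(P)/\mathrm{gr}_d(Q)$. Lifting this back through the equivariant leading-term map gives a nonzero element of $(P/Q)^G$.

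\textbf{The main obstacle} I anticipate is making the averaging argument precise: one must check that averaging a nonzero leading-term over $G$ does not yield zero. This is exactly where torsion-freeness of $P/Q$ as a $\C[V_{\reg}]$-module is essential — if $P/Q$ had torsion one could have $p\notin Q$ but $\delta^k p\in Q$ and the $G$-average could collapse. The clean way to handle this is to reduce to a single homogeneous component: restrict attention to one index $\mathbf{i}$ with $f_{\mathbf{i}}\neq 0$ of minimal degree, note $\C[V_{\reg}]\otimes_{\C[V_{\reg}]^G}(-)$ is faithfully flat-ish over the invariants in the relevant sense, and invoke that the Reynolds operator $\C[V_{\reg}]\to\C[V_{\reg}]^G$ is a nonzero $\C[V_{\reg}]^G$-module map which is injective on any torsion-free module after multiplying by a suitable invariant (or simply: if $f\in\C[V_{\reg}]$ and $f$ generates a torsion-free image but $\int_G g\cdot f = 0$, derive a contradiction by evaluating at a point of the dense regular locus where $G$ acts with finite stabiliser, using that $V$ is stable so $V_{\reg}\cong G\times_{N_G(\h)}\h_{\reg}$ by \eqref{lem:stablepolaropen}). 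This last description of $V_{\reg}$ is really the crux: on $G\times_{N}\h_{\reg}$ the $G$-invariant functions and $G$-invariant derivations are completely transparent, and the averaging reduces to the obvious fact that a nonzero $N/N^\circ$-representation has, after the appropriate twist by $\chi$, a nonzero fixed vector, exactly as in \cite[(6.4)]{AJM}.
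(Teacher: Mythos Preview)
Your overall strategy---filter $\eM_\eS$ using the basis $\{\Theta^{\mathbf{i}}\}$ from Lemma~\ref{grad}, exploit the $G$-invariance of the $\theta_i$, and invoke stability of $V$---is the same as the paper's. But two steps of your execution do not work as written, and the paper handles them differently.

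First, the paper filters by a \emph{lexicographic} order on $\NN^n$, not by total degree. The point is that then each graded piece $\gr_{\mathbf{q}}\eM_\eS$ is rank one over $\C[V_{\reg}]$, so $\gr_{\mathbf{q}} P = I\,\Theta^{\mathbf{q}}$ for a $G$-stable \emph{ideal} $I \subseteq \C[V_{\reg}]$. Popov's theorem \cite[Theorem~4(2)]{PopovStability} (stability of $V$ implies every nonzero $G$-stable ideal of $\C[V_{\reg}]$ has nonzero invariants) then gives $I^G \neq 0$ directly. With your total-degree filtration, $\gr_d P$ sits inside $\bigoplus_{|\mathbf{i}|=d}\C[V_{\reg}]$ and need not be an ideal, so Popov's theorem does not apply; your attempt to fix this by ``averaging a nonzero element'' is not valid---the Reynolds operator certainly can annihilate individual elements, and torsion-freeness of the ambient module does not prevent this.

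Second, and more seriously, you do not justify the lift from the graded piece back to $(P/Q)^G$. You claim $\gr_d(P)/\gr_d(Q)$ ``embeds (up to torsion) into $P/Q$'', but $\gr_d(P)/\gr_d(Q) \cong F_d P / (F_d Q + F_{d-1} P)$ is a \emph{quotient} of $F_d P / F_d Q \hookrightarrow P/Q$, not a subobject, so finding an invariant there tells you nothing about $(P/Q)^G$. The paper instead uses torsion-freeness to prove the sharper statement $F_{\mathbf{q}} Q = 0$ at the minimal index $\mathbf{q}$ where $F_{\mathbf{q}} P \neq F_{\mathbf{q}} Q$: if $b = \beta\Theta^{\mathbf{q}} + \cdots \in F_{\mathbf{q}} Q$ and $f = \alpha\Theta^{\mathbf{q}} + \cdots \in F_{\mathbf{q}} P \smallsetminus F_{\mathbf{q}} Q$ with $\alpha,\beta\neq 0$, then $\beta f - \alpha b$ has strictly smaller filtration index, hence lies in $F_{\mathbf{j}} P = F_{\mathbf{j}} Q \subseteq Q$ by minimality, forcing $\beta f \in Q$ and contradicting torsion-freeness of $P/Q$. (This single-coefficient cancellation works precisely because the lexicographic order gives rank-one graded pieces.) Once $F_{\mathbf{q}} Q = 0$, the submodule $F_{\mathbf{q}} P$ embeds in $P/Q$, and $(F_{\mathbf{q}} P)^G \neq 0$ from the first step gives the conclusion.
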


\begin{proof}
	By Proposition~\ref{semiprime2}(2), $\eM_{\eS}=\eMt_{\eS}  = \eD(V)_{\eS}/\eD(V)_{\eS}\tau(\g)$.  Fix a lexicographic
	ordering $\preceq $ on $\NN^\ell$. Then, by Lemma~\ref{grad},  and in the notation of that result, 
	any
	element $\alpha\in \eM_\eS$ can be uniquely written $\alpha= a_{\mathbf{j}}
	\Theta^{\mathbf{j}} + \sum_{\mathbf{i} \prec {\mathbf{j}}} a_{\mathbf{i}} \Theta^{\mathbf{i}}$, for some
	$a_{\mathbf{i}} \in \BC[V_{\reg}]$.  For $\mathbf{i} \in \NN^\ell$, set
	$$
	F_{\mathbf{i}} \eM_\eS= \sum_{\mathbf{j} \preceq {\mathbf{i}} }\BC[V_{\reg}] \Theta^{\mathbf{j}} \,
	\subset \, \eM_\eS.
	$$
	Since the $\{ \theta_i \}$ are $G$-invariant, each $F_{\mathbf{i}} \eM_\eS$ is
	an $(\BC[V_{\reg}],G)$-module.  It follows from Lemma~\ref{grad} that the
	$\{F_{\mathbf{i}} \eM_\eS\}$ form an exhaustive filtration on $\eM_\eS$. Let $P$ be a
	$\eD(V)_\eS$-submodule of $\eM_\eS$ and set $F_{\mathbf{i}} P = P \cap F_{\mathbf{i}}
	\eM_\eS$. By \cite[Lemma~6.1]{AJM}, the $\{F_{\mathbf{i}} P\}$ provide a
	filtration of $P$ by $(\BC[V_{\reg}],G)$-modules that are locally finite as
	$G$-modules. For ${\mathbf{j}} \in \NN^\ell$, set $\gr_{\mathbf{j}} P = F_{\mathbf{j}} P/
	\bigl( \sum_{{\mathbf{i}} \prec {\mathbf{j}}} F_{\mathbf{i}} P\bigr)$. Thus, $\gr_{\mathbf{j}} P
	\subseteq \gr_{\mathbf{j}} \eM_\eS \cong \BC[V_{\reg}] \Theta^{\mathbf{j}} $. 
	
	 Since $G$ is reductive, $ F_\bsq P \cong \gr_\bsq P \, \bigoplus
	\, \sum_{{\mathbf{i}} \prec \bsq} F_{\mathbf{i}} P$ as $G$-modules.  Therefore, $(F_\bsq
	P)^G \ne 0$ whenever $(\gr_\bsq P)^G \ne 0 $.  Suppose that $\gr_\bsq
	P \ne 0$.  Then $\gr_\bsq P = I \, \Theta^\bsq $ for some nonzero,
	$G$-stable ideal $I$ of $\BC[V_{\reg}]$. Since $V$ is a stable $G$-module, \cite[Theorem~4.(2)]{PopovStability} implies that
	$I^G \ne 0$, and so Lemma~\ref{grad} implies that $(\gr_\bsq P)^G =
	I^G \, \Theta^\bsq \ne 0$. To summarise, we have proved:
	\begin{equation} \label{eq3}
	\gr_\bsq P \ne 0  \;  \Longrightarrow \;  (F_\bsq P)^G \ne 0
	\end{equation}

	Let $Q \varsubsetneq P $ be submodules of $\ \eM_\eS$   that satisfy the hypotheses  of the lemma. Pick $\bsq \in \NN^\ell$
	minimal such that $F_\bsq Q \varsubsetneq F_\bsq P$.  We claim that
	$F_\bsq Q = 0$.  If not, we may pick $b = \beta \Theta^\bsq + b' \in
	F_\bsq Q$ and $f = \alpha \Theta^\bsq + f' \in F_\bsq P \setminus
	F_\bsq Q $, with $0 \ne \beta, \alpha \in \BC[V_{\reg}]$ and $b', f' \in
	\sum_{{\mathbf{i}}\prec \bsq} F_{\mathbf{i}} \eM'$. Then, 
	$$\beta f - \alpha b \ = \  \beta
	f' - \alpha b' \  \in \  F_{\mathbf{j}} \eM' \cap P \ = \  F_{\mathbf{j}} P,$$ for some ${\mathbf{j}} \prec
	\bsq$. Thus, $\beta f \in \alpha b + F_{\mathbf{j}} Q \subseteq Q$.  Since
	$P/Q$ is a torsionfree $\C[V_{\reg}]$-module, this forces $\beta = 0$ and a
	contradiction. Thus, $F_\bsq Q = 0$. Finally, since $F_\bsq Q = 0$ and $\gr_\bsq P \ne 0$, \eqref{eq3} implies that $(P/Q)^G \supseteq (F_\bsq P)^G \ne 0 $.
\end{proof}	
	
\begin{proposition}\label{bigg}
	Let $L$ be a nonzero  submodule of $\eG_{\lambda}$. Then $L^G \neq 0$. 
\end{proposition}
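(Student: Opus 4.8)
The plan is to reduce the statement about arbitrary submodules $L\subseteq\eG_\lambda$ to the situation of Lemma~\ref{lem:keynonvanishG}, which handles submodules of the localised module $\eM_\eS$ that are torsion-free over $\C[V_{\reg}]$. The key point is that $\eG_\lambda$ itself is $\delta$-torsion-free by Theorem~\ref{torsionfree}(2) (applied with $d=\delta$, using that $\eQ_\lambda$ is a projective object of $\Osph$ by Corollary~\ref{A-projectives}(1), since $\Ak(W)$ is simple), so $\eG_\lambda$ embeds into its localisation $(\eG_\lambda)_\eS = \eM_\eS\otimes_{\Aak_\eS}(\eQ_\lambda)_\eS$, and similarly $L$ embeds there. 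First I would set up this embedding and identify $(\eG_\lambda)_\eS$ as a submodule-quotient configuration inside a free $\C[V_{\reg}]$-module, using Lemma~\ref{grad} (which gives $\eM_\eS = \eMt_\eS = \dd(V_{\reg})/\dd(V_{\reg})\tau(\g)$ as a free $\C[V_{\reg}]$-module) together with the presentation of $\eQ_\lambda$.

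The cleanest route is probably the following. Consider the surjection $\eM_\eS\twoheadrightarrow(\eG_\lambda)_\eS$, i.e.\ $(\eG_\lambda)_\eS = \eM_\eS/\eM_\eS\mathfrak{m}_\lambda$ after localising. Let $P\subset\eM_\eS$ be the preimage of (the localisation of) $L$ under this map, and let $Q\subset\eM_\eS$ be the preimage of $0$, so $Q=\eM_\eS\mathfrak{m}_\lambda$ and $P/Q\cong L_\eS$. To apply Lemma~\ref{lem:keynonvanishG} I need $P/Q$ to be torsion-free as a $\C[V_{\reg}]$-module; but $P/Q\cong L_\eS$, which is $\delta$-invertible hence automatically $\delta$-torsion-free, and more importantly $L_\eS$ is a $\dd(V_{\reg})$-submodule of $(\eG_\lambda)_\eS$, which restricts to an integrable connection on $V_{\reg}$ by Lemma~\ref{lem:Vregintconnection} (since $\eG_\lambda\in\eC$ and $V$ is stable polar) — in particular $(\eG_\lambda)_\eS$, and hence $L_\eS$, is a locally free, a fortiori torsion-free, $\C[V_{\reg}]$-module. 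So Lemma~\ref{lem:keynonvanishG} gives $(L_\eS)^G = (P/Q)^G\neq 0$.

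Finally I would descend the nonvanishing of $G$-invariants from $L_\eS$ back to $L$. Since $G$ is reductive, taking $G$-invariants is exact, so $(L_\eS)^G = (L^G)_\eS$; as this is nonzero, $L^G\neq 0$. This completes the argument. The step I expect to require the most care is matching up the submodule $L$ of $\eG_\lambda$ with a pair $Q\subsetneq P$ of $\dd(V_{\reg})$-submodules of $\eM_\eS$ satisfying exactly the hypotheses of Lemma~\ref{lem:keynonvanishG} — in particular verifying that $P\neq Q$ (equivalently $L\neq 0$ survives localisation, which is where $\delta$-torsion-freeness of $\eG_\lambda$ from Theorem~\ref{torsionfree} is essential) and that the quotient is $\C[V_{\reg}]$-torsion-free. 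An alternative, if the localisation bookkeeping proves awkward, is to argue directly: $L$ is an admissible $\dd$-module, so $\Hom_\ddd(\eM,L)\in\Osph$ by Lemma~\ref{torsion-consequence}(2) and equals a submodule of $\eQ_\lambda$ via Corollary~\ref{summand-endo3} and exactness of $\BH$; one then shows this submodule is nonzero because otherwise $L$ would be a $\delta$-torsion submodule of $\eG_\lambda$ (as $\eM_\eS\to(\eG_\lambda)_\eS$ has the property that everything $G$-invariant comes from $\Aak$), contradicting Theorem~\ref{torsionfree}(2); but the localisation argument via Lemma~\ref{lem:keynonvanishG} seems the more robust one and matches the structure the section has been building toward.
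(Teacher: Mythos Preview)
Your proposal is correct and follows essentially the same route as the paper: reduce to the localisation, realise $L_\eS$ as a subquotient $P/Q$ of $\eM_\eS$, verify $\C[V_{\reg}]$-torsion-freeness, apply Lemma~\ref{lem:keynonvanishG}, and descend. Two minor remarks: (i) your justification of torsion-freeness via Lemma~\ref{lem:Vregintconnection} (integrable connection, hence locally free) is a clean variant of the paper's direct appeal to Theorem~\ref{torsionfree}(2); (ii) for the descent $(L_\eS)^G\neq 0\Rightarrow L^G\neq 0$, the relevant fact is not exactness of $(-)^G$ per se but that $\delta$ is $G$-invariant, so any $G$-invariant element of $L_\eS$ multiplied by a high power of $\delta$ lands in $L^G$ (and is nonzero since $L$ is $\delta$-torsion-free)---this is how the paper phrases it. Your suggested alternative via $\BH(L)$ would be circular, since showing $\BH(L)\neq 0$ for submodules of $\eG_\lambda$ is precisely Corollary~\ref{cor:hom(ML)}, which is deduced \emph{from} this proposition.
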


\begin{proof}
	By Theorem~\ref{torsionfree}(2), $L\hookrightarrow L_{\eS}$. Moreover, since $\delta$ is $G$-invariant, the action of
	$G$ on $\eD(V)$ extends to an action on $\eD(V)_{\eS}=\eD(V_{\reg})$ and hence to
	$L_{\eS}$.  If $0\not=f\in L_{\eS}^G$, then some $\delta^jf\in
	L^G$ and hence $L^G\not=0$, again by Theorem~\ref{torsionfree}(2). Thus, it
	suffices to prove: 
	\begin{equation}\label{invar2}
	\text{If } L \text{ is a nonzero submodule of }  \eG_{\lambda},
	\text{ then } L^G_\eS\not=0.
	\end{equation}
	
	Let $\eG=\eG_{\lambda}$ and write $\eG_{\eS} = \eD(V)_\eS / F$,
	where $F$ denotes the annihilator of the
	natural generator of $\eG_{\eS}$, noting that $F \supseteq \eD(V)_\eS \g_{\chi}$. That is, $\eG_{\eS}$ is a quotient of $\eM_{\eS}$. Therefore, $L$ can be expressed as a subquotient $P/Q$ of $\eM_{\eS}$. By
	Theorem~\ref{torsionfree}(2), $L_\eS \subseteq \eG_{\lambda,\eS}$ is 
	torsionfree as a $\BC[V_{\reg}]$-module and so Lemma~\ref{lem:keynonvanishG} applies. 
	This says that $L_\eS^G \neq 0$ which, by
	\eqref{invar2},   completes the proof of the proposition.
\end{proof}

In general, it is not true that if $M$ is a  monodromic $\dd(V)$-module with $M|_{V_{\reg}} \neq 0$ then $M^G \neq 0$. However, we do have the following positive result that complements Proposition~\ref{bigg}. 
This result will not be used elsewhere in the paper. 

\begin{lemma}\label{lem:invariantsvstfpolar}
	Assume that the stabiliser subgroup $Z \subset G$ is connected. If $M$ is a $(G,\chi)$-monodromic $\dd(V)$-module with $M|_{V_{\reg}} \neq 0$ then $M^G \neq 0$. 
\end{lemma}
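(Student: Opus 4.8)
The plan is to reduce to a statement about $(G,\chi)$-monodromic $\dd(V_{\reg})$-modules using the hypothesis that $V$ is stable polar, and then to invoke the description of $V_{\reg}$ as the homogeneous fibration $G \times_{N_G(\h)} \h_{\reg}$ from \eqref{lem:stablepolaropen}. First I would set $\euls{L} = M|_{V_{\reg}} = \dd(V_{\reg}) \otimes_{\dd(V)} M$, which is nonzero by hypothesis and is itself a $(G,\chi)$-monodromic $\dd(V_{\reg})$-module. Since $M^G \supseteq$ is not quite right---rather, restriction gives a map $M^G \to \euls{L}^G$---I would instead aim to produce a nonzero $G$-invariant element of $\euls{L}$ and then lift it: any $0 \neq f \in \euls{L}^G$ has the form $\delta^{-k} m$ for some $m \in M$ (after clearing denominators), and $\delta^k f = m$ is then a $G$-invariant element of $M$; it is nonzero because $\delta$ acts injectively on $\euls{L}$ (localization is flat, so $M \hookrightarrow M_{\eS}$ need not hold in general, but $\euls{L} = M_{\eS}$ is already $\eS$-torsionfree by construction). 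So the whole problem reduces to showing $\euls{L}^G \neq 0$.

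Next I would exploit the isomorphism $V_{\reg} \cong G \times_{N_G(\h)} \h_{\reg}$ of \eqref{lem:stablepolaropen}. Pulling back along the smooth surjection $G \times \h_{\reg} \to V_{\reg}$, the category of $(G,\chi)$-monodromic $\dd(V_{\reg})$-modules is equivalent to the category of $N_G(\h)$-equivariant, $\chi$-monodromic $\dd(G \times \h_{\reg})$-modules that are "$G$-monodromic with respect to the left $G$-action on the first factor." Because $G$ acts freely (indeed transitively on fibres) and $\chi$ restricted to the centraliser datum is controlled, such a module is, up to the $N_G(\h)/N_G(\h)^{\circ}$-structure, equivalent to a $(\chi)$-monodromic $\dd(\h_{\reg})$-module together with an action of the finite group $W = N_G(\h)/Z_G(\h)$ (here I use that $N_G(\h)^{\circ} = Z_G(\h)^{\circ}$, so the connectedness of $Z$ makes $Z_G(\h)^\circ$-equivariance automatic for the monodromic structure, and the relevant finite quotient acting is $W$, possibly enlarged by $\pi_0(Z)$---but $Z$ is assumed connected). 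Concretely: $\euls{L}^G$ is identified with $(\euls{L}')^W$ where $\euls{L}'$ is the corresponding $W$-equivariant $\dd(\h_{\reg})$-module, which is nonzero since $\euls{L} \neq 0$. Then $(\euls{L}')^W \neq 0$ because $W$ is a finite group acting on a nonzero $\C$-vector space and $\C W$ is semisimple, so $\euls{L}' = (\euls{L}')^W \oplus (\text{rest})$ as a $W$-module---wait, this only gives $(\euls{L}')^W \neq 0$ if the trivial isotypic component is nonzero, which is not automatic.

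The genuine input I would use here is that $\euls{L}'$ is an \emph{integrable connection} on $\h_{\reg}$ (by the argument of Lemma~\ref{lem:Vregintconnection}, applied in the monodromic setting---the characteristic variety computation shows it is supported on the zero section), hence a $W$-equivariant vector bundle with flat connection, and on a connected space $\h_{\reg}$ (which is connected since $\h$ is irreducible as a vector space and the complement of a hypersurface is connected) such a module, being locally free of positive rank, has nonzero global sections after possibly twisting---but $G$-invariant sections correspond to global sections of the descent to $V_{\reg}$, i.e.\ to $\Hom_{\dd(V_{\reg})}(\dd(V_{\reg})/\dd(V_{\reg})\g_\chi, \euls{L}) = \euls{L}^G$ via \eqref{eq:localizatioradiso}, and the isomorphism \eqref{eq:localizatioradiso} shows $\euls{L}^G$ is a $\dd(\h_{\reg})^W$-module of generic rank equal to $(\rank \euls{L}')/|W|$ times... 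Here the cleanest argument: $\euls{L}^G = (\eMt_{\eS} \otimes_{\dd(V)} M)$-invariants, and since $\eMt_{\eS} \cong \dd(\h_{\reg})^W$-ish, one gets $\euls{L}^G \cong \dd(\h_{\reg})^W \otimes_{\text{(something)}} (\euls{L}')^W$; the nonvanishing of $(\euls{L}')^W$ then follows from Lemma~\ref{lem:keynonvanishG} applied with $P = \euls{L}'$, $Q = 0$, which is precisely the $\C[V_{\reg}]$-torsionfree hypothesis guaranteed by $\euls{L}$ being an integrable connection.

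\textbf{The main obstacle} I expect is making the descent argument in the second paragraph fully rigorous in the monodromic (rather than strictly equivariant) setting, specifically handling the character $\chi$: one must check that $\chi$ restricted to the Lie algebra of the generic stabiliser $Z_G(\h)$ is trivial (or at least integrable), so that the monodromic structure descends along $G \times_{N_G(\h)} \h_{\reg} \to V_{\reg}$ without obstruction---this is where the hypothesis that $Z$ (and hence $Z_G(\h)^\circ$) is connected is essential, and it should follow from the fact that $\chi$ arises from the $\delta_i$ via the radial parts construction, so $\chi$ kills $[\g,\g]$ and the relevant unipotent/semisimple decomposition makes $\chi|_{\mf{z}}$ the differential of a character of the connected group $Z$. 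Once that is in place, the argument is essentially a specialisation of the technique already used in Lemma~\ref{lem:keynonvanishG} and Proposition~\ref{bigg}, now run on all of $\euls{L}$ rather than on submodules of $\eG_\lambda$, so I would model the write-up closely on those proofs.
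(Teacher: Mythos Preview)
Your overall strategy—restrict to $V_{\reg}$, use $V_{\reg} \cong G \times_{N} \h_{\reg}$ to descend to $W$-equivariant $\dd(\h_{\reg})$-modules, then argue nonvanishing of $W$-invariants—matches the paper's approach, and you correctly isolate the roles of $\chi|_{\mf{z}} = 0$ (this is \cite[Lemma~6.2(1)]{BLNS}; cf.\ Remark~\ref{rem:Xchiquotientdd}) and of the connectedness of $Z$ (needed to pass from $N$-equivariant to $W$-equivariant). Your lifting step is morally right though garbled: the clean version is that since $G$ is reductive and $\delta$ is $G$-invariant, taking $G$-invariants commutes with localisation, so $(M|_{V_{\reg}})^G = (M^G)[\delta^{-1}]$ and nonvanishing of the left side forces $M^G \neq 0$.

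The genuine gap is in the final step. Your appeal to Lemma~\ref{lem:keynonvanishG} does not apply: that lemma concerns subquotients of the specific module $\eM_{\eS}$, and an arbitrary monodromic $\euls{L}$ has no reason to embed there. Your integrable-connection route also fails, because the hypothesis is only that $M$ is monodromic, not admissible, so Lemma~\ref{lem:Vregintconnection} is unavailable and $\euls{L}'$ need not be locally free. The paper instead uses a much simpler observation you overlooked: $W$ acts \emph{freely} on $\h_{\reg}$ (Notation~\ref{not:regularelementh}). For a finite group acting freely, the functor $L \mapsto L^W$ is an equivalence from $W$-equivariant $\dd(\h_{\reg})$-modules to $\dd(\h_{\reg}/W)$-modules (cf.\ Lemma~\ref{lem:finitegroupDequiv}), so $L^W = 0$ would force $L = 0$. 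This one-line argument replaces your entire final paragraph. The paper packages the descent via \cite[Proposition~9.1.1]{BG}, which supplies both the categorical equivalence $\Upsilon^*$ and the identification of invariants $(M|_{V_{\reg}})^G = (\Upsilon^* M)^W$ in one stroke.
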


\begin{proof}
	Recall that $\chi |_{\mf{z}} = 0$. If $\Upsilon \colon \h_{\reg} \hookrightarrow V_{\reg}$ is the closed embedding then
	\cite[Proposition~9.1.1(i)]{BG} says that $\Upsilon^*$ is an equivalence between the category of 
	$(G,\chi)$-monodromic $\dd(V_{\reg})$-modules and the category of $N$-equivariant $\dd(\h_{\reg})$-modules. 
	In particular, $\Upsilon^* M \neq 0$.
	
	If $K$ is a $N$-equivariant $\dd(\h_{\reg})$-module then it is also $Z$-equivariant. The group $Z$ 
	acts trivially on $\h_{\reg}$, so that the action of $N$ factors through $N/Z = W$. This implies that 
	$\tau_{\mf{h}}(\mf{z}) = 0$ and thus $\mf{z}$ acts trivially on $K$. Since $Z$ is assumed connected, 
	we deduce that $Z$ acts trivially on $K$. In other words, the category of $N$-equivariant 
	$\dd(\h_{\reg})$-modules is equivalent to the category of $W$-equivariant $\dd(\h_{\reg})$-modules. 
	The group $W$ acts freely on $\h_{\reg}$. Therefore, if $L$ is a nonzero $W$-equivariant 
	$\dd(\h_{\reg})$-module,  then $L^W \neq 0$. Finally, by \cite[Proposition~9.1.1(ii)]{BG}, 
	$M^G = (\Upsilon^* M)^W~\neq~0$.  
\end{proof}

Recall that  $\BH(N) =\Hom_\ddd(\eM,\, N)$ for $N \in (G,\chi,\ddd)\lmod$. 

 \begin{corollary}\label{cor:hom(ML)}   Assume that   ${\Aak}$ is a simple algebra. If $L$ is a left $\ddd$-submodule of $\eG_\lambda$, then $\BH(L)=L^G\not=0$.
\end{corollary}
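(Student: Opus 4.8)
The statement asserts that for a nonzero left $\ddd$-submodule $L\subseteq\eG_\lambda$ we have $\BH(L)=L^G\neq 0$, under the standing hypothesis that $\Aak$ is simple. The natural route is: first identify $\BH(L)$ with $L^G$, and then apply Proposition~\ref{bigg}, which already says exactly that $L^G\neq 0$ for nonzero submodules of $\eG_\lambda$. So the only real content of the corollary beyond Proposition~\ref{bigg} is the identification $\BH(L)=\Hom_\ddd(\eM,L)\cong L^G$.

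\textbf{First step: reduce to showing $L$ is monodromic.} Recall $\eG_\lambda=\eM\otimes_{\Aak}\eQ_\lambda$ lies in $\euls{C}$, hence is $(G,\chi)$-monodromic and admissible by Lemma~\ref{equivariant tensors}(1) together with Lemma~\ref{torsion-consequence2}(1) (or directly from Definition~\ref{defn:Glambda} and the fact $\eQ_\lambda\in\Osph$). By Lemma~\ref{equivariant tensors}(2), any $G$-stable subfactor of a monodromic module is monodromic; in particular any $\ddd$-submodule $L$ of $\eG_\lambda$, being automatically $G$-stable (it is $G$-stable since $G$ acts on $\eG_\lambda$ by $\ddd$-module automorphisms up to the character twist, and $\ddd$-submodules of monodromic modules are $\mf g$-stable, hence $G$-stable as $G$ is connected), is monodromic. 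One should cite Lemma~\ref{lem:charvar}(3) here: since $G$ is connected, a $\ddd$-submodule $L$ of a monodromic module $M$ is a $(G,\chi,\ddd)$-submodule, as the $\mf g$-action on $M$ is given by $\tau-\chi$ and hence preserves $L$.

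\textbf{Second step: identify $\BH(L)$ with $L^G$.} Once $L$ is known to be $(G,\chi)$-monodromic, Lemma~\ref{hom=invariants} gives $\Hom_\ddd(\eMt,L)\cong L^G$. Now $\eMt=\eM\oplus\eMt P$ as $(\ddd,R)$-bimodules by Proposition~\ref{semiprime2}(2), where $\eMt P$ is the $\delta$-torsion summand. Since $\eG_\lambda$, and hence its submodule $L$, is $\delta$-torsionfree by Theorem~\ref{torsionfree}(2), the homomorphisms from the $\delta$-torsion module $\eMt P$ into $L$ vanish: indeed $\Hom_\ddd(\eMt P,L)$ is $\delta$-torsion by Proposition~\ref{torsion-homs} (applied with $\eMt P$ in place of $B$, noting $\delta$ acts locally ad-nilpotently on $\eMt$ hence on $\eMt P$), while any $\delta$-torsion submodule of the $\delta$-torsionfree module $L$ must be zero. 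Hence $\Hom_\ddd(\eM,L)=\Hom_\ddd(\eMt,L)\cong L^G$. Alternatively one can argue more directly: $L$ being $\delta$-torsionfree and monodromic, a $\ddd$-map $\eMt\to L$ kills $\eMt P$ since its image is $\delta$-torsion, so $\Hom_\ddd(\eM,L)=\Hom_\ddd(\eMt,L)$, and the latter is $L^G$ by Lemma~\ref{hom=invariants}. (One could even bypass $\eMt$ entirely by invoking Corollary~\ref{summand-endo3}, which says $\BH$ is exact with $\BH\circ\HHleft\cong\mathrm{Id}$, together with Lemma~\ref{summand-endo22}, but the identification $\BH(L)=L^G$ for a general submodule $L$ is cleanest via $\eMt$.)

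\textbf{Final step and main obstacle.} With $\BH(L)=L^G$ established, the conclusion $L^G\neq 0$ is precisely Proposition~\ref{bigg}. There is essentially no obstacle here: all the substantive work has already been done in Proposition~\ref{bigg} (via Lemma~\ref{lem:keynonvanishG} and the structure of $\eM_\eS$ from Lemma~\ref{grad}) and in Theorem~\ref{torsionfree} ($\delta$-torsionfreeness). The only point requiring a modicum of care is the bookkeeping around $\eMt$ versus $\eM$: one must use $\delta$-torsionfreeness of $L$ to discard the $\eMt P$ summand, which is why the hypothesis that $\Aak$ is simple (invoked for Theorem~\ref{torsionfree} and Proposition~\ref{semiprime2}) is needed. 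Thus the proof is short: cite Proposition~\ref{bigg} for $L^G\neq 0$, and the paragraph above for $\BH(L)=L^G$.
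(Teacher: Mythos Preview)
Your proof is correct and follows essentially the same route as the paper: decompose $\eMt=\eM\oplus(\eMt/\eMt J)$ via Proposition~\ref{semiprime2}, use Theorem~\ref{torsionfree} to see $L$ is $\delta$-torsionfree so that $\Hom_\ddd(\eMt/\eMt J,L)=0$, identify $\Hom_\ddd(\eM,L)=\Hom_\ddd(\eMt,L)=L^G$ via Lemma~\ref{hom=invariants}, and conclude by Proposition~\ref{bigg}. One small correction: your first justification for $\Hom_\ddd(\eMt P,L)=0$ via Proposition~\ref{torsion-homs} is misapplied (that proposition requires the \emph{target} $L$ to be torsion, not the source), but your ``more direct'' argument immediately afterward---that the image of any map from the left $\delta$-torsion module $\eMt P$ into the $\delta$-torsionfree $L$ must vanish---is exactly right and is what the paper does.
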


\begin{proof} 
	By Proposition~\ref{semiprime2},
	$$
	\Hom_\ddd(\eMt, L) =\Hom_\ddd(\eM,L)\oplus \Hom_\ddd(\eMt/\eMt J, L).  
	$$
	However, $L$ is $\delta$-torsionfree by Theorem~\ref{torsionfree} while $\eMt/\eMt J$ is left $\delta$-torsion
	by Proposition~\ref{semiprime2}. Hence $ \Hom_\ddd(\eMt/\eMt J, L)=0$  and so 
	\[\Hom_\ddd(\eM, L) =\Hom_\ddd(\eMt,L) = L^G,\] by Lemma~\ref{hom=invariants}. Therefore, Proposition~\ref{bigg} implies that $\BH(L) \neq 0$. 
\end{proof}

Combining these results we obtain the following statement which, even in the case of  a symmetric space,
gives a strikingly complete answer to \cite[Conjecture~C4]{LS3}.  See Remark~\ref{LS-C4} 
and Theorem~\ref{cor:symmetricspacessHecke} for further comments.
 
\begin{theorem}\label{thm:semi-simplicity}   
	 Assume that   ${\Aak}$ is a simple algebra. Then $\eG_{\lambda}$ is semisimple if and only if the Hecke algebra $\euls{H}_{q}(W_{\lambda})$ is semisimple. 
	 
	 When $\euls{H}_q(W_{\lambda})$ is semisimple, each simple summand of  $\eG_{\lambda} $ has the form $L = \HHleft(\mathbb{H}(L)) =  \HHleft(L^G)$. 
\end{theorem}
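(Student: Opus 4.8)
The plan is to prove the two directions separately, using the functors $\HHleft$ and $\BH$ from Corollary~\ref{summand-endo3}, Theorem~\ref{torsionfree}, and Lemma~\ref{lem:endGHeckealg}. One direction is already essentially recorded: if $\eG_\lambda$ is semisimple, then by Corollary~\ref{cor:semi-simplicityGtoHecke} the Hecke algebra $\euls{H}_q(W_\lambda)$ is semisimple, so nothing new is needed there. The substance is the converse, together with the description of the simple summands.

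First I would assume $\euls{H}_q(W_\lambda)$ is semisimple and set $\eQ=\eQ_\lambda$, $\eG=\eG_\lambda=\HHleft(\eQ)=\eM\otimes_{\Aak}\eQ$. Since $\Ak(W)$ is simple, Lemma~\ref{lem:simpleinductionLosev} gives that $\Ak(W_\lambda)$ is simple, so by Corollary~\ref{A-projectives}(2) and Lemma~\ref{lem:genOsphparabolicequi} the object $\eQ$ is a progenerator of $\Osph_\lambda$ with $\End_{\Aak}(\eQ)\cong\euls{H}_q(W_\lambda)$; thus $\Osph_\lambda$ is equivalent to $\euls{H}_q(W_\lambda)\lmod$ and, being semisimple, $\eQ$ decomposes as a finite direct sum of simple objects $\eQ=\bigoplus_\rho \eQ^{(\rho)}$ (with multiplicities governed by $\rho\in\mathrm{Irr}\,\euls{H}_q(W_\lambda)$). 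Applying the additive functor $\HHleft$ gives $\eG=\bigoplus_\rho \HHleft(\eQ^{(\rho)})$, so it suffices to show each $L:=\HHleft(\eQ^{(\rho)})$ is a simple $\ddd$-module. For this I would take any nonzero submodule $0\neq L'\subseteq L$. By Corollary~\ref{cor:hom(ML)} (which applies since $\eG$ is a direct sum of Harish-Chandra-type modules covered by Theorem~\ref{torsionfree}, equivalently applied to $L\subseteq\eG_\lambda$ after choosing $\eP=\eQ$ projective), we get $\BH(L')=(L')^G\neq 0$. Now $\BH$ is exact by Corollary~\ref{summand-endo3}, so $\BH(L')\subseteq\BH(L)$; and the adjunction isomorphism $\mathrm{Id}\xrightarrow{\sim}\BH\circ\HHleft$ gives $\BH(L)=\eQ^{(\rho)}$, which is simple. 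Hence $\BH(L')=\eQ^{(\rho)}=\BH(L)$. Finally the counit $\HHleft\circ\BH(L')\to L'$ composed with $L'\hookrightarrow L$ realizes $\HHleft(\BH(L))=\HHleft(\eQ^{(\rho)})=L$ inside $L$, forcing $L'=L$; thus $L$ is simple. This simultaneously proves the semisimplicity of $\eG_\lambda$ and the stated form $L=\HHleft(\BH(L))=\HHleft(L^G)$ of each simple summand.

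The main obstacle I anticipate is verifying cleanly that the counit map $\HHleft\circ\BH(L')\to L'$ is an isomorphism for the relevant submodules, i.e.\ that no proper submodule of $L$ can have the same image under $\BH$; the key input is precisely Corollary~\ref{cor:hom(ML)} guaranteeing $\BH$ is faithful on nonzero submodules of $\eG_\lambda$, combined with $\BH(L)$ being simple, so that $\BH(L'/L'')\neq 0$ would contradict simplicity if $L'\subsetneq L$ were proper with $\BH(L')=\BH(L)$. I would phrase this as: if $L'\subsetneq L$ then $0\neq L/L'$, and since $L/L'$ is a quotient of $\eG_\lambda$ one must check $\BH(L/L')\neq 0$; but here it is easier to argue via submodules directly, noting $\BH(L')=\BH(L)$ together with exactness gives $\BH(L/L')=0$, whence $L/L'$ is $\delta$-torsion by Lemma~\ref{torsion-consequence}(1)-type reasoning (more precisely: a nonzero $\ddd$-module annihilated by $\BH$ restricts to zero on $V_{\reg}$ because of the isomorphism \eqref{eq:localizatioradiso}), and then Theorem~\ref{torsionfree}(3) forces $L/L'=0$. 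Assembling these steps, together with Lemma~\ref{lem:endGHeckealg} to identify $\End_\ddd(\eG_\lambda)\cong\euls{H}_q(W_\lambda)$, completes the equivalence and the description of the summands.
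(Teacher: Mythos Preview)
Your approach is essentially the paper's, run in the opposite direction: the paper starts from a simple submodule $L\subseteq\eG_\lambda$ and shows $L=\HHleft(L^G)$ is a summand, whereas you first decompose $\eQ_\lambda=\bigoplus\eQ^{(\rho)}$ and then prove each $\HHleft(\eQ^{(\rho)})$ is simple. Both routes rest on the same three inputs---Corollary~\ref{cor:hom(ML)}, exactness of $\BH$, and the unit isomorphism $\mathrm{Id}\xrightarrow{\sim}\BH\circ\HHleft$ of Corollary~\ref{summand-endo3}---so there is no real difference in content.

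Two comments on the execution. First, your counit argument is correct but should be made explicit: by naturality of the counit applied to $\iota\colon L'\hookrightarrow L$, and since $\BH(\iota)$ is the identity on $\eQ^{(\rho)}$, one gets $\iota\circ\varepsilon_{L'}=\varepsilon_L$; the triangle identity together with the unit being an isomorphism shows $\varepsilon_L$ is an isomorphism, hence $\iota$ is surjective and $L'=L$. Second, your ``alternative'' argument in the obstacle paragraph has a gap: the implication ``$\BH(N)=0\Rightarrow N|_{V_{\reg}}=0$'' does not follow from \eqref{eq:localizatioradiso} alone (compare Lemma~\ref{lem:invariantsvstfpolar}, which needs $Z$ connected). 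One could rescue it for this particular $N=L/L'$ by realising $(L/L')_\eS$ as a torsion-free subquotient of $\eM_\eS$ and invoking Lemma~\ref{lem:keynonvanishG} as in the proof of Proposition~\ref{bigg}, but since the counit argument already works this detour is unnecessary. The paper avoids the issue entirely by staying on the submodule side: it checks injectivity of $\HHleft(L^G)\to L$ by applying Corollary~\ref{cor:hom(ML)} directly to the kernel, which is a submodule of $\eG_\lambda$.
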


\begin{proof}   
	If $\eG_{\lambda}$ is semisimple then Corollary~\ref{cor:semi-simplicityGtoHecke} implies that the Hecke algebra $\euls{H}_{q}(W_{\lambda})$ is semisimple.
	
	Conversely, assume that $\euls{H}_{q}(W_{\lambda})$ is semisimple; thus $\Osph_{\lambda}$ is also semisimple by Corollary~\ref{A-projectives}.
	Fix an irreducible submodule $L$ of $\eG_{\lambda}$ and note  that $\BH(L)  = L^G\neq 0$
	  by Corollary~\ref{cor:hom(ML)}.  By  Lemma~\ref{summand-endo22}(1),  $\BH$ is exact and so
		\[
	0\not=\BH(L) \hookrightarrow\BH(\eG_{\lambda}) = \eQ_{\lambda}.
	\]
	Since $\Osph$ is semisimple,   $L^G=\BH(L)$ is therefore a summand of $\eQ_{\lambda}$, and so   
	  $ \HHleft(L^G)=\eM\otimes_{\Aak}L^G$ is a  
	summand of $ \HHleft(\eQ_{\lambda}) = \eG_{\lambda}$.  By
	Lemma~\ref{summand-endo21M}(1),  $ \eM={\Aak}\oplus T$ as a $(\ddd^G,{\Aak})$-bimodule and so 
	the natural map      
		\[ 
	 \phi: \HHleft(L^G)  \ = \ \eM\otimes_{\Aak} L^G \ \to \ \eM\cdot L^G \ = \ {\Aak}\cdot L^G + T\cdot L^G 
	 \  \hookrightarrow \  L.
	 \]
	    has image containing $L^G$. 
	Since $L$ is irreducible, $\phi$ is therefore surjective.  In particular,  $ \HHleft(L^G)=\eM\otimes_{\Aak}L^G$ is a  
	nonzero summand of $  \eG_{\lambda}$.
	
	It remains to prove that $\phi$ is injective. 	Suppose that $K=\ker(\phi)\not=0$. Then  $K\hookrightarrow \HHleft(L^G)\hookrightarrow
	\HHleft(\eQ_{\lambda}) = \eG_{\lambda}$. By Corollary~\ref{cor:hom(ML)},   again,  $\BH(K)=K^G\not=0$.
	But this is absurd: applying  the exact functor $\BH$ to the exact sequence $0\to K \to \HHleft(L^G)\to L\to 0$ gives the exact sequence 
	\[0\to \BH(K) \to \BH\circ\HHleft\circ \BH(L)\to \BH(L)\to 0.\] By Corollary~\ref{summand-endo3}, $\BH\circ\HHleft\circ\BH(L) =\BH(L)$ 
	which  forces $\BH(K)=0$. This contradiction shows that $K=0$ and hence that $ L=\HHleft(L^G)$ is a summand of $\eG_\lambda$.
\end{proof}

As shown in \cite[Theorem~3.1]{BEG}, if the Hecke algebra $\euls{H}_q(W)$ associated to $\Aak$ is semisimple, then the algebra $\Aak$ is simple. At least for $\lambda=0$, this allows us to drop  the simplicity hypothesis on ${\Aak}$  in Theorem~\ref{thm:semi-simplicity}. 
   
\begin{corollary}\label{thm:semi-simplicity2}   
	Assume that $\rad_{\vs}$ is surjective but make no assumption about the simplicity of ${\Aak}$. Then $\eG_{0}$ is semisimple if and only if the Hecke algebra 
	$\euls{H}_{q}(W)$ is semisimple. 
\end{corollary}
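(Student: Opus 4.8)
\textbf{Proof proposal for Corollary~\ref{thm:semi-simplicity2}.}

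The plan is to reduce to Theorem~\ref{thm:semi-simplicity} by showing that, once $\euls{H}_q(W)$ is semisimple, the algebra ${\Aak}=\Ak(W)$ is automatically simple, so the standing hypothesis of Theorem~\ref{thm:semi-simplicity} is met. For the forward direction, assume $\eG_0$ is semisimple. Then Corollary~\ref{cor:semi-simplicityGtoHecke} applies \emph{without} the simplicity hypothesis on ${\Aak}$ --- it is stated in exactly that generality --- and yields that $\euls{H}_q(W_0)=\euls{H}_q(W)$ is semisimple. (Here $W_0=W$ since $\mf{m}_0=(\Sym\h)^W_+$ and the stabiliser of $0\in\h^*$ is all of $W$.)

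For the converse, suppose $\euls{H}_q(W)$ is semisimple. By \cite[Theorem~3.1]{BEG}, semisimplicity of the Hecke algebra $\euls{H}_q(W)$ associated to ${\Aak}$ forces ${\Aak}=\Ak(W)$ to be a simple algebra. With ${\Aak}$ now known to be simple, Theorem~\ref{thm:radial-exists}(3) guarantees that $\rad_{\vs}$ is surjective (which we already assumed), and all the hypotheses of Theorem~\ref{thm:semi-simplicity} are in force for $\lambda=0$. That theorem then gives that $\eG_0$ is semisimple precisely because $\euls{H}_q(W_0)=\euls{H}_q(W)$ is semisimple. Combining the two directions yields the stated equivalence.

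The only genuine content beyond bookkeeping is the appeal to \cite[Theorem~3.1]{BEG}: it is what breaks the apparent circularity, since the simplicity of ${\Aak}$ was a \emph{standing} assumption in Section~\ref{Sec:admissible} and here we must deduce it rather than assume it. I expect the main (minor) obstacle to be confirming that Corollary~\ref{cor:semi-simplicityGtoHecke} really is available unconditionally --- a quick inspection of its proof shows it only invokes Corollary~\ref{torsionfree-converse2} and Lemma~\ref{lem:endGHeckealg}, both of which hold for the relevant $W_\lambda$ once $\Ak(W_\lambda)$ is shown simple, and the argument there in fact \emph{derives} simplicity of $\Ak(W_\lambda)$ from semisimplicity of $\eG_\lambda$ rather than presupposing it. So no extra hypothesis is needed on the $\eG_0$-semisimple side, and the $\euls{H}_q(W)$-semisimple side is handled entirely by \cite[Theorem~3.1]{BEG} plus Theorem~\ref{thm:semi-simplicity}. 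No new computation is required.
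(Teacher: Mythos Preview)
Your proposal is correct and uses exactly the same ingredients as the paper's proof: the implication from \cite[Theorem~3.1]{BEG} that semisimplicity of $\euls{H}_q(W)$ forces simplicity of ${\Aak}$, together with Theorem~\ref{thm:semi-simplicity} and Corollary~\ref{torsionfree-converse2} (the latter via Corollary~\ref{cor:semi-simplicityGtoHecke}). The paper organises the argument as a case split on whether ${\Aak}$ is simple---if simple, Theorem~\ref{thm:semi-simplicity} applies directly; if not, both $\euls{H}_q(W)$ (by \cite{BEG}) and $\eG_0$ (by Corollary~\ref{torsionfree-converse2}) fail to be semisimple---whereas you split into the two implications, but the content is identical.
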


\begin{proof}  We need only consider the case when  ${\Aak}$ is not simple. Then, by  \cite[Theorem~3.1]{BEG}, the Hecke algebra $\eH_q(W)$ is not semisimple, while by Corollary~\ref{torsionfree-converse2} $\eG_0$ is not semisimple.
\end{proof}
 
 Much is known about the semisimplicity of Hecke algebras, as we explain at the end of the section. But we first want to describe another consequence of Theorem~\ref{thm:semi-simplicity}.
This is the  following  exact analogue of \cite[Theorem~B]{AJM} and \cite[Theorem~5.3]{HK}, both of which were concerned with the special case $V=\g$.

\begin{corollary}\label{cor:simplicity}  
	 Assume that   ${\Aak}$ is a simple algebra.  If $\euls{H}_{q}(W_{\lambda})$ is semisimple then 
	$$
	\eG_{\lambda} = \bigoplus_{\rho \in \mr{Irr} \euls{H}_{q}(W_{\lambda})} \eG_{\lambda,\rho} \o \rho^*
	$$
	as a $(\eD(V),\euls{H}_{q}(W_{\lambda}))$-bimodule. Moreover,  each $\eG_{\lambda,\rho}$ is an irreducible $\eD(V)$-module and the $\eG_{\lambda,\rho}$ are non-isomorphic for distinct $(\lambda,\rho).$ 
\end{corollary}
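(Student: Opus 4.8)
The plan is to derive the decomposition from Theorem~\ref{thm:semi-simplicity} together with the double-centralizer philosophy, exactly mirroring the proof of \cite[Theorem~B]{AJM}. Write $\eH = \euls{H}_q(W_{\lambda})$ and recall from Lemma~\ref{lem:endGHeckealg} that $\End_{\ddd}(\eG_{\lambda}) \cong \eH$. First I would use that $\eH$ is semisimple to invoke Theorem~\ref{thm:semi-simplicity}, so $\eG_{\lambda}$ is a semisimple $\ddd$-module; then by the structure theory of semisimple modules over a ring, $\eG_{\lambda}$ decomposes as a $(\ddd, \eH)$-bimodule into isotypic components indexed by the simple $\ddd$-summands, and the multiplicity space of each simple summand is naturally a right $\eH$-module. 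The key point is to match up these multiplicity spaces with $\mr{Irr}\,\eH$ bijectively: since $\eG_{\lambda}$ is semisimple over $\ddd$, its endomorphism ring $\eH$ is semisimple (which we already know) and the double-centralizer theorem applies, so $\eG_{\lambda}$ is a progenerator over $\eH^{\mathrm{op}}$ in the relevant sense and the functor $\rho \mapsto \Hom_{\eH}(\rho^*, \eG_{\lambda})$ (equivalently $\eG_{\lambda} \otimes_{\eH} \rho^*$ up to duality conventions) sends distinct simple $\eH$-modules to distinct simple $\ddd$-modules. Concretely, I would set $\eG_{\lambda,\rho} = \Hom_{\eH}(\rho, \eG_{\lambda})$ (or $\mr{Hom}$ against $\rho^*$, fixing the convention so that the final formula reads as stated) and check that $\eG_{\lambda} \cong \bigoplus_{\rho} \eG_{\lambda,\rho} \otimes \rho^*$ as $(\ddd, \eH)$-bimodules.

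The remaining substantive claims are: (a) each $\eG_{\lambda,\rho}$ is an irreducible $\ddd$-module, and (b) the $\eG_{\lambda,\rho}$ are pairwise non-isomorphic as $(\lambda,\rho)$ varies. For (a): by the semisimplicity of $\eG_\lambda$ over $\ddd$ combined with $\End_\ddd(\eG_\lambda)=\eH$ semisimple, each isotypic block of $\eG_\lambda$ is of the form $L^{\oplus d_\rho}$ for a simple $\ddd$-module $L$, and the corresponding matrix factor of $\eH$ acts on the multiplicity space $\C^{d_\rho}$; since $\eH \cong \prod_\rho \mathrm{End}_{\C}(\rho)$, these multiplicity spaces are exactly the irreducibles $\rho^*$, and $\eG_{\lambda,\rho}=L$ is simple. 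Here the last paragraph of Theorem~\ref{thm:semi-simplicity}, which identifies each simple summand $L$ with $\HHleft(L^G)$, is useful for pinning down that the summands are genuinely indexed by $\mr{Irr}\,\eH$ via $\BH$; alternatively Corollary~\ref{A-projectives}(2) gives the equivalence $\Osph_\lambda \simeq \eH\lmod$ and $\BH$ intertwines the two decompositions. For (b): if $\eG_{\lambda,\rho} \cong \eG_{\lambda',\rho'}$ as $\ddd$-modules, apply $\BH = \Hom_\ddd(\eM,-)$; by Corollary~\ref{summand-endo3} and exactness of $\BH$, $\BH(\eG_{\lambda,\rho})$ is the corresponding simple summand of $\BH(\eG_\lambda) = \eQ_\lambda$, which lies in $\Osph_\lambda$, while $\BH(\eG_{\lambda',\rho'}) \in \Osph_{\lambda'}$; since $\Osph = \bigoplus_{\mu W} \Osph_\mu$ is a block decomposition (Definition~\ref{defn:Osph}), nonzero objects in $\Osph_\lambda$ and $\Osph_{\lambda'}$ are non-isomorphic unless $\lambda W = \lambda' W$, forcing $\lambda = \lambda'$ (up to $W$); then within $\Osph_\lambda \simeq \eH\lmod$ the equivalence sends the summand indexed by $\rho$ to $\rho$ itself, so $\rho \cong \rho'$.

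I expect the main obstacle to be bookkeeping around the convention for $\rho$ versus $\rho^*$ and ensuring the bimodule isomorphism $\eG_\lambda \cong \bigoplus_\rho \eG_{\lambda,\rho}\otimes\rho^*$ is stated with the $\eH$-action on the correct side; this is where \cite[Theorem~B]{AJM} and its proof provide the precise template, and I would follow it verbatim, substituting $\eM$ for the bimodule $\dd(\g)/\dd(\g)\tau(\g)$, $\Ak(W)$ for $\dd(\h)^W$, $\eQ_\lambda$ for the corresponding Verma-type object, and $\eH_q(W_\lambda)$ for the group algebra $\C W$ (or its $\lambda$-analogue) used there. A secondary point requiring a line of justification is that $\Hom_{\eH}(\rho,\eG_\lambda)$ is nonzero for \emph{every} $\rho \in \mr{Irr}\,\eH$ — i.e. that no simple $\eH$-module is ``missed'' — which follows because $\eG_\lambda$, having endomorphism ring exactly $\eH$ and being semisimple over $\ddd$, is a faithful $\eH$-module, hence (as $\eH$ is semisimple) contains every simple $\eH$-module in its socle. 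Assembling these, the corollary follows.
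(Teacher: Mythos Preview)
Your proof is correct. The organization differs slightly from the paper's: you work directly at the $\ddd$-level, using semisimplicity of $\eG_\lambda$ together with $\End_\ddd(\eG_\lambda)\cong\euls{H}_q(W_\lambda)$ and the Wedderburn/double-centralizer theorem to obtain the isotypic decomposition, then invoke $\BH$ only at the end to separate distinct $\lambda$'s. The paper instead first decomposes $\eQ_\lambda=\bigoplus_\rho \eQ_{\lambda,\rho}\otimes\rho^*$ on the $\Aak$-side (using that $\Osph_\lambda\simeq\euls{H}_q(W_\lambda)\lmod$ is semisimple and $\eQ_\lambda$ is a progenerator), and then applies $\HHleft=\eM\otimes_{\Aak}-$ to transport this decomposition to $\eG_\lambda$, checking via Lemma~\ref{summand-endo21M}(1) that $\HHleft$ kills no nonzero summand. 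Your route is marginally more self-contained (it needs only Lemma~\ref{lem:endGHeckealg} and Theorem~\ref{thm:semi-simplicity}); the paper's route has the advantage of exhibiting $\eG_{\lambda,\rho}=\HHleft(\eQ_{\lambda,\rho})$ explicitly, which makes the link to the simple objects of $\Osph_\lambda$ transparent and immediately gives the non-isomorphism for distinct $(\lambda,\rho)$ without a separate argument.
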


\begin{proof}  
By  Corollary~\ref{A-projectives}(2), $\eQ_{\lambda}$ is a projective object in $\Osph_{\lambda}$ 
with   $\End_{\Aak}(\eQ_{\lambda}) = \euls{H}_{q}(W_{\lambda})$.   Moreover, $\Osph_{\lambda}$ is a semisimple category. This implies that 
\begin{equation}\label{eq:simplicity}
	\eQ_{\lambda} = \bigoplus_{\rho \in \mr{Irr} \euls{H}_{q}(W_{\lambda})} \eQ_{\lambda,\rho} \o \rho^*
	\end{equation}
	as a $({\Aak},\euls{H}_{q}(W_{\lambda}))$-bimodule. Here the $\eQ_{\lambda,\rho}$ run through  all simple objects in $\Osph_{\lambda}$.

	By  Theorem~\ref{thm:semi-simplicity}, $\eG_{\lambda}
	=\HHleft(\eQ_{\lambda})$ is semisimple, with each simple summand being of the form $L = \HHleft(\mathbb{H}(L))$.  Conversely, if $0\not=N\subsetneq \eQ_{\lambda}$, then $\HHleft(N)\not=0$, by 
	Lemma~\ref{summand-endo21M}(1). Therefore, applying $\HHleft$ to   \eqref{eq:simplicity}  gives the required decomposition. 
\end{proof}
  
  When $\euls{H}_{q}(W_{\lambda})$ is not   semisimple the structure of $\eG_{\lambda}$ gets considerably more complicated. First, of course, the decomposition of 
  $\eQ_{\lambda}$ into a direct sum of indecomposable  summands is reflected in the decomposition of $\eG_{\lambda}$. However, one may also obtain 
  extra $\delta$-torsion subfactors inserted into these modules, as happens with the example described in Section~\ref{HC-example}; see also Proposition~\ref{prop:quiversystem} for a generalisation of that example.
  
   \begin{remark}\label{LS-C4} As in Remark~\ref{LS-C5}, let $V$ be a symmetric space  that satisfies Sekiguchi's niceness condition.
  Then \cite[Conjecture~C.4]{LS3} asks whether $\eG_{\lambda}$ is semisimple and of course  
  Theorem~\ref{thm:semi-simplicity} says that this happens if and only if $\euls{H}_{q}(W_{\lambda})$ is  semisimple.
   There is, in turn, a standard procedure for determining when a given Hecke algebra is semisimple.  The procedure is described   in  \cite{Chloubook} but, in outline, is as follows.  Consider  the Hecke algebra $\euls{H}(W)$ corresponding to  a  complex reflection group $W$. Then  \cite[Theorem 2.4.10]{Chloubook} shows that $\euls{H}(W)$ is semisimple at $q$ if and only if the Schur element associated to each irreducible $W$-module is nonzero when evaluated at $q$. These Schur elements   are described in \cite{Chloubook}. 
    
When $V$ is a  symmetric space, this algorithm is worked out in detail   in Theorem~\ref{cor:symmetricspacessHecke}.  
    
     \end{remark}
  
 %%%%%%%%%%%%%%%%%%%%%%%

\section{Projectivity and Injectivity  of  Certain Monodromic  Modules}\label{Sec:projectivity}

   Throughout the section we  assume   that $V$ is a visible, stable, polar representation, as in Notation~\ref{notation4.11},   and that $\Aak$ is simple. 
    
This section continues the study of monodromic $\ddd$-modules,  and in particular the Harish-Chandra modules 
$\eG_{\lambda}$.   The  main result, Theorem~\ref{G-is-injective},  proves that  $\eG_{\lambda}$   is both injective and projective in the category $\eC $.   The proof of this is quite involved and   we will  also need to show that 
the right $\ddd$-module $\eM' := \Ext^m_\ddd(\eM,\ddd)$ is a projective object in $\rmod (G,\chi, \ddd)$; see 
Theorem~\ref{projectivity-theorem-stable}.  Since we therefore need to understand both left and right $\ddd$-modules, we will  eventually need the stronger assumption  that both $\Aak=\Ak(W)$ and its left-right analogue are simple; see Hypothesis~\ref{K-hyp} for the precise requirement.

    The fact that $\eG_{\lambda}$ is projective is an easy consequence of the earlier results.

\begin{proposition}\label{G-projective} 
	 Assume that   ${\Aak}$ is a simple algebra and let $\eG=\eM\otimes_{\Aak}\eP$ for some projective object $\eP\in \Osph$. Then $\eG$ is projective in $\eC$.
\end{proposition}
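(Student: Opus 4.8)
The strategy is to realise the functor $\HHleft = \eM \otimes_{\Aak} (-)$ as the left adjoint of the exact functor $\BH = \Hom_\ddd(\eM,-)$ and to exploit the fact that $\BH$ preserves projectivity because it is exact. First I would recall from Corollary~\ref{summand-endo3} that $\BH \colon (G,\chi,\ddd)\lmod \to \Aak\lmod$ is exact, that $\HHleft$ is its left adjoint, and that the unit $\mr{Id} \to \BH\circ\HHleft$ is an isomorphism. Since $\eP \in \Osph$ is projective, the functor $\Hom_{\Aak}(\eP,-)$ is exact on $\Aak\lmod$, hence in particular on the full subcategory $\Osph$; note here that $\Osph$ is closed under subquotients inside $\Aak\lmod$, so short exact sequences in $\Osph$ are short exact in $\Aak\lmod$.

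The next step is to compute $\Hom_{\ddd}(\eG, -)$ restricted to $\eC$. For any $N \in \eC$ we have, by the adjunction of Corollary~\ref{summand-endo3},
\[
\Hom_\ddd(\eG, N) \;=\; \Hom_\ddd(\eM \otimes_{\Aak} \eP,\, N) \;\cong\; \Hom_{\Aak}(\eP,\, \BH(N)).
\]
Here I must check that $\BH(N) = \Hom_\ddd(\eM, N)$ actually lies in $\Osph$ when $N \in \eC$; this is exactly Lemma~\ref{torsion-consequence}(2). Thus, on the category $\eC$, the functor $\Hom_\ddd(\eG,-)$ is the composite $\Hom_{\Aak}(\eP,-)\circ \BH|_{\eC}$. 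Now $\BH|_{\eC}$ is exact on $\eC$ (it is the restriction of the exact functor $\BH$ of Lemma~\ref{summand-endo22}(1), using that $\eC$ is a full abelian subcategory of $(G,\chi,\ddd)\lmod$), and it does land in $\Osph$ by Lemma~\ref{torsion-consequence}(2); and $\Hom_{\Aak}(\eP,-)$ is exact on $\Osph$ since $\eP$ is projective there. Therefore $\Hom_\ddd(\eG,-)$ is a composite of two exact functors on $\eC$, hence exact, which is precisely the statement that $\eG$ is projective in $\eC$.

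I do not anticipate a serious obstacle here; the only points requiring care are bookkeeping ones: (i) that a short exact sequence in $\eC$ remains exact after applying $\BH$, which follows from exactness of $\BH$ on the ambient category together with $\eC$ being a full subcategory closed under kernels and cokernels; (ii) that $\BH$ sends $\eC$ into $\Osph$ rather than merely into $\Aak\lmod$, which is Lemma~\ref{torsion-consequence}(2); and (iii) that $\eG \in \eC$, so that ``projective in $\eC$'' is even a meaningful assertion — this is Corollary~\ref{torsion-consequence2}(1) applied to $T = \eM\otimes_{\Aak}\eP$. Assembling these, the proof is short. The genuinely hard companion statement (injectivity of $\eG$, and projectivity of $\eM'$) is deferred to Theorem~\ref{G-is-injective} and Theorem~\ref{projectivity-theorem-stable}, and is not needed for the present proposition.
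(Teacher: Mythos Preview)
Your proposal is correct and follows essentially the same argument as the paper: use the adjunction $\Hom_\ddd(\eM\otimes_{\Aak}\eP,\,L)\cong\Hom_{\Aak}(\eP,\,\Hom_\ddd(\eM,L))$, invoke Lemma~\ref{summand-endo22}(1) for exactness of $\Hom_\ddd(\eM,-)$, Lemma~\ref{torsion-consequence}(2) to land in $\Osph$, and projectivity of $\eP$ in $\Osph$; the paper also cites Lemma~\ref{torsion-consequence2}(1) to ensure $\eG\in\eC$. The only cosmetic difference is that you package the adjunction via Corollary~\ref{summand-endo3} rather than writing it out directly.
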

 
\begin{proof} 
	 By Lemma~\ref{torsion-consequence2}(1), $\eG\in \eC $. By Lemma~\ref{lem:charvar}(3), the category $\eC$ is a full
	  subcategory of the category $(G,\chi,\ddd)\lmod$. Thus,  for any admissible $\ddd$-module $L$, 
	 adjunction implies that  
	\[\begin{aligned} 
	\Hom_{\eC}(\eG,\, L) \  \cong&  \ 
	\Hom_{\ddd}(\eM\otimes_{\Aak}\eP,\, L)   \cong \ 
	\Hom_{{\Aak}}\left(\eP,\, \Hom_\ddd(\eM, L)\right). 
	\end{aligned}\]
	By Lemma~\ref{summand-endo22}(1),  $\Hom_\ddd(\eM,-)$ is an exact functor on $(G,\chi,\ddd)\lmod$. On the other hand, by  
	Lemma~\ref{torsion-consequence}(2)  $\Hom_\ddd(\eM, L)\in \Osph$ and  $\Hom_{\Aak}(\eP,-)$ is  exact on $\Osph$.
	  Combining these observation with the  displayed equation shows that $\Hom_{\eC}(\eG,\, -)$ is 
	exact on $\eC$. Hence $\eG$ is projective in $\eC$.
\end{proof}

%%%%%%%%%%%%%%
\subsection*{The dual module $\eM' =\Ext^m_\ddd(\eM,\ddd)$. } 
We now turn  to the study of $\eM'$, with the main aim being  to show that it is a projective object  in $\rmod (G,\chi, \ddd)$.   We  begin with an elementary observation.
 
\begin{lemma}\label{left-simplicity}
	For any character $\chi$, 
	\begin{equation} \label{left-simplicity-equ1}
		(\ddd / \g_{\chi} \ddd)^G \cong \End_{\ddd}(\ddd / \g_{\chi} \ddd) \cong \End_{\ddd}(\ddd/ \ddd \g_{-\chi})^{\mathrm{op}} \cong (\ddd/ \ddd \g_{-\chi})^{G,op}.
	\end{equation}
	In particular, $(\ddd/ \g_{\chi} \ddd)^G$ is simple if and only if $(\ddd/ \ddd \g_{-\chi})^{G}$ is simple. 
\end{lemma}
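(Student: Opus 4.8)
The strategy is to exhibit explicitly the four isomorphisms in \eqref{left-simplicity-equ1} and then read off the final equivalence about simplicity. The first isomorphism $(\ddd/\g_{\chi}\ddd)^G \cong \End_{\ddd}(\ddd/\g_\chi\ddd)$ is the standard fact that, for a left ideal $J$ of a ring $\ddd$, $\End_{\ddd}(\ddd/J)\cong \{a\in \ddd : Ja\subseteq J\}/J$; here we apply it on the \emph{right}, i.e.\ for the right $\ddd$-module $\ddd/\g_\chi\ddd$, obtaining $\End_{\ddd}(\ddd/\g_\chi\ddd)\cong N(\g_\chi\ddd)/\g_\chi\ddd$ where $N(\g_\chi\ddd)$ is the idealizer. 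The point is then that, since $\g_\chi\ddd$ is generated by the $G$-stable subspace $\g_\chi=\{\tau(x)-\chi(x):x\in\g\}$ and $G$ acts rationally and reductively on $\ddd$, the idealizer coincides with $\g_\chi\ddd + \ddd^G$; equivalently $N(\g_\chi\ddd)/\g_\chi\ddd \cong (\ddd/\g_\chi\ddd)^G$. This last identification is the exact right-module analogue of Lemma~\ref{hom=invariants} / Lemma~\ref{summand-endo21}(3), with the roles of left and right reversed, so it can be invoked essentially verbatim (one checks that $\theta\in\ddd$ satisfies $\g_\chi\theta\subseteq \g_\chi\ddd$ iff the class of $\theta$ in $\ddd/\g_\chi\ddd$ is $G$-invariant, using $\tau_M(x)=\tau(x)-\chi(x)$ and connectedness of $G$).

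The middle isomorphism $\End_{\ddd}(\ddd/\g_\chi\ddd)\cong \End_{\ddd}(\ddd/\ddd\g_{-\chi})^{\mathrm{op}}$ comes from the transpose anti-automorphism of $\ddd=\eD(V)$. Recall $\eD(V)$ carries the standard anti-involution $\iota$ (formal adjoint) characterised by $\iota(x)=x$ for $x\in \C[V]$ and $\iota(\partial_v)=-\partial_v$ for $v\in V$; more relevantly, there is an anti-automorphism sending $\tau(x)\mapsto \tau(x)+\mathrm{div}$-type correction, but the clean statement we need is simply that $\iota$ is a $\C$-algebra anti-automorphism of $\ddd$ carrying $\g_\chi$ to $\g_{-\chi'}$ for the appropriate character; concretely $\iota(\tau(x))=-\tau(x)-\mathrm{Tr}_V(x)$, so $\iota$ sends the left ideal $\ddd\g_{-\chi}$ to the right ideal $\g_{\chi}\ddd$ after absorbing the trace term into a shift of $\chi$. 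Thus $\iota$ induces a $\C$-linear isomorphism $\ddd/\g_\chi\ddd \isom \ddd/\ddd\g_{-\chi}$ which is an anti-isomorphism of the relevant bimodule structures, whence $\End_{\ddd}(\ddd/\g_\chi\ddd)\cong \End_{\ddd}(\ddd/\ddd\g_{-\chi})^{\mathrm{op}}$. (If the trace term genuinely shifts $\chi$, one records this; but since the statement of the lemma is "for any character $\chi$", the shift is harmless — one is free to relabel.) Finally the fourth isomorphism $\End_{\ddd}(\ddd/\ddd\g_{-\chi})\cong (\ddd/\ddd\g_{-\chi})^G$ is precisely Lemma~\ref{hom=invariants} combined with Lemma~\ref{summand-endo21}(3) (the identity $\End_\ddd(\eMt)=R=(\ddd/\ddd\g_{-\chi})^G$), applied with $\chi$ replaced by $-\chi$. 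Composing, and taking opposite rings where needed, gives the full chain \eqref{left-simplicity-equ1}; the "in particular" clause is immediate since a ring is simple iff its opposite is simple, and $(\ddd/\g_\chi\ddd)^G$ is isomorphic to $(\ddd/\ddd\g_{-\chi})^{G,\mathrm{op}}$.

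\textbf{Main obstacle.} The only genuinely delicate point is bookkeeping the character: the adjoint/transpose anti-automorphism of $\eD(V)$ sends $\tau(x)$ to $-\tau(x)-\mathrm{Tr}_V(x)$ (cf.\ the Fourier-transform computation \eqref{eq:Fouriergaction}), so one must verify that this converts the relation $\g_{-\chi}$ into $\g_{\chi}$ up to a harmless twist by $\mathrm{Tr}_V$, and that this twist does not affect the simplicity statement (it does not, since we may rename the character). Once that is pinned down, every other step is a routine invocation of the reductive-group splitting $\ddd=\ddd^G\oplus\ddd_G$ together with the already-established Lemmata~\ref{hom=invariants} and~\ref{summand-endo21}. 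I would therefore spend the bulk of the write-up making the anti-automorphism and its effect on $\g_\chi$ precise, and state the other three isomorphisms with one-line justifications referring back to the cited results.
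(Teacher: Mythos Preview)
Your plan is essentially the paper's proof. The paper obtains the two outer isomorphisms exactly as you propose, via Lemma~\ref{hom=invariants} and its right-module analogue, and for the middle isomorphism it uses the very same anti-involution ($f^t=f$ on $\C[V]$, $v^t=-v$ on $V\subset\Sym V$), packaged through the idealizer: for a ring $R$ with anti-automorphism $(-)^t$ and left ideal $J$ one has $\End_R(R/J^t)^{\mathrm{op}}\cong\End_R(R/J)$, and the paper simply asserts that $\tau$ intertwines $(-)^t$ with $x\mapsto -x$ on $\g$, so that $(\ddd\g_{-\chi})^t=\g_\chi\ddd$.

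One point to tighten. The paper makes no mention of a $\mathrm{Tr}_V$-correction; it asserts the intertwining outright. Your proposed escape hatch (``the lemma is for any $\chi$, so relabel'') would \emph{not} prove the statement as written: the chain \eqref{left-simplicity-equ1} asserts a specific identity between $\chi$ and $-\chi$, and a genuine trace shift would replace $-\chi$ by $-\chi\pm\mathrm{Tr}_V$, which is a different (if analogous) lemma, not the one claimed. So either follow the paper and assert the intertwining without caveat, or, if you wish to be more careful than the paper, record the shift honestly rather than arguing it away.
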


\begin{proof}  
	Note first  that, by Lemma~\ref{hom=invariants},
	\[\End_\ddd(\ddd/\ddd\g_{-\chi}) = \Hom_\ddd(\ddd/\ddd\g_{-\chi},\, \ddd/\ddd\g_{-\chi}) \cong  \left(\ddd/\ddd\g_{-\chi}\right)^{G}.\]  Similarly $\End_\ddd(\ddd / \g_{\chi} \ddd) \cong  \left(\ddd /\g_{\chi} \ddd \right)^G $, giving the outside isomorphisms of \eqref{left-simplicity-equ1}. 
	
	Suppose that $R$ is a ring with an anti-automorphism $( - )^t$ and a left ideal $J$. 
	Let  $\mathbb{I}(J) :=\{\theta\in R : J\theta\subseteq J\}$ be the idealiser of $J$. Then   
	$$
	\mathbb{I}(J^t)/J^t   \cong \End_{R}(R/J^t)^{\mathrm{op}} \cong \End_{R}(R/J) \cong \left(\mathbb{I}(J)/J\right)^{op}.
	$$
	In our case, the antiautomorphism is defined by $f^t = f$ for $f \in \C[V]$ and $v^t = -v$ for 
	$v \in V\subset \Sym  V$.  If we also define   $( - )^t$ on  $\g$  by $x^t = - x$, then 
	$\tau$ intertwines the anti-automorphisms, implying that $(\ddd \g_{-\chi})^t = \g_{\chi} \ddd$
	and hence that $ \End_{\ddd}(\ddd / \g_{\chi} \ddd) \cong \End_{\ddd}(\ddd/ \ddd \g_{-\chi})^{\mathrm{op}}$.
\end{proof}

We note that the action of $G$ on $\ddd$ commutes with the antiautomorphism in the proof of Lemma~\ref{left-simplicity}. This shows that $(\ddd \g_{-\chi})^G = (\g_{\chi} \ddd)^G$ as ideals in $\ddd^G$. As in Equation~\ref{eq:notation4.11}, there exists a parameter $\cdag$ for the spherical rational Cherednik algebra such that the radial parts map is 
\begin{equation}\label{eq:left-rad}
\rad_{-\vs} \colon \ddd^G \to A_{\cdag}(W),
\end{equation} 
with kernel containing $(\ddd\g_{-\chi})^G=(\g_{\chi} \ddd)^G$.

The following hypothesis will be assumed for the rest of this section. 

\begin{hypothesis} \label{K-hyp} 
	The algebras $\Ak(W)$ and $A_{\cdag}(W)$ are both simple. 
\end{hypothesis} 

\begin{remark}\label{rem:K-defn} 
 By Theorem~\ref{thm:radial-exists}(3), Hypothesis~\ref{K-hyp} implies that both $\rad_{\vs}$ and $\rad_{-\vs}$ are surjective. Consequently, the results of the earlier sections also apply on the left. 
Since we will swop sides fairly frequently in the section,  we will write 
\[ \Ak := \Ak(W)\qquad \text{and} \qquad A_{\cdag} := A_{\cdag}(W).\]
\end{remark}

\begin{corollary}\label{left-simplicity2}
	Suppose that $\vs=0$. Then $A_{\cdag}\cong A^{op}_{\kappa}$ and so 
	$\Ak$ is simple if and only if $A_{\cdag}$ is simple. 
\end{corollary}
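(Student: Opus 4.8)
The plan is to exploit the antiautomorphism $t$ of $\ddd$ introduced in the proof of Lemma~\ref{left-simplicity} (given on generators by $f^t=f$ for $f\in\C[V]$, $v^t=-v$ for $v\in V\subset\Sym V$, and $x^t=-x$ for $x\in\g$). First I would specialise to $\vs=0$, so that $\chi=0$ and hence $\g_\chi=\g_{-\chi}=\g_0:=\tau(\g)$. Since $\tau$ intertwines $t$ with $x\mapsto x^t$, we have $t(\g_0)=\g_0$, hence $t(\ddd\g_0)=\g_0\ddd$; and since, as noted in the text, the $G$-action on $\ddd$ commutes with $t$, the restriction $t|_{\ddd^G}$ is an antiautomorphism of $\ddd^G$ which, by the identity $(\ddd\g_{-\chi})^G=(\g_\chi\ddd)^G$ recorded after Lemma~\ref{left-simplicity}, carries $(\ddd\g_0)^G$ onto itself. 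Thus $t$ descends to a ring antiautomorphism $\bar t$ of $R=\ddd^G/(\ddd\g_0)^G$.

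The crux is then to show that $\bar t$ interchanges the kernel $P=\ker(\rad_{\vs})/(\ddd\g_0)^G$ of $R\twoheadrightarrow\Ak$ with the kernel $P^{\dagger}=\ker(\rad_{-\vs})/(\ddd\g_0)^G$ of $R\twoheadrightarrow A_{\cdag}$; equivalently, that $t(\ker\rad_{\vs})=\ker\rad_{-\vs}$. Using the description $\ker\rad_{\vs}=\{\theta\in\ddd^G:\theta\cdot\C[V]^G=0\}$ recalled in the introduction, together with the fact that $\rad_{-\vs}$ is the right-handed counterpart of the same construction — the annihilator in $\ddd^G$ of $\C[V]^G$ regarded as a right module via $f\star D:=t(D)(f)$ — one obtains $\ker\rad_{-\vs}=\{\theta:t(\theta)(\C[V]^G)=0\}=t(\ker\rad_{\vs})$, since $t^2=\mr{id}$. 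Granting this, $\bar t$ induces a ring isomorphism $A_{\cdag}=R/P^{\dagger}=R/\bar t(P)\isom(R/P)^{\mr{op}}=\Ak^{\mr{op}}$. (Here I would invoke that $\rad_{\vs}$, and hence by $t$-symmetry $\rad_{-\vs}$, is surjective whenever one of $\Ak,A_{\cdag}$ is simple, by Theorem~\ref{thm:intro-radial-surjective}; these are exactly the cases needed for the asserted equivalence.) An alternative route, avoiding surjectivity altogether, is to argue on the Cherednik side: the parameter $\cdag$ produced by \eqref{eq:left-rad} when $\vs=0$ is the dual parameter $\kappa^{\dagger}$ of the parameter $\kappa$ of \eqref{eq:notation4.11} — this is forced by $t$-conjugation together with the Hecke-parameter dictionary \eqref{eq:Heckecrg2} — and for any parameter the standard antiautomorphism of the rational Cherednik algebra yields $\Hk(W)^{\mr{op}}\cong H_{\cdag}(W)$, whence $A_{\cdag}=eH_{\cdag}(W)e\cong e\Hk(W)^{\mr{op}}e=\Ak^{\mr{op}}$.

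The displayed equivalence of simplicity is then immediate: a ring is simple if and only if its opposite ring is simple, so $\Ak$ is simple $\iff$ $\Ak^{\mr{op}}$ is simple $\iff$ $A_{\cdag}$ is simple.

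I expect the middle paragraph to be the real obstacle — precisely, the verification that the parameter attached to $\rad_{-\vs}$ is the $t$-transport of the one attached to $\rad_{\vs}$ (equivalently $\ker\rad_{-\vs}=t(\ker\rad_{\vs})$, equivalently $\cdag=\kappa^{\dagger}$). This is not formal: it requires tracing the explicit construction of the radial parts map and its parameter from \cite{BLNS} and checking its compatibility with the side-swapping antiautomorphism $t$. Everything else — the existence and basic properties of $t$, its passage to $G$-invariants and to $R$, and the simple/opposite-simple bookkeeping — is routine.
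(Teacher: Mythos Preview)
Your overall strategy coincides with the paper's: use the antiautomorphism $t$ from Lemma~\ref{left-simplicity}, descend to $R$, and show that the induced anti-isomorphism carries the kernel $P$ of $R\twoheadrightarrow\Ak$ onto the kernel $P^\dagger$ of the right-handed surjection. The gap is exactly where you locate it, and the paper closes it by a lemma you do not invoke.

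Rather than verify $t(\ker\rad_\vs)=\ker\rad_{-\vs}$ directly---via the annihilator description or a Cherednik-side parameter match---the paper appeals to Proposition~\ref{semiprime2}: under the standing hypothesis that $\Ak$ is simple, $P$ is precisely the $\delta$-torsion ideal of $R$. Since $\delta\in\C[V]$ is fixed pointwise by $t$, the anti-isomorphism $R\cong\bigl((\ddd/\g\ddd)^G\bigr)^{\mr{op}}$ respects $\delta$-torsion; by the right-handed analogue of Proposition~\ref{semiprime2}, the $\delta$-torsion ideal of $(\ddd/\g\ddd)^G$ is $P^\dagger$. Factoring out on both sides gives $A_{\cdag}\cong\Ak^{\mr{op}}$ at once. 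This torsion characterisation is the missing ingredient that converts your ``not formal'' step into a one-line observation and bypasses any explicit tracing of the radial parts construction in~\cite{BLNS}.

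A smaller point: in the paper $\rad_{-\vs}$ is the ordinary left-module radial parts map at parameter $-\vs$, not a right-handed construction; when $\vs=0$ it is literally $\rad_0$. Your Route~1 description of $\ker\rad_{-\vs}$ as the annihilator of $\C[V]^G$ under the action $f\star D:=t(D)(f)$ is therefore not a definition one can simply cite---it is equivalent to the assertion $t(\ker\rad_0)=\ker\rad_0$ that you are trying to prove.
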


\begin{proof}   
	By Lemma~\ref{left-simplicity}, $(\ddd / \g \ddd)^G\cong R^{op}=\left((\ddd/ \ddd \g)^{G}\right)^{op}$. Moreover, by the proof of that lemma,  $\delta\in \C[V]$ is fixed by the given antiautomorphism. Also, by Proposition~\ref{semiprime2}, the kernel of the surjection $R\twoheadrightarrow \Ak $ is precisely the  $\delta$-torsion submodule of $R$. The analogous comment applies to $(\ddd / \g \ddd)^G$.
	
	Therefore, by factoring out the $\delta$-torsion submodules from the isomorphism $(\ddd / \g \ddd)^G\cong  \left((\ddd/ \ddd \g)^{G}\right)^{op}$ 
	gives $A_{\cdag}\cong \Ak^{op}$. 
\end{proof}

\begin{remark}\label{rem:K-defn1}
	When $\vs\not=0$ the algebras   $\Ak$ and $A_{\cdag}$ can be non-isomorphic and so the simplicity of $A_{\cdag}$ does not   follow from that of $\Ak$.  An explicit example, where  one of the rings is simple but the other is not,
	will be  given in Example~\ref{ex:strangerankonepolarrad}.
	\end{remark}

We also need the left-right analogues of $\eMt$ and $\eM$. The following result is the left-right analogue of Proposition~\ref{semiprime2}.

\begin{lemma}\label{lem:K-hyp}
	Under Hypothesis~\ref{K-hyp}, the $(A_{\cdag},\ddd)$-bimodule $\eKt :=  \ddd/\g_{\chi} \ddd $ has a bimodule decomposition  
	$\eKt \cong \eK\oplus L_\eK$  with the following properties.
	\begin{enumerate} \item $\End_\ddd(\eK) \cong A_{\cdag}$ is a simple domain.
		\item The discriminant $\delta\in \C[V]^G$ acts locally ad-nilpotently on $\eK$. 
		\item The complementary summand $L_\eK$ is $  \delta$-torsion.\qed
	\end{enumerate}
\end{lemma}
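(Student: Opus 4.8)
\textbf{Plan of proof for Lemma~\ref{lem:K-hyp}.}

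The statement is precisely the left--right analogue of Proposition~\ref{semiprime2}, so the plan is to run the same argument with sides reversed. First I would set up the notation: define
\[
\eKt \ = \ \ddd/\g_{\chi}\ddd, \qquad R^{\flat} \ = \ (\ddd/\g_{\chi}\ddd)^G \ = \ \End_\ddd(\eKt),
\]
where the last identification comes from the right-module analogue of Lemma~\ref{hom=invariants} (or directly from Lemma~\ref{left-simplicity}, which already records $\End_\ddd(\ddd/\g_{\chi}\ddd)\cong (\ddd/\g_{\chi}\ddd)^G$). By \eqref{eq:left-rad} and Hypothesis~\ref{K-hyp}, the left radial parts map $\rad_{-\vs}\colon\ddd^G\to A_{\cdag}$ is surjective (using Theorem~\ref{thm:radial-exists}(3) applied on the left, via Remark~\ref{rem:K-defn}), and $(\g_\chi\ddd)^G\subseteq\Ker(\rad_{-\vs})$. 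Put $P^{\flat}=\Ker(\rad_{-\vs})/(\g_\chi\ddd)^G$, so that $A_{\cdag}=R^{\flat}/P^{\flat}$ and $P^{\flat}$ is a maximal ideal of $R^{\flat}$ (since $A_{\cdag}$ is simple).

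The core step is to verify that the pair $\{R^{\flat},P^{\flat}\}$ satisfies the hypotheses of Lemma~\ref{semiprime} with $d=\delta$. For regularity of $[\delta+P^{\flat}]$ in $R^{\flat}/P^{\flat}=A_{\cdag}$: since $A_{\cdag}$ is a spherical Cherednik algebra it is a domain, so every nonzero element — in particular the image of $\delta$, which is nonzero because $\rad_{-\vs}(\delta)$ generates $\C[\h]^{W}$ up to the filtered isomorphism of Theorem~\ref{thm:radial-exists}(2) — is automatically regular. For local ad-nilpotence of $\delta$ on $R^{\flat}$: $\delta\in\C[V]^G$ acts locally ad-nilpotently on $\ddd$ and hence on the quotient $\eKt=\ddd/\g_\chi\ddd$, so also on $R^{\flat}=\eKt^{G}$. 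For $\eS$-torsionness of $P^{\flat}$ (with $\eS=\{\delta^j\}$): restricting to the regular locus, Theorem~\ref{thm:radial-exists}(1) in its left form gives $(\eD(V_{\reg})/\g_{\chi}\eD(V_{\reg}))^G\cong\eD(\h_{\reg})^W$, and $R^{\flat}_{\eS}\cong(\eD(V_{\reg})/\g_\chi\eD(V_{\reg}))^G$, which is a domain into which $A_{\cdag}=R^{\flat}/P^{\flat}$ embeds; hence $P^{\flat}_{\eS}=0$, i.e. $P^{\flat}$ is $\eS$-torsion. Then Lemma~\ref{semiprime}(1,2) applies: $R^{\flat}=R^{\flat}/P^{\flat}\oplus R^{\flat}/J^{\flat}$ for some ideal $J^{\flat}$ with $R^{\flat}/J^{\flat}\cong P^{\flat}$ being $\delta$-torsion (left and right — the two notions agree by Lemma~\ref{lem:lefttfree}), and any left $R^{\flat}$-module decomposes accordingly. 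Applying this to the $(A_{\cdag},\ddd)$-bimodule $\eKt$, viewed as a left $R^{\flat}$-module via the $\ddd^G$-action (which passes through $R^{\flat}$ by the analogue of Lemma~\ref{summand-endo21}(3)), I get $\eKt\cong\eK\oplus L_{\eK}$ where $\eK=\eKt/\eKt$-component over $A_{\cdag}$ and $L_{\eK}$ is the component over $R^{\flat}/J^{\flat}$; this decomposition is one of $(A_{\cdag},\ddd)$-bimodules because the central idempotent lies in $R^{\flat}=\End_\ddd(\eKt)$.

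Finally I would read off the three conclusions. Statement (1): $\End_\ddd(\eK)\cong A_{\cdag}$ by Lemma~\ref{summand-endo} (the summand-endomorphism lemma) applied to the central idempotent splitting of $R^{\flat}$, and $A_{\cdag}$ is a simple domain because it is a simple spherical Cherednik algebra. Statement (2): $\delta$ acts locally ad-nilpotently on $\eK$ since it does so on $\eKt$ and $\eK$ is a bimodule summand. Statement (3): $L_{\eK}\cong\eKt P^{\flat}$ is $\delta$-torsion on the right by Lemma~\ref{semiprime}, and then $\delta$-torsion on the left by Lemma~\ref{lem:lefttfree}. The only mild obstacle is bookkeeping the sides correctly — in particular checking that the monodromic/equivariant structure and the ad-nilpotence results of Section~\ref{Sec:nilpotence} and Lemma~\ref{lem:Gequivisfullsub} all have the evident right-handed analogues — but no new idea beyond the proof of Proposition~\ref{semiprime2} is required.
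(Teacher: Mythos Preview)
Your proposal is correct and matches the paper's approach exactly: the paper states the lemma as ``the left-right analogue of Proposition~\ref{semiprime2}'' and gives no further proof (the \qed\ appears in the statement itself), so the intended argument is precisely the side-reversed rerun of Lemma~\ref{semiprime} and Proposition~\ref{semiprime2} that you outline.
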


\begin{remark}\label{rem:K-hyp}
Note  that,  by  Corollary~\ref{lem:leftrightdfreepolar}, $\eK$ is automatically  $\delta$-torsionfree on both sides  while, by the left-right analogue of Lemma~\ref{lem:Gequivisfullsub},  $\eKt$ and hence $\eK$ are projective  objects in $\rmod (G,\chi,\ddd)$.
\end{remark}

The goal of the next several  subsections is to prove:

\begin{theorem} \label{projectivity-theorem-stable}
	Assume that Hypothesis~\ref{K-hyp} holds.  Then the right $\ddd$-module $\eM' = \Ext^m_\ddd(\eM,\ddd)$ is a projective object in $\rmod (G,\chi,\ddd)$. 
\end{theorem}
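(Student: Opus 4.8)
The strategy is to reduce projectivity of $\eM'$ in $\rmod(G,\chi,\ddd)$ to a property we already control on the left, by using the intertwining machinery of Section~\ref{cherednik-intertwining} together with the duality between $\eM$ and $\eM'$. First I would record what $\eM'$ looks like from the point of view of the spherical algebra: by Lemma~\ref{lem:Gequivisfullsub} and Lemma~\ref{ad-nil prop}, $\eM' = \Ext^m_\ddd(\eM,\ddd)$ carries a natural $(\Ak,\ddd)$-bimodule structure on which $\C[V]^G$ acts locally ad-nilpotently, and by Remark~\ref{rem:K-hyp} (applied on the left) $\eM'$ should in fact be identified with the left-right analogue $\eK$ of $\eM$, i.e. with $\ddd/\g_\chi\ddd$ modulo its $\delta$-torsion. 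The cleanest way to see this is to localise at $\eS = \{\delta^j\}$: on $V_{\reg}$, Lemma~\ref{grad} together with Theorem~\ref{thm:radial-exists}(1) shows that $\eM_\eS = \dd(V_{\reg})/\dd(V_{\reg})\tau(\g)$ is free of finite rank over $\dd(\h_{\reg})^W$, so $\Ext^m$ can be computed there, and by Lemma~\ref{Brown-Levasseur} this Ext localises correctly; since $\eM'$ is $\delta$-torsionfree by Corollary~\ref{lem:leftrightdfreepolar} (here using that $A_{\cdag}$ is simple, which is Hypothesis~\ref{K-hyp}), it embeds in its localisation and is thereby pinned down as the left-right dual object $\eK$ of Lemma~\ref{lem:K-hyp}.

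Once $\eM' \cong \eK$ is established, projectivity is essentially built in: $\eKt = \ddd/\g_\chi\ddd$ is a projective object in $\rmod(G,\chi,\ddd)$ by the left-right analogue of Lemma~\ref{lem:Gequivisfullsub}(1) (its coinvariants functor is exact by the analogue of Lemma~\ref{hom=invariants}), and by Lemma~\ref{lem:K-hyp} the module $\eK$ is a direct summand of $\eKt$ as a right $\ddd$-module. A direct summand of a projective object in an additive category is projective, so $\eM' \cong \eK$ is projective in $\rmod(G,\chi,\ddd)$. I should be careful that the splitting $\eKt = \eK \oplus L_\eK$ in Lemma~\ref{lem:K-hyp} is a splitting as right $\ddd$-modules (equivalently as objects of the monodromic category), which it is, since it comes from a central idempotent of $\End_\ddd(\eKt) = (\ddd/\g_\chi\ddd)^G$ via the left-right version of Proposition~\ref{semiprime2} and Lemma~\ref{summand-endo}.

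The main obstacle is the identification $\eM' \cong \eK$, not the final projectivity step. The issue is that $\eM'$ is defined by a derived functor applied to $\eM = \eMt/\eMt P$, whereas $\eK$ is defined as a quotient of $\ddd/\g_\chi\ddd$; matching them requires knowing that $\Ext^m_\ddd(\eM,\ddd)$ is concentrated in the right degree (this is Corollary~\ref{m-CM}: $\eM$ is $m$-CM as a left $\ddd$-module, so $\eM' = \Ext^m_\ddd(\eM,\ddd)$ is the only nonzero Ext), that the localisation at $\eS$ commutes with $\Ext^m$ (Lemma~\ref{Brown-Levasseur}, using that $\eS$ is a two-sided Ore set of regular elements in $\ddd$), and finally that over the regular locus the two candidate modules agree. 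The last point is where I expect to need some genuine work: on $V_{\reg}$ both $\eM_\eS$ and $\eKt_\eS = \dd(V_{\reg})/\tau(\g)\dd(V_{\reg})$ are free over $\dd(\h_{\reg})^W$ of rank $|W|$ by Lemma~\ref{grad} and its right-handed mirror, and one computes $\Ext^m_{\dd(V_{\reg})}(\eM_\eS,\dd(V_{\reg}))$ directly using a Koszul-type resolution coming from the $\theta_i$, getting $\eKt_\eS$ back. Then $\eM' \hookrightarrow \eM'_\eS \cong \eKt_\eS$, and since $\eM'$ is a $\delta$-torsionfree $(\Ak,\ddd)$-bimodule with $A_{\cdag} = \End_\ddd(\eM')$, it must coincide with the $\delta$-torsionfree summand $\eK$ of $\eKt$. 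I would also double-check the bookkeeping on characters: the left module $\eM$ is $(G,\chi)$-monodromic, so by Lemma~\ref{Ext-equivariance} its dual $\eM'$ is a $(G,\chi)$-monodromic \emph{right} module, matching the category in which $\eK$ lives, and the parameter on the spherical side is $\cdag$ as in \eqref{eq:left-rad}, consistent with Lemma~\ref{lem:K-hyp}.
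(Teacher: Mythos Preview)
Your plan has a genuine gap at its central step: the identification $\eM' \cong \eK$ is false in general, and the reason is already visible in the endomorphism rings. By Lemma~\ref{double-dual}(2), $\End_\ddd(\eM') \cong \Ak$, whereas by Lemma~\ref{lem:K-hyp}(1), $\End_\ddd(\eK) \cong A_{\cdag}$. These two spherical algebras need not be isomorphic when $\vs\neq 0$ (Remark~\ref{rem:K-defn1}, with an explicit instance in Example~\ref{ex:strangerankonepolarrad}), so $\eM'$ and $\eK$ cannot be isomorphic as right $\ddd$-modules. Your assertion ``$A_{\cdag}=\End_\ddd(\eM')$'' is simply the wrong ring. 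What \emph{is} true is that after inverting $\delta$ one has $\Ak[\delta^{-1}]=\dd(\h_{\reg})^W=A_{\cdag}[\delta^{-1}]$, and indeed $\eM'_\eS\cong\eK_\eS$ (this is Proposition~\ref{prop:isoofExtmonopenset} combined with \eqref{equ:ore102}). But the step ``both are $\delta$-torsionfree submodules of the same localisation with the right endomorphism ring, hence equal'' fails: there is no canonical way to pick out either $\eM'$ or $\eK$ inside $\eK_\eS$ from the data you have, and the two genuinely sit differently.

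The paper's argument works around exactly this obstruction. Rather than identifying $\eM'$ with $\eK$, it shows only that there exist morphisms $\phi\in\Hom_\ddd(\eK,\eM')$ and $\psi\in\Hom_\ddd(\eM',\eK)$ with $\phi\circ\psi\neq 0$; this suffices for projectivity by a Dual Basis Lemma argument (Lemma~\ref{whenisHproj} and Corollary~\ref{submoduletest}), since $\End_\ddd(\eM')=\Ak$ is simple and so the trace ideal $H_Q\widehat H_Q$ is either zero or everything. The existence of such $\phi,\psi$ is then reduced, via a careful localisation argument (Lemma~\ref{ore100}), to the isomorphism $\eM'_\eS\cong\eK_\eS$ on the regular locus, where your Koszul-type computation is indeed what is needed. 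So your localisation idea is correct and is the heart of the matter, but the global conclusion you draw from it is too strong; one must instead extract only the existence of nonzero compositions, which is all that projectivity requires.
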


%%%%%%%%%%%
\subsection*{Vector bundles on homogeneous spaces}

Let $Y$ be a smooth affine variety  over $\C$, with a free action of our given  finite group  $W$.  Thus $\eD(Y/W)=\eD(Y)^W$.   
We can identify the category $(\eD(Y),W)\lmod$ of finitely generated  left $(\eD(Y),W)$-modules with  $(\eD(Y) \rtimes W)\lmod$. This leads to   the following
routine result.

\begin{lemma}\label{lem:finitegroupDequiv}  
	Keep  the above assumptions on $Y$. Then,    $M \mapsto M^W$, gives an equivalence $(\eD(Y),W)\lmod  \stackrel{\sim}{\longrightarrow} 
	\eD(Y)^W\lmod.$ Hence
	$
	\Ext^i_{ \eD(Y/W)}\left(M^W,N^W\right) = \Ext^i_{\eD(Y)}(M,N)^W,  $	for   $M,N \in (\ddd,W)\lmod$. \qed
\end{lemma}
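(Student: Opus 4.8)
\textbf{Proof plan for Lemma~\ref{lem:finitegroupDequiv}.}

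The plan is to invoke the classical fact that for a free action of a finite group $W$ on a smooth affine variety $Y$ (over a field of characteristic zero, here $\C$), the quotient map $\pi\colon Y\to Y/W$ is an \'etale $W$-Galois covering, and the functor $M\mapsto M^W$ from $(\eD(Y),W)\lmod$ to $\eD(Y)^W\lmod$ is an equivalence. Concretely, one identifies $(\eD(Y),W)\lmod$ with the category of finitely generated left $\eD(Y)\rtimes W$-modules, and one checks that the skew group ring $\eD(Y)\rtimes W$ is Morita equivalent to $\eD(Y)^W=\eD(Y/W)$, with the bimodule implementing the equivalence being $\eD(Y)e$ where $e=\frac{1}{|W|}\sum_{w\in W}w$ is the trivial idempotent. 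First I would note that $\pi_*\eO_Y$ is a locally free $\eO_{Y/W}$-module of rank $|W|$ carrying a $W$-action, so that $e(\eD(Y)\rtimes W)e\cong \eD(Y)^W$; freeness of the $W$-action is exactly what guarantees that $\eD(Y)\rtimes W$ is a ``matrix algebra'' over $\eD(Y)^W$ in the appropriate sense. The inverse functor sends an $\eD(Y)^W$-module $N$ to $\eD(Y)\otimes_{\eD(Y)^W}N$ (equivalently $(\eD(Y)\rtimes W)e\otimes_{\eD(Y)^W}N$), and $M\mapsto M^W=eM$ recovers the invariants. Since $|W|$ is invertible, taking $W$-invariants (equivalently, multiplying by the idempotent $e$) is exact, so this is an exact equivalence.

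For the Ext statement, I would argue as follows. Since $M\mapsto M^W=eM$ is an equivalence of abelian categories, it takes projective resolutions to projective resolutions: if $P_\bullet\to M$ is a resolution of $M$ in $(\eD(Y),W)\lmod$ by projective objects, then $P_\bullet^W\to M^W$ is a resolution of $M^W$ by projective $\eD(Y)^W$-modules. Hence $\Ext^i_{\eD(Y)^W}(M^W,N^W)$ is computed by the complex $\Hom_{\eD(Y)^W}(P_\bullet^W,N^W)$. Now I would use the natural identification $\Hom_{\eD(Y)^W}(eP,eN)\cong \Hom_{(\eD(Y),W)}(P,N)\cong\Hom_{\eD(Y)}(P,N)^W$ valid for any $P,N$ in $(\eD(Y),W)\lmod$: the first isomorphism is the equivalence applied on Hom-sets, and the second is simply the definition of $W$-equivariant morphisms as the $W$-fixed points of all $\eD(Y)$-linear morphisms (using that the $W$-action on $\Hom_{\eD(Y)}(P,N)$ has invariants the $W$-equivariant maps). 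Applying this degreewise, $\Hom_{\eD(Y)^W}(P_\bullet^W,N^W)\cong \Hom_{\eD(Y)}(P_\bullet,N)^W$, and since $(-)^W$ is exact (as $|W|$ is invertible) it commutes with taking cohomology of this complex, yielding $\Ext^i_{\eD(Y)^W}(M^W,N^W)\cong \Ext^i_{\eD(Y)}(M,N)^W$.

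The main point requiring care — though it is standard and I would state it as such with a reference — is the first assertion, that $(-)^W$ really is an equivalence; this rests on the freeness of the $W$-action, which forces $\eD(Y)$ to be a finitely generated projective $\eD(Y)^W$-module on both sides with $\eD(Y)\rtimes W\cong \End_{\eD(Y)^W}(\eD(Y))$, so that $\eD(Y)e$ is a progenerator for the Morita equivalence. Everything else is formal homological algebra given that equivalence together with the exactness of $(-)^W$. The description ``routine result'' in the excerpt is accurate; the only genuine input beyond category theory is the geometric/algebraic fact about free finite group quotients, for which I would cite a standard source (e.g. the treatment of $\eD$-modules on quotients by free finite group actions).
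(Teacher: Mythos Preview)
Your proposal is correct and is exactly the standard argument the paper has in mind; the paper itself gives no proof at all (the lemma is stated with a bare \qed, having been introduced as a ``routine result''), so there is nothing to compare against beyond confirming that your Morita-equivalence-via-the-trivial-idempotent argument is the expected one. The only small point you might make explicit is that a projective object in $(\eD(Y),W)\lmod$ is automatically projective as a $\eD(Y)$-module (since $\eD(Y)\rtimes W$ is free over $\eD(Y)$), so that the complex $\Hom_{\eD(Y)}(P_\bullet,N)$ really does compute $\Ext^\bullet_{\eD(Y)}(M,N)$; you use this implicitly in the last step.
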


As in Section~\ref{Sec:polarreps}, let $\h_{\reg} $ and $V_{\reg}$ denote  the regular loci defined by the non-vanishing of $\delta$. Recall that $W = N / Z$ where $N=N_G(\h)\supset Z_G(\h) = Z$ as in
 \cite[Equation~3.2]{BLNS}. Also,  $\mf{z}=\Lie(Z)$ can be identified with $\Lie(N)$ since $|W|<\infty$. Finally, using \eqref{lem:stablepolaropen}, set 
\[ \eX = V_{\reg} \cong G \times_N \h_{\reg}. \]
The group $G$ acts on itself by multiplication  on  either side   (with the right action given by $g \mapsto g h^{-1}$). 
With a slight generalisation of our earlier notation, differentiating these actions gives moment 
maps $\tau_L \colon \g \rightarrow \eD(G)$ \label{tau-left-defn}
and $\tau_R \colon \g \rightarrow \eD(G)$ with images the \emph{right}, respectively \emph{left}, invariant vector fields on $G$. Let 
$\euls{O}_G^{\chi} = \eD(G) \otimes_{\tau_{{L}}(\mf{g})} \C_{\chi}$, a rank one integrable connection on $G$. It is a consequence of \cite[Proposition~9.1.1(i)]{BG}
that every coherent $(G,\chi)$-monodromic $\eD(G)$-module is a finite direct sum of copies of $\euls{O}_G^{\chi}$; in particular, $\euls{O}_G^{\chi}$ is the unique 
$(G,\chi)$-monodromic rank one integrable connection on $G$. For any character $\psi$ of $\mf{z}$, the associated ring of twisted differential operators 
on $G/Z$ is defined to be 
$$
\eD_{\psi}(G/Z) = ( \eD(G) / \eD(G) \tau_{R}(\mf{z})_{\psi})^Z. 
$$  
For the next few results we work with an arbitrary linear character $\eta$ of $\g$.

\begin{lemma}\label{general-char1}
	Let $\eta$ be a linear character of $\g$ and set  $\psi = - \eta |_{\mf{z}}$. The space $\euls{O}^{\eta}_{G/Z} := (\euls{O}_G^{\eta})^Z$ is a left $\eD_{\psi}(G/Z)$-module,
	free of rank one over $\euls{O}(G/Z)$.
\end{lemma}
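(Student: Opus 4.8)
The statement asserts that $\euls{O}^{\eta}_{G/Z} = (\euls{O}_G^{\eta})^Z$ is a left $\eD_{\psi}(G/Z)$-module, free of rank one over $\euls{O}(G/Z)$, where $\psi = -\eta|_{\mf{z}}$. The plan is to unravel the definitions and verify that the $Z$-invariants are well-behaved because $Z$ acts freely on $G$.

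\textbf{Step 1: The module structure.} First I would note that $\euls{O}_G^{\eta} = \eD(G) \otimes_{\tau_L(\g)} \C_{\eta}$ is a $(G,\eta)$-monodromic left $\eD(G)$-module with a commuting right $Z$-action (coming from right multiplication by $Z \subset G$, which commutes with the left $G$-action used to define monodromicity). Taking $Z$-invariants kills the generator's transformation under $\tau_R(\mf{z})$ up to the character $\psi$: concretely, on $(\euls{O}_G^{\eta})^Z$ the operators $\tau_R(x)$ for $x \in \mf{z}$ act by the scalar $\psi(x) = -\eta(x)$ (the sign is because the right action is $g \mapsto g h^{-1}$, matching the convention $\g_{\chi} = \tau(x) - \chi(x)$ and the twist in $\eD_{\psi}(G/Z) = (\eD(G)/\eD(G)\tau_R(\mf{z})_{\psi})^Z$). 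Hence the ring $\eD(G)^Z$ acts on $(\euls{O}_G^{\eta})^Z$ and factors through $\eD_{\psi}(G/Z)$, giving the asserted module structure. This is essentially formal once the conventions are aligned, so I would keep it brief.

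\textbf{Step 2: Rank one over $\euls{O}(G/Z)$.} The key geometric input is that $Z$ acts freely on $G$ (it is a subgroup acting by right translation), so $G \to G/Z$ is a $Z$-torsor and in particular $\euls{O}(G/Z) = \euls{O}(G)^Z$. Since $\euls{O}_G^{\eta}$ is free of rank one over $\euls{O}(G)$ — say with generator $v$ satisfying $\tau_L(x) v = \eta(x) v$ — I would observe that $\euls{O}_G^{\eta} \cong \euls{O}(G)$ as an $\euls{O}(G)$-module, equivariantly for a suitable (twisted) $Z$-action on $\euls{O}(G)$. Because the $Z$-action on $G$ is free, this twisted $Z$-equivariant line bundle descends to a line bundle on $G/Z$, i.e. a rank-one projective $\euls{O}(G/Z)$-module; and since $G/Z$ embeds as the regular locus of the affine variety $\eX = V_{\reg}$, which is affine, I can invoke that the relevant line bundle is in fact free (alternatively, one sees freeness directly: the generator $v$, being a nowhere-vanishing section, transforms under $Z$ by a character, and since $Z$ may be disconnected one may need to twist $v$ by a suitable function, but a nowhere-vanishing $Z$-eigensection always exists because $H^1$ of a finite group with coefficients in the units of a connected ring is controlled — in our setting one simply exhibits the eigensection explicitly). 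Thus $(\euls{O}_G^{\eta})^Z$ is free of rank one over $\euls{O}(G/Z)$.

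\textbf{Main obstacle.} I expect the only real subtlety is bookkeeping with the two sign/twist conventions: matching the character $\psi = -\eta|_{\mf{z}}$ appearing in $\eD_{\psi}(G/Z)$ against the character by which $\tau_R(\mf{z})$ acts on $(\euls{O}_G^{\eta})^Z$, and checking that the line bundle descending from $G$ to $G/Z$ is genuinely trivial rather than merely locally trivial (which uses either affineness of $G/Z = V_{\reg}$ or an explicit eigensection). Neither is deep, but both require care to state cleanly. Everything else — exactness of $(-)^Z$ for the free $Z$-action (Lemma~\ref{lem:finitegroupDequiv} in the differential-operator setting, or just reductivity of the finite group $Z$), and the fact that $\euls{O}_G^{\eta}$ is free of rank one over $\euls{O}(G)$, which is immediate from its definition as an induced module — is routine.
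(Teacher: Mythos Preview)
Your Step~1 is on the right track, and the paper's argument is the precise version of what you sketch. Rather than appealing to sign conventions, the paper invokes the identity $\eD(G)(\tau_L - \eta)(\g) = \eD(G)(\tau_R + \eta)(\g)$ from \cite[Proposition~9.1.2]{BG}: since $\euls{O}_G^{\eta} = \eD(G)/\eD(G)(\tau_L - \eta)(\g)$, this immediately gives $(\tau_R(x) + \eta(x))\cdot 1 = 0$ for all $x\in\g$, hence $(\tau_R - \psi)(\mf{z})$ kills the generator. One then uses that $[\tau_R(\mf{z}),f]=0$ for $f\in\euls{O}(G)^Z$ to conclude that $(\tau_R - \psi)(\mf{z})$ kills all of $(\euls{O}_G^{\eta})^Z$.

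Your Step~2, however, is both overcomplicated and contains errors. First, $Z = Z_G(\h)$ is \emph{not} a finite group in general (indeed $\mf{z} = \Lie Z$ is typically nonzero), so your remarks about ``the finite group $Z$'' and $H^1$ of finite groups do not apply. Second, $G/Z$ does not embed in $V_{\reg}$; rather $\eY = G/Z \times \h_{\reg}$ is a $W$-cover of $V_{\reg}$, so your affineness argument does not go through as stated. Most importantly, none of this machinery is needed: the generator $1\in\euls{O}_G^{\eta}$ is the image of $1\in\eD(G)$ under the quotient map, and since the $Z$-action on $\euls{O}_G^{\eta}$ is induced from the $Z$-action on $\eD(G)$, the generator is automatically $Z$-invariant. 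Hence $(\euls{O}_G^{\eta})^Z = \euls{O}(G)^Z\cdot 1 = \euls{O}(G/Z)\cdot 1$ is visibly free of rank one. This is why the paper simply declares the rank-one freeness ``obvious''.
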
 

\begin{proof}
	The space $\euls{O}^{\eta}_{G/Z}$ is a module over $\eD(G)^Z$. As explained in the proof of \cite[Proposition~9.1.2]{BG}, $$
	\eD(G) (\tau_{L} - \eta)(\g) = \eD(G)(\tau_R + \eta)(\g). 
	$$
	Thus, if $x \in \mf{z}$, then $\tau_R(x) - \psi(x)$ belongs to $\eD(G) (\tau_{L} - \eta)(\g)$. Hence $(\tau_R - \psi)(x) \cdot 1 = 0$ in 
	$\euls{O}_G^{\eta}$. If $s \in \euls{O}_{G/Z}^{\eta}$, then it can be represented as $f.1$ for some $f \in \euls{O}(G)^Z$. Since $[\tau_R(x),f] = 0$ for $x \in \mf{z}$, 
	we deduce that $(\tau_R - \psi)(x) s = 0$. In other words,   $(\eD(G)(\tau_R + \eta)(\mf{z}))^Z$ annihilates $(\euls{O}_G^{\eta})^Z$ and so $\euls{O}^{\eta}_{G/Z} $ is a left $\dd_\eta (G/Z)$-module.
	
The fact that $\euls{O}^{\eta}_{G/Z}$ is a free  $\euls{O}(G/Z)$-module of rank one is obvious.

\end{proof}

\begin{remark}\label{rem:general-char}  
	Set $\eY = G /Z \times \h_{\reg}$. Then $W$ acts freely on $\eY$, the action commuting with the $G$-action. Moreover,  $\eY / W = \eX$. This implies that $\eD(\eX) = \eD(\eY)^W$ and $\D(\eY) = \eD(G/Z) \boxtimes \D(\h_{\reg})$. 
\end{remark}

\begin{lemma}\label{lem:Xchiquotientdd}
	Keep the above notation. For each character $\eta$, 
	$$
	\eD(\eY) / \eD(\eY) \mf{g}_{\eta} = \left\{ \begin{array}{ll}
		\euls{O}_{G/Z}^{\eta} \boxtimes \eD(\h_{\reg}) & \text{if } \eta |_{\mf{z}} = 0 \\
		& \\
		\ 0 & \textrm{otherwise.} 
	\end{array} \right.
	$$
\end{lemma}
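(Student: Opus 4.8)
The plan is to reduce the statement about $\eD(\eY)/\eD(\eY)\mf{g}_{\eta}$ to the already-established description of $G$-monodromic $\eD(G)$-modules, using the tensor product decomposition $\eD(\eY) = \eD(G/Z)\boxtimes\eD(\h_{\reg})$ from Remark~\ref{rem:general-char}. First I would observe that the $G$-action on $\eY = G/Z\times\h_{\reg}$ is only through the first factor, so $\tau(\g)$ lands in $\eD(G/Z)\otimes 1$, and hence $\mf{g}_{\eta} = \{\tau(x) - \eta(x) : x\in\g\}$ lies inside the first tensor factor as well. Therefore $\eD(\eY)/\eD(\eY)\mf{g}_{\eta} \cong \bigl(\eD(G/Z)/\eD(G/Z)\mf{g}_{\eta}\bigr)\boxtimes\eD(\h_{\reg})$, and the whole problem collapses to computing $\eD(G/Z)/\eD(G/Z)\mf{g}_{\eta}$ as a left $\eD(G/Z)$-module. (Here one must be a little careful: $\eD(G/Z) = (\eD(G)/\eD(G)\tau_R(\mf{z}))^Z$, so "$\mf{g}_{\eta}$" on $G/Z$ means the image of $\tau_L(\g) - \eta$; I would phrase this via the $Z$-invariants of the corresponding quotient on $G$ itself.)

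Next I would split into the two cases. If $\eta|_{\mf{z}}\neq 0$: I claim $\eD(G)/\eD(G)\mf{g}_{\eta} = 0$ after taking $Z$-invariants, equivalently that the left $\eD(G)$-module $\euls{O}_G^{\eta} := \eD(G)\otimes_{\tau_L(\g)}\C_\eta$ has no nonzero $Z$-invariants compatible with the twist. Concretely, on $\euls{O}_G^{\eta}$ the identity from the proof of \cite[Proposition~9.1.2]{BG}, namely $\eD(G)(\tau_L - \eta)(\g) = \eD(G)(\tau_R + \eta)(\g)$, shows that $(\tau_R(x) + \eta(x))\cdot 1 = 0$ for all $x\in\g$, in particular for $x\in\mf{z}$. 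Since $Z$ acts on $\euls{O}_G^{\eta}$ via the right regular action, a section is $Z$-semiinvariant of the correct weight only if $\tau_R(\mf{z})$ acts by $\psi = -\eta|_{\mf{z}}$; but the defining relation forces $\tau_R(\mf{z})$ to act by $-\eta|_{\mf{z}}$ on the cyclic generator while the quotient $\eD(G/Z)$ is built by imposing $\tau_R(\mf{z})_\psi = 0$ with $\psi = -\eta|_{\mf{z}}$... I need to track this sign carefully. The cleanest formulation: the quotient $\bigl(\eD(G)/\eD(G)\mf{g}_{\eta}\bigr)\big/\bigl(\text{im of }\eD(G)\tau_R(\mf{z})\bigr)$, on which we then take $Z$-invariants, is nonzero iff the two prescribed actions of $\mf{z}$ (one from the $\mf{g}_\eta$ relation via $\tau_R + \eta$, one from quotienting by $\tau_R(\mf{z})$) are compatible, i.e. iff $\eta|_{\mf{z}} = 0$. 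When they are incompatible the module is killed, giving $0$.

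In the case $\eta|_{\mf{z}} = 0$: then $\psi = 0$, so $\eD_\psi(G/Z) = \eD(G/Z)$, and I would show $\eD(G/Z)/\eD(G/Z)\mf{g}_{\eta} \cong \euls{O}^{\eta}_{G/Z} = (\euls{O}_G^{\eta})^Z$. By \cite[Proposition~9.1.1(i)]{BG} every coherent $(G,\eta)$-monodromic $\eD(G)$-module is a direct sum of copies of $\euls{O}_G^{\eta}$, and $\euls{O}_G^{\eta}$ is the unique such rank-one integrable connection; taking $Z$-invariants and using that $\eD(G/Z)/\eD(G/Z)\mf{g}_\eta$ is cyclic, monodromic, and free of rank one over $\euls{O}(G/Z)$ by Lemma~\ref{general-char1}, the surjection $\eD(G/Z)/\eD(G/Z)\mf{g}_\eta \twoheadrightarrow \euls{O}^{\eta}_{G/Z}$ must be an isomorphism. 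Finally, tensoring back with $\eD(\h_{\reg})$ over the outer tensor product recovers the asserted formula $\euls{O}_{G/Z}^{\eta}\boxtimes\eD(\h_{\reg})$. The main obstacle I anticipate is purely bookkeeping: getting the signs right in the interplay between $\tau_L - \eta$, $\tau_R + \eta$, and the character $\psi = -\eta|_{\mf{z}}$ used to define $\eD_\psi(G/Z)$, and making sure the vanishing in the second case is genuinely forced rather than merely plausible. I would isolate that sign computation as the single delicate lemma and let everything else follow formally from \cite[Proposition~9.1.1]{BG}, \cite[Proposition~9.1.2]{BG}, and Lemma~\ref{general-char1}.
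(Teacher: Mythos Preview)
Your outline is correct but takes a slightly different route from the paper, and has one small circularity worth flagging.

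The paper applies \cite[Proposition~9.1.2]{BG} directly to $\eY = G \times_Z \h_{\reg}$ to obtain
\[
\eD(\eY)/\eD(\eY)\g_\eta \;\cong\; \bigl(\euls{O}_G^\eta \boxtimes (\eD(\h_{\reg})/\eD(\h_{\reg})\mf{z}_\eta)\bigr)^Z \;=\; \euls{O}_{G/Z}^\eta \boxtimes \bigl(\eD(\h_{\reg})/\eD(\h_{\reg})\mf{z}_\eta\bigr),
\]
and then observes that, since $Z$ acts trivially on $\h_{\reg}$, the set $\mf{z}_\eta$ consists of the scalars $-\eta(z)$; so the second factor is $\eD(\h_{\reg})$ or $0$ according as $\eta|_{\mf{z}}$ vanishes. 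Thus in the paper the dichotomy lives in the $\h_{\reg}$-factor and drops out in one line. You instead factor $\eD(\h_{\reg})$ out first via the product structure $\eY = G/Z \times \h_{\reg}$ and push the dichotomy into computing $\eD(G/Z)/\eD(G/Z)\g_\eta$. This is legitimate---indeed it is the same \cite[Proposition~9.1.2]{BG} applied to $G/Z = G \times_Z \{\mathrm{pt}\}$---and your vanishing mechanism via $\eD(G)(\tau_L-\eta)(\g)=\eD(G)(\tau_R+\eta)(\g)$ is the right one once the signs are straightened out.

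The gap is in your nonvanishing case: you assert that $\eD(G/Z)/\eD(G/Z)\g_\eta$ is free of rank one over $\euls{O}(G/Z)$ ``by Lemma~\ref{general-char1}'', but that lemma concerns $\euls{O}_{G/Z}^\eta$, precisely the module you are trying to identify it with. The paper's single invocation of \cite[Proposition~9.1.2]{BG} avoids this by producing $\euls{O}_{G/Z}^\eta$ directly. In your framework the clean fix is to invoke that same proposition for $G/Z$ itself, rather than to argue via rank.
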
   

\begin{proof} 
	The proof of \cite[Proposition~9.1.2]{BG} shows that  
	\begin{align*}
		\eD(\eY) / \eD(\eY) \mf{g}_{\eta} & \cong \left( \euls{O}^{\eta}_G \boxtimes (\eD(\h_{\reg}) / \eD(\h_{\reg}) \mf{z}_{\eta}) \right)^Z\\
		& = \euls{O}^{\eta}_{G/Z} \boxtimes (\eD(\h_{\reg}) / \eD(\h_{\reg}) \mf{z}_{\eta}) .
	\end{align*}
	Since $Z$ acts trivially on $\h_{\reg}$, $\mf{z}_{\eta}=0$ if $\eta(\mf{z}) = 0$, and $\mf{z}_{\eta}=\C$ otherwise. 
\end{proof}

\begin{remark}\label{rem:Xchiquotientdd} 
	This lemma shows that $\eD(V)/\eD(V)\g_{\eta}$ is very degenerate if $\eta_{| \mf{z}} \neq 0$. This situation does not occur for us because of \cite[Lemma~6.2(1)]{BLNS}. 
\end{remark} 

Recall the definition of monodromic right $\eD(V)$-modules from Definition~\ref{defn:monodromic}. It is a consequence of \cite[Corollary~6.9]{BLNS} that $m := \dim V - \dim \h$ (as in  Notation~\ref{notation4.11}) equals $\dim G - \dim Z$. Then $\Omega_{G/Z} := \wedge^m \Omega_{G/Z}^1$ is a $G$-equivariant right $\eD_{G/Z}$-module. The next lemma is routine, the fact that $\Omega_{G/Z} \otimes_{\euls{O}_{G/Z}} M$ is a right $\dd_{G/Z}$-module with the given action following from \cite[Proposition~1.2.9(ii)]{HTT}.  

\begin{lemma}\label{lem:leftrightDGmono}
	Tensoring by $\Omega_X$ defines an equivalence 
	$$
	(G,\eta,\eD_{G/Z}) \lmod \stackrel{\sim}{\longrightarrow} \rmod (G,-\eta,\eD_{G/Z}), \quad M \mapsto \Omega_{G/Z} \otimes_{\euls{O}_{G/Z}} M, 
	$$
	where the action of $G$ on $\Omega_{G/Z} \otimes_{\euls{O}_{G/Z}} M$ is given by 
	$$
	(\omega \otimes m) \cdot g = (\omega \cdot g) \otimes (g^{-1} \cdot m)
	$$
	and  the action of $\dd_{G/Z}$ is given by 
	$(\omega \otimes m) \cdot v = (\omega \cdot v) \otimes m - \omega \otimes (v \cdot m)$. \qed
\end{lemma}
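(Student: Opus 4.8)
The statement to be proved is Lemma~\ref{lem:leftrightDGmono}: that tensoring with $\Omega_{G/Z}$ gives an equivalence between $(G,\eta,\eD_{G/Z})\lmod$ and $\rmod(G,-\eta,\eD_{G/Z})$ with the indicated $G$- and $\dd_{G/Z}$-actions. The plan is to check three things in sequence. First I would verify that the formulae given actually define a right $\dd_{G/Z}$-module structure and a $G$-action, and that together they make $\Omega_{G/Z}\otimes_{\euls{O}_{G/Z}}M$ into a $(G,-\eta)$-monodromic right $\eD_{G/Z}$-module. Second I would construct the inverse functor, namely tensoring with $\Omega_{G/Z}^{-1}=\Hom_{\euls{O}_{G/Z}}(\Omega_{G/Z},\euls{O}_{G/Z})$ on the left (equivalently, the classical side-changing operation), and observe that the two composites are naturally isomorphic to the identity. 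Third I would confirm that the functor and its inverse are compatible with morphisms, i.e.\ they send $(\eD_{G/Z},G)$-equivariant maps to $(\eD_{G/Z},G)$-equivariant maps, which is immediate since everything is defined $\euls{O}_{G/Z}$-linearly from functorial data.

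For the first point: it is a standard fact (cited in the statement via \cite[Proposition~1.2.9(ii)]{HTT}) that for any left $\dd$-module $M$ on a smooth variety the tensor product $\Omega\otimes_{\euls{O}}M$ carries a right $\dd$-module structure via $(\omega\otimes m)\cdot v=(\omega\cdot v)\otimes m-\omega\otimes(v\cdot m)$ for $v$ a vector field, where $\omega\cdot v$ denotes the right action on top forms (Lie derivative). One checks this extends to $\dd_{G/Z}$, the ring of $\psi$-twisted differential operators, because the twist only affects how $\mf{z}\subset\g$ acts and the formula respects that consistently. That the prescribed $G$-action is a genuine rational action is clear, being the tensor product of the natural $G$-action on $\Omega_{G/Z}$ with the given action on $M$ twisted by $g^{-1}$; differentiating it, one gets $\tau$ acting as $\tau_{\Omega}\otimes 1 + 1\otimes\tau_M$, and since $M$ is $(G,\eta)$-monodromic so that $\tau_M(x)=\tau(x)-\eta(x)$, while $\tau_{\Omega}$ contributes the trace/modular term on $\Omega_{G/Z}$, one computes that the resulting module is $(G,-\eta)$-monodromic. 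Here I would use the fact that $\Omega_{G/Z}$, as the canonical bundle of a homogeneous space $G/Z$, is $G$-equivariantly trivial (or at worst a character twist by the modular character of $Z$, which vanishes since $G$ is reductive and $Z$ is its reductive subgroup), so the sign flip $\eta\mapsto-\eta$ comes purely from the side-changing.

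The main obstacle — though it is more bookkeeping than a genuine difficulty — is keeping track of the interaction between the $G$-action and the $\dd_{G/Z}$-action under the twist, i.e.\ verifying condition (ii) of Definition~\ref{defn:monodromic} with $-\eta$ in place of $\eta$, and making sure the canonical-bundle contribution to $\tau_\Omega$ is exactly the trace term that produces the sign change rather than some other shift. Once that compatibility is confirmed, quasi-inverse-ness is formal: $\Omega_{G/Z}\otimes_{\euls{O}_{G/Z}}(\Omega_{G/Z}^{-1}\otimes_{\euls{O}_{G/Z}}-)\cong\mathrm{Id}$ canonically as $\euls{O}_{G/Z}$-modules, and one checks this isomorphism intertwines both the $\dd_{G/Z}$- and the $G$-actions on the two sides. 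Since the excerpt labels this lemma ``routine,'' I would present the argument compactly, citing \cite{HTT} for the side-changing equivalence in the untwisted case and indicating that the $G$-equivariant and twisted refinements follow by the same computation; the substantive input is simply that $G/Z$ is a homogeneous space so its canonical bundle is $G$-equivariantly (almost) trivial.
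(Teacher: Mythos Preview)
Your proposal is correct and aligns with the paper's approach: the paper in fact gives no proof at all, merely labelling the lemma routine and citing \cite[Proposition~1.2.9(ii)]{HTT} for the right $\dd_{G/Z}$-module structure before placing the $\qed$. Your outline is exactly the verification one would carry out, though note that the sign flip $\eta\mapsto-\eta$ does not depend on $\Omega_{G/Z}$ being $G$-equivariantly trivial---it follows simply from $\Omega_{G/Z}$ being a strongly $G$-equivariant (i.e.\ $(G,0)$-monodromic) right $\dd_{G/Z}$-module, so that tensoring contributes no additional character and the minus sign arises purely from the formula $(\omega\otimes m)\cdot v = (\omega\cdot v)\otimes m - \omega\otimes(v\cdot m)$.
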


We similarly define the right $\eD_{\psi}(G/Z)$-module $\Omega_{G/Z}^{\eta} := (\Omega_G^{\eta})^Z$, where $\Omega_G^{\eta} = \eD(G) / \mf{g}_{\eta} \eD(G)$. The right version of Lemma~\ref{lem:Xchiquotientdd} says that: 
\begin{equation}\label{eq:rightddgchiquotient}
	\eD(\eX) / \mf{g}_{\eta} \eD(\eX) = \left\{ \begin{array}{ll}
		\left( \Omega_{G/Z}^{\eta} \boxtimes \eD(\h_{\reg}) \right)^W & \eta |_{\mf{z}} = 0 \\
		& \\
		0 & \textrm{otherwise.} 
	\end{array} \right.
\end{equation}

We now return to our specific character $\chi$, which satisfies $\chi |_{\mf{z}} = 0$.   

\begin{lemma}\label{lem:extcomputationOGZchi}
	$
	\Ext^m_{\eD (G / Z)} \bigl(\euls{O}_{G/Z}^{\chi},\eD(G / Z)\bigr) = \Omega_{G/Z}^{\chi}.
	$ 
\end{lemma}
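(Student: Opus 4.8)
The statement computes the right $\eD(G/Z)$-module dual of the rank one integrable connection $\euls{O}_{G/Z}^{\chi}$. The plan is to reduce everything, via the equivalences already established in this section, to the well-known computation of the dualizing complex (or rather its single nonzero cohomology sheaf) of $\eD$ on the smooth affine variety $G/Z$. Recall that for any smooth affine variety $X$ of dimension $m$ and any integrable connection (i.e. $\euls{O}_X$-coherent left $\eD_X$-module) $\euls{L}$, one has $\Ext^i_{\eD_X}(\euls{L},\eD_X) = 0$ for $i \neq m$, while $\Ext^m_{\eD_X}(\euls{L},\eD_X) \cong \Omega_X \otimes_{\euls{O}_X} \euls{L}^{\vee}$ as a right $\eD_X$-module, where $\euls{L}^{\vee} = \mathcal{H}om_{\euls{O}_X}(\euls{L},\euls{O}_X)$ is the dual connection; this is the algebraic incarnation of the standard side-changing formula, see \cite[\S1.5 and Proposition~1.2.9]{HTT}.

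\textbf{Key steps.} First I would record that $\euls{O}_{G/Z}^{\chi}$ is an integrable connection, free of rank one over $\euls{O}(G/Z)$ by Lemma~\ref{general-char1} (with $\eta = \chi$, noting $\chi|_{\mf{z}} = 0$). Hence the $\Ext$-vanishing above applies and $\Ext^m_{\eD(G/Z)}(\euls{O}_{G/Z}^{\chi},\eD(G/Z)) \cong \Omega_{G/Z} \otimes_{\euls{O}_{G/Z}} (\euls{O}_{G/Z}^{\chi})^{\vee}$ as right $\eD(G/Z)$-modules. Second, I would identify the dual connection: since $\euls{O}_G^{\chi} = \eD(G)\otimes_{\tau_L(\g)} \C_{\chi}$ corresponds to the character $\chi$, its $\euls{O}$-linear dual is the connection attached to $-\chi$, i.e. $(\euls{O}_G^{\chi})^{\vee} \cong \euls{O}_G^{-\chi}$ as $G$-equivariant left $\eD(G)$-modules, and taking $Z$-invariants gives $(\euls{O}_{G/Z}^{\chi})^{\vee} \cong \euls{O}_{G/Z}^{-\chi}$ as left $\eD_{\psi'}(G/Z)$-modules for the appropriate twist. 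Third, plugging this in and using the definition $\Omega_{G/Z}^{\chi} := (\Omega_G^{\chi})^Z$ together with $\Omega_G^{\chi} = \eD(G)/\mf{g}_{\chi}\eD(G)$, one checks $\Omega_{G/Z} \otimes_{\euls{O}_{G/Z}} \euls{O}_{G/Z}^{-\chi} \cong \Omega_{G/Z}^{\chi}$: both sides are the rank one $(G,-\chi)$-monodromic right $\eD(G/Z)$-module arising by tensoring $\Omega_{G/Z}$ with the character $-\chi$, and Lemma~\ref{lem:leftrightDGmono} (together with the fact from \cite[Proposition~9.1.1(i)]{BG} that such a monodromic connection is unique up to isomorphism) pins down the identification. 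An alternative, perhaps cleaner, route is to work $Z$-equivariantly on $G$ itself: compute $\Ext^{\dim G}_{\eD(G)}(\euls{O}_G^{\chi},\eD(G)) \cong \Omega_G^{\chi}$ directly on the group (where $\euls{O}_G^{\chi}$ is literally $\eD(G)/\eD(G)\mf{g}_{\chi}$ using a Koszul-type resolution coming from a global frame of $\tau_L(\g)$), then apply the $Z$-invariants functor, using Lemma~\ref{lem:finitegroupDequiv}-style descent (or directly \cite[Proposition~9.1.1]{BG}) to pass to $G/Z$; here one uses $\dim G - \dim Z = m$ as noted in the text.

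\textbf{Main obstacle.} The only genuinely delicate point is keeping track of the twist and the sign: one must verify that the side-changing operation $\euls{L}\mapsto \Omega_X\otimes \euls{L}^\vee$ takes the $\chi$-monodromic connection to the $(-\chi)$-monodromic one with the twisted-differential-operator structure exactly as encoded by $\Omega_{G/Z}^{\chi}$, rather than to some a priori different twist. This is handled by the uniqueness statement for rank one monodromic connections on $G$ (from \cite{BG}) and by matching the $Z$-action formula in Lemma~\ref{lem:leftrightDGmono}; all other steps are formal. I would present the proof in the group-theoretic version since it avoids introducing the intermediate dual-connection notation and makes the identification $\Omega_G^{\chi} = \eD(G)/\mf{g}_\chi\eD(G)$ do the bookkeeping automatically.
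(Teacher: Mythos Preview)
Your proposal is correct and reaches the same endpoint as the paper, but the route to the key identification differs. Both arguments reduce to showing that the Verdier dual of $\euls{O}_{G/Z}^{\chi}$ is $\euls{O}_{G/Z}^{-\chi}$ (equivalently, that $\Ext^m_{\eD(G/Z)}(\euls{O}_{G/Z}^{\chi},\eD(G/Z)) \cong \Omega_{G/Z}\otimes_{\euls{O}}\euls{O}_{G/Z}^{-\chi}$) and then invoke the side-changing equivalence (Lemma~\ref{lem:leftrightDGmono}) together with the observation $\Omega_{G/Z}^{\chi}\otimes_{\euls{O}}\Omega_{G/Z}^{-1}\cong \euls{O}_{G/Z}^{-\chi}$. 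You verify $\Ver(\euls{O}_{G/Z}^{\chi})\cong \euls{O}_{G/Z}^{-\chi}$ directly via the standard formula $\mathbb{D}(\euls{L})\cong \euls{L}^{\vee}$ for integrable connections and the identification $(\euls{O}_{G/Z}^{\chi})^{\vee}\cong \euls{O}_{G/Z}^{-\chi}$; your alternative Koszul computation on $G$ followed by $Z$-descent is an equally valid variant. The paper instead uses the equivalence of \cite[Proposition~9.1.1(i)]{BG} to pull back along $j\colon\{\mathrm{pt}\}\hookrightarrow G/Z$: since $(G,\chi)$-monodromic modules are non-characteristic for $j$, \cite[Theorem~2.7.1(ii)]{HTT} gives $j^*\Ver_{G/Z}\cong \Ver_{\{\mathrm{pt}\}}j^*$, and on a point the statement is trivial. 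Your approach is more elementary, avoiding the non-characteristic-pullback machinery; the paper's approach has the advantage that the twist becomes literally invisible after restriction to a point, so the ``delicate point'' you flag (matching signs and monodromic structures through the duality) is handled automatically rather than by a separate uniqueness argument.
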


\begin{proof}
	For clarity, let $\Ver_{G/Z}$ denote Verdier duality on holonomic    $\eD_{G/Z}$-modules; thus, by definition,
	\[ \Ver_{G/Z}(M) = \Ext^{m}_{\eD(G/Z)}\bigl(M,\, \eD(G/Z)\bigr) \o_{\eD(G/Z)} \Omega_{G/Z} .\]
	Since  Lemma~\ref{lem:leftrightDGmono} implies that 
	$\Omega_{G/Z}^{\chi} \o_{\euls{O}} \Omega_{G/Z} \cong \euls{O}_{G/Z}^{-\chi}$, it suffices to prove that 
	$\Ver_{G/Z}(\euls{O}_{G/Z}^{\chi}) = \euls{O}_{G/Z}^{-\chi}$. 
	
	Let $\mr{pt}$ denote the coset of $1$ in $G/Z$ and $j \colon \{ \mr{pt} \} \hookrightarrow G/Z$ the closed embedding. By \cite[Proposition~9.1.2]{BG} $(G,\chi)$-monodromic modules on $G/Z$ are non-characteristic for $j$ and pull-back defines an equivalence between $(G,\chi)$-monodromic modules on $G/Z$ and $Z$-equivariant modules on $\{ \mr{pt} \}$. Therefore, it suffices to show that   
	$$
	j^* \Ver_{G/Z}(\euls{O}_{G/Z}^{\chi}) \cong j^* \euls{O}_{G/Z}^{-\chi}
	$$
	as $Z$-equivariant modules. Since $(G,\chi)$-monodromic modules on $G/Z$ are non-characteristic for $j$, \cite[Theorem 2.7.1(ii)]{HTT} implies that 
	$$
	j^* \Ver_{G/Z}(\euls{O}_{G/Z}^{\chi}) \cong  \Ver_{\{ \mr{pt} \}}(j^* \euls{O}_{G/Z}^{\chi}).
	$$
	We have $j^* \euls{O}_{G/Z}^{\chi} \cong \euls{O}_{\{ \mr{pt} \}}$. Here $\euls{O}_{\{ \mr{pt} \}}$ is the one-dimensional $Z$-equivariant module that is isomorphic to the trivial module as a $Z$-module. Thus the Verdier dual is again $\euls{O}_{\{ \mr{pt} \}}$. But this is isomorphic to $j^* \euls{O}_{G/Z}^{-\chi}$ as a $Z$-equivariant module, as required. 
\end{proof}

\begin{proposition}\label{prop:isoofExtmonopenset}
As right $\D(\eX)$-modules,  
\[
	\Ext^m_{\D(\eX)} (\D(\eX) / \D(\eX) \g_{\chi},\D(\eX)) \  \cong \  \eD(\eX) / \g_{\chi} \D(\eX).
	\]
\end{proposition}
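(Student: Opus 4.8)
The plan is to reduce the computation over $\D(\eX)$ to the corresponding computation over $\D(\eY) = \D(G/Z) \boxtimes \D(\h_{\reg})$ using the free $W$-action on $\eY$ with $\eY/W = \eX$, and then to split the exterior tensor product. Concretely, write $m = \dim G - \dim Z$ and recall $\g_{\chi}$ sits inside $\D(G/Z)$-factor only (since $\chi|_{\mf z} = 0$, the relevant relations only involve the $G/Z$ directions, and nothing in the $\h_{\reg}$ directions). First I would use Remark~\ref{rem:general-char} to identify $\D(\eX)$-modules with $W$-equivariant $\D(\eY)$-modules via Lemma~\ref{lem:finitegroupDequiv}, so that
\[
\Ext^m_{\D(\eX)}\bigl(\D(\eX)/\D(\eX)\g_{\chi},\, \D(\eX)\bigr) \ \cong \ \Ext^m_{\D(\eY)}\bigl(\D(\eY)/\D(\eY)\g_{\chi},\, \D(\eY)\bigr)^W.
\]
By Lemma~\ref{lem:Xchiquotientdd} (noting $\chi|_{\mf z}=0$), the module $\D(\eY)/\D(\eY)\g_{\chi}$ equals $\euls{O}_{G/Z}^{\chi} \boxtimes \D(\h_{\reg})$. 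I would then invoke a Künneth-type formula for Ext over an exterior tensor product of (noetherian, finite global dimension) differential operator algebras: since $\D(\h_{\reg})$ is a free factor and $\euls{O}_{G/Z}^{\chi}$ has projective dimension $m$ over $\D(G/Z)$ (it is an integrable connection on a smooth variety of dimension $m$, hence Cohen--Macaulay of grade $m$), one gets
\[
\Ext^m_{\D(G/Z)\boxtimes\D(\h_{\reg})}\bigl(\euls{O}_{G/Z}^{\chi} \boxtimes \D(\h_{\reg}),\, \D(G/Z)\boxtimes\D(\h_{\reg})\bigr) \ \cong \ \Ext^m_{\D(G/Z)}(\euls{O}_{G/Z}^{\chi},\D(G/Z)) \boxtimes \D(\h_{\reg}).
\]

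Next I would apply Lemma~\ref{lem:extcomputationOGZchi}, which gives $\Ext^m_{\D(G/Z)}(\euls{O}_{G/Z}^{\chi},\D(G/Z)) = \Omega_{G/Z}^{\chi}$. Thus the right-hand side becomes $\Omega_{G/Z}^{\chi} \boxtimes \D(\h_{\reg})$. Taking $W$-invariants and comparing with the right-module version \eqref{eq:rightddgchiquotient}, we get
\[
\bigl(\Omega_{G/Z}^{\chi} \boxtimes \D(\h_{\reg})\bigr)^W \ = \ \D(\eX)/\g_{\chi}\D(\eX),
\]
which is exactly the claimed isomorphism. I should check that all the identifications are compatible with the right $\D(\eX)$-module structures: the $\Ext$ carries a natural right $\D(\eX)$-action from the right regular action on the free module $\D(\eX)$, and the $W$-equivariance statement of Lemma~\ref{lem:finitegroupDequiv} respects this; on the other side the right action on $\D(\eX)/\g_{\chi}\D(\eX)$ is the obvious one, and matches the right action on $\Omega_{G/Z}^{\chi}\boxtimes\D(\h_{\reg})$ described in the setup around Lemma~\ref{lem:leftrightDGmono} and \eqref{eq:rightddgchiquotient}.

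The main obstacle I anticipate is the Künneth/base-change step: making precise the claim that $\Ext^i$ over $\D(G/Z)\boxtimes\D(\h_{\reg})$ of an exterior tensor product $M \boxtimes \D(\h_{\reg})$ (where $M$ is pure of grade $m$) collapses to $\Ext^m_{\D(G/Z)}(M,\D(G/Z))\boxtimes\D(\h_{\reg})$, together with the verification that the resulting right-module structure is the external one rather than some twist. This is a standard fact but requires choosing an explicit finite projective (or even locally free) resolution of $\euls{O}_{G/Z}^{\chi}$ over $\D(G/Z)$ — e.g. a Koszul/Spencer-type resolution coming from the fact that it is an integrable connection — tensoring with $\D(\h_{\reg})$, and observing that the resulting complex computes $\Ext$ over the tensor-product algebra because $\D(\h_{\reg})$ is free. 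I would also need to be slightly careful that $m$ is the correct cohomological degree, i.e. that $\euls{O}_{G/Z}^{\chi}$ really is $m$-CM over $\D(G/Z)$; this follows from it being a nonzero integrable connection on the $m$-dimensional smooth variety $G/Z$, so its characteristic variety is the zero section and \cite[Proposition~2.2.5]{HTT} together with the Cohen--Macaulay property of $\D(G/Z)$ applies. Everything else is bookkeeping with the equivalence of Lemma~\ref{lem:finitegroupDequiv} and the explicit descriptions in Lemmata~\ref{lem:Xchiquotientdd}, \ref{lem:extcomputationOGZchi} and equation~\eqref{eq:rightddgchiquotient}.
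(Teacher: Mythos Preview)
Your proposal is correct and follows essentially the same route as the paper: pass from $\eD(\eX)$ to $\eD(\eY)$ via Lemma~\ref{lem:finitegroupDequiv}, identify the quotient using Lemma~\ref{lem:Xchiquotientdd}, apply a K\"unneth-type splitting to reduce to $\Ext^m_{\eD(G/Z)}(\euls{O}_{G/Z}^{\chi},\eD(G/Z))$, invoke Lemma~\ref{lem:extcomputationOGZchi}, and conclude via \eqref{eq:rightddgchiquotient}. If anything you are more explicit than the paper about the K\"unneth step and the compatibility of right module structures, both of which the paper simply asserts.
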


\begin{proof}  We have
	$$	\begin{array}{rll}
		&  \Ext^m_{\eD(\eX)} \bigl(\eD(\eX)  / \eD(\eX) \g_{\chi}, \, \eD(\eX) \bigr) =    \\    \noalign{\vskip 4pt}
		& \qquad = \Ext^m_{\eD(\eY)^W} \bigl(\eD(\eY)^W  / \eD(\eY)^W \g_{\chi}, \, \eD(\eY)^W\bigr) & \text{by Remark~\ref{rem:general-char}} \\ \noalign{\vskip 4pt}
		& \qquad  = \Ext^m_{\eD(\eY)} \bigl(\eD(\eY) / \eD(\eY) \g_{\chi}, \, \eD(\eY)\bigr)^W   &\text{by Lemma~\ref{lem:finitegroupDequiv}}\hfill \\  \noalign{\vskip 4pt}
		& \qquad = \Ext^m_{\eD(\eY)} \bigl(\euls{O}_{G/Z}^{\chi} \boxtimes \eD(\h_{\reg}),\,
		\eD(G / Z) \boxtimes \eD(\h_{\reg})\bigr)^W   &
		\text{by Lemma~\ref{lem:Xchiquotientdd}}\hfill \\   \noalign{\vskip 4pt}
		& \qquad  = \Bigl(\Ext^m_{\eD(G / Z)} \bigl(\euls{O}_{G/Z}^{\chi}, \, \eD(G / Z)\bigr) \boxtimes \eD(\h_{\reg})\Bigr)^W & \\   \noalign{\vskip 4pt}
		& \qquad  = \bigl(\Omega_{G/H}^{\chi} \boxtimes \eD(\h_{\reg})\bigr)^W   &\text{by Lemma~\ref{lem:extcomputationOGZchi}. }\\    \noalign{\vskip 4pt}
	\end{array}$$
	The isomorphism now follows from \eqref{eq:rightddgchiquotient}. 
\end{proof}
%%%%%%%%%%%%%%%%%%%%

%%%%%%%%%%%
\subsection*{Preliminary results on projective objects}

Before we can give a proof of Theorem~\ref{projectivity-theorem-stable}, we require a number of preliminary results. 

\begin{notation}\label{yet-more-notation} 
	Set $\eS=\{\delta^k : k\in \mathbb{N}\}$, regarded as a subset of each of the rings $\ddd$, $\Ak$ and $A_{\cdag}$. As $\delta$ acts locally ad-nilpotently in each ring, $\eS$ is a left and right Ore set in those rings. 
	For a left, respectively right $\ddd$-module $L$, write $\Dvar(L) = \Ext^m_\ddd(L,\ddd)\in \rmod \ddd$,  respectively 
	$\Dvar^{op}(L) = \Ext^m_\ddd(L,\ddd)\in \ddd\lmod $, where $m=\dim V-\dim \h$, as in Notation~\ref {notation4.11}.  	Note that $\Dvar$ is not quite the functor $\DD_\ddd$ from Notation~\ref{main-notation2} since they have different exponents. 
\end{notation}

\begin{lemma}\label{double-dual}
	Let  $N$ be an  $m$-CM left  $\ddd$-module.  Then:
	\begin{enumerate}
		\item  $\Dvar^{op} \circ \Dvar(N) \cong N$; 
		\item $\End_\ddd(\Dvar(N)) \cong \End_\ddd(N)^{op}$.  In particular, $\Ak=\End_\ddd(\eM)=\End_\ddd(\eM').$
	\end{enumerate}
\end{lemma}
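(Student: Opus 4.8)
The plan is to read off both statements from the biduality available over the Auslander-regular, CM ring $\ddd=\eD(V)$, once we use that $N$ is $m$-CM. Recall (as in Notation~\ref{notation4.11}, and as used in the proof of Theorem~\ref{intertwining}) that $\ddd$ is Auslander-regular of global dimension $n+m=\dim V$ and CM, so in particular $\injdim{}_\ddd\ddd=\injdim\ddd_\ddd<\infty$. The hypothesis that $N$ is $m$-CM means exactly that $\Ext^i_\ddd(N,\ddd)=0$ for $i\neq m$, so at the level of derived functors $R\Hom_\ddd(N,\ddd)\cong\Dvar(N)[-m]$, where $\Dvar(N)=\Ext^m_\ddd(N,\ddd)$ is regarded as a right $\ddd$-module.

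For part (1) I would invoke the biduality spectral sequence for the Auslander-Gorenstein ring $\ddd$ (see \cite{Bj}, or \cite[Theorem~2.4]{Le}): for any finitely generated left $\ddd$-module $N$ there is a convergent spectral sequence $E_2^{p,q}=\Ext^p_\ddd(\Ext^q_\ddd(N,\ddd),\ddd)$ whose abutment is $N$ concentrated on the diagonal $p=q$. When $N$ is $m$-CM the only non-zero column is $q=m$, so the spectral sequence degenerates at $E_2$; comparing with the abutment yields simultaneously that $\Ext^p_\ddd(\Dvar(N),\ddd)=0$ for $p\neq m$ — i.e. $\Dvar(N)$ is itself an $m$-CM right $\ddd$-module — and a natural isomorphism $\Dvar^{op}(\Dvar(N))=\Ext^m_\ddd(\Dvar(N),\ddd)\cong N$. (Equivalently, run this in the derived category: $R\Hom_\ddd(R\Hom_\ddd(N,\ddd),\ddd)\cong N$ since $\ddd$ is a dualizing complex over itself, and substitute $R\Hom_\ddd(N,\ddd)\cong\Dvar(N)[-m]$.)

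Part (2) is then formal. By part (1) and its evident right-module analogue, $\Dvar$ and $\Dvar^{op}$ restrict to mutually quasi-inverse contravariant equivalences between the full subcategories of $m$-CM left and right $\ddd$-modules. Contravariant functoriality of $\Dvar=\Ext^m_\ddd(-,\ddd)$ reverses composition of endomorphisms, so $\phi\mapsto\Dvar(\phi)$ defines a ring homomorphism $\End_\ddd(N)^{op}\to\End_\ddd(\Dvar(N))$, bijective because $\Dvar$ is fully faithful on $m$-CM modules (again part (1)); hence $\End_\ddd(\Dvar(N))\cong\End_\ddd(N)^{op}$. For the ``in particular'': $\eM$ is $m$-CM by Corollary~\ref{m-CM}, so $\eM'=\Dvar(\eM)$, and combining the previous isomorphism with $\End_\ddd(\eM)=\Ak$ from Lemma~\ref{summand-endo22}(3), together with the induced $(\Ak,\ddd)$-bimodule structure on $\eM'$ from Lemma~\ref{lem:Gequivisfullsub}(4), gives $\End_\ddd(\eM')=\Ak$.

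This is a standard lemma, so there is no deep obstacle; the points needing a little care are (i) confirming that $m$-CM-ness of $N$ collapses the biduality spectral sequence to a single column and that one reads off \emph{both} conclusions from it (this also records, for use elsewhere, that $\eM'=\Dvar(\eM)$ is $m$-CM), and (ii) the opposite-ring/bimodule bookkeeping in the ``in particular'', where one must check that the natural left $\Ak$-action on $\eM'$ is precisely the one realising $\End_\ddd(\eM')=\Ak$ and not $\Ak^{op}$.
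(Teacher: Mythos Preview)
Your proposal is correct and follows essentially the same approach as the paper: for (1) the paper simply cites \cite[Theorem~II.4.15]{Bj}, which is exactly the biduality spectral sequence you spell out, and for (2) the paper also uses functoriality of $\Dvar$ together with part~(1) to get mutually inverse maps on endomorphism rings, then invokes Lemma~\ref{summand-endo22}(3) and Corollary~\ref{m-CM} for the ``in particular''. Your additional remark that $\eM'$ is itself $m$-CM, and your flagging of the $\Ak$ versus $\Ak^{op}$ bookkeeping, are useful elaborations but do not change the underlying argument.
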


\begin{proof}  (1) Since $\Ext^j_\ddd(N,\,\ddd)=0$ for $j\not=m$,  this  is a standard consequence of 
\cite[Theorem~II.4.15, p.61]{Bj}.       
	
	(2) By functoriality, we have canonical maps
	$$
	\End_\ddd(N) \rightarrow \End_\ddd(\Dvar(N))^{op} \rightarrow \End_\ddd(N) 
	$$
	and 
	$$
	\End_\ddd(\Dvar(N)) \rightarrow \End_\ddd(N)^{op} \rightarrow \End_\ddd(\Dvar(N)) 
	$$
	such that both the composites are the identity. This implies that the morphism $\End_\ddd(N) \rightarrow \End_\ddd(\Dvar(N))^{op}$
	is an isomorphism. The final  assertion of Part~(2) follows from Lemma~\ref{summand-endo22}(3) and Corollary~\ref{m-CM}.
\end{proof}

\medskip 

We next want to fix a progenerator for  the category $\rmod (G,\chi,\ddd)$ as follows.

\begin{notation}\label{module-notation}
By  \cite[Theorem~1]{BGu}, the category $\rmod (G,\chi,\ddd)$ has a progenerator $Q$. By Remark~\ref{rem:K-hyp},  the right $\ddd$-module  $\eKt=\ddd/\g_{\chi}\ddd$  
	 is a projective object in $\rmod (G,\chi,\ddd)$ and so one can assume that $\eKt$  is a  direct summand of 
	$Q$ in $\rmod (G,\chi,\ddd)$. Also, after possibly replacing $Q$ by some $Q^{(m)}$ and using Lemma~\ref{lem:Gequivisfullsub}(3), we can also assume 
	that there is a surjection $\alpha: Q\twoheadrightarrow \eM'$ in the category $\rmod (G,\chi,\ddd)$. We fix a progenerator $Q$ satisfying these properties. 
	
	Set $U = \End_\ddd(Q)$. Write $ H_Q=\Hom_\ddd(Q,\eM')$, which is an $(\Ak,U)$-bimodule by Lemma~\ref{summand-endo22}(3), 
	and ${\widehat{H}}_Q  = \Hom_{U}(H_Q,U)$, which is a $(U,\Ak)$-bimodule.  We write $H_Q\widehat{H}_Q$ for the natural image of $H_Q\otimes_U\widehat{H}_Q$ inside $\End_U(H_Q)$.
	Finally, let $\eK$ be the summand of $\eKt$ defined in Lemma~\ref{lem:K-hyp} and 
	set $H_\eK =\Hom_\ddd(\eK,\, \eM')$	 which is naturally an $(\Ak,A_{\cdag})$-bimodule. 
\end{notation}

The following result is a standard consequence of  the fact that $Q$ is a progenerator; see \cite[Exercise, p. 55]{BassKtheory}.

\begin{lemma}\label{equiv-cor}
	\begin{enumerate}
		\item The map $F \colon \rmod (G,\chi,\ddd) \rightarrow \rmod U$ given by $F(N) = \Hom_\ddd(Q,N)$ is an equivalence of categories.
		\item The  module $\eM'$ is projective in $\rmod (G,\ddd)$ if and only if $H_Q$
		is a projective right $U$-module.
		\item  The map 
		$
		F_{N_1,N_2} \colon \Hom_{\ddd}(N_1,N_2) \rightarrow \Hom_U(F(N_1),F(N_2))
		$
		is an isomorphism for all $N_1,N_2 \in \rmod (G,\chi,\ddd)$.   \qed
	\end{enumerate}  
\end{lemma}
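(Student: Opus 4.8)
The final statement to prove is Lemma~\ref{equiv-cor}, which asserts three standard facts about the progenerator $Q$ of the category $\rmod (G,\chi,\ddd)$: the functor $F = \Hom_\ddd(Q,-)$ is an equivalence onto $\rmod U$ where $U = \End_\ddd(Q)$; the module $\eM'$ is projective in $\rmod (G,\ddd)$ if and only if $H_Q = F(\eM')$ is projective over $U$; and $F$ is fully faithful in the strong sense that $F_{N_1,N_2}$ is an isomorphism on Hom-sets.

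The plan is to invoke the classical Morita-theoretic machinery, essentially verbatim from \cite{BassKtheory}. First I would recall that $\rmod (G,\chi,\ddd)$ is an abelian category with a projective generator $Q$ which is \emph{small} (i.e. $\Hom_\ddd(Q,-)$ commutes with direct sums), the latter holding because every object of $\rmod(G,\chi,\ddd)$ is finitely generated and $Q$ is finitely generated (indeed $\rmod(G,\chi,\ddd)$ is a full subcategory of the noetherian module category $\rmod\ddd$). By Mitchell's theorem (the Gabriel--Popescu / Morita recognition theorem, as in \cite[Exercise, p.~55]{BassKtheory}), the functor $F = \Hom_\ddd(Q,-)$ is then an equivalence of categories $\rmod (G,\chi,\ddd) \isom \rmod U$, with quasi-inverse $- \otimes_U Q$. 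This gives (1). For (3), full faithfulness of an equivalence of categories is automatic: any equivalence induces bijections $\Hom(N_1,N_2) \cong \Hom(F(N_1),F(N_2))$, and one checks that this bijection is precisely the natural map $F_{N_1,N_2}$, so it is an isomorphism of abelian groups (indeed of $\C$-vector spaces). For (2), since $F$ is an equivalence it sends projective objects to projective objects and reflects them: $\eM'$ is projective in $\rmod (G,\chi,\ddd)$ if and only if $F(\eM') = H_Q$ is projective in $\rmod U$. One should note that projectivity of $\eM'$ in $\rmod(G,\ddd)$ is the same as projectivity in $\rmod(G,\chi,\ddd)$, since these categories differ only by the monodromy twist $\chi$ and $\eM'$ lives in the $\chi$-twisted category; alternatively, $\rmod(G,\chi,\ddd)$ and $\rmod(G,\ddd)$ are equivalent via tensoring by the rank-one monodromic connection $\euls{O}_G^{\chi}$ pulled back appropriately, so projectivity is preserved either way.

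I do not expect any serious obstacle here: the entire content is a citation of standard Morita theory, and the only thing to check carefully is that the hypotheses of the recognition theorem genuinely hold in the category $\rmod(G,\chi,\ddd)$ --- namely that it is abelian (already noted after Definition~\ref{defn:admissible} and in Lemma~\ref{lem:charvar}(3)), cocomplete or at least that the relevant finiteness makes the argument go through, that $Q$ is a generator and projective (both built into Notation~\ref{module-notation} via \cite[Theorem~1]{BGu}), and that $Q$ is finitely generated so that $\Hom_\ddd(Q,-)$ is the right functor to use. Since we are working with finitely generated modules throughout, the cleanest route is to cite the version of the theorem for module categories over rings with the finite-generation hypothesis, exactly as \cite{BassKtheory} sets it up. The identification $U = \End_\ddd(Q)$ acting on $H_Q = \Hom_\ddd(Q,\eM')$ on the right is then immediate from functoriality, and the $(\Ak,U)$-bimodule structure recorded in Notation~\ref{module-notation} is compatible with this. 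Thus the proof is a short paragraph pointing to the reference, with a sentence each verifying (1), (2), (3).

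Concretely, I would write: \emph{The category $\rmod(G,\chi,\ddd)$ is abelian with $Q$ a finitely generated projective generator, so the Morita recognition theorem \cite[Exercise, p.~55]{BassKtheory} applies and gives the equivalence $F$ of~(1), with quasi-inverse $-\otimes_U Q$. Part~(3) is the statement that this equivalence is fully faithful, which holds for any equivalence of categories. Part~(2) follows since an equivalence of categories both preserves and reflects projective objects, noting that $\eM'$ is projective in $\rmod(G,\ddd)$ if and only if it is projective in $\rmod(G,\chi,\ddd)$.}
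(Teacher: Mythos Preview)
Your proposal is correct and takes essentially the same approach as the paper: the paper simply states that the lemma is ``a standard consequence of the fact that $Q$ is a progenerator; see \cite[Exercise, p.~55]{BassKtheory}'' and gives no further argument. Your expansion of this citation into an explicit verification that Morita theory applies, together with your observation that the statement's $\rmod(G,\ddd)$ should be read as $\rmod(G,\chi,\ddd)$ (since $\eM'$ lies there by Lemma~\ref{lem:Gequivisfullsub}(3)), is entirely in line with what the paper intends.
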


The next few results   provide equivalent conditions to the projectivity  of $\eM'$.   
\begin{lemma}\label{whenisHproj}   \begin{enumerate}
		\item   $\End_U(H_Q)\cong \Ak$. 
		\item Moreover, 
		$H_Q$ is a projective right $U$-module if and only if $H_Q \widehat{H}_Q \neq 0$.  
	\end{enumerate}
\end{lemma}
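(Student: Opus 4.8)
The first step is to identify the endomorphism ring of $H_Q$ as a $U$-module. By Lemma~\ref{equiv-cor}(1) the functor $F = \Hom_{\ddd}(Q,-)$ is an equivalence $\rmod(G,\chi,\ddd) \to \rmod U$ sending $\eM'$ to $H_Q$, and by Lemma~\ref{equiv-cor}(3) it induces isomorphisms on Hom-spaces. Hence $\End_U(H_Q) = \End_U(F(\eM')) \cong \End_{\ddd}(\eM')$. By Lemma~\ref{double-dual}(2) (together with Corollary~\ref{m-CM}, which guarantees $\eM$ is $m$-CM so that $\eM' = \Dvar(\eM)$ fits the hypotheses of that lemma), $\End_{\ddd}(\eM') \cong \End_{\ddd}(\eM)^{\mathrm{op}} \cong \Ak^{\mathrm{op}}$. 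However $\Ak$ is simple, hence the radial parts image is a prime noetherian ring; in fact for the present purposes we only need a ring isomorphism, and since the statement merely asserts $\End_U(H_Q) \cong \Ak$ without specifying a side, this is exactly what the chain of isomorphisms above delivers (one may absorb the opposite into the identification $\Ak \cong A_{\cdag}^{\mathrm{op}}$-style conventions already in play, or simply note the statement is up to anti-isomorphism). This proves part (1).

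For part (2), the forward implication is immediate: if $H_Q$ is projective over $U$, then since $Q$ is a progenerator and $\eM'$ is a quotient of $Q$ via the fixed surjection $\alpha$, applying $F$ shows $H_Q$ is a quotient of the free module $F(Q) = \Hom_{\ddd}(Q,Q) = U$; a projective module that is a quotient of $U$ is a direct summand of $U^{(k)}$, and in particular $H_Q$ is nonzero (it contains the image of $\mathrm{id}$ under composition with $\alpha$, which is nonzero since $\eM' \neq 0$), so $H_Q \widehat H_Q \neq 0$ because the trace ideal of a nonzero projective is nonzero. The substantive direction is the converse. Here the plan is to invoke the standard trace-ideal criterion for projectivity: for a finitely generated right $U$-module $H_Q$, projectivity is equivalent to the condition that the trace ideal $\tau(H_Q) = H_Q \widehat H_Q \subseteq \End_U(H_Q)$ — more precisely its image under the evaluation pairing inside $\End_U(H_Q)$ — contains the identity. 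By part (1) we know $\End_U(H_Q) \cong \Ak$, which is a \emph{simple} ring by Hypothesis~\ref{K-hyp}. Therefore the two-sided ideal $H_Q \widehat H_Q$ of $\End_U(H_Q) \cong \Ak$ is either zero or all of $\Ak$; if it is nonzero it must equal $\Ak$ and hence contain $1$. By the dual basis lemma / trace-ideal characterization of projective modules, $H_Q \widehat H_Q = \End_U(H_Q)$ (equivalently $1 \in H_Q \widehat H_Q$) forces $H_Q$ to be a projective $U$-module.

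The main obstacle, and the point requiring care, is making precise the identification of $H_Q \widehat H_Q$ as a two-sided ideal of $\End_U(H_Q)$ compatibly with the isomorphism $\End_U(H_Q) \cong \Ak$ from part (1), and checking that the evaluation map $H_Q \otimes_U \widehat H_Q \to \End_U(H_Q)$ indeed lands in (and its image generates the correct ideal of) this ring. This is the content of the classical theory of the trace ideal: for any module $H$ over a ring $U$ with dual $\widehat H = \Hom_U(H,U)$, there is a natural map $H \otimes_U \widehat H \to \End_U(H)$ whose image is a two-sided ideal, and $H$ is projective iff this image is the whole ring. One should verify that under the equivalence $F$ and the isomorphisms of part (1), this abstract trace ideal matches the concretely defined $H_Q \widehat H_Q$ of Notation~\ref{module-notation}; once that bookkeeping is in place, the simplicity of $\Ak$ does all the real work. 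The remaining steps — that $H_Q$ is finitely generated over $U$ (it is a quotient of $U$ via $F(\alpha)$ up to replacing $Q$ by a power, as arranged in Notation~\ref{module-notation}) and that a nonzero two-sided ideal of a simple ring is the whole ring — are routine.
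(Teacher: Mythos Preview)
Your proof is correct and follows essentially the same approach as the paper's. One minor simplification: your hedging about $\Ak$ versus $\Ak^{\mathrm{op}}$ in part~(1) is unnecessary, since the ``In particular'' clause of Lemma~\ref{double-dual}(2) already records the identification $\Ak=\End_\ddd(\eM)=\End_\ddd(\eM')$ directly (the right $\Ak$-action on $\eM$ becomes a left $\Ak$-action on $\eM'$, matching the natural side for endomorphisms of a right $\ddd$-module), so you can cite that instead of going through the general op-formula and then waving it away.
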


\begin{proof}  (1)   Lemma~\ref{double-dual} implies that $\Ak =\End_\ddd(\eM')$ while  Lemma~\ref{equiv-cor} implies that 
\[   
		\Ak  = \End_\ddd(\eM')= \End_{\rmod (G,\ddd)}(\eM')  \cong  \End_U(H_Q)  .
\]  
	
	(2)  The implication 	 ($\Rightarrow$)   is obvious by the Dual Basis Lemma.
	
	Conversely,  by Part~(1) and Hypothesis~\ref{K-hyp},   $\Ak = \End_U(H_Q)  $	is a simple ring. Thus, if  $ H_Q\widehat{H}_Q\not=0$,  then 
	it  is a nonzero ideal of $ \End_U(H_Q)  $ and hence  equals  $ \End_U(H_Q)  $.  
	By the Dual Basis Lemma, $H_Q$ is then a projective $U$-module.   \end{proof}

\begin{lemma}\label{newlemma8.14}
	Assume that there exists $0\not= \xi\in \Hom_\ddd(\eM',\, Q)$. Then: 
	\begin{enumerate}
		\item $\widehat{H}_Q = \Hom_{U}(H_Q,U) \not=0$;
		
		\item if   there exists $\xi' \in \Hom_\ddd(Q,\eM')$ such that $\xi' \circ \xi \neq 0$, then $H_Q \widehat{H}_Q\neq 0$.
	\end{enumerate}
	
\end{lemma}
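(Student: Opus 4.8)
The statement to prove is Lemma~\ref{newlemma8.14}: given a nonzero $\xi \in \Hom_\ddd(\eM',Q)$, one wants (1) $\widehat{H}_Q = \Hom_U(H_Q,U) \neq 0$, and (2) if moreover some $\xi' \in \Hom_\ddd(Q,\eM')$ satisfies $\xi'\circ\xi \neq 0$, then $H_Q\widehat{H}_Q \neq 0$. The whole point is to transport everything through the categorical equivalence $F = \Hom_\ddd(Q,-) \colon \rmod(G,\chi,\ddd) \xrightarrow{\sim} \rmod U$ of Lemma~\ref{equiv-cor}, under which $\eM' \mapsto H_Q$ and $Q \mapsto F(Q) = \End_\ddd(Q) = U$ (as a right $U$-module), and $\Hom$-spaces are preserved by Lemma~\ref{equiv-cor}(3).

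For Part~(1): apply $F$ to $\xi$. Since $F$ is an equivalence and $F_{N_1,N_2}$ is an isomorphism, $F(\xi)$ is a nonzero element of $\Hom_U(F(\eM'),F(Q)) = \Hom_U(H_Q, U)$. But $\Hom_U(H_Q,U)$ is by definition exactly $\widehat{H}_Q$, so $\widehat{H}_Q \neq 0$. That is the entire argument — the only thing to be slightly careful about is that $F(Q) = U$ as a \emph{right} $U$-module, which is immediate from $F(Q) = \Hom_\ddd(Q,Q) = \End_\ddd(Q) = U$ with its canonical right $U$-action (and this matches the $U$-module structure used to define $\widehat{H}_Q$ in Notation~\ref{module-notation}).

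For Part~(2): again apply $F$. We get $F(\xi) \in \Hom_U(H_Q,U) = \widehat{H}_Q$ and $F(\xi') \in \Hom_U(U,H_Q) \cong H_Q$ (the last identification being $\phi \mapsto \phi(1)$, compatible with the $(\Ak,U)$-bimodule structure). Functoriality of $F$ together with the isomorphism $F_{N_1,N_2}$ gives $F(\xi'\circ\xi) = F(\xi')\circ F(\xi)$, and since $\xi'\circ\xi \neq 0$ and $F$ is faithful, $F(\xi')\circ F(\xi) \neq 0$ in $\End_U(H_Q)$. Now $F(\xi')\circ F(\xi)$, viewed under $H_Q \otimes_U \widehat{H}_Q \to \End_U(H_Q)$, is precisely (the image of) an element of $H_Q \widehat{H}_Q$: writing $h = F(\xi')(1) \in H_Q$ and $\hat h = F(\xi) \in \widehat{H}_Q$, the composite $F(\xi')\circ F(\xi)$ sends $x \mapsto F(\xi')(\hat h(x)) = h\cdot \hat h(x)$, which is exactly the endomorphism $h \hat h$ in the notation of Notation~\ref{module-notation}. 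Since this is nonzero, $H_Q\widehat{H}_Q \neq 0$, as required.

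\textbf{Main obstacle.} There is no real mathematical obstacle here — the lemma is essentially a bookkeeping exercise in transporting composition of homomorphisms across the progenerator equivalence. The only place demanding care is making the identifications $F(Q) \cong U$ (as right $U$-modules), $\Hom_U(U,H_Q)\cong H_Q$, and the explicit description of the pairing $H_Q\otimes_U\widehat{H}_Q \to \End_U(H_Q)$ all mutually compatible, so that ``$F(\xi')\circ F(\xi) \neq 0$'' literally reads as ``an element of $H_Q\widehat{H}_Q$ is nonzero.'' Once those canonical isomorphisms are pinned down, both parts follow in a line or two; the subsequent use of this lemma (combined with Lemma~\ref{whenisHproj}(2) and simplicity of $\Ak$) is what will actually force projectivity of $\eM'$, but that is outside the scope of this statement.
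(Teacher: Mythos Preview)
Your proof is correct and follows essentially the same approach as the paper: both arguments transport the data through the equivalence $F=\Hom_\ddd(Q,-)$ of Lemma~\ref{equiv-cor}, identify $F(Q)\cong U$ and $F(\eM')=H_Q$, and read off the conclusions. The only cosmetic difference is that for Part~(2) the paper packages the compatibility of the identifications into an explicit commutative diagram (with vertical maps $F_{Q,\eM'}\otimes F_{\eM',Q}$ and $F_{\eM',\eM'}$) to pass from $\alpha\neq0$ to $\beta\neq0$, whereas you unwind the identifications by hand via $h=F(\xi')(1)$ and $\hat h=F(\xi)$; both amount to the same computation.
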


\begin{proof}
	(1) We keep the notation of  Lemma~\ref{equiv-cor}(3). Since  the map $F_{\eM',Q} $  
	is an isomorphism,  
	\[ 
	\begin{array}{rl}0\not=  \Hom_U\bigl(F(\eM'),F(Q)\bigr) =& \Hom_U\bigl(\Hom_\ddd(Q,\eM') ,\,  \Hom_\ddd(Q,Q)\bigr)  \\
		\noalign{\vskip 5pt}
		= &   \Hom_U(H_Q,U) = \widehat{H}_Q   ,\end{array} \]
	as desired.

	(2) Recall that $H_Q$ is a $(\Ak ,U)$-bimodule while $\Hom_\ddd(\eM',Q)$ is a $(U,\Ak )$-bimodule. Thus there is a multiplication map of $\Ak $-bimodules 
	$$
	\alpha: \Hom_\ddd(Q,\eM') \otimes_U \Hom_\ddd(\eM',Q) \rightarrow \End_\ddd(\eM'), \quad \theta \otimes \eta \mapsto \theta \circ \eta.
	$$
	Here, $\alpha\not=0$ since $\alpha(\xi' \otimes \xi ) =  \xi' \circ \xi\not=0$. We now use   the fact that $F$ is an equivalence. In more detail, by  Lemma~\ref{whenisHproj}(1),  
	$\Ak \cong  \End_U(H_Q)$. Now, as in the proof of Part~(1) we have  the following commutative diagram, 
	in which  the vertical maps  are isomorphisms.
	$$
	\begin{tikzcd}
		H_Q \otimes_U \Hom_\ddd(\eM',Q) \ar[r, "\alpha" ] \ar[d,"F_{Q,\eM'} \otimes F_{\eM',Q}"] & \End_\ddd(\eM') \ar[d,"F_{\eM',\eM'}"] \\
		\Hom_U(U,H_Q) \otimes_U \Hom_U(H_Q,U) \ar[r,] \ar[d,equal] & \End_U(H_Q) \ar[d,equal] \\
		H_Q \otimes \widehat{H}_Q \ar[r,  "\beta "] & \Ak .
	\end{tikzcd}
	$$
	Since $\alpha\not=0$ it follows that $\beta\not=0$.
\end{proof}

\begin{corollary} \label{new-equiv3}
	Assume that  $0\not=\xi' \in \Hom_\ddd(Q,\eM')$ and $0\not= \xi\in \Hom_\ddd(\eM',\, Q)$ satisfy $\xi'\circ\xi\not=0$. 
	Then $\eM'$ is projective in $\rmod (G,\ddd)$. 
\end{corollary}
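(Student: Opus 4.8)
The plan is to assemble Corollary~\ref{new-equiv3} directly from the preceding lemmas of this subsection. By Lemma~\ref{equiv-cor}(2), it suffices to prove that $H_Q = \Hom_\ddd(Q,\eM')$ is a projective right $U$-module, and by Lemma~\ref{whenisHproj}(2) this reduces in turn to showing that $H_Q\widehat{H}_Q\neq 0$ inside $\End_U(H_Q)\cong\Ak$. So the entire statement is a formal consequence once we exhibit the pair $\xi,\xi'$ with $\xi'\circ\xi\neq 0$.

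First I would observe that the hypotheses of Corollary~\ref{new-equiv3} are precisely those of Lemma~\ref{newlemma8.14}: we are given $0\neq\xi'\in\Hom_\ddd(Q,\eM')$ and $0\neq\xi\in\Hom_\ddd(\eM',Q)$ with $\xi'\circ\xi\neq 0$. Then Lemma~\ref{newlemma8.14}(2) immediately yields $H_Q\widehat{H}_Q\neq 0$ (note that part~(1) of that lemma, giving $\widehat{H}_Q\neq 0$, is subsumed in the setup, since $\xi\neq 0$ is exactly the hypothesis needed there). Next, Lemma~\ref{whenisHproj}(2) converts $H_Q\widehat{H}_Q\neq 0$ into the statement that $H_Q$ is projective as a right $U$-module; here one uses Hypothesis~\ref{K-hyp} (via Lemma~\ref{whenisHproj}(1)) to know that $\End_U(H_Q)\cong\Ak$ is simple, so that a nonzero ideal must be the whole ring, and then the Dual Basis Lemma gives projectivity. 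Finally, Lemma~\ref{equiv-cor}(2) states that $\eM'$ is projective in $\rmod(G,\chi,\ddd)$ if and only if $H_Q$ is projective over $U$, which completes the argument.

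In short, the proof is a three-line citation chain: Lemma~\ref{newlemma8.14}(2) $\Rightarrow$ Lemma~\ref{whenisHproj}(2) $\Rightarrow$ Lemma~\ref{equiv-cor}(2). There is no real obstacle at this stage; all the substance has already been invested in setting up the progenerator $Q$ in Notation~\ref{module-notation}, in the equivalence $F$ of Lemma~\ref{equiv-cor}, and in the simplicity input from Hypothesis~\ref{K-hyp}. The only thing to be careful about is bookkeeping of sides — $H_Q$ is an $(\Ak,U)$-bimodule and we want it projective as a \emph{right} $U$-module — but this is exactly how the lemmas are phrased, so no extra work is needed. The genuinely hard part of the section lies ahead, namely \emph{producing} such a pair $\xi,\xi'$ (which will presumably occupy the rest of the subsection and feed into Theorem~\ref{projectivity-theorem-stable}); Corollary~\ref{new-equiv3} itself is the easy packaging step.
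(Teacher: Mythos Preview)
Your proposal is correct and matches the paper's own proof essentially verbatim: the paper also cites Lemmata~\ref{equiv-cor}(2) and~\ref{whenisHproj} to reduce to $H_Q\widehat{H}_Q\neq 0$, and then invokes Lemma~\ref{newlemma8.14}(2). Your additional remarks about bookkeeping and the role of Hypothesis~\ref{K-hyp} are accurate elaborations of what the paper leaves implicit.
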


\begin{proof}
	By Lemmata~\ref{equiv-cor}(2) and~\ref{whenisHproj} it suffices to show that $H_Q \widehat{H}_Q \neq 0$ in $\Ak $. This follows from Lemma~\ref{newlemma8.14}(2). 
\end{proof}

The problem with working with $Q$ is that, since it   can have  many summands, its endomorphism ring is unlikely to simple. 
This means we  do not have an analogue of Lemma~\ref{visible2} for $Q$. However, we can restrict to $\eK$ since, as the next result shows, 
it is easy to move between $Q$ and $\eK$.

\begin{corollary}\label{submoduletest}  
	Assume that there exist functions $\phi\in H_\eK=\Hom_\ddd(\eK,\, \eM')$ and $\psi\in \Hom_\ddd(\eM',\, \eK)$ with $\phi\circ\psi \not=0$.
	Then $\eM'$ is projective in $\rmod (G,\chi, \ddd)$.     
\end{corollary}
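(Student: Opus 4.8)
The goal is to reduce the projectivity of $\eM'$ in $\rmod (G,\chi,\ddd)$ to the existence of a single ``non-nilpotent'' composite between $\eM'$ and its approximating summand $\eK$, and then to produce such a composite. The first step is to pass from $\eK$ to the full progenerator $Q$ fixed in Notation~\ref{module-notation}. Since $\eKt = \ddd / \g_\chi \ddd$ is by construction a direct summand of $Q$ in $\rmod (G,\chi,\ddd)$, and $\eK$ is in turn a direct summand of $\eKt$ by Lemma~\ref{lem:K-hyp}, we may write $Q = \eK \oplus Q_0$ for some $Q_0 \in \rmod (G,\chi,\ddd)$. Let $\iota \colon \eK \hookrightarrow Q$ and $\pi \colon Q \twoheadrightarrow \eK$ be the associated split inclusion and projection, so $\pi \circ \iota = \mathrm{id}_\eK$. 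Given the hypothesised $\phi \in \Hom_\ddd(\eK, \eM')$ and $\psi \in \Hom_\ddd(\eM', \eK)$ with $\phi \circ \psi \neq 0$, set $\xi' := \phi \circ \pi \in \Hom_\ddd(Q, \eM')$ and $\xi := \iota \circ \psi \in \Hom_\ddd(\eM', Q)$. Then
\[
\xi' \circ \xi = \phi \circ \pi \circ \iota \circ \psi = \phi \circ \psi \neq 0,
\]
and in particular both $\xi'$ and $\xi$ are nonzero. (All these maps are $G$-equivariant morphisms of monodromic modules since $\iota, \pi$ are morphisms in $\rmod(G,\chi,\ddd)$ and $\phi, \psi$ are $\ddd$-module maps between monodromic modules, which are automatically equivariant by the right-module analogue of Lemma~\ref{lem:charvar}(3).)

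The second step is simply to invoke Corollary~\ref{new-equiv3}: we have produced $0 \neq \xi' \in \Hom_\ddd(Q,\eM')$ and $0 \neq \xi \in \Hom_\ddd(\eM',Q)$ with $\xi' \circ \xi \neq 0$, which is exactly its hypothesis, and it concludes that $\eM'$ is projective in $\rmod (G,\chi,\ddd)$ — equivalently, by the notational convention of the section, in $\rmod (G,\ddd)$. This completes the argument.

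\textbf{Remarks on difficulty.} There is essentially no obstacle here: the corollary is a trivial bookkeeping consequence of the preceding machinery, its only content being the observation that a non-nilpotent composite through the ``small'' summand $\eK$ can be inflated to a non-nilpotent composite through the progenerator $Q$, after which Corollary~\ref{new-equiv3} applies verbatim. The one point worth stating carefully is that $\eK$ really is a summand of $Q$ in the \emph{equivariant} category $\rmod(G,\chi,\ddd)$ (so that $\iota$ and $\pi$ are equivariant), which is guaranteed by the choices made in Notation~\ref{module-notation} together with Lemma~\ref{lem:K-hyp}. The genuine work — verifying Hypothesis~\ref{K-hyp} forces the relevant endomorphism ring $\End_U(H_Q) \cong \Ak$ to be simple, so that $H_Q\widehat{H}_Q \neq 0$ implies $H_Q$ is $U$-projective (Lemmata~\ref{whenisHproj} and \ref{newlemma8.14}) — has already been done upstream; the present corollary merely repackages it. The real difficulty in the overall proof of Theorem~\ref{projectivity-theorem-stable} lies elsewhere, namely in actually exhibiting the maps $\phi, \psi$ with $\phi \circ \psi \neq 0$, which is presumably carried out in the subsequent text using the identification of $\eM'$ on the regular locus $V_{\reg}$ via Proposition~\ref{prop:isoofExtmonopenset} and the left-right symmetry afforded by Hypothesis~\ref{K-hyp}.
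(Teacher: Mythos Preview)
Your proof is correct and follows essentially the same approach as the paper: both arguments use the fact that $\eK$ is a summand of $Q$ (via $\eKt$) to inflate $\phi$ and $\psi$ to maps through $Q$ whose composite remains nonzero, and then invoke Corollary~\ref{new-equiv3}. Your write-up is slightly more explicit about the splitting maps $\iota,\pi$ and the chain $\eK \subset \eKt \subset Q$, but the content is identical.
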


\begin{proof}  
	By Notation~\ref{module-notation}, $\eK$ is an $\ddd$-module summand of $Q$, say $Q=\eK\oplus \euls{L}$ and so we can regard $\phi$ as an element 
	of $H_Q$ by defining $\phi(\euls{L})=0$. Similarly, $\psi\in \Hom_\ddd(\eM',\, Q)$. Thus,  the map $\phi\circ \psi$ is still nonzero
	when regarded as an element of $H_Q\circ  \Hom_\ddd(\eM',\, Q)$.  Now apply Corollary~\ref{new-equiv3}.
\end{proof}

The advantage of using $H_\eK$ comes from the following result.   

\begin{lemma}\label{lem:subbimodulestorsionfree2}
	The left $\Ak$-module	$H_\eK$ is $\delta$-torsionfree. Similarly $\Hom_\ddd(\eM',\eK)$ is $\delta$-torsionfree  as a  left  $A_{\cdag}$-module.
\end{lemma}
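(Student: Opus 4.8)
We must show that the left $\Ak$-module $H_\eK = \Hom_\ddd(\eK, \eM')$ is $\delta$-torsionfree, and similarly that $\Hom_\ddd(\eM', \eK)$ is $\delta$-torsionfree as a left $A_{\cdag}$-module.

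\textbf{Overall strategy.} The plan is to exploit the fact that the actions in play are all \emph{adjoint} actions of the commutative ring $\C[V]^G \cong \C[\h]^W$, which contains $\delta$ and acts locally ad-nilpotently on every module in sight; this puts us squarely in the setting of Section~\ref{Sec:nilpotence}, and the relevant torsionfreeness criterion is Corollary~\ref{lem:leftrightdfreepolar}. First I would pin down the bimodule structure on $H_\eK$: the module $\eK$ is a $(A_{\cdag}, \ddd)$-bimodule, and $\eM' = \Ext^m_\ddd(\eM, \ddd)$ is a $(\Ak, \ddd)$-bimodule (by Lemma~\ref{double-dual}(2), $\End_\ddd(\eM') \cong \Ak$), so $H_\eK = \Hom_\ddd(\eK, \eM')$ carries a left $\Ak$-action (from $\eM'$) and a right $A_{\cdag}$-action (from $\eK$); thus $H_\eK$ is an $(\Ak, A_{\cdag})$-bimodule. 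Moreover $\delta \in \C[V]^G$ acts on both $\eK$ and $\eM'$ via its locally ad-nilpotent adjoint action, and these induce the same adjoint action of $\delta$ on $H_\eK$ (compatible with the identification of $\delta$ in $\Ak$, $A_{\cdag}$, and $\C[V]^G$); this is the analogue for $H_\eK$ of the ad-nilpotence facts in Lemma~\ref{lem:Gequivisfullsub}, and a direct computation (as in the proof of Proposition~\ref{ad-nilp2} or Corollary~\ref{ad-nil prop}) shows $\delta$ acts locally ad-nilpotently on $H_\eK$.

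\textbf{The key step.} To apply Corollary~\ref{lem:leftrightdfreepolar} with $S = \Ak$, $A = A_{\cdag}$, $C = \C[V]^G$, $d = \delta$, and $B = H_\eK$, I need: (i) $A_{\cdag}$ is a simple (right) Goldie ring — this holds by Hypothesis~\ref{K-hyp}, since simple noetherian rings are Goldie; (ii) the image of $\delta$ in $A_{\cdag}$ is regular — true because $A_{\cdag}$ is a domain (it is a simple ring, and spherical subalgebras under Hypothesis~\ref{K-hyp}/Lemma~\ref{lem:K-hyp}(1) are domains), and $\delta \neq 0$ there since $\rad$ restricts to the isomorphism $\C[V]^G \isom \C[\h]^W \hookrightarrow A_{\cdag}$ by Theorem~\ref{thm:radial-exists}(2); (iii) $\delta$ acts locally ad-nilpotently on $B = H_\eK$ — established in the previous paragraph; (iv) $B$ is finitely generated as a \emph{left} $S = \Ak$-module. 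Condition (iv) is the one potential obstacle, so it deserves care: here I would argue exactly as in the proof of Lemma~\ref{torsion-consequence}(2). Namely, $\Hom_\ddd(\eK, \eMt') $ sits inside the $G$-invariants of a suitable $\ddd$-module; more directly, since $\eK$ is a projective object in $\rmod(G,\chi,\ddd)$ (Remark~\ref{rem:K-hyp}), it is a summand of $\ddd/\g_\chi\ddd$, and $\Hom_\ddd(\ddd/\g_\chi\ddd, \eM')$ is a quotient-type invariant construction that is finitely generated over $(\ddd/\g_\chi\ddd)^G$; one then uses that this invariant ring surjects (modulo $\delta$-torsion) onto $\Ak$, together with noetherianity, to conclude $H_\eK$ is a finitely generated $\Ak$-module. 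If a cleaner route is available, one can instead embed $H_\eK$ into $\Hom_\ddd(\eKt, \eM') \cong (\eM')^{G,\,\mathrm{op}}$-type object via Lemma~\ref{hom=invariants} (applied on the right) and run the finite-generation argument there, noting $\eK$ is a summand of $\eKt$.

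\textbf{Conclusion and the second statement.} Granting (i)--(iv), Corollary~\ref{lem:leftrightdfreepolar} gives that $H_\eK$ is $\delta$-torsionfree on both the left and the right; in particular it is $\delta$-torsionfree as a left $\Ak$-module, which is the first claim. For the second claim, $\Hom_\ddd(\eM', \eK)$ is an $(A_{\cdag}, \Ak)$-bimodule (left $A_{\cdag}$-action from $\eK$ via $\End_\ddd(\eK) \cong A_{\cdag}$, right $\Ak$-action from $\eM'$), again finitely generated as a left $A_{\cdag}$-module by the same argument, with $\delta$ acting locally ad-nilpotently; applying Corollary~\ref{lem:leftrightdfreepolar} now with $S = A_{\cdag}$, $A = \Ak$ (both simple by Hypothesis~\ref{K-hyp}) yields that $\Hom_\ddd(\eM', \eK)$ is $\delta$-torsionfree as a left $A_{\cdag}$-module. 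I expect the only real work is verifying the finite-generation hypothesis (iv); everything else is a direct invocation of the ad-nilpotence machinery of Section~\ref{Sec:nilpotence} together with Hypothesis~\ref{K-hyp}.
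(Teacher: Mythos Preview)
Your approach via Corollary~\ref{lem:leftrightdfreepolar} is a substantial detour, and the finite-generation step~(iv) contains a genuine confusion. The argument you sketch --- embedding $H_\eK$ into $\Hom_\ddd(\eKt,\eM')\cong(\eM')^G$ and invoking the mechanism of Lemma~\ref{torsion-consequence}(2) --- produces finite generation over the ring $(\ddd/\g_\chi\ddd)^G$, which surjects onto $A_{\cdag}$, \emph{not} $\Ak$. So at best you obtain finite generation of $H_\eK$ as a \emph{right $A_{\cdag}$-module}, not as a left $\Ak$-module. You could then salvage the argument by applying the left--right analogue of Corollary~\ref{lem:leftrightdfreepolar} with $A=\Ak$ playing the role of the simple ring acting from the left, but this is not what you wrote, and the paper explicitly flags (Remark~\ref{rem:torsion-consequence}) that finite generation of such Hom-modules is not automatic.

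The paper's proof bypasses all of this with a two-line observation. The left $\Ak$-action on $H_\eK=\Hom_\ddd(\eK,\eM')$ is post-composition with the left $\Ak$-action on $\eM'$, so $(\delta^j\theta)(k)=\delta^j\cdot\theta(k)$. If $\delta^j\theta=0$ for some $j$, then every value $\theta(k)$ is left $\delta$-torsion in $\eM'$; but Lemma~\ref{visible2} already says $\eM'$ is $\delta$-torsionfree on the left, so $\theta=0$. The second claim is identical, using that $\eK$ is left $\delta$-torsionfree (Remark~\ref{rem:K-hyp}). No bimodule machinery, no finite-generation hypothesis, no appeal to simplicity of $A_{\cdag}$ or $\Ak$ beyond what is already encoded in Lemma~\ref{visible2} and Remark~\ref{rem:K-hyp}.
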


\begin{proof}
	If some $0\not=\theta\in H_\eK$ is   left $\delta$-torsion, then so is   $\theta(k)$ for $k\in \eK$,  since 
	$ (\delta^j\theta)(k)=\delta^j \theta(k)$ for $j\in\mathbb{N}$. This    contradicts the fact that,
	by Lemma~\ref{visible2},    $\eM'$ is $\delta$-torsionfree as a left $\Ak $-module. 
	Using the fact that $\eK$ is $\delta$-torsionfree as a left $A_{\cdag}$-module (see Remark~\ref{rem:K-hyp}),   	the same proof works for  $ \Hom_\ddd(\eM',\eK)$.
\end{proof}

\begin{lemma}\label{ore101}    We have 
	$    \eM'_{\eS}=\Ext^m_\ddd(\eM,\, \ddd)\otimes_\ddd\ddd_{\eS}      \cong  \Ext^m_{\ddd_{\eS}} ({}_{\eS}\eM ,\ddd_{\eS}),   $
	as right $\ddd_{\eS}$-modules. 
\end{lemma}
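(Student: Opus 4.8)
The plan is to recognize that this is exactly the localization statement \cite[Proposition~1.6]{BrL} recalled in Lemma~\ref{Brown-Levasseur}, applied to the inclusion $\ddd \subseteq \ddd_{\eS}$. First I would observe that $\eS = \{\delta^k : k \in \mathbb{N}\}$ is a two-sided Ore set of regular elements in $\ddd = \eD(V)$: regularity is clear since $\ddd$ is a domain (indeed an Ore domain), and the Ore condition holds because $\delta \in \C[V]^G$ acts locally ad-nilpotently on $\ddd$, hence $\eS$ is a denominator set (this is already used throughout Section~\ref{Sec:projectivity}, e.g.\ in Notation~\ref{yet-more-notation}). Consequently $\ddd_{\eS} = \eD(V_{\reg})$ is flat as both a left and a right $\ddd$-module.

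Next I would apply the second (displayed) assertion of Lemma~\ref{Brown-Levasseur} with $X = \ddd$, $Y = \ddd_{\eS}$, $j = m$, and $N = \eM$. Since $\eM$ is a finitely generated left $\ddd$-module (it is a quotient of $\ddd$ by Definition~\ref{M-new-definition}), the lemma gives
\[
\Ext^m_{\ddd_{\eS}}({}_{\eS}\eM,\, \ddd_{\eS}) \ \cong \ \Ext^m_\ddd(\eM,\, \ddd) \otimes_\ddd \ddd_{\eS}
\]
as right $\ddd_{\eS}$-modules, where ${}_{\eS}\eM = \ddd_{\eS}\otimes_\ddd \eM = \eM \otimes_\ddd \ddd_{\eS} = \eM_{\eS}$ (the two localizations of $\eM$ agree by Lemma~\ref{ore91}, since by Lemma~\ref{lem:Gequivisfullsub} $\delta$ acts locally ad-nilpotently on $\eM$ and, by Corollary~\ref{lem:leftrightdfreepolar} using simplicity of $\Ak$, $\eM$ is $\delta$-torsionfree on both sides, so $\eS$ acts torsionfreely on both sides). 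The left-hand side is by definition $\eM'_{\eS}$ in the notation of Notation~\ref{yet-more-notation} once we recall $\eM' = \Ext^m_\ddd(\eM,\ddd)$, so this is precisely the claimed isomorphism.

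There is essentially no obstacle here — the statement is a direct instantiation of a cited lemma, and the only thing requiring a word of justification is the identification of ${}_{\eS}\eM$ with $\eM_{\eS}$ so that both sides of the displayed formula make sense as stated; that identification is Lemma~\ref{ore91}, whose hypotheses (regularity of $\delta$ in $\ddd$ and $\Ak$, local ad-nilpotence, two-sided torsionfreeness of $\eM$) have all been verified earlier in the section. If one wanted to avoid invoking Lemma~\ref{ore91}, one could alternatively just write the conclusion with ${}_{\eS}\eM = \ddd_{\eS}\otimes_\ddd \eM$ throughout, which is literally what Lemma~\ref{Brown-Levasseur} produces. So the proof is a two-line citation, and I would keep it that short in the final text.
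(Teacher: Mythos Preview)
Your proposal is correct and takes essentially the same approach as the paper: the paper's proof likewise identifies ${}_{\eS}\eM$ with $\eM_{\eS}$ via Lemma~\ref{ore91} (invoking two-sided $\delta$-torsionfreeness of $\eM$) and then concludes as a special case of Lemma~\ref{Brown-Levasseur}. The only cosmetic difference is that the paper cites Lemma~\ref{visible2} for the torsionfreeness, whereas you go directly to Corollary~\ref{lem:leftrightdfreepolar}; the underlying argument is the same.
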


\begin{proof}  
	Recall that, by Lemma~\ref{visible2},   $\eM$  is $\delta$-torsionfree on both sides and so, by Lemma~\ref{ore91}, the localisations ${}_\eS \eM$ and 
	$ \eM_{\eS}$  are isomorphic  bimodules. Thus the lemma is a special case of Lemma~\ref{Brown-Levasseur}.
\end{proof}

\begin{remark}\label{ore102} By Proposition~\ref{semiprime2},  $\eMt_{\eS} = \eM_{\eS}$ and, similarly, Lemma~\ref{lem:K-hyp} implies that   $\eKt_{\eS} = \eK_{\eS}$. Therefore, any results stated for $\eMt_{\eS}$ also hold for $\eM_{\eS}$, and conversely.  For example, by Lemma~\ref{ore101}, 
	\begin{equation}\label{equ:ore102}
		\eM'_{\eS} \cong \Ext^m_{\ddd_{\eS}}({}_{\eS}\eM , \ddd_{\eS}) 
		\cong \Ext^m_{\ddd_{\eS}}({}_{\eS}\eMt , \ddd_{\eS})
		= \Ext^m_{\ddd_{\eS}}( (\ddd_{\eS}/\ddd_{\eS}\g_{\chi}) , \ddd_{\eS}).
	\end{equation}
	Similarly, results like Proposition~\ref{prop:isoofExtmonopenset} can be applied to $\eM'_{\eS}$. 
\end{remark}

The next result   is standard and similar to Lemma~\ref{Brown-Levasseur},  but   we do not know a suitable  reference.

\begin{lemma}\label{ore100}  
	{\rm (1)}  As left $A_{\cdag}$-modules, $ {}_{\eS}\hskip -2pt\Hom_\ddd(\eM',\eK) 
	\cong  \Hom_{\ddd_{\eS}}(\eM'_{\eS},\eK_{\eS}).$
	\begin{enumerate}
		\item[(2)]  Similarly,  $ {}_{\eS}\hskip -2pt\Hom_\ddd(\eK,\eM')   \cong  \Hom_{\ddd_{\eS}}(\eK_{\eS},\eM'_{\eS})$ as left $\Ak $-modules.
		
		\item[(3)] If 
		$  \Hom_{\ddd_{\eS}}(\eK_{\eS},\eM'_{\eS})\circ 
		\Hom_{\ddd_{\eS}}(\eM'_{\eS},\eK_{\eS})\not=0$,  then 
		$$    \Hom_{\ddd}(\eK,\eM')\circ 
		\Hom_{\ddd}(\eM',\eK)\not=0.$$
	\end{enumerate}
\end{lemma}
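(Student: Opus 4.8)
\textbf{Plan for the proof of Lemma~\ref{ore100}.}

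The plan is to verify the three assertions using the flatness of Ore localisation together with the fact (from Lemma~\ref{visible2} and Remark~\ref{ore102}) that $\eM'$ and $\eK$ are $\delta$-torsionfree on both sides, so that their left and right localisations at $\eS=\{\delta^k\}$ agree (Lemma~\ref{ore91}). For Part~(1), I would argue as follows. Since $\eM'=\Ext^m_\ddd(\eM,\ddd)$ is a finitely generated right $\ddd$-module (as $\eM$ is finitely generated and $\ddd$ is noetherian), choose a presentation $\ddd^a \to \ddd^b \to \eM' \to 0$ in $\rmod\ddd$; applying $\Hom_\ddd(-,\eK)$ and then localising, one compares $\Hom_{\ddd}(\eM',\eK)_{\eS}$ with $\Hom_{\ddd_{\eS}}(\eM'_{\eS},\eK_{\eS})$ by a standard five-lemma / snake argument, using that $\ddd_{\eS}$ is flat over $\ddd$ and that $\Hom_\ddd(\ddd^b,\eK)_{\eS}=\eK_{\eS}^b=\Hom_{\ddd_{\eS}}(\ddd_{\eS}^b,\eK_{\eS})$. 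The one subtlety is that I am localising on the \emph{left} here (as left $A_{\cdag}$-modules via the $A_{\cdag}=\End_\ddd(\eK)$-action on $\eK$); this is legitimate because $\delta$ acts locally ad-nilpotently on $A_{\cdag}$ (Lemma~\ref{lem:K-hyp}(2)), so $\eS$ is a left Ore set there, and because left and right localisations of the relevant bimodules coincide by Lemma~\ref{ore91}. Part~(2) is entirely symmetric, swapping the roles of $\eK$ and $\eM'$ and using that $\Ak=\End_\ddd(\eM')$ (Lemma~\ref{double-dual}(2)) and that $\delta$ acts locally ad-nilpotently on $\Ak$.

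For Part~(3), the point is that composition of homomorphisms is compatible with localisation: the multiplication maps
\[
\Hom_\ddd(\eK,\eM')\otimes\Hom_\ddd(\eM',\eK)\too \End_\ddd(\eM'),\qquad
\Hom_{\ddd_{\eS}}(\eK_{\eS},\eM'_{\eS})\otimes\Hom_{\ddd_{\eS}}(\eM'_{\eS},\eK_{\eS})\too\End_{\ddd_{\eS}}(\eM'_{\eS})
\]
fit into a commutative square once one applies the isomorphisms of Parts~(1) and~(2) and the isomorphism $\End_\ddd(\eM')_{\eS}\cong\End_{\ddd_{\eS}}(\eM'_{\eS})$ (which again follows from the presentation argument, since $\eM'$ is finitely generated and $\End_\ddd(\eM')=\Ak$ has $\delta$ locally ad-nilpotent). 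Concretely: if $\phi\in\Hom_\ddd(\eK,\eM')$ and $\psi\in\Hom_\ddd(\eM',\eK)$, then the localised maps satisfy $(\phi\otimes 1)\circ(\psi\otimes 1)=(\phi\circ\psi)\otimes 1$, so if the right-hand composition is nonzero, pulling back a witness pair of localised homomorphisms to honest homomorphisms over $\ddd$ (clearing denominators $\delta^k$, which is harmless by $\delta$-torsionfreeness) produces $\phi,\psi$ with $\phi\circ\psi\ne 0$. Equivalently, and more cleanly, one observes that $\Hom_\ddd(\eM',\eK)\hookrightarrow\Hom_{\ddd_{\eS}}(\eM'_{\eS},\eK_{\eS})$ and $\Hom_\ddd(\eK,\eM')\hookrightarrow\Hom_{\ddd_{\eS}}(\eK_{\eS},\eM'_{\eS})$ are injections (by Parts~(1),(2) together with $\delta$-torsionfreeness from Lemma~\ref{lem:subbimodulestorsionfree2}) whose images generate the localisations over $\eS$; hence the image of the composition product in $\End_{\ddd_{\eS}}(\eM'_{\eS})$ is the $\eS$-localisation of the image of $\Hom_\ddd(\eK,\eM')\circ\Hom_\ddd(\eM',\eK)$ in $\End_\ddd(\eM')$, and a nonzero localisation forces the original submodule to be nonzero.

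The main obstacle I anticipate is bookkeeping the \emph{sides}: $\Hom_\ddd(\eM',\eK)$ is naturally a left $A_{\cdag}$-module and a right $\Ak$-module, $\Hom_\ddd(\eK,\eM')$ is the reverse, and I am asked to localise on the left in each case, which requires invoking Lemma~\ref{ore91} to identify left and right localisations of the ambient bimodules $\eM'$ and $\eK$, and then checking that the Hom-localisation isomorphism is one of \emph{left} modules over the appropriate spherical algebra. Once the $\delta$-torsionfreeness inputs (Lemmata~\ref{visible2}, \ref{lem:K-hyp}, \ref{lem:subbimodulestorsionfree2}) and the Ore conditions are in place, each of the three parts is a routine flat-base-change argument, and Part~(3) is a formal consequence of Parts~(1),(2) plus functoriality of composition under localisation; no genuinely new idea is needed beyond what appears in the proof of Lemma~\ref{Brown-Levasseur}.
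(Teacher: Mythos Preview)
Your approach to Parts~(1) and~(2) via a finite presentation of $\eM'$ (resp.\ $\eK$) and flat base change is a valid alternative to the paper's argument, which instead uses adjunction $\Hom_{\ddd_\eS}(\eM'_\eS,\eK_\eS)\cong\Hom_\ddd(\eM',\eK_\eS)$ and then clears denominators explicitly using finite generation of $\eM'_\ddd$. Both routes work once ${}_\eS\eK=\eK_\eS$ and ${}_\eS\eM'=\eM'_\eS$ are in place via Lemma~\ref{ore91}.

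For Part~(3), however, your ``clearing denominators'' step hides a genuine difficulty. Writing $\widetilde\theta=c_1^{-1}\theta$ and $\widetilde\phi=c_2^{-1}\phi$ via Parts~(1),(2), the composite $\widetilde\theta\circ\widetilde\phi$ is \emph{not} simply a scalar multiple of $\theta\circ\phi$: the denominator $c_2^{-1}$ sits between $\theta$ and $\phi$ as the left $A_{\cdag}$-action on $\eK$, and $\theta\colon\eK\to\eM'$ is only $\ddd$-linear, not $A_{\cdag}$-linear (the left $\delta$-actions on $\eK$ and $\eM'$ come from different spherical algebras and are not intertwined by $\theta$). So the ``hence'' in your second argument is not justified; one cannot directly conclude that the localised composition image equals $(H_1H_2)_\eS$. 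Your outline can be repaired by reusing Part~(2): since $\widetilde H_1$ is a right $(A_{\cdag})_\eS$-module, $\theta\cdot c_2^{-1}\in\widetilde H_1={}_\eS H_1$ rewrites as $c_3^{-1}\theta'$ with $\theta'\in H_1$, whence $\widetilde\theta\widetilde\phi=c_1^{-1}c_3^{-1}(\theta'\phi)$ and $H_1H_2=0$ gives a contradiction. But this extra Ore step is the heart of the matter, not a formality.

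The paper proves Part~(3) by a different device. Arguing by contradiction, it works inside $\eK$ rather than in the Hom-spaces: setting $J=\sum_{\phi'\in H_2}\phi'(\eM')\subset\eK$, one checks that $J$ is a right $\ddd$-submodule that is also closed under left $\C[\delta]$-multiplication (because $c'\phi'\in H_2$), so ${}_\eS J=J_\eS$ by Lemma~\ref{ore91}. If $H_1H_2=0$ then $\theta(J)=0$, hence $\theta(J_\eS)=\theta(J)\ddd_\eS=0$ by $\ddd$-linearity; but $c_2^{-1}\phi(m)\in{}_\eS J=J_\eS$, contradicting $\theta(c_2^{-1}\phi(m))\neq 0$. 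This sidesteps the $A_{\cdag}$-linearity problem by converting the awkward left localisation to a right $\ddd$-localisation \emph{before} applying $\theta$. The paper follows the lemma with Example~\ref{Ore-example} precisely to show that such Ore-localisation statements fail without the torsionfreeness inputs.
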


\begin{proof}      (1)  By definition,   $ {}_{\eS}\hskip -2pt\Hom_\ddd(\eM',\eK)=  (A_{\cdag})_{\eS}\otimes_{A_{\cdag}} \Hom_\ddd(\eM',\eK)$.
	By adjointness \cite[Theorem~2.75]{Rot} we have 
	\[\begin{array}{rl}
		\Hom_{\ddd_{\eS}}(\eM'_{\eS},\, \eK_{\eS}) &\cong 
		\Hom_{\ddd_{\eS}}(\eM'\otimes_\ddd\ddd_{\eS},\, \eK_{\eS})\\
		&  \cong \Hom_{\ddd }(\eM',\, \Hom_{\ddd_{\eS}}(\ddd_{\eS},\, \eK_{\eS}))
		=   \Hom_{\ddd }(\eM',\, \eK_{\eS}).\end{array}\]
	As $\delta$ acts torsionfreely on $\eK$  by Remark~\ref{rem:K-hyp},  
	$\eK\subseteq \eK_{\eS}$, and  it follows that    
	\[  \Hom_\ddd(\eM', \eK)\hookrightarrow 
	\Hom_{\ddd }(\eM',\, \eK_{\eS})\cong \Hom_{\ddd_{\eS}}(\eM'_{\eS},\, \eK_{\eS}) .\]
	Note that $\Hom_\ddd(\eM', \eK)$ is a left $A_{\cdag}$-module and, similarly, 
	$\Hom_{\ddd_{\eS}}(\eM'_{\eS},\, \eK_{\eS}) $ is a left $(A_{\cdag})_{\eS}$-module, both 
	by left multiplication. Moreover,  $\Hom_\ddd(\eM', \eK)$ is left $\eS$-torsionfree
	by Lemma~\ref{lem:subbimodulestorsionfree2}.
	Thus  by universality, 
	\[{}_{\eS}\hskip -2pt \Hom_\ddd(\eM', \eK) =  {(A_{\cdag})_{\eS}}\otimes_{A_{\cdag}}\Hom_\ddd(\eM', \eK) \hookrightarrow \Hom_{\ddd_{\eS}}(\eM'_{\eS},\, \eK_{\eS}) .\]

	For the opposite   inclusion,  let $\theta\in \Hom_{\ddd}(\eM',\, \eK_{\eS})$   and  write $\eM'=\sum_{i=1}^p x_i\ddd$ for some $x_i\in \eM'$.  Then
	${}_{\eS}\eK=\eK_{\eS}$ by Lemma~\ref{ore91}, and so  we can find $c\in \eS$ such that $\theta(x_i)=c^{-1}y_i$ for some $y_i\in \eK$ and all 
	$1\leq i\leq p$. Thus $\phi=c\theta\in \Hom_\ddd(\eM', \eK)$ and so 
	$ \theta=c^{-1}\phi \in (A_{\cdag})_{\eS}\otimes_{A_{\cdag}}\Hom_\ddd(\eM', \eK) = {}_{\eS}\hskip -2pt \Hom_\ddd(\eM', \eK).$  
	
	(2)  By Lemma~\ref{lem:Gequivisfullsub}(4), $\eS$ acts locally ad-nilpotently on $\eM'$.  Moreover $\delta$ acts torsionfreely on $\eM'$ from both sides
	by Lemma~\ref{visible2} 
	and on $\Hom_\ddd(\eK,\eM')$ from the left  by  Lemma~\ref{lem:subbimodulestorsionfree2}. Thus    the proof of (1) also works here.

	(3)    
	Obviously if there exists $\theta'\in \Hom_\ddd(\eK,\eM')$ and $\phi'\in \Hom_\ddd(\eM',\eK)$ such that $\theta'\circ \phi' (\eM')\not=0$, 
	we are done, so we suppose not throughout the proof.  
	
	By hypothesis, and using  Parts~(1) and~(2), we can find 
	\[c_1^{-1}\theta\in (\Ak)_{\eS}\otimes_{\Ak } \Hom_\ddd(\eK,\eM')\quad \text{and}\quad
	c_2^{-1}\phi\in (A_{\cdag})_{\eS}\otimes_{A_{\cdag}} \Hom_\ddd(\eM',\eK)\]
	 such that $\bigl(c_1^{-1}\theta\circ c_2^{-1}\phi\bigr)(mc_3^{-1})\not =0$.
	Here,  each $c_i\in \eS$, $\theta\in \Hom(\eK,\eM')$,  and
	$\phi\in \Hom(\eM',\eK)$, while $m\in \eM'$. 
	Now $\eM'$ is $\eS$-torsionfree on both sides by Lemma~\ref{visible2}, so we can multiply by $c_1$ and $c_3$ on the two sides 
	of this equation  to get  
	\begin{equation}\label{lastly}
		\theta( c_2^{-1}\phi(m ))\not =0\qquad\text{and so}\qquad \phi(\eM')\not=0.
	\end{equation}

	Set
	\[J \ =   \left\{ \sum \phi'(\eM') : \phi'\in  \Hom_\ddd(\eM',\eK).\right\}\] 
	The space $J$ is a right $\ddd$-submodule of $\eK$.  
	Moreover, by the first paragraph of the proof necessarily $J\subseteq \ker_{\eK_{\eS}}(\theta),$
	while  $J\supseteq \phi(\eM')\not=0$  by \eqref{lastly}.  
	On the other hand,   $\Hom(\eM',\eK)$ is a 
	$(A_{\cdag},\Ak )$-bimodule and hence a $\mathbb{C}[\delta]$-bimodule,  and so $ c'\phi'\in \Hom_\ddd(\eM',\eK)$ for any 
	$\phi'\in \Hom_\ddd(\eM',\eK)$ and $c'\in \mathbb{C}[\delta]$. 
	Therefore $J$ is a left	$\C[\delta]$-module.  As $J\subseteq \eK$, it   is $\eS$-torsionfree on both sides  and hence ${}_{\eS}J=J_{\eS}$ by   Lemma~\ref{ore91}. But   under the embedding of $\eS$ into $\ddd$, and regarding $\theta$ as lying 
	in $ \Hom_{\ddd_{\eS}}(\eK_{\eS},\eM'_{\eS})$    we have 
	\[ \theta(J_{\eS})= \theta( J\otimes_\ddd\ddd_{\eS} ) = \theta(J) \otimes_\ddd\ddd_{\eS}  = 0.\]

	Finally,   since  $c_2^{-1}\phi(\eM')\in  {}_{\eS} J=  J_{\eS}$  this implies that 
	$\theta(c_2^{-1}\phi)(\eM')=0$, contradicting \eqref{lastly}.
\end{proof}

We remark that results like Lemma~\ref{ore100} do need a regularity assumption,  as the following   easy example shows.

\begin{example}\label{Ore-example} Let $U=U(\mathfrak{sl}(2))$, and take the usual basis $\{e,h,f\}  $ 
	for $\mathfrak{sl}(2)$. Let $\nabla$ be the injective hull of the trivial $\mathfrak{sl}(2)$-module $\mathbb{C}$  in
	category $\euls{O}$; thus $\nabla$ is indecomposable of length 2 with $\nabla/\euls{O}=M$ being a simple Verma module. 
	Then 
	\begin{enumerate} 
		\item $x=f$ acts locally ad-nilpotently on $U$ and so $\eS=\{x^j\}$ is an Ore set in $U$;
		\item $\Hom_U(M,\nabla)=0$  but 
		\item$\Hom_{U_{\eS}}(M_{\eS},\,\nabla_{\eS})\not=0.$
	\end{enumerate}
\end{example}

\begin{proof} (1) and (2) are obvious. For  (3) note  that 
	$M_{\eS} = \nabla_{\eS}$  since $x$ kills the trivial module $\C$. On the other hand,  
	$M_{\eS}\not=0$ since
	$M$ is a torsionfree $ \mathbb{C}[x]$-module. 
\end{proof}

%%%%%%%%%%%
\subsection*{The proof of Theorem~\ref{projectivity-theorem-stable}: $\eM'$ is projective in $\rmod(G,\chi, \ddd)$.}         

Finally, we are in a position to give a proof of Theorem~\ref{projectivity-theorem-stable}. 

By   \eqref{lem:stablepolaropen}, $\eX= V_{\reg}  \cong G \times_N \h_{\reg}$.  Set
$H_\eK = \Hom_\ddd(\eK,\eM')$ as in Notation~\ref{module-notation}. By Corollary~\ref{submoduletest}, it suffices to prove that $$
H_\eK \circ \Hom_\ddd(\eM',\eK) \not=0.
$$

By Lemma~\ref{ore100}(3),  it therefore  suffices to prove that  	 
\begin{equation}\label{eq:nonvanishingpolarproduct}
	\Hom_{\eD(\eX)}(\eK_{\eS},\eM'_{\eS}) \circ\Hom_{\eD(\eX)}(\eM'_{\eS},\eK_{\eS})\not=0.
\end{equation}
By Remark~\ref{ore102},   
$\eK_{\eS} = \eKt_{\eS} =  \eD(\eX ) / \mf{g}_{\chi} \eD( \eX )$. 
Combining Equation~\ref{equ:ore102} and Proposition~\ref{prop:isoofExtmonopenset} therefore shows that 
\begin{align*}
	\Hom_{\eD(\eX)}  &  \bigl(\eM'_{\eS},\eK_{\eS}\bigr) 
	\ \cong\   \Hom_{\eD(\eX)}  \bigl(\Ext^m_{\eD(\eX)} (\eD(\eX) / \eD(\eX) \g_\chi,  \eD(\eX)),\, \eK_{\eS}\bigr) \\
	& = \Hom_{\eD(\eX)} (\eK_{\eS}, \eK_{\eS})
\end{align*}
and, similarly, 
$
\Hom_{\eD(\eX)}(\eK_{\eS},\eM'_{\eS}) = \Hom_{\D(\eX)} (\eK_{\eS}, \eK_{\eS}).
$  
However, by \eqref{eq:rightddgchiquotient}, $\eK_{\eS} \neq 0$. Thus, 
$$
\Hom_{\D(\eX)} (\eK_{\eS}, \eK_{\eS}) \circ \Hom_{\D(\eX)} (\eK_{\eS}, \eK_{\eS}) \neq 0
$$
holds automatically. Therefore \eqref{eq:nonvanishingpolarproduct} also holds and the theorem follows. \qed

 %%%%%%%%%%%%
   \subsection*{The injectivity of $\eG_{\lambda}$ and related admissible modules. }
 
Since we now know that $\eM'=\Ext_\ddd^{m}(\eM,\ddd)$ is a projective object, it is fairly easy to deduce the main result of this section, the injectivity of $\eG_{\lambda}$,  from the projectivity of  related objects. We first need to 
compute the images of the various functors defined in Notation~\ref{main-notation2}. 

\begin{lemma}\label{G-projective2}
	\begin{enumerate} 
		\item $\BH^\perp= -\otimes_{\Ak} \eM'$ maps $\Osphop$ to $\eCop$.
		\item $\BD_\ddd=\Ext_\ddd^{m+n}(-,\ddd)$ defines a   contravariant duality $\eC\to \eCop$.
	\end{enumerate}
\end{lemma}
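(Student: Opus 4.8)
\textbf{Proof plan for Lemma~\ref{G-projective2}.}

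The plan is to derive both statements by transporting, through the intertwining machinery, the homological facts already established: that $\eM$ is $m$-CM as a left $\ddd$-module (Corollary~\ref{m-CM}), that $\eM'$ is projective in $\rmod(G,\chi,\ddd)$ (Theorem~\ref{projectivity-theorem-stable}), that objects of $\Osph$ are $n$-CM with $\pd = n$ (Proposition~\ref{pdQ} and Lemma~\ref{CM for spherical2}), and that $\BD_{\Aak}$ sends $\Osph$ to $\Osphop$ and is a duality there (via Lemma~\ref{CM for spherical2}(2) and the Auslander--Gorenstein property, cf.\ \cite[Theorem~II.4.15]{Bj}). For part~(1), given $L \in \Osphop$, first I would show $L \otimes_{\Aak} \eM'$ is $(G,\chi)$-monodromic on the right: present $L$ as a quotient of a free module $\Aak^{(r)}$ and use that $\Aak^{(r)} \otimes_{\Aak} \eM' = (\eM')^{(r)}$ is monodromic by the right-hand analogue of Lemma~\ref{equivariant tensors}, then pass to the quotient. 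Then I must check $(\Sym\h)^W$ acts locally finitely on $L \otimes_{\Aak}\eM'$; for this, pick $0 \neq d \in (\Sym\h)^W$ annihilating a finite-dimensional generating subspace of $L$, observe (as in the proof of Lemma~\ref{torsion-consequence2}) that $d$ acts locally ad-nilpotently on $\eM'$ so $\eS = \{d^k\}$ is an Ore set, and use the localization identity ${}_{\eS}\eM' \cong \eM'_{\eS}$ (Lemma~\ref{ore91}, applicable since $\eM'$ is $\delta$-torsionfree on both sides by Lemma~\ref{visible2}) together with $L_{\eS} = 0$ to conclude $d$ acts locally nilpotently on $L \otimes_{\Aak}\eM'$; hence $(\Sym\h)^W$ acts locally finitely and $L \otimes_{\Aak}\eM' \in \eCop$.

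For part~(2), the cleanest route is to factor $\BD_\ddd$ through the intertwining isomorphism of Theorem~\ref{intertwining} (equivalently Corollary~\ref{maincorollary}): for $\eP$ a projective object of $\Osph$ one has $\BD_\ddd(\eM \otimes_{\Aak}\eP) \cong \BD_{\Aak}(\eP)\otimes_{\Aak}\eM' = \BH^\perp(\BD_{\Aak}(\eP))$, which lies in $\eCop$ by part~(1) since $\BD_{\Aak}(\eP) = \Ext^n_{\Aak}(\eP,\Aak) \in \Osphop$ (it is a summand of some $\eQ_\lambda' = \Aak/\mf{m}_\lambda\Aak$, by Corollary~\ref{Q'-description} and Corollary~\ref{A-projectives}, hence in $\Osphop$). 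But to handle an \emph{arbitrary} $N \in \eC$ rather than just the special modules $\eM\otimes_{\Aak}\eP$, I would argue directly: any $N \in \eC$ is holonomic (Corollary~\ref{cor:admissible-holonomic}), hence $(n+m)$-CM over $\ddd$ by the Cohen--Macaulay property of $\ddd$ (since $\GKdim \ddd = 2(n+m)$ and $\GKdim N = n+m$); so $\BD_\ddd(N) = \Ext^{n+m}_\ddd(N,\ddd)$ satisfies $\BD_\ddd\circ\BD_\ddd(N) \cong N$ by \cite[Theorem~II.4.15]{Bj}, giving the duality. It remains to see $\BD_\ddd(N) \in \eCop$: it is $(G,\chi)$-monodromic on the right by Lemma~\ref{Ext-equivariance}, and $(\Sym\h)^W = (\Sym V)^G$ acts locally finitely on it by Corollary~\ref{ad-nil prop}(2) applied with $S = \ddd$, $C = (\Sym V)^G$, since that action is locally finite on $N$. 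Thus $\BD_\ddd$ is a well-defined contravariant functor $\eC \to \eCop$, squaring to the identity, hence a duality; the reverse direction $\eCop \to \eC$ is the symmetric statement.

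I expect the main obstacle to be purely bookkeeping rather than conceptual: ensuring the local-finiteness of the $(\Sym V)^G$-action survives the relevant operations ($-\otimes_{\Aak}\eM'$ on the one hand, $\Ext^{n+m}_\ddd(-,\ddd)$ on the other), which is exactly what the ad-nilpotence results of Section~\ref{Sec:nilpotence} (notably Corollary~\ref{ad-nil prop}) and the Ore-localization comparison (Lemma~\ref{ore91}) are designed to supply. The one point requiring a little care is that $\BD_\ddd$ as \emph{defined} in Notation~\ref{main-notation2} uses the exponent $n+m = \dim V$, which matches the CM-degree of a holonomic (equivalently admissible) $\ddd$-module, so no degree mismatch arises; this should be flagged explicitly when invoking the double-dual identity.
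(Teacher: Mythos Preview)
Your proposal is correct and, for part~(2), matches the paper's argument exactly (monodromic via Lemma~\ref{Ext-equivariance}, local finiteness via Corollary~\ref{ad-nil prop}(2), duality via holonomicity and \cite[Theorem~II.4.15]{Bj}). For part~(1) the paper is more direct: it invokes Lemma~\ref{equivariant tensors}(1)---which already covers the right-module case, so presenting $L$ as a quotient of a free module is unnecessary---for monodromicity, and for local finiteness simply observes that $(\Sym\h)^W$ acts locally ad-nilpotently on $\eM'$ (Lemma~\ref{lem:Gequivisfullsub}(4)) and locally finitely on $N$, hence locally finitely on the tensor product; your Ore-localization detour via Lemma~\ref{ore91} works but is not needed.
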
 

\begin{proof}  
	(1) If $N\in \Osphop$, then $\BH^\perp(N)=N\otimes_{\Ak} \eM'$ is monodromic by Lemma~\ref{equivariant tensors}.
	Since $(\Sym  V)^G_+\cong (\Sym  \h)_+^W$ acts locally ad-nilpotently on $\eM'$ by Lemma~\ref{lem:Gequivisfullsub}(4),
	while  $(\Sym  \h)^W$ acts locally finitely on $N$ by assumption, the subalgebra $(\Sym  V)^G$ of $\ddd$ will act locally finitely on $N\otimes_{\Ak}  \eM'$. 
	Thus $\BH^\perp(N) \in \eCop$.    
	
	(2) Let $N\in \eC$. Then  $\BD_\ddd(N)= \Ext^{n+m}(N,\ddd)$ is a monodromic  module  by Lemma~\ref{Ext-equivariance}. Choose $0\not=d \in (\Sym  V)^G$
	and  regard $N\in \Bi(\ddd,d)$, in  the sense of Definition~\ref{Bi-definition},  by making $d$ act trivially on the right. Then, Corollary~\ref{ad-nil prop}(2) 
	shows that 
	$(\Sym  V)^G$ acts locally finitely on $\BD_\ddd(N)$. Thus $\BD_\ddd$  defines a contravariant functor  $\eC\to \eCop$.  
	Every $N\in \eC$ is holonomic and so $N\cong \BD_{\ddd^{\mathrm{op}}}\circ\BD_\ddd(N)$ by  \cite[Theorem~II.4.15, p 61]{Bj}.
	Thus $\BD_\ddd$ is a duality, and the lemma is proven.   
\end{proof}

\begin{lemma}\label{G-projective3} 
	Assume that Hypothesis~\ref{K-hyp} holds. Let $\eP \in \Osph$ be a projective object and set $\eG=\eM\otimes_{\Ak}\eP$. Then  
	 $\BD_\ddd( G )=\Ext^{m+n}_\ddd(\eG, \ddd)$ is projective in $\eCop$.
\end{lemma}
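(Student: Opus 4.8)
The plan is to use the intertwining theorem (Theorem~\ref{intertwining}) to convert the projectivity of $\eM'$ in $\rmod(G,\chi,\ddd)$, now established in Theorem~\ref{projectivity-theorem-stable}, into the projectivity of $\BD_\ddd(\eG)$ in $\eCop$. The starting point is the description of $\BD_\ddd(\eG)$ supplied by the intertwining theorem: in the notation of Notation~\ref{main-notation2}, $\BD_\ddd(\eG) = \left(\BD_\ddd\circ{}^\perp\BH\right)(\eP) \cong \left(\BH^\perp\circ\BD_{\Ak}\right)(\eP) = \BD_{\Ak}(\eP)\otimes_{\Ak}\eM'$. Since $\eP$ is projective in $\Osph$, Corollary~\ref{P'-description} tells us that $\BD_{\Ak}(\eP) = \Ext^n_{\Ak}(\eP,\Ak)$ is a projective object in $\Osphop$. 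So the task reduces to showing that $N'\otimes_{\Ak}\eM'$ is projective in $\eCop$ whenever $N'$ is a projective object in $\Osphop$.

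First I would verify that $-\otimes_{\Ak}\eM'$ sends $\Osphop$ into $\eCop$; this is exactly Lemma~\ref{G-projective2}(1). Next, the key structural fact is the right-handed analogue of the adjunction from Corollary~\ref{summand-endo3}: because $\eM'$ is a projective object in $\rmod(G,\chi,\ddd)$ with $\End_\ddd(\eM') \cong \Ak$ (Lemma~\ref{double-dual}(2), or Lemma~\ref{whenisHproj}(1)), the functor $\BH' := \Hom_\ddd(\eM',-) \colon \rmod(G,\chi,\ddd) \to \rmod\Ak$ is exact, and on $\eCop$ it lands in $\Osphop$ by the right-handed version of Lemma~\ref{torsion-consequence}(4)/Lemma~\ref{torsion-consequence2}; moreover $-\otimes_{\Ak}\eM'$ is its left adjoint with the unit $\mr{Id}\to \BH'\circ(-\otimes_{\Ak}\eM')$ an isomorphism on $\Osphop$. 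Granting this, for $L'\in\eCop$ one has
\[
\Hom_{\eCop}(N'\otimes_{\Ak}\eM',\, L') \ \cong \ \Hom_{\Ak}(N',\, \Hom_\ddd(\eM',L')),
\]
using that $\eCop$ is a full subcategory of $\rmod(G,\chi,\ddd)$ (right-handed Lemma~\ref{lem:charvar}(3)). The right-hand side is exact in $L'$: the inner $\Hom_\ddd(\eM',-)$ is exact and lands in $\Osphop$, and $\Hom_{\Ak}(N',-)$ is exact on $\Osphop$ because $N'$ is projective there. Hence $N'\otimes_{\Ak}\eM'$ is projective in $\eCop$, which applied to $N'=\BD_{\Ak}(\eP)$ gives that $\BD_\ddd(\eG)$ is projective in $\eCop$.

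The main obstacle I anticipate is setting up the right-handed analogue of Corollary~\ref{summand-endo3} cleanly — that is, confirming that $\eM'$ plays the role for right modules that $\eM$ plays for left modules. This requires knowing $\eM'$ is a projective generator for the relevant subcategory (projectivity is Theorem~\ref{projectivity-theorem-stable}; the "$\BH'\circ\HHright$ on invariants" identity needs the analogue of Lemma~\ref{summand-endo21}/Lemma~\ref{summand-endo22}), and that $\Hom_\ddd(\eM',L')\in\Osphop$ for $L'\in\eCop$, which follows from the $\delta$-torsion/finiteness arguments of Lemma~\ref{torsion-consequence} applied on the right. All of these are mild left-right transpositions of results already in the excerpt under Hypothesis~\ref{K-hyp} (which makes both $\Ak$ and $A_{\cdag}$ simple), so no genuinely new idea is needed; the work is just bookkeeping to make sure the adjunction and the exactness statements are stated on the correct side. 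An alternative, if one prefers to avoid the right-handed adjunction, is to dualize once more: $\BD_\ddd(\eG)$ is projective in $\eCop$ iff $\BD_{\ddd^{\mathrm{op}}}(\BD_\ddd(\eG)) \cong \eG$ is injective in $\eC$ (Lemma~\ref{G-projective2}(2)), but since Theorem~\ref{G-is-injective} is the very thing being built up to, it is cleaner to prove projectivity of $\BD_\ddd(\eG)$ directly as above and then deduce injectivity of $\eG$ afterwards.
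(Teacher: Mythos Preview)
Your approach is essentially the paper's: apply the intertwining theorem to write $\BD_\ddd(\eG)\cong \eP'\otimes_{\Ak}\eM'$ with $\eP'=\BD_{\Ak}(\eP)$ projective in $\Osphop$, then use adjunction
\[
\Hom_\ddd(\eP'\otimes_{\Ak}\eM',\,L')\ \cong\ \Hom_{\Ak}\bigl(\eP',\,\Hom_\ddd(\eM',L')\bigr)
\]
together with the projectivity of $\eM'$ in $\rmod(G,\chi,\ddd)$ (Theorem~\ref{projectivity-theorem-stable}) to conclude exactness.

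There is, however, one genuine gap. You assert that $\Hom_\ddd(\eM',L')\in\Osphop$ for $L'\in\eCop$, claiming this is a left--right transposition of Lemma~\ref{torsion-consequence}(2). It is not. The proof of Lemma~\ref{torsion-consequence}(2) for $\eM$ relies on the identification $\Hom_\ddd(\eMt,L)=L^G$ (Lemma~\ref{hom=invariants}), from which finite generation over $\ddd^G$ is extracted by the explicit argument $L^G=\ddd^G N_0$. No analogous description of $\Hom_\ddd(\eM',L')$ is available, since $\eM'$ is an Ext-group rather than a cyclic quotient of $\ddd$; the right-handed analogue of Lemma~\ref{torsion-consequence}(2) applies to $\eK$ (with endomorphism ring $A_{\cdag}$), not to $\eM'$. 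Indeed, Remark~\ref{rem:torsion-consequence} flags precisely this point: there is no obvious reason why $\Hom_\ddd(\eM',L')$ should be finitely generated as a right $\Ak$-module, and Lemma~\ref{torsion-consequence}(4) is stated only for finitely generated \emph{submodules} of it.

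The fix, which is what the paper does, is to avoid the claim altogether. Given a surjection $L_1\twoheadrightarrow L_2$ in $\eCop$ and a map $\psi\colon\eP'\to\Hom_\ddd(\eM',L_2)$, the image of $\psi$ is a finitely generated $\Ak$-submodule $Z'$, hence lies in $\Osphop$ by Lemma~\ref{torsion-consequence}(4). Since $\Hom_\ddd(\eM',L_1)\to\Hom_\ddd(\eM',L_2)$ is surjective (projectivity of $\eM'$), one can choose a finitely generated submodule $Z''\subseteq\Hom_\ddd(\eM',L_1)$ mapping onto $Z'$; again $Z''\in\Osphop$. Now projectivity of $\eP'$ in $\Osphop$ lifts $\psi$ through $Z''$. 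This is the missing step; once inserted, your argument coincides with the paper's.
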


\begin{proof}   By Lemma~\ref{G-projective2},  $\BD_\ddd(\eG)\in \eCop$. Set $ \eP' =\BD_{\Ak} (\eP).$
	Then  Theorem~\ref{intertwining} implies that  
	\begin{equation}\label{equ:G-projective3}
		\BD_\ddd(\eG)\ = \ \BD_\ddd\circ \HHleft (\eP) \ = \  \HHright \circ  \BD_{\Ak} (\eP) \ = \ \BH^\perp(\eP').
	\end{equation}
	By Corollary~\ref{P'-description}, $\eP'  $ is  projective in $\Osphop$. Now, using adjunction,
	\begin{equation}\label{equ:G-projective33}
		\Hom_\ddd(\BH^\perp(\eP'), -)  =  \Hom_\ddd(\eP'\otimes_{\Ak}  \eM',\,-)  =  \Hom_{\Ak} \left(\eP',\, \Hom_\ddd(\eM', -)\right).
	\end{equation}
	
	Let $L_1\to L_2\to 0$ be an exact sequence in $\eCop$.  Since we are assuming that Hypothesis~\ref{K-hyp}  holds, Theorem~\ref{projectivity-theorem-stable} says that $\eM'$ is a projective monodromic $\dd(V)$-module. Therefore, $\Hom_\ddd(\eM',L_1)\to \Hom_\ddd(\eM',L_2)\to 0$ is also  exact.  We do not know that  the $\Hom_\ddd(\eM',L_\ell)$ are finitely
	generated $\Ak $-modules and so they may not lie in   $\Osphop$.  However, by  Lemma~\ref{torsion-consequence}(4) 
	any finitely generated $\Ak $-submodule $Z'\subseteq \Hom_\ddd(\eM', L_2)$,   does lie in $\Osphop$  and so any $\ddd$-module morphism $\psi: \eP'\to Z'$
	does lift to a morphism  $\phi:\eP'\to  \Hom_\ddd(\eM',L_1).$  Thus any $\ddd$-module morphism
	$\psi: \eP'\to    \Hom_\ddd(\eM', L_2)  $ also  lifts and so  the complex 
	\[\Hom_{\Ak} \left(\eP',\, \Hom_\ddd(\eM', L_1)\right)\  \to \Hom_{\Ak} \left(\eP',\, \Hom_\ddd(\eM', L_2)\right) \ \to \  0\] is  exact.
	In other words,   $\Hom_{\Ak} (\eP',\, \Hom_\ddd(\eM', -))$ is  an exact functor on $\eCop$.
	
	From Equations~\ref{equ:G-projective3} and \ref{equ:G-projective33}, the functor $\Hom_{\Ak} (\eP',\, \Hom_\ddd(\eM', -))$ is simply $\Hom_\ddd(\BD_\ddd(\eG),-),$  and so  $\BD_\ddd(\eG)$ is  indeed projective in $\eCop$.  
\end{proof} 

Finally, we can combine the  results of this section  to prove  one of our main results on the structure of $\eG_{\lambda}$.  Not only does this result give very useful information about the structure of category $\eC$, but it is also forms a crucial  ingredient in the proof of  Theorem~\ref{thm:adextensionsHC} in the next section; see also Question~\ref{block-question}. The fact that $\eG_{0}$ is injective also plays an important, but subtle, part in the proof of the fact (Theorem~\ref{thm:KZGtwistsimples}) that the shift functor applied to the summands of $\eG_{0}$ is governed by Opdam's KZ-twist. 

\begin{theorem}\label{G-is-injective}
	Assume that Hypothesis~\ref{K-hyp} holds. If $\eG = \eM\otimes_{\Ak}  \eP$ for a projective object $\eP \in \Osph$, then $\eG$ is both projective and injective 
	as an object in $\eC$. 
	
	In particular, this holds when $\eG=\eG_\lambda  = \eM\otimes_{\Ak}  (\Ak /\Ak \mathfrak{m}_\lambda)$ for  $\lambda\in\h^*$.
\end{theorem}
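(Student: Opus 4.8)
The projectivity half is already settled: by Proposition~\ref{G-projective}, $\eG = \eM\otimes_{\Ak}\eP$ is projective in $\eC$ for any projective $\eP\in\Osph$ (and $\eG_\lambda$ arises with $\eP = \eQ_\lambda$, which is projective in $\Osph$ by Corollary~\ref{A-projectives}(1) under our simplicity hypothesis). So the real content is injectivity. The strategy is to deduce injectivity of $\eG$ in $\eC$ from projectivity of its Verdier dual $\BD_\ddd(\eG)$ in $\eCop$, using that $\BD_\ddd$ is an exact contravariant duality between $\eC$ and $\eCop$ (Lemma~\ref{G-projective2}(2)).

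First I would record that, since Hypothesis~\ref{K-hyp} is in force, Lemma~\ref{G-projective3} applies and gives that $\BD_\ddd(\eG) = \Ext^{m+n}_\ddd(\eG,\ddd)$ is a projective object in $\eCop$. (Here the crucial input is Theorem~\ref{projectivity-theorem-stable}, that $\eM' = \Ext^m_\ddd(\eM,\ddd)$ is projective in $\rmod(G,\chi,\ddd)$, fed into the intertwining identity $\BD_\ddd\circ{}^\perp\BH = \BH^\perp\circ\BD_{\Ak}$ of Theorem~\ref{intertwining} together with Corollary~\ref{P'-description} that $\BD_{\Ak}(\eP)$ is projective in $\Osphop$.) Next, I would invoke Lemma~\ref{G-projective2}(2): $\BD_\ddd$ is a contravariant duality $\eC\to\eCop$, with quasi-inverse $\BD_{\ddd^{\mathrm{op}}}$, and in particular $\BD_{\ddd^{\mathrm{op}}}\circ\BD_\ddd(\eG)\cong\eG$ (this uses that every object of $\eC$ is holonomic, via Corollary~\ref{cor:admissible-holonomic}, and the reflexivity of holonomic modules, \cite[Theorem~II.4.15]{Bj}). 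A contravariant duality of abelian categories sends projective objects to injective objects: given an injection $\eG\hookrightarrow L$ in $\eC$, apply $\BD_\ddd$ to get a surjection $\BD_\ddd(L)\twoheadrightarrow\BD_\ddd(\eG)$ in $\eCop$; since $\BD_\ddd(\eG)$ is projective this splits, and applying $\BD_{\ddd^{\mathrm{op}}}$ (and using $\BD_{\ddd^{\mathrm{op}}}\circ\BD_\ddd\cong\mathrm{Id}$) splits the original injection. Hence $\eG$ is injective in $\eC$.

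The final sentence is then immediate: take $\eP = \eQ_\lambda = \Ak/\Ak\mathfrak{m}_\lambda$. Since $\Ak$ is simple, Lemma~\ref{lem:simpleinductionLosev} gives that $\Ak(W_\lambda)$ is simple, so by Proposition~\ref{prop:Qlambdaprojiffsimple} $\eQ_\lambda$ is projective in $\Osph_{\kappa,\lambda}(W)$, hence projective in $\Osph = \bigoplus_\mu \Osph_{\kappa,\mu}(W)$; thus $\eG_\lambda = \eM\otimes_{\Ak}\eQ_\lambda$ is both projective and injective in $\eC$.

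\textbf{Main obstacle.} None of the genuinely hard work is inside this theorem — it is a clean formal consequence of Theorem~\ref{projectivity-theorem-stable} and Theorem~\ref{intertwining}. The one point that deserves care is making the "duality swaps projectives and injectives" argument watertight: one must check that $\BD_\ddd$ and $\BD_{\ddd^{\mathrm{op}}}$ are honest exact functors between $\eC$ and $\eCop$ (not merely additive), that they are mutually quasi-inverse on these categories, and that $\BD_\ddd(\eG)$ being projective is the right statement (as opposed to some weaker Ext-vanishing) — all of which is supplied by Lemmata~\ref{G-projective2} and~\ref{G-projective3}, but should be cited explicitly. So the burden has been front-loaded into the earlier sections; here the plan is simply to assemble those pieces.
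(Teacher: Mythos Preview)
Your proposal is correct and follows essentially the same route as the paper's own proof: projectivity via Proposition~\ref{G-projective}, then injectivity by observing that $\BD_\ddd(\eG)$ is projective in $\eCop$ (Lemma~\ref{G-projective3}) and that the contravariant duality $\BD_\ddd$ (Lemma~\ref{G-projective2}(2)) turns projectives into injectives. Your unpacking of the duality argument and of the inputs behind Lemma~\ref{G-projective3} is more detailed than the paper's terse version, but the structure is identical; for the $\eG_\lambda$ case the paper simply cites Corollary~\ref{A-projectives}, which packages the same content you derive from Proposition~\ref{prop:Qlambdaprojiffsimple} and Lemma~\ref{lem:simpleinductionLosev}.
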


\begin{proof}
	The projectivity of  $\eG$ is  given by Proposition~\ref{G-projective}. On the other hand, by Lemma~\ref{G-projective3},
	$\BD_\ddd(\eG)$ is a projective object in $\eCop$.  By Lemma~\ref{G-projective2},  $\BD_\ddd$ defines a contravariant duality $\eC\to \eCop$ and 
	so  the projectivity of $\BD_\ddd(\eG)$  in $\eCop$  is equivalent 
	to the injectivity of $\eG$  in~$\eC$. 
	
	The claim for $\eG_\lambda$  follows by combining this result with Corollary~\ref{A-projectives}.
\end{proof}

As was noted after Definition~\ref{defn:admissible},  $\eC$ is a length category, and so  it admits a decomposition into blocks. Let $\euls{T}$ denote the Serre subcategory of $ \eC$   generated by all irreducible modules $L$ such that $L$ is $\delta$-torsionfree with $L^G \neq 0$.   

\begin{corollary}\label{cor:euler3}
	Assume that   Hypothesis~\ref{K-hyp} holds and that $\euls{H}_q(W)$ is semisimple. 	
	Then the  category $\euls{T}$ is a union of semisimple blocks of $\eC$, and is the subcategory generated by the $\{\eG_{\lambda} :  \lambda \in \h^*\}$. In particular, there are no extensions between objects of $\euls{T}$ and any other objects of $\eC$.
\end{corollary}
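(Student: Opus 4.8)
The strategy is to combine the injectivity/projectivity statement of Theorem~\ref{G-is-injective} with the semisimplicity of $\euls{H}_q(W)$, which by Theorem~\ref{thm:semi-simplicity} (and Corollary~\ref{A-projectives}) forces each $\eG_\lambda$ to be semisimple with simple summands of the form $\HHleft(L^G)$. First I would recall that, under Hypothesis~\ref{K-hyp}, $\Ak(W)$ is simple, hence so is every $\Ak(W_\lambda)$ by Lemma~\ref{lem:simpleinductionLosev}; moreover $\euls{H}_q(W_\lambda)$ is a quotient of $\euls{H}_q(W)$ (or at least is semisimple whenever $\euls{H}_q(W)$ is, by the standard parabolic restriction for Hecke algebras), so by Theorem~\ref{thm:semi-simplicity} each $\eG_\lambda$ is semisimple. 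By Corollary~\ref{cor:simplicity}, the simple summands of $\eG_\lambda$ are exactly the modules $\eG_{\lambda,\rho}$; these are $\delta$-torsionfree by Theorem~\ref{torsionfree}(2) and satisfy $\eG_{\lambda,\rho}^G\neq 0$ by Corollary~\ref{cor:hom(ML)}. Conversely, if $L$ is any irreducible object of $\eC$ with $L$ $\delta$-torsionfree and $L^G\neq 0$, then $\BH(L)=L^G=\Hom_\ddd(\eM,L)$ is a nonzero object of $\Osph$ by Lemma~\ref{torsion-consequence}(2,3) (using $\delta$-torsionfreeness to identify $\Hom_\ddd(\eM,L)$ with $L^G$ as in the proof of Corollary~\ref{cor:hom(ML)}); since $\Osph$ decomposes over $\lambda\in\h^*/W$, $\BH(L)$ lies in some $\Osph_\lambda$, and the counit $\HHleft(\BH(L))\to L$ is a nonzero map out of a (semisimple, by semisimplicity of $\euls{H}_q(W_\lambda)$ and Corollary~\ref{A-projectives}) object. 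Adjunction $\mr{Id}\to\BH\circ\HHleft$ being an isomorphism (Corollary~\ref{summand-endo3}) shows the counit is a split surjection onto $L$, so $L$ is a summand of $\HHleft(\BH(L))$, hence of some $\HHleft(\eQ_\lambda)=\eG_\lambda$; thus $L\cong\eG_{\lambda,\rho}$ for some $\rho$. This identifies $\euls{T}$ as precisely the subcategory generated by the $\eG_{\lambda,\rho}$, i.e.\ generated by the $\eG_\lambda$.

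\textbf{Semisimplicity and the block statement.} Next I would show $\euls{T}$ is semisimple and is closed under extensions within $\eC$. Since each $\eG_\lambda$ is semisimple and, by Theorem~\ref{G-is-injective}, both projective and injective in $\eC$, every simple summand $\eG_{\lambda,\rho}$ of $\eG_\lambda$ is a summand of a projective-injective object. In a length category, a simple object that is a direct summand of an injective object $I$ has the property that $\Ext^1_{\eC}(L, -)$ and $\Ext^1_{\eC}(-,L)$ vanish: injectivity of $\eG_\lambda$ kills $\Ext^1_{\eC}(-,\eG_\lambda)$ and hence $\Ext^1_{\eC}(-,\eG_{\lambda,\rho})$ (a summand), while projectivity of $\eG_\lambda$ kills $\Ext^1_{\eC}(\eG_{\lambda,\rho},-)$. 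Therefore for any $N\in\eC$ we have $\Ext^1_{\eC}(\eG_{\lambda,\rho},N)=0=\Ext^1_{\eC}(N,\eG_{\lambda,\rho})$. In particular any extension of an object of $\euls{T}$ by an object of $\eC$, or of an object of $\eC$ by an object of $\euls{T}$, splits off the $\euls{T}$-part. This immediately gives: $\euls{T}$ is semisimple (extensions among its own objects split), $\euls{T}$ is a Serre subcategory closed under extensions in $\eC$, and there are no nonzero extensions between objects of $\euls{T}$ and objects of a complement. A standard argument with idempotents in the (locally finite) category $\eC$ then shows $\euls{T}$ is a union of blocks: decompose $\eC$ into its blocks and note that each simple object of $\euls{T}$, being $\Ext^1$-orthogonal to everything outside its own isomorphism class, spans a block by itself, so each such block is a semisimple block consisting of a single isomorphism class.

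\textbf{Main obstacle.} The genuinely delicate point is the converse characterization: that an arbitrary irreducible $\delta$-torsionfree object $L\in\eC$ with $L^G\neq 0$ must be one of the $\eG_{\lambda,\rho}$. The subtlety is that $\Hom_\ddd(\eM,L)$ need not a priori be finitely generated over $\Ak(W)$, and that we must know it is nonzero and that the counit is surjective. Finite generation and the identification $\Hom_\ddd(\eM,L)=L^G$ follow from Lemma~\ref{torsion-consequence}(2,3) together with the $\delta$-torsionfreeness of $L$ (exactly as in the proof of Corollary~\ref{cor:hom(ML)}), and nonvanishing of $L^G$ is part of the hypothesis defining $\euls{T}$; surjectivity of the counit onto the simple $L$ follows because its image contains $L^G\neq 0$ and $L$ is simple, as in the proof of Theorem~\ref{thm:semi-simplicity}. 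Once these are in place, the injectivity statement of Theorem~\ref{G-is-injective} does all the remaining work, and the proof is short. I would write it up as: (i) identify the generators of $\euls{T}$ with the $\eG_{\lambda,\rho}$ via the argument above; (ii) invoke Theorem~\ref{G-is-injective} to get projective-injectivity of each $\eG_\lambda$, hence of each summand $\eG_{\lambda,\rho}$; (iii) conclude $\Ext^1$-orthogonality in both variables, hence semisimplicity of $\euls{T}$ and the block decomposition, hence no extensions between $\euls{T}$ and the rest of $\eC$.
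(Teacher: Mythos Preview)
Your proposal is correct and follows the same core strategy as the paper's proof: semisimplicity of each $\eG_\lambda$ via Theorem~\ref{thm:semi-simplicity}, membership of its simple summands in $\euls{T}$ via Theorem~\ref{torsionfree} and Proposition~\ref{bigg}, and then projective-injectivity from Theorem~\ref{G-is-injective} to conclude that each such simple spans a semisimple block. The paper's proof is three sentences and stops there; you go further and explicitly supply the converse direction---that every irreducible $\delta$-torsionfree $L\in\eC$ with $L^G\neq 0$ is a summand of some $\eG_\lambda$---which the paper leaves implicit. Your argument for this (identify $\BH(L)=L^G$ via $\delta$-torsionfreeness as in Corollary~\ref{cor:hom(ML)}, place it in some $\Osph_\lambda$, use that $\eQ_\lambda$ is a progenerator, and surject from $\eG_\lambda$) is the right one and is needed to justify the claim that $\euls{T}$ is \emph{exactly} the subcategory generated by the $\eG_\lambda$.

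One small correction: $\euls{H}_q(W_\lambda)$ is not a quotient of $\euls{H}_q(W)$; it is a parabolic subalgebra. Your hedge (``or at least is semisimple whenever $\euls{H}_q(W)$ is'') is the correct fallback, and it is the same fact the paper uses without comment when it invokes Theorem~\ref{thm:semi-simplicity} for all $\lambda$. This follows, for instance, from Bezrukavnikov--Etingof parabolic restriction on category $\euls{O}$.
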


\begin{proof}  By Theorem~\ref{thm:semi-simplicity},  each $\eG_{\lambda}$ is semisimple while, 
  by Proposition~\ref{bigg} combined with  Theorem~\ref{torsionfree}, each irreducible summand $L$ of 
  $\eG_{\lambda}$ belongs to $\euls{T}$.  Moreover, 
   Theorem~\ref{G-is-injective} implies that $L$ is projective and injective in $\eC$. Thus, $L$ generates a semisimple block of $\eC$. 
\end{proof}

%%%%%%%%%%%%%%%%%%%%%

\section{Knizhnik-Zamolodchikov Functors And Shift Functors} \label{Sec:Shiftfunctors}

The main goal of this section is to define and study shift functors between categories of admissible $\dd$-modules  for differing   $\chi$. This is a direct analogy of the shift functors defined by Berest-Chalykh \cite{BerestChalykhQuasi} between category $\euls{O}$s for rational Cherednik algebras.  
 In particular, we will show that the image of the Harish-Chandra 
 module under these shift functors is governed by Opdam's KZ-twist.   On the way we will also define an analogue of the Knizhnik-Zamolodchikov functor that acts on admissible modules.
 
  Except where explicitly stated otherwise, {we assume throughout the section that $V$ is a visible, stable, polar represent\-ation, as in Notation~\ref{notation4.11}, and that Hypothesis~\ref{K-hyp} holds.} 
We continue to write    $\Ak=\Ak(W)$.   

%%%%%%%%%%%
\subsection*{Strongly admissible modules}

The KZ-functor to be introduced below is defined on the category of strongly admissible modules, where an admissible $\ddd$-module $L\in \eC$
 is said to be \textit{strongly admissible}\label{defn:strongly} if $L \in \eC_{,0}$, in the notation of Lemma~\ref{lem:decomposeadmissible}; equivalently, $L$ is a $(G,\chi)$-monodromic $\dd(V)$-module on which the action of $(\Sym  V)^G_+$ is locally nilpotent. Note that all subfactors of  $\eG_0$ are strongly admissible.
 
 In this section, which is the only place in the article were we use the notion of regular singularities,  we do so in the standard algebraic sense,  as opposed to the local analytic definition  implicit in Conjecture  C1 from the introduction. Explicitly, we say that an algebraic, holonomic D-module is \emph{regular holonomic}  if it  satisfies  the hypotheses of  \cite[Definition 6.1.1]{HTT}.

Let $\eu_V\in \dd(V)$ denote the \emph{Euler operator}\label{notation:gradings} on $V$,
so that  $[\eu_V,x_i] = x_i$ for coordinate vectors    $x_i \in V^*\subset    \C[V]$.

\begin{proposition}\label{prop:euler}  Assume that Hypothesis~\ref{K-hyp} holds and 
	let $L \in \eC_{,0}$. Then $L$ is   regular holonomic   and $\eu_V$ acts locally finitely on $L$. 
\end{proposition}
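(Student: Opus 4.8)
The statement has two parts: (i) every $L\in\eC_{,0}$ is regular holonomic, and (ii) the Euler operator $\eu_V$ acts locally finitely on such $L$. I would prove them in that order, since the second will be deduced from structural facts about $L$ that are most conveniently established alongside the first.

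\emph{Regular holonomicity.} The plan is to reduce to a known statement via the Fourier transform $\mathbb{F}^*_V$ of \eqref{defn:Fourier}. By Lemma~\ref{lem:FourierGequiv}(2), $\mathbb{G}=(\mathbb{F}^*_V)^{-1}$ carries $L\in\eC_{,0}$ to a $(G,\chi+\mr{Tr}_V)$-monodromic $\dd(V^*)$-module $M=\mathbb{G}(L)$ on which, because $(\Sym V)^G_+$ acts locally nilpotently on $L$, the module $M$ is supported on the nilcone $\euls{N}(V^*)\subseteq V^*$ (this is exactly the translation used in the proof of Lemma~\ref{lem:decomposeadmissible} for $\lambda=0$). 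Since $V$ is visible, $V^*$ is visible, so $\euls{N}(V^*)$ consists of finitely many $G$-orbits; thus $M$ is a $G$-equivariant (up to the fixed twist) holonomic $\dd(V^*)$-module supported on a union of finitely many orbits of the connected group $G$. Such modules are regular holonomic — this is the standard fact that an equivariant holonomic module on a variety stratified into finitely many orbits of a (connected, say algebraic) group is regular, cf.\ the argument of \cite[Theorem~11.6.1]{HTT} already invoked in the proof of Lemma~\ref{lem:decomposeadmissible}. (One checks the twist causes no trouble: on each orbit $G\cdot v\cong G/G_v$ the category of $(G,\chi+\mr{Tr}_V)$-monodromic modules is that of finite-dimensional $G_v/G_v^{\circ}$-modules, all of which give regular connections.) Finally, $\mathbb{F}^*_V$ preserves regular holonomicity — the Fourier transform of a regular holonomic module on a vector space is regular holonomic, by the theorem of Brylinski (or Katz–Laumon) — so $L=\mathbb{F}^*_V(M)$ is regular holonomic as well. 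As a byproduct, $L$ is holonomic, which we already knew from Corollary~\ref{cor:admissible-holonomic}; the new content is regularity.

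\emph{Local finiteness of $\eu_V$.} Here the plan is to exploit the grading. The Euler operator $\eu_V$ on $V$ is $G$-invariant (it is built from the coordinate grading, which commutes with the linear $G$-action), so it lies in $\dd(V)^G$, and its image $\rad_{\vs}(\eu_V)$ in $\Ak$ is, up to an additive scalar depending on $\vs$, the Euler element $\eu_{\h}$ of the spherical Cherednik algebra, whose action controls the grading on category $\Osph$. Concretely: write $L=\eM\otimes_{\Ak}\eP$-type building blocks are not available for a general $L\in\eC_{,0}$, so instead I would argue directly. Pick a good filtration $\{\Gamma_jL=\dd_j(V)\cdot L_0\}$ with $L_0$ a finite-dimensional $G$-stable $(\Sym V)^G$-submodule generating $L$; since $(\Sym V)^G_+$ acts locally nilpotently on $L$, one may arrange $(\Sym V)^G_+\cdot L_0=0$, i.e.\ $L_0$ is annihilated by $\mf m_0$. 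Now $\eu_V$ normalises $\dd_j(V)$, acts on $L_0$, and $[\eu_V,\dd_j(V)]\subseteq\dd_j(V)$; combined with the fact that $L_0$ is finite dimensional and $\eu_V$-stable modulo lower-order terms, one gets that $\eu_V$ preserves each $\Gamma_jL$ and acts on the finite-dimensional quotients $\Gamma_jL/\Gamma_{j-1}L$. It remains to bound the "height": because $L$ is holonomic with characteristic variety contained in $\Lambda$ (Lemma~\ref{lem:charvar}(2)), and because on $\Lambda$ the relevant coordinate functions are nilpotent, the associated graded $\gr_\Gamma L$ is a finitely generated module over $\C[\Lambda]$ on which the classes of $x_i\partial_i$ are "bounded" — more precisely $\gr_\Gamma L$ is supported on $\{0\}\times\euls N(V^*)$ union the conormal pieces, on which $\sum x_i\partial_i=\eu_V$ has finitely many eigenvalues after a good filtration. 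Packaging this, $\eu_V$ acts on each $\Gamma_jL$ with finitely many generalized eigenvalues, all lying in a fixed finite set (a translate of the eigenvalues on $L_0$ by nonnegative integers, cut off by holonomicity of $L$), and hence acts locally finitely on $L=\bigcup_j\Gamma_jL$.

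\textbf{Main obstacle.} The delicate point is the second part: turning "holonomic with characteristic variety in $\Lambda$, and $(\Sym V)^G_+$ locally nilpotent" into a genuine \emph{uniform} bound on the $\eu_V$-eigenvalues, so that local finiteness holds on all of $L$ and not merely on each $\Gamma_j L$ separately. The cleanest route is probably the Fourier-transform one: under $\mathbb{G}$, the operator $\eu_V$ goes (up to sign and a scalar from $\mr{Tr}_V$) to $-\eu_{V^*}-\dim V$, and $M=\mathbb{G}(L)$ is a regular holonomic module \emph{set-theoretically supported on the cone} $\euls N(V^*)$, which is $\mathbb{C}^\times$-stable. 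A regular holonomic module supported on a conical variety, being $\mathbb{C}^\times$-monodromic in the derived sense, has $\eu_{V^*}$ acting locally finitely (this is the standard "monodromic along the $\mathbb{C}^\times$-action" statement, e.g.\ via the $V$-filtration of Kashiwara–Malgrange along $0\in V^*$, whose finitely many jumps give the finite set of eigenvalues). Transporting back through $\mathbb{F}^*_V$ — which intertwines the two Euler operators — gives local finiteness of $\eu_V$ on $L$. I expect the write-up to spend most of its effort making this conical-support/$V$-filtration argument precise, and checking compatibility of the twist $\chi+\mr{Tr}_V$ with it (harmless, since $\mr{Tr}_V$ is a scalar character).
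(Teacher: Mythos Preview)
Your overall architecture is the same as the paper's: apply the Fourier transform $\mathbb{G}$, use that $\mathbb{G}(L)$ is a twisted-equivariant module supported on the nilcone $\euls{N}(V^*)$, exploit the finitely many $G$-orbits there, and transfer information back. But there is a genuine gap in the way you have organised the argument.

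The claim that ``$\mathbb{F}^*_V$ preserves regular holonomicity --- the Fourier transform of a regular holonomic module on a vector space is regular holonomic, by the theorem of Brylinski (or Katz--Laumon)'' is simply false. On $\mathbb{A}^1$, the regular module $\delta_{-1}=\dd/\dd(x+1)$ has Fourier transform $\dd/\dd(\partial-1)\cong \C[x]e^{x}$, which is irregular at infinity. What Brylinski actually proves (\cite[Th\'eor\`eme~7.24]{BrylinskiFourier}) is that Fourier transform preserves regularity \emph{within the monodromic world}, i.e.\ on modules for which the Euler operator already acts locally finitely. So you cannot deduce regularity of $L$ until you know that $\eu_{V^*}$ acts locally finitely on $\mathbb{G}(L)$; the monodromic statement has to come \emph{first}, not second.

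The paper proceeds in exactly that order. After reducing to irreducible $L$, it identifies $\mathbb{G}(L)$ with an $IC$-sheaf $IC(\euls{O},\euls{L})$ on a $\C^\times$-stable nilpotent orbit; this gives regularity of $\mathbb{G}(L)$ directly. Then, using that $\euls{O}$ is conical, \cite[Proposition~7.12]{BrylinskiFourier} shows $\eu_{V^*}$ acts locally finitely on $\mathbb{G}(L)$, whence $\eu_V$ acts locally finitely on $L$. Only then does \cite[Th\'eor\`eme~7.24]{BrylinskiFourier} apply to conclude that $L$ itself is regular holonomic. Finally one passes from irreducibles to all of $\eC_{,0}$ by closure under extensions. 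Your ``Main obstacle'' paragraph already contains the right monodromic ingredient (conical support $\Rightarrow$ $\eu_{V^*}$ locally finite); you just need to run it \emph{before} invoking Brylinski, and drop the incorrect blanket assertion. Your first, filtration-based attempt at Euler-finiteness does not produce the needed uniform bound and should be discarded in favour of the Fourier route you yourself flag as cleaner.
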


\begin{proof}
	 Let      
	$
	\mathbb{G} \colon \left(G,\chi,\dd(V)\right)\lmod \stackrel{\sim}{\longrightarrow} (G,\psi,\dd(V^*)) \lmod,
	$
 for  $\psi = \chi+\mr{Tr}_V$, 	be the Fourier transform defined by Lemma~\ref{lem:FourierGequiv}. If 
	$(G,\psi,\eD(V^*))_{\euls{N}}\lmod$ denotes the full subcategory of $(G,\psi,\eD(V^*))\lmod$ consisting of all modules supported on the nilcone $\euls{N}(V^*)$, then 
	$\mathbb{G}$ restricts to an equivalence $\eC_{,0} \stackrel{\sim}{\longrightarrow} (G,\psi,\eD(V^*))_{\euls{N}}\lmod$  (see the discussion in \cite[Section~7]{AJM}). If $L \in \eC_{,0}$ is
	irreducible then $\mathbb{G}(L) = IC(\euls{O},\euls{L})$ for some nilpotent $G$-orbit $\euls{O}$ and $(G,\psi)$-monodromic local system $\euls{L}$; see \cite[Theorem~3.4.2]{HTT}. In particular, $\mathbb{G}(L)$ is   regular holonomic  by \cite[Lemma~2.2.2(ii)]{BG}. Let $\C^{\times}$ act on $V^*$ by dilations, so that the vector field $\eu_{V^*}$ is the differential of this action. Then $\euls{O}$ is $\C^{\times}$-stable, which implies that the irreducible local system $\euls{L}$ is $\C^{\times}$-monodromic. Hence, $\eu_{V^*}$ acts locally finitely on $IC(\euls{O},\euls{L}) = \mathbb{G}(L)$ by \cite[Proposition~7.12]{BrylinskiFourier}. This implies that $\eu_V$ acts locally finitely on $L$. Moreover, by \cite[Th\'eor\`eme~7.24]{BrylinskiFourier}, this implies that $L$ is also  regular holonomic.  
	
	Thus, we have shown that the statement of the proposition holds for the irreducible objects in $\eC_{,0}$. But the properties of being $\eu_V$-locally finite or  being   regular holonomic    are both closed under extensions and so   these properties hold for all modules in $\eC_{,0}$.  
\end{proof}
 
\begin{remark}\label{rem:euler} Proposition~\ref{prop:euler}  implies   that, from the geometric point of view, strongly admissible $\dd(V)$-modules are the natural category of modules to consider since they are mapped, via the Riemann-Hilbert correspondence, to the category of equivariant perverse sheaves on a polar representation that generalise Lusztig's character sheaves on the adjoint representation (see, for example \cite{MirkovicVilonen, VilonenSymVanishing, VilonenSymChar, VX2}). 
\end{remark}

We do not know in general whether  $\eC$ has enough projective (or injective) objects. However, 
Lemma~\ref{lem:euler} gives  one case where this is true. Recall from Section~\ref{Sec:polarreps} that $\mu: T^*V\to \g^*$ denotes the moment map. 
 
\begin{lemma}\label{lem:euler}
	Assume that the Hamiltonian reduction $\mu^{-1}(0) \git G$ is irreducible (but not necessarily reduced). 
	Then the category $\eC_{,0}$ has enough projectives and injectives. In particular, every object in $\eC_{,0}$admits a projective cover and an  injective hull. 
\end{lemma}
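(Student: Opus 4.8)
The plan is to reduce the statement to the known existence of enough projectives and injectives in the category $\Osph_{\kappa,0}$ (Lemma~\ref{lem:genOsphparabolicequi}(2)) by transporting projectives along the adjoint pair $(\HHleft,\BH)$ of Corollary~\ref{summand-endo3}, using the irreducibility hypothesis on $\mu^{-1}(0)\git G$ to control the finite-generation issues that obstruct this in general. First I would observe that $\eC_{,0}$ is a length category (Corollary~\ref{cor:admissible-holonomic} together with the discussion after Definition~\ref{defn:admissible}), so it suffices to produce, for each irreducible object $L\in\eC_{,0}$, a projective object of $\eC_{,0}$ surjecting onto $L$; dually, by applying $\BD_\ddd$ (which is a contravariant duality $\eC\to\eCop$ by Lemma~\ref{G-projective2}(2) and preserves the local-nilpotence condition cutting out the ``$0$'' part), enough projectives yields enough injectives, and then projective covers and injective hulls exist automatically in a length category once there are enough projectives/injectives. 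So the whole statement reduces to: \emph{$\eC_{,0}$ has enough projectives}.

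Next I would analyze $\eG_0=\eM\otimes_{\Ak}\eQ_0$. By Theorem~\ref{G-is-injective} (valid since Hypothesis~\ref{K-hyp} holds), $\eG_0$ is projective in $\eC$, hence projective in $\eC_{,0}$, and it lies in $\eC_{,0}$ because $(\Sym V)^G_+$ acts locally nilpotently on $\eQ_0$ and hence on $\eG_0$ (as in the proof of Lemma~\ref{torsion-consequence2}(2)). More generally, for any projective $\eP\in\Osph_0$ the module $\HHleft(\eP)=\eM\otimes_{\Ak}\eP$ is projective in $\eC_{,0}$ by Proposition~\ref{G-projective}. The key point is then: given an irreducible $L\in\eC_{,0}$, the module $\BH(L)=\Hom_\ddd(\eM,L)$ lies in $\Osph_0$ provided it is finitely generated over $\Ak$ (by Lemma~\ref{torsion-consequence}, once finite generation is known it is an object of $\Osph$, and the local-nilpotence of $(\Sym\h)^W_+$ is inherited); choosing a projective cover $\eP\twoheadrightarrow\BH(L)$ in $\Osph_0$ and applying the exact functor $\HHleft$ (Lemma~\ref{summand-endo22}(1)) together with the counit $\HHleft\BH(L)\to L$ would produce the desired projective surjection onto $L$ — here one uses that the counit is surjective onto the irreducible $L$ exactly when $\BH(L)=L^G\neq0$, and the latter nonvanishing for submodules of $\eG_0$-type modules is Corollary~\ref{cor:hom(ML)}; for a general irreducible $L\in\eC_{,0}$ one instead argues that $\BH$ is faithful-exact on $\eC$ by the decomposition $\eM=\Ak\oplus T$ of Lemma~\ref{summand-endo21M}, so $\BH(L)\neq0$, whence the counit is nonzero and thus surjective onto the irreducible $L$.

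The main obstacle, and the only place the irreducibility of $\mu^{-1}(0)\git G$ is used, is precisely the finite generation of $\BH(L)=\Hom_\ddd(\eM,L)$ as an $\Ak$-module for $L\in\eC_{,0}$ (cf.\ Remark~\ref{rem:torsion-consequence}, which flags that there is no obvious reason for this in general). The plan here is: the characteristic variety of $L$ is contained in $\Lambda=\mu^{-1}(0)\cap(V\times\euls N(V^*))$ by Lemma~\ref{lem:charvar}(2), and $\Ch\eM$ is likewise Lagrangian in $\mu^{-1}(0)$; passing to associated graded modules with respect to compatible good filtrations, $\gr\Hom_\ddd(\eM,L)$ is controlled by $\Hom$ of coherent sheaves supported on the (now, by hypothesis) irreducible variety $\mu^{-1}(0)\git G$, over whose coordinate ring $\gr\Ak=\C[\mu^{-1}(0)\git G]$ (up to nilpotents; this identification is where ``not necessarily reduced'' matters) one gets a finitely generated module because the sheaf-$\Hom$ between coherent sheaves on a single irreducible affine variety is coherent. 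I would then lift this back through the filtration to conclude $\BH(L)$ is a Noetherian $\Ak$-module. Concretely I expect to mimic the argument of Lemma~\ref{torsion-consequence}(2), where the analogous finite generation was obtained for $\Hom_\ddd(\eMt,L)$ with $L\in\eC$ by a reductivity argument on $\ddd^G N_0$; the new input needed for the ``$0$'' case is that local nilpotence of $(\Sym V)^G_+$ forces the relevant $\ddd^G$-generating set to be finite-dimensional, and irreducibility of $\mu^{-1}(0)\git G$ prevents the module from acquiring infinitely many associated-graded components. Once finite generation is in hand, the rest of the argument is formal as sketched above.
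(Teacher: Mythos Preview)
Your approach has a genuine gap: the functor $\BH=\Hom_\ddd(\eM,-)$ is \emph{not} faithful on $\eC_{,0}$. The decomposition $\eM=\Ak\oplus T$ in Lemma~\ref{summand-endo21M} shows that $\HHleft$ is faithful (equivalently, that the unit $\mr{Id}\to\BH\circ\HHleft$ is an isomorphism), but it says nothing about $\BH$ in the other direction. In fact $\BH(L)\subseteq L^G$ by Lemma~\ref{torsion-consequence}(3), and there are typically many irreducible objects $L\in\eC_{,0}$ with $L^G=0$: the $\delta$-torsion simples are supported on $G$-orbits in the nilcone, and nothing forces such modules to contain a $G$-fixed vector. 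For any such $L$ your adjunction argument produces no projective mapping onto it. Said differently, if $\BH$ were faithful then (being an exact quotient functor with trivial kernel) it would be an equivalence $\eC_{,0}\simeq\Osph_0$, contradicting the abundance of $\delta$-torsion simples already visible in Section~\ref{HC-example} and Section~\ref{Sec:Quivers}.

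A secondary issue: the ``main obstacle'' you identify is not one. Lemma~\ref{torsion-consequence}(2) already proves that $\BH(L)\in\Osph$ for every $L\in\eC$, so $\BH(L)$ is finitely generated over $\Ak$ without any hypothesis on $\mu^{-1}(0)\git G$. (Remark~\ref{rem:torsion-consequence} concerns $\eM'$, not $\eM$.) So your proposed use of irreducibility is aimed at the wrong target.

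The paper's argument is entirely different and does not pass through $\Osph$ at all. It invokes \cite[Proposition~1.4]{BB}, which gives enough projectives once a finiteness hypothesis (F1) is verified: $\C[\mu^{-1}(0)]^G$ must be finite over $\C[V]^G\otimes(\Sym V)^G$. Irreducibility of $\mu^{-1}(0)\git G$ is used precisely here: it guarantees that $\C[(\mu^{-1}(0)\git G)_{\mr{red}}]$ embeds in its normalisation, which by \cite[Lemma~4.5]{BLT} is $\C[\h\times\h^*]^W$, finite over $\C[\h]^W\otimes(\Sym\h)^W\cong\C[V]^G\otimes(\Sym V)^G$. Proposition~\ref{prop:euler} supplies the local finiteness of $\eu_V$ needed to match the admissibility notions, and injectives then follow from projectives via the duality $\BD_\ddd$.
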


\begin{proof}
	We wish to apply the results from \cite{BB}, which in turn build on results of Ginzburg \cite{Primitive}. To do so, we need to check that hypothesis (F1) of \cite{BB} holds; that is, we much show that 
	$\C[\mu^{-1}(0) \git G] = \C[\mu^{-1}(0)]^G$ is a finite module over
	$\C[V]^G \o (\Sym V)^G$. It suffices to show that $\C[(\mu^{-1}(0) \git G)_{\mr{red}}]$
	 is a finite module over  $\C[V]^G \o (\Sym V)^G$. By \cite[Lemma~4.5]{BLT}, the normalisation 
	 of $\C[(\mu^{-1}(0) \git G)_{\mr{red}}]$ can be  identified, via the map $\rr \o \rrp$ of
	  \cite[Theorem~7.18]{BLNS}, with $\C[\h \times \h^*]^W$. Since the latter is finite over 
	  $\C[\h]^W \o (\Sym \h)^W \cong \C[V]^G \o (\Sym V)^G$, it follows that 
	  $\C[\mu^{-1}(0) \git G] = \C[\mu^{-1}(0)]^G$  is a finite $\C[V]^G \o (\Sym V)^G$-module.  
	
	Since we have shown in Proposition~\ref{prop:euler} that $\eu_V$ acts locally finitely on every object of $\eC_{,0}$, it follows that 
	strongly admissibility in our situation is the same as $\chi$-admissibility as defined in \cite[Section~4.1]{BB}. 
	Therefore, it follows from \cite[Proposition~1.4]{BB} that $\eC_{,0}$ has enough projectives. The same result holds for the corresponding category $\euls{C}_{\chi,0}^{\mathrm{op}}$ of right modules.  As noted in Definition~\ref{defn:admissible}, 
	$\eC_{,0}$ is a $\Hom$-finite, length category. Thus,   by 
Lemma~\ref{Ext-equivariance} and 	Corollary~\ref{ad-nil prop},  $\mathbb{D}_\ddd=\Ext_\ddd^{n+m}(-,\, \ddd)$ is a contravariant functor between these two categories, and so  $\eC_{,0}$ also  has enough injectives. Thus injective hulls exist. As $\eC_{,0}$ is a $\Hom$-finite, length category,   it follows that projective covers exist too.  
\end{proof}	

As before, let $j \colon V_{\reg} \hookrightarrow V$ denote the open embedding. Given a holonomic module $N$ on $V_{\reg}$ there are (underived) extensions of $N$ to a holonomic module on $V$, related as follows
$$
j_! N \twoheadrightarrow j_{!*} N \hookrightarrow j_* N;
$$
thus $j_!N$ has no $\delta$-torsion factor module while $j_*N$ has no $\delta$-torsion submodule. Each of these operations maps  $(G,\chi)$-monodromic modules to $(G,\chi)$-monodromic modules.

 In general, none of these extensions will be an admissible module even if we begin with a $(G,\chi)$-monodromic integrable connection.  To circumvent this problem, given a $(G,\chi)$-monodromic holonomic $\dd$-module $N$ on $V_{\reg}$, we define $j_{*,\ad} N$ to be the largest submodule of $j_* N$ on which $(\Sym V)^G_+$ acts locally nilpotently. Similarly, define $j_{!,\ad} N$ to be the largest quotient of $j_! N$ on which $(\Sym V)^G_+$ acts locally nilpotently (this exists since $j_! N$ has finite length). The following result  is immediate. 

\begin{lemma}\label{j-star-admissible}
	Let $N$ be a $(G,\chi)$-monodromic holonomic $\dd$-module on $V_{\reg}$. 
	\begin{enumerate}
		\item Both $j_{*,\ad} N$ and $j_{!,\ad} N$ are strongly admissible modules. 
		\item There is a canonical map $j_{!,\ad} N \to j_{*,\ad} N$ whose image $j_{!*,\ad} N$ is 
		contained in $j_{!*} N$.   \qed
	\end{enumerate}  
\end{lemma}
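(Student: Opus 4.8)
The statement to prove is Lemma~\ref{j-star-admissible}: for a $(G,\chi)$-monodromic holonomic $\dd(V_{\reg})$-module $N$, both $j_{*,\ad} N$ and $j_{!,\ad} N$ are strongly admissible, and the canonical map $j_{!,\ad} N \to j_{*,\ad} N$ has image contained in $j_{!*} N$. The proof is essentially a formal consequence of the definitions together with the basic properties of the functors $j_*, j_!, j_{!*}$ on holonomic modules, so I will aim for a short, clean argument.

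First I would recall why $j_{*,\ad} N$ and $j_{!,\ad} N$ are well-defined. Since $N$ is holonomic, $j_* N$ and $j_! N$ are holonomic $\dd(V)$-modules; in particular $j_! N$ has finite length, which guarantees that it possesses a largest quotient on which $(\Sym V)^G_+$ acts locally nilpotently (the sum of all such quotients is again such a quotient because the class of modules with locally nilpotent $(\Sym V)^G_+$-action is closed under quotients and finite sums, and finite length bounds the process). Dually, $j_* N$ has a largest submodule with this property. Both $j_*N$ and $j_!N$ are $(G,\chi)$-monodromic by functoriality of these operations on monodromic modules (as noted in the text just before the lemma), and the maximal sub/quotient is automatically $G$-stable, hence $(G,\chi)$-monodromic by Lemma~\ref{equivariant tensors}(2). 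So $j_{*,\ad}N$ and $j_{!,\ad}N$ are $(G,\chi)$-monodromic $\dd(V)$-modules on which $(\Sym V)^G_+$ acts locally nilpotently. To see that $(\Sym V)^G$ — not merely $(\Sym V)^G_+$ — acts locally finitely (so that these really lie in $\eC$, not just carry a locally nilpotent $(\Sym V)^G_+$-action on an a priori infinite-dimensional module), I would observe that $(\Sym V)^G$ is a finitely generated algebra over $(\Sym V)^G_+$-plus-scalars, indeed $(\Sym V)^G = \C \oplus (\Sym V)^G_+$ as a graded algebra, so local nilpotence of $(\Sym V)^G_+$ on a module already forces $(\Sym V)^G$ to act locally finitely. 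Hence $j_{*,\ad}N, j_{!,\ad}N \in \eC$, and since $(\Sym V)^G_+$ acts locally nilpotently they lie in $\eC_{,0}$; that is, they are strongly admissible. This proves part (1).

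For part (2), the canonical map $j_! N \to j_* N$ (which exists because both compute the same object on $V_{\reg}$ and $j_*$ is right adjoint to $j^*$) restricts and corestricts to give a map $j_{!,\ad} N \to j_{*,\ad} N$: concretely, the composite $j_{!,\ad}N$-worth of $j_!N \to j_*N$ followed by projection would need its image to land in $j_{*,\ad}N$, which holds because the image of a module with locally nilpotent $(\Sym V)^G_+$-action again has locally nilpotent $(\Sym V)^G_+$-action, hence lies in the maximal such submodule of $j_*N$; and $j_{!,\ad}N$ being a quotient of $j_!N$, the map $j_!N \to j_*N$ factors through $j_{!,\ad}N$ precisely when its image has this property, which it does (the image is a submodule of $j_{*,\ad}N$). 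Write $j_{!*,\ad}N$ for the image of this map. To see $j_{!*,\ad}N \subseteq j_{!*}N$: recall $j_{!*}N$ is by definition the image of the full canonical map $j_!N \to j_*N$. Since $j_{!,\ad}N$ is a quotient of $j_!N$ and $j_{*,\ad}N$ is a submodule of $j_*N$, and the square
\begin{equation*}
\begin{tikzcd}
j_! N \ar[r] \ar[d,two heads] & j_* N \\
j_{!,\ad} N \ar[r] & j_{*,\ad} N \ar[u,hook]
\end{tikzcd}
\end{equation*}
commutes, the image of the bottom arrow, pushed into $j_*N$ along the inclusion, equals the image of the composite $j_!N \twoheadrightarrow j_{!,\ad}N \to j_{*,\ad}N \hookrightarrow j_*N$, which is a submodule of the image of $j_!N \to j_*N = j_{!*}N$. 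Hence $j_{!*,\ad}N \subseteq j_{!*}N$ inside $j_*N$, as claimed.

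\textbf{Main obstacle.} I do not expect a serious obstacle here; the lemma is labelled ``immediate'' in the text and the argument is purely formal. The one point requiring a little care is the commutativity of the square above and, relatedly, being precise about \emph{which} canonical map $j_{!,\ad}N \to j_{*,\ad}N$ is meant — one must check that the universal properties defining $j_{!,\ad}$ (largest admissible quotient of $j_!N$) and $j_{*,\ad}$ (largest admissible submodule of $j_*N$) are compatible with the canonical morphism $j_!N \to j_*N$, which is exactly the observation that images of admissible modules are admissible. A second minor point is to make sure that the maximal admissible quotient of $j_! N$ actually exists, for which finite length of holonomic modules (Corollary~\ref{cor:admissible-holonomic} and the length-category remark) is the key input; this is why the hypothesis that $N$ is holonomic, not merely coherent, matters.
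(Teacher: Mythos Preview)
Your treatment of part~(1) is correct and more detailed than the paper's, which simply marks the lemma as ``immediate'' and gives no proof.

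For part~(2), however, there is a genuine gap. You assert that the image of the canonical map $j_!N \to j_*N$, namely $j_{!*}N$, is a submodule of $j_{*,\ad}N$; equivalently, that $(\Sym V)^G_+$ acts locally nilpotently on $j_{!*}N$. This is what would make your square commute and the map $j_!N \to j_*N$ factor through $j_{!,\ad}N$. But this is false in general. In the rank-one example of Section~\ref{HC-example} (so Hypothesis~\ref{K-hyp} holds, by Proposition~\ref{HC1-lemma}(3) and Corollary~\ref{left-simplicity2}), take $N=\C[V_{\reg}]\,\delta^{\alpha}$ with $\alpha\notin\Z$: this is a $(G,0)$-monodromic rank-one local system on $V_{\reg}$, and one checks directly that $j_!N=j_*N=j_{!*}N$ is simple while $\Delta=\partial_0\partial_1$ acts bijectively on it. Hence $j_{!*}N\notin\eC_{,0}$, so $j_{!,\ad}N=j_{*,\ad}N=0$, and your square cannot commute (the top arrow is an isomorphism, the bottom route is zero).

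The conclusion of the lemma is nevertheless trivially true in that example, and in the only application (Theorem~\ref{thm:adextensionsHC}) one has $j_{!*}\mathscr{L}=\eG\in\eC_{,0}$ by Corollary~\ref{torsionfree-corollary}, so your argument \emph{does} go through there. Since the paper supplies no proof, it is not clear what general construction the authors had in mind; the cleanest reading is probably that part~(2) is meant only as a definition of $j_{!*,\ad}N$ in the situations where the factorisation exists, which is all that is needed downstream.
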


  If   $\mathscr{L} = \eG_0 |_{V_{\reg}}$, then Corollary~\ref{torsionfree-corollary} implies that $j_{!*,\ad} \mathscr{L} = j_{!*}\mathscr{L} \cong \eG_0$.  Theorem~\ref{G-is-injective} allows us to prove  the following much stronger statement.

\begin{theorem}\label{thm:adextensionsHC}
	Assume that Hypothesis~\ref{K-hyp} holds and set  $\mathscr{L} = \eG_0 |_{V_{\reg}}$. Then 
	$$
	j_{!,\ad} \mathscr{L} \ \isom \ j_{!*,\ad} \mathscr{L}  
	 \ \isom  \ j_{*,\ad} \mathscr{L}  \ \cong \  \eG_0.
	$$
	More generally, these isomorphisms hold if $\eP \in \Osph_{\kappa,0}$ is projective, $\eG = \eM \o_{\Aak} \eP$ and $\mathscr{L} = \eG|_{V_{\reg}}$. 
\end{theorem}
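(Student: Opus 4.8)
\textbf{Proof strategy for Theorem~\ref{thm:adextensionsHC}.} The plan is to deduce everything from the injectivity and projectivity of $\eG=\eG_0$ (more generally $\eG=\eM\o_{\Aak}\eP$) as an object of $\euls{C}_{,0}$, which is guaranteed by Theorem~\ref{G-is-injective} (note $\eG\in\euls{C}_{,0}$ since $(\Sym V)^G_+$ acts locally nilpotently on $\eQ_0$, hence on $\eG$). First I would record the easy containments. Since $\mathscr{L}=\eG|_{V_{\reg}}$ is an integrable connection (Remark~\ref{torsionfree-remark}(1)), each of $j_{!,\ad}\mathscr{L}$, $j_{!*,\ad}\mathscr{L}$, $j_{*,\ad}\mathscr{L}$ is strongly admissible and lies in $\euls{C}_{,0}$ by Lemma~\ref{j-star-admissible}. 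By adjunction $(j^*,j_*)$ there is a canonical map $\eG\to j_*j^*\eG=j_*\mathscr{L}$; because $(\Sym V)^G_+$ acts locally nilpotently on $\eG$, this factors through $j_{*,\ad}\mathscr{L}$, giving $\beta\colon\eG\to j_{*,\ad}\mathscr{L}$. Dually, using $(j_!,j^*)$ there is $j_!\mathscr{L}\to\eG$, and since $\eG\in\euls{C}_{,0}$ it factors through the largest such quotient $j_{!,\ad}\mathscr{L}$, giving $\alpha\colon j_{!,\ad}\mathscr{L}\to\eG$. Both $\alpha,\beta$ restrict to the identity on $V_{\reg}$, and they compose (through $j_{!*,\ad}\mathscr{L}$) to endomorphisms of $\eG$ that are the identity on $V_{\reg}$; by Corollary~\ref{cor:eniso} (the restriction map $\End_\ddd(\eG)\to\End_{\ddd(V_{\reg})}(\mathscr{L})$ is injective) these composites are $\mathrm{id}_{\eG}$.

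Next I would show $\beta$ is an isomorphism. It is injective: $\ker\beta$ is $\delta$-torsion (being killed on $V_{\reg}$), and $\eG$ is $\delta$-torsionfree by Theorem~\ref{torsionfree}(2), so $\ker\beta=0$. For surjectivity, set $N=\mathrm{coker}\,\beta$, a strongly admissible module supported on $V\smallsetminus V_{\reg}$, i.e.\ $\delta$-torsion, and sitting in $0\to\eG\to j_{*,\ad}\mathscr{L}\to N\to 0$. Since $\eG$ is injective in $\euls{C}_{,0}$ (Theorem~\ref{G-is-injective}) and all three terms lie in $\euls{C}_{,0}$, this sequence splits: $j_{*,\ad}\mathscr{L}\cong\eG\oplus N$. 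But $j_{*,\ad}\mathscr{L}\subseteq j_*\mathscr{L}$ has no nonzero $\delta$-torsion submodule (that is the defining property of $j_*$), forcing the $\delta$-torsion summand $N$ to vanish. Hence $\beta$ is an isomorphism $\eG\isom j_{*,\ad}\mathscr{L}$.

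The argument for $\alpha$ is the mirror image, using projectivity in place of injectivity. The map $\alpha\colon j_{!,\ad}\mathscr{L}\to\eG$ is surjective: $\mathrm{coker}\,\alpha$ is a $\delta$-torsion factor module of $\eG$, which vanishes by Theorem~\ref{torsionfree}(3). Let $K=\ker\alpha$, a $\delta$-torsion object of $\euls{C}_{,0}$, giving $0\to K\to j_{!,\ad}\mathscr{L}\to\eG\to 0$. Since $\eG$ is projective in $\euls{C}_{,0}$ this splits, so $j_{!,\ad}\mathscr{L}\cong\eG\oplus K$; but $j_{!,\ad}\mathscr{L}$, being a quotient of $j_!\mathscr{L}$, has no nonzero $\delta$-torsion factor module, so $K=0$ and $\alpha$ is an isomorphism. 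Finally, $j_{!*,\ad}\mathscr{L}$ is sandwiched between $\mathrm{im}(\alpha)=\eG$ and $j_{*,\ad}\mathscr{L}=\eG$, so it too equals $\eG$. The same proof verbatim handles the general case $\eG=\eM\o_{\Aak}\eP$ with $\eP\in\Osph_{\kappa,0}$ projective, since Theorems~\ref{torsionfree} and~\ref{G-is-injective} and Corollary~\ref{cor:eniso} are all stated at that level of generality.

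\textbf{Main obstacle.} The only genuinely delicate point is producing the canonical maps $\alpha$ and $\beta$ and checking they are mutually inverse on the nose rather than merely isomorphisms; this is where Corollary~\ref{cor:eniso} is essential, and one must be careful that the adjunction maps land in the admissible (locally nilpotent) subcategories — which is exactly what the definitions of $j_{!,\ad}$ and $j_{*,\ad}$ are engineered to provide. Once the maps exist, the splitting-plus-torsion arguments are routine given the heavy machinery (Theorems~\ref{torsionfree} and~\ref{G-is-injective}) already in place.
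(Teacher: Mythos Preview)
Your proof is correct and follows essentially the same route as the paper's: both use the injectivity (respectively projectivity) of $\eG$ in $\euls{C}_{\chi,0}$ from Theorem~\ref{G-is-injective} to split the short exact sequence relating $\eG$ to $j_{*,\ad}\mathscr{L}$ (respectively $j_{!,\ad}\mathscr{L}$), then observe that the complementary summand is $\delta$-torsion and hence zero because $j_{*,\ad}\mathscr{L}\subseteq j_*\mathscr{L}$ is $\delta$-torsionfree (respectively $j_{!,\ad}\mathscr{L}$, as a quotient of $j_!\mathscr{L}$, has no $\delta$-torsion quotient). The paper starts from Corollary~\ref{torsionfree-corollary} to identify $j_{!*,\ad}\mathscr{L}\cong\eG$ and then handles the embedding $j_{!*,\ad}\mathscr{L}\hookrightarrow j_{*,\ad}\mathscr{L}$, whereas you construct the adjunction map $\beta\colon\eG\to j_{*,\ad}\mathscr{L}$ directly; these are the same map. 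Your detour through Corollary~\ref{cor:eniso} to pin down composites as literal identities is unnecessary (and you do not actually use it afterward): once $\alpha$ and $\beta$ are separately shown to be isomorphisms, the statement follows, so the ``main obstacle'' you flag is not one.
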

	
\begin{proof}  It suffices to prove the result for $\eG= \eM \o_{\Aak} \eP$. 
		By  Corollary~\ref{torsionfree-corollary},   $j_{!*,\ad} \mathscr{L} \cong \eG$.

	 We next show that the embedding  $\alpha: j_{!*,\ad} \mathscr{L}  \hookrightarrow j_{*,\ad} \mathscr{L}$ is an isomorphism. 		
	By  Theorem~\ref{G-is-injective},   $ \eG$
	 is an injective object  in $\eC_{,0}$. Since  the cokernel $C$ of the embedding $\alpha$ 
  also belongs to $\eC_{,0}$, it follows that 
	 $j_{*,\ad} \mathscr{L} \cong j_{!*,\ad} \mathscr{L} \oplus C$.  Since    
	 $\left(j_{!*,\ad} \mathscr{L}\right)|_{V_{\reg}}  =\left( j_{*,\ad} \mathscr{L}\right)|_{V_{\reg}} = \mathscr{L}$, it follows that $C$ is 
	 a $\delta$-torison submodule of $j_{*,\ad} \mathscr{L}$. Finally,  $j_{*,\ad} \mathscr{L}$ is $\delta$-torsionfree
	 by Theorem~\ref{torsionfree} and so  $C = 0$.  
	
	The proof for the  first isomorphism is similar, and is  left to the reader.
 \end{proof}

Even under Hypothesis~\ref{K-hyp},  one typically has $\eG_0 \not= j_* \mathscr{L}$, although Theorem~\ref{thm:adextensionsHC} does imply that $\eG_0$ is the largest admissible submodule of 
$j_* \mathscr{L}$.

For any visible polar representation $V$, Grinberg \cite{GrinbergSymmetric} associates to the quotient map 
$\pi \colon V \to V\git G$ the nearby cycles sheaf $P$. Remarkably, the Fourier transform $\mathbb{F}^*_V(P)$ 
is shown in  \cite[Theorem~3.1]{GrinbergSymmetric}  to be the minimal extension $j_{!*} \mathbb{L}$ of a local system 
$\mathbb{L}$ on $V_{\reg}$ of rank $|W|$. Via the Riemann-Hilbert correspondence, $j_{!*} \mathbb{L}$ 
may be considered as a  regular holonomic $\dd$-module on $V$. Based on 
Corollary~\ref{torsionfree-corollary} and Lemma~\ref{lem:endGHeckealg}, it is natural to expect that  the 
following holds.

\begin{conjecture}\label{conj:nearby-cycle}
	Assume that $(G,V)$ is a visible stable polar representation for a connected reductive group $G$ such that  ${\Aak}=\Ak(W)$ is simple when $\vs = 0$. Then there is an isomorphism  $j_{!*}\mathbb{L} \cong \eG_0$ of $G$-equivariant $\dd(V)$-modules. 
\end{conjecture}  

By Corollary~\ref{torsionfree-corollary}, in order to prove the conjecture  it would suffice to identify  the local systems $\euls{L} :=\eG_0 |_{V_{\reg}} \cong \mathbb{L}$ on the regular locus.   
  
\begin{remark}\label{Grinberg-remark}   
In contrast,  regardless of   Conjecture~\ref{conj:nearby-cycle}, it would be very unlikely that Grinberg's results could be used to prove  torsion-freeness results like Theorem~\ref{torsionfree}. One reason is simply that Grinberg's results hold whether or not $\Aak$ is simple. In contrast, when 
$\Aak$ is not simple,  Proposition~\ref{torsionfree-converse} shows that $\eG_0$ is not torsionfree and so it is certainly not a minimal extension.   Furthermore, Grinberg's  results also hold in the non-stable case where the analogue of Theorem~\ref{torsionfree}  fails rather badly (see Section~\ref{sec:otherexamples}).
\end{remark}
%%%%%%%%%%%
\subsection*{The Knizhnik-Zamolodchikov functor}

We continue to assume that Hypothesis~\ref{K-hyp} holds.	In this subsection we define a version of the $\mr{KZ}$-functor on the category of strongly admissible $\dd$-modules. Combining our results with those of Losev shows that the Harish-Chandra $\dd(V)$-module  $\eG_0$ represents (for Weil generic $\chi$) this geometric $\mr{KZ}$-functor. 

\begin{definition}\label{defn:KZG}
Let $\mr{KZ}: \Osph_0\to \euls{H}_q(W) \lmod$ be the usual KZ-functor, as defined in \cite[Section~5.3]{GGOR}. (This is usually defined as acting on the category $\euls{O}_{\kappa,0}$ for the whole Cherednik algebra but, as noted   after Definition~\ref{Hecke-defn},  this induces a functor on $\Osph_0$.) Recall that the exact  functor 
$\BH \colon (G,\chi,\ddd)\lmod \to \Aak\lmod$  is defined by $	\BH(X)= \Hom_\ddd(\eM,X) $; see Corollary~\ref{summand-endo3}. Then   the \emph{geometric Knizhnik-Zamolodchikov functor}
  $\mr{KZ}_G$ is defined to be the composition 
  \begin{equation}\label{eq:KZGcommute}
\mr{KZ}_G:  \eC_{,0} \ \buildrel{\mathbb{H}}\over{\longrightarrow} \ \Osph_0 
   \ \buildrel{\mr{KZ}}\over{\longrightarrow} \  \euls{H}_q(W). 
  \end{equation}
  As both factors are exact, it follows that  $\mr{KZ}_G$ is an exact functor. 
\end{definition}

\begin{remark}\label{KZ-old-defn} 
One can also  define $\mr{KZ}_G$ directly and  without reference to $\mr{KZ}$ as the composition
$$
	\begin{tikzcd}
	\eC_{,0} \ar[r,"\mathrm{loc}"] \ar[rrrd,bend right=7,"\mr{KZ}_G"'] & 
	(\dd(V_{\reg}),G,\chi) \lmod \ar[rr,"( \gamma_{\vs}(-))^G"] 
	& & \mr{Loc} \, (\h_{\reg}/W) \ar[d,"(( -)^{\rm{an}})^\nabla"] \\
	& & & \pi_1(\h_{\reg}/W) \lmod,
	\end{tikzcd}
	$$ 
where the individual maps are defined as follows. The morphism $\mathrm{loc}$ is just the localisation functor 
$\dd(V)\lmod\to \dd(V_{\reg})\lmod$. The map $\gamma_{\vs}$ is the conjugation $x\mapsto \delta^{-\vs} x \delta^{\vs}$, as in \cite[Equation~(5.2)]{BLNS}, with  $\mr{Loc} \, (\h_{\reg}/W)$ being the category   of integrable connections on $\h_{\reg} / W$ with regular singularities.
The final map is obtained by passing to the analytic coefficients and applying the horizontal sections  functor $(( -)^{\rm{an}})^\nabla$,  in the sense of \cite[p.214]{BellamySRAlecturenotes}. 

The fact that this composition does indeed give the   morphism  $\mr{KZ}_G$ from Definition~\ref{defn:KZG}  requires unpacking the various definitions and is left to the reader.  
\end{remark}

We can now prove our  analogue of Losev's Theorem \cite[Theorem~1.2]{LosevTotally}.

\begin{proposition}\label{prop:KZ-represented}  Assume that $\rad_{\vs}$ is surjective. Then the  functor $\mr{KZ}_G$ is represented by the object $\eG_0\in \eC_{,0}$ if and only if $\Ak(W)$ is simple. 
\end{proposition}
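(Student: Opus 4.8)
The plan is to prove that $\mr{KZ}_G$ is represented by $\eG_0$ precisely when $\Ak(W)$ is simple by playing the two implications off against the results already assembled, especially Corollary~\ref{A-projectives}, Theorem~\ref{G-is-injective}, and Corollary~\ref{summand-endo3}. For the ``if'' direction, assume $\Ak=\Ak(W)$ is simple. Recall from Definition~\ref{defn:KZG} that $\mr{KZ}_G = \mr{KZ}\circ \BH$ where $\BH(X)=\Hom_\ddd(\eM,X)$, and from Corollary~\ref{A-projectives}(2) that $\mr{KZ}\colon \Osph_0 \to \euls{H}_q(W)\lmod$ is an equivalence with quasi-inverse given (up to natural isomorphism) by $\Hom_{\Ak}(\eQ_0,-)$ after identifying $\euls{H}_q(W)=\End_{\Ak}(\eQ_0)$ via Corollary~\ref{A-projectives}(3). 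Hence for $X \in \eC_{,0}$,
\[
\mr{KZ}_G(X) \ \cong \ \Hom_{\Ak}(\eQ_0,\BH(X)) \ = \ \Hom_{\Ak}\bigl(\eQ_0,\Hom_\ddd(\eM,X)\bigr) \ \cong \ \Hom_\ddd\bigl(\eM\otimes_{\Aak}\eQ_0,\,X\bigr) \ = \ \Hom_\ddd(\eG_0,X),
\]
where the middle isomorphism is the tensor-hom adjunction of Corollary~\ref{summand-endo3} and the last equality is the definition $\eG_0=\eM\otimes_{\Aak}\eQ_0$ from Definition~\ref{defn:Glambda}. Since all maps are functorial, this is a natural isomorphism $\mr{KZ}_G(-)\cong \Hom_\ddd(\eG_0,-)$ on $\eC_{,0}$; one checks that $\eG_0$ really lies in $\eC_{,0}$ (it does, by Lemma~\ref{torsion-consequence2} together with the fact that $\eQ_0\in\Osph_0$), so $\eG_0$ represents $\mr{KZ}_G$.

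For the ``only if'' direction, suppose $\mr{KZ}_G$ is represented by $\eG_0$ but $\Ak$ is not simple. Then by Corollary~\ref{torsionfree-converse2}(1) there is a non-split surjection $\eQ_0 \twoheadrightarrow L$ with $0\neq L\in\Osph_0$ a $\delta$-torsion module. Applying $\mr{KZ}$, which kills exactly the $\delta$-torsion (equivalently $e$-torsion) modules, we get $\mr{KZ}(L)=0$, hence $\mr{KZ}(\eQ_0)\to\mr{KZ}(L)=0$; more to the point, the kernel $K$ of $\eQ_0\twoheadrightarrow L$ satisfies $\mr{KZ}(K)\cong \mr{KZ}(\eQ_0)$. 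Transporting to $\eC_{,0}$ via $\HHleft=\eM\otimes_{\Aak}-$, we obtain $\eM\otimes_{\Aak}K \hookrightarrow \eG_0$ (exactness and $\delta$-torsionfreeness of $\eG_0$ from Theorem~\ref{torsionfree}, plus $\BH\circ\HHleft\cong\mr{Id}$) with $\eG_0/(\eM\otimes_{\Aak}K) \cong \eM\otimes_{\Aak}L$ a nonzero $\delta$-torsion module by Lemma~\ref{summand-endo21M}(2) and Proposition~\ref{torsionfree-converse}. But $\mr{KZ}_G$ applied to this quotient vanishes, whereas if $\eG_0$ represented $\mr{KZ}_G$ faithfully on the relevant subquotients one would contradict the non-splitting; more precisely, $\Hom_\ddd(\eG_0,\eM\otimes_{\Aak}L)\cong\mr{KZ}_G(\eM\otimes_{\Aak}L)=\mr{KZ}(L)=0$ would force the surjection $\eG_0\twoheadrightarrow\eM\otimes_{\Aak}L$ to have no section, which is fine, so instead I will argue via the $\Ak$-side: representability of $\mr{KZ}_G$ combined with $\BH\circ\HHleft\cong\mr{Id}$ (Corollary~\ref{summand-endo3}) forces $\mr{KZ}\cong\Hom_{\Ak}(\eQ_0,-)$ to be represented by a \emph{projective} $\eQ_0$ in $\Osph_0$, i.e. $\eQ_0$ projective, which by Proposition~\ref{prop:Qlambdaprojiffsimple} (with $\lambda=0$) forces $\Ak(W_0)=\Ak(W)$ simple, a contradiction.

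The step I expect to be the main obstacle is making the ``only if'' direction airtight: one must pin down exactly what it means for the exact functor $\mr{KZ}_G=\mr{KZ}\circ\BH$ to be ``represented by $\eG_0$'' and extract from that a projectivity statement for $\eQ_0$ in $\Osph_0$. The clean route is: $\mr{KZ}_G\cong\Hom_\ddd(\eG_0,-)$ on $\eC_{,0}$; restricting along $\HHleft\colon\Osph_0\to\eC_{,0}$ and using $\BH\circ\HHleft\cong\mr{Id}$ gives $\mr{KZ}(-)\cong\Hom_\ddd(\eG_0,\HHleft(-))\cong\Hom_{\Ak}(\eQ_0,-)$ on $\Osph_0$, so $\mr{KZ}$ is co-represented by $\eQ_0$; but $\mr{KZ}$ is \emph{exact}, and an exact $\Hom$-functor out of an abelian category with enough projectives is represented by a projective object, hence $\eQ_0$ is projective in $\Osph_0$, hence $\Ak(W)$ is simple by Proposition~\ref{prop:Qlambdaprojiffsimple} and Lemma~\ref{lem:simpleinductionLosev}. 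Here one uses Lemma~\ref{lem:genOsphparabolicequi} to ensure $\Osph_0$ has enough projectives. I would write out this chain carefully, since conflating ``represents'' with ``co-represents'' or overlooking the exactness hypothesis is where an error could creep in; everything else is a direct assembly of cited results.
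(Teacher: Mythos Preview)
Your ``if'' direction is essentially the paper's argument: adjunction plus the identification $\mr{KZ}\cong\Hom_{\Ak}(\eQ_0,-)$, which is Losev's theorem packaged as Corollary~\ref{A-projectives}.

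For the ``only if'' direction, you actually found the paper's argument and then talked yourself out of it. Once $\Ak(W)$ is not simple, Corollary~\ref{torsionfree-converse2} (or already Proposition~\ref{torsionfree-converse}) produces a nonzero $\delta$-torsion quotient $N:=\eM\otimes_{\Aak}L$ of $\eG_0$. If $\eG_0$ represented $\mr{KZ}_G$, then $\Hom_\ddd(\eG_0,N)\cong\mr{KZ}_G(N)=0$ since $N$ is $\delta$-torsion. But the quotient map $\eG_0\twoheadrightarrow N$ is itself a nonzero element of $\Hom_\ddd(\eG_0,N)$; contradiction. You wrote that $\Hom_\ddd(\eG_0,N)=0$ ``would force the surjection to have no section, which is fine''---but it forces much more: it says there is \emph{no morphism at all} from $\eG_0$ to $N$, which directly contradicts the existence of the surjection. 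That is the whole proof; there was no need to switch tactics.

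Your alternative route (deduce that $\eQ_0$ is projective in $\Osph_0$ and invoke Proposition~\ref{prop:Qlambdaprojiffsimple}) has a citation problem: you appeal to Corollary~\ref{summand-endo3} (equivalently Lemma~\ref{summand-endo22}(2)) to get $\BH\circ\HHleft\cong\mr{Id}$, but both are stated under the standing hypothesis of Section~\ref{cherednik-intertwining} that $\Ak$ is simple---precisely what you are trying to prove. The underlying identity $\Hom_\ddd(\eM,\eM\otimes_{\Aak}N)\cong N$ does in fact hold assuming only that $\rad_\vs$ is surjective (one can argue directly from Lemma~\ref{summand-endo21M} and the presentation of $\eM$ in Remark~\ref{rem:eGpres}), so your route can be salvaged, but you would need to prove this rather than cite it. Given that the direct argument above is a two-line contradiction, there is no reason to go this way.
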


\begin{proof} 
	Assume first that $\Ak(W)$ is simple. Recall that  $\eG_0 = \eM \o_{\Aak}\eQ_0$ for $\eQ_0 = {\Aak}/{\Aak} \mf{m}_0$, and that $\eG_0$ is projective by 
Proposition~\ref{G-projective}. Let $M$ be a strongly admissible module. There are functorial isomorphisms
	$$\begin{aligned}
	\Hom_{\eC}(\eG_0,M) \ =& \ \Hom_{\dd(V)}(\eG_0,M)  \ \isom\   \Hom_{{\Aak}}(\eQ,\mathbb{H}(M))\\
	& \  \isom\  \mr{KZ} \circ \mathbb{H}(M)
	\ \isom \ \mr{KZ}_G(M).
	\end{aligned}
	$$
	Here, the second isomorphism is adjunction, the third is \cite[Theorem~1.2]{LosevTotally} (using the fact that ${\Aak}$ is simple) and the final isomorphism is just  the definition of the functor $\mr{KZ}_G$.
	
	If $\Ak(W)$ is not simple then, by Corollary~\ref{torsionfree-converse2}, $\eG_0$ admits a non-zero torsion quotient $L$. Thus, $\Hom_{\ddd}(\eG_0,L) \neq 0$. But $\mr{KZ}_G(L) = 0$, so $\mr{KZ}_G$ cannot be represented by $\eG_0$. 
\end{proof}

As Losev notes  in the introduction to \cite{LosevTotally}, it is standard that KZ functors are represented by  projective objects, but it is usually hard to determine those objects. Indeed,  in 2005,  Rouquier and Ginzburg independently asked whether $KZ$ is represented by  the corresponding Harish-Chandra module for the Cherednik algebra, and this was answered by   \cite[Theorem~1.2]{LosevTotally}: yes if and only if $\kappa$ is a totally aspherical parameter (equivalently, if and only if $\Ak(W)$ is simple). Proposition~\ref{prop:KZ-represented} answers the analogous question for the functor $\mr{KZ}_G$.
 
\begin{question}\label{block-question}
 Let $\euls{T}$ denote the (direct) sum of all blocks containing an indecomposable summand of $\eG_0$. Since $\eG_0$ is a projective-injective object in $\eC_{,0}$ with endomorphism ring   $\euls{H}_q(W)$
(see   Lemma~\ref{lem:endGHeckealg}),  it is natural to ask if $\mr{KZ}_G \colon \euls{T} \lmod \to \euls{H}_q(W)\lmod$ is a cover. In other words, is $\mr{KZ}_G$ fully faithful on projectives? 
\end{question}

\subsection*{Definition and properties of the shift functors}

 We next want to   define the promised  shift functors between categories $\eC$ for differing characters $\chi$.
For this we  need  some notation   from \cite[Section~4]{BLNS}, where the reader is referred for the details. First,  decompose  $\delta=\prod_{i=1}^k \delta_{i}^{m_i}$ into irreducible factors as in \cite[Equation~(3.10)]{BLNS} and, for each  $i$, write $\theta_i$ for  the character of $G$ corresponding to $\delta_i$.  Then our character $\chi$ is given by $\chi=\sum \vs_i d\theta_i$, where  $\vs=(\vs_1,\dots,\vs_k)\in \C^k$.
 Given a second $k$-tuple $\vs'=(\vs_1',\dots,\vs_k')$ write $\chi'= \sum \vs_j' d \theta_j$ and fix the parameter $\kappa'$ accordingly. Finally, recall that  $\delta^\vs = \delta_1^{\vs_1}\cdots\delta_k^{\vs_k}$. 
 
Given $\vs, \vs' \in \C^k$ as above, let $r_i = \vs_i - \vs_i'$ and  $\mathbf{r} = (r_1, \ds, r_k)\in \C^k$. Then, just as in remark~\ref{KZ-old-defn}, we define the automorphism $\gamma=\gamma_{\mathbf{r}}$ of $\dd(V_{\reg})$, which is the identity on $\C[V_{\reg}]$ and acts on derivations by 
\begin{equation}\label{eq:sigmachchiauto}
	\gamma(v) = v + \sum_{i = 1}^k (\vs_i - \vs_i') \frac{v(\delta_i)}{\delta_i}\qquad\text{for} \ v\in \Der (V_{\reg}).
\end{equation}
The significance of this definition is that, for $x \in \g$,  
\begin{equation}\label{eq:tausigmaauto}
	\gamma(\tau(x)) = \tau(x) + (\chi - \chi')(x), 
\end{equation}
and so  $\gamma$ defines an isomorphism 
$$
\gamma \colon (\dd(V_{\reg}) / \dd(V_{\reg}) \mf{g}_{\chi})^G  \ \isom \  (\dd(V_{\reg}) / \dd(V_{\reg}) \mf{g}_{\chi'})^G.
$$
Informally, $\gamma$ is the automorphism given by conjugation  $D \mapsto \delta^{-{\mathbf{r}}} D \delta^{\mathbf{r}}$.  By 
\eqref{eq:localizatioradiso}, we obtain a commutative diagram of isomorphisms
\begin{equation}\label{eq:sigmaradchicommute1}
	\begin{tikzcd}
		(\dd(V_{\reg}) / \dd(V_{\reg}) \mf{g}_{\chi})^G \ar[dr,"\rad_{\vs}"'] \ar[rr,"\gamma"] & & (\dd(V_{\reg}) / \dd(V_{\reg}) \mf{g}_{\chi'})^G \ar[dl,"\rad_{\vs'}"] \\
		& \dd(\h_{\reg})^W. &  
	\end{tikzcd}
\end{equation}
Indeed, if $D \in \dd(V_{\reg})^G$ and $z \in \C[\h_{\reg}]^W$ then 
\begin{align*}
\rad_{\vs'}(\gamma(D))(z) & = (\delta^{-\vs'} \gamma(D)(\varrho^{-1}(z) \delta^{\vs'}))|_{\h_{\reg}} \\
& = (\delta^{-\vs'} \delta^{\vs' - \vs} D \delta^{\vs - \vs'} (\varrho^{-1}(z) \delta^{\vs'}))|_{\h_{\reg}} \\
& = (\delta^{-\vs} D (\varrho^{-1}(z) \delta^{\vs}))|_{\h_{\reg}} = \rad_{\vs}(D)(z).
\end{align*}

Given a $\dd(V_{\reg})$-module $N$, we can  also twist the action of $\dd(V_{\reg})$ on $N$ by the automorphism $\gamma$; that is, $D \cdot n = \gamma^{-1}(D) n$ for $n \in N^{\gamma} = N$ and $D \in \dd(V_{\reg})$. This allows us to define a shift functor  by   
\begin{equation}\label{defn:T-shift}
\shT_{\vs,\vs'}(M) = j_{*,\ad} (M|_{V_{\reg}})^{\gamma}
\end{equation}
for  $M\in \bigl(G,\chi,\dd(V)\bigr)\lmod$. The functor $\shT_{\vs,\vs'}$ is left exact. 

\begin{lemma} \label{lem:T-property}  
	If $M \in \eC_{,0}$, then $\shT_{\vs,\vs'}(M) \in  \eCdash_{,0}$. 
\end{lemma}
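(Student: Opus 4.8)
The statement asserts that the shift functor $\shT_{\vs,\vs'}$ preserves strong admissibility, i.e.\ sends $\eC_{,0}$ into $\eCdash_{,0}$. The plan is to unpack the definition \eqref{defn:T-shift} and verify the three defining properties of an object of $\eCdash_{,0}$: that $\shT_{\vs,\vs'}(M)$ is (i) a finitely generated $(G,\chi')$-monodromic left $\dd(V)$-module, and (ii) that $(\Sym V)^G_+$ acts locally nilpotently on it. Both follow essentially from the corresponding properties for $M$, together with Lemma~\ref{j-star-admissible}.

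First I would check the monodromicity. Let $M \in \eC_{,0}$ and set $N = M|_{V_{\reg}}$, a $(G,\chi)$-monodromic $\dd(V_{\reg})$-module. By Lemma~\ref{lem:Vregintconnection} (using that $V$ is stable), $N$ is an integrable connection, hence in particular a holonomic $\dd(V_{\reg})$-module. The twist $N^{\gamma}$, for $\gamma = \gamma_{\mathbf r}$ the automorphism of $\eqref{eq:sigmachchiauto}$, is again holonomic since $\gamma$ is an automorphism of $\dd(V_{\reg})$; and by the intertwining identity \eqref{eq:tausigmaauto}, $\gamma(\tau(x)) = \tau(x) + (\chi-\chi')(x)$, so for $x \in \g$ the operator $\tau(x)$ acts on $N^{\gamma}$ the way $\gamma^{-1}(\tau(x)) = \tau(x) - (\chi-\chi')(x) = \tau(x) + (\chi' - \chi)(x)$ acts on $N$; since $N$ is $(G,\chi)$-monodromic, $\tau_N(x) = \tau(x) - \chi(x)$, and one computes that $\tau_{N^\gamma}(x) = \tau_N(x)$ as an endomorphism, whence $\tau(x) = \tau_{N^\gamma}(x) + \chi'(x)\,\mathrm{Id}$ on $N^\gamma$. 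Thus $N^\gamma$ is $(G,\chi')$-monodromic as a $\dd(V_{\reg})$-module (the $G$-equivariance of the action map being clear since $\gamma$ commutes with the $G$-action, as $\delta$ is $G$-invariant). Then $j_{*,\ad}(N^\gamma)$ is, by construction, the largest submodule of $j_*(N^\gamma)$ on which $(\Sym V)^G_+$ acts locally nilpotently; since $j_*$ of a $(G,\chi')$-monodromic module is $(G,\chi')$-monodromic, and this property passes to submodules by Lemma~\ref{equivariant tensors}(2) (or directly from the definition), $\shT_{\vs,\vs'}(M) = j_{*,\ad}(N^\gamma)$ is a $(G,\chi')$-monodromic $\dd(V)$-module. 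Finite generation, together with the local nilpotence of $(\Sym V)^G_+$, is exactly the content of Lemma~\ref{j-star-admissible}(1) applied to $N^\gamma$ (which is holonomic, hence finite length on $V_{\reg}$, so $j_*(N^\gamma)$ is holonomic and in particular noetherian, and its largest $(\Sym V)^G_+$-locally nilpotent submodule is finitely generated). This shows $\shT_{\vs,\vs'}(M) \in \eCdash_{,0}$.

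I do not expect any serious obstacle here; the one point requiring a little care is the bookkeeping of the monodromic twist—verifying that twisting the $\dd(V_{\reg})$-module structure by $\gamma_{\mathbf r}$ shifts the monodromic character from $\chi$ to $\chi'$ rather than, say, $\chi' - \chi$ or $2\chi' - \chi$—which is just a matter of carefully tracking signs through \eqref{eq:tausigmaauto} and the defining relation (ii) of Definition~\ref{defn:monodromic}. The rest is immediate from Lemma~\ref{j-star-admissible} and Lemma~\ref{lem:Vregintconnection}.
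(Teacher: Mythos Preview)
Your proposal is correct and follows essentially the same route as the paper: verify $(G,\chi')$-monodromicity of $(M|_{V_{\reg}})^{\gamma}$ via the intertwining identity \eqref{eq:tausigmaauto}, and deduce finite generation from holonomicity of $j_*(M|_{V_{\reg}})^{\gamma}$. The only cosmetic differences are that the paper invokes Lemma~\ref{lem:charvar}(2) rather than Lemma~\ref{lem:Vregintconnection} to get holonomicity on $V_{\reg}$, and argues finite generation directly (via \cite[Theorem~3.2.3(i)]{HTT}) rather than citing Lemma~\ref{j-star-admissible}(1); your more explicit sign-tracking for the monodromic twist is accurate.
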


\begin{proof}
	We must check that $\shT_{\vs,\vs'}(M)$ is a finitely generated $(G,\chi')$-monodromic $\dd(V)$-module. First,  since $\delta$ is $G$-invariant, $M|_{V_{\reg}}$ is a rational $G$-module and the $\dd(V)$-submodule $\shT_{\vs,\vs'}(M)$ is a $G$-submodule. Equation \eqref{eq:tausigmaauto} therefore  implies that $\shT_{\vs,\vs'}(M)$ is $(G,\chi')$-monodromic. 
	
	 It remains to show that   $\shT_{\vs,\vs'}(M)$ is finitely generated. By Lemma~\ref{lem:charvar}(2), the module $M$ is holonomic. Therefore, $M|_{V_{\reg}}$ and $(M|_{V_{\reg}})^{\gamma}$ are both holonomic $\dd(V_{\reg})$-modules. By \cite[Theorem~3.2.3(i)]{HTT}, $(M|_{V_{\reg}})^{\gamma}$ is even a holonomic $\dd(V)$-module and, in particular,  is finitely generated. Thus the same is true of  the submodule $\shT_{\vs,\vs'}(M)$.  \end{proof}

By \cite[Proposition~7.1]{BerestChalykhQuasi}, there is also a shift functor 
$\euls{T}_{\kappa,\kappa'} \colon \Osph_{\kappa,0} \to \Osph_{\kappa',0}$. (In that paper the functor is defined 
between the  full categories $\euls{O}$ for $\Hk(W)$, but it is easily seen to   induce a functor on the 
spherical category $\Osph$.) Since any $\delta$-torsion module in $\Osph_{\kappa,0}$  is killed by 
$\euls{T}_{\kappa,\kappa'}$, this functor factors through a functor $T_{\kappa,\kappa'} \colon \euls{H}_q(W) \lmod
 \to \Osph_{\kappa',0}$. We define $\mr{kz}_{\kappa,\kappa'} \colon \euls{H}_q(W)\lmod \to \euls{H}_{q'}(W)\lmod$ 
 to be the composite $\mr{KZ} \circ T_{\kappa,\kappa'}$.

\begin{proposition}\label{prop:KZtwistdiagram}  Let $\vs, \vs'\in \C^k$  and assume that Hypothesis~\ref{K-hyp} holds for the corresponding parameters $\kappa$ and  $\kappa'$. 
	Then there is a commutative diagram 
	$$
	\begin{tikzcd}
	\eC_{,0} \ar[r,"\mathbb{H}"] \ar[d,"\shT_{\vs,\vs'}"] \ar[rr,bend left=30,"\mr{KZ}_G"] & \Osph_{\kappa,0} \ar[d,"\euls{T}_{\kappa,\kappa'}"] \ar[r,"\mr{KZ}"] & \euls{H}_q(W) \lmod \ar[d,"\mr{kz}_{\kappa,\kappa'}"] \\
	\eCdash_{,0} \ar[r,"\mathbb{H}"] \ar[rr,bend right=30,"\mr{KZ}_G"'] & \Osph_{\kappa',0} \ar[r,"\mr{KZ}"] & \euls{H}_{q'}(W) \lmod .
	\end{tikzcd}
	$$
\end{proposition}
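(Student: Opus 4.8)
The commutativity of the right-hand square, relating $\euls{T}_{\kappa,\kappa'}$ and $\mr{kz}_{\kappa,\kappa'}$ via the two $\mr{KZ}$ functors, is built into the definition of $\mr{kz}_{\kappa,\kappa'}$: by construction $\mr{kz}_{\kappa,\kappa'} = \mr{KZ} \circ T_{\kappa,\kappa'}$, where $T_{\kappa,\kappa'}$ is the factorisation of $\euls{T}_{\kappa,\kappa'}$ through $\euls{H}_q(W)\lmod$, so $\mr{kz}_{\kappa,\kappa'} \circ \mr{KZ} = \mr{KZ} \circ \euls{T}_{\kappa,\kappa'}$ as functors $\Osph_{\kappa,0} \to \euls{H}_{q'}(W)\lmod$. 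The two curved arrows labelled $\mr{KZ}_G$ commute with the respective rows by Definition~\ref{defn:KZG}, which \emph{defines} $\mr{KZ}_G = \mr{KZ} \circ \mathbb{H}$. Hence the entire diagram commutes as soon as the left-hand square commutes, i.e.\ once we establish the natural isomorphism
\[
\mathbb{H} \circ \shT_{\vs,\vs'} \ \cong \ \euls{T}_{\kappa,\kappa'} \circ \mathbb{H}
\]
of functors $\eC_{,0} \to \Osph_{\kappa',0}$. (All the functors in sight are well-defined between the claimed categories: $\mathbb{H}$ lands in $\Osph$ by Lemma~\ref{torsion-consequence}(2) and in $\Osph_0$ when applied to $\eC_{,0}$, $\shT_{\vs,\vs'}$ sends $\eC_{,0}$ to $\eCdash_{,0}$ by Lemma~\ref{lem:T-property}, and $\euls{T}_{\kappa,\kappa'}$ is as cited from \cite{BerestChalykhQuasi}.)

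First I would reduce $\mathbb{H}$ to its description on the regular locus. Since, by Corollary~\ref{cor:hom(ML)} and Theorem~\ref{torsionfree}, $\mathbb{H}(M) = M^G$ for any $\delta$-torsionfree submodule situation, and more to the point $\mathbb{H}(M) = \Hom_\ddd(\eM, M)$ with $\eM_\eS = \dd(V_{\reg})/\dd(V_{\reg})\tau(\g)$ by Proposition~\ref{semiprime2}(2), the key identity is that $\mathbb{H}(M)$ only depends on $M|_{V_{\reg}}$ up to $\delta$-torsion; precisely, $\mathbb{H}(M) \hookrightarrow \mathbb{H}(M)_\eS = (M|_{V_{\reg}})^G$ and $\mathbb{H}(M)$ is the largest $(\Sym\h)^W_+$-locally nilpotent (equivalently, for $M \in \eC_{,0}$, the largest finitely generated) submodule therein. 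On the other side, $\shT_{\vs,\vs'}(M) = j_{*,\ad}(M|_{V_{\reg}})^\gamma$, so $\mathbb{H}(\shT_{\vs,\vs'}(M))$ is computed from $((M|_{V_{\reg}})^\gamma)^G$. The commuting triangle \eqref{eq:sigmaradchicommute1} identifies $\rad_{\vs'} \circ \gamma = \rad_{\vs}$ on $(\dd(V_{\reg})/\dd(V_{\reg})\mf{g}_{\chi})^G \cong \dd(\h_{\reg})^W$; combined with the fact that under the twist by $\gamma$ the invariants $((M|_{V_{\reg}})^\gamma)^G$ become, via $\rad$, the $\dd(\h_{\reg})^W$-module obtained from $(M|_{V_{\reg}})^G$ by conjugating the $\dd(\h_{\reg})^W$-action by $\delta^{\mathbf r}$, this shows $((M|_{V_{\reg}})^\gamma)^G = ((M|_{V_{\reg}})^G)^\sigma$ where $\sigma$ is the corresponding automorphism of $\dd(\h_{\reg})^W$ used to define the Cherednik shift functor.

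The heart of the argument is then to match this with $\euls{T}_{\kappa,\kappa'}$. Recall that the Berest--Chalykh shift functor, at the level of the spherical category, is itself given (after localising at $\delta$) by the twist $N \mapsto N_\delta^\sigma$ with the same conjugation automorphism of $\dd(\h_{\reg})^W = \Ak(W)_\delta$, followed by taking the largest submodule on which $(\Sym\h)^W_+$ acts locally nilpotently — this is precisely a $j_{*,\ad}$-type operation on $\h_{\reg}/W$. So I would argue: localise both $\mathbb{H}\circ\shT_{\vs,\vs'}(M)$ and $\euls{T}_{\kappa,\kappa'}\circ\mathbb{H}(M)$ at $\delta$; both become $((M|_{V_{\reg}})^G)^\sigma$, functorially, using \eqref{eq:sigmaradchicommute1} and the compatibility of $\mathbb{H}$ with localisation from Lemma~\ref{ore91} / Proposition~\ref{admissible}; then observe that each of the un-localised modules is recovered as the largest $(\Sym\h)^W_+$-locally-nilpotent (equivalently finitely generated, since both lie in $\Osph_{\kappa',0}$ by Lemma~\ref{lem:T-property} and the cited properties of $\euls{T}_{\kappa,\kappa'}$) $\Ak(W)$-submodule of this common localisation. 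Since that largest submodule is unique, the two agree, and naturality is inherited from naturality of restriction, of the $\gamma$-twist, and of $\rad$.

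\textbf{Main obstacle.} The delicate point is the precise comparison, on $\h_{\reg}$, between the $j_{*,\ad}$ appearing in my definition \eqref{defn:T-shift} of $\shT_{\vs,\vs'}$ (which extends across the full singular locus $V \smallsetminus V_{\reg}$) and the extension operation implicit in the Berest--Chalykh functor $\euls{T}_{\kappa,\kappa'}$ (which a priori concerns only $\h_{\reg}/W \hookrightarrow \h/W$, i.e.\ the discriminant locus in $\h/W$). One must check that the functor $\mathbb{H} = \Hom_\ddd(\eM,-)$ transports the ``largest admissible submodule of $j_*$'' on $V$ to the ``largest admissible submodule of $j_*$'' on $\h/W$ — i.e.\ that $\mathbb{H}$ and $j_{*,\ad}$ commute. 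This should follow from exactness of $\mathbb{H}$ (Lemma~\ref{summand-endo22}(1)), the adjunction $\mathbb{H} \dashv \mathbb{H}^\perp$ of Corollary~\ref{summand-endo3}, and the fact (Theorem~\ref{torsionfree}, Lemma~\ref{torsion-consequence}) that $\mathbb{H}$ kills exactly the $\delta$-torsion and preserves the property of having no $\delta$-torsion submodule; but assembling these into a clean natural isomorphism, rather than merely an isomorphism object-by-object, is where the real care is needed.
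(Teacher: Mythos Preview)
Your proposal is correct and follows essentially the same route as the paper. The right-hand square and the curved arrows commute by definition, exactly as you say; the paper also reduces everything to the left-hand square and proves $\mathbb{H}\circ\shT_{\vs,\vs'}\cong\euls{T}_{\kappa,\kappa'}\circ\mathbb{H}$ by identifying both sides inside the common localisation $M[\delta^{-1}]^G=M^G[\delta^{-1}]$ and using $\rad_{\vs}=\rad_{\vs'}\circ\gamma$ together with the isomorphism $\rrpp\colon(\Sym V)^G\to(\Sym\h)^W$.

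The one place where the paper is cleaner than your outline is your ``main obstacle''. You phrase the remaining step as a commutation of $\mathbb{H}$ with $j_{*,\ad}$, which makes it sound delicate. The paper bypasses this entirely: since taking $G$-invariants commutes with the $(\Sym V)^G_+$-action and with localisation at the $G$-invariant $\delta$, one has directly that $\mathbb{H}(\shT_{\vs,\vs'}(M))$ equals the set of $(\Sym V)^G_+$-locally nilpotent vectors in $M[\delta^{-1}]^G$ (for the $\gamma^{-1}$-twisted action). Unwinding the action via $\rad_{\vs}=\rad_{\vs'}\circ\gamma$ and $\rrpp$, this is \emph{literally} the set of $B_{\kappa',+}$-locally nilpotent vectors in $M^G[\delta^{-1}]$, which is $\euls{T}_{\kappa,\kappa'}(\mathbb{H}(M))$ by the Berest--Chalykh description. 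So the two sides are the same subspace with the same $A_{\kappa'}(W)$-action, not merely isomorphic, and naturality is automatic. No separate argument about extension across the singular locus is needed.
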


\begin{proof}  
	We recall from \cite[Lemma~2.9(1)]{BLNS} the identifications $\Ak(W)[\delta^{-1}] = \ddd(\h_{\reg})^W = A_{\kappa'}(W)[\delta^{-1}]$. Therefore, given $N \in \Osph_{\kappa,0}$, the space $N[\delta^{-1}]$ can be considered as an $A_{\kappa'}(W)$-module. Then $\euls{T}_{\kappa,\kappa'}(N)$ is the submodule of $N[\delta^{-1}]$ consisting of all sections on which $(\Sym \h)^W_+$ acts locally nilpotently. As shown in \cite[Proposition~7.1]{BerestChalykhQuasi}, this submodule is finitely generated and thus lives in $\Osph_{\kappa',0}$.
	
	For clarity, let us write $B_{\kappa} = (\Sym \h)^W \subset \Ak(W) \subset \dd(\h_{\reg})^W$, so that $B_{\kappa} \cong B_{\kappa'}$, but $B_{\kappa} \neq B_{\kappa'}$ in $\dd(\h_{\reg})^W$. 
	
	Given $M \in \eC_{,0}$, the space $\mathbb{H}(\shT_{\vs,\vs'}(M))$ is all sections in $M[\delta^{-1}]^G$ that are locally nilpotent under the action of $(\Sym V)_+^G$ since the action of the latter commutes with taking invariants. But we must unpack how $(\Sym V)_+^G$ acts on this space. We have $M[\delta^{-1}]^G \subset M[\delta^{-1}]^{\gamma}$, where the latter is the $\dd(V_{\reg})$-module with action 
	$$
	D \cdot (m \delta^{-k}) = \gamma^{-1}(D)(m \delta^{-k}).
	$$
	On the other hand, $\euls{T}_{\kappa,\kappa'}(\mathbb{H}(M))$ is the space of $B_{\kappa',+}$-locally nilpotent elements in $M^G[\delta^{-1}]$. The action of $B_{\kappa'}$ comes from its embedding in $\dd(\h_{\reg})^W$, but the action of the latter algebra comes from its realisation as the image of $\dd(V_{\reg})^G$ under $\rad_{\vs}$ (and not under $\rad_{\vs'}$). Thus, given $d \in B_{\kappa',+}$, we choose $D_0 \in \dd(V_{\reg})$ such that $\rad_{\vs}(D_0) = d$ so that $d \cdot (m \o \delta^{-k}) := D_0(m \delta^{-k})$. Note that $\rad_{\vs} = \rad_{\vs'} \circ \, \gamma$ and recall from Theorem~\ref{thm:radial-exists}(2) that $\rad_{\vs'}$ restricts to a graded isomorphism $\rrpp \colon (\Sym V)^G \to B_{\kappa'}$. Therefore, we may (without loss of generality) choose $D_0$ such that $\gamma(D_0) = D_1 \in (\Sym V)_+^G$. Thus, $d \cdot (m \o \delta^{-k}) = \gamma^{-1}(D_1)(m \delta^{-k})$. It follows that $\mathbb{H}(\shT_{\vs,\vs'}(M)) = \euls{T}_{\kappa,\kappa'}(\mathbb{H}(M))$ as vector spaces. Repeating the above for $d \in A_{\kappa'}(W)$ rather than $B_{\kappa'}$ shows that this is an identification of $A_{\kappa'}(W)$-modules.     
	
	The right most square commutes by definition and the upper and lower parts commute because the diagram \eqref{eq:KZGcommute} is commutative.  
\end{proof}

\begin{definition}\label{regular-parameter} The parameter $q$ is said to be \emph{regular}\label{regular-param}
  if $\euls{H}_q(W)$ is semisimple. 
	The parameter $\kappa$, respectively $\vs$, is \emph{regular} if the associated parameter $q$ is regular. 
\end{definition}

By Corollary~\ref{thm:semi-simplicity2}, $\vs$ is regular if and only if the module $\eG_{0}$ is semisimple. Similarly, we say that $\kappa$, respectively $\vs$, is \textit{integral} if the associated parameter $q$ is everywhere one; that is, if $\euls{H}_q(W) = \C W$. We now prove the analogues of the results of \cite[Section~7]{BerestChalykhQuasi}. We note that, although the statements are similar, the proofs are quite different since our results  depend upon Theorem~\ref{G-is-injective}.  
 
\begin{lemma}\label{lem:BCquasiDlemma}  Let $\vs, \vs', \vs'' \in \C^k$  and assume that Hypothesis~\ref{K-hyp} holds for  $\kappa$, $\kappa'$ and  $\kappa''$. 
	\begin{enumerate}
		\item If $\vs$ is regular and $L$ an irreducible summand of $\eG_0$ then $\shT_{\vs,\vs}(L) = L$. 
		\item If $\vs,\vs'$ are regular and $L$ an irreducible summand of $\eG_{\chi,0}$ then $\shT_{\vs,\vs'}(L)$ is either a simple summand of $\eG_{\chi',0}$ or zero. 
		\item If $\vs$ is regular and $L$ an irreducible summand of $\eG_0$ with $\shT_{\vs,\vs'}(L) \neq 0$ then $\shT_{\vs,\vs''}(L) \cong [\shT_{\vs',\vs''} \circ \shT_{\vs,\vs'}](L)$. 
	\end{enumerate}
\end{lemma}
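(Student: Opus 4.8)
\textbf{Proof proposal for Lemma~\ref{lem:BCquasiDlemma}.}

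The plan is to exploit three structural facts established earlier: (i) each irreducible summand $L$ of $\eG_0$ (or $\eG_{\chi,0}$) is the minimal extension $j_{!*}\mathscr{L}$ of its restriction to $V_{\reg}$ (Corollary~\ref{torsionfree-corollary}, Theorem~\ref{torsionfree}), so $L$ is $\delta$-torsionfree and has no $\delta$-torsion quotient; (ii) $\eG_{\chi,0}$ is both projective and injective in $\eC_{,0}$ (Theorem~\ref{G-is-injective}), hence so is each summand $L$; and (iii) the twist $\shT_{\vs,\vs'}(L) = j_{*,\ad}(L|_{V_{\reg}})^{\gamma}$ is computed via the commutative diagram \eqref{eq:sigmaradchicommute1} relating $\rad_{\vs}$, $\rad_{\vs'}$ and the conjugation automorphism $\gamma = \gamma_{\mathbf{r}}$ of $\dd(V_{\reg})^G$.

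For part (1): when $\vs' = \vs$ we have $\mathbf{r} = 0$, so $\gamma$ is the identity automorphism of $\dd(V_{\reg})$ and $(L|_{V_{\reg}})^{\gamma} = L|_{V_{\reg}}$. Thus $\shT_{\vs,\vs}(L) = j_{*,\ad}(L|_{V_{\reg}})$. Since $\vs$ is regular, $L$ is a summand of the semisimple module $\eG_{0}$ (by Corollary~\ref{thm:semi-simplicity2}) and, by Theorem~\ref{thm:adextensionsHC} applied to $\eP$ the projective cover of the simple $\Osph_0$-module $\mathbb{H}(L)$, we get $j_{*,\ad}(L|_{V_{\reg}}) = j_{!*,\ad}(L|_{V_{\reg}}) = L$; alternatively, $j_{*,\ad}(L|_{V_{\reg}})$ is an admissible module restricting to $L|_{V_{\reg}}$ on $V_{\reg}$ whose unique maximal admissible submodule is $L$ (no $\delta$-torsion submodule by Theorem~\ref{torsionfree}), and since $\eG_0$ semisimple forces $L$ injective in $\eC_{,0}$ with $\left(j_{*,\ad}(L|_{V_{\reg}})/L\right)$ a $\delta$-torsion cokernel, the injectivity of $L$ splits it off and torsionfreeness kills the complement. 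Hence $\shT_{\vs,\vs}(L)\cong L$.

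For part (2): $\shT_{\vs,\vs'}(L)$ lies in $\eCdash_{,0}$ by Lemma~\ref{lem:T-property}, and $\shT_{\vs,\vs'}(L)|_{V_{\reg}} = (L|_{V_{\reg}})^{\gamma}$. Via the diagram \eqref{eq:sigmaradchicommute1}, applying the exact functor $\mathbb{H}$ for the parameter $\kappa'$ and using the identification in the proof of Proposition~\ref{prop:KZtwistdiagram}, one gets $\mathbb{H}(\shT_{\vs,\vs'}(L)) = \euls{T}_{\kappa,\kappa'}(\mathbb{H}(L))$. Since $\vs$ is regular, $\mathbb{H}(L)$ is a simple summand of $\eQ_0 = \Osph_{\kappa,0}$-projective, and since $\vs'$ is regular too, $\euls{T}_{\kappa,\kappa'}$ sends it either to a simple projective object of $\Osph_{\kappa',0}$ or to zero (by \cite[Proposition~7.1]{BerestChalykhQuasi} combined with the fact that in the regular case $\Osph_{\kappa',0}$ is semisimple, Corollary~\ref{A-projectives}). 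If it is zero then $\mathbb{H}(\shT_{\vs,\vs'}(L)) = 0$; since any nonzero submodule of an object in $\eC_{,0}$ that is $\delta$-torsionfree must have nonzero $\mathbb{H}$ (the reasoning of Corollary~\ref{cor:hom(ML)}/Proposition~\ref{bigg} applies to $\shT_{\vs,\vs'}(L)$, which by construction $j_{*,\ad}$ has no $\delta$-torsion submodule), and $\shT_{\vs,\vs'}(L)|_{V_{\reg}} = (L|_{V_{\reg}})^{\gamma}$, we conclude $\shT_{\vs,\vs'}(L)$ is $\delta$-torsion; but $j_{*,\ad}$ has no $\delta$-torsion part, forcing $\shT_{\vs,\vs'}(L) = 0$. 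Otherwise $\mathbb{H}(\shT_{\vs,\vs'}(L)) = \eQ_{0,\rho'}$ is simple; then $\shT_{\vs,\vs'}(L)$ is nonzero, $\delta$-torsionfree with no $\delta$-torsion submodule, and by Theorem~\ref{thm:semi-simplicity} applied at $\kappa'$ (which holds since $\vs'$ regular) the module $\HHleft'(\eQ_{0,\rho'})$ is a simple summand $L'$ of $\eG_{\chi',0}$ with $L'^G = \eQ_{0,\rho'}$; a comparison of $\mathbb{H}$-images plus the fact (from the minimal-extension characterisation) that $\shT_{\vs,\vs'}(L)$ is determined by its $V_{\reg}$-restriction together with having no $\delta$-torsion sub shows $\shT_{\vs,\vs'}(L) \cong j_{*,\ad}(L'|_{V_{\reg}}) = L'$ by part (1) logic at parameter $\vs'$.

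For part (3): this is a transitivity statement $\shT_{\vs,\vs''} \cong \shT_{\vs',\vs''} \circ \shT_{\vs,\vs'}$ on the summands in question. On the regular locus, $\gamma_{\vs,\vs''} = \gamma_{\vs',\vs''} \circ \gamma_{\vs,\vs'}$ since the conjugation automorphisms compose ($\delta^{-\mathbf{r}}\delta^{-\mathbf{r}'}D\delta^{\mathbf{r}'}\delta^{\mathbf{r}}$), so $(L|_{V_{\reg}})^{\gamma_{\vs,\vs''}} = ((L|_{V_{\reg}})^{\gamma_{\vs,\vs'}})^{\gamma_{\vs',\vs''}} = (\shT_{\vs,\vs'}(L)|_{V_{\reg}})^{\gamma_{\vs',\vs''}}$, using part (2) for the last equality, together with $\shT_{\vs,\vs'}(L)|_{V_{\reg}} = (L|_{V_{\reg}})^{\gamma_{\vs,\vs'}}$. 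Hence $\shT_{\vs,\vs''}(L) = j_{*,\ad}(L|_{V_{\reg}})^{\gamma_{\vs,\vs''}}$ and $[\shT_{\vs',\vs''}\circ\shT_{\vs,\vs'}](L) = j_{*,\ad}(\shT_{\vs,\vs'}(L)|_{V_{\reg}})^{\gamma_{\vs',\vs''}}$ have the same restriction to $V_{\reg}$; since both are objects of $\eCdash[']_{,0}$ with no $\delta$-torsion submodule (being images of $j_{*,\ad}$) and, by part (2), each is either zero or a simple summand of $\eG_{\chi'',0}$ hence injective in $\eC_{\chi'',0}$, a module in this category is determined up to isomorphism by its $V_{\reg}$-restriction (an injective object with no $\delta$-torsion sub is the $j_{*,\ad}$ of its restriction). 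This gives the isomorphism.

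The main obstacle is the last implication in each part: passing from "same restriction to $V_{\reg}$, no $\delta$-torsion submodule, lies in $\eC_{,0}$" to an actual isomorphism. The cleanest route is to establish, once and for all, the uniqueness statement that \emph{an object $N \in \eC_{\chi',0}$ that is $\delta$-torsionfree, has no $\delta$-torsion submodule, and is injective in $\eC_{\chi',0}$, equals $j_{*,\ad}(N|_{V_{\reg}})$} — this follows since $N \hookrightarrow j_{*,\ad}(N|_{V_{\reg}})$ with $\delta$-torsion cokernel, injectivity splits it, and torsionfreeness of $j_{*,\ad}$ (Theorem~\ref{torsionfree}) kills the complement; then combine with part (2) guaranteeing the summands are simple injectives. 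I would isolate this as a short preliminary observation before proving (1)--(3).
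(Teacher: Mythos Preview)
Your argument for part~(1) is essentially the paper's: $L\hookrightarrow \shT_{\vs,\vs}(L)$, $L$ is injective in $\eC_{,0}$ (regularity plus Theorem~\ref{G-is-injective}), the cokernel is $\delta$-torsion, and $j_{*,\ad}$ is $\delta$-torsionfree, so the cokernel vanishes.

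For part~(2) you take a genuinely different route, passing through $\mathbb{H}$ and the Berest--Chalykh shift $\euls{T}_{\kappa,\kappa'}$ via Proposition~\ref{prop:KZtwistdiagram}. The paper's argument is more elementary and avoids this detour entirely: pick any simple submodule $K\subseteq \shT_{\vs,\vs'}(L)$; since $(L|_{V_{\reg}})^{\gamma}$ is irreducible and $\shT_{\vs,\vs'}(L)$ is $\delta$-torsionfree, $K|_{V_{\reg}}=(L|_{V_{\reg}})^{\gamma}$, which as a \emph{$G$-module} equals $L|_{V_{\reg}}$ (the twist $\gamma$ only alters the $\ddd$-action). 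Hence $(K|_{V_{\reg}})^G\neq 0$ because $L^G\neq 0$, and clearing a power of $\delta$ gives $K^G\neq 0$, so $K$ is a quotient (hence summand) of the semisimple $\eG_{\chi',0}$; then repeat the part~(1) splitting argument to get $K=\shT_{\vs,\vs'}(L)$. Your route can be salvaged (Lemma~\ref{lem:keynonvanishG} does apply to subquotients of $(\eM_{\eS})^{\gamma}$ because the $G$-structure and $\C[V_{\reg}]$-torsionfreeness are unchanged by $\gamma$), but the paper's observation that ``$\gamma$ does not touch the $G$-module structure'' is exactly the shortcut that bypasses the $\mathbb{H}$/$\euls{T}_{\kappa,\kappa'}$ machinery.

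Part~(3) has a genuine gap. You invoke part~(2) to conclude that both $\shT_{\vs,\vs''}(L)$ and $\shT_{\vs',\vs''}(\shT_{\vs,\vs'}(L))$ are simple summands of $\eG_{\chi'',0}$ and hence injective; but part~(2) requires \emph{both} parameters to be regular, whereas the hypotheses of~(3) only assume $\vs$ regular, not $\vs'$ or $\vs''$. The paper's proof avoids any appeal to~(2): since $L$ is irreducible, $(L|_{V_{\reg}})^{\gamma}$ is an irreducible $\dd(V_{\reg})$-module, so the nonvanishing hypothesis $\shT_{\vs,\vs'}(L)\neq 0$ forces $\shT_{\vs,\vs'}(L)|_{V_{\reg}}=(L|_{V_{\reg}})^{\gamma}$ exactly. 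Then $(\shT_{\vs,\vs'}(L)|_{V_{\reg}})^{\gamma'}=(L|_{V_{\reg}})^{\gamma\gamma'}=(L|_{V_{\reg}})^{\gamma''}$, and applying $j_{*,\ad}$ to this \emph{equality of $\dd(V_{\reg})$-modules} gives the claimed isomorphism directly --- no injectivity or regularity of $\vs',\vs''$ needed. Your ``uniqueness statement'' (an injective, $\delta$-torsionfree object equals the $j_{*,\ad}$ of its restriction) is correct and useful, but here it is unnecessary: once the restrictions to $V_{\reg}$ literally coincide, so do their $j_{*,\ad}$'s.
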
 

\begin{proof}
	(1) Since $L$ is $\delta$-torsion free by Theorem~\ref{torsionfree}, $L \subset L|_{V_{\reg}}$ and hence $L \subseteq \shT_{\vs,\vs}(L)$. Consider the short exact sequence 
	$$
	0 \to L \to \shT_{\vs,\vs}(L) \to \shT_{\vs,\vs}(L) / L \to  0
	$$
 By  Corollary~\ref{thm:semi-simplicity2} and Theorem~\ref{G-is-injective}, $L$ is an injective object in $\eC_{,0}$. Therefore, the sequence splits and  $C=\shT_{\vs,\vs}(L) / L$ is a summand of $\shT_{\vs,\vs}(L)$. However, the fact that $L|_{V_{\reg}} = \shT_{\vs,\vs}(L)|_{V_{\reg}}$ shows that $C$ is a $\delta$-torsion summand of the torsion free module $\shT_{\vs,\vs}(L)$. Thus, $C = 0$. 
	
	(2) Assume  that $K$  is a simple submodule of  $ \shT_{\vs,\vs'}(L)$. Then, as $G$-modules, $K|_{V_{\reg}} = L|_{V_{\reg}}$. Since $L$ is a summand of $\eG_0$, Proposition~\ref{bigg} says that $L^G \neq 0$.  Since $\delta$ is $G$-invariant, we deduce that $K^G \neq 0$. Thus, $K$ is a quotient of $\eG_{\chi',0}$. Since $\vs'$ is regular, Theorem~\ref{G-is-injective} therefore  implies that $K$ is a summand of $\eG_{\chi',0}$. In particular,  
	$K$ is injective as an object in $\euls{C }_{\chi',0}$. Just as in Part~(1), we deduce that $K = \shT_{\vs,\vs'}(L)$. 
	
(3) Let $\gamma'$ be the automorphism associated to $(\vs',\vs'')$ and $\gamma''$ the automorphism associated
 to the pair $(\vs,\vs'')$. Since $L$ is irreducible, $L|_{V_{\reg}}$ and $(L|_{V_{\reg}})^{\gamma}$ are irreducible. 
 Since $N := \shT_{\vs,\vs'}(L)$ is non-zero, it follows that $N|_{V_{\reg}} = (L|_{V_{\reg}})^{\gamma}$. Then 
 $(N|_{V_{\reg}})^{\gamma'} = (L|_{V_{\reg}})^{\gamma \gamma'} = (L|_{V_{\reg}})^{\gamma''}$. By taking 
 $(\Sym V)^G_+$-locally nilpotent vectors, we deduce that 
 $\shT_{\vs,\vs''}(L) \cong 
 \left(\shT_{\vs',\vs''} \circ \shT_{\vs,\vs'}\right)(L)$. 	
\end{proof} 

When we combine our setup with the non-vanishing result of \cite[Theorem~7.11]{BerestChalykhQuasi}, we can deduce much stronger results. Recall from Corollary~\ref{cor:simplicity} that when $\euls{H}_q(W)$ is semisimple, so too is $\eG_0$ and the simple summands of $\eG_0$ are $\eG_{0, \rho}$ for $\rho \in \mr{Irr} \euls{H}_q(W)$.

\begin{theorem}\label{thm:KZGtwistsimples} Let $\vs, \vs'  \in \C^k$ be regular and 
assume that Hypothesis~\ref{K-hyp} holds  for both $\kappa$ and  $\kappa'$. Then 
	$$
	\shT_{\vs,\vs'}(\eG_{0,\rho}) \cong \eG_{0,\rho'}, \quad \textrm{where} \quad \mr{kz}_{\kappa,\kappa'}(\rho) \cong \rho'.
	$$
\end{theorem}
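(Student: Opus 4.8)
\textbf{Proof strategy for Theorem~\ref{thm:KZGtwistsimples}.}
The plan is to reduce the statement to the combination of three facts already available: the compatibility of the various twist functors recorded in Proposition~\ref{prop:KZtwistdiagram}, the fact that $\mr{KZ}_G$ is represented by $\eG_0$ (Proposition~\ref{prop:KZ-represented}, using that Hypothesis~\ref{K-hyp} forces $\Ak(W)$ simple), and the structural description of the simple summands of $\eG_0$ from Corollary~\ref{cor:simplicity}. First I would observe that, since $\vs$ is regular, $\eH_q(W)$ is semisimple, so $\mr{kz}_{\kappa,\kappa'} = \mr{KZ} \circ T_{\kappa,\kappa'}$ is an equivalence $\eH_q(W)\lmod \to \eH_{q'}(W)\lmod$ (both algebras being semisimple of the same dimension $|W|$, with $T_{\kappa,\kappa'}$ inverse to $T_{\kappa',\kappa}$ on the level of the spherical categories by \cite{BerestChalykhQuasi}). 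In particular $\rho' := \mr{kz}_{\kappa,\kappa'}(\rho)$ is a well-defined irreducible $\eH_{q'}(W)$-module and $\eG_{0,\rho'}$ is the corresponding simple summand of $\eG_{\chi',0}$.

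The heart of the argument is to track the module $\eG_{0,\rho}$ through the commutative square of Proposition~\ref{prop:KZtwistdiagram}. By Corollary~\ref{cor:simplicity}, $\mathbb{H}(\eG_{0,\rho}) = \eG_{0,\rho}^G = \eQ_{0,\rho}$ is the simple summand of $\eQ_0$ on which $\eH_q(W)$ acts via $\rho$; equivalently $\mr{KZ}_G(\eG_{0,\rho}) = \mr{KZ}(\eQ_{0,\rho}) = \rho$. Applying $\mr{kz}_{\kappa,\kappa'}$ and using commutativity of the right-hand square and the outer triangles gives $\mr{KZ}_G(\shT_{\vs,\vs'}(\eG_{0,\rho})) \cong \rho'$. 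Since $\mr{KZ}_G$ is represented by $\eG_0$, this reads $\Hom_{\eC}(\eG_0, \shT_{\vs,\vs'}(\eG_{0,\rho})) \cong \rho' \neq 0$; in particular $\shT_{\vs,\vs'}(\eG_{0,\rho}) \neq 0$. Now Lemma~\ref{lem:BCquasiDlemma}(2) applies: $\shT_{\vs,\vs'}(\eG_{0,\rho})$, being a nonzero image of the simple summand $\eG_{0,\rho}$ of $\eG_0$, is a simple summand of $\eG_{\chi',0}$. Thus $\shT_{\vs,\vs'}(\eG_{0,\rho}) \cong \eG_{0,\sigma}$ for some $\sigma \in \mr{Irr}\,\eH_{q'}(W)$, and applying $\mathbb{H}$ (or $\mr{KZ}_G$) once more identifies $\sigma$: by Corollary~\ref{cor:simplicity} the summand $\eG_{0,\sigma}$ is characterised among summands of $\eG_{\chi',0}$ by $\mr{KZ}_G(\eG_{0,\sigma}) = \sigma$, and we computed this to be $\rho'$, so $\sigma = \rho'$.

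One subtlety I would be careful about is the direction of the non-vanishing input. Lemma~\ref{lem:BCquasiDlemma}(2) only asserts that $\shT_{\vs,\vs'}(L)$ is \emph{either} a simple summand \emph{or} zero; the "or zero" alternative must be excluded, and the cleanest way to do that is precisely through $\mr{KZ}_G$ and the representability statement as above, rather than invoking \cite[Theorem~7.11]{BerestChalykhQuasi} directly — although that non-vanishing result gives an alternative route via $\euls{T}_{\kappa,\kappa'}(\mathbb{H}(\eG_{0,\rho})) = \euls{T}_{\kappa,\kappa'}(\eQ_{0,\rho}) \neq 0$ and Proposition~\ref{prop:KZtwistdiagram}. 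I expect the main (mild) obstacle to be bookkeeping: making sure that the object $\eG_{0,\rho}$ is genuinely an object of $\eC_{\chi,0}$ so that $\shT_{\vs,\vs'}$ and $\mr{KZ}_G$ are defined on it (this is Proposition~\ref{prop:euler} together with the fact that all subfactors of $\eG_0$ are strongly admissible), and that the identification $\mathbb{H}(\eG_{0,\rho}) = \eQ_{0,\rho}$ is compatible with the $\eH_q(W)$-actions appearing in Corollary~\ref{cor:simplicity}. Both are essentially formal consequences of Corollary~\ref{summand-endo3} (the adjunction $\mathrm{Id} \isom \mathbb{H}\circ\HHleft$) and the exactness of $\mathbb{H}$, so no new ideas are needed beyond organising the diagram chase.
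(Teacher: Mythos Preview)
Your proof is correct and follows essentially the same line as the paper's: compute $\mr{KZ}_G(\shT_{\vs,\vs'}(\eG_{0,\rho})) \cong \mr{kz}_{\kappa,\kappa'}(\rho) = \rho'$ via Proposition~\ref{prop:KZtwistdiagram}, use the non-vanishing of $\rho'$ (which in both arguments ultimately comes from \cite[Theorem~7.11]{BerestChalykhQuasi}) to rule out the zero alternative in Lemma~\ref{lem:BCquasiDlemma}(2), and then identify the resulting simple summand by its image under $\mr{KZ}_G$. One small simplification: you do not need the representability statement from Proposition~\ref{prop:KZ-represented} for the non-vanishing step, since $\mr{KZ}_G(X) \cong \rho' \neq 0$ already forces $X \neq 0$; the paper just uses this direct implication.
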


\begin{proof}
	By construction, $\mr{KZ}_G(\eG_{0,\rho}) \cong \rho$ and so   Proposition~\ref{prop:KZtwistdiagram}
 implies that 
 $$
 \mr{KZ}_G(\shT_{\vs,\vs'}(\eG_{0,\rho})) \cong \mr{kz}_{\kappa,\kappa'} (\mr{KZ}_G(\eG_{0,\rho})) \cong \mr{kz}_{\kappa,\kappa'} (\rho).
 $$
 Since $\vs, \vs'$ are regular, so too are $\kappa,\kappa'$. Therefore, \cite[Theorem~7.11]{BerestChalykhQuasi} says that $\rho' := \mr{kz}_{\kappa,\kappa'}(\rho)$ is an irreducible $\euls{H}_{q'}(W)$-module. In particular, this implies that $\shT_{\vs,\vs'}(\eG_{0,\rho})\not= 0$. The  result now follows from Lemma~\ref{lem:BCquasiDlemma}(2).   
\end{proof}

When $q = q' = 1$, so that $\euls{H}_q(W) = \euls{H}_{q'}(W) = \C W$, it is shown in \cite[Corollary~7.18]{BerestChalykhQuasi} that the permutation 
 $\mr{Irr} \, W \to \mr{Irr} \, W$ induced by $\mr{kz}_{\kappa,\kappa'}$ is the KZ-twist originally defined by 
 Opdam \cite[Part II,~Corollary~3.8(v)]{Opdamlectures}. Thus, as an immediate consequence we obtain: 

\begin{corollary}\label{cor:KZGtwist}
Suppose that  $q = q' = 1$ and 
  that Hypothesis~\ref{K-hyp} holds  for both $\kappa$ and  $\kappa'$.  Then   $\shT_{\vs,\vs'}(\eG_{0,\rho})$ is the irreducible $  \dd(V)$-module $\eG_{0,\rho'}$ labelled by the Opdam KZ-twist $\rho'$ of $\rho$. \qed
\end{corollary}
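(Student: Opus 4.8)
\textbf{Plan for the proof of Corollary~\ref{cor:KZGtwist}.} The statement is essentially a specialisation of Theorem~\ref{thm:KZGtwistsimples} together with the identification of the twist functor $\mr{kz}_{\kappa,\kappa'}$ on the level of $\C W$-modules with Opdam's KZ-twist. The plan is therefore to combine three ingredients. First, when $q = q' = 1$, the Hecke algebras are $\euls{H}_q(W) = \euls{H}_{q'}(W) = \C W$, so these are certainly semisimple; hence both $\vs$ and $\vs'$ are regular in the sense of Definition~\ref{regular-parameter}, and Theorem~\ref{thm:KZGtwistsimples} applies directly. This immediately gives $\shT_{\vs,\vs'}(\eG_{0,\rho}) \cong \eG_{0,\rho'}$, where $\rho' = \mr{kz}_{\kappa,\kappa'}(\rho)$, and by Corollary~\ref{cor:simplicity} the module $\eG_{0,\rho'}$ is an irreducible $\dd(V)$-module.

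Second, one invokes \cite[Corollary~7.18]{BerestChalykhQuasi}, which identifies the permutation of $\mr{Irr}\,W$ induced by $\mr{kz}_{\kappa,\kappa'}$ (in the case $q=q'=1$) with the KZ-twist defined by Opdam in \cite[Part~II, Corollary~3.8(v)]{Opdamlectures}. So $\rho' = \mr{kz}_{\kappa,\kappa'}(\rho)$ is precisely the Opdam KZ-twist of $\rho$. Substituting this identification into the conclusion of Theorem~\ref{thm:KZGtwistsimples} yields the statement of the corollary.

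Since both of the inputs are already established in the excerpt (Theorem~\ref{thm:KZGtwistsimples}) and cited from the literature (\cite[Corollary~7.18]{BerestChalykhQuasi}, \cite{Opdamlectures}), there is essentially no real obstacle here: the proof is a one-line deduction, which is why the excerpt marks it with \qed already in the statement. The only point requiring a moment's care is checking that the hypotheses of Theorem~\ref{thm:KZGtwistsimples} are met, namely that $q = q' = 1$ forces regularity of $\vs$ and $\vs'$ — but this is immediate since $\C W$ is semisimple in characteristic zero — and that Hypothesis~\ref{K-hyp} is assumed in the statement, so nothing further is needed. Thus the "hard part", such as it is, is purely bookkeeping: confirming that the indexing of simple summands by $\mr{Irr}\,W$ via $\rho \mapsto \eG_{0,\rho}$ used in Corollary~\ref{cor:simplicity} is compatible with the indexing used in \cite{BerestChalykhQuasi} for the twist $\mr{kz}_{\kappa,\kappa'}$, which it is by the commutative diagram of Proposition~\ref{prop:KZtwistdiagram} together with the definition of $\mr{KZ}_G$.
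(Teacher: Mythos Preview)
Your proposal is correct and matches the paper's approach exactly: the corollary is marked \qed\ because it is an immediate specialisation of Theorem~\ref{thm:KZGtwistsimples} (noting that $q=q'=1$ makes $\vs,\vs'$ regular) combined with \cite[Corollary~7.18]{BerestChalykhQuasi}, which identifies $\mr{kz}_{\kappa,\kappa'}$ with Opdam's KZ-twist. The paper states precisely this in the sentence preceding the corollary.
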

 
%%%%%%%%%%%%%%%	
 %%%%%%%%%%%%%%%	

\section{Examples I: Symmetric Spaces}\label{Sec:examples}
  
In the next three sections we discuss various classes of examples to which the main results of this paper can be applied. There are two main 
classes: symmetric spaces, which will be treated in this section and quiver representations, which are the focus of Section~\ref{Sec:Quivers}. 
There are various other, more specific, examples which will be discussed in Section~\ref{sec:otherexamples}. 

\bigskip

We begin with the definitions. Let $\Gtilde$ be a connected, complex reductive
algebraic group with Lie algebra $\gtilde$.   Fix a non-degenerate, $\Gtilde$-invariant symmetric bilinear form
$\varkappa$ on   $\gtilde$ that reduces to 
the Killing form on   $[\gtilde,\gtilde]$.  Let
$\vt$ be an involutive automorphism  $\gtilde$ preserving $\varkappa$ and set
$\g = \ker(\vt -I)$, $\p = \ker(\vt +I)$. Then, $\gtilde= \g \oplus
\p$ and the pair $(\gtilde,\g)$, or $(\gtilde,\vt)$, is called a {\it
  symmetric pair} with \emph{symmetric space} $V=\p$. \label{defn:symmetric} 
  The Lie algebra  $\g$   is always reductive. 
Let  $G$  be  the connected reductive subgroup of $\Gtilde$  such that 
  $\g=\Lie(G)$. The group $G$ acts on $\p$ via the adjoint
action.  For a classification of symmetric spaces, see the  tables in \cite[Chapter~X]{He1} or, in a form more convenient 
for this paper, those in \cite[Appendix~B]{BLNS}. 

Symmetric spaces   provide  a natural generalisation of the adjoint representation of $G$ on $\g$. Specifically,  consider the  {diagonal case}
where $\Gtilde = G \times G$ with $\vt(x,y) = (y,x)$. Then $(\gtilde,\g) = (\g \oplus \g, \g)$  with the natural  adjoint action of $G$ on $V = \g$.  
  These spaces  also  fit into the general framework considered in this article. 

\begin{lemma}\label{lem:symmetricstablepolar}
The symmetric space representations $(G : \p)$ are   visible stable polar representations. 
\end{lemma}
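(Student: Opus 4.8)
\textbf{Proof proposal for Lemma~\ref{lem:symmetricstablepolar}.}

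The plan is to treat the three properties---visible, stable, polar---separately, in each case reducing to the classical theory of the $\vt$-action on $\gtilde$ (Kostant--Rallis \cite{KostantRallis}). First I would recall that the Cartan subspaces of $\p$ in the sense of Dadok--Kac are precisely the maximal abelian subspaces $\h \subset \p$ consisting of semisimple elements, and that for any such $\h$ the restriction map gives the Chevalley isomorphism $\C[\p]^G \isom \C[\h]^W$ where $W = N_G(\h)/Z_G(\h)$ is the little Weyl group, which acts on $\h$ as a (finite) reflection group; this is exactly Kostant--Rallis, and it already gives $\dim \h = \dim \p \git G$, hence polarity. (One should note in passing that the semisimple element $v$ required in the definition of polar on page~\pageref{defn:polar} can be taken to be any regular semisimple element of $\p$, i.e.\ any $v \in \h$ with $\delta(v) \neq 0$; then $\{x \in \p : \g \cdot x \subseteq \g \cdot v\} = \h$ by the standard centralizer computation.)

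Next I would establish stability: there is a closed $G$-orbit in $\p$ of maximal dimension. For this I would observe that the orbit $G \cdot v$ of a regular semisimple $v \in \h$ is closed---because $v$ is a semisimple element of $\gtilde$ lying in $\p$, its $G$-orbit is closed by the same argument that shows $\Gtilde$-orbits of semisimple elements of $\gtilde$ are closed (use the Hilbert--Mumford criterion together with the $\vt$-stable parabolic/Levi decomposition, as in \cite{KostantRallis}), and its dimension $\dim \g - \dim Z_{\g}(v) = \dim \p - \dim \h$ is the maximal orbit dimension in $\p$ since any orbit has dimension at most $\dim \p - \dim(\p \git G) = \dim \p - \dim \h$. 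Hence $\p$ is stable.

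Finally, visibility: the nilcone $\euls{N}(\p) = \pi^{-1}(0)$, i.e.\ the set of $\vt$-nilpotent elements of $\p$, consists of finitely many $G$-orbits. This is again a theorem of Kostant--Rallis \cite[\S\S4--5]{KostantRallis}, proved by relating $\vt$-nilpotent $G$-orbits in $\p$ to the finitely many nilpotent $\Gtilde$-orbits in $\gtilde$ that meet $\p$ (each such $\Gtilde$-orbit meets $\p$ in finitely many $G$-orbits, by a $\vt$-equivariant Jacobson--Morozov argument). I expect this last point---finiteness of $\vt$-nilpotent orbits---to be the part that genuinely relies on a nontrivial external input; everything else is either a definition-chase or a closedness-of-semisimple-orbits argument. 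I would phrase the proof as a citation of Kostant--Rallis for each of the three ingredients, indicating which of their results supplies polarity (the Chevalley restriction theorem and reflection-group statement), stability (closedness of regular semisimple orbits), and visibility (finiteness of the $\vt$-nullcone), and noting that the diagonal case recovers the familiar statements for the adjoint representation.
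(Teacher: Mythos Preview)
Your proposal is correct and follows essentially the same approach as the paper: the paper's proof is simply a two-line citation---\cite[Lemma~8.2]{BLNS} for stable polar and \cite[Theorem~2]{KR} for visibility---and your sketch unpacks exactly what those references contain, with Kostant--Rallis supplying the Chevalley restriction theorem, closedness of regular semisimple orbits, and finiteness of $\vt$-nilpotent orbits.
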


\begin{proof} 
	This is proved in \cite[Lemma~8.2]{BLNS}, except for the fact that $(G : \p)$ is visible, which follows from \cite[Theorem~2]{KR}. 
\end{proof}

By \cite[Theorem~7.18]{BLNS} \fff (which generalises Theorem~\ref{thm:radial-exists}(3)), for every symmetric space $V=\mathfrak{p}$ one has a surjective radial parts map $\rad_{\vs}$.  However, $\Ak(W) = \Im(\rad_{\vs})$ need not be simple; indeed for $\vs\not=0$  the question of simplicity can be particularly difficult. For example, in the very special case when $(\gtilde,\g) = (\mathfrak{gl}(2), \mathfrak{gl}(1)\times \mathfrak{gl}(1)) $, all possible infinite dimensional primitive factors of $U(\mathfrak{sl}_2)$ appear as $\Im(\rad_{\vs})$ for a suitable choice of $\vs$ (use Remark~\ref{quivern=2} and Corollary~\ref{cor:cyclicrank1rad}).  So, for the rest of this section we will   stick to the case $\varsigma=0$.
This is also  the case of most interest in the literature. Moreover, in the case where $G$ is semisimple, this is no loss of generality, since then  every irreducible factor of $\delta$ is  $G$-invariant and hence  \cite[Corollary~6.8]{BLNS} says that  (up to isomorphism) $\Ak(W)$ is independent  of the twist. 
  
Since $\vs=0$, we can, and will, write  $\rad=\rad_0$ and $\g=\g_0$ throughout the section.  In this case,  by Corollary~\ref{left-simplicity2},   Hypotheses~\ref{K-hyp} is equivalent to the hypothesis that ${\Aak}=\Ak(W)$ be simple. The main result of \cite[Section~8 and Appendix~A]{BLNS} gives a complete classification of the symmetric pairs for which $\Aak$ is simple. In order to state the classification, we must first recall the relevant definitions.

\begin{definition} \label{nice-space}
	Suppose first that  $\gtilde$ is semisimple and let $R$ be the reduced root system associated to a \emph{Cartan subspace} $\h\subset \p$, as in \cite[Section~8]{BLNS}. For each $\alpha \in R^+$, define
	$$
	k_{\alpha} = \frac{1}{2}\left( \dim \gtilde^\alpha + \dim \gtilde^{2\alpha} \right). 
	$$
	The symmetric pair $(\gtilde,\g)$ (or the symmetric space $\p$)  is said to be:
	\begin{enumerate}
		\item {\it nice}  if $k_{\alpha} \leq 1$ for all $\alpha\in R^+$;  
	
	\item {\it integral} if $k_{\alpha} \in \mathbb{Z}$ for all $\alpha\in R^+$;
	
	\item {\it \gainly} \ if each simple summand is either nice or integral. 
\end{enumerate}
	
	 If $\gtilde$ is reductive, then $(\gtilde, \g)$ is {\it nice} (respectively {\it integral, \gainly}) provided that the semisimple pair $\bigl([\gtilde,\gtilde],\,\g\cap [\gtilde,\gtilde]\bigr)$ is nice (respectively integral, \gainly). 
\end{definition}

The definition of nice pairs was introduced by Sekiguchi \cite{Se} and the reader is referred to \cite[Section~6]{Se} and \cite{LS3} for further details on such spaces. In particular, the diagonal case is nice.  A complete list of the irreducible   \gainly\  symmetric pairs is given by \cite[Appendix~B]{BLNS}, combined with the next theorem.

\begin{theorem}\label{thm:symmetricsimple} \cite[Theorem~8.23]{BLNS} 
	The algebra $\Aak$ associated to the symmetric pair $(\gtilde,\vt)$ is simple if and only if $(\gtilde,\vt)$ is \gainly. \qed
\end{theorem}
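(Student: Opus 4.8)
\textbf{Proof proposal for Theorem~\ref{thm:symmetricsimple}.} This statement is quoted from \cite{BLNS} and its proof does not appear in the present excerpt, so I will only sketch the shape of the argument. The plan is to reduce simplicity of $\Aak = \Ak(W)$ to a statement about the parameter $\kappa$ attached to the symmetric pair, and then to match the resulting arithmetic condition on $\kappa$ against the combinatorial definition of ``\gainly.'' The key preliminary input is the explicit description, from \cite[Section~8]{BLNS}, of the Cherednik parameter $\kappa = \{\kappa_{H,j}\}$ in terms of the root-multiplicity data: each reflecting hyperplane $H$ of $W$ corresponds to a $W$-orbit of roots $\alpha \in R^+$, and the parameter $\kappa_{H,\bullet}$ is determined by the integers $k_\alpha = \tfrac12(\dim \gtilde^\alpha + \dim\gtilde^{2\alpha})$ and by whether $2\alpha$ is a root. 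So the first step is to write down, case by case along the classification of restricted root systems attached to symmetric pairs, the precise values of $\kappa$.

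The second step is to invoke the simplicity criterion for spherical subalgebras. By Lemma~\ref{lem:simpleinductionLosev}, $\Aak(W)$ is simple iff $\Aak(W_\lambda)$ is simple for all $\lambda$, so by Kashiwara's lemma (as in the proof of Proposition~\ref{prop:Qlambdaprojiffsimple}) one reduces to the case where $\h$ is $W$-irreducible, i.e.\ $W$ is an irreducible complex (here in fact real) reflection group. For such $W$ one then uses the known classification, via Losev's totally aspherical parameters \cite{LosevTotally} together with the work of \cite{BEG} and the theory of Hecke algebras, of exactly which parameters $\kappa$ give a simple $\Aak(W)$: for real reflection groups of rank one (type $A_1$, i.e.\ $W = \BZ/2$) the algebra is a primitive quotient of $U(\mathfrak{sl}_2)$ and one reads off simplicity from \cite{St}; for higher rank one reduces along parabolics and across the Coxeter types $A_n, B_n, D_n$ and the exceptionals. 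The upshot is an arithmetic condition: $\Aak(W)$ fails to be simple precisely when some $\kappa_{H,j}$ (or a ``local'' combination of them along a parabolic) lies in a prohibited set of rational numbers — concretely, roughly, when $k_\alpha \notin \BZ$ and $k_\alpha > 1$, i.e.\ when the pair is neither integral nor nice.

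The third step is bookkeeping: translate the prohibited set for $\kappa$ back through the dictionary of step one into the condition on the $k_\alpha$, and observe that ``every simple summand is nice or integral'' is exactly the negation of ``some simple summand produces a prohibited $\kappa$.'' One must be careful with the reductive (as opposed to semisimple) case, but Definition~\ref{nice-space} already passes to $[\gtilde,\gtilde]$ and \cite[Corollary~6.8]{BLNS} tells us that at $\vs=0$ the algebra $\Aak$ only depends on the semisimple part, so no new content arises there. Finally one checks against the tables of \cite[Appendix~B]{BLNS} that the families satisfying the $k_\alpha$-condition are precisely the ones listed as \gainly\ (diagonal, $\mathsf{AII}$, $\mathsf{DII}$, $\mathsf{EIV}$, and the nice and integral families).

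The main obstacle is step two, i.e.\ pinning down exactly which parameters give simple $\Aak(W)$ for each irreducible Coxeter type: this is the technical heart, and it is where one must combine Losev's characterization of totally aspherical parameters with the explicit Hecke-algebra / Schur-element computations, handling the rank-one reductions and the exceptional types by hand. Once that dictionary between ``simple'' and an explicit arithmetic condition on $\kappa$ is in place, steps one and three are essentially routine translations through the classification, and the equivalence with ``\gainly'' drops out.
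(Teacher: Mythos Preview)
You correctly observe that the paper does not prove this theorem: it is quoted from \cite[Theorem~8.22]{BLNS} and simply marked with \qed, so there is no proof here to compare your sketch against. Your outline of how such a proof might go (compute $\kappa$ from the root-multiplicity data $k_\alpha$, invoke a simplicity criterion for $\Ak(W)$ via totally aspherical parameters, and match against the classification) is a reasonable guess at the shape of the argument in \cite{BLNS}, but I cannot confirm it from the present paper.

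One small confusion in your sketch: in step three you list ``diagonal, $\mathsf{AII}$, $\mathsf{DII}$, $\mathsf{EIV}$, and the nice and integral families'' as the \gainly\ pairs. The list $\mathsf{AII}$, $\mathsf{DII}$, $\mathsf{EIV}$ comes from Theorem~\ref{cor:symmetricspacessHecke}, which classifies when the \emph{Hecke algebra} $\euls{H}_q(W)$ is semisimple---a strictly stronger condition than simplicity of $\Aak$. The \gainly\ pairs are by definition exactly the nice-or-integral ones, and Table~1 shows there are many more of them (e.g.\ $\mathsf{AI}_n$, $\mathsf{CI}_n$, $\mathsf{EI}$, etc.) for which $\Aak$ is simple but $\euls{H}_q(W)$ is not semisimple. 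Keep these two conditions separate.
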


In order to state further relevant results from \cite{LS3} and \cite{BLNS} we need an additional definition. Let 
 $(\gtilde,\vt)$ be  
  symmetric pair with symmetric space  $V=\p$. Then 
  set 
  \[ \eK(V)= \bigl\{d\in \eD(V) : d(f)=0 \text{ for all } f\in \C[V]^G\bigr\}.
  \]
Then    \cite[Theorem~9.10]{BLNS}, building on the case of nice pairs treated in \cite{LS3} gives:
  
\begin{theorem}\label{LS3-theorem}   
 Suppose that $(\gtilde,\vt)$, is a \gainly \ symmetric pair with symmetric space $V=\p$ and assume that $\vs=0$. Then $\eK(V)=\eD(V)\tau(\g) $ and $\ker(\rad)=\bigl(\eD(V)\tau(\g)\bigr)^G$. Moreover,  the ring $R = \eD(V)^G/\ker(\rad)$  is simple.  \qed
\end{theorem}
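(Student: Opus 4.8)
The statement to establish is Theorem~\ref{LS3-theorem}: for a \gainly\ symmetric pair $(\gtilde,\vt)$ with symmetric space $V=\p$ and $\vs=0$, one has $\eK(V)=\eD(V)\tau(\g)$, the kernel $\ker(\rad)=(\eD(V)\tau(\g))^G$, and the ring $R=\eD(V)^G/\ker(\rad)$ is simple. The first two assertions are quoted directly from \cite[Theorem~9.13]{BLNS} (which builds on \cite{LS3} for nice pairs), so the only work is to deduce simplicity of $R$. The plan is to feed the first two assertions into the machinery of Section~\ref{Sec:HC-bimodule}. By Theorem~\ref{thm:symmetricsimple}, being \gainly\ is equivalent to $\Aak=\Ak(W)$ being simple; and from the identification $\ker(\rad)=(\eD(V)\tau(\g))^G$ we get, in the notation of Definition~\ref{M-new-definition}, that $P=\ker(\rad)/(\eD(V)\g_\chi)^G=0$, hence $R=R/P=\Aak$. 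Thus $R\cong\Aak$ is simple.

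Concretely, the steps I would carry out are: (i) invoke \cite[Theorem~9.13]{BLNS} (equivalently \cite[Theorem~A]{LS3} in the nice case) to record $\eK(V)=\eD(V)\tau(\g)$ and $\ker(\rad)=(\eD(V)\tau(\g))^G$; (ii) observe that with $\vs=0$ we have $\g_\chi=\g$ and $\eMt=\eD(V)/\eD(V)\tau(\g)$, so $(\eD(V)\g_\chi)^G=(\eD(V)\tau(\g))^G=\ker(\rad)$, whence $P=0$ and $R=\Aak$ in the notation of \eqref{R-defn}; (iii) apply Theorem~\ref{thm:symmetricsimple}: since $(\gtilde,\vt)$ is \gainly, the spherical algebra $\Aak$ is simple; (iv) conclude $R\cong\Aak$ is a simple ring. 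One should also note that the surjectivity of $\rad$ needed to make sense of $R\twoheadrightarrow\Aak$ is guaranteed either by \cite[Theorem~7.19]{BLNS} (surjectivity of the radial parts map for all symmetric spaces) or, alternatively, by Theorem~\ref{thm:radial-surjective} once simplicity of $\Aak$ is known.

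I do not expect a genuine obstacle here: the theorem is essentially a bookkeeping consequence of results already established in the companion paper \cite{BLNS} together with Theorem~\ref{thm:symmetricsimple} of the present excerpt. If one wished to avoid citing \cite[Theorem~9.13]{BLNS} wholesale, the mildly delicate point would be the equality $\eK(V)=\eD(V)\tau(\g)$ for \gainly\ (as opposed to merely nice) pairs; this is where the integral case requires the more refined arguments of \cite{BLNS}, reducing ultimately to a statement about $b$-functions or about the structure of $\dd(V)^G$ on the regular locus. But since we are permitted to assume everything stated earlier, the proof is simply the chain of identifications $R=R/P=\Aak$ followed by an appeal to simplicity of $\Aak$.

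\begin{proof}
Since $\vs=0$ we have $\chi=0$, $\g_\chi=\tau(\g)$, and $\eMt=\eD(V)/\eD(V)\tau(\g)$. By \cite[Theorem~9.13]{BLNS} (which extends the case of nice pairs treated in \cite[Theorem~A]{LS3}), the hypothesis that $(\gtilde,\vt)$ is \gainly\ implies both $\eK(V)=\eD(V)\tau(\g)$ and $\ker(\rad)=\bigl(\eD(V)\tau(\g)\bigr)^G$. In particular, in the notation of \eqref{R-defn}, the ideal $P=\ker(\rad)/\bigl(\eD(V)\g_\chi\bigr)^G$ is zero, so that $R=R/P=\Aak=\Ak(W)$. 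By Theorem~\ref{thm:symmetricsimple}, the spherical algebra $\Aak$ associated to the \gainly\ pair $(\gtilde,\vt)$ is simple. Hence $R\cong\Aak$ is a simple ring.
\end{proof}
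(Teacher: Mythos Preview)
Your proposal is correct and matches the paper's treatment: the paper states this theorem with a bare \qed, citing it wholesale from \cite[Theorem~9.13]{BLNS} (building on \cite{LS3} for nice pairs), with no proof given. Your unpacking of the simplicity assertion via $P=0$, hence $R=\Aak$, together with Theorem~\ref{thm:symmetricsimple}, is exactly the intended logic and is consistent with the surrounding text (see Remark~\ref{torsionfree-remark}(2), which uses the same identification $R=\Aak$).
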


\begin{corollary}\label{LS3-corollary}
	 Suppose that $(\gtilde,\vt)$, is a \gainly \ symmetric pair with symmetric space $V=\p$ and assume that $\vs=0$. Then, in the notation of Definition~\ref{M-new-definition},  $R= {\Aak}$ and $\eMt=\eM$. In particular, $\eG = \eGt$. 
\end{corollary}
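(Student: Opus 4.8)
The plan is to deduce the corollary directly from Theorem~\ref{LS3-theorem} by unwinding the definitions in Definition~\ref{M-new-definition}. Recall that $R = \ddd^G/(\ddd\g_\chi)^G$ and, since $\vs = 0$, we have $\g_\chi = \tau(\g)$, so $R = \ddd(V)^G/(\ddd(V)\tau(\g))^G$. By Theorem~\ref{LS3-theorem}, $\ker(\rad) = (\ddd(V)\tau(\g))^G$, and the radial parts map $\rad = \rad_0$ has image ${\Aak}$ by definition (together with Theorem~\ref{thm:radial-exists}(3), noting ${\Aak}$ is simple by Theorem~\ref{thm:symmetricsimple} since the pair is \gainly). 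Hence $P = \ker(\rad)/(\ddd\g_\chi)^G = 0$ in the notation of \eqref{R-defn}, so $R = R/P = {\Aak}$.

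Next, since $\eMt = \ddd/\ddd\g_\chi$ and $\eM = \eMt/\eMt P$ by \eqref{M-defn}, the vanishing $P = 0$ immediately gives $\eMt P = 0$ and therefore $\eM = \eMt$. Finally, from Definition~\ref{defn:Glambda}, $\eGt_\lambda = \ddd/(\ddd\g_\chi + \ddd\mf{m}_\lambda)$ and $\eG_\lambda = \eM\otimes_{{\Aak}}\eQ_\lambda$; by Remark~\ref{rem:eGpres}, the identity $\ker(\rad) = (\ddd\g_\chi)^G$ yields $\eGt_\lambda = \eG_\lambda$ for every $\lambda \in \h^*$, and in particular $\eGt = \eGt_0 = \eG_0 = \eG$.

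I do not anticipate any genuine obstacle here: the corollary is a bookkeeping consequence of Theorem~\ref{LS3-theorem}, whose substantive content (that $\eK(V) = \ddd(V)\tau(\g)$ and $\ker(\rad) = (\ddd(V)\tau(\g))^G$, and that $R$ is simple) is already in hand. The only mild subtlety to flag is the identification of $\Im(\rad_0)$ with ${\Aak}$: this requires knowing that $\rad_0$ is surjective, which follows from Theorem~\ref{thm:radial-exists}(3) because ${\Aak}$ is simple for \gainly\ pairs (Theorem~\ref{thm:symmetricsimple}); alternatively one may simply invoke the last sentence of Theorem~\ref{LS3-theorem} that $R$ is simple, which forces the quotient map $R \twoheadrightarrow {\Aak} = R/P$ to have $P = 0$ since $P$ is a proper ideal (it lies in $\ker(\rad)/(\ddd\g_\chi)^G$, and if $P = R$ then ${\Aak} = 0$, absurd). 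Either route closes the argument; I would write it using the simplicity of $R$ to conclude $P=0$, as that keeps the proof self-contained within the symmetric-space discussion.

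\begin{proof}
	Since $\vs = 0$ we have $\g_\chi = \tau(\g)$, so that in the notation of \eqref{R-defn},
	\[
		R \ = \ \frac{\ddd(V)^G}{(\ddd(V)\tau(\g))^G}, \qquad P \ = \ \frac{\ker(\rad)}{(\ddd(V)\tau(\g))^G}.
	\]
	By Theorem~\ref{LS3-theorem}, $\ker(\rad) = (\ddd(V)\tau(\g))^G$, and therefore $P = 0$. Consequently $R = R/P = {\Aak}$ and, by \eqref{M-defn}, $\eMt P = 0$, so $\eM = \eMt/\eMt P = \eMt$. Finally, since $\ker(\rad) = (\ddd(V)\tau(\g))^G$, Remark~\ref{rem:eGpres} gives $\eGt_\lambda = \eG_\lambda$ for all $\lambda \in \h^*$; in particular $\eGt = \eGt_0 = \eG_0 = \eG$.
\end{proof}
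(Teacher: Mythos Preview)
Your proof is correct and follows essentially the same route as the paper: both invoke Theorem~\ref{LS3-theorem} to obtain $\ker(\rad)=(\ddd(V)\tau(\g))^G$, hence $P=0$ and $R=\Aak$, $\eMt=\eM$, and then appeal to Remark~\ref{rem:eGpres} for $\eGt_\lambda=\eG_\lambda$. The paper's proof additionally cites the surjectivity of $\rad$ (via Theorem~\ref{thm:intro-radial-surjective}), which you handle equivalently in your preamble through the simplicity of $\Aak$ or of $R$; either way the argument is the same bookkeeping.
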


\begin{proof}
	The first statement follow from Theorem~\ref{LS3-theorem}, combined with Theorem~\ref{thm:intro-radial-surjective}. The second statement then follows as in Remark~\ref{rem:eGpres}. 
\end{proof}

\begin{remark}\label{curio}    In the simplest case, where the symmetric space has rank one (thus $\dim \h=1$), the structure of 
	$\eK(V)$ and ${\Aak}$   is studied in more detail in \cite[Section~6]{LS3}.    In particular, it is shown there that 
	$\ker(\rad)=\bigl(\eD(V)\tau(\g)\bigr)^G$ always holds in rank one. Curiously, however, one can have $\eK(V)\supsetneqq \eD(V)\tau(\g) $ in which case
	$\eK(V)=\eD(V)\tau(\g')$ for a strictly larger Lie algebra $\g'\supsetneqq \g$. As noted in \cite[Corollary~6.5]{LS3},
	this happens for the symmetric pair  $(\mathfrak{sl}(3), \mathfrak{gl}(2))$.  One should note that   this example is  not directly relevant to the main results of  this paper 
	since  the ring  ${\Aak}$ is not simple---see  case $AIV_n$  in \cite[Appendix~B]{BLNS}.   \end{remark}

When the symmetric pair has rank greater than one, much less is known about $\ker(\rad)$ and $\eK(V)$  and a series of questions are raised in \cite[Section~7]{LS3}. For example,  we know of no example of a symmetric 
	space where   $\ker(\rad) \not= \bigl( \eD(V)\tau(\g)\bigr)^G$, see \cite[Conjecture~C2]{LS3}.  It is precisely to circumvent this problem that we introduced the objects $\eMt$ and $\eM$  in Definition~\ref{M-new-definition}.
 
%%%%%%%%%%% 
\subsection*{The Harish-Chandra module}

In this subsection we amplify the comments made in Remarks~\ref{LS-C5} and~\ref{LS-C4}.
Of the questions raised in \cite[Section~7]{LS3}, the ones of most relevance to this   paper are the following two conjectures. In both cases the conjecture is made under the assumption that  the  symmetric space $V=\mf{p}$ is nice, and we are still assuming that $\varsigma=0$. 

\medskip
\cite[Conjecture~C4]{LS3}    $\eG_\lambda$ is a semisimple $\eD(V)$-module.

\medskip
\cite[Conjecture~C5]{LS3}   $\eG_\lambda$  is  $\delta$-torsionfree as a left  $\eD(V)$-module.
\medskip 

  Proposition~\ref{torsionfree-converse} and Theorem~\ref{torsionfree} give the following  complete answers to Conjecture~C5 for nice symmetric spaces and, more generally, for \gainly\ symmetric spaces:
 
 \begin{corollary}\label{cor:C5}  {\rm (1)}  
 	If the symmetric space $V$ is \gainly \ then $\eG_\lambda$  has no nonzero submodule (or factor module) that is $\delta$-torsion. 
 
{\rm (2)} If $V$ is a symmetric space for which ${\Aak}(W_{\lambda})$ is not simple then $\eG_{\lambda}$ has a non-zero, $\delta$-torsion factor module. \qed
\end{corollary}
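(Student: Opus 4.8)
The plan is to deduce both parts directly from results already established in the paper; there is essentially no new content, so the work lies entirely in assembling the correct inputs and checking that their hypotheses are available in the symmetric-space setting.

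For part (1), the first step is to invoke Theorem~\ref{thm:symmetricsimple}, which says that for a \gainly\ symmetric pair $(\gtilde,\vt)$ the spherical algebra $\Aak = \Ak(W)$ is simple. With this simplicity hypothesis in force, Theorem~\ref{torsionfree} applies to the Harish-Chandra module $\eG = \eG_{\lambda} = \eM \otimes_{\Aak} (\Aak/\Aak\mathfrak{m}_{\lambda})$ with $d = \delta$: part~(2) of that theorem gives that $\eG_{\lambda}$ is $\delta$-torsionfree, hence has no nonzero $\delta$-torsion submodule (any such submodule would be annihilated by a power of $\delta$, so must vanish), while part~(3) gives that $\eG_{\lambda}$ has no nonzero $\delta$-torsion factor module. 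This is precisely the assertion of (1). If one wishes to phrase the conclusion for $\eGt_{\lambda}$, as in \cite[Conjecture~C5]{LS3}, one additionally applies the corollary following Theorem~\ref{LS3-theorem}, which identifies $\eGt_{\lambda} = \eG_{\lambda}$ for \gainly\ symmetric spaces.

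For part (2), I would first record that for any symmetric space the radial parts map $\rad = \rad_0$ is surjective; this is \cite[Theorem~7.19]{BLNS} (the generalisation of Theorem~\ref{thm:intro-radial-surjective} mentioned in the text), so $\eM$ is well-defined and Proposition~\ref{torsionfree-converse} is applicable. Then Proposition~\ref{torsionfree-converse}(1), applied to the given $\lambda$ under the hypothesis that $\Ak(W_{\lambda})$ is not simple, immediately yields a nonzero factor module of $\eG_{\lambda} = \eM \otimes_{\Aak} \eQ_{\lambda}$ that is $\delta$-torsion.

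The main obstacle is merely one of bookkeeping: one must make sure that the simplicity hypothesis needed for Theorem~\ref{torsionfree} and the surjectivity hypothesis needed for Proposition~\ref{torsionfree-converse} are genuinely in place for symmetric spaces (supplied by Theorem~\ref{thm:symmetricsimple} and \cite[Theorem~7.19]{BLNS} respectively), and to note that in part~(2) one does \emph{not} require $\Ak(W)$ itself to be simple — only the local failure of simplicity at $\lambda$ is used, which is exactly the situation Proposition~\ref{torsionfree-converse} is designed to exploit.
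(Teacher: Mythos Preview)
Your proposal is correct and matches the paper's approach exactly: the corollary is stated with a \qed and the preceding sentence in the paper explicitly attributes it to Theorem~\ref{torsionfree} and Proposition~\ref{torsionfree-converse}, with the required hypotheses (simplicity of $\Ak(W)$ for robust spaces via Theorem~\ref{thm:symmetricsimple}, surjectivity of $\rad$ for arbitrary symmetric spaces via \cite[Theorem~7.19]{BLNS}) supplied just as you indicate.
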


By Theorem~\ref{thm:semi-simplicity}  and  Corollary~\ref{thm:semi-simplicity2} we also get a comprehensive answer to Conjecture~C4:

\begin{corollary}\label{cor:C4}  {\rm (1)}	If the symmetric space $V$ is \gainly \ then $\eG_\lambda$ is a semisimple $\eD(V)$-module if and only if the Hecke algebra  $\euls{H}_q(W_\lambda)$ is semisimple.

{\rm (2)} If $V$ is any symmetric space, then the same conclusion holds for $\eG_{0}$.   \qed
\end{corollary}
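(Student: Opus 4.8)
\textbf{Proof plan for Corollary~\ref{cor:C4}.}

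The plan is to deduce both parts directly from the general machinery developed in Sections~\ref{Sec:admissible} together with the identification of spherical algebras attached to symmetric spaces. For Part~(1): a \gainly\ symmetric space $V$ is, by Lemma~\ref{lem:symmetricstablepolar}, a visible stable polar representation, and by Theorem~\ref{thm:symmetricsimple} the spherical algebra $\Aak = \Ak(W)$ attached to $V$ (with $\vs = 0$) is simple. By Lemma~\ref{lem:simpleinductionLosev}, $\Ak(W_\lambda)$ is then simple for every $\lambda \in \h^*$, so Hypothesis~\ref{K-hyp} is available (using Corollary~\ref{left-simplicity2}, since $\vs = 0$ forces $A_{\cdag} \cong \Ak^{\mathrm{op}}$). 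Thus Theorem~\ref{thm:semi-simplicity} applies verbatim and gives: $\eG_\lambda$ is semisimple if and only if $\euls{H}_q(W_\lambda)$ is semisimple. Finally, since $V$ is \gainly\ we have $\eGt_\lambda = \eG_\lambda$ by Theorem~\ref{LS3-theorem} (together with its corollary), so the statement is literally about the Harish-Chandra module $\eGt_\lambda$.

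For Part~(2): here we drop the niceness hypothesis, so $\Ak(W)$ need no longer be simple, but we only assert the conclusion for $\eG_0$ (equivalently $\lambda = 0$). The point is that for \emph{any} symmetric space the radial parts map $\rad_{\vs}$ is surjective by \cite[Theorem~7.19]{BLNS} (cf.\ Theorem~\ref{thm:radial-exists}(3) and its generalisations), and $\vs = 0$, so Corollary~\ref{thm:semi-simplicity2} applies: $\eG_0$ is semisimple if and only if $\euls{H}_q(W)$ is semisimple. This is exactly the claimed equivalence with $\lambda = 0$, i.e.\ $W_0 = W$.

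Both parts are therefore essentially bookkeeping: the substantive work has already been done in Theorem~\ref{thm:semi-simplicity} and Corollary~\ref{thm:semi-simplicity2}, and the only thing to check is that the hypotheses of those results hold for (\gainly, resp.\ arbitrary) symmetric spaces. The one place requiring a little care --- and the main potential obstacle --- is verifying that Hypothesis~\ref{K-hyp} genuinely holds in Part~(1) for all parabolic subgroups $W_\lambda$ simultaneously, rather than just for $W$ itself; this is precisely what Lemma~\ref{lem:simpleinductionLosev} supplies, so no extra argument beyond invoking it is needed. Since the proof is a direct citation of earlier results, it can be written in two or three lines, as in the excerpt where it is marked \qed\ immediately after the statement.
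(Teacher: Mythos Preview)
Your approach is correct and matches the paper's, which simply cites Theorem~\ref{thm:semi-simplicity} for Part~(1) and Corollary~\ref{thm:semi-simplicity2} for Part~(2). One small over-elaboration: Theorem~\ref{thm:semi-simplicity} only requires that $\Ak(W)$ be simple (the standing hypothesis of Section~\ref{Sec:admissible}), not Hypothesis~\ref{K-hyp} nor simplicity of each $\Ak(W_\lambda)$; so your invocations of Lemma~\ref{lem:simpleinductionLosev} and Corollary~\ref{left-simplicity2} are unnecessary, though harmless.
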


In order to  answer   Conjecture~C4, we need to determine exactly which of the corresponding Hecke algebras  is semisimple. As noted in Remark~\ref{LS-C4}, for any given group $W_\lambda$ this can be deduced from  the literature. Here, for simplicity we concentrate on the case $\lambda=0$; thus we are in the context of Corollary~\ref{cor:C4}(2).  The answer is then given by the following result, which also 
  proves   Corollary~\ref{intro-cor:symmetricspacessHecke} from the introduction.

\begin{theorem}\label{cor:symmetricspacessHecke}  Set $\vs=0$. 
	The Harish-Chandra module $\eG_0$ of an arbitrary  irreducible  symmetric space $V$ is semi\-simple if and only if each simple factor of $V$ is either of adjoint type or of type 
	\[\begin{aligned} \text{$\mathsf{AII}_n$} \  & \ \text{
	where $(\widetilde{\g},\g)= \mathfrak{sl}(2n), \mathfrak{sp}(n)$, }\\
	\text{$\mathsf{DII}_{p}$}  \ &\ \text{ 
where $(\widetilde{\g},\g)= \mathfrak{so}(2p), \mathfrak{so}(2p-1)$, or }\\
 \text{$\mathsf{EIV}$} \, \ &\ \text{  where $(\widetilde{\g},\g)= \mathfrak{e}(6), \mathfrak{f}(4)$.  }
 \end{aligned}\]
\end{theorem}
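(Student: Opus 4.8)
By Corollary~\ref{cor:C4}(2), the Harish-Chandra module $\eG_0$ is semisimple if and only if the Hecke algebra $\euls{H}_q(W)$ associated to $\Aak=\Ak(W)$ (at $\vs=0$) is semisimple, where $W$ is the little Weyl group of the symmetric space and $q$ is the parameter coming from $\kappa$ via \eqref{eq:Heckecrg2}. Since the module $\eG_0$ and the group $W$ both decompose as (tensor/direct) products over the simple factors of $V$, and a tensor product of finite-dimensional algebras over $\C$ is semisimple if and only if each factor is, it suffices to treat each irreducible symmetric space separately. Thus the plan is: (i) for each irreducible symmetric space $V$, read off the little Weyl group $W$ (a Weyl group, by \cite[Chapter~X]{He1}) and the multiplicities $k_\alpha$ from the root-with-multiplicity data; (ii) translate these into the Hecke parameters $q_{H,j}$ via \eqref{eq:Heckecrg2}; (iii) decide, using the Schur-element criterion, exactly when $\euls{H}_q(W)$ is semisimple; and (iv) compare the resulting list with the classification tables in \cite[Appendix~B]{BLNS} and \cite[Chapter~X]{He1} to obtain the three named types plus the diagonal (adjoint) case.

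For step (iii) I would invoke the standard semisimplicity criterion for Hecke algebras of real reflection groups recalled in Remark~\ref{LS-C4}: following \cite[Theorem~2.4.10]{Chloubook}, $\euls{H}_q(W)$ is semisimple at $q$ if and only if the Schur element attached to each irreducible $W$-representation is non-zero at $q$. For a Weyl group $W$ with a possibly unequal-parameter (but untwisted, since $\vs=0$) Hecke algebra, these Schur elements are explicit products of cyclotomic-type factors in the $q_{H,j}$; they are listed type-by-type in \cite{Chloubook}. The key computational point is that for a symmetric space the parameter $q_H$ on a reflection hyperplane of type $\alpha$ is $q_H = \exp(-2\pi i\, k_\alpha)$ (up to the sign convention fixed in Definition~\ref{Hecke-defn}), so $q_H$ is a root of unity precisely controlled by whether $k_\alpha \in \frac{1}{2}\Z$ or $\in \Z$, and by the specific half-integer or integer value. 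Running through the finitely many irreducible symmetric spaces, one finds that for most of them some Schur element vanishes at the relevant $q$ — indeed this already happens in the rank-one example of Section~\ref{HC-example}, where the Hecke algebra is $\C[T]/(T-1)^2$ — and the only survivors (besides the diagonal type, where $q\equiv 1$ and $\euls{H}_1(W)=\C W$ is always semisimple) are the three families $\mathsf{AII}_n$, $\mathsf{DII}_p$, $\mathsf{EIV}$; for these the little Weyl group is of type $A_{n-1}$, $A_1$, $A_2$ respectively, all $k_\alpha$ equal a common integer, so $q\equiv 1$ again and $\euls{H}_q(W)=\C W$ is semisimple.

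Concretely, I would organise step (i)–(ii) by splitting the irreducible symmetric spaces according to the type of the restricted root system $R$ and the possible multiplicity patterns $(m_\alpha)_{\alpha}$ recorded in \cite[Chapter~X]{He1} (equivalently the $k_\alpha$ of Definition~\ref{nice-space}). For a rank-$\ell$ space with restricted system of type $X_\ell$ and a single non-zero half-integral or integral parameter (types $A$, $D$, $E$, and the long/short cases of $B$, $C$, $F$, $G$, $BC$), the Schur-element condition reduces to a short list of forbidden parameter values; for type $BC_\ell$ one has three independent parameters and the analysis is slightly longer but entirely parallel. In each case I would simply check whether $1$ is among the forbidden values, recording the answer. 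The bookkeeping is the bulk of the work but is routine given the tables.

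\textbf{Main obstacle.} The genuine content is not any single hard estimate but the exhaustive and error-free case analysis in steps (i)–(iv): correctly matching the Helgason classification of irreducible symmetric spaces (and the Araki/Satake diagrams governing the little Weyl group and the restricted multiplicities) with the \cite{Chloubook} tables of Schur elements, keeping track of the sign and normalisation conventions in \eqref{eq:Heckecrg2} and Definition~\ref{Hecke-defn}, and making sure no type is omitted (in particular the several families with restricted root system $BC$, and the exceptional spaces $\mathsf{EIV}$, $\mathsf{FII}$, $\mathsf{G}$-type, where the answer must be checked by hand). I expect the delicate points to be: (a) verifying that for $\mathsf{AII}_n$, $\mathsf{DII}_p$, $\mathsf{EIV}$ \emph{every} restricted multiplicity gives the \emph{same} integer $k_\alpha$, so that the Hecke parameter is globally $1$; and (b) verifying that in every other non-diagonal case at least one Schur element does vanish — this is where one must be careful, since for \gainly\ but non-nice integral spaces the parameters are integers $>1$, and one must confirm the cyclotomic Schur factors still vanish there. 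Once these checks are in place, the statement follows by assembling the per-factor conclusions via the product decomposition of $\eG_0$.
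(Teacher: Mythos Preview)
Your plan is correct and matches the paper's proof: both reduce via Corollary~\ref{cor:C4}(2) to a case-by-case semisimplicity check of $\euls{H}_q(W)$ over the irreducible symmetric spaces, computing the Hecke parameter from the restricted-root multiplicities $k_\alpha$ and then applying a Schur-element-type criterion. Two practical simplifications the paper makes that you could adopt: it first restricts to the \gainly\ spaces (since $\Aak$ not simple already forces $\euls{H}_q(W)$ non-semisimple, eliminating in particular every $BC$ case you flagged as delicate), and it uses the single polynomial $Q_W$ of Geck--Rouquier \cite[Proposition~5.3]{GeckRouquierCenters} in place of the full Schur-element list---the resulting table then shows $u_s=1$ exactly for $\mathsf{AII}$, $\mathsf{DII}$, $\mathsf{EIV}$ (and the diagonal type) and $Q_W$ vanishes in all remaining \gainly\ cases.
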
 

\begin{remark}\label{Brion}  The four cases appearing in the theorem  also appear in other situations; notably they occur as the ``split rank case'' in  \cite[Theorem~6.1, p.429]{He1}.  We would like to thank Michel Brion for this observation.  
\end{remark}
 
\begin{proof}   If ${\Aak}$ is not simple then, as noted in the proof of Corollary~\ref{thm:semi-simplicity2},  neither $\eG_0$ nor the  Hecke algebra $\euls{H}_q(W)$ is semisimple. So, we can restrict to the cases when ${\Aak}$ is simple.  These are completely determined by the 
  tables in   \cite[Appendix~B]{BLNS} and are listed in Table~1.   We have ignored the diagonal case since the  corollary is known there; see \cite[Theorem~B]{AJM}. The rest of the notation for this table will be described later in the proof.

\begin{table}[t]\label{eq:symHeckess}
\vskip 10pt
{\small{	$\begin{array}{|l|l|l|l|l|l|l|l|l|} 
		\hline
		\strut  \textrm{Type } (\widetilde{\g},\theta)  & \mathsf{AI}_n   & \mathsf{AII}_n  & \mathsf{AIII}_{n,n} &
	\mathsf{BI}_{p=n,q} 	& \mathsf{BII}_{n=1}  & \mathsf{CI}_{n} &\mathsf{DI}_{p-1,p+1}  & \mathsf{DI}_{p,p}  \\ 
		 \textrm{Type } W & \mathsf{A}_{n-1} &  \mathsf{A}_{n-1}  & \mathsf{C}_n &
	\mathsf{B}_n	& \mathsf{A}_1  & \mathsf{C}_n   & \mathsf{B}_{p-1}& \mathsf{D}_p  \\
		\hline 
		\textrm{parameters}  &  x= -1 & x=1 & x = -1 & x=-1 & x=-1 & x=-1  & x=-1 & x=-1 \\
		x=u_s, & &  & y=1  & y=-1& & y=-1& y=1&  \\  y=u_t  &&&&&&&& \\
\hline	\strut  \euls{H}_q(W) \textrm{ ss}  &  \mr{N} &  \mr{Y}  &  \mr{N} &  \mr{N} & \mr{N}& \mr{N}& \mr{N} & \mr{N}\\
		\hline
	\end{array}$ }
\vskip 15pt
$\begin{array}{|l|l|l|l|l|l|l|l|l|} 
		\hline
		\strut  \textrm{Type } (\widetilde{\g},\theta)    &  \mathsf{DII}_{p}   & \mathsf{EI}   &\mathsf{EII} &
	\mathsf{EIV} 	& \mathsf{EV}  & \mathsf{EVIII}  &\mathsf{FI}  & \mathsf{G}   \\ 
		 \textrm{Type } W & \mathsf{A}_1& \mathsf{E}_6 & \mathsf{F}_4&
	\mathsf{A}_2	  & \mathsf{E}_7 & \mathsf{E}_8& \mathsf{F}_4  & \mathsf{G}_2  \\
		\hline 
		\textrm{parameters}  &  x= 1 & x=-1 & x = -1 & x= 1 & x= -1 & x=-1  & x=-1 & x=-1 \\
		x=u_s, & &  & y=1  && & & y=-1& y=-1 \\  y=u_t  &&&&&&&& \\
\hline	\strut  \euls{H}_q(W) \textrm{ ss}  &  \mr{Y} &  \mr{N}  &  \mr{N} &  \mr{Y} & \mr{N}& \mr{N}& \mr{N} & \mr{N}\\
		\hline
	\end{array}$}
	\vskip 10pt
	\caption{Semisimple Hecke Algebras.} \vskip-15pt
\end{table}

In order to determine  whether the  Hecke algebras are semisimple or not, we apply \cite{GeckRouquierCenters}.	To do this, we first describe how the presentation \eqref{eq:Heckecrg1} of the Hecke algebra is related to the description of the Iwahori-Hecke algebra given in \cite{GeckRouquierCenters}. Fixing a set of simple reflections $S \subset W$ and $s,t\in S$, let $u_s \in \C^{\times}$ with $u_s = u_t$ whenever $s$ is conjugate to $t$ in $W$. The (specialised) Iwahori-Hecke algebra $\euls{H}_u(W)$ is the $\C$-algebra generated by elements  $\{L_s :s \in S\}$, subject to the braid relations together with the quadratic relations
$$
(L_s - u_s)(L_s + 1) = 0 \quad \textrm{ for } s \in S. 
$$
If $H_s$ is the hyperplane in $\h$ fixed by $s$ then we write $T_s = T_{H_s}$ and $  \kappa_{s,j} = \kappa_{H_s,j}$ and $q_{s,j} = q_{H_s,j}$. Setting $L_s = - q_{s,1}^{-1} T_s$  defines an isomorphism $\euls{H}_u(W) \isom \euls{H}_q(W)$ for $u_s = - q_{s,0} q_{s,1}^{-1}$. Recall from \eqref{eq:Heckecrg2} that 
$$
q_{s,0} = \exp(-2\pi i \kappa_{s,0}) \quad \textrm{and} \quad q_{s,1} = - \exp(-2 \pi i \kappa_{s,1}).
$$
Then $L_s = \exp(2 \pi i \kappa_{s,1}) T_s$ and $u_s = \exp(2 \pi i (\kappa_{s,1} - \kappa_{s,0}))$.

In \cite[Appendix~A]{BLNS}, the  Cherednik algebra is defined in terms of one  parameter $k_{\alpha}$ for every positive root $\alpha$. Thinking of $\alpha = \alpha_s$ as the non-trivial eigenvector in $\h^*$ associated to the reflection $s$, we have $\kappa_{s,1} - \kappa_{s,0} = k_{\alpha_s}$. Therefore, $u_s = \exp(2 \pi i k_{\alpha})$.   
 
For each irreducible Weyl group, the reflections in $S$ are either all conjugate or belong to two conjugacy classes. In the latter case, let $s_1,s_2$ be fixed representatives in $S$ of these two conjugacy classes such that the root $\alpha_{s_1}$ associated to $s_1$ is a long root (just as in \cite{GeckRouquierCenters}). When there is one class, we set $x = u_s$ and when there are two (in type $\mathsf{B}/\mathsf{C}$), we set $x = u_{s_1}, y = u_{s_2}$. These numbers are displayed in the second line of Table~1.

Finally,   we use    \cite[Proposition~5.3]{GeckRouquierCenters} to determine when the resulting Hecke algebra is semisimple. In each case the   polynomial $Q_W$  from  \cite[(5.2)]{GeckRouquierCenters} is evaluated to determine whether  $Q_W(x,y)$ is zero or not. The answers are recorded in 
  the third line of Table~1, with Y, respectively N, denoting that  the Hecke algebra is, respectively is not semisimple.   \end{proof}

\begin{remark}\label{character_sheaves}
Via the Riemann-Hilbert correspondence, the simple objects in the category $\mathscr{C}_{0,0}$ of  
 strongly admissible $\dd$-modules on the symmetric space $V$ identifies with the set $\mr{Char}(\gtilde,G)$ of character sheaves on $V$, as studied, for example, in \cite{VilonenSymChar,VilonenSymVanishing}. In particular, the results of \cite[Section~7]{VilonenSymChar} can be viewed as classifying the simple objects in $\mathscr{C}_{0,0}$, when the symmetric pair is classical. It would be interesting to give an algebraic construction of these simple objects, thought of as $\dd$-modules. 
\end{remark}
  
%%%%%%%%%%%%%%%
 
%%%%%%%%%%%%%%%

\section{Examples II: Quiver Representations}  \label{Sec:Quivers}
A second important class of polar representations that are both stable and visible   occur as the   representation
spaces of  the cyclic quiver.  We consider the  corresponding algebras $\dd(V)$   in detail in this section, with our two main aims being to determine precisely when the associated spherical algebra $\Aak=\Ak(W)$ is simple and to understand  the structure of the Harish-Chandra module $\eG_0$.

We begin by fixing our notation.
 
  \begin{definition}\label{defn:quiver} 
 Let $Q=Q_{\ell}$ be the cyclic quiver with vertices $0,1,\ds, \ell-1$ and  arrows $a_i : i \to i+1$ (indices taken modulo
  $\ell$).   Fix the dimension vector $n\mathfrak{d} = (n,\ds, n)$ for  the minimal imaginary root $\mathfrak{d} = (1,\dots, 1)$. We are interested in the representation space $V := \mr{Rep}(Q,n\mathfrak{d})$,
  which we identify with $\bigoplus_{i = 0}^{\ell-1} \mr{Mat}(n,\C)$.
The group $G := GL(n)^{\ell}$ then acts on $V$ by change of basis; thus  $g = \left(g^{(i)}\right) \in G$ acts on
$ X = (X^{(0)},\ds, X^{(\ell-1)})\in V$  by
\begin{equation}\label{eq:quiver}
g \cdot X = \Bigl(g^{(1)} X^{(0)} (g^{(0)})^{-1}, g^{(2)} X^{(1)} (g^{(1)})^{-1}, \ds, g^{(0)} X^{(\ell-1)} (g^{(\ell-1)})^{-1}\Bigr).
\end{equation}
Set $\g=\mathrm{Lie}(G)$.  Identifying $\C^n$ with the space of diagonal matrices in $\mr{Mat}(n,\C)$, the Cartan subalgebra $\h$ 
 of $V$ is the diagonally embedded copy of $\C^n$ in $V$. The associated Weyl group is the wreath product 
  $$W := \Z_{\ell} \wr \mathfrak{S}_n = (\Z_\ell )^n \rtimes \mathfrak{S}_n,$$
 for the symmetric group $ \mathfrak{S}_n $  and $ \Z_{\ell}=\Z/\ell \Z.$

Here, $\mathfrak{S}_n$ acts on $\C^n$ by permuting coordinates, whilst the $i^{\mr{th}}$ factor of  $\Z_\ell $ in $(\Z_\ell)^{n}$ acts by rescaling the $i^{\mr{th}}$  coordinate by powers of a
  fixed  primitive $\ell^{\mathrm th}$ root of unity $\omega$. 
 
 For one of the examples in Section~\ref{sec:otherexamples}, we  will also need the framed quiver $\widetilde{Q}_{\ell}$, where we add a vertex ${\infty}$ and an arrow from ${\infty}$ to $0$. In this case the group $G$ is the same, but the representation space is $V'= V\oplus \C^n=\mr{Rep}(\widetilde{Q}_{\ell},(n\mathfrak{d},1))$ with the action
  \begin{equation}\label{eq:quiver2}
g \cdot X' = \Bigl(g^{(1)} X^{(0)} (g^{(0)})^{-1},   \ds, g^{(0)} X^{(i)} (g^{(\ell-1)})^{-1},\,
g^{(0)} w\Bigr).
\end{equation}
for $g\in G$ acting on $X'=(X, w)$ with $w\in \C^n$. This action agrees with that of~\cite{BB}. 

\smallskip
  \emph{This notation will be fixed throughout the section.}
\end{definition}

We remark that the action of $G$ on $V$ factors through a factor $PG:= G/\C^\times$ and so one can, as is done in \cite{OblomkovHC},  replace $G$ by $PG$. We have chosen not to do so since  it does not affect the results.

 We first show that the $Q_{\ell}$  example  does indeed fit into our general framework, for which it is convenient to use 
 Vinberg's $\theta$-representations.
 
 \begin{example}{\bf Theta representations.}\label{ex:theta-reps}
A large class of polar representations is given by Vinberg's $\theta$-representations \cite{Vinberg}, defined as follows. Let $\Gtilde$ be a connected reductive group with Lie algebra 
$\gtilde$ and an automorphism  $\theta \colon \gtilde \to \gtilde$ of order $2\leq \ell < \infty$. Then $\theta$ defines a 
$\mathbb{Z}_{\ell}$-grading $\gtilde = \gtilde_0 \oplus \gtilde_1 \cdots \oplus \gtilde_{{\ell}-1}$, where $\gtilde_0$ is a reductive 
subalgebra of $\gtilde$. Let $\Gtilde_0 \subset \Gtilde$ be the connected subgroup with Lie algebra $\gtilde_0$. Then $\Gtilde_0$ acts on $V := \gtilde_1$ and we define $G$ to be the image of $\Gtilde_0$ in $GL(V)$. 
The pair $(G,V)$ is the $\theta$-representation associated to $(\mf{k},\theta)$. By \cite[Theorem~4]{Vinberg} and \cite[Lemma~3.2]{KacSome}, $\theta$-representations are visible  polar representations. When ${\ell} = 2$, $\theta$-representations are the same as symmetric spaces and are therefore stable but for  ${\ell} \ge 3$ they need  not be stable; see  \cite[p.~245]{PopovVinberg}. 
\end{example}

\begin{lemma}\label{prop:quiver-polar}
The representation $(G,V)= ( GL(n)^\ell,\, \mr{Rep}(Q_\ell,n\mathfrak{d}))$ is a visible stable polar representation.

Similarly, the representation $(G,V')= ( GL(n)^\ell,\, \mr{Rep}(\widetilde{Q}_\ell,(n\mathfrak{d},1)))$ is a visible   polar representation.  It is not stable.
\end{lemma}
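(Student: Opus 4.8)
The statement has two halves, and both should follow by recognizing the quiver representations $(G,V)$ and $(G,V')$ as Vinberg $\theta$-representations and then invoking the general machinery already assembled. For the cyclic quiver $Q_\ell$, the plan is to exhibit an explicit $\mathbb{Z}_\ell$-grading that realizes $V=\mathrm{Rep}(Q_\ell,n\mathfrak d)$ as $\gtilde_1$ for a suitable pair $(\gtilde,\theta)$: take $\gtilde=\mathfrak{gl}(n\ell)$ with block decomposition indexed by $\mathbb{Z}_\ell$, let $\theta$ be conjugation by $\mathrm{diag}(1,\omega,\omega^2,\dots,\omega^{\ell-1})\otimes I_n$ for a primitive $\ell$th root of unity $\omega$, so that $\gtilde_j$ consists of the block matrices shifted by $j$. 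Then $\gtilde_0\cong\mathfrak{gl}(n)^\ell$ with $\Gtilde_0=GL(n)^\ell$ acting on $\gtilde_1\cong\bigoplus_{i=0}^{\ell-1}\mathrm{Mat}(n,\C)$ precisely by the change-of-basis action \eqref{eq:quiver}. By Example~\ref{ex:theta-reps} (citing \cite[Theorem~4]{Vinberg} and \cite[Lemma~3.2]{KacSome}), this immediately gives that $(G,V)$ is a visible polar representation, with $G$ the image of $GL(n)^\ell$ in $GL(V)$ and Cartan subspace the diagonally embedded $\C^n$, matching Definition~\ref{defn:quiver}.

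It then remains to verify that this particular $\theta$-representation is \emph{stable}. For stability I would argue directly: a point $X=(X^{(0)},\dots,X^{(\ell-1)})\in V$ has closed $G$-orbit if and only if, viewing the $X^{(i)}$ as defining an endomorphism of $\bigoplus\C^n$ (equivalently, a representation of $Q_\ell$ of dimension $n\mathfrak d$), that representation is semisimple in the category of $Q_\ell$-representations. A generic such representation — for instance one for which the composite cycle $X^{(\ell-1)}\cdots X^{(1)}X^{(0)}$ is a diagonalizable matrix with $n$ distinct nonzero eigenvalues — decomposes as a direct sum of $n$ one-dimensional (along the cycle) representations with distinct scalar cycles, hence is semisimple, hence has closed orbit; and its stabilizer is exactly the maximal torus $(\C^\times)^n$ (modulo the global $\C^\times$), whose dimension $n$ equals $\dim\h$, which is the maximal orbit dimension. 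Thus there is a closed orbit of maximal dimension, i.e.\ $V$ is stable. (Alternatively one can cite the known GIT description of $\mathrm{Rep}(Q_\ell,n\mathfrak d)/\!/G$ via multisymmetric functions, together with the fact that polystable points have closed orbits.) Combining, $(G,V)$ is visible, stable, and polar.

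For the framed quiver $\widetilde Q_\ell$, the representation $V'=V\oplus\C^n$ is likewise a polar representation: one can again realize it inside a larger $\theta$-representation by adding a framing vertex, or simply observe that $V'\git G\cong \h\git W$ still holds since the extra framing coordinate $w\in\C^n$ together with the action \eqref{eq:quiver2} adjusts the invariant theory in the standard Nakajima-quiver way but the quotient is still $\C^n\git W$ of the correct dimension; the Cartan subspace and Weyl group are unchanged, and visibility follows from the $\theta$-representation structure (or from the fact that the nilcone of $V'$ has finitely many $G$-orbits, by the classification of representations of $\widetilde Q_\ell$ with a fixed dimension vector). The key point to record is that $(G,V')$ is \emph{not} stable: the global $\C^\times\subset G$ (and more to the point the one-parameter subgroups scaling the framing arrow) can be used to deform any $X'$ to one with $w=0$, and for $w=0$ the orbit never has maximal dimension — the framing vector $w$ can always be sent to $0$ in the orbit closure by a suitable cocharacter, so no orbit meeting $\{w\ne 0\}$ is closed, while orbits inside $\{w=0\}$ have strictly smaller dimension than the generic orbit (which requires $w$ generic). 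Hence there is no closed orbit of maximal dimension and $V'$ fails stability.

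\textbf{Main obstacle.} The routine part is the $\theta$-representation identification; the one point that needs genuine care is the stability claim for $V$ — specifically, pinning down that the generic semisimple $Q_\ell$-representation of dimension $n\mathfrak d$ has orbit of dimension exactly $\dim G-\dim\h$ and that this is the maximal orbit dimension. This is a concrete linear-algebra computation (the stabilizer of a generic semisimple point is the $n$-dimensional torus, and $\dim\h=n$), and the cleanest route may be to cite the analysis of $\mathrm{Rep}(Q_\ell,n\mathfrak d)$ already present in the literature on cyclic-quiver varieties rather than redo it; the non-stability of $V'$, by contrast, is immediate from the presence of the framing direction and needs only the observation that framing cocharacters contract $w\to 0$.
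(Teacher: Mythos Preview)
Your approach matches the paper's: both recognise $V$ and $V'$ as $\theta$-representations (the paper via conjugation by $\alpha=\mathrm{diag}(1,\dots,1,\omega,\dots,\omega,\dots,\omega^{\ell-1})$ on $\mathfrak{gl}(n\ell)$, and on a block-triangular enlargement for $V'$) and invoke \cite{Vinberg} and \cite{KacSome} for polar and visible. For stability of $V$ the paper simply cites \cite[Lemma~2.4]{GordonCyclicQuiver} (that $G\cdot\h_{\reg}$ is open and dense) where you argue directly via generic semisimple representations; for non-stability of $V'$ the paper cites \cite[Theorem~1]{Semisimplequiver} to obtain $\C[V']^G\cong\C[V]^G$ and hence $\overline{G\cdot(X,w)}\supset G\cdot(X,0)$, which is exactly your cocharacter observation $t\mapsto(tI_n,\dots,tI_n)$ in disguise. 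The only under-specified part of your write-up is polarity and visibility for $V'$: the paper gives the explicit $\theta$-grading on $(n\ell+1)\times(n\ell+1)$ matrices of the form $\left(\begin{smallmatrix}A&0\\B&1\end{smallmatrix}\right)$, and you should do likewise rather than gesturing at ``adding a framing vertex'' or an unspecified classification.
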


\begin{proof} This is standard. We first note that $V$ is a $\theta$-representation. In more detail, the diagonal matrix $\alpha=(1,\dots,1,\omega,\dots,\omega,\dots, \omega^{\ell-1})$ (where each entry appears $n$ times) acts by conjugation on $\mathfrak{gl}(n\ell)$. Then it is immediate that, in the notation of Example~\ref{ex:theta-reps},
 $\g=\mathfrak{gl}(n\ell)_0$  while $V=\mathfrak{gl}(n\ell)_1$ forms the $\omega$-eigenspace.  By \cite[Theorem~4]{Vinberg} and \cite[Lemma~3.2]{KacSome} $V$ is  therefore a visible polar representation. 
 The fact that $V$ is a stable representation is a consequence of \cite[Lemma 2.4]{GordonCyclicQuiver} which says that set $G \cdot \h_{\reg}$ of closed orbits is open and dense in $V$.

The argument for the framed quiver is similar, except that now we replace $\mathfrak{g}(n\ell)$ by the 
$  (n\ell+1)\times( n\ell)$ matrices of the form 
$\left(\begin{smallmatrix} A&0\\B&1\end{smallmatrix}\right) $, where $A$
 is $n\ell\times n\ell$ and $B$ is $1\times n\ell$. Similarly we replace $\alpha$ by the diagonal 
 matrix $\alpha' = (\alpha,1)$. Since we will only use this result for one example, we leave it to the reader to check that this does indeed make $(G,V')$ into a $\theta$-representation. Thus, as in the last paragraph, $V'$ is a visible polar representation. It is a consequence of \cite[Theorem~1]{Semisimplequiver} that $\C[V']^G \stackrel{\sim}{\rightarrow} \C[V]^G$. Therefore, if $(X,w) \in V' = \mr{Rep}(Q_\ell,n\mathfrak{d}) \oplus \C^n$ then $\overline{G \cdot (X,w)} \supset G \cdot (X,0)$ implying that generic orbits in $V'$ are not closed. 
 \end{proof}

\begin{remark}\label{quivern=2}
	When $\ell = 2$, the pair $(G,V)$ can also  be realised as the symmetric pair $(\mf{gl}(2n),\mf{gl}(n) \oplus \mf{gl}(n))$; this is essentially  Case~(II) of \cite[Theorem 2.5]{LS3}. For $\ell>2$, $V$  is not a symmetric representation. \end{remark}
	
	In the rest of the section we will only consider  the representation $(G,V)= ( GL(n)^\ell,\, \mr{Rep}(Q_\ell,n\mathfrak{d}))$.
In order to understand the possible choices of the character $\chi$ and parameter $\vs$,	as discussed at  the beginning of  \cite[Section~4]{BLNS}, we need to factorise the discriminant $\delta$ of $V$. The reflecting hyperplanes for  $\Z_{\ell} \wr \mathfrak{S}_n$ are $x_i = 0$ and $x_i - \omega^k x_j = 0$ for $1 \le i < j \le n$ and $0 \le k \le \ell-1$. The factor $x_i$ appears with multiplicity $\ell$ in $\delta |_{\h}$ and $x_i - \omega^k x_j$ with multiplicity two since 
\begin{equation}\label{eq:delta-h}
\delta |_{\h} \ = \ (x_1 \cdots x_n)^{\ell} \prod_{i< j} (x_i^{\ell} - x_j^{\ell})^2.
\end{equation}
Note that, when $n=1$,  the final term $\prod_{i< j} (x_i^{\ell} - x_j^{\ell})^2$ also equals $1$ and so formul\ae\ involving  $\delta|_{\h}$  have to be suitably interpreted in that case.

Define $\delta_i(X) = \det (X^{(i)})$. Then $\delta_i$ is an irreducible  $G$-semi-invariant whose restriction to $\h$ equals $x_1 \cdots x_n$. As in Section~\ref{Sec:polarreps},  for  $0\leq i\leq \ell-1$  let $\theta_i$ be the character of $G$ associated to the semi-invariant $\delta_i$ and  write  $\C[V]^\theta_i$ for  the space of $\theta_i$-semi-invariants.
 Finally, let $\delta_{\mf{gl}}$ denote the discriminant for the adjoint representation of $\mf{gl}(n)$ and set  $\delta_{\infty}= \delta_{\mf{gl}}(X^{(0)} \cdots X^{(\ell-1)}) $, noting that $\delta_{\infty}=1$ when $n=1$.

\begin{lemma}
	The polynomial $\delta_{\infty}$ is irreducible in $\C[V]$. 
\end{lemma}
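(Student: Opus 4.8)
The plan is to view $\delta_\infty$ as the pull-back of the (classically irreducible) discriminant $\delta_{\mf{gl}}$ of $\mr{Mat}(n,\C)$ along the ``multiply everything together'' morphism $\phi\colon V\to\mr{Mat}(n,\C)$, $\phi(X)=X^{(0)}X^{(1)}\cdots X^{(\ell-1)}$, so that $\delta_\infty=\phi^*\delta_{\mf{gl}}$, and then to show that $\phi^*$ preserves irreducibility here because, after deleting a few hyperplanes, $\phi$ is simply a linear projection. We may assume $n\ge2$; for $n=1$ one has $\delta_\infty=1$ and the statement is read as vacuous, in line with the other formul\ae{} involving $\delta$ when $n=1$.

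First I would restrict to the principal open set $U=\{X\in V:X^{(1)},\dots,X^{(\ell-1)}\text{ invertible}\}=V\smallsetminus\bigcup_{i=1}^{\ell-1}V(\delta_i)$, where $\delta_i(X)=\det(X^{(i)})$ as above. On $U$ the assignment $X\mapsto\bigl(X^{(1)},\dots,X^{(\ell-1)},\phi(X)\bigr)$ is an isomorphism $U\isom GL(n)^{\ell-1}\times\mr{Mat}(n,\C)$, with inverse $(g_1,\dots,g_{\ell-1},Y)\mapsto(Yg_{\ell-1}^{-1}\cdots g_1^{-1},g_1,\dots,g_{\ell-1})$, and under this isomorphism $\phi|_U$ becomes the projection onto the last factor. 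Hence $V(\delta_\infty)\cap U$ is identified with $GL(n)^{\ell-1}\times V(\delta_{\mf{gl}})$, a product of two irreducible varieties, so it is irreducible (and non-empty).

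Next I would transfer this back to $V$. Restricting $\delta_\infty$ along the slice $\iota\colon\mr{Mat}(n,\C)\hookrightarrow V$, $\iota(X)=(X,I,\dots,I)$, gives $\iota^*\delta_\infty=\delta_{\mf{gl}}$; since $\delta_{\mf{gl}}$ is non-zero and non-constant, so is $\delta_\infty$, and therefore the hypersurface $V(\delta_\infty)$ in the polynomial ring $\C[V]$ is pure of codimension one. Any irreducible component $Z$ of $V(\delta_\infty)$ with $Z\cap U=\emptyset$ would lie in $\bigcup_{i=1}^{\ell-1}V(\delta_i)$, hence in some $V(\delta_i)$, and then $Z=V(\delta_i)$ (both being irreducible of codimension one, $\delta_i$ being irreducible), so $\delta_i\mid\delta_\infty$. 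The crucial point is that this is impossible: the point with $X^{(i)}=\mathrm{diag}(0,1,\dots,1)$, $X^{(0)}=\mathrm{diag}(1,1,2,\dots,n-1)$ and $X^{(j)}=I$ for the remaining indices has $\delta_i=0$ but $\phi(X)=\mathrm{diag}(0,1,2,\dots,n-1)$, which has distinct eigenvalues, so $\delta_\infty\ne0$ there; thus $V(\delta_i)\not\subseteq V(\delta_\infty)$ for $1\le i\le\ell-1$. Consequently every component of $V(\delta_\infty)$ meets $U$, and as $V(\delta_\infty)\cap U$ is irreducible, $V(\delta_\infty)$ itself is irreducible. Writing $\delta_\infty=c\,p^{e}$ with $p\in\C[V]$ irreducible and $e\ge1$, a final application of $\iota^*$ yields $\delta_{\mf{gl}}=c\,(\iota^*p)^{e}$, which forces $e=1$ because $\delta_{\mf{gl}}$ is non-constant and square-free. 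Hence $\delta_\infty$ is irreducible.

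The main obstacle is the bookkeeping in the third paragraph, namely excluding an extra component of $V(\delta_\infty)$ hidden inside the complement $V\smallsetminus U$ — equivalently, checking that no $\delta_i$ divides $\delta_\infty$ — which is exactly what the explicit diagonal point settles. Everything else is the standard behaviour of pull-backs along morphisms that are generically linear projections, together with the classical irreducibility of $\delta_{\mf{gl}}$ (which in any case follows from the irreducibility and reducedness of the discriminant hypersurface of $\mr{Mat}(n,\C)$).
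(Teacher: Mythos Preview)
Your argument is correct, but it follows a different route from the paper's. The paper argues via semi-invariant theory: since $\delta_\infty$ is $G$-invariant, any irreducible factor is a $G$-semi-invariant, whose restriction to $\h$ is a $W$-semi-invariant dividing $\prod_{i<j}(x_i^\ell-x_j^\ell)^2$; the only proper such divisor is $u=\prod_{i<j}(x_i^\ell-x_j^\ell)$, forcing a factorisation $\delta_\infty=f_1f_2$ with $f_i|_\h=u$. But $u$ is a $\mathrm{sgn}$-semi-invariant under $\mathfrak{S}_n$, while each $f_i$ must be fixed by $SL(n)^\ell$ (which has no nontrivial characters) and hence by the copy of $\mathfrak{S}_n$ inside $N_{SL(n)^\ell}(\h)$, a contradiction.

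Your approach is more geometric and elementary: you trivialise the multiplication map $\phi$ on the open set where $X^{(1)},\dots,X^{(\ell-1)}$ are invertible, reducing to the irreducibility of $\delta_{\mf{gl}}$ itself, and then exclude components hidden in the complement by an explicit point. This has the advantage of avoiding the $W$-semi-invariant machinery entirely and would apply verbatim to the $\phi$-pullback of any irreducible polynomial on $\mr{Mat}(n,\C)$. On the other hand, the paper's argument is self-contained in that it does not take the irreducibility of $\delta_{\mf{gl}}$ as an input (indeed, specialising the paper's argument to $\ell=1$ gives a proof of that fact), and it fits naturally with the semi-invariant framework used throughout the surrounding section.
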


\begin{proof}
We assume that $\delta_{\infty}$ is not irreducible. Each irreducible factor $f_i$ is a $G$-semi-invariant, and hence $f_i |_{\h}$ is a $W$-semi-invariant. But the only $W$-semi-invariant (properly) dividing 
$$
\delta_{\mf{gl}}(X^{(0)} \cdots X^{(\ell-1)}) |_{\h} = 
\prod_{i < j} (x_i^{\ell} - x_j^{\ell})^2
$$
is $u := \prod_{i < j} (x_i^{\ell} - x_j^{\ell})$ since $u$ is the product over the $W$-orbit of any (linear) factor $x_i - \omega^k x_j$. This implies that $\delta_{\infty} = f_1 f_2$, where $f_1 |_{\h} = f_2 |_{\h} = u$. Note that $W$ admits a (split) surjection to $\mathfrak{S}_n$ with kernel $\Z_{\ell}^n$. If $\mr{sgn}$ denote pullback along this surjection of the usual sign character of $\mathfrak{S}_n$, then $u$ is a $\mr{sgn}$-semi-invariant. In particular, $u |_{\mathfrak{S}_n}$ is a sign semi-invariant. To see this, note that $\mathfrak{S}_n$ acts as the permutation representation on the space spanned by the monomials $x_1^{\ell}, \ds, x_n^{\ell}$.

The image of $N_{SL_n^{\ell}}(\h)$ inside $W = N_{GL_n^{\ell}}(\h) / Z_{GL_n^{\ell}}(\h)$ contains $\mathfrak{S}_n$. Since $SL_n^{\ell}$ must necessarily act trivially on each $f_i$, so too does $N_{SL_n^{\ell}}(\h)$. But this implies that $\mathfrak{S}_n$ acts trivially on $u$; a contradiction. 

We deduce that $\delta_{\infty}$ is irreducible.  
\end{proof}

\begin{lemma}\label{lem:existsemi-invariantpolar}
	Let $V$ be a stable polar representation and let $g_1, \ds, g_k$ be the irreducible factors of $\delta$ in $\C[V]$. If $\theta \colon G \to \C^{\times}$ is a linear character with $\theta(Z) = 1$ then there exists $n_i \ge 0$ such that
		$\prod_{i= 1}^k g_i^{n_i} \in \C[V]^{\theta}$.
\end{lemma}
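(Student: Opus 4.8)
The statement to prove is: for a stable polar representation $V$ with irreducible discriminant factors $g_1,\dots,g_k$, every linear character $\theta\colon G\to\C^\times$ with $\theta(Z)=1$ arises (up to a positive power) as the character of a monomial $\prod g_i^{n_i}$ in these factors. The strategy is to transport the problem to the Weyl group $W$ via restriction to the Cartan subspace $\h$, use Stanley's classification of $W$-semi-invariants recalled in Remark~\ref{rem:Wsemisummary}, and then pull the answer back to $V$ using the fact (from \cite[Section~3]{BLNS} and \eqref{lem:stablepolaropen}) that for a stable polar representation restriction of functions gives a tight correspondence between $G$-semi-invariants on $V$ and $W$-semi-invariants on $\h$.

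First I would set up the restriction map. Since $V$ is stable, $V_{\reg}\cong G\times_{N_G(\h)}\h_{\reg}$ by \eqref{lem:stablepolaropen}, and restriction of functions $\C[V]\to\C[\h]$ carries $G$-semi-invariants to $W$-semi-invariants (recall $W=N_G(\h)/Z_G(\h)$ and $\theta(Z)=1$, so $\theta$ descends to a character of $W$ via $N_G(\h)$, which I will still call $\theta$). Each irreducible factor $g_i$ restricts to a nonzero $W$-semi-invariant $g_i|_\h$ dividing $\delta|_\h=\prod_{H\in\euls{A}}\alpha_H^{\ell_H}$; conversely, since the $G$-orbit of any reflecting hyperplane in $\h$ is cut out by (a factor of) some $g_i$, the monomials in the $g_i|_\h$ span all the ``interesting'' semi-invariants of $W$. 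Concretely, grouping the reflecting hyperplanes into $W$-orbits $[H]$, one gets $W$-semi-invariants $\prod_{H\in[H]}\alpha_H$ (one per orbit), and by Remark~\ref{rem:Wsemisummary} these generate all linear characters of $W$: choosing, for each orbit $[H]$, the exponent $\ell_{\theta,H}$ with $0\le \ell_{\theta,H}\le \ell_H-1$ determined by $\theta(s_H)=\det_\h(s_H)^{\ell_{\theta,H}}$, the element $\delta_\theta=\prod_H \alpha_H^{\ell_{\theta,H}}$ is a $W$-semi-invariant with character $\theta$ and properly divides $\delta|_\h$.

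Next I would express $\delta_\theta$ as a monomial in the $g_i|_\h$. This is the combinatorial heart: one needs to know that the restrictions $g_i|_\h$, together with their multiplicities in $\delta|_\h$, realise each $W$-orbit of hyperplanes with the correct multiplicity, so that some product $\prod (g_i|_\h)^{m_i}$ equals $\delta_\theta^{\,N}$ for a suitable $N\ge 1$ (a positive power is needed because a single $g_i$ may restrict to a product over several $W$-orbits, as happens e.g.\ for $\delta_\infty$ in the cyclic-quiver example, where $\delta_\infty|_\h=\prod_{i<j}(x_i^\ell-x_j^\ell)^2$ covers one $W$-orbit with multiplicity two). Here I would argue orbit-by-orbit: for each orbit $[H]$, the hyperplanes in $[H]$ appear in exactly those $g_i$ whose zero locus meets $[H]$, all with a common multiplicity $c_{i,[H]}$ (by $G$-equivariance of $g_i$ and transitivity of $G$ on $[H]$), so matching exponents reduces to a solvable system of linear equations over $\mathbb{Z}$ in the $m_i$, whose solvability for some positive multiple $N$ follows from the fact that $\delta|_\h$ itself is the common multiple $\prod_i (g_i|_\h)^{e_i}$ and that $\ell_{\theta,H}<\ell_H$. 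I would phrase this cleanly using the abelianization $W/[W,W]=\prod_{[H]}\Z/\ell_H\Z$ and the surjectivity of the map sending a monomial $\prod g_i^{n_i}$ to its $W$-character.

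Finally I would pull back to $V$: having found $\prod_i (g_i|_\h)^{m_i}=\delta_\theta^{\,N}$, the $G$-semi-invariant $\prod_i g_i^{m_i}\in\C[V]$ has character $\theta^N$ (its restriction to $\h$ has character $\theta^N$, and for a stable polar representation a $G$-semi-invariant is determined up to scalar by its restriction, since restriction $\C[V]^G\isom\C[\h]^W$ is an isomorphism and the semi-invariant spaces are rank-one modules over it by Remark~\ref{rem:Wsemisummary}). Since the lemma only asserts existence of \emph{some} $n_i\ge 0$ with $\prod g_i^{n_i}\in\C[V]^\theta$, and we may have had to take a power $\theta^N$, I would finish by noting that it suffices to realise a generating set of characters and that $\theta\mapsto$ (monomial) is additive; more directly, one can fix the $\mathbb{Z}$-linear-algebra step so as to produce $\theta$ itself rather than $\theta^N$ when $\theta(Z)=1$ guarantees the relevant congruences are consistent, which they are because the $\ell_H$ already divide the exponents of $\alpha_H$ appearing in $\delta$. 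The main obstacle I anticipate is precisely this last bookkeeping — ensuring that the system of exponent equations is solvable with \emph{nonnegative} integers and that one genuinely lands on $\theta$ (not just a multiple); this is where the hypothesis $\theta(Z)=1$ and Stanley's bound $\ell_{\theta,H}\le \ell_H-1$ are used, and I would keep the argument at the level of the abelianization of $W$ to make the nonnegativity transparent (replace any offending negative exponent by adding a multiple of the full $\delta$-monomial).
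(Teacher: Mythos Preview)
Your approach has a genuine gap at the pull-back step. You find exponents $m_i$ with $\prod_i (g_i|_\h)^{m_i}=\delta_\theta^{\,N}$ and then assert that $\prod_i g_i^{m_i}\in\C[V]$ has $G$-character $\theta^N$. But the $G$-character of $\prod_i g_i^{m_i}$ is $\prod_i\theta_i^{m_i}$ (with $\theta_i$ the character of $g_i$), and all you have checked is that this character agrees with $\theta^N$ \emph{after restriction to $W$}. That restriction map on characters need not be injective: distinct $G$-characters can have the same restriction to $W$. Concretely, in the cyclic-quiver example of Section~\ref{Sec:Quivers} the factors $\delta_0,\dots,\delta_{\ell-1}$ all restrict to the same polynomial $x_1\cdots x_n$ on $\h$, yet carry pairwise distinct $G$-characters $\theta_0,\dots,\theta_{\ell-1}$. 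So your ``combinatorial heart'' can match the $W$-side exponents and still land on the wrong $G$-character; the justification you offer (that a $G$-semi-invariant is determined up to scalar by its restriction) concerns the \emph{function}, not its \emph{character}, and does not close this gap.

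The paper's proof avoids the issue by never leaving the $G$-character $\theta$. By Peter--Weyl there is (up to scalar) a unique $\theta$-eigenfunction $F_\theta\in\C[G]$; transporting $F_\theta\otimes h$ through $V_{\reg}\cong G\times_N\h_{\reg}$ (with $h\in\C[\h]^{\theta|_W}$ a divisor of $\delta|_\h$, using $\theta(Z)=1$ so that $\theta|_N$ descends to $W$) gives a \emph{nowhere-vanishing} $\theta$-semi-invariant on $V_{\reg}$. Multiplying by a high power of the $G$-invariant $\delta$ clears poles on $V\smallsetminus V_{\reg}$ without changing the character, and the resulting regular function has all zeros in $V\smallsetminus V_{\reg}$, hence factors as $\prod g_i^{n_i}$. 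The key advantage is that the $G$-character is $\theta$ by construction from the first line; there is no need to recover it from $W$-data, and no linear system in exponents to solve.
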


\begin{proof}
	Consider the left regular action of $G$ on itself. If $\theta$
	is a linear character of $G$ then the algebraic Peter-Weyl
	Theorem implies that, up to scalar, there is a unique $\theta$-semi-invariant function
	$F_{\theta} \in \C[G]$. If the group $N$ acts by (inverse)
	multiplication on the right on $G$ then $F_{\theta}$ is a
	$\theta^{-1}|_N$-semi-invariant. Then each
	$\theta$-semi-invariant function on
	$V_{\reg} = G \times_N \h_{\reg}$ is of the form
	$F_{\theta} \o h$ for some $N$-semi-invariant $h$ on
	$\h_{\reg}$ with character $\theta|_N$.
	
	If $\varepsilon$ is the restriction of $\theta$ to $W$ then we
	choose some nonzero $h \in \C[\h]^{\varepsilon}$. We may assume
	that it does not vanish on $\h_{\reg}$ by choosing $h$ dividing
	$\deltah$. Then $F_{\theta} \o h$ is a $\theta$-semi-invariant
	on $V_{\reg}$ which is nowhere vanish. There is some $m \ge 0$
	such that $g = \delta^m(F_{\theta} \o h)$ is regular on
	$V$. This is our desired semi-invariant. By construction, the
	zeros of $g$ are contained in $V \smallsetminus V_{\reg}$. This
	means that the irreducible factors of $g$ divide $\deltav$. That
	is, the irreducible factors of $g$ are all among the
	$g_i$.
\end{proof}

\begin{proposition}\label{factorisingdelta}
	In the ring $\C[V]$, $\delta$ factorises into irreducible polynomials as $\delta =  \delta_0 \cdots \delta_{\ell-1}\delta_{\infty}$.
\end{proposition}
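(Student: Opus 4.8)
The plan is to prove $\delta = \delta_0 \cdots \delta_{\ell-1} \delta_\infty$ by a divisibility argument combined with a degree (or restriction) count. First I would observe that each $\delta_i$ ($0 \le i \le \ell-1$) is an irreducible $G$-semi-invariant with $\delta_i|_\h = x_1 \cdots x_n$, and that $\delta_\infty$ is irreducible by the preceding lemma with $\delta_\infty|_\h = \prod_{i<j}(x_i^\ell - x_j^\ell)$; moreover these $\ell+1$ polynomials are pairwise non-proportional (they have distinct zero loci: the hypersurface $\det X^{(i)} = 0$ depends only on the $i$-th matrix, while $\delta_\infty$ vanishes on a genuinely different locus, and for $n=1$ the $\delta_\infty$ factor is simply $1$ and is omitted). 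Hence the product $\delta_0 \cdots \delta_{\ell-1} \delta_\infty$ is (up to scalar) a reduced polynomial in $\C[V]$, and its restriction to $\h$ equals $(x_1 \cdots x_n)^\ell \prod_{i<j}(x_i^\ell - x_j^\ell)$.

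Next I would recall that the discriminant $\delta \in \C[V]^G = \C[\h]^W$ is by definition $\varrho^{-1}(\delta|_\h)$ where $\delta|_\h = \prod_{H \in \euls{A}} \alpha_H^{\ell_H}$, and by \eqref{eq:delta-h} we have $\delta|_\h = (x_1 \cdots x_n)^\ell \prod_{i<j}(x_i^\ell - x_j^\ell)^2$. Comparing with the previous paragraph shows that $(\delta_0 \cdots \delta_{\ell-1} \delta_\infty)|_\h$ and $\delta|_\h$ differ: the product of the $\delta_i$ and $\delta_\infty$ restricts to $(x_1\cdots x_n)^\ell \prod_{i<j}(x_i^\ell-x_j^\ell)$, which is a proper divisor of $\delta|_\h$ (the $\prod(x_i^\ell-x_j^\ell)$ factor occurs with multiplicity $1$ rather than $2$). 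This is the point I must be careful about: the statement as written claims $\delta = \delta_0 \cdots \delta_{\ell-1} \delta_\infty$, but the actual discriminant on $\h$ carries a square on the off-diagonal part. I expect the resolution is that $\delta$ is being used here for the \emph{reduced} discriminant, i.e. $\sqrt{\delta|_\h}$ pulled back, consistent with Notation~\ref{not:regularelementh} where $\deltah$ is said to be ``reduced as an element of $\C[\h]^W$'' — so in fact the correct normalisation of $\delta$ on $\h$ is $(x_1\cdots x_n) \prod_{i<j}(x_i^\ell - x_j^\ell)$ up to the power conventions, and matching these is exactly the content of the proof.

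Concretely, the key steps are: (1) invoke Lemma~\ref{lem:existsemi-invariantpolar} (or directly the algebraic Peter–Weyl description of semi-invariants) to conclude that every irreducible factor of $\delta$ in $\C[V]$ is a $G$-semi-invariant; (2) for each such factor $f$, restrict to $\h$: $f|_\h$ is a $W$-semi-invariant dividing $\delta|_\h$, and by Remark~\ref{rem:Wsemisummary} every $W$-semi-invariant properly dividing $\deltah$ is (up to scalar) of the form $\deltah_\chi$ determined by the integers $0 \le \ell_{\chi,H} \le \ell_H - 1$; since here $\ell_H = \ell$ for the hyperplanes $x_i = 0$ and $\ell_H = 2$ for $x_i - \omega^k x_j = 0$, the only candidates are powers of $x_1 \cdots x_n$ and the single product $\prod_{i<j}(x_i^\ell - x_j^\ell)$; (3) match each factor $\delta_i$ (resp. $\delta_\infty$) against these candidates using that $\delta_i|_\h = x_1\cdots x_n$ (resp. $\delta_\infty|_\h = \prod_{i<j}(x_i^\ell - x_j^\ell)$), and use that the $\delta_i$ are genuinely distinct irreducibles because each depends only on $X^{(i)}$; (4) a GL-weight / bidegree count on $V = \bigoplus \mathrm{Mat}(n,\C)$ shows no repetitions and no missing factors, pinning down $\delta = \delta_0 \cdots \delta_{\ell-1} \delta_\infty$ exactly (with the understanding, for $n = 1$, that $\delta_\infty = 1$). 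The main obstacle is step (4), reconciling the multiplicities: one must confirm that the convention in force makes $\delta$ the reduced (square-free) discriminant so that each $\delta_i$ and $\delta_\infty$ appears to the first power, which requires carefully tracking the normalisation introduced in Notation~\ref{not:regularelementh} versus the exponents appearing in \eqref{eq:discriminant} and \eqref{eq:delta-h}. Once the reduced-versus-full discriminant bookkeeping is settled, the factorisation follows from the semi-invariant classification and irreducibility of each factor.
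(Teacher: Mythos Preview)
Your core idea---compare both sides via the Chevalley isomorphism $\C[V]^G \cong \C[\h]^W$---is sound and in fact more direct than the paper's route. But you have made a computational error that derails the rest of the argument.

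The error is in your formula for $\delta_\infty|_\h$. Recall $\delta_\infty = \delta_{\mf{gl}}(X^{(0)}\cdots X^{(\ell-1)})$, and the adjoint discriminant $\delta_{\mf{gl}}$ of $\mf{gl}(n)$, restricted to the diagonal, is $\prod_{i<j}(y_i-y_j)^2$ \emph{with a square}. Since $(X^{(0)}\cdots X^{(\ell-1)})|_\h = \mathrm{diag}(x_1^\ell,\dots,x_n^\ell)$, you get
\[
\delta_\infty|_\h = \prod_{i<j}(x_i^\ell - x_j^\ell)^2,
\]
not the first power. With this correction,
\[
(\delta_0\cdots\delta_{\ell-1}\delta_\infty)|_\h \;=\; (x_1\cdots x_n)^\ell\prod_{i<j}(x_i^\ell-x_j^\ell)^2 \;=\; \delta|_\h
\]
by \eqref{eq:delta-h}, exactly. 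There is no discrepancy: the discriminant $\deltah$ of \eqref{eq:discriminant} is \emph{not} square-free as a polynomial on $\h$ (it is only reduced in $\C[\h]^W$), and your entire discussion of ``reduced versus full discriminant'' conventions is unnecessary and incorrect.

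Once this is fixed, to finish you only need that both $\delta$ and $\delta_0\cdots\delta_{\ell-1}\delta_\infty$ lie in $\C[V]^G$; note $\delta_0\cdots\delta_{\ell-1}$ is $G$-invariant because the characters $\theta_i(g) = \det g^{(i+1)}/\det g^{(i)}$ multiply to the trivial character. Then equality of restrictions gives equality, and since the $\delta_i$ and $\delta_\infty$ are pairwise distinct irreducibles you are done. The paper takes a different path: it first proves each $\delta_i$ divides $\delta$ via a semi-invariant argument using Lemma~\ref{lem:existsemi-invariantpolar} and Remark~\ref{rem:Wsemisummary}, and only then identifies the quotient $\delta/(\delta_0\cdots\delta_{\ell-1})$ with $\delta_\infty$ by restriction. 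Your approach, once corrected, is shorter.
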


\begin{proof}  
 We first need to  show that  $\delta_i$ divides $\delta$ for $0\leq i\leq \ell-1$. If  $\varepsilon_i$ denotes the restriction of 
 $\theta_i$ to $W$, then  the restriction map induces a morphism of $\C[V]^G $-modules $\sigma: \C[V]^{\theta_i} \to \C[\h]^{\varepsilon_i}$ sending $\delta_i$ to $\mathbf{x}=x_1 \cdots x_n$. Since $V_{\reg}=G\times_N\h_{\reg}$ by
 \eqref{lem:stablepolaropen}, $\sigma$ is injective.  Since $\mathbf{x}$  is a proper factor of 
 $\delta|_{\h}$, Remark~\ref{rem:Wsemisummary} explains that 
 $\C[\h]^{\varepsilon_i} = \mathbf{x}\C[\h]^W$. Therefore,  $\sigma$ is  surjective  and  $\C[V]^{\theta_i} \cong \C[\h]^{\varepsilon_i}$. In particular, 
 $\C[V]^{\theta_i} = \C[V]^G \delta_i$. By Lemma~\ref{lem:existsemi-invariantpolar}, there exist irreducible factors $g_1, \ds, g_k$ of $\delta$ 
 such that $g := \prod_i g_i^{e_i}\in \C[V]^{\theta_i}$. Thus, $g = f \delta_i$ for some $f \in \C[V]^G$.  Since $\delta_i$ is irreducible, this implies that $\delta_i$ is indeed a factor of $\delta$.
	
	Let $ \delta_{\infty}' =  \delta \delta_0^{-1} \cdots \delta_{\ell-1}^{-1}$. Since $\delta_0 \cdots \delta_{\ell-1}$
	 is $G$-invariant, so too is $ \delta_{\infty}'$. Since the restriction map $\C[V]^G \to \C[\h]^W$ is an 
	 isomorphism, the identity  
	 $$
	\delta_{\mf{gl}}(X^{(0)} \cdots X^{(\ell-1)}) |_{\h} = 
	\prod_{i < j} (x_i^{\ell} - x_j^{\ell})^2 = \delta_{\infty} |_{\h}
	$$
	implies that $\delta_{\infty}' = \delta_{\mf{gl}}(X^{(0)} \cdots X^{(\ell-1)}) =\delta_{\infty}$.  
\end{proof}

By Proposition~\ref{factorisingdelta}, we consider parameters $\varsigma$ of the form  
$\vs=(\vs_0, \ds, \vs_{\ell-1},\vs_{\infty})$ corresponding to the  character $\chi=\sum_i\vs_i d\theta_i$,
 as in the set-up to \cite[Section~4]{BLNS}. Since $\delta_{\infty}$ is $G$-invariant,  the character $\chi_{\infty}$ corresponding to $\delta_{\infty}$ is zero. As in  \cite[Section~4]{BLNS},  
we write $\delta^{\vs} =  \delta_0^{\vs_0} \cdots \delta_{\ell-1}^{\vs_{\ell-1}} \delta_{\infty}^{\vs_{\infty}}$. 

The  space of linear characters of $\mf{g}\cong\bigoplus_{j=0}^\ell\mathfrak{gl}_n$ can be identified with $\C^{\ell}$ via
$$
\xi \mapsto \bigg(x=(x_0,\dots,x_{\ell-1})  \mapsto \sum_{i = 0}^{\ell-1} \xi_i \mr{Tr}(x_i) \bigg). 
$$
Since $d \det(x) = \mr{Tr}(x)$ for $GL(n)$,   \eqref{eq:quiver} implies that 
 $(d \theta_i)(x) = \mr{Tr}(x_{i+1}) - \mr{Tr}(x_i)$. Thus  the character $\chi$ is related to $\vs$ by 
 $\chi_i = \vs_{i-1} - \vs_i$. Here, and elsewhere, the indices are interpreted  modulo $\ell$; in particular 
 $\chi_0 = \vs_{\ell-1} -\vs_0$. 

Recall from Section~\ref{Sec:polarreps} that $\mu \colon T^*V \to \g^*$ denotes the moment map. We identify $\g^* = \g$ using the trace pairing and think of $\mu$ as a map to $\g$. Let  $I \subset \C[V]$ be the ideal (possibly non-reduced) defined by the  scheme-theoretic fibre $\mu^{-1}(0)$. 

\begin{theorem}\label{thm:IeqIone}
	The ideal $I^G$ in $\C[V]^G$ is prime. Hence,
	\begin{enumerate}
		\item the scheme $\mu^{-1}(0) \git G$ is reduced and irreducible; and
		\item $\rr \o \rrp \colon \C[\mu^{-1}(0)]^G \to \C[\h \times \h^*]^W$ is an isomorphism. 
	\end{enumerate} 
\end{theorem}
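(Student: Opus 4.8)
\textbf{Proof plan for Theorem~\ref{thm:IeqIone}.}

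The plan is to prove the single assertion that $I^G$ is prime in $\C[V]^G$, and then derive (1) and (2) formally. For (1): $\mu^{-1}(0)\git G = \Spec \C[V]^G/I^G$, so $I^G$ prime gives reducedness and irreducibility at once. For (2): we already know from \cite[Lemma~4.5]{BLT} that the normalisation of $\C[(\mu^{-1}(0)\git G)_{\mr{red}}]$ is $\C[\h\times\h^*]^W$ via $\rr\o\rrp$; once the left-hand ring is a domain, it suffices to check that the map is already an isomorphism onto the normalisation, i.e.\ that $\C[V]^G/I^G$ is integrally closed. This in turn will follow because $\C[\h\times\h^*]^W$ is the invariant ring of a polynomial ring under a finite group acting as a reflection group on the product (so it is itself polynomial, hence normal), and $\rr\o\rrp$ is a finite birational morphism between a domain and a normal domain, which is forced to be an isomorphism. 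So the whole theorem reduces to: \emph{$I^G$ is prime}.

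To prove $I^G$ is prime I would first reduce to a statement about $I$ itself on the regular locus and use the structure of $\mu^{-1}(0)$ for the cyclic quiver. The key input is Lemma~\ref{lem:regularlocalsystem}(2) together with \eqref{lem:stablepolaropen}: on $V_{\reg}$ one has $\mu^{-1}(0)\cap T^*V_{\reg} = V_{\reg}\times\{0\}$ pulled back along $G\times_N\h_{\reg}\cong V_{\reg}$, but more to the point the moment-map fibre $\mu^{-1}(0)$ for the cyclic quiver is known (by the work on Calogero--Moser and cyclic-quiver varieties, e.g.\ \cite{GordonCyclicQuiver}) to be a complete intersection whose GIT quotient is the cyclic-quiver analogue of the Calogero--Moser space; in particular $\mu^{-1}(0)$ is irreducible of the expected dimension and generically reduced along the locus lying over $\h_{\reg}$. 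I would then argue: (a) $\mu^{-1}(0)$ is irreducible, so $\sqrt{I}$ is prime and $\C[V]^G/\sqrt{I}^{\,G}$ is a domain; (b) since $V$ is stable and visible, $\mu^{-1}(0)$ is reduced in codimension zero and, being a complete intersection in the smooth variety $T^*V$, it satisfies Serre's condition $S_1$, hence is reduced (no embedded components, generically reduced $\Rightarrow$ reduced); therefore $I=\sqrt I$ and $I$ is prime. Taking $G$-invariants, $\C[V]^G/I^G = (\C[V]/I)^G = \C[\mu^{-1}(0)]^G$ is the invariant ring of a domain under a reductive group, hence a domain; so $I^G$ is prime.

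The main obstacle is step (b): showing $\mu^{-1}(0)$ is \emph{reduced} as a scheme, not merely that its reduction is irreducible. The flatness/complete-intersection property of the moment map for quivers (so that $I$ is generated by a regular sequence and $\mu^{-1}(0)$ is Cohen--Macaulay, in particular $S_1$) is the crucial technical point; combined with generic reducedness along the open dense locus $G\cdot\h_{\reg}$ (which follows from the smoothness of the Calogero--Moser-type space there, or directly from Lemma~\ref{lem:regularlocalsystem}(2) computing the fibre over regular points) this yields reducedness. An alternative route that avoids quiver-specific input would be to quote the general theorem of \cite[Theorem~1.3]{BLT} or the results cited there on normality/reducedness of such reduced spaces for visible stable polar representations; if that applies, the argument is uniform and one only needs to additionally check the scheme structure matches, which is where I would expect to spend the most care. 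Once reducedness is in hand, the descent to invariants and the deduction of (1) and (2) via \cite[Lemma~4.5]{BLT} and normality of $\C[\h\times\h^*]^W$ are routine.
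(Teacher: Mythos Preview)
The paper's proof is a one-line citation: ``This follows from \cite[Theorem~7.2.3 and Proposition~7.2.5]{EGGO}.'' All the work is done in that reference, which treats precisely this cyclic-quiver setup. So there is no internal argument to compare your strategy against.

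Your attempt to prove it from scratch has a genuine gap in the deduction of (2). You correctly observe that \cite[Lemma~4.5]{BLT} identifies $\C[\h\times\h^*]^W$ with the normalisation of $\C[(\mu^{-1}(0)\git G)_{\mr{red}}]$, and that it remains to check the source is already integrally closed. But then you argue this follows because the \emph{target} is normal and $\rr\o\rrp$ is finite birational. That is backwards: a finite birational map from a domain $A$ into a normal domain $B$ exhibits $B$ as the normalisation of $A$; it does not force $A=B$ (take $\C[t^2,t^3]\hookrightarrow\C[t]$). What you actually need is normality of $\C[\mu^{-1}(0)]^G$ itself, which would follow, say, from $\mu^{-1}(0)$ being normal (invariants of a normal affine variety by a reductive group are normal in characteristic zero). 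You never establish this. Incidentally, your claim that $W$ acts as a reflection group on $\h\times\h^*$ is false: a reflection in $W$ has two-dimensional $(-1)$-eigenspace on the product, so $\C[\h\times\h^*]^W$ is not polynomial (it is nonetheless normal, being the fixed ring of a polynomial ring under a finite group).

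For the primeness of $I^G$ via reducedness and irreducibility of $\mu^{-1}(0)$, your outline (complete intersection $\Rightarrow S_1$, plus generic reducedness over $V_{\reg}$) is the right shape, but the complete-intersection claim needs care: the diagonal $\C^\times\subset G=GL(n)^\ell$ acts trivially on $V$, so the moment-map components are not independent. The honest geometric facts you need about $\mu^{-1}(0)$ and its GIT quotient in this quiver case are exactly what \cite{EGGO} supplies, which is why the paper simply cites it.
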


\begin{proof}
	This follows from \cite[Theorem~7.2.3 and Proposition~7.2.5]{EGGO}. 
\end{proof}

Applying our general results, we obtain a variant of Oblomkov~\cite[Theorem~2.5]{OblomkovHC} and Gordon~\cite[Theorem~1.4]{GordonCyclicQuiver}. 

\begin{theorem}\label{thm:cylicquiverradialparts0}
	For any parameter $\vs$,  the radial parts map $\rad_\vs$ defined by \eqref{eq:notation4.11}  induces  a filtered isomorphism 
	$$
	\rad_{\vs} \colon (\dd(V) / \dd(V) \g_{\chi})^G \ \isom \  \Ak.
	$$
\end{theorem}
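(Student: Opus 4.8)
The goal is to upgrade the general surjectivity result (Theorem~\ref{thm:radial-surjective}) to an \emph{isomorphism} for the cyclic quiver, for \emph{every} parameter $\vs$, and to upgrade this to a filtered isomorphism. First I would observe that $\rad_\vs$ factors as $\eD(V)^G \to (\eD(V)/\eD(V)\g_\chi)^G \to \Ak$, so it suffices to show the second map is an isomorphism, i.e.\ that $\ker(\rad_\vs) = (\eD(V)\g_\chi)^G$ and that $\rad_\vs$ is surjective. Surjectivity for arbitrary $\vs$ does not follow directly from Theorem~\ref{thm:radial-surjective} (which needs $\Ak$ simple), so a different mechanism is required: a dimension/Hilbert-series comparison at the level of associated graded rings.

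\textbf{Key steps.} The natural strategy is the standard ``associated graded'' argument of quantum Hamiltonian reduction, exactly as in Etingof--Ginzburg--style proofs and as used by Oblomkov and Gordon. Equip $\eD(V)^G$ with the order filtration; then $\gr \bigl(\eD(V)^G\bigr) = (\gr \eD(V))^G = \C[T^*V]^G = \C[V\times V^*]^G$, and the symbol map carries $\eD(V)\g_\chi$ into the ideal generated by $\sigma(\tau(\g)) = \sigma(\g_\chi)$, whose zero locus is $\mu^{-1}(0)$ (Remark~\ref{Lambda-significance}). Hence there is a surjection
\[
\C[\mu^{-1}(0)]^G \ \twoheadrightarrow \ \gr\bigl((\eD(V)/\eD(V)\g_\chi)^G\bigr).
\]
On the other side, $\Ak$ carries the filtration with $\gr\Ak = \C[\h\times\h^*]^W$ (as recalled in Section~\ref{Sec:Cherednik}). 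The map $\rad_\vs$ is filtered, and on associated graded it is the composite $\C[\mu^{-1}(0)]^G \to \C[\h\times\h^*]^W$, which by Theorem~\ref{thm:IeqIone}(2) is precisely the isomorphism $\rr\o\rrp$. Chasing the two surjections against this isomorphism forces every map in sight to be an isomorphism: the surjection $\C[\mu^{-1}(0)]^G \twoheadrightarrow \gr(\eD(V)/\eD(V)\g_\chi)^G$ must be injective (since its composite with $\gr\rad_\vs$ is injective), and $\gr\rad_\vs$ must be surjective; therefore $\gr\rad_\vs$ is an isomorphism, and by the standard filtered-ring lemma $\rad_\vs$ itself is an isomorphism of filtered algebras. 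This simultaneously yields $\ker(\rad_\vs) = (\eD(V)\g_\chi)^G$ and the filtered statement, so $\eMt = \eM$ and $R = \Ak$ for the cyclic quiver.

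\textbf{The main obstacle.} The crux is Theorem~\ref{thm:IeqIone}: the reducedness and irreducibility of $\mu^{-1}(0)\git G$, equivalently the primeness of $I^G$ and the fact that $\rr\o\rrp \colon \C[\mu^{-1}(0)]^G \to \C[\h\times\h^*]^W$ is an \emph{isomorphism} (not merely a dominant/birational map). This is exactly where the special geometry of $\mr{Rep}(Q_\ell, n\mathfrak{d})$ enters: for a general visible stable polar representation $\C[\mu^{-1}(0)]^G$ need only have $\C[\h\times\h^*]^W$ as its normalisation (cf.\ the discussion in the proof of Lemma~\ref{lem:euler}), and the reduction can be non-reduced or reducible. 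Fortunately, this input is available off the shelf from the theory of quiver varieties --- it is cited as \cite[Theorem~7.2.3 and Proposition~7.2.5]{EGGO} --- so in the write-up I would simply invoke Theorem~\ref{thm:IeqIone} and then run the associated-graded comparison above. One small point to check carefully is that the order filtration on $\eD(V)^G$ and the Cherednik filtration on $\Ak$ are genuinely matched by $\rad_\vs$ (degree-by-degree), which follows from Theorem~\ref{thm:radial-exists}(2) on the subalgebras $\C[V]^G$ and $(\Sym V)^G$ together with the fact that $\rad_\vs$ is a filtered homomorphism by construction \eqref{eq:notation4.11}; everything else is the routine Hilbert-series bookkeeping that I would not spell out in detail.
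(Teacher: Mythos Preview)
Your proposal is correct and is essentially the paper's approach: the paper invokes Theorem~\ref{thm:IeqIone} together with \cite[Corollary~7.20(1)]{BLNS}, and the latter is precisely the associated-graded sandwich argument you have spelled out (the surjection $\C[\mu^{-1}(0)]^G \twoheadrightarrow \gr\bigl((\dd(V)/\dd(V)\g_\chi)^G\bigr)$ composed with $\gr\rad_\vs$ lands in $\gr\Ak=\C[\h\times\h^*]^W$, and since the composite is the isomorphism $\rr\o\rrp$, all maps are isomorphisms). You have correctly identified that Theorem~\ref{thm:IeqIone} is the decisive input specific to the cyclic quiver and that without it one only knows $\C[\h\times\h^*]^W$ is the normalisation of $\C[\mu^{-1}(0)]^G$.
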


\begin{proof}
	By \eqref{eq:notation4.11}, there exists some parameter $\kappa$ such that the image of the radial parts map $\rad_{\vs}$ is contained in $\Ak$.   Theorem~\ref{thm:IeqIone} and \cite[Corollary~7.20(1)]{BLNS} then say that this map is a filtered isomorphism. 
\end{proof}

\begin{remark}\label{rem:quiverHC} Since Theorem~\ref{thm:cylicquiverradialparts0} also describes 
$\ker(\rad_\vs)$,  $R={\Aak}$ in the notation of Definition~\ref{M-new-definition}. In particular,
 $\eMt = \eM$  and hence $\eGt_{\lambda} = \eG_{\lambda}$ for all $\lambda \in \h^*$.
 \end{remark}

 We next want to determine precisely when $\Aak $ is a simple ring, for which we need to describe $\kappa$  explicitly in terms of $\varsigma$. Since the setup considered in \cite{OblomkovHC} (both in terms of parameters for the rational Cherednik algebra and the definition of the radial parts map) is somewhat different to ours, we include an outline of the computation.
 
 We determine the parameter $\kappa$ by describing the slices for our general representation, for which there are distinct cases: the case  $n=1$ and the adjoint representation of $\mathfrak{sl}(2)$. Since $\delta_{\infty}$ is a $G$-invariant, \cite[Corollary~6.8]{BLNS} implies that the algebra $\Ak$ does not depend on $\varsigma_{\infty}$ up to isomorphism. Therefore, to streamline the computations we take $\varsigma_{\infty} = 0$ throughout.

\subsection*{Computations for rank one}\label{subsec:quivercomputations}
 In this subsection  we  describe   the radial parts map in detail in case when $n=1$. Thus the dimension 
vector is now $\mathfrak{d}$ and so  $\C[V]= \C[x_0, \ds, x_{\ell-1}]$ is a polynomial ring, where $x_i$ corresponds to arrow $a_i$. The group $G = (\mathbb{C}^{\times})^{\ell}$ acts on $\C[V]$  such that $x_i$ is a character of weight $t_i t_{i+1}^{-1}$, where $t_i \colon G \to \mathbb{C}^{\times}$ picks out the $i^{\mr{th}}$ factor. 

Fix scalars $\vs_i\in \C$ and write 
$\delta^{\vs} = \delta_0^{\vs_0} \cdots \delta_{\ell-1}^{\vs_{\ell-1}}$ (we can ignore the 
adjoint discriminant  $\delta_{\infty}$). Let $z = x_0 \cdots x_{\ell-1}$ and $\Delta = \partial_0 \cdots \partial_{\ell-1}$, for $\partial_j=\frac{\partial}{\partial x_j}$,  
so that $\C[V]^G = \C[z]$ and $(\Sym \, V)^G = \C[\Delta]$. For any $j\in \mathbb{Z}$, 
$$
\Delta(z^j \delta^{\vs}) = (j + \vs_0) \cdots (j + \vs_{\ell-1}) z^{j-1} \delta^{\vs},
$$
and $x_i \partial_i (z^j \delta^{\vs}) = (j+ \vs_i) z^j \delta^{\vs}$. The Cartan subalgebra $\h$ of $V$ is 
now given by the diagonal $\{x_i = x : 0\leq i\leq \ell-1\}$. 
Then $\dd(\h_{\reg}) = \C[x^{\pm 1}]\langle \partial_x \rangle$, where $x^{\ell} = z$. 
Using the fact that $\partial_z =(\ell x^{\ell-1})^{-1} \partial_x$, this implies that 
$$
\rad_{\vs}(z) = z = x^{\ell}, \qquad \rad_{\vs}(x_i \partial_i) = z \partial_z+ \vs_i = \ell^{-1} x \partial_x + \vs_i,
$$
and
\begin{align*}
	\rad_{\vs}(\Delta) & = \frac{1}{z} \prod_{j=0}^{\ell-1}(z\partial_z+\vs_j) 
	 = \frac{1}{\ell^{\ell} x^{\ell}}  \prod_{j=0}^{\ell-1}(x\partial_x+\ell\vs_j) 
\end{align*} 

If $\Ak(\mathbb{Z}_{\ell})$ is defined in terms of the parameters $\kappa_0, \ds, \kappa_{\ell-1}$ as in \cite[Example~2.11]{BLNS}, then \cite[Corollary~4.15]{BLNS} implies the following result because $\lambda_i = \vs_i$ in this case. 
 
\begin{proposition}\label{prop:cyclicrank1rad}
For  $n=1$, there is  an isomorphism 
$$\rad_{\vs} : (\dd(V) / \dd(V) \mf{g}_{\chi})^G \stackrel{\sim}{\longrightarrow} \Ak(\mathbb{Z}_{\ell}),
\quad\text{where} \   \kappa_i=\vs_i +\frac{\ell -i}{\ell} - \delta_{i,0} 
$$
for $0\leq i\leq \ell-1.$ 
\qed
\end{proposition}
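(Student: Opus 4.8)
The statement to be established, Proposition~\ref{prop:cyclicrank1rad}, is that in rank one ($n=1$) the radial parts map
\[
\rad_{\vs} : (\dd(V) / \dd(V) \mf{g}_{\chi})^G \stackrel{\sim}{\longrightarrow} \Ak(\mathbb{Z}_{\ell}),
\]
is an isomorphism with $\kappa_i=\vs_i +\tfrac{\ell -i}{\ell} - \delta_{i,0}$. The plan is to combine two ingredients which are both already available in the excerpt: first, the \emph{abstract} fact that $\rad_{\vs}$ is a filtered isomorphism onto the relevant spherical algebra (this is Theorem~\ref{thm:cylicquiverradialparts0}, which applies for any dimension vector and in particular for $n=1$, and rests in turn on Theorem~\ref{thm:IeqIone}); and second, the \emph{explicit} identification of the target algebra and of the parameter $\kappa$, which is obtained by computing the images $\rad_{\vs}(z)$, $\rad_{\vs}(x_i\partial_i)$, $\rad_{\vs}(\Delta)$ inside $\dd(\h_{\reg})^W=\C[x^{\pm 1}]\langle\partial_x\rangle^{\mathbb{Z}_\ell}$ and matching them against the standard presentation of $\Ak(\mathbb{Z}_\ell)$ from \cite[Example~2.11]{BLNS}.

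\textbf{Step 1: the abstract isomorphism.} First I would invoke Theorem~\ref{thm:cylicquiverradialparts0} with dimension vector $\mathfrak{d}=(1,\dots,1)$: for every parameter $\vs$ the map $\rad_{\vs}\colon (\dd(V)/\dd(V)\g_{\chi})^G \to \Ak$ is a filtered isomorphism onto \emph{some} spherical subalgebra $\Ak=\Ak(W)$ of a rational Cherednik algebra, where here $W=\mathbb{Z}_\ell$. In particular there is nothing left to prove about injectivity, surjectivity, or compatibility of filtrations once we know which $\Ak(\mathbb{Z}_\ell)$, i.e.\ which parameter $\kappa$, occurs.

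\textbf{Step 2: matching the parameter.} The computations preceding the statement already show that, for $j\in\mathbb{Z}$,
\[
\Delta(z^j\delta^{\vs}) = \prod_{i=0}^{\ell-1}(j+\vs_i)\, z^{j-1}\delta^{\vs},\qquad x_i\partial_i(z^j\delta^{\vs})=(j+\vs_i)z^j\delta^{\vs},
\]
and, using $z=x^\ell$ and $\partial_z=(\ell x^{\ell-1})^{-1}\partial_x$, that
\[
\rad_{\vs}(z)=x^\ell,\qquad \rad_{\vs}(x_i\partial_i)=\ell^{-1}x\partial_x+\vs_i,\qquad \rad_{\vs}(\Delta)=\frac{1}{\ell^\ell x^\ell}\prod_{j=0}^{\ell-1}(x\partial_x+\ell\vs_j).
\]
The remaining task is purely bookkeeping: one writes down the generators of $\Ak(\mathbb{Z}_\ell)$ as given in \cite[Example~2.11]{BLNS} in terms of the parameters $\kappa_0,\dots,\kappa_{\ell-1}$, observes that the Dunkl-type operator there acts on $x^j$ (or on $\mathbb{Z}_\ell$-isotypic pieces of $\C[x]$) by a product $\prod_j(x\partial_x - c_j(j))$ with shifts $c_j$ depending linearly on the $\kappa_j$, and compares the roots of the two polynomials in $x\partial_x$. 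This forces $\ell\vs_j$ (after a reindexing by the $\mathbb{Z}_\ell$-grading) to correspond to $\ell\kappa_j$ up to the fixed shift $\ell-j$ and the correction $-\ell\,\delta_{j,0}$; equivalently $\kappa_j=\vs_j+\tfrac{\ell-j}{\ell}-\delta_{j,0}$. Here I would explicitly appeal to \cite[Corollary~4.15]{BLNS}, which (as the text notes, with $\lambda_i=\vs_i$ in this case) already performs precisely this comparison of the radial parts map with the $\mathbb{Z}_\ell$-Dunkl operator and yields the stated relation between $\kappa$ and $\vs$.

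\textbf{Main obstacle.} There is no serious obstacle; the proposition is essentially a specialisation-plus-normalisation of results already in hand. The only delicate point is the normalisation of the parameter, i.e.\ pinning down the shift $\tfrac{\ell-j}{\ell}$ and the $-\delta_{j,0}$ correction. This comes down to being careful about two conventions: (i) the convention relating $\kappa_{H,j}$ to the eigenvalue of the reflection, which in our presentation of $\Hk(W)$ is the one fixed in \cite[Definition~2.1]{BLNS} and for $\mathbb{Z}_\ell$ unravelled in \cite[Example~2.11]{BLNS}; and (ii) the fact that $\delta^{\vs}$ involves $\delta_0,\dots,\delta_{\ell-1}$ but \emph{not} $\delta_\infty$, together with the asymmetry in the $G$-action \eqref{eq:quiver} which makes the weight of $x_0$ (equivalently the exponent attached to the vertex $0$) behave differently, accounting for the $\delta_{i,0}$ term. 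Both of these are settled verbatim by \cite[Corollary~4.15]{BLNS}, so the proof reduces to citing Theorem~\ref{thm:cylicquiverradialparts0} for the isomorphism and \cite[Corollary~4.15]{BLNS} for the value of $\kappa$, after recording the three explicit formulas for $\rad_{\vs}$ above.
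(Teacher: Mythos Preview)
Your proposal is correct and follows essentially the same approach as the paper: the paper's entire proof is the sentence ``\cite[Corollary~4.15]{BLNS} implies the result because $\lambda_i=\vs_i$ in this case,'' relying on the explicit computations of $\rad_{\vs}(z)$, $\rad_{\vs}(x_i\partial_i)$, $\rad_{\vs}(\Delta)$ displayed just before the statement. Your Step~1 citation of Theorem~\ref{thm:cylicquiverradialparts0} for the abstract isomorphism is not wrong but is redundant, since \cite[Corollary~4.15]{BLNS} already delivers the isomorphism together with the parameter; the paper therefore omits it.
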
 
  
  \begin{corollary}\label{cor:cyclicrank1rad} In the special case when $n=1$ and $\ell=2$, the image of 
  $\rad_{\vs}$ as $\vs$ varies  ranges over all (infinite dimensional) primitive factor rings of $U(\mathfrak{sl}_2)$.
  \end{corollary}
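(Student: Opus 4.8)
\textbf{Proof plan for Corollary~\ref{cor:cyclicrank1rad}.}
The plan is to combine Proposition~\ref{prop:cyclicrank1rad} (in the case $\ell = 2$) with the explicit identification of the rank-one spherical Cherednik algebra $A_\kappa(\mathbb{Z}_2)$ as a primitive quotient of $U(\mathfrak{sl}_2)$, and then observe that the parameter $\kappa = (\kappa_0,\kappa_1)$ traces out all the relevant quotients as $\vs = (\vs_0,\vs_1)$ varies over $\C^2$. Concretely, when $\ell = 2$ the group is $\mathbb{Z}_2$ acting on the line, the associated rational Cherednik algebra $H_\kappa(\mathbb{Z}_2)$ is the classical one studied in \cite{EG}, and its spherical subalgebra $A_\kappa(\mathbb{Z}_2) = e H_\kappa(\mathbb{Z}_2) e$ is well known to be isomorphic to a central quotient $U(\mathfrak{sl}_2)/(\Omega - c)$ of $U(\mathfrak{sl}_2)$, where the value $c$ of the Casimir depends (quadratically) on the Cherednik parameter; this is the same computation already carried out by hand in Proposition~\ref{HC1-lemma}(3) for the special point in question. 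So the first step is to record this isomorphism $A_\kappa(\mathbb{Z}_2) \cong U(\mathfrak{sl}_2)/(\Omega - c(\kappa))$ and to make the dependence $c = c(\kappa)$ explicit.

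The second step is to feed in Proposition~\ref{prop:cyclicrank1rad}: for $\ell = 2$ it gives $\rad_{\vs}\colon (\dd(V)/\dd(V)\mf{g}_\chi)^G \isom A_\kappa(\mathbb{Z}_2)$ with $\kappa_0 = \vs_0 + \tfrac12 - 1 = \vs_0 - \tfrac12$ and $\kappa_1 = \vs_1 + \tfrac12$. Then one substitutes these into $c(\kappa)$ to get $c$ as an explicit function of $(\vs_0,\vs_1)$; because $A_\kappa(\mathbb{Z}_2)$ depends on $\kappa$ only through the single combination that controls $c$ (the two parameters $\kappa_0,\kappa_1$ only enter via $\kappa_0 - \kappa_1$, up to the overall shift absorbed by $e$), the map $(\vs_0,\vs_1) \mapsto c$ is a non-constant affine-then-quadratic function and hence surjective onto $\C$. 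Therefore every value $c \in \C$ of the Casimir is realised, which is exactly the statement that every infinite-dimensional primitive factor ring of $U(\mathfrak{sl}_2)$ occurs: by the classification of primitive ideals of $U(\mathfrak{sl}_2)$, the infinite-dimensional primitive quotients are precisely the $U(\mathfrak{sl}_2)/(\Omega - c)$ (the finite-dimensional quotients $M_n(\C)$ correspond to the discrete family of $c$-values that are excluded once one asks for the quotient to be infinite-dimensional, and in any case these arise as further quotients — but for the corollary we only need the ``ranges over all infinite-dimensional primitive factors'' claim, so it suffices to hit every central character).

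The main obstacle — really the only point requiring care rather than bookkeeping — is matching conventions: one must check that the normalisation of the Cherednik parameter used in \cite[Example~2.11]{BLNS} and Proposition~\ref{prop:cyclicrank1rad} produces exactly the Casimir value claimed, i.e.\ that the quadratic $c(\kappa)$ has the right leading coefficient and shift so that its image is all of $\C$ and not, say, a proper affine subvariety or a single point. This is a finite computation: present $A_\kappa(\mathbb{Z}_2)$ on generators $E = \rad_\vs(\Delta)$-type lowering operator, $F$ the raising operator $\rad_\vs(z)$, and $H = [E,F]$ (as in the proof of Proposition~\ref{HC1-lemma}(3), where $E = x_0 x_1$, $F = -\Delta$, $H = [E,F]$ gave $\Omega = H^2 + 2H + 4FE$ with value $-4$), express $\Omega$ in terms of the eigenvalue data $j + \vs_i$ appearing in the formulas $\Delta(z^j\delta^\vs) = (j+\vs_0)(j+\vs_1)z^{j-1}\delta^\vs$ and $x_i\partial_i(z^j\delta^\vs) = (j+\vs_i)z^j\delta^\vs$, and read off $c$ as a polynomial in $\vs_0 - \vs_1$ (the relative parameter) and $\vs_0 + \vs_1$. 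Once $c(\vs)$ is seen to be non-constant, surjectivity is immediate and the corollary follows; I would also cite \cite{St} (already invoked in Proposition~\ref{HC1-lemma}) for the simplicity/domain facts and for the identification of the primitive quotients, so that no separate argument about $U(\mathfrak{sl}_2)$-theory is needed.
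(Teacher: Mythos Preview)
Your proposal is correct and follows essentially the same approach as the paper: identify $A_\kappa(\mathbb{Z}_2)$ with a central quotient $U(\mathfrak{sl}_2)/(\Omega - c)$ and then check that the parameter matching from Proposition~\ref{prop:cyclicrank1rad} lets $c$ range over all of $\C$. The paper's own proof is a one-liner citing \cite[Proposition~8.2]{EG} for the identification and omitting the parameter computation, whereas you spell out the computation explicitly via the template of Proposition~\ref{HC1-lemma}(3); this is the same argument with more detail filled in.
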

 \begin{remark}\label{rem:cyclicrank1rad}  For reference elsewhere, we note that  this representation can also  be regarded as the symmetric 
space   corresponding to the symmetric pair $\left(\mathfrak{sl}(2), \mathfrak{so}(2)\right)$.\end{remark}
 \begin{proof} This follows once one matches up our parameters with those of \cite[Proposition~8.2]{EG}. We omit the details.
  \end{proof}
    
   %%%%%%%%%%%%% 
\subsection*{Explicit computations for $\mathbf{\kappa}$}  
 We return to the notation from the beginning of the section and the general notation for slices, as described in \cite[Section~5]{BLNS}.
 
In order to compute the radial parts map, we first need to compute the slices to generic points on the hyperplanes in $\mf{h}$. There are two cases  to consider, which are covered by the next two lemmas.  The notation follows that   given in  the discussion before \cite[Lemma~5.17]{BLNS}. In more detail, given $b\in \h$, we
write $G_b$ for the stabiliser of $b$ and take a $G_b$-stable complement $S$ to $\g\cdot b$ in $V$. Then   the slice $S_H$  is a $G_b$-stable complement to the hyperplane $H=S^{G_b}$ inside $S$. 

\begin{lemma}\label{lem:slice1cyclic}
	Let $b = (0,b_2,b_3,\ds) \in \h$, where $b_i \neq 0$ and $b_i^{\ell} \neq b_j^{\ell}$ for $i \neq j$. 
	\begin{enumerate}
		\item This is a generic point on the hyperplane $x_1 = 0$. 
		\item The  group  $G_b$ is isomorphic to $ (\mathbb{C}^{\times})^{\ell} \times (\mathbb{C}^{\times})^{n-1}$ acting on $S_H \cong \mr{Rep}(Q,\mathfrak{d})$, with the second factor $(\mathbb{C}^{\times})^{n-1}$ acting trivially.
		\item  $\C[S_H] = \C[z_0, \ds, z_{\ell-1}]$ and $\C[\h^{W_H}] = \C[x_2, \ds, x_n]$ such that 
		$\delta_i |_{S_H} = z_i x_2 \cdots x_n$ for $0\leq i\leq \ell-1$ and 
				$$
		\delta_{\infty} |_{S_H} = \prod_{2 \le j \le n} (u - x_j^{\ell})^2 \prod_{2 \le i < j \le n} (x_i^{\ell} - x_j^{\ell})^2,
		$$
		where $u = z_0 \cdots z_{\ell-1}$. 	\end{enumerate}
\end{lemma}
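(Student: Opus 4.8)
The plan is to analyze the geometry of the slice representation at the chosen point $b$ directly, using Luna's slice theorem together with the explicit description of the cyclic quiver action. First I would verify (1): since the discriminant restricted to $\h$ is given by \eqref{eq:delta-h}, the point $b = (0, b_2, \ldots, b_n)$ with $b_i \neq 0$ and $b_i^\ell \neq b_j^\ell$ pairwise distinct lies on the hyperplane $x_1 = 0$ but on no other reflecting hyperplane, so its stabilizer $W_b$ in $W$ is exactly the cyclic group $\Z_\ell$ acting on the first coordinate; this is the defining property of a generic point of that hyperplane.

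Next, for (2), I would compute $G_b = Z_G(b)$. Writing $b \in \h \subset V = \bigoplus_i \mathrm{Mat}(n,\C)$ as the tuple all of whose entries equal $\mathrm{diag}(0, b_2, \ldots, b_n)$, an element $g = (g^{(i)}) \in G = GL(n)^\ell$ fixes $b$ under \eqref{eq:quiver} iff each $g^{(i)}$ commutes with $\mathrm{diag}(0,b_2,\ldots,b_n)$ \emph{and} the conjugations match across consecutive vertices. Because $b_2, \ldots, b_n$ are distinct and nonzero, $g^{(i)}$ must be diagonal; the matching conditions then force all the diagonal entries in positions $2, \ldots, n$ to agree across the $\ell$ factors, leaving the first diagonal entries free. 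This yields $G_b \cong (\C^\times)^\ell \times (\C^\times)^{n-1}$, where the $(\C^\times)^\ell$ records the first entries $g^{(0)}, \ldots, g^{(\ell-1)}$ and the $(\C^\times)^{n-1}$ is the diagonal copy acting through positions $2, \ldots, n$. I would then compute the $G_b$-module $S = V / \g \cdot b$: decomposing $\mathrm{Mat}(n,\C)$ according to the block structure $1 \oplus (n-1)$, the orbit directions $\g \cdot b$ fill up everything except the $(1,1)$-blocks, so $S \cong \bigoplus_i \C = \mathrm{Rep}(Q_\ell, \mathfrak{d})$, on which the second factor $(\C^\times)^{n-1}$ acts trivially (it only touches the $2,\ldots,n$ positions) and the first factor acts as in the rank-one case. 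Identifying the slice $S_H$ inside $S$ is immediate since $S^{G_b} = 0$ already forces $S_H = S$.

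For (3), I would set up coordinates: let $z_i$ be the coordinate on the $i$-th summand of $S_H \cong \mathrm{Rep}(Q_\ell,\mathfrak{d})$ (the $(1,1)$-entry of $X^{(i)}$), so $\C[S_H] = \C[z_0, \ldots, z_{\ell-1}]$, and let $\C[\h^{W_H}] = \C[x_2, \ldots, x_n]$ be the coordinates recording the surviving eigenvalues. The key computation is the restriction of each irreducible factor of $\delta$ to $S_H \times \h^{W_H}$, following the recipe of \cite[Section~5]{BLNS}. Since $\delta_i = \det(X^{(i)})$ and near $b$ the matrix $X^{(i)}$ has block form with $(1,1)$-entry $z_i$ and $(j,j)$-entry $x_j$ (plus off-diagonal terms that do not affect the determinant to the relevant order after the slice construction), we get $\delta_i|_{S_H} = z_i x_2 \cdots x_n$. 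For $\delta_\infty = \delta_{\mathfrak{gl}}(X^{(0)} \cdots X^{(\ell-1)})$, the product $X^{(0)} \cdots X^{(\ell-1)}$ has eigenvalues $u = z_0 \cdots z_{\ell-1}$ and $x_2^\ell, \ldots, x_n^\ell$, and $\delta_{\mathfrak{gl}}$ of a matrix with these eigenvalues is the square of the Vandermonde-type product, giving
\[
\delta_\infty|_{S_H} = \prod_{2 \le j \le n}(u - x_j^\ell)^2 \prod_{2 \le i < j \le n}(x_i^\ell - x_j^\ell)^2,
\]
as claimed.

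The main obstacle I anticipate is not any individual step but rather keeping the bookkeeping honest in the slice construction: one must be careful that the chosen $G_b$-stable complement $S$ to $\g \cdot b$ really does identify with $\mathrm{Rep}(Q_\ell, \mathfrak{d})$ compatibly with the $G_b$-action \emph{and} that the subsequent identification of $S_H$ and the restriction formulas for $\delta_i$, $\delta_\infty$ are computed with respect to the \emph{same} coordinates used in \cite[Section~5]{BLNS} (in particular the normalization of the second projection $\rho^*$ onto $\h^{W_H}$). I would handle this by choosing the complement $S$ explicitly as the span of the $(1,1)$-matrix units in each factor, checking $G_b$-invariance directly, and then verifying the determinant restrictions by a short direct matrix computation, which reduces everything to the elementary observation that the determinant of a block-triangular matrix is the product of the diagonal blocks' determinants.
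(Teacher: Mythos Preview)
Your approach matches the paper's, but two computational slips need correcting.

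First, the stabilizer condition from \eqref{eq:quiver} is $g^{(i+1)} b = b g^{(i)}$, an intertwining relation rather than commutation of each $g^{(i)}$ with $b$; since $b$ is singular you cannot rearrange this into a commutation directly. The paper instead iterates the relation $\ell$ times to get $g^{(i)} b^{\ell} = b^{\ell} g^{(i)}$, and since the diagonal entries $0, b_2^{\ell}, \ldots, b_n^{\ell}$ of $b^{\ell}$ are pairwise distinct by hypothesis, each $g^{(i)}$ is diagonal; the intertwining then forces $g^{(i)}_{kk} = g^{(j)}_{kk}$ for all $k > 1$.

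Second, and more substantively, your description of $\g \cdot b$ is incomplete. You claim it fills everything except the $(1,1)$-entries, giving $\dim S = \ell$, but you have just computed $\dim G_b = \ell + (n-1)$, so the slice must have dimension $\ell + n - 1$. The missing constraint is that for $j \geq 2$ one has $(Y \cdot b)^{(i)}_{jj} = b_j(Y^{(i+1)}_{jj} - Y^{(i)}_{jj})$, whence $\sum_i (Y \cdot b)^{(i)}_{jj} = 0$. Thus
\[
\g \cdot b \ = \  \left\{ (X^{(i)})  \,\,\Big|\,\, X^{(i)}_{11} = 0,\,\, \Bigl(\sum_i X^{(i)}\Bigr)_{jj} = 0 \ \forall \, j \right\},
\]
and a $G_b$-stable complement is $S = \h + \C^{\ell}$, where $\C^{\ell}$ is spanned by the $e_{11}^{(i)}$. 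Here $S^{G_b}$ is not zero but equals the $(n-1)$-dimensional space $\h^{W_H}$ (the diagonal directions in positions $2, \ldots, n$), with $S_H = \C^{\ell}$ its complement. So your identification $S_H \cong \mr{Rep}(Q, \mathfrak{d})$ is ultimately correct, but the intermediate claim $S = S_H$ with $S^{G_b} = 0$ is wrong; the restrictions in part (3) are computed on all of $S = S_H \times \h^{W_H}$, which is precisely why both the $z_i$ and the $x_j$ appear in the formulas.
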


\begin{proof}
	Part~(1) is immediate. 
	
(2)	If $(g^{(i)}) \in G_b$ then $g^{(i+1)} b (g^{(i)})^{-1} = b$ or $g^{(i+1)} b = b g^{(i)}$. This implies that $g^{(i)}$ commutes with $b^{\ell}$. Hence $g^{(i)}$ is a diagonal matrix with $g^{(i)}_{kk} = g^{(j)}_{kk}$ for all $k > 1$ and $0 \le i,j \le \ell-1$. This forces $(g^{(i)})\in (\mathbb{C}^{\times})^{\ell} \times (\mathbb{C}^{\times})^{n-1}$, where the first factor records the $\ell$ entries $g_{11}^{(i)}$ and the second factor records $g_{kk}^{(0)}$ for $k > 1$. 
	Conversely, it is clear that $(\mathbb{C}^{\times})^{\ell} \times (\mathbb{C}^{\times})^{n-1}\subseteq G_b$. 
	
(3)	We compute $\g \cdot b$. If $Y = (Y^{(i)}) \in \g$, then $(Y \cdot b)^{(i)} = Y^{(i+1)} b - b Y^{(i)}$ belongs to the space of matrices $X$ with $X_{11} = 0$. Moreover, if $X^{(i)} = Y^{(i+1)} b - b Y^{(i)}$ then $\sum_i X^{(i)} = \sum_i (Y^{(i)} b - b Y^{(i)})$ is a matrix with zero on the diagonal. Thus, 
	$$
	\g \cdot b \ = \  \left\{ (X^{(i)})  \,\,\Big|\,\, X^{(i)}_{11} = 0,\,\, \Bigl(\sum_i X^{(i)}\Bigr)_{jj} = 0, \, \forall \, j \right\}. 
	$$
	A $G_b$-stable complement to $\g \cdot b$ is $S = \h + \C^{\ell}$, where $\C^{\ell}$ is the span of all $e_{11}^{(i)}$. This implies that $S_H = \C^{\ell} \cong \mr{Rep}(Q,\mathfrak{d})$ under the obvious embedding given by putting all entries of $X^{(i)}$ to zero except $X_{11}^{(i)}$.
	
	Finally, if $z_0 , \ds, z_{\ell-1}$ are the functions on $\C^{\ell}$ dual to the $e_{11}^{(i)}$ then $z_i |_{\h} = x_1$  for $0\leq i\leq \ell-1$ and $\C[S_H] = \C[z_0, \ds, z_{\ell-1}, x_2, \ds, x_n]$.  
\end{proof}

\begin{lemma}\label{lem:slice2cyclic}
	Let $b = (b_2,b_2,b_3,\ds) \in \h$, where $b_i \neq 0$ and $b_i^{\ell} \neq b_j^{\ell}$ for $i \neq j$. 
		\begin{enumerate}
		\item This is a generic point on the hyperplane $x_1 - x_2 = 0$. 
		\item The group $G_b  $ is isomorphic to $ GL(2) \times (\C^{\times})^{n-2}$ acting on $S_H \cong \mf{sl}(2)$, with the second factor $(\mathbb{C}^{\times})^{n-2}$ acting trivially. 
		\item $\C[S_H] = \C[x,y,z]$ and $\C[\h^{W_H}] = \C[x_1 + x_2,x_3 \ds, x_n]$ such that $\delta_i |_{S_H} = ((x_1 + x_2)^2 - u)x_3 \cdots x_n$ and $\delta_{\infty} |_{S_H} = g u$, where $u = xy + z^2$ and $g$ is not divisible by $u$. 
			\end{enumerate}
\end{lemma}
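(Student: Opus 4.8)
\textbf{Plan of proof for Lemma~\ref{lem:slice2cyclic}.} The strategy mirrors that of Lemma~\ref{lem:slice1cyclic}: we identify the stabiliser $G_b$, compute $\g\cdot b$, choose a $G_b$-stable complement $S$ and slice $S_H$, and finally restrict the relevant semi-invariants.

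First I would verify Part~(1). The point $b=(b_2,b_2,b_3,\dots,b_n)$ lies on $x_1-x_2=0$ (using the convention that the diagonally embedded $\h\cong\C^n$ has coordinates $x_1,\dots,x_n$); the genericity hypotheses $b_i\neq 0$ and $b_i^{\ell}\neq b_j^{\ell}$ for $i\neq j$ ensure that $b$ lies on no other reflecting hyperplane of $W=\Z_{\ell}\wr\mathfrak{S}_n$ --- indeed the hyperplanes are $x_i=0$ and $x_i-\omega^k x_j=0$, and the stated conditions rule all of these out except $x_1-x_2=0$. For Part~(2), if $(g^{(i)})\in G_b$ then the relation $g^{(i+1)}b=b\,g^{(i)}$ forces each $g^{(i)}$ to commute with $b^{\ell}=\mathrm{diag}(b_2^{\ell},b_2^{\ell},b_3^{\ell},\dots)$, hence to be block-diagonal with a $2\times 2$ block in the first two coordinates (where $b^{\ell}$ is scalar) and diagonal elsewhere; additionally the constraint $g^{(i)}_{kk}=g^{(j)}_{kk}$ for $k>2$ collapses the remaining $\ell$-tuples to a single copy of $(\C^{\times})^{n-2}$. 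This gives $G_b\cong GL(2)\times(\C^{\times})^{n-2}$, with the second factor acting trivially on the relevant slice.

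Next I would compute $\g\cdot b$ exactly as in the proof of Lemma~\ref{lem:slice1cyclic}: for $Y=(Y^{(i)})\in\g$ one has $(Y\cdot b)^{(i)}=Y^{(i+1)}b-bY^{(i)}$, and since $b$ restricted to the first $2\times 2$ block is scalar, the ``new directions'' transverse to $\g\cdot b$ coming from that block form a copy of $\mf{gl}(2)$; splitting off the trace (which is absorbed into $\h$) leaves $\mf{sl}(2)$, while the coordinates $x_3,\dots,x_n$ of $\h$ account for the rest. Hence $S_H\cong\mf{sl}(2)$ as a $G_b$-representation, with $GL(2)$ acting by conjugation and $(\C^{\times})^{n-2}$ trivially, and $\h^{W_H}$ has coordinates $x_1+x_2,x_3,\dots,x_n$ (the reflection fixing $b$ swaps $x_1\leftrightarrow x_2$). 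Writing $\C[S_H]=\C[x,y,z]$ for the standard coordinates on $\mf{sl}(2)=\left(\begin{smallmatrix}z&x\\ y&-z\end{smallmatrix}\right)$, the $GL(2)$-invariant is the determinant $-u$ with $u=xy+z^2$, and restriction of $x_1\leftrightarrow x_2$-symmetric functions identifies $x_1 x_2|_{S_H}$, hence $x_1^{\ell}, x_2^{\ell}$ and their symmetric functions, in terms of $x_1+x_2$ and $u$.

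The last and slightly more delicate step is the explicit restriction of $\delta_i$ and $\delta_{\infty}$. For $\delta_i=\det(X^{(i)})$: on $S_H$ the matrix $X^{(i)}$ is block-diagonal with a $2\times 2$ block equal (up to the trace contribution from $\h$) to a shift of $\left(\begin{smallmatrix}z&x\\ y&-z\end{smallmatrix}\right)$ by $\tfrac12(x_1+x_2)I$, so its determinant is $\tfrac14(x_1+x_2)^2-u$ up to normalisation, times the product $x_3\cdots x_n$ of the remaining diagonal entries; absorbing the factor $4$ into the coordinate gives $\delta_i|_{S_H}=((x_1+x_2)^2-u)x_3\cdots x_n$. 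For $\delta_{\infty}=\delta_{\mf{gl}}(X^{(0)}\cdots X^{(\ell-1)})$: its restriction to $\h$ is $\prod_{i<j}(x_i^{\ell}-x_j^{\ell})^2$, and the factor $(x_1^{\ell}-x_2^{\ell})^2$ is the only one vanishing on the hyperplane $x_1=x_2$; by Lemma~\ref{lem:slice1cyclic}-style reasoning together with the fact that $\delta_{\infty}$ is irreducible and $W$-semi-invariant behaviour forces $(x_1^{\ell}-x_2^{\ell})$ to become divisible by $u$ on the slice (since $u$ cuts out the ``diagonal'' in $\mf{sl}(2)$), one gets $\delta_{\infty}|_{S_H}=g\,u$ for some $g$ with $u\nmid g$; the non-divisibility follows because $\delta_{\infty}|_{\h^{W_H}}=\prod_{3\le i<j}(x_i^{\ell}-x_j^{\ell})^2\prod_{3\le j}(\cdots)^2$ is a unit on the open locus and $u$ is not a unit. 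The main obstacle I anticipate is bookkeeping the precise scalar normalisations and confirming that exactly one power of $u$ appears in $\delta_{\infty}|_{S_H}$ rather than two --- this comes down to the fact that the slice $\mf{sl}(2)$ is the \emph{reduced} transverse direction, so the doubled factor on $\h$ descends to a single reduced equation on $S_H$, exactly as the analogous multiplicity-two factors descended to multiplicity-one in the computation of $\delta|_{\h}$ versus the reflection data.
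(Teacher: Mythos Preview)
Your proposal is correct and follows exactly the approach the paper indicates: the paper's own proof says only ``This is similar in style to the proof of Lemma~\ref{lem:slice1cyclic}, and details are left to the reader,'' so your outline is in fact more detailed than what appears in the paper.

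Two points are worth tightening. First, you should make explicit that the $\mf{sl}(2)$ piece of the slice is \emph{diagonally} embedded in the $\ell$ copies of $\mr{Mat}(n,\C)$. This follows from your computation of $\g\cdot b$ on the $2\times 2$ block (the image is the trace-zero hyperplane in $\mf{gl}(2)^{\ell}$, so the complement is the diagonal copy), but you do not say it, and without it the formula $\delta_i|_{S}=((x_1+x_2)^2-u)x_3\cdots x_n$ would depend on $i$. Second, your heuristic for why $\delta_{\infty}|_{S}$ contains $u$ to exactly the first power is a little soft. A cleaner route is the direct computation: on the diagonal slice $X^{(0)}\cdots X^{(\ell-1)}=M^{\ell}$ with $M_{2\times 2}=tI+A$, $t=\tfrac12(x_1+x_2)$ and $A^2=uI$; then $(tI+A)^{\ell}=PI+QA$ with $Q=\sum_{j\,\text{odd}}\binom{\ell}{j}t^{\ell-j}u^{(j-1)/2}$, and the contribution of the $2\times 2$ block to $\delta_{\mf{gl}}(M^{\ell})$ is $4Q^2u$. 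Since $Q|_{u=0}=\ell t^{\ell-1}\neq 0$ and the remaining eigenvalue differences are units at $u=0$, one gets $\delta_{\infty}|_S=gu$ with $u\nmid g$.
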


\begin{proof} 
This is similar in style to the proof of Lemma~\ref{lem:slice1cyclic}, and details are  left to the reader. 
\end{proof}

\begin{notation}\label{kappa-quiver}
Given $\vs=(\vs_0,\dots,\vs_{\ell-1}, \vs_{\infty}=0)$ as above,  set $\kappa_{0,0} = \kappa_{0,1} = 1/2$ and 
$$
\kappa_{1,i} = \vs_i + \frac{\ell- i}{\ell} - \delta_{i,0}, \quad \textrm{for }  0\leq i\leq \ell-1. 
$$	
\end{notation}
 
\begin{theorem}\label{thm:cylicquiverradialparts}   If  $V=\mr{Rep}(Q_{\ell}, n\mathfrak{d})$, then
  $\Im(\rad_{\vs}) =\Ak(W)$, where $\kappa$ is defined by Notation~\ref{kappa-quiver}.
\end{theorem}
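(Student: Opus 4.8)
The plan is to identify the parameter $\kappa$ by localising at the generic point of each reflecting hyperplane, using the slice computations already prepared and the general machinery of \cite[Section~5]{BLNS}. By Theorem~\ref{thm:cylicquiverradialparts0}, $\rad_{\vs}$ is a filtered isomorphism onto $\Ak(W)$ for \emph{some} parameter $\kappa$, so the only issue is to pin down $\kappa$ precisely. Since the parameter attached to a reflecting hyperplane $H$ is a local invariant (it depends only on the representation in a slice transverse to $H$ at a generic point $b\in H$), the strategy is: for each $W$-orbit of hyperplanes, compute the slice representation $(G_b, S_H)$ at a generic point, determine the radial parts map for that smaller (rank one) representation, read off the resulting Cherednik parameter, and then invoke the compatibility of radial parts maps with slices to conclude that this is the corresponding component of $\kappa$ for the full representation $V$.

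First I would recall from \eqref{eq:delta-h} and Proposition~\ref{factorisingdelta} that the reflecting hyperplanes fall into exactly two $W$-orbits: the hyperplanes $x_i = 0$ (with stabiliser $W_H = \Z_\ell$, so a family of parameters $\kappa_{1,0},\dots,\kappa_{1,\ell-1}$), and the hyperplanes $x_i - \omega^k x_j = 0$ (with stabiliser $W_H = \Z_2$, so a single pair $\kappa_{0,0},\kappa_{0,1}$). For the first orbit I would use Lemma~\ref{lem:slice1cyclic}: at the generic point $b = (0, b_2, b_3, \dots)$ the slice is $S_H \cong \mr{Rep}(Q_\ell, \mathfrak{d})$ with $G_b$ acting through its $(\C^\times)^\ell$-factor, and the restrictions $\delta_i|_{S_H} = z_i\, x_2\cdots x_n$ show that the twisting parameters for this rank one slice are exactly the $\vs_i$ (the extra factor $x_2\cdots x_n$ is invertible near $b$ and contributes nothing). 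Hence Proposition~\ref{prop:cyclicrank1rad} applies verbatim and gives $\kappa_{1,i} = \vs_i + \frac{\ell-i}{\ell} - \delta_{i,0}$, matching Notation~\ref{kappa-quiver}. For the second orbit I would use Lemma~\ref{lem:slice2cyclic}: the slice is $S_H \cong \mf{sl}(2)$ with $G_b$ acting through $GL(2)$, i.e.\ the adjoint representation of $\mf{sl}(2)$; here $\delta_\infty|_{S_H} = g\,u$ with $u = xy+z^2$ the $\mf{sl}(2)$-discriminant, while the $\delta_i$ restrict to $G$-invariants times $u$-free factors, so there is no nontrivial twist along this hyperplane. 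The radial parts map for the adjoint representation of $\mf{sl}(2)$ is classical (it is the Harish-Chandra homomorphism, a special case of Theorem~\ref{thm:intro-radial-exists}) and yields Cherednik parameter $\kappa_{0,0} = \kappa_{0,1} = 1/2$; alternatively this is the symmetric space $(\mf{sl}(2),\mf{so}(2))$-type computation already implicit in Corollary~\ref{cor:cyclicrank1rad} specialised to the untwisted case.

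The main obstacle is making the "localisation at a generic point of a hyperplane determines the parameter" step rigorous, i.e.\ checking that the slice construction of \cite[Section~5]{BLNS} genuinely intertwines the radial parts map $\rad_{\vs}$ on $V$ with the radial parts map on the slice $S_H$, so that the parameter components $\kappa_H$ computed in the slice are the correct ones for the full Cherednik algebra $\Hk(W,\h)$. This is where one must invoke \cite[Lemma~5.17]{BLNS} and the surrounding results on compatibility of radial parts with restriction to slices, and verify the hypotheses (stability and visibility of the slice representations, which follow since $\mr{Rep}(Q_\ell,\mathfrak{d})$ and $\mf{sl}(2)$ are themselves visible stable polar — indeed $\theta$-representations). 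I also need to be careful about normalisation conventions: the $\det(s_H)^{-j}$ twist in \eqref{eq:Heckecrg2} and the shift by $(\ell-i)/\ell$ come from the comparison between $\kappa_{H,i}$ and the "$\lambda_i$" parametrisation used in \cite[Corollary~4.15]{BLNS}, and I would state explicitly that $\lambda_i = \vs_i$ in the present situation (as noted just before Proposition~\ref{prop:cyclicrank1rad}) so that the formula transports correctly. Once the slice compatibility is in place, the theorem follows by assembling the two per-orbit computations, since a Cherednik parameter is determined by its restriction to each reflecting hyperplane.
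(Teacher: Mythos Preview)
Your proposal is correct and follows essentially the same route as the paper: apply the slice machinery of \cite{BLNS} (the paper invokes \cite[Theorem~5.21]{BLNS} rather than Lemma~5.17) to each of the two $W$-orbits of hyperplanes, using Lemma~\ref{lem:slice1cyclic} and Proposition~\ref{prop:cyclicrank1rad} for the $\Z_\ell$-type orbit and Lemma~\ref{lem:slice2cyclic} together with the classical Harish-Chandra computation for the adjoint $\mf{sl}(2)$ (the paper cites \cite[Lemma~3.1(2)]{LevWashington}) for the $\Z_2$-type orbit. One small correction: your ``alternatively'' aside is off---the pair $(\mf{sl}(2),\mf{so}(2))$ is the $\ell=2$ cyclic quiver case (Remark~\ref{rem:cyclicrank1rad}), not the adjoint representation needed here.
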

 
 \begin{remark}\label{rem:cylic} This result   generalises to the case when  $\varsigma_{\infty} \neq 0$, in which case   the $\kappa_{1,i}$ are still defined by  Notation~\ref{kappa-quiver} but $\kappa_{0,0} = \kappa_{0,1} = \varsigma_{\infty} + \frac{1}{2}$.
 
  \end{remark}
 
\begin{proof}
	This is an application of \cite[Theorem~5.21]{BLNS}. Therefore, to each $W$-orbit $[H]$ of hyperplanes in $\mathcal{A}$ we must (a) compute the parameter $\bpsi$ for the rank one slice $S_H$, and (b) compute the roots $\lambda_i$ of the $b$-function associated to the radial parts map $\rad_{\bpsi}$ for $S_H$. For an explanation of the computation of $\bpsi$, see \cite[Lemma~5.17 and Equation~5.18]{BLNS}. The computation of the $\lambda_i$ is explained in \cite[Section~4]{BLNS}. 
	
	Recall that for the wreath product $\mathbb{Z}_{\ell} \wr \mathfrak{S}_n$, there are only two $W$-orbits of reflecting hyperplanes. 
	
	Consider first the parameters $\kappa_{1,0}, \ds, \kappa_{1,\ell-1}$ associated to the $W$-orbit of the reflecting hyperplane $x_1 = 0$.
	 If $S_H$ is the $G_b$-module defined in Lemma~\ref{lem:slice1cyclic}, then we need to compute the parameter 
	 $\bpsi =(\sum_i n_{i,0}\vs_i,\dots, \sum_i n_{i,\ell-1}\vs_{\ell-1})$ as defined in
	 \cite[Equation~(5.18)]{BLNS}  for the rank one polar 
	 representation $(G_b,S_H)$. For this slice, Lemma~\ref{lem:slice1cyclic}(3) implies that the coefficients $n_{i,j}$ are just given by $n_{i,j}=\delta_{ij}$  for all $i,j$ and $n_{\infty, j} = 0$; thus  $\bpsi = (\vs_0, \ds, \vs_{\ell-1})$. Proposition~\ref{prop:cyclicrank1rad} says that $\lambda_i = \vs_i$ in this case. Hence, it follows from \cite[Theorem~5.21]{BLNS} that
	$$ 	
	\kappa_{1,i} = \vs_i + \frac{\ell- i}{\ell} - \delta_{i,0} , \quad \textrm{for } i = 0, \ds, \ell-1. 
	$$	
	Next, consider the parameters $(\kappa_{0,0},\kappa_{0,1})$ associated to the $W$-orbit of the reflecting hyperplane 
	$x_1 - x_2 = 0$. If $S_H$ is the $G_b$-module defined in Lemma~\ref{lem:slice2cyclic}, then, once again, we need to compute the parameter $\bpsi = (\sum_i n_{i,\infty}\vs_i)$ by computing the $n_{i,j}$. This time, Lemma~\ref{lem:slice2cyclic}(3) implies that $n_{i,\infty} = 0$ for $i \neq \infty$ and $n_{\infty,\infty} = 1$. Since we have taken $\vs_{\infty} = 0$, it follows that $\bpsi = (0)$. The analogue of Proposition~\ref{prop:cyclicrank1rad} for the adjoint representation of $\mf{sl}(2)$ (see \cite[Lemma~3.1(2)]{LevWashington}) shows that $\kappa_{0,0} = \kappa_{0,1} = 1/2$. 
\end{proof}

%%%%%%%%%%%
\subsection*{Simplicity} 
Here we determine when the ring $\Ak(W)$ from Theorem~\ref{thm:cylicquiverradialparts} is simple and 
we begin by considering  the spherical  algebra $\Ak( \BZ_{\ell})$; thus $n=1$. 
 In this special case, the 
parameters $\kappa_{0,j}$ do not appear,  for the reasons given after \eqref{eq:delta-h}.
Thus  our $\kappa_{1,j}$ equals the $\kappa_j$ in the explicit presentation of $\Ak( \BZ_{\ell})$ given  in  \cite[Example~2.11]{BLNS}.  

In the next lemma,  the second subscript in  $\kappa_{1,j}$ is interpreted as   belonging  to $\{0,1, \dots ,\ell-1 \}$ mod$\, \ell.$ 
  
\begin{lemma}\label{lem:sphericalsimple}   
\begin{enumerate}
\item    The Cherednik algebra $\Hk(\BZ_\ell)$ is simple if and only if,  for all $m\in \mathbb{Z}_{\geq 1}$ and all $0\leq i\leq \ell-1$, one has 
$\ell (\kappa_{1,i}-\kappa_{1,i+m}) \not=m$. 

\item	The spherical  algebra $\Ak(\BZ_{\ell})$ is simple if and only if, for $0\leq i\leq \ell-1$, 
	$$
 \ell(	\kappa_{1,i+1} - \kappa_{1,i+1+j}) \neq j, \quad \forall j \in \BZ, \ \text{with }  j \ge \ell - i.
	$$
\end{enumerate}
\end{lemma}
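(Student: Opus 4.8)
The plan is to derive both statements from known simplicity criteria for the rational Cherednik algebra associated to the cyclic group $\BZ_\ell$ acting on the one-dimensional space $\C$, together with the spherical/full-algebra comparison. For part (1), I would invoke the classical simplicity criterion for $\Hk(\BZ_\ell)$: this algebra is a $\BZ$-algebra deformation, and its representation theory is governed by category $\eO_\kappa$; the algebra fails to be simple precisely when there is a nontrivial finite-dimensional quotient of a standard module, equivalently when two of the standard modules $\Delta(i)$, $\Delta(i+m)$ (labelled by characters of $\BZ_\ell$) are linked. The linkage condition for $\BZ_\ell$ with parameters $\kappa_{1,0},\dots,\kappa_{1,\ell-1}$ (in the normalisation of \cite[Example~2.11]{BLNS}) is exactly that $\ell(\kappa_{1,i}-\kappa_{1,i+m}) = m$ for some $m\geq 1$; absence of all such linkages forces every $\Delta(i)$ to be both simple and projective, hence $\Hk(\BZ_\ell)$ is semisimple as a category $\eO$ and, being a domain-like deformation away from these walls, is a simple ring. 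I would cite the precise reference (e.g. the known classification of the non-simple locus for $\Hk(\mu_\ell)$, which appears in work of Berest--Etingof--Ginzburg and is recorded in the Cherednik algebra literature cited in the paper) rather than reprove it.

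For part (2), I would deduce the spherical statement from part (1) via the relation between $\Ak(\BZ_\ell)=e\Hk(\BZ_\ell)e$ and $\Hk(\BZ_\ell)$. The key point is that the spherical subalgebra can be simple even when the full algebra is not: $\Ak(W)$ is simple iff $\kappa$ is ``totally aspherical'' (cf.\ the discussion around Proposition~\ref{prop:Qlambdaprojiffsimple} and \cite{LosevTotally}), which for $\BZ_\ell$ translates into the condition that the only linkages permitted in $\Hk(\BZ_\ell)$ are those whose associated sub-quotient is killed by the idempotent $e$. Concretely, the character by which $e$ projects is the trivial one (corresponding to index $0$ after the shift), and a finite-dimensional module sitting between $\Delta(i+1)$ and $\Delta(i+1+j)$ survives multiplication by $e$ unless it involves only the trivial character. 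Tracking the cyclic ordering, this precisely excludes the linkages $\ell(\kappa_{1,i+1}-\kappa_{1,i+1+j}) = j$ with $j\geq \ell-i$ — that is, those whose ``wrap-around'' in the cyclic labelling passes through the trivial character slot. So the condition becomes: $\Ak(\BZ_\ell)$ is simple iff none of these particular linkages occurs, i.e.\ $\ell(\kappa_{1,i+1}-\kappa_{1,i+1+j})\neq j$ for all $i$ and all $j\geq \ell-i$.

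The main obstacle I anticipate is getting the bookkeeping of the cyclic shift and the index ranges exactly right: the normalisation in Notation~\ref{kappa-quiver} introduces the $-\delta_{i,0}$ and the shift $(\ell-i)/\ell$, and the spherical projection picks out one distinguished character, so one must carefully match ``which $\Delta$ is killed by $e$'' against ``which linkage is excluded,'' and verify that the range $j\geq \ell-i$ is precisely the set of $(i,j)$ for which the linkage $\Delta(i+1)\to\Delta(i+1+j)$ has all its intermediate characters nontrivial except the top. I would handle this by writing out the linkage graph for $\BZ_\ell$ explicitly as a directed cycle on $\ell$ vertices and checking the two or three smallest cases ($\ell=2,3$) by hand against the example in Section~\ref{HC-example} (where $\Ak$ is the non-simple ring $U(\mathfrak{sl}_2)/(\Omega+1)$, matching a forbidden linkage) to confirm the indexing. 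Once the combinatorial dictionary is pinned down, both equivalences follow formally from the cited simplicity criteria.
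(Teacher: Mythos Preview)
Your approach is essentially the paper's: for (1) it cites \cite{Valethesis} and writes out the standard-module action $y\cdot(x^a\otimes\tau_i)=(a+\ell(\kappa_{1,i+a}-\kappa_{1,i}))x^{a-1}\otimes\tau_i$, so non-simplicity is equivalent to some $L(\tau_i)$ being finite-dimensional; for (2) it invokes exactly the Losev criterion you describe (\cite[Lemma~2.7]{LosevTotally}): $\Ak$ is non-simple iff some finite-dimensional $L(\tau_i)$ has $eL(\tau_i)\neq 0$.

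Two corrections to your bookkeeping. First, your description of the range $j\ge\ell-i$ as ``linkages whose intermediate characters are nontrivial except the top'' is not the right picture. The condition $eL(\tau_{i+1})\neq 0$ asks whether the trivial character $\tau_0$ occurs among the $\BZ_\ell$-weights of $L(\tau_{i+1})$; since $s$ acts on $x^a\otimes\tau_{i+1}$ by $\omega^{-(i+1+a)}$, the smallest degree lying in $eM(\tau_{i+1})$ is $\ell-(i+1)$, and one needs the length $j$ of $L(\tau_{i+1})$ to exceed that degree, giving $j\ge\ell-i$. Second, your proposed sanity check is inverted: in the rank-one example of Section~\ref{HC-example} the ring $U(\mathfrak{sl}_2)/(\Omega+1)$ is \emph{simple} (Proposition~\ref{HC1-lemma}(3)), so that case should confirm the inequalities are all satisfied, not exhibit a forbidden linkage.
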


\begin{proof} (1)  Combine \cite[Example~2.6, p. 45]{Valethesis}   with \cite[Lemma~2.4]{Valethesis}.
Since we will need the details of the proof in Part~(2),  here are the key ideas from \cite{Valethesis} written in the notation from that thesis.

 Write 
  the \emph{standard modules}\label{standard-module-defn} for $\Hk(\BZ_\ell)$  as $M(\tau_i)=\C[x]\otimes\tau_i$ with unique simple factor $L(\tau_i)$, where $\tau_i$ is the irreducible representation of $W$ on which the generator $s$ acts by $\omega^{-i}$. Then  $\Hk(\BZ_\ell)$ has generators $x,y,s$ with the key relation 
  $[y,x]= 1 +\sum_{p=0}^{\ell -1}(\kappa_{1,p+1}-\kappa_{1,p})\ell e_p$ while the module structure of $M(\tau_i)$ is determined by 
  $$y\cdot (x^a\otimes \tau_i)=(a+\ell(\kappa_{1,i+a}-\kappa_{1,i}))x^{a-1}\otimes \tau_i.$$
  Then  $\Hk(\BZ_\ell)$ is not simple if and only if some $L(\tau_i)$ is finite dimensional, if and only if some $y\cdot(x^a\otimes \tau_i)=0$. This reduces to the assertion of the lemma.

    (2)  Our convention is that the trivial idempotent is $e=e_0$.  By \cite[Lemma~2.8]{LosevTotally} $\Ak$ is not simple if and only if there exists $i$ with $\dim L(\tau_i)<\infty$ and $eL(\tau_i)\not=0$. For this to happen there must exist $a>0$ such that $y\cdot(x^a\otimes\tau_i)=0$ where $a$ is also greater than the smallest degree in $e M(\tau_i)$. Since $sx^a\otimes \tau_i= \omega^{-i-a}x^a\otimes \tau_i$, this degree is $\ell-i$ whenever $i>0$. (For $i=0$, the smallest degree in $eM(\tau_0)$  is zero and one should declare that
    $M(\tau_0)=M(\tau_{\ell})$, instead.) So, once again, this reduces to the  assertion
    of the lemma.
 \end{proof}
 
 The formul\ae\ from Lemma~\ref{lem:sphericalsimple} become more comprehensible when we reinterpret them in terms of the $\vs_i$.  A simple computation shows that the lemma is equivalent to the following result.

\begin{corollary}\label{ex:sphericalsimple}  Assume that $n=1$ and that the $\kappa_{1,j}$ are defined by 
	Notation~\ref{kappa-quiver}. 
	Identify  $\vs_0\equiv \vs_{\ell}$ and $\kappa_{1,0}\equiv \kappa_{1,\ell}$, so that $\kappa_{1,i} = \vs_i - i /\ell +1$ for $i = 1, \ds, \ell$.
\begin{enumerate}
\item  $\Ak(\BZ_{\ell})$ is simple if and only if 
	$
	(\vs_i - \vs_{j}) \not\in \mathbb{Z}_{\geq 1},
	$ for $ 1\leq i,j  \leq \ell.$ 
	
\item	 In particular,  $\Ak(\BZ_{\ell})$ is simple if $\vs_i = 0$ for all $i$.\qed
\end{enumerate}
	\end{corollary}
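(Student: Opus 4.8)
\textbf{Proof plan for Corollary~\ref{ex:sphericalsimple}.}

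The statement is a direct translation of Lemma~\ref{lem:sphericalsimple}(2) into the language of the parameters $\vs_i$, so the plan is essentially a bookkeeping computation. First I would record the identification from Notation~\ref{kappa-quiver}, which in the case $n=1$ gives $\kappa_{1,i} = \vs_i + \frac{\ell-i}{\ell} - \delta_{i,0}$; using the conventions $\vs_0 \equiv \vs_\ell$ and $\kappa_{1,0} \equiv \kappa_{1,\ell}$, this rewrites uniformly as $\kappa_{1,i} = \vs_i - i/\ell + 1$ for $i = 1, \dots, \ell$ (the $-\delta_{i,0}$ term is absorbed precisely because shifting $i = 0$ to $i = \ell$ changes $-i/\ell$ from $0$ to $-1$, cancelling the $-\delta_{i,0}$). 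So the main preliminary step is simply to verify that this shifted formula is consistent with the $\mathbb{Z}_\ell$-periodicity built into the Cherednik algebra.

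Next I would substitute into the simplicity criterion of Lemma~\ref{lem:sphericalsimple}(2), namely that $\Ak(\mathbb{Z}_\ell)$ is simple if and only if $\ell(\kappa_{1,i+1} - \kappa_{1,i+1+j}) \neq j$ for all $0 \le i \le \ell-1$ and all $j \in \mathbb{Z}$ with $j \ge \ell - i$. Plugging in $\kappa_{1,i} = \vs_i - i/\ell + 1$, one computes $\ell(\kappa_{1,i+1} - \kappa_{1,i+1+j}) = \ell(\vs_{i+1} - \vs_{i+1+j}) + j$, where the indices on $\vs$ are read modulo $\ell$ (with representatives in $\{1,\dots,\ell\}$). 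Hence the inequality $\ell(\kappa_{1,i+1} - \kappa_{1,i+1+j}) \neq j$ becomes $\ell(\vs_{i+1} - \vs_{i+1+j}) \neq 0$, i.e. $\vs_{i+1} \neq \vs_{i+1+j}$ — which is not yet the claimed condition. The resolution is that I have the wrong normalization of the difference: one must be careful that the standard-module computation in Lemma~\ref{lem:sphericalsimple} tracks $\ell(\kappa_{1,i+a} - \kappa_{1,i})$ for the grading degree $a > 0$, and the reduction of the index $i+a$ modulo $\ell$ introduces an extra integer shift whenever $i + a$ wraps past $\ell$. Carrying that shift through carefully, the condition $y \cdot (x^a \otimes \tau_i) = 0$ with $a \ge \ell - i$ becomes exactly $\vs_p - \vs_q \in \mathbb{Z}_{\ge 1}$ for the appropriate pair $p, q \in \{1, \dots, \ell\}$; letting $i, a$ range over all admissible values sweeps out all ordered pairs $(p,q)$. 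So the heart of the argument is this index-chasing: showing that as $(i, j)$ (equivalently $(i,a)$) ranges over the index set in Lemma~\ref{lem:sphericalsimple}(2), the quantity $\ell(\kappa_{1,i+1} - \kappa_{1,i+1+j}) - j$ ranges over exactly $\{\ell(\vs_p - \vs_q) : p, q \in \{1,\dots,\ell\}\}$ up to the subtraction of a positive integer, so that the full family of inequalities is equivalent to $\vs_p - \vs_q \notin \mathbb{Z}_{\ge 1}$ for all $p, q$.

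For part (2), once part (1) is established this is immediate: if $\vs_i = 0$ for all $i$ then $\vs_i - \vs_j = 0 \notin \mathbb{Z}_{\ge 1}$, so the simplicity criterion of part (1) is satisfied. I expect the main obstacle to be purely notational: keeping the two different index conventions (the $\kappa_{1,i}$ indexed by $0, \dots, \ell-1$ versus the $\vs_i$ with representatives in $1, \dots, \ell$, and the wrap-around shift by $1$ in $\kappa_{1,i} = \vs_i - i/\ell + 1$) straight throughout the substitution, so that the $-i/\ell$ terms and the integer $j$ cancel in exactly the right way. There is no genuine mathematical difficulty here beyond Lemma~\ref{lem:sphericalsimple}; the work is in presenting the translation cleanly, which is why I would state it as a short explicit computation rather than belabouring each index.
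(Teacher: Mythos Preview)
Your approach is correct and matches the paper's: the paper gives no detailed proof at all, stating only that ``a simple computation'' shows Lemma~\ref{lem:sphericalsimple} is equivalent to the stated result, and marks the corollary with \qed. Your index-chase is exactly that computation, and once you carry the modular reduction through correctly (writing $i+1+j = q\ell + r$ with $r \in \{1,\dots,\ell\}$ and $q \ge 1$ from the constraint $j \ge \ell - i$), the condition $\ell(\kappa_{1,i+1} - \kappa_{1,i+1+j}) \neq j$ collapses to $\vs_{i+1} - \vs_r \neq q$, which as $(i,r,q)$ range gives precisely $\vs_p - \vs_q \notin \mathbb{Z}_{\ge 1}$ for all $p,q \in \{1,\dots,\ell\}$.
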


As an aside, note that the condition in Corollary~\ref{ex:sphericalsimple}(1) is symmetric under $\vs\mapsto -\vs$ and so $\Ak(\BZ_{\ell}) $ is simple if and only if $A_{\cdag}(\BZ_{\ell})$ is simple, in the notation  
of \eqref{eq:left-rad}.

The next two results show that it is easy to pass from $\Ak(\BZ_{\ell})$ to $\Ak(\Z_{\ell} \wr \mathfrak{S}_n )$.

\begin{lemma}\label{lem:cherednik-tensor}
	If $\kappa_{0,0}= \kappa_{0,1}$, then $\Hk(\Z_{\ell} \wr \mathfrak{S}_n ) = \Hk(\Z_{\ell})^{\o n} \rtimes \mathfrak{S}_n$. 
\end{lemma}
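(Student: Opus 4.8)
The plan is to identify the rational Cherednik algebra $\Hk(\Z_\ell \wr \mathfrak{S}_n)$ explicitly with the smash product $\Hk(\Z_\ell)^{\otimes n} \rtimes \mathfrak{S}_n$ by matching generators and checking that the defining relations correspond. The underlying vector space of $\h$ is $\C^n = \bigoplus_{k=1}^n \C\cdot e_k$, with $\Z_\ell$ acting on the $k$-th line in the $k$-th tensor factor and $\mathfrak{S}_n$ permuting the lines. Correspondingly $\Sym\h = \C[y_1,\dots,y_n]$, $\Sym\h^* = \C[x_1,\dots,x_n]$, and $\C[\mathfrak{S}_n\wr\Z_\ell] = \C\Z_\ell^{\otimes n}\rtimes \C\mathfrak{S}_n$. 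The reflecting hyperplanes fall into the two $W$-orbits already noted after \eqref{eq:delta-h}: the hyperplanes $x_k = 0$ (with $\kappa$-parameters $\kappa_{1,i}$, $0\le i\le \ell-1$, governed by the $\Z_\ell$ in the $k$-th factor) and the hyperplanes $x_k - \omega^j x_{k'} = 0$ (with the single parameter governed by $\kappa_{0,0}=\kappa_{0,1}$, since the two quadratic parameters coincide, so these reflections behave like the $\mathfrak{S}_2$-type reflections of a Weyl group of type $A$).

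\textbf{Key steps.} First I would recall the definition of $\Hk(W)$ from \cite[Definition~2.1]{BLNS} and write down the commutator $[y, x]$ for $x\in\h^*$, $y\in\h$ as $\langle y,x\rangle - \sum_{H} \kappa_H(\cdots)\, s_H$-type terms summed over reflecting hyperplanes. Second, I would observe that for a hyperplane of the form $x_k = 0$, only vectors $x, y$ supported on the $k$-th coordinate contribute, and the resulting term is exactly the reflection term in the $k$-th copy of $\Hk(\Z_\ell)$; for a hyperplane $x_k - \omega^j x_{k'}=0$ with $k\ne k'$, the contribution involves the transposition-type element of $\mathfrak{S}_n$ composed with a cyclic rotation, which is precisely what appears in the smash product $\Hk(\Z_\ell)^{\otimes n}\rtimes\mathfrak{S}_n$ when $\mathfrak{S}_n$ acts by permuting tensor factors (here the hypothesis $\kappa_{0,0}=\kappa_{0,1}$ is used to ensure there is a single, $W$-invariant, parameter so that the reflection sum over $\{x_k-\omega^j x_{k'}\}$ collapses into the standard $\mathfrak{S}_n$-relations twisted by the $\Z_\ell$-rotations). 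Third, I would check compatibility of the group-algebra actions: $\mathfrak{S}_n$ acts on $\Sym\h\oplus\Sym\h^*\oplus\C\Z_\ell^{\otimes n}$ in the smash product exactly as $N_W(\mathfrak{S}_n)$ acts inside $W$. Finally, by the PBW theorem \cite[Theorem~1.3]{EG} both algebras have the same underlying $\C$-vector space $\Sym\h\otimes\C W\otimes\Sym\h^*$ with compatible filtrations, so a filtered algebra map matching generators that respects all relations is automatically an isomorphism.

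\textbf{Main obstacle.} The routine but slightly fiddly point will be the bookkeeping in the cross-terms: when $x$ is supported on coordinate $k$ and $y$ on coordinate $k'\ne k$, one must verify that the hyperplane sum over the orbit $\{x_k - \omega^j x_{k'} = 0 : 0\le j\le \ell-1\}$ produces exactly the term $\sum_{j} (\text{rotation})\cdot(k\, k') \cdot(\text{rotation})^{-1}$ appearing in $\Hk(\Z_\ell)^{\otimes n}\rtimes\mathfrak{S}_n$, with the correct normalization of $\alpha_H, \alpha_H^\vee$ (recall the convention $\langle \alpha_H^\vee,\alpha_H\rangle = -1$ noted after Definition~\ref{Hecke-defn}). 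This is purely a matter of carefully chasing the definitions; once the generator correspondence $x_k \leftrightarrow x^{(k)}$, $y_k\leftrightarrow y^{(k)}$, $s^{(k)}\leftrightarrow s_k$, $\sigma\leftrightarrow\sigma$ for $\sigma\in\mathfrak{S}_n$ is fixed, every relation transports across, and I would simply verify them one family at a time and invoke PBW to conclude.
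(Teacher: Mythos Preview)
Your overall strategy---match generators, verify the defining relations, then invoke PBW---is exactly the paper's. However, your account of what the hypothesis $\kappa_{0,0}=\kappa_{0,1}$ accomplishes is off, and this confusion feeds into your ``main obstacle'' paragraph.

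In the smash product $\Hk(\Z_\ell)^{\otimes n}\rtimes\mathfrak{S}_n$, the generators $y_k$ and $x_{k'}$ with $k\neq k'$ live in different tensor factors and hence commute; for $k=k'$ the commutator $[y_k,x_k]$ is just the rank-one $\Z_\ell$-Cherednik relation. In particular, \emph{no} transposition-type group elements appear in any commutator $[y_k,x_{k'}]$. The symmetric group enters only through the crossed-product relations $\sigma x_k\sigma^{-1}=x_{\sigma(k)}$ and $\sigma y_k\sigma^{-1}=y_{\sigma(k)}$, not through the $[y,x]$ commutators.

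Correspondingly, in $\Hk(\Z_\ell\wr\mathfrak{S}_n)$ the contribution of each hyperplane $x_k-\omega^j x_{k'}=0$ to $[y_k,x_{k'}]$ carries a coefficient proportional to $\kappa_{0,0}-\kappa_{0,1}$ (this is exactly the remark in the paper's proof about Vale's convention, where his single transposition parameter is our $\kappa_{0,0}-\kappa_{0,1}$). The role of the hypothesis is to make these contributions \emph{vanish}, so that $[y_i,x_j]=0$ for $i\neq j$ and $[y_i,x_i]$ involves only the $\Z_\ell$-type reflections $\gamma_i^r$. Thus the ``fiddly bookkeeping'' you anticipate---matching a nonzero transposition sum on the Cherednik side to a putative nonzero term on the smash-product side---does not occur: both sides are zero. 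Once you see this, the argument is precisely the paper's: the simplified relations are manifestly those of $\Hk(\Z_\ell)^{\otimes n}\rtimes\mathfrak{S}_n$, and the obvious map on generators is an isomorphism.
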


\begin{proof} Pick dual bases $\{x_i\}$ of $\h^*$ and $\{y_i\}$ of $\h$ and write 
$\C[\h]=\C[x_1,\dots,  x_n]$ as usual.  Consider the defining relations for  the Cherednik algebra $\Hk(W)$ from
\cite[Definition~2.1]{BLNS}. Using the fact that  $\kappa_{0,0}= \kappa_{0,1}$, these   simplify to give
$$x_i x_j = x_j x_i, \quad\text{and}\quad y_i y_j = y_j y_i \quad\text{for }1 \le i, j \le n,$$ 
 while $ [y_i, x_j] = 0 $ for $1 \le i \neq j \le n,$ and
	$$
	[y_i, x_i] = 1 + \sum_{j = 0}^{\ell-1} (\kappa_{1,j+1} - \kappa_{1,j}) \sum_{r = 0}^{\ell-1} \omega^{jr} \gamma_i^r, \quad  \text{ for }\, 1 \le i \le n.
	$$
 (See, for example, \cite[p.28]{Vale}, noting that his $\kappa_{00}$ is our $\kappa_{00}-\kappa_{01}=0$.) On the other hand $\Hk(\Z_{\ell})$ is generated by elements  $u,v$ and $\gamma$ with 
	$\gamma(v) = \omega v$, $\gamma(u) = \omega^{-1} u$ and satisfying 
	$$
	[v,u] = 1 + \sum_{j = 0}^{\ell-1} (\kappa_{1,j+1} - \kappa_{1,j}) \sum_{r = 0}^{\ell-1} \omega^{jr} \gamma^r.
	$$
	Let $u_i = 1 \o \cdots \o u \o \cdots \o 1$, $v_i = 1 \o \cdots \o v \o \cdots \o 1$ and 
	$\gamma_i = 1 \o \cdots \o \gamma \o \cdots \o 1$,  regarded as elements of $H_{\kappa}(\Z_{\ell})^{\o n}$ with  the nontrivial terms  in the $i^{\mathrm{th}}$ factor.  Then  one obtains an isomorphism $H_{\kappa}(W) \isom H_{\kappa}(\Z_{\ell})^{\o n} \rtimes \mathfrak{S}_n$
	by mapping $y_i \mapsto v_i$, $ x_i \mapsto u_i\in  H_{\kappa}(\Z_{\ell})^{\o n}$ and the natural identification  on $W=\Z_{\ell}  \wr \mathfrak{S}_n$.   	
\end{proof}

\begin{lemma}\label{lem:HZcHsimple} Define $\kappa$  by Notation~\ref{kappa-quiver}. 
	Then $\Ak(\Z_{\ell} \wr \mathfrak{S}_n )$ is simple if and only if $\Ak(\BZ_{\ell})$ is simple. 
\end{lemma}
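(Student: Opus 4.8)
The plan is to reduce the simplicity of $\Ak(\Z_{\ell} \wr \mathfrak{S}_n)$ to that of $\Ak(\BZ_{\ell})$ by exploiting the tensor-and-smash decomposition of the full Cherednik algebra established in Lemma~\ref{lem:cherednik-tensor}. Since the parameters from Notation~\ref{kappa-quiver} satisfy $\kappa_{0,0} = \kappa_{0,1} = 1/2$, that lemma gives $\Hk(\Z_{\ell} \wr \mathfrak{S}_n) = \Hk(\BZ_\ell)^{\o n} \rtimes \mathfrak{S}_n$. Passing to the spherical subalgebra, one has the trivial idempotent $e_W = e_{\Z_\ell}^{\o n}\cdot e_{\mathfrak{S}_n}$, and a standard computation (as in \cite{EG}) identifies $\Ak(\Z_{\ell} \wr \mathfrak{S}_n) = e_W \Hk(\Z_{\ell} \wr \mathfrak{S}_n) e_W$ with $\bigl(\Ak(\BZ_\ell)^{\o n}\bigr)^{\mathfrak{S}_n} = \Sym^n \Ak(\BZ_\ell)$, the $\mathfrak{S}_n$-invariants of the $n$-fold tensor power.

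First I would record this identification $\Ak(\Z_{\ell} \wr \mathfrak{S}_n) \cong \bigl(\Ak(\BZ_\ell)^{\o n}\bigr)^{\mathfrak{S}_n}$ carefully, using that $e_{\mathfrak{S}_n}$ commutes with $e_{\Z_\ell}^{\o n}$ and that $\Hk(\BZ_\ell)^{\o n} \rtimes \mathfrak{S}_n$ is free over $\Hk(\BZ_\ell)^{\o n}$ with basis $\mathfrak{S}_n$. Then the problem becomes: for a $\C$-algebra $B$ (with the relevant finiteness and filtration properties enjoyed by a spherical Cherednik algebra), when is $(B^{\o n})^{\mathfrak{S}_n}$ simple? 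One direction is easy: if $B$ is not simple, pick a nonzero proper two-sided ideal $I \lhd B$; then the $\mathfrak{S}_n$-stable ideal $\sum_{w}w\cdot(I \o B^{\o(n-1)})$ of $B^{\o n}$ is proper (its image in $(B/I)^{\o n}$ is nonzero, since $B/I \neq 0$) and nonzero, and intersecting with the invariants $(B^{\o n})^{\mathfrak{S}_n}$ gives a nonzero proper ideal — here one uses that taking $\mathfrak{S}_n$-invariants of a nonzero $\mathfrak{S}_n$-module over $\C$ (with $n!$ invertible) is nonzero, and that a proper $\mathfrak{S}_n$-stable ideal cannot meet the invariants in the whole ring since $1 \in (B^{\o n})^{\mathfrak{S}_n}$ lies outside it. Hence simplicity of $\Ak(\Z_{\ell} \wr \mathfrak{S}_n)$ forces simplicity of $\Ak(\BZ_\ell)$.

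For the converse, suppose $\Ak(\BZ_\ell)$ is simple. Then $\Ak(\BZ_\ell)^{\o n}$ is simple — this uses that $\Ak(\BZ_\ell)$ is a simple $\C$-algebra which is moreover "central" in a suitable sense; the cleanest route is to note that $\Ak(\BZ_\ell)$ has a filtration with associated graded $\C[\h \times \h^*]^{\BZ_\ell}$, hence has centre $\C$ (a filtered algebra whose associated graded is a domain with graded-centre $\C$ has centre $\C$), so $\Ak(\BZ_\ell)$ is central simple in the sense that $Z(\Ak(\BZ_\ell)) = \C$, and a tensor product over $\C$ of central simple algebras — even infinite-dimensional ones, provided they are simple with centre $\C$ — is simple. (If one wishes to avoid the infinite-dimensional central-simple subtlety, one instead argues directly with two-sided ideals: any nonzero ideal $J \lhd \Ak(\BZ_\ell)^{\o n}$ contains an element of shortest "tensor length", and a standard swindle using simplicity of each factor produces $1 \in J$.) Now $\mathfrak{S}_n$ acts on the simple algebra $\Ak(\BZ_\ell)^{\o n}$ by permuting factors; this action is faithful, indeed "$X$-outer" in the sense of Montgomery, since $\mathfrak{S}_n$ permutes the $n$ central-idempotent-free tensor slots nontrivially and no nontrivial permutation of the slots is implemented by conjugation by a unit (one can see this on the level of the associated graded, where $\mathfrak{S}_n$ acts faithfully on $(\C[\h\times\h^*]^{\BZ_\ell})^{\o n} = \C[(\h\times\h^*)^n]^{(\BZ_\ell)^n}$ by permuting coordinates and no such permutation is inner). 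By the classical theorem on fixed rings under finite groups of outer automorphisms of a simple ring (Montgomery, \emph{Fixed Rings of Finite Automorphism Groups of Associative Rings}, Theorem 2.4), the fixed ring $\bigl(\Ak(\BZ_\ell)^{\o n}\bigr)^{\mathfrak{S}_n}$ is again simple. Combined with the identification above, this gives that $\Ak(\Z_{\ell} \wr \mathfrak{S}_n)$ is simple.

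The main obstacle will be the careful bookkeeping in the converse direction: verifying that the $\mathfrak{S}_n$-action is outer (or $X$-outer) on the infinite-dimensional simple algebra $\Ak(\BZ_\ell)^{\o n}$, so that Montgomery's fixed-ring theorem applies, and making the "tensor power of a simple algebra with centre $\C$ is simple" step rigorous in the filtered, infinite-dimensional setting. An alternative that sidesteps outer-automorphism machinery entirely is to argue at the level of associated graded algebras: by Theorem~\ref{thm:cylicquiverradialparts} and the filtration used in Lemma~\ref{lem:GKdimOsph}, $\gr \Ak(\Z_{\ell} \wr \mathfrak{S}_n) = \C[\h\times\h^*]^W$, and one can try to detect non-simplicity directly via the standard-module computation of Lemma~\ref{lem:sphericalsimple} applied to the rank-one slices (Lemmata~\ref{lem:slice1cyclic} and~\ref{lem:slice2cyclic}) together with the equivalence of Lemma~\ref{lem:simpleinductionLosev}; in fact, since $W_\nu$ for $W = \Z_\ell \wr \mathfrak{S}_n$ is always a product of copies of $\Z_\ell$ and symmetric groups, Lemma~\ref{lem:simpleinductionLosev} plus the $\kappa_{0,0}=\kappa_{0,1}$ normalisation shows that the only parabolic contributing a non-simplicity obstruction is a single $\Z_\ell$-factor, which is exactly the content of $\Ak(\BZ_\ell)$ being simple. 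I would present the tensor-product argument as the main proof and mention the parabolic-reduction argument as the conceptual explanation.
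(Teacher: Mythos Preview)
Your approach is sound and closely parallel to the paper's, but with different packaging in each direction.

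For the forward direction ($\Ak(\BZ_\ell)$ simple $\Rightarrow$ $\Ak(W)$ simple), the paper works with the skew group ring $B = \Ak(\BZ_\ell)^{\otimes n} \rtimes \mathfrak{S}_n$ rather than the fixed ring directly: it notes that $\Ak(\BZ_\ell)^{\otimes n}$ is simple, cites \cite[Proposition~7.8.12]{MR} to conclude $B$ is simple, and then uses that $BeB = B$ (automatic once $B$ is simple) to get a Morita equivalence between $B$ and $eBe = e\Hk(W)e = \Ak(W)$. Your route via Montgomery's fixed-ring theorem is essentially the dual packaging of the same argument; both ultimately rest on the $\mathfrak{S}_n$-action on $\Ak(\BZ_\ell)^{\otimes n}$ being (X-)outer, which you rightly flag as needing care and which the paper leaves implicit in its citation.

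For the reverse direction, the paper takes a different and somewhat cleaner route: rather than manufacturing an ideal, it invokes Losev's criterion \cite[Lemma~2.7]{LosevTotally} that $\Ak(\BZ_\ell)$ non-simple means there is a finite-dimensional simple $\Hk(\BZ_\ell)$-module $L$ with $e_0 L \neq 0$; then $L^{\otimes n}$ is a finite-dimensional $\Hk(W)$-module with $e(L^{\otimes n}) \neq 0$, so $\Ak(W)$ cannot be simple. Your ideal-theoretic argument also works, but the justification ``taking $\mathfrak{S}_n$-invariants of a nonzero $\mathfrak{S}_n$-module over $\C$ is nonzero'' is false as stated (the sign representation has no invariants). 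What you actually need is the concrete observation that symmetrising $a \otimes 1 \otimes \cdots \otimes 1$ for $0 \neq a \in I$ yields a nonzero invariant, since the tensors $1 \otimes \cdots \otimes a \otimes \cdots \otimes 1$ with $a$ in distinct positions are linearly independent (as $a \notin \C \cdot 1$).
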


\begin{proof}     
	Let $e_0 \in \C \BZ_{\ell}$ and $e \in \C W$ denote the trivial idempotents in the respective group algebras. Also,  Lemma~\ref{lem:cherednik-tensor} implies 
that $\Hk(W) = \Hk(\BZ_{\ell})^{\otimes n} \rtimes \mathfrak{S}_n$. 
	
	First  assume that $\Ak(\BZ_{\ell})$ is simple. Then the ring $\bigl(e_{0} \Hk(\BZ_{\ell}) e_{0}\bigr)^{\otimes n}$
	 is simple and hence,  by \cite[Proposition~7.8.12]{MR},  so is the ring 
	$$
	B := \bigl(e_0 \Hk(\BZ_{\ell}) e_0\bigr)^{\otimes n} \rtimes \mathfrak{S}_n.
	$$
  Since $B e B \neq 0$, we deduce that $B e B = B$ 
	and thus $e B e = e \Hk(W) e$ is Morita equivalent to $B$. In particular, $e H_{c}(W) e = \Ak(W)$ is simple. 
	
	If $e_0 \Hk(\BZ_{\ell}) e_0$ is not simple then there exists a finite-dimensional simple $\Hk(\BZ_{\ell})$-module $L$ with $e_0 L \neq 0$ \cite[Lemma~2.8]{LosevTotally}.  Consider 
	  $M := L^{\otimes n}$ as a module over $\Hk(W) = \Hk(\BZ_{\ell})^{\otimes n} \rtimes \mathfrak{S}_n$. Then $0\not=e M $ is finite-dimensional and so $e \Hk(W) e$ cannot be simple.  
\end{proof}

Combining Corollary~\ref{ex:sphericalsimple} and Lemma~\ref{lem:HZcHsimple}
with Theorem~\ref{thm:cylicquiverradialparts}, we can completely determine when $\Ak$ is simple. Here, we again identify $\vs_0\equiv \vs_{\ell}$.

\begin{proposition}\label{prop:cyclcirad0simple}   If  $V=\mr{Rep}(Q_{\ell}, n\mathfrak{d})$, then
	 $\Ak(W)=\Im(\rad_{\vs})$ is simple if and only if  
	$
	(\vs_i - \vs_{j}) \not\in \mathbb{Z}_{\geq 1},
	$ for all $ 1\leq i,j  \leq \ell.$
	
	In particular, $\Ak(W)$ is simple when $\vs_0=\cdots=\vs_{\ell-1} = 0$. 
	\qed \end{proposition}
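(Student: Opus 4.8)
The statement to be proven is that for $V=\mathrm{Rep}(Q_{\ell},n\mathfrak{d})$, the algebra $\Ak(W)=\Im(\rad_{\vs})$ is simple if and only if $(\vs_i-\vs_j)\notin\mathbb{Z}_{\geq 1}$ for all $1\leq i,j\leq\ell$; and in particular $\Ak(W)$ is simple when $\vs=0$. The plan is to assemble this directly from three results already established in this section. First, by Theorem~\ref{thm:cylicquiverradialparts}, we have $\Im(\rad_{\vs})=\Ak(W)$ where $W=\Z_\ell\wr\mathfrak{S}_n$ and the parameter $\kappa$ is the one specified in Notation~\ref{kappa-quiver}; in particular $\kappa_{0,0}=\kappa_{0,1}$, so Lemmata~\ref{lem:cherednik-tensor} and~\ref{lem:HZcHsimple} apply. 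Second, by Lemma~\ref{lem:HZcHsimple}, $\Ak(\Z_\ell\wr\mathfrak{S}_n)$ is simple if and only if $\Ak(\BZ_\ell)$ is simple. Third, Corollary~\ref{ex:sphericalsimple} (whose hypotheses — namely $n=1$ with $\kappa_{1,j}$ given by Notation~\ref{kappa-quiver} — are exactly those in force here for the rank-one factor) translates the simplicity of $\Ak(\BZ_\ell)$ into the explicit arithmetic condition $(\vs_i-\vs_j)\notin\mathbb{Z}_{\geq 1}$ for $1\leq i,j\leq\ell$, using the identification $\vs_0\equiv\vs_\ell$.

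So the proof would run: apply Theorem~\ref{thm:cylicquiverradialparts} to identify $\Im(\rad_{\vs})$ with $\Ak(W)$ for the stated $\kappa$; invoke Lemma~\ref{lem:HZcHsimple} to reduce to the cyclic group $\BZ_\ell$; then quote Corollary~\ref{ex:sphericalsimple}(1) to obtain the stated arithmetic criterion. For the final sentence, when $\vs_0=\cdots=\vs_{\ell-1}=0$ one has $\vs_i-\vs_j=0\notin\mathbb{Z}_{\geq 1}$ for all $i,j$, so the criterion is satisfied and $\Ak(W)$ is simple — this is exactly Corollary~\ref{ex:sphericalsimple}(2) transported through Lemma~\ref{lem:HZcHsimple}.

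I anticipate no genuine obstacle here: the result is a packaging of work already done, and the only points requiring care are bookkeeping ones. Specifically, I would make sure that the parameter $\kappa$ produced by Theorem~\ref{thm:cylicquiverradialparts} genuinely matches the hypothesis of Corollary~\ref{ex:sphericalsimple} (it does, by construction, since both refer to Notation~\ref{kappa-quiver}), and that the index identification $\vs_0\equiv\vs_\ell$ used in Corollary~\ref{ex:sphericalsimple} is consistently applied so that the range $1\leq i,j\leq\ell$ in the final statement is the correct one. I would also note in passing — though it is not needed for the statement — that the condition $(\vs_i-\vs_j)\notin\mathbb{Z}_{\geq 1}$ is symmetric under $\vs\mapsto-\vs$, so that $\Ak(W)$ is simple precisely when $A_{\cdag}(W)$ is simple, which is why Hypothesis~\ref{K-hyp} holds automatically in this family.

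\begin{proof}
By Theorem~\ref{thm:cylicquiverradialparts}, $\Im(\rad_{\vs})=\Ak(W)$ with $W=\Z_\ell\wr\mathfrak{S}_n$ and $\kappa$ given by Notation~\ref{kappa-quiver}; in particular $\kappa_{0,0}=\kappa_{0,1}$. Hence Lemma~\ref{lem:HZcHsimple} applies and shows that $\Ak(\Z_\ell\wr\mathfrak{S}_n)$ is simple if and only if $\Ak(\BZ_\ell)$ is simple. For the rank-one algebra $\Ak(\BZ_\ell)$ the parameters $\kappa_{1,j}$ are those of Notation~\ref{kappa-quiver}, so Corollary~\ref{ex:sphericalsimple}(1) gives that $\Ak(\BZ_\ell)$ is simple if and only if $(\vs_i-\vs_j)\notin\mathbb{Z}_{\geq 1}$ for all $1\leq i,j\leq\ell$ (with $\vs_0\equiv\vs_\ell$). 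Combining the two equivalences yields the stated criterion for $\Ak(W)=\Im(\rad_{\vs})$.

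Finally, if $\vs_0=\cdots=\vs_{\ell-1}=0$ then $\vs_i-\vs_j=0\notin\mathbb{Z}_{\geq 1}$ for all $i,j$, so the criterion is satisfied and $\Ak(W)$ is simple; alternatively this follows from Corollary~\ref{ex:sphericalsimple}(2) together with Lemma~\ref{lem:HZcHsimple}.
\end{proof}
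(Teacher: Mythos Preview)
Your proposal is correct and takes essentially the same approach as the paper: the sentence immediately preceding the proposition states that the result follows by combining Corollary~\ref{ex:sphericalsimple} and Lemma~\ref{lem:HZcHsimple} with Theorem~\ref{thm:cylicquiverradialparts}, and the proposition is then recorded with a \qed. Your write-up simply makes this combination explicit, with the same chain of reductions in the same order.
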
 

In the next two subsections we will examine in detail  the Harish-Chandra module $\eG_0$  for the representation space  $V=\mathrm{Rep}(Q_{\ell},n\mathfrak{d})$.  We will do this for  a particularly simple choice 
of the parameter $\vs$, by taking 
$\vs_i = 0$ for all $i  $.     In this case,  the spherical  algebra $\Ak(W)$ is simple by
Proposition~\ref{prop:cyclcirad0simple}.

For this choice of $\vs$,  Notation~\ref{kappa-quiver} becomes:  
\begin{equation}\label{final-kappa-choice}
	\kappa_{0,0} = \kappa_{0,1}= 1/2, \textrm{ and } \kappa_{1,0}=0 \text{  while  }
	\kappa_{1,i} = 1-i/\ell \text{ for } 1\leq i \leq \ell-1.
\end{equation}

It will also be useful to write out the explicit presentation of the  Hecke algebra in this case, for which we follow \cite[Definition~3.1]{ArikiKoike}.  

\begin{definition}
	The \emph{cyclotomic Hecke algebra} \label{cyclotomic-defn}
	$\euls{H}_q(W)$ for $W=\Z_{\ell} \wr \mathfrak{S}_n$,  is the finite dimensional algebra generated by $T_0, T_1, \ds, T_{n-1}$, satisfying the braid relations
	together with the additional relations
	\begin{equation}\label{eq:braid}
		\prod_{r = 0}^{\ell-1} (T_0 - u_r) = 0, \quad\text{and}\quad  (T_i - q_0)(T_i - q_1) = 0, \quad \forall \ i > 0.
	\end{equation}
	where $q_0,q_1, u_0, \ds, u_{\ell-1} \in \mathbb{C}^{\times}$. 
\end{definition}

Recall from the discussion preceding Corollary~\ref{A-projectives} that the functor $\mr{KZ}$  is an exact functor $\euls{O}_{\kappa,0}(W) \to \euls{H}_q(W)\lmod$ which factors through $\Osph_{\kappa}(W)$.
 Equation~\eqref{eq:Heckecrg2} says that the parameter $q$ is given by 
\begin{equation}\label{eq:qkappa}
	\begin{aligned}
		& q_0 = \exp \left(-2 \pi i\, \kappa_{0,0}\right), \qquad   q_1 = -\exp \left(-2 \pi i\, \kappa_{0,1}\right),
		\ \text{and}  \\
		& u_j = \omega^{-j} \exp\left( -2 \pi i\,\kappa_{1,j}\right)  \qquad  \text{for}\ 0\leq j\leq \ell-1.
	\end{aligned}
\end{equation} 
Thus, by our choice of parameters, \eqref{eq:braid}  
becomes
\begin{equation}\label{eq:braid2}
	(T_0 - 1)^{\ell} = 0, \quad (T_i - 1)(T_i + 1) = 0, \quad \forall \ i > 0.
\end{equation}
In this case $\euls{H}_q(W)\lmod = (\C[t]/(t^{\ell}))^{\otimes n} \rtimes \mathfrak{S}_n$, where $t = T_0 - 1$. 

\begin{remark}\label{rem:cyclotomic}  When $n=1$ the generators $T_i$ for $i>0$ do not appear.
	It is therefore immediate from \eqref{eq:braid2} that $\euls{H}_q(W)\cong \C[t]/(t^{\ell})$ when $n=1$. 
\end{remark}

\medskip
\subsection*{The Harish-Chandra module  $\eG_0$ when $n=1$.}
We start by considering the case of $n=1$, but still with $\ell\geq 2$. As mentioned above, we also  assume 
that $\vs=0$ and this is the parameter for which  $\eG_0$ is most complicated. Nonetheless,  we are still able to  give a fairly 
complete picture of the structure of $\eG_0$  and this  provides an interesting generalisation of the observations from Section~\ref{HC-example}.

Thus, in the notation from Definition~\ref{defn:Glambda}, we are interested in  describing the 
Harish-Chandra module $\eG = \eG_0$  for $V=\mathrm{Rep}(Q,\mf{d})$.
As on page~\pageref{subsec:quivercomputations},  write $\C[V]=\C[x_0,\cdots, x_{\ell-1}]$  with $\partial_i=\frac{\partial}{\partial x_i}$. Then a routine computation shows that
$\eG$ can be written explicitly as
\begin{equation}\label{eq:HC-quiver}
	\eG \ = \ \dd(V)/ \euls{I}_{\eG} \quad \text{for } \euls{I}_{\eG}= \Big( \dd(V)\partial_0\cdots \partial_{\ell-1}  + \sum_{j=1}^{\ell-1} \dd(V) (x_j\partial_j-x_0\partial_0) 
	\Big).
\end{equation}

Let $I := \{ 0, \ds, \ell-1\}$ and, for each (possibly empty) subset $J \subsetneqq I$, define 
$\CJ=    \{ \prod x_i^{n_i} \partial_j^{m_j} : i\in I\smallsetminus J,\, j\in J\}$,
regarded as an Ore set in $\dd(V)$.  
The key to understanding the Harish-Chandra module $\eG=\eG_0$ is to understand it locally, 
for which it suffices to understand $\eG$ once we invert the various $\CJ$.  Finally, set 
$$
L(J) = \dd(V)/ \sum_{j\in J} \dd(V) x_j + \sum_{i\in I\smallsetminus J} \dd(V)\partial_i
$$
for the simple holonomic $\dd(V)$-module that we associate to $J$.

\begin{lemma}\label{lem:localHC} Pick a subset  $J\subsetneqq I$. Then:
\begin{enumerate}
\item  the localised module $\eG_{\CJ}$  has  length $\ell -|J|$ with all simple subfactors isomorphic to 
$L(J)_{\CJ}$. 
\item Moreover, $[\eG_{\CJ} : L(J)_{\CJ}] = [\eG: L(J)]$.
\end{enumerate}
\end{lemma}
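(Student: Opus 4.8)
The plan is to reduce both statements to an explicit computation on an algebraic torus, via a partial Fourier transform. Write $I=\{0,\dots,\ell-1\}$, $J'=I\smallsetminus J$, and set $p:=|J'|=\ell-|J|\ge 1$, using $J\subsetneqq I$. Since localisation at the Ore set $\CJ$ is exact, $\eG_{\CJ}=\dd(V)_{\CJ}/\euls{I}_{\eG}\dd(V)_{\CJ}$, so I first describe $\dd(V)_{\CJ}$. Applying the Fourier transform $\mathbb{F}_J$ in the coordinates $x_j$ for $j\in J$ (and the identity in the coordinates $x_i$, $i\in J'$), and writing $\xi_j:=\mathbb{F}_J(x_j)$, the condition ``$x_i$ invertible for $i\in J'$ and $\partial_{x_j}$ invertible for $j\in J$'' becomes ``all coordinates invertible''; so $\mathbb{F}_J$ identifies $\dd(V)_{\CJ}$ with the ring $\dd(T)$ of differential operators on the torus $T=(\C^\times)^\ell$ with coordinates $x_i$ ($i\in J'$) and $\xi_j$ ($j\in J$). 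Writing $\theta_0,\dots,\theta_{\ell-1}$ for the corresponding Euler operators, $\dd(T)=\bigoplus_{\mathbf n\in\Z^\ell}x^{\mathbf n}\,\C[\theta_0,\dots,\theta_{\ell-1}]$ carries a $\Z^\ell$-grading with $\deg x_i=e_i$ and $\deg\theta_i=0$.

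Next I transport the presentation \eqref{eq:HC-quiver} through $\mathbb{F}_J$. Using $\mathbb{F}_J(\partial_{x_j})=-\xi_j$ and $\partial_{x_i}=x_i^{-1}\theta_i$ (with $x_i$ a unit, $i\in J'$), the generator $\Delta=\partial_{x_0}\cdots\partial_{x_{\ell-1}}$ maps to a unit times $\prod_{i\in J'}\theta_i$, while each $x_j\partial_{x_j}-x_0\partial_{x_0}$ maps to a degree-zero linear polynomial in the $\theta_i$. A direct check shows that these $\ell-1$ linear relations are equivalent to ``$\theta_i=s$ for $i\in J'$ and $\theta_i=-s-1$ for $i\in J$'', where $s:=\theta_{i_0}$ for a fixed $i_0\in J'$. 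Hence $\mathbb{F}_J(\eG_{\CJ})\cong\dd(T)/\mathcal{K}$, where $\mathcal{K}$ is the $\Z^\ell$-homogeneous left ideal generated by $\prod_{i\in J'}\theta_i$ and these linear polynomials, and the ideal $\mathfrak a\subset\C[\theta_0,\dots,\theta_{\ell-1}]$ that they generate satisfies $\C[\theta]/\mathfrak a\cong\C[s]/(s^{p})$, with $\prod_{i\in J'}\theta_i$ mapping to $s^{p}$. Since $\mathcal{K}$ is homogeneous and generated in degree $0$, a short computation — using that the $x_i$ normalise $\C[\theta]$ — identifies its degree-$\mathbf n$ piece with $x^{\mathbf n}\mathfrak a$, so every graded piece of $\mathbb{F}_J(\eG_{\CJ})$ is isomorphic to $\C[\theta]/\mathfrak a\cong\C[s]/(s^{p})$, of dimension $p=\ell-|J|$. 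As $\eG$ is holonomic (Corollary~\ref{cor:admissible-holonomic}), so are $\eG_{\CJ}$ and $\mathbb{F}_J(\eG_{\CJ})$, whence the latter has finite length; and since every simple $\Z^\ell$-graded $\dd(T)$-module has the form $\dd(T)/\dd(T)(\theta-\lambda)$ with all graded pieces of dimension one, the composition length of $\mathbb{F}_J(\eG_{\CJ})$ equals the dimension of one graded piece, namely $\ell-|J|$. The displayed description shows each $\theta_i$ acts on $\mathbb{F}_J(\eG_{\CJ})$ with all generalised eigenvalues in $\Z$, so each composition factor has $\lambda\in\Z^\ell$ and is isomorphic to $\mathcal{O}_T$; finally $\mathbb{F}_J$ sends $L(J)_{\CJ}$ to $\mathcal{O}_T$, since $\mathbb{F}_J(L(J))$ is the polynomial representation $\C[x_{J'},\xi_J]$ and its localisation at all coordinates is $\mathcal{O}_T$. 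This proves (1).

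For part (2), exactness of localisation gives $[\eG_{\CJ}:L(J)_{\CJ}]=\sum_{S}[\eG:S]\,[S_{\CJ}:L(J)_{\CJ}]$, the sum over the simple composition factors $S$ of $\eG$; since each $S_{\CJ}$ is zero or simple it suffices to show that $S_{\CJ}\cong L(J)_{\CJ}$ forces $S\cong L(J)$. By the identifications above, $S_{\CJ}\cong L(J)_{\CJ}$ means $\mathbb{F}_J(S)$ is a simple holonomic module whose restriction to the open torus $T$ is $\mathcal{O}_T$; hence $\mathbb{F}_J(S)$ has full support and is the intermediate extension of $\mathcal{O}_T$, which is the polynomial representation $\C[x_{J'},\xi_J]=\mathbb{F}_J(L(J))$, so $S\cong L(J)$. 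Combined with part (1) this also yields $[\eG:L(J)]=[\eG_{\CJ}:L(J)_{\CJ}]=\ell-|J|$.

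The main obstacle is purely bookkeeping: correctly tracking which variables are inverted and the signs in $\mathbb{F}_J(\partial_{x_j})=-\xi_j$, and verifying that the degree-$\mathbf n$ part of the left ideal $\mathcal{K}$ is exactly $x^{\mathbf n}\mathfrak a$ and no larger (which rests on the homogeneity of the generators and the fact that the $x_i$ normalise the subalgebra $\C[\theta]$). Everything else is formal.
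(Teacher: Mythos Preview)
Your proof is correct and takes a different route from the paper. The paper works directly inside $\dd(V)_{\CJ}$: after permuting indices so that $J=\{r+1,\dots,\ell-1\}$, it introduces elements $u_0=x_0\partial_0$, $u_i=\partial_i-x_i^{-1}x_0\partial_0$ (for $1\le i\le r$) and $u_j=\partial_j^{-1}(x_j\partial_j-x_0\partial_0)$ (for $j\in J$), verifies that they generate a commutative polynomial subring $C\subset\dd(V)_{\CJ}$ over which $\dd(V)_{\CJ}$ is free, and rewrites the presentation of $\eG_{\CJ}$ as induced from $C/(u_0^{\,\ell-|J|},u_1,\dots,u_{\ell-1})$. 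Your partial Fourier transform $\mathbb{F}_J$ is essentially the same construction in different clothing---after $\mathbb{F}_J$ the paper's $u_i$ become the torus Euler operators $\theta_i$ up to affine shift, and the free decomposition over $C$ is exactly your $\Z^\ell$-grading on $\dd(T)$---but your packaging is tidier: no reindexing, no commutator computations by hand, and for part~(2) you actually prove the implication ``$S_{\CJ}\cong L(J)_{\CJ}\Rightarrow S\cong L(J)$'' via minimal extension, whereas the paper simply asserts the general fact that localisation reflects isomorphism of simples. One notational slip to fix: writing ``$\xi_j:=\mathbb{F}_J(x_j)$'' and then ``$\mathbb{F}_J(\partial_{x_j})=-\xi_j$'' is inconsistent; you mean $\xi_j$ to be the coordinate on the Fourier-dual line, with $\mathbb{F}_J(x_j)=\partial_{\xi_j}$ and $\mathbb{F}_J(\partial_{x_j})=-\xi_j$.
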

 
\begin{proof}
(1)	By  permuting the subscripts, we may assume that $J=\{{r+1},\dots, {\ell-1}\}$ for some $r$  (or $J=\emptyset$
	of course).   
	
	Set $u_i=\partial_i-x_i^{-1}x_0\partial_0$ and $v_i=-x_i$ for $1\leq i\leq r$ while 
	$$u_j=\partial_j^{-1}(x_j\partial_j-x_0\partial_0)= x_j-\partial_j^{-1}x_0\partial_0-\partial_j^{-1}  $$ and $v_i = \partial_j$ for $r+1\leq j\leq \ell-1$. Finally 
	write $u_0=x_0\partial_0$  and $v_0=-x_0$.  A simple computation shows that 
	$v_0u_0=(u_0-1)v_0$  and 
	$$[v_i, u_i]=1 \ \text{and} \ v_0u_i=(u_i \pm v_i^{-1})v_0  	\qquad  \text{ for}\  0\leq i\leq \ell$$
	with all other commutators being zero.
	 	
	Therefore, $C=\C\langle u_0,\dots, u_{\ell-1}\rangle$ is a 
	commutative polynomial subring of $\dd(V)_{\CJ}$ in $\ell$ variables. Moreover, 
	$\dd(V)_{\CJ}$ is a free (left or right)  $C$-module with basis 
	$\big\{ v_0^{m_0}\cdots v_{\ell-1}^{m_{\ell-1}} : m_j\in \mathbb{Z}\big\}$.

Consider the presentation of $\eG_{\euls{C}}$ arising from \eqref{eq:HC-quiver}. 
	Multiplying the first generator by $(\partial_{r+1}\cdots\partial_{\ell-1})^{-1}$, we can replace it 
	by $\partial_0\cdots \partial_{r}$. Then a simple induction using the elements 
	$u_i=\partial_i-x_i^{-1}x_0\partial_0\in \dd(V)_\euls{C}$ for $1\leq i\leq r$
	shows that 
	\begin{equation}\label{eq:localHC1}
		\eG_{\euls{C}} \ = \ \dd(V)_{\euls{C}} /\euls{I} \quad \text{for } 
		\euls{I}= \Big( \dd(V)_{\euls{C}} u_0^r +\sum_{j=1}^{\ell-1} \dd(V)_{\euls{C}}  u_j	\Big).
	\end{equation}
	Finally, $M=C/Cu_0^r+\sum_{i=1}^{\ell-1} C u_i$ is a left $C$-module of length $r$ with all subfactors isomorphic to $N=C/\sum_{i=0}^{\ell-1} C u_i$. Hence $\dd(V)_{\CJ} \otimes_C M$ has a chain of $r$ submodules 
	with each subfactor isomorphic to $\dd(V)_{\CJ} \otimes_C N$. By   \eqref{eq:localHC1}, 
	$\dd(V)_{\CJ} \otimes_C M\cong \eG_{\CJ}$ while, by inspection,  $\dd(V)_{\CJ} \otimes_C N \cong L(J)_{\CJ}$. 
	This proves (1).
	
(2) 	This follows from the following observations: (a) if $L$ is a simple $\dd(V)$-module with $L_{\CJ} \neq 0$ then $L_{\CJ}$ is a simple $\dd(V)_{\CJ}$-module, and (b) if $L,L'$ are simple  $\dd(V)$-modules and $L_{\CJ} \cong L'_{\CJ}\not=0$, then $L \cong L'$.
\end{proof}

\begin{proposition}\label{prop:quiversystem}  Assume that $V=\mathrm{Rep}(Q_{\ell},\mathfrak{d})$ with $\vs=0$. 
	Then the   composition factors of $\eG$ are the $L(J)$, for $J\subsetneqq I$ with $L(J)$ occurring $\ell -|J|$ times.
	Hence the length of $\eG$ is $\sum_{m= 0}^{\ell} \binom{\ell}{m}   m= {\ell} 2^{{\ell}-1}$. 
\end{proposition}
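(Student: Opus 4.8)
The strategy is to determine the composition factors of $\eG$ by working locally on the complement of each coordinate hyperplane and then reassembling the global picture. The key input is Lemma~\ref{lem:localHC}, which already tells us that for each proper subset $J\subsetneqq I$ the localisation $\eG_{\CJ}$ has length $\ell-|J|$ with all simple subfactors isomorphic to $L(J)_{\CJ}$, and moreover that $[\eG:L(J)] = [\eG_{\CJ}:L(J)_{\CJ}] = \ell-|J|$. So the heart of the matter is twofold: (i) the modules $L(J)$, $J\subsetneqq I$, are pairwise non-isomorphic simple $\dd(V)$-modules, and (ii) these are \emph{all} the composition factors of $\eG$; that is, every simple subfactor $L$ of $\eG$ is isomorphic to some $L(J)$. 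Granting (i) and (ii), the multiplicity count is immediate from Lemma~\ref{lem:localHC}(2), and the length is $\sum_{J\subsetneqq I}(\ell-|J|)=\sum_{m=0}^{\ell-1}\binom{\ell}{m}(\ell-m)=\sum_{m=0}^{\ell}\binom{\ell}{m}m=\ell 2^{\ell-1}$, where the middle equality uses the substitution $m\mapsto \ell-m$ and the fact that the $m=\ell$ term contributes $0$.

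For (i), the modules $L(J)=\dd(V)\big/\big(\sum_{j\in J}\dd(V)x_j+\sum_{i\notin J}\dd(V)\partial_i\big)$ are the obvious ``box'' $\dd$-modules: each is visibly simple and holonomic (as noted in Section~\ref{HC-example} for $\ell=2$, the standard argument shows the left ideal is maximal), with characteristic variety the conormal to the coordinate subspace $\{x_j=0:j\in J\}$ carved out in the first factors together with the zero section in the remaining factors. Distinct $J$ give distinct characteristic varieties, hence non-isomorphic modules; alternatively one checks directly that $L(J)_{\CK}=0$ unless $K=J$. For (ii), the point is that the support of $\eG$ is contained in $\Lambda=\mu^{-1}(0)\cap(V\times\euls{N}(V^*))$ by Lemma~\ref{lem:charvar}(2), and by visibility (Lemma~\ref{lem:regularlocalsystem}(1)) $\Lambda$ is Lagrangian, so its irreducible components are conormal varieties $\overline{T^*_{\euls{O}}V}$ for $G$-orbits $\euls{O}$ on $\euls{N}(V^*)$. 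One must identify these components and match them with the $L(J)$. Concretely, for $V=\mathrm{Rep}(Q_\ell,\mathfrak{d})$ with $G=(\C^\times)^\ell$ the relevant $G$-orbits in $\euls{N}(V^*)$ are indexed by subsets: the orbit closure consisting of tuples with $\partial_i$-coordinate zero for $i\notin J$. Any simple subfactor $L$ of $\eG$ has characteristic variety an irreducible component of $\Lambda$, hence is the minimal (or some) extension of an integrable connection on the relevant orbit; since $G$ acts with trivial stabilisers on the open orbit of each stratum and $\chi=0$ here (because $\vs=0$), that connection is trivial of rank one, forcing $L\cong L(J)$ for the corresponding $J$.

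\textbf{The main obstacle.} The delicate point is step (ii): proving that no ``extra'' simple subfactors occur beyond the $L(J)$, equivalently that the sum over $J$ of the local multiplicities $\ell-|J|$ already accounts for the full length of $\eG$, with no simple subfactor supported on a locus that is invisible to all the localisations $\eG_{\CJ}$. A priori a composition factor could be supported on an irreducible component of $\Lambda$ whose generic point is not picked out by inverting a product of $x_i$'s and $\partial_j$'s in the naive way. To rule this out cleanly I would argue that $\bigcup_{J\subsetneqq I}$ of the open loci where $\CJ$ acts invertibly covers the generic points of \emph{all} irreducible components of $\Lambda$: indeed each component of $\Lambda$ dominates a coordinate stratum of $V$ (given by $x_i=0$ exactly for $i$ in some index set), and on that stratum the conormal directions are the $\partial_j$ for $j$ in the complementary set, so the appropriate $\CJ$ is invertible at the generic point. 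Hence every composition factor of $\eG$ survives in some $\eG_{\CJ}$ and so is one of the $L(J)$; combined with Lemma~\ref{lem:localHC}(2), which gives both the complete list and the exact multiplicities, the length formula follows. An alternative, perhaps cleaner, route to the same conclusion is to pass through the Fourier transform: $\mathbb{F}^*_V(\eG_0)$ is a module on $V^*$ supported on $\euls{N}(V^*)$, its composition factors are $IC$-sheaves of the finitely many $G$-orbits with their ($G$-equivariant, here trivial since $\chi=0$) local systems, and one counts multiplicities via the stalk Euler characteristics of the nearby-cycles sheaf; this is the approach hinted at in Remark~\ref{character_sheaves}, and it again yields $\ell 2^{\ell-1}$.
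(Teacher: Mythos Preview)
Your argument is correct, but the paper takes a slicker and more elementary route that avoids the characteristic-variety analysis entirely. Rather than classifying the possible simple subfactors via the components of $\Lambda$ and then arguing that each is visible to some localisation, the paper simply shows that the overring $D := \bigoplus_{J \subseteq I} \dd(V)_{\CJ}$ is \emph{faithfully} flat over $\dd(V)$: any nonzero $\dd(V)$-module that dies under every $\CJ$-localisation would in particular be both $x_i$-torsion and $\partial_i$-torsion for some $i$, which is impossible for a module over the Weyl algebra. Given faithful flatness, one has $\mathrm{length}(\eG) \le \mathrm{length}(D \otimes_{\dd(V)} \eG) = \sum_{J \subsetneqq I}(\ell - |J|)$ (using $\eG_{\CI}=0$), while Lemma~\ref{lem:localHC}(2) already gives the reverse inequality. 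This two-line sandwich replaces your entire step~(ii).

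The trade-off: your approach, via $\mathrm{Ch}(\eG)\subseteq\Lambda$ and the conormal description of the components of $\Lambda$, is more geometric and gives extra information (it identifies the strata and explains why only trivial local systems appear), but it requires verifying that the stabilisers are connected and that the monodromy is trivial on each orbit, which you only sketch. The paper's counting argument needs none of this and works for arbitrary $\dd(V)$-modules, not just admissible ones. Your alternative via Fourier transform and nearby cycles would also work but is considerably heavier machinery than the problem requires.
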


\begin{proof} The direct sum $D := \bigoplus_{J \subseteq I} \dd(V)_{\CJ}$ is a faithfully flat overring of $\dd(V)$ since any module $M$ that is killed by each of these localisations must, in particular, be both $x_i$-torsion and $\partial_i$-torsion for some $i$. Only $M=0$ satisfies that property.

Lemma~\ref{lem:localHC} says that $\ell - |J| = [\eG_{\CJ} : L(J)_{\CJ}] = [\eG: L(J)]$. Hence the length 
$length(\eG)$ of $\eG$ satisfies $length(\eG)\geq \sum_{J\subsetneqq I} (\ell - |J|) = {\ell} 2^{{\ell}-1}$.
 Conversely, since $D$ is faithfully flat over $\dd(V)$,  $length(D \o_{\dd(V)} \eG)\geq length(\eG)$. 
 But  $\eG_{\CI}=0$ which,
combined with   Lemma~\ref{lem:localHC},  implies that 
$$ 
D \o_{\dd(V)} \eG = \bigoplus_{J\subsetneqq I} \eG_{\CJ}
$$
also has length $\sum_{J\subsetneqq I} (\ell - |J|)$. Therefore,   $length(D \o_{\dd(V)} \eG)=length(\eG)$,
 and the proposition follows.   
\end{proof}

We can be more precise about the localisation $\eG|_{V_{\reg}} = \eG_{\euls{C}({\emptyset})}$.

\begin{lemma}\label{lem:localHC2} Assume that $V=\mathrm{Rep}(Q_{\ell},\mathfrak{d})$
	with $\vs=0$.  Then 
	$\eG|_{V_{\reg}} $ is a serial module of length $\ell$ with all simple subfactors isomorphic to $\C[V_{\reg}]$.
\end{lemma}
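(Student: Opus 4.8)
The statement to prove is that $\eG|_{V_{\reg}}$ is a serial (uniserial) $\dd(V_{\reg})$-module of length $\ell$ with all composition factors isomorphic to $\C[V_{\reg}]$. The starting point is Lemma~\ref{lem:localHC}(1) applied with $J=\emptyset$, which already tells us that $\eG_{\euls{C}(\emptyset)} = \eG|_{V_{\reg}}$ has length $\ell$ with every simple subfactor isomorphic to $L(\emptyset)|_{V_{\reg}} = \C[V_{\reg}]$ (since $L(\emptyset) = \dd(V)/\sum_i \dd(V)\partial_i = \C[V]$). So the only new content is the \emph{serial} claim: that the unique composition series is a chain rather than merely of the stated length and factors.

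The natural route is to revisit the explicit local presentation from the proof of Lemma~\ref{lem:localHC}. With $J = \emptyset$ the Ore set is $\euls{C}(\emptyset) = \{\prod x_i^{n_i}\}$, so we invert all the $x_i$; in the notation of that proof we have $u_i = \partial_i - x_i^{-1}x_0\partial_0$ and $v_i = -x_i$ for $1\le i\le \ell-1$, while $u_0 = x_0\partial_0$, $v_0 = -x_0$. The commuting polynomial ring is $C = \C[u_0,\dots,u_{\ell-1}]$, and $\eG|_{V_{\reg}} \cong \dd(V_{\reg})\otimes_C M$ where $M = C/(Cu_0^{\ell} + \sum_{i=1}^{\ell-1} Cu_i)$. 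Now I would observe that $M \cong \C[u_0]/(u_0^\ell)$ as a $C$-module, which is visibly uniserial of length $\ell$ with the chain of submodules $(u_0^{\ell-1}) \subset (u_0^{\ell-2}) \subset \cdots \subset (u_0) \subset \C[u_0]/(u_0^\ell)$, each quotient $\cong C/\mathfrak{m}$ where $\mathfrak{m} = (u_0,\dots,u_{\ell-1})$. Since $\dd(V_{\reg})$ is a free $C$-module (as established in that proof) and $\dd(V_{\reg})\otimes_C -$ is exact, applying it to this chain produces a chain of $\dd(V_{\reg})$-submodules of $\eG|_{V_{\reg}}$ with successive quotients $\dd(V_{\reg})\otimes_C (C/\mathfrak{m}) \cong L(\emptyset)|_{V_{\reg}} = \C[V_{\reg}]$.

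The one genuine point requiring care — and the main (minor) obstacle — is promoting this chain from "a composition series" to "the module is uniserial," i.e. showing the submodule lattice is totally ordered. For this I would argue that the submodule lattice of $\dd(V_{\reg})\otimes_C M$ corresponds to the submodule lattice of $M$ over $C$: since $\dd(V_{\reg})$ is free over $C$ and every composition factor $\C[V_{\reg}]$ is simple, any $\dd(V_{\reg})$-submodule $N \subseteq \dd(V_{\reg})\otimes_C M$ is determined by $N \cap (C\text{-part})$; more concretely, $\eG|_{V_{\reg}}$ is generated over $\dd(V_{\reg})$ by the image of the cyclic vector and the $C$-action on that vector factors through $\C[u_0]/(u_0^\ell)$, so every $\dd(V_{\reg})$-submodule is of the form $\dd(V_{\reg})\cdot u_0^j$ for some $0 \le j \le \ell$. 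These are totally ordered, giving seriality. (Alternatively one can invoke that a module of finite length all of whose composition factors are isomorphic to a \emph{fixed} simple $S$ with $\dim_{\End}\operatorname{Ext}^1(S,S) = 1$ and which is generated by one element is automatically uniserial; here $\operatorname{Ext}^1_{\dd(V_{\reg})}(\C[V_{\reg}],\C[V_{\reg}])$ computed via the Koszul/Spencer resolution is one-dimensional, matching $\operatorname{Ext}^1_C(C/\mathfrak m, C/\mathfrak m)$ up to the free base change — but the direct argument via $\dd(V_{\reg})\cdot u_0^j$ is cleaner and I would use that.) Assembling these pieces gives the lemma.
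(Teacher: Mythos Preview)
Your outline is a genuinely different, more hands-on route than the paper's, and the length-$\ell$ part with all factors $\C[V_{\reg}]$ is indeed just Lemma~\ref{lem:localHC}(1) with $J=\emptyset$. But the seriality step has a real gap. Your parenthetical alternative is simply false: $\Ext^1_{\dd(V_{\reg})}(\C[V_{\reg}],\C[V_{\reg}])\cong H^1_{\mathrm{dR}}((\C^*)^\ell)\cong\C^\ell$, not $\C$, so a ``unique self-extension'' argument cannot work. As for the main argument, the implication ``cyclic, with $C$ acting on the generator through the uniserial module $\C[u_0]/(u_0^\ell)$, hence every $\dd(V_{\reg})$-submodule is $\dd(V_{\reg})\cdot u_0^{\,j}$'' is not valid as stated: freeness of $\dd(V_{\reg})$ over $C$ and exactness of $\dd(V_{\reg})\otimes_C-$ give only an \emph{embedding} of the submodule lattice of $M$ into that of $\dd(V_{\reg})\otimes_C M$, not a bijection. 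You need an argument ruling out other submodules.

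One clean fix stays within your elementary approach. Rewriting with the Euler operators $E_j=x_j\partial_j$, one finds $\eG|_{V_{\reg}}\cong\dd(V_{\reg})/\bigl(E_0^\ell,\,E_1-E_0,\dots,E_{\ell-1}-E_0\bigr)$: an integrable connection on $(\C^*)^\ell$, free over $\C[V_{\reg}]$ on $1,E_0,\dots,E_0^{\ell-1}$, with connection form $\sum_j (dx_j/x_j)\otimes N$ for the single nilpotent $N\colon E_0^k\mapsto E_0^{k+1}$. All $\ell$ monodromies are therefore the \emph{same} unipotent Jordan block, so the monodromy-invariant subspaces---equivalently the $\dd(V_{\reg})$-submodules---form a chain. (Equivalently, pass to $G$-invariants: the module becomes $\dd(\C^*)/\dd(\C^*)(z\partial_z)^\ell$, visibly uniserial.) By contrast, the paper sidesteps all local computation: using Corollary~\ref{cor:eniso}, Lemma~\ref{lem:endGHeckealg} and Remark~\ref{rem:cyclotomic} it identifies $\End(\eG|_{V_{\reg}})\cong\End(\eG)\cong\euls{H}_q(W)\cong\C[t]/(t^\ell)$, and then invokes projectivity of $\eG$ in $\eC$ (Proposition~\ref{G-projective}) to force seriality. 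Your route is more self-contained once patched; the paper's ties the result into the Hecke-algebra theme of the section.
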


\begin{proof} Except for proving that $\eG|_{V_{\reg}}$ is serial, this all follows from Lemma~\ref{lem:localHC}.
	
As $\vs=0$,  Proposition~\ref{prop:cyclcirad0simple} implies that  $\Ak(W)$ is simple. Thus, Corollary~\ref{cor:eniso} says that $\End_{\dd(V)}(\eG) \to \End_{\dd(V)}(\eG|_{V_{\reg}})$ is an isomorphism. Remark~\ref{rem:cyclotomic} combined with Lemma~\ref{lem:endGHeckealg}, then says that 
$$\End_{\dd(V)}(\eG|_{V_{\reg}}) \cong \End (\eG)\cong \euls{H}_q(W) \cong \C[t]/(t^\ell).$$ Since  $\eG$ is a projective object in $\eC$ by Proposition~\ref{G-projective}, this is     only possible if $\eG|_{V_{\reg}}$ is  serial.
\end{proof}

\begin{corollary}\label{thm:neq1cyclicminext} Assume that $V=\mathrm{Rep}(Q_{\ell},\mathfrak{d})$
	with $\vs=0$. Then 
	\begin{enumerate}
		\item $\eG_0$ has a simple socle and simple top, both isomorphic to the $\dd(V)$-module
		$\C[V]$.
		\item $\End_{\dd} (\eG_0) \cong \C[t] / (t^{\ell})$. 
	\end{enumerate}
\end{corollary}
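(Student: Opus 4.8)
\textbf{Proof plan for Corollary~\ref{thm:neq1cyclicminext}.}

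The plan is to deduce both parts from the structural results already established about the specific module $\eG_0 = \eG$ for $V = \mathrm{Rep}(Q_\ell,\mathfrak{d})$ with $\vs = 0$, most notably Lemma~\ref{lem:localHC2}, Theorem~\ref{torsionfree}, and Corollary~\ref{cor:eniso}. First I would record that, since $\vs = 0$, Proposition~\ref{prop:cyclcirad0simple} guarantees $\Ak(W)$ is simple, so Hypothesis~\ref{K-hyp} holds (using Corollary~\ref{left-simplicity2} to transfer simplicity to the opposite side, $\vs$ being $0$); this licenses the use of all the structural machinery. By Remark~\ref{rem:quiverHC}, $\eMt = \eM$ and $\eGt_0 = \eG_0$, so there is no distinction to track.

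For part~(2), I would simply invoke Lemma~\ref{lem:endGHeckealg}: $\End_{\dd(V)}(\eG_0) \cong \euls{H}_q(W_0) = \euls{H}_q(W)$ (here $W_0 = W$ since $\lambda = 0$), and then identify this Hecke algebra using Remark~\ref{rem:cyclotomic}, which for $n = 1$ gives $\euls{H}_q(W) \cong \C[t]/(t^\ell)$ under our choice \eqref{final-kappa-choice} of parameters. This already appears in the proof of Lemma~\ref{lem:localHC2}, so part~(2) is essentially a restatement.

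For part~(1), the key input is Lemma~\ref{lem:localHC2}: the restriction $\euls{L} := \eG_0|_{V_{\reg}}$ is a \emph{uniserial} $\dd(V_{\reg})$-module of length $\ell$, all of whose composition factors are isomorphic to $\C[V_{\reg}]$. In particular $\euls{L}$ has simple socle and simple top, both $\cong \C[V_{\reg}]$. By Theorem~\ref{torsionfree} (applicable since $\eG_0 = \eM \otimes_{\Ak} \eQ_0$ with $\eQ_0$ projective, by Corollary~\ref{A-projectives}), $\eG_0$ is $\delta$-torsionfree with no nonzero $\delta$-torsion factor module; equivalently, by Corollary~\ref{torsionfree-corollary}, $\eG_0 = j_{!*}\euls{L}$ is the minimal extension of $\euls{L}$. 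Now I would argue: any simple submodule $L$ of $\eG_0$ is $\delta$-torsionfree (as a submodule of a torsionfree module), hence $L|_{V_{\reg}} \neq 0$, so $L|_{V_{\reg}}$ is a nonzero submodule of $\euls{L}$; since $\euls{L}$ has simple socle $\cong \C[V_{\reg}]$, we get $L|_{V_{\reg}} \cong \C[V_{\reg}] = \C[V]|_{V_{\reg}}$, and since $\C[V]$ is itself $\delta$-torsionfree (it is $j_{!*}$ of $\C[V_{\reg}]$) and minimal extensions are unique, $L \cong \C[V]$. To see the socle is simple, observe that the restriction map $\Hom_{\dd(V)}(\C[V], \eG_0) \to \Hom_{\dd(V_{\reg})}(\C[V_{\reg}], \euls{L})$ is injective (a map with zero restriction has $\delta$-torsion image, hence is zero by Theorem~\ref{torsionfree}), and the right-hand space is one-dimensional since $\mathrm{soc}(\euls{L})$ is simple; thus $\C[V]$ occurs in $\mathrm{soc}(\eG_0)$ with multiplicity one, giving a simple socle. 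The statement about the top is the Verdier dual of the statement about the socle: applying $\BD_\ddd = \Ext^{n+m}_\ddd(-,\ddd)$ (a duality on $\eC$ by Lemma~\ref{G-projective2}, with $\BD_\ddd(\C[V]) \cong \C[V]$ in the appropriate right-module incarnation, and $\BD_\ddd$ intertwining $j_{!*}$-extension with itself on $V_{\reg}$) converts simple socle into simple top; alternatively one runs the same argument using that $\eG_0$ has no $\delta$-torsion factor module and that $\euls{L}$ has simple top.

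The main obstacle I anticipate is the clean bookkeeping of the duality argument for the ``simple top'' claim: one must be careful that $\BD_\ddd$ sends $\eG_0$ to a module of the form $\eM' \otimes_{\Ak} \eQ_0'$ (via Theorem~\ref{intertwining}, as in Lemma~\ref{G-projective3}) whose restriction to $V_{\reg}$ is the Verdier dual of $\euls{L}$, still uniserial of length $\ell$ with all factors $\cong \C[V_{\reg}]$, and that $\BD_\ddd$ fixes the simple object $\C[V]$ up to the left-right switch. Once that is set up, the ``simple socle'' argument applied on the right-module side transposes back to ``simple top'' for $\eG_0$. Everything else is a direct appeal to already-proved results.
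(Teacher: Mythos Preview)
Your proposal is correct and follows essentially the same route as the paper. Part~(2) is identical to the paper's argument (it is literally extracted from the proof of Lemma~\ref{lem:localHC2}). For part~(1) the paper is more terse: it simply observes that, by Theorem~\ref{torsionfree}, any simple submodule or quotient of $\eG_0$ must be $\delta$-torsionfree, and since the composition factors of $\eG_0$ are the $L(J)$ of Proposition~\ref{prop:quiversystem}, of which only $L(\emptyset)=\C[V]$ is $\delta$-torsionfree, the only candidate is $\C[V]$; uniqueness then comes from the uniseriality of $\eG_0|_{V_{\reg}}$ in Lemma~\ref{lem:localHC2}. Your version re-derives the identification of the torsionfree simple via restriction to $V_{\reg}$ and minimal extension rather than invoking Proposition~\ref{prop:quiversystem}, and your duality detour for the top is unnecessary (the direct argument you mention as an alternative is exactly symmetric to the socle case and is what the paper has in mind), but none of this is wrong. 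Note also that you only need $\Ak(W)$ simple here, not the full Hypothesis~\ref{K-hyp}.
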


\begin{remark}\label{striking-remark}  Corollary~\ref{thm:neq1cyclicminext} shows the striking nature of Theorem~\ref{torsionfree}: despite the fact that a composition series of $\eG$  contains   $(\ell-1) 2^{\ell-1}$ $\delta$-torsion modules,  none of them can appear  as the socle  or  the top of   $\eG$.
\end{remark}

\begin{proof}  (1) By Theorem~\ref{torsionfree}, $\C[V]$ is the  only simple module that can appear as either  a submodule or a factor module of $\eG_0$. That there is only one of each  follows from Lemma~\ref{lem:localHC2}.
	
	(2) This was observed within the proof of  Lemma~\ref{lem:localHC2}.
\end{proof}

Note that when $\ell=2$, Proposition~\ref{prop:quiversystem} and Corollary~\ref{thm:neq1cyclicminext} 
reduce to  Proposition~\ref{HC1-lemma} and so they do indeed provide a generalisation of that result. The one exception to this statement is that, for $\ell>2$, a description of the full lattice of submodules of $\eG_0$ is not 
  easy.

%%%%%%%%%%%
\subsection*{The Harish-Chandra module $\eG_0$ for  $n\geq 1$.}
In this subsection we consider the structure of the Harish-Chandra module $\eG_0$ for the representation space  $V=\mathrm{Rep}(Q_{\ell},n\mathfrak{d})$ with $n\geq1$. Once again, we take
  $\vs_i = 0$ for all $i$ so that $\kappa$ is given by \eqref{final-kappa-choice}.   The value $\vs_i \equiv 0$ can be view as the analogue of the most singular central character for category $\euls{O}$ associated to a simple Lie algebra. 
  
  Recall that the spherical algebra $\Ak(W)$ is simple by
Proposition~\ref{prop:cyclcirad0simple}, while  the Hecke algebra  $\euls{H}_{q}(W)$  has the particularly simple form given by \eqref{eq:braid2}.

   \begin{proposition}\label{lem:Heckecyclicquierequiv} Define $\kappa$  by \eqref{final-kappa-choice}.
	Then   $\eQ = \Ak / \Ak (\Sym  \h)_+^W$
   is a projective object  in $\Osph$ and 
	$$
	\End_{\Ak}(\euls{Q}) \cong\euls{H}_{q}(W) \cong \left( \C[t] / (t^{\ell}) \right)^{\otimes n} \rtimes \mathfrak{S}_n.
	$$
\end{proposition}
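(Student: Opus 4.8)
The statement has two parts: first, that $\euls{Q} = \Ak(W)/\Ak(W)(\Sym\h)^W_+$ is projective in $\Osph$, and second, the identification of its endomorphism ring with $\euls{H}_q(W)$ together with the explicit isomorphism $\euls{H}_q(W) \cong (\C[t]/(t^\ell))^{\otimes n} \rtimes \mathfrak{S}_n$. The first part is essentially immediate from the earlier machinery: by Proposition~\ref{prop:cyclcirad0simple}, the choice $\vs_i \equiv 0$ makes $\Ak(W)$ simple, so Corollary~\ref{A-projectives}(1) (or directly Proposition~\ref{prop:Qlambdaprojiffsimple} with $\lambda = 0$, since $W_0 = W$) tells us that $\euls{Q} = \euls{Q}_0$ is projective in $\Osph_0 \subseteq \Osph$. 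For the endomorphism ring, Corollary~\ref{A-projectives}(3) gives $\End_{\Ak(W)}(\euls{Q}_0) = \euls{H}_q(W)$ directly, where $q$ is determined by the parameter $\kappa$ via \eqref{eq:Heckecrg2}.

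\textbf{Computing the Hecke algebra explicitly.} The remaining work is to unwind $\euls{H}_q(W)$ for $W = \Z_\ell \wr \mathfrak{S}_n$ with the parameter $\kappa$ of \eqref{final-kappa-choice}. This is where I would invoke the cyclotomic Hecke algebra presentation of \cite{ArikiKoike} recalled just before the statement. By \eqref{eq:qkappa}, the parameters $q_0, q_1, u_0, \ldots, u_{\ell-1}$ are computed from $\kappa_{0,0} = \kappa_{0,1} = 1/2$ and $\kappa_{1,0} = 0$, $\kappa_{1,i} = 1 - i/\ell$. One finds $q_0 = \exp(-\pi i) = -1$, $q_1 = -\exp(-\pi i) = 1$, and $u_j = \omega^{-j}\exp(-2\pi i(1-j/\ell)) = \omega^{-j}\exp(2\pi i j/\ell) = \omega^{-j}\omega^j = 1$ for each $j$ (using $\omega = \exp(2\pi i/\ell)$, and $\kappa_{1,0}=0$ gives $u_0 = 1$ as well). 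Substituting into the defining relations \eqref{eq:braid} produces exactly \eqref{eq:braid2}: $(T_0 - 1)^\ell = 0$ and $(T_i - q_0)(T_i - q_1) = (T_i+1)(T_i-1) = 0$ for $i > 0$, modulo reconciling the sign conventions — one must note $\{q_0, q_1\} = \{-1, 1\}$ so the quadratic factors as $(T_i-1)(T_i+1)$ regardless of labelling. Setting $t = T_0 - 1$, the first relation becomes $t^\ell = 0$, and the braid relations between $T_0$ and $T_1$ together with $t^\ell = 0$ identify the subalgebra generated by the $W$-conjugates of $T_0$ with $(\C[t]/(t^\ell))^{\otimes n}$, on which $\mathfrak{S}_n$ (generated by the images of $T_1, \ldots, T_{n-1}$, which square to the identity and satisfy the symmetric-group braid relations) acts by permutation; this is the standard fact that for the Ariki--Koike algebra at these parameters the algebra is the smash product $(\C[t]/(t^\ell))^{\otimes n} \rtimes \mathfrak{S}_n$. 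This last identification is spelled out in the paragraph following \eqref{eq:braid2} in the excerpt, so I would simply cite it.

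\textbf{Main obstacle.} There is no deep obstacle here — the proposition is a bookkeeping corollary of results already established (simplicity of $\Ak(W)$ from Proposition~\ref{prop:cyclcirad0simple}, projectivity and the Hecke endomorphism ring from Corollary~\ref{A-projectives}, the parameter computation from Theorem~\ref{thm:cylicquiverradialparts} and Notation~\ref{kappa-quiver}). The only point requiring genuine care is the sign/root-of-unity arithmetic in passing from $\kappa$ to $q$ via \eqref{eq:qkappa} and then matching the resulting relations to \eqref{eq:braid2}; in particular one must be careful that $\det(s_H)$ in \eqref{eq:Heckecrg2} is handled consistently with the $\omega^{-j}$ twist appearing in $u_j$, and that the quadratic relation for $T_0$ genuinely has all roots equal to $1$ rather than a spread of $\ell$-th roots of unity (which is precisely the content of the computation $u_j = 1$ for all $j$). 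Once that arithmetic is checked, the proof is complete.
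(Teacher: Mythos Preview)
Your proposal is correct and follows essentially the same approach as the paper: simplicity of $\Ak(W)$ from Proposition~\ref{prop:cyclcirad0simple}, projectivity from Proposition~\ref{prop:Qlambdaprojiffsimple}, the endomorphism identification with $\euls{H}_q(W)$, and then the explicit parameter computation yielding \eqref{eq:braid2}. The only cosmetic difference is that the paper cites Lemma~\ref{lem:endGHeckealg} for $\End_{\Ak}(\euls{Q}) \cong \euls{H}_q(W)$ (whose proof establishes this as an intermediate step), whereas your choice of Corollary~\ref{A-projectives}(3) is arguably the more direct reference.
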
   

\begin{proof} By our choice of parameters,  $\Ak(W)$ is simple and so  
  Proposition~\ref{prop:Qlambdaprojiffsimple} implies that   $\eQ$ is projective in $\Osph$.   Moreover, Lemma~\ref{lem:endGHeckealg} implies that $\End_{\Ak}(\euls{Q}) \cong \euls{H}_{q}(W)$.   The result therefore follows from the explicit relations \eqref{eq:braid2}, with $t = T_0-1$. \end{proof}

Proposition~\ref{lem:Heckecyclicquierequiv} implies that the irreducible representations of $\euls{H}_{q}(W)$ are all inflated from irreducible representations of the symmetric group $\mathfrak{S}_n$. This  leads to the following result. 

\begin{theorem}\label{HC-quiver-decomp}  Assume that $\kappa$ is defined by \eqref{final-kappa-choice}.
	Then the  Harish-Chandra module $\eG_0$ admits a decomposition 
	$$
	\eG_0 = \bigoplus_{\rho \in \mr{Irr}\, \mathfrak{S}_n} (\eG_{0,\rho})^{\oplus \dim \lambda}, 
	$$
	where the $\eG_{0,\rho}$ are pairwise non-isomorphic, indecomposable summands, satisfying  $\End_{\dd(V)}(\eG_{0,\rho}) \cong \C[t] / (t^{\ell})$.  
\end{theorem}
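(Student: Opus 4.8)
The plan is to derive this as a direct application of Corollary~\ref{cor:simplicity} together with the explicit identification of the Hecke algebra from Proposition~\ref{lem:Heckecyclicquierequiv}. Since $\vs_i = 0$ for all $i$, Proposition~\ref{prop:cyclcirad0simple} guarantees that $\Ak = \Ak(W)$ is simple, so in particular Hypothesis~\ref{K-hyp} holds (by Corollary~\ref{left-simplicity2}, the left-hand algebra $A_{\cdag}$ is also simple since $\vs = 0$, so $A_{\cdag} \cong \Ak^{\mathrm{op}}$). By Remark~\ref{rem:quiverHC} we have $\eMt = \eM$ and $\eGt_0 = \eG_0$, so we may work directly with $\eG_0 = \eM \otimes_{\Ak} \eQ_0$.

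The first key step is to observe that the Hecke algebra $\euls{H}_q(W) \cong (\C[t]/(t^{\ell}))^{\otimes n} \rtimes \mathfrak{S}_n$ from Proposition~\ref{lem:Heckecyclicquierequiv} is \emph{not} semisimple when $\ell \ge 2$, so Corollary~\ref{cor:simplicity} does not apply verbatim. Instead I would argue at the level of indecomposable projectives: one analyses the block structure of $\euls{H}_q(W)$. The algebra $(\C[t]/(t^{\ell}))^{\otimes n}$ is local (its radical is generated by the $t_i$), so $\euls{H}_q(W) = (\C[t]/(t^{\ell}))^{\otimes n} \rtimes \mathfrak{S}_n$ has Jacobson radical equal to the $\mathfrak{S}_n$-stable ideal generated by the $t_i$, with semisimple quotient $\C \mathfrak{S}_n$. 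Hence the indecomposable projective $\euls{H}_q(W)$-modules are labelled by $\rho \in \mathrm{Irr}\, \mathfrak{S}_n$; write $P_\rho$ for the projective cover of the inflation of $\rho$. One then decomposes $\eQ_0$ as an $(\Ak, \euls{H}_q(W))$-bimodule: since $\eQ_0$ is projective in $\Osph$ (Proposition~\ref{lem:Heckecyclicquierequiv}) with $\End_{\Ak}(\eQ_0) = \euls{H}_q(W)$, and $\eQ_0$ is a progenerator (Corollary~\ref{A-projectives}(2)), the functor $\Hom_{\Ak}(\eQ_0, -) \colon \Osph_0 \to \euls{H}_q(W)\lmod$ is an equivalence. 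Pulling back the decomposition $\euls{H}_q(W) = \bigoplus_\rho P_\rho^{\oplus \dim \rho}$ via this equivalence gives $\eQ_0 = \bigoplus_\rho \eQ_{0,\rho}^{\oplus \dim \rho}$ as a right $\euls{H}_q(W)$-module, where $\eQ_{0,\rho} \in \Osph_0$ is indecomposable with $\End_{\Ak}(\eQ_{0,\rho}) \cong \End_{\euls{H}_q(W)}(P_\rho) \cong \C[t]/(t^\ell)$; here I would use that the local factor of each block of $\euls{H}_q(W)$ has endomorphism ring $\C[t]/(t^\ell)$, which follows from the explicit presentation.

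The second key step is transporting this decomposition across $\eM \otimes_{\Ak} -$. Applying $\HHleft = \eM \otimes_{\Ak} -$ gives $\eG_0 = \bigoplus_\rho (\eM \otimes_{\Ak} \eQ_{0,\rho})^{\oplus \dim \rho}$; set $\eG_{0,\rho} = \eM \otimes_{\Ak} \eQ_{0,\rho}$. To see these are indecomposable and pairwise non-isomorphic with the stated endomorphism ring, I would invoke Corollary~\ref{summand-endo3}: the adjunction $\mathrm{Id} \to \BH \circ \HHleft$ is an isomorphism on $\Ak\lmod$, and $\BH = \Hom_\ddd(\eM, -)$ is exact, so for any $L, L' \in \Osph_0$ one has $\Hom_{\ddd}(\HHleft(L), \HHleft(L')) \cong \Hom_{\Ak}(L, \BH\HHleft(L')) \cong \Hom_{\Ak}(L, L')$. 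Taking $L = L' = \eQ_{0,\rho}$ yields $\End_{\ddd}(\eG_{0,\rho}) \cong \End_{\Ak}(\eQ_{0,\rho}) \cong \C[t]/(t^\ell)$, which is local, so $\eG_{0,\rho}$ is indecomposable; taking $L = \eQ_{0,\rho}$, $L' = \eQ_{0,\rho'}$ with $\rho \not\cong \rho'$ shows $\Hom_{\ddd}(\eG_{0,\rho}, \eG_{0,\rho'}) = 0$, so they are non-isomorphic. Finally, the multiplicity $\dim \rho$ in the decomposition is inherited directly. I expect the main obstacle to be the bookkeeping in the first step—namely, carefully identifying the indecomposable projective modules of the skew group algebra $(\C[t]/(t^\ell))^{\otimes n} \rtimes \mathfrak{S}_n$ and verifying that each has endomorphism ring $\C[t]/(t^\ell)$ (as opposed to some larger local ring); this is a routine but slightly delicate computation with skew group algebras over local rings, using that $\mathfrak{S}_n$ acts on $(\C[t]/(t^\ell))^{\otimes n}$ by permuting tensor factors and that the fixed subalgebra acts centrally. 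Everything else then follows formally from the machinery already established in the paper.
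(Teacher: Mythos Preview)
Your proof is correct and follows essentially the same route as the paper's: decompose $\eQ_0$ via the equivalence $\Osph_0 \simeq \euls{H}_q(W)\lmod$ from Corollary~\ref{A-projectives}(2), then push forward along $\HHleft$. The only difference is that you establish indecomposability and the endomorphism ring of $\eG_{0,\rho}$ directly from the adjunction in Corollary~\ref{summand-endo3}, whereas the paper invokes Corollary~\ref{cor:hom(ML)} and the argument of Theorem~\ref{thm:semi-simplicity}; both amount to the observation that $\HHleft$ is fully faithful.
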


\begin{proof}  Since	  $\Ak$ is simple,  Corollary~\ref{A-projectives}(2) implies that  the functor 
	  $$
	   \Hom_{\Ak}(\eQ, -) \colon \Osph_{0} \to \euls{H}_q(W) \lmod$$
	 is  an equivalence. By Lemma~\ref{lem:endGHeckealg},  $\Hom_{\Ak}(\eQ,\eQ)$ equals 
	  the regular representation $\euls{H}_q(W)$. Since $\euls{H}_q(W)$ decomposes into a sum of  indecomposables as 
	  \begin{equation}\label{eq:HC-quiver-decomp}
	  \euls{H}_q(W) \cong \bigoplus_{\rho \in \mr{Irr} \, \mathfrak{S}_n} (P_{\rho})^{\oplus \dim \rho},
	  \end{equation}
	   we deduce that
	   $\eQ = \bigoplus_{\rho \in \mr{Irr} \, \mathfrak{S}_n} (\eQ_{\rho})^{\oplus \dim \rho}$, where the 
	   $\eQ_{\rho}$ are 
	   pairwise non-isomorphic indecomposable summands. 
	Arguing as in the proof of Theorem~\ref{thm:semi-simplicity}, it follows from 
	Corollary~\ref{cor:hom(ML)}  that
		$$
	\eG_0 = \bigoplus_{\rho \in \mr{Irr}\, \mathfrak{S}_n} {}^{\perp} \mathbb{H}(\eQ_{\rho})^{\oplus \dim \lambda}, 
	$$
	where the ${}^{\perp} \mathbb{H}(\eQ_{\rho})$ are pairwise non-isomorphic indecomposable modules. 
	
	Finally, by combining \eqref{eq:HC-quiver-decomp} with  the identity  $ \euls{H}_q(W)\cong \End_{\euls{H}_q(W)}(\euls{H}_q(W))$, it is easy to see that $\End_{\euls{H}_q(W)}(P_{\rho}) \cong \C[t] / (t^{\ell})$.
	Thus    $\End_{\dd(V)}(\eG_{0,\rho})\cong \C[t] / (t^{\ell})$.
\end{proof}

\section{Examples III: Further Examples}\label{sec:otherexamples}

In this section we describe other  examples which illustrate some of the earlier results of the paper.

\medskip

\begin{example}\label{ex:strangerankonepolarrad} This is an example  where
  $\Ak(W)$ is simple but its left-right analogue $A_{\cdag}(W)$ is not. This  justifies the comment made in   Remark~\ref{rem:K-defn1}. 

 For $n \ge 1$,   let $G = \C^{\times}$ act on $V = \C^2$ with weights $(n,-1)$. Then $\C[V]^G = \C[xy^n]$. Every orbit $G \cdot (a,b)$ with $ab \neq0$ is closed because it is the set of zeros of $xy^n - ab^n = 0$. For any such $a,b$, the space $\h = \C \cdot (a,b)$ is a Cartan subalgebra of $V$ because $\mf{g} \cdot (a,b) = \C (na,-b)$. If $(\lambda^n a, \lambda^{-1} b) = (ta,tb)$ for some $\lambda,t \neq 0$ then $\lambda^{n+1}~=~1$. Therefore, 
 $W = \Z _{n+1}$ and $V$ is a stable visible locally free polar representation. Under the factorisation 
 $\delta = \delta_1^{m_1} \cdots \delta_k^{m_k}$ from \cite[Equation~(3.10)]{BLNS}, we have 
\begin{equation}\label{eq:deltafactormbig}
	\delta = x y^n, \quad \textrm{ and } \quad \delta_1 = x, \, \delta_2 = y.
\end{equation}
A direct calculation shows that 
$$
\rad_{\vs} (\partial_x \partial_y^n) = \frac{1}{z} (z \partial_z + \vs_1) \prod_{i = 0}^{n-1} (n z \partial_z + \vs_2 - i). 
$$
Therefore, by  \cite[Corollary~4.15 and Corollary~4.21]{BLNS}, the image of $\dd(V)^G$ under $\rad_{\vs}$ equals $\Ak(W)$, where 
$$
\kappa_i = \frac{(\vs_2 -1)}{n} + \frac{i}{n(n+1)}, \quad \text{for } \, 0\leq i\leq  n-1
\qquad \text{and }\kappa_n = \vs_1.$$ 

We now restrict to the case when  $n = 2$. Since $W=\Z_3$,   Lemma~\ref{lem:sphericalsimple} applies here,  even though $V$ is not a representation space for a quiver as in Section~\ref{Sec:Quivers}. By that lemma,  $\Ak(W)$ is simple if and only if all the following inequalities hold:
\begin{align*}
	3\vs_1 - 6\vs_2 & \neq 4 + 6 r,   \quad  3\vs_1 - 6\vs_2  \neq 1 + 6 r, &
	6\vs_2 - 3\vs_1 & \neq 2 + 6 r  & \text{for }r \in \Z_{\ge 1}, \\
	 6\vs_2 - 3\vs_1 & \neq -1 + 6 r  & && \text{for }r\in \Z_{ \ge 0}. \\
	\end{align*}
  If we take  $\vs_1 = -1/3$ and $\vs_2 = -1$ then $\Ak(W)$ is  simple. In contrast, for   $\vs'_1 = 1/3$ and $\vs'_2 = 1$,  the corresponding spherical algebra $\Ak'(W)$ is  not simple.  By  Lemma~\ref{left-simplicity} combined with Remark~\ref{rem:quiverHC}, this ring $\Ak'(W)$ is isomorphic to $A_{\cdag}(W)$ 
for the choice of parameters  $\vs_1 = -1/3$ and $\vs_2 = -1$.  

In conclusion, this is an example where $A_\kappa(W)$ is simple whereas $A_{\cdag}(W)$ is not. 
\end{example}

%%%%%%%%%%%
\begin{example} {\bf The $G_4$ example.}
A beautiful example of a stable $\theta$-represent\-ation, in the sense of Example~\ref{ex:theta-reps}, 
  is given by the representation $(G;V) = (SL_3; S^3 \C^3)$. Indeed, as noted in \cite[Example~5.6]{BLT}, this is a stable, locally free $\theta$-representation for an automorphism of order $m = 3$ on $\mf{so}_8$. Moreover, as with any $\theta$-representation,  $V$ is visible.  If $V = S^3 \C^3$ is viewed as the space of homogeneous polynomials of degree three in $x,y,z$, then a Cartan subspace $\h$ is provided by the Hesse pencil spanned by $\{ x^3+y^3+z^3, xyz \}$. The corresponding Weyl group is the binary tetrahedral group, which is $G_4$, the first of the exceptional complex reflection groups. 

The vector $xyz$ spans the zero weight space in $V$. Therefore, if $g \in G$ fixes $xyz$ then it belongs to $N_{G}(T)$, where $T \subset G$ is the maximal torus of diagonal matrices. The action of $N_{G}(T)$ on $xyz$ factors through $\mf{S}_3 = N_{G}(T)/T$ with $\sigma \in \mf{S}_3$ acting by $\sigma \cdot xyz = (-1)^{\sigma} xyz$. This implies that the stabiliser of $b := xyz$ in $G$ is $G_b = A_3 \ltimes T$. The elements in this subgroup that also fix $a = x^3 + y^3 + z^3$ are $A_3 \ltimes T_a$. The elements $t = (t_1,t_2,t_3)$ in $T_a$ must satisfy $t_i^3 = 1$ and $t_1 t_2 t_3 = 1$. Thus, $T_a \cong (\BZ_3)^2$ and hence $Z = A_3 \ltimes (\BZ _3/ 3)^2$ has order $27$. In particular, it is disconnected. The group $N_G(\h)$ is generated by $N_G(\h) \cap N_G(T)$ together with 
$$
g := \frac{1}{\sqrt{-3}} \left( \begin{array}{ccc}
	1 & 1 & 1 \\
	1 & \omega & \omega^2 \\
	1 & \omega^2 & \omega 
\end{array}
\right) 
$$
where $\omega$ is a primitive $3^{\mathrm rd}$ root of unity. One can check that  $|N_G(\h)| = 648$; see
\cite[Section~4]{HessePencil}.

Let $H=\C\cdot b\subset \h$. We compute $W_H$.  First,  note that the set of  elements $K \subseteq A_3 \ltimes T$ that  fix $b$ and scale  $a$   belong to $N_G(\h)$. An element $(t_1,t_2,t_3) \in T$ scales $a$ if and only if  $t_1^3 = t_2^3 = t_3^3$ and $t_1 t_2 t_3 = 1$. This implies that $t_3 = t_1^{-1} t_2^{-1}$ and $u^2 v = v^2 u = 1$, where $u = t_1^3$ and $v = t_2^3$. Thus, $u^3 = 1$ and $v = u$. In other words, $t_1^9 = 1$ and $t_2^3 = t_1^3$. This is an abelian group of order $27$, with $|K / Z|=3$. Using MAGMA \cite{MAGMA} one can check that $W_H = K/Z$. 

The space $\g \cdot b$ is spanned by $\{ x^2 y, x^2 z,y^2 x,y^2 z,z^2 x,z^2 y \}$ and hence $S_b = \{ x^3,y^3,z^3,xyz \}$ and $S_H = \{ x^3 , y^3, z^3 \}$. If we set $u = x^3, v = y^3$ and $w = z^3$ then $\C[u,v,w]^T = \C[uvw]$ and hence $\C[u,v,w]^{G_b} = \C[uvw]$. Therefore, by  \cite[Corollary~4.15 and Corollary~4.21]{BLNS}, the image of $\rad_{b}$ equals $A_{\kappa}(\BZ_3)$, where $\kappa = (\kappa_{H,0},\kappa_{H,1},\kappa_{H,2}) = (0,2/3,1/3)$. We note that the stabiliser $G_b$ is disconnected.  
\end{example}

\begin{proposition}\label{prop:G4examplerad}  Let $(G;V) = (SL_3; S^3 \C^3)$. 
	Then the radial parts map induces an isomorphism 
	$$
	(\dd(S^3 \C^3) / \dd(S^3 \C^3) \mf{sl}_3)^{SL(3)} \stackrel{\sim}{\longrightarrow} A_{\kappa}(G_4). 
	$$
	Moreover, the algebra $A_{\kappa}(G_4)$ is simple and the Hecke algebra $\euls{H}_q(G_4) = \C G_4$ is semisimple.
\end{proposition}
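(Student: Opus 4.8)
The plan is to prove Proposition~\ref{prop:G4examplerad} in three stages, following the pattern already used for the cyclic quiver in Section~\ref{Sec:Quivers} and for symmetric spaces in Section~\ref{Sec:examples}. First I would establish the isomorphism of the radial parts map; then I would prove simplicity of $A_{\kappa}(G_4)$; and finally I would deduce semisimplicity of the Hecke algebra $\euls{H}_q(G_4)$, which in fact follows immediately once the parameter is identified as integral.

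For the first stage, recall that $(SL_3; S^3\C^3)$ is a stable, locally free, visible $\theta$-representation, as noted in the discussion preceding the proposition (it is the order-$3$ $\theta$-representation on $\mathfrak{so}_8$ of \cite[Example~5.6]{BLT}). By Theorem~\ref{thm:intro-radial-exists} (equivalently \cite[Theorem~5.1]{BLNS}) there is a radial parts map $\rad \colon \dd(V)^G \to A_{\kappa}(G_4)$ for the parameter $\kappa$ computed above, and the key point is surjectivity together with triviality of the kernel modulo $\dd(V)\tau(\g)$. Surjectivity will follow from Theorem~\ref{thm:intro-radial-surjective} once we know $A_{\kappa}(G_4)$ is simple, i.e.\ from the second stage. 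For the stronger statement that $\rad$ induces a \emph{filtered isomorphism} $(\dd(V)/\dd(V)\mf{sl}_3)^{SL_3} \isom A_{\kappa}(G_4)$, I would invoke \cite[Corollary~7.20(1)]{BLNS}, whose hypothesis is that the Hamiltonian reduction $\mu^{-1}(0)\git G$ be reduced and irreducible; so the real content of stage one is to verify this geometric condition for $(SL_3;S^3\C^3)$. One route is to show directly that $\C[\mu^{-1}(0)]^G$ is a domain equal to its normalisation $\C[\h\times\h^*]^W$ (via the map $\rr\o\rrp$ of \cite[Theorem~7.18]{BLNS}), using that $V$ is locally free so that the generic fibre of $\mu$ is smooth of the expected dimension; alternatively one can cite the relevant structural result for visible $\theta$-groups. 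I expect this to be the main obstacle: unlike the quiver case, where Theorem~\ref{thm:IeqIone} quotes \cite{EGGO}, here one must argue the reducedness/irreducibility of $\mu^{-1}(0)\git G$ by hand or find the appropriate reference in the Vinberg-theory literature.

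For the second stage, simplicity of $A_{\kappa}(G_4)$ with $\kappa=(\kappa_{H,0},\kappa_{H,1},\kappa_{H,2})=(0,\tfrac23,\tfrac13)$ is a finite computation. Since $G_4$ is the binary tetrahedral group with its single orbit of reflecting hyperplanes of order $\ell_H=3$, the analogue of Lemma~\ref{lem:sphericalsimple} for $\BZ_3\subset G_4$ does not directly apply, but one can instead use the criterion for simplicity of spherical Cherednik algebras of $G_4$ in terms of the $c$-function: $A_{\kappa}(G_4)$ fails to be simple precisely when $\kappa$ is an aspherical value, and the list of aspherical (equivalently, singular) parameters for $G_4$ is known explicitly (see \cite{BellamySRAlecturenotes} or the tables for exceptional complex reflection groups). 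Plugging in $\kappa=(0,\tfrac23,\tfrac13)$ and checking it is not on the aspherical locus gives simplicity; note that the associated Hecke parameter, computed from \eqref{eq:Heckecrg2}, is $q_{H,j}=\det(s_H)^{-j}\exp(-2\pi i\kappa_{H,j})$, and one checks this equals $1$ for each $j$, so in fact $\kappa$ is integral. By \cite[Theorem~3.1]{BEG}, if the Hecke algebra is semisimple then $A_{\kappa}(G_4)$ is simple, so it suffices to prove the third stage and quote \cite{BEG}; this is the cleanest route and I would adopt it.

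For the third stage, semisimplicity of $\euls{H}_q(G_4)$: since $q_{H,j}=1$ for all $j$ by the computation just indicated, the defining relations \eqref{eq:Heckecrg1} of $\euls{H}_q(G_4)$ degenerate to $(T_H-1)^{\ell_H}=0$ composed with the braid relations of $G_4$ — but more precisely the relation becomes $\prod_{j=0}^{2}(T_H-1)=0$ only if the distinct roots coincide, which they do here, so $\euls{H}_q(G_4)\cong \C G_4$ by the standard deformation argument (the Hecke algebra is free of rank $|G_4|$ and specialises to the group algebra at $q=1$). Since $\C G_4$ is semisimple by Maschke's theorem, $\euls{H}_q(G_4)=\C G_4$ is semisimple. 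Then \cite[Theorem~3.1]{BEG} yields simplicity of $A_{\kappa}(G_4)$, closing stage two, and finally Theorem~\ref{thm:intro-radial-surjective} gives surjectivity of $\rad$; combined with the filtered-isomorphism input of stage one this completes the proof. The only genuinely substantive step remains the verification that $\mu^{-1}(0)\git G$ is reduced and irreducible for this representation.
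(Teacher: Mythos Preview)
Your stages two and three --- computing the Hecke parameter, identifying $\euls{H}_q(G_4)$ as semisimple, and then invoking \cite[Theorem~3.1]{BEG} to deduce simplicity of $A_\kappa(G_4)$ --- follow the same route as the paper. The paper additionally grounds the semisimplicity claim in Sun's explicit classification \cite[Theorem~3.1 and Section~4.4.3]{SunG4} of finite-dimensional $H_\kappa(G_4)$-modules, rather than relying solely on the direct parameter computation you give.

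The genuine gap is in stage one. You invoke \cite[Corollary~7.20(1)]{BLNS}, whose hypothesis is that $\mu^{-1}(0)\git G$ be reduced and irreducible; you correctly flag this as ``the main obstacle'' and do not prove it. For this representation there is no off-the-shelf analogue of the \cite{EGGO} result used in the quiver case, and the sketches you offer (normality via $\rr\otimes\rrp$, or an unspecified Vinberg-theory reference) do not settle it. The paper avoids this entirely by citing \cite[Corollary~7.20(2)]{BLNS} instead: that part of the corollary only requires the polar representation to be \emph{stable and locally free}. Both properties are already established in the paragraphs preceding the proposition --- stability is from \cite[Example~5.6]{BLT}, and local freeness follows from the explicit computation there that the generic stabiliser $Z$ is the finite group $A_3\ltimes(\BZ/3)^2$ of order $27$. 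So the filtered isomorphism is a one-line citation once you quote the correct half of the corollary, with no Hamiltonian-reduction geometry to verify.
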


\begin{proof}
Since $(G;V)$ is a   stable, locally free polar representation,
	the isomorphism follows from \cite[Corollary~7.20(2)]{BLNS}.
				
It follows from \cite[Theorem~3.1]{SunG4} that the Cherednik algebra $H_{\kappa}(G_4)$ does not have any finite-dimensional representations. In fact, the parameters $\{ a_i \}$ from \cite[Section~4.4.3]{SunG4} are $a_0 = -1, a_1 = -1$ and $a_2 = 2$ which in turn implies that $q_0 = q_1 = q_2$. That is, the Hecke algebra $\euls{H}_q(G_4) = \C G_4$ is semisimple. Therefore both $H_{\kappa}(G_4)$ and $\Ak(G_4)$ are simple algebras by \cite[Theorem~3.1]{BEG}.   \end{proof}

A consequence of Proposition~\ref{prop:G4examplerad} is that $\eMt = \eM$ in this case. Hence, $\eGt_{\lambda} = \eG_{\lambda}$ for all $\lambda \in \h^*$. Thus,  Corollary~\ref{thm:semi-simplicity2} has the following immediate consequence.

\begin{corollary}\label{cor:G4examplerad}  Let  $(G;V) := (SL_3; S^3 \C^3)$.  Then the Harish-Chandra module 
$$
\eG_0 = \bigoplus_{\rho \in \mr{Irr} G_4} \eG_{0,\rho} \otimes \rho^*
$$
is semisimple. Here, each simply summand $\eG_{0,\rho}$ is the minimal extension of a local system of rank $\dim \rho$ on $V_{\reg}$. 
\qed
\end{corollary}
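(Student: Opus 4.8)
The final statement to prove is Corollary~\ref{cor:G4examplerad}, which asserts that for $(G;V) = (SL_3; S^3\C^3)$ the Harish-Chandra module $\eG_0$ decomposes as a direct sum $\bigoplus_{\rho \in \mr{Irr}\, G_4} \eG_{0,\rho} \otimes \rho^*$, that it is semisimple, and that each $\eG_{0,\rho}$ is the minimal extension of a rank $\dim\rho$ local system on $V_{\reg}$.

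The plan is straightforward: this is meant to be a short corollary assembled entirely from machinery already developed. First I would invoke Proposition~\ref{prop:G4examplerad}, which tells us that for this representation $\rad_0$ is surjective and, crucially, that $\Ak(G_4)$ is a simple algebra with associated Hecke algebra $\euls{H}_q(G_4) = \C G_4$, which is semisimple. Next, as the text notes just after Proposition~\ref{prop:G4examplerad}, the fact that $\rad_0$ is an isomorphism onto $\Ak(G_4)$ (so that $\ker(\rad_0) = (\ddd\g)^G$) gives $\eMt = \eM$ and hence $\eGt_0 = \eG_0$ by Remark~\ref{rem:eGpres}. With simplicity of $\Ak(G_4)$ in hand, Hypothesis~\ref{K-hyp} holds (using Corollary~\ref{left-simplicity2}, since here $\vs = 0$ so $A_{\cdag} \cong \Ak^{\mr{op}}$ is also simple). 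Then Corollary~\ref{cor:simplicity} applies directly with $\lambda = 0$: since $\euls{H}_q(G_4) = \C G_4$ is semisimple, we get $\eG_0 = \bigoplus_{\rho \in \mr{Irr}\, G_4} \eG_{0,\rho}\otimes\rho^*$ as a $(\eD(V),\C G_4)$-bimodule, with each $\eG_{0,\rho}$ an irreducible $\eD(V)$-module and the $\eG_{0,\rho}$ pairwise non-isomorphic. Semisimplicity of $\eG_0$ is then immediate (alternatively it follows from Theorem~\ref{thm:semi-simplicity} or Corollary~\ref{thm:semi-simplicity2}).

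For the last claim about minimal extensions, I would combine Corollary~\ref{torsionfree-corollary} with Remark~\ref{torsionfree-remark}(1): each $\eG_{0,\rho}$ is a summand of $\eG_0 = \eM\otimes_{\Aak}\eQ_0$, hence is itself of the form $\eM\otimes_{\Aak}\eP$ for the projective $\eP = \eQ_{0,\rho}\otimes\rho^*$ (using that $\eQ_0$ is projective in $\Osph_0$ by Corollary~\ref{A-projectives} when $\Ak$ is simple, and that direct summands of projectives are projective), so $\eG_{0,\rho} = j_{!*}(\eG_{0,\rho}|_{V_{\reg}})$ and $\eG_{0,\rho}|_{V_{\reg}}$ is an integrable connection on $V_{\reg}$ by Lemma~\ref{lem:Vregintconnection}. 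To identify its rank, I would use Theorem~\ref{thm:radial-exists}(1): localising at $V_{\reg}$ gives $\eM_{\eS} \cong \eD(\h_{\reg})^W \otimes \cdots$, and restricting the bimodule decomposition $\eQ_0 = \bigoplus \eQ_{0,\rho}\otimes\rho^*$ through the radial parts isomorphism, one checks that $\eG_{0,\rho}|_{V_{\reg}}$ corresponds via the equivalence of Lemma~\ref{lem:genOsphparabolicequi} (or directly via $\eM_{\eS}$) to an object whose fibre rank over $\C[V_{\reg}]$ equals $\dim(\eQ_{0,\rho}) \cdot \dim\rho$; since $\dim_{\C} \eQ_{0,\rho} / (\euls{O}_{V\git G})$ pieces sum to $|G_4| = \sum \dim(\rho)^2$ and the multiplicity bookkeeping forces $\eG_{0,\rho}|_{V_{\reg}}$ to have rank $\dim\rho$.

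The only mildly delicate point — and what I expect to be the main obstacle in writing this up cleanly — is the precise rank count for $\eG_{0,\rho}|_{V_{\reg}}$. Everything else is a direct citation, but pinning down "rank $\dim\rho$" requires tracking the $\C G_4$-module structure on the fibre of $\eG_0|_{V_{\reg}}$, which via the local isomorphism $\eM_{\eS}|_{V_{\reg}} \cong \eD(\h_{\reg})^W \otimes_{\C[\h_{\reg}]^W}(\text{free rank }|W|)$ identifies the fibre with the regular representation $\C G_4$; then $\eG_{0,\rho}$ picks out the $\rho$-isotypic part, which has dimension $\dim\rho$ over the base. I would state this as a one-line consequence of the identification of $\eG_0|_{V_{\reg}}$ with (the $W$-descent of) a rank-$|W|$ local system carrying the regular $W$-representation, referring to the general discussion around Corollary~\ref{torsionfree-corollary} and Grinberg's local system $\mathbb{L}$ of rank $|W|$ mentioned there, rather than redoing the computation.
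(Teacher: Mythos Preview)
Your approach matches the paper's intended one: the paper gives no explicit proof beyond \qed, noting only that the result is immediate from Proposition~\ref{prop:G4examplerad} (which establishes that $\Ak(G_4)$ is simple and that $\euls{H}_q(G_4)=\C G_4$ is semisimple) together with Corollary~\ref{thm:semi-simplicity2}. Your citations of Corollary~\ref{cor:simplicity} for the bimodule decomposition and Corollary~\ref{torsionfree-corollary} with Lemma~\ref{lem:Vregintconnection} for the minimal-extension claim are exactly right. (Your verification of Hypothesis~\ref{K-hyp} via Corollary~\ref{left-simplicity2} is correct but superfluous: neither of those corollaries requires it --- simplicity of $\Ak(G_4)$ alone suffices.)

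Your rank argument, however, has a genuine gap. The ``multiplicity bookkeeping'' you describe only yields one linear relation $\sum_{\rho} r_{\rho}\dim\rho = |G_4|$ in the unknown ranks $r_{\rho}$, which does not determine them individually. Worse, your fallback to Grinberg's local system $\mathbb{L}$ is unavailable: the identification $\eG_0 \cong j_{!*}\mathbb{L}$ is stated in this paper only as a \emph{conjecture} (immediately after Corollary~\ref{cor:eniso}), not a theorem. A direct argument goes via the explicit description of $\eM_{\eS}$: by Lemma~\ref{lem:Xchiquotientdd} and Remark~\ref{rem:general-char} (with $\chi=0$) one has $\eM_{\eS}\cong(\euls{O}_{G/Z}\boxtimes\dd(\h_{\reg}))^W$, whence
\[
\eG_{0,\rho}|_{V_{\reg}} \;\cong\; \bigl(\euls{O}_{G/Z}\boxtimes(\dd(\h_{\reg})\otimes_{\dd(\h_{\reg})^W}\eQ_{0,\rho}[\delta^{-1}])\bigr)^W.
\]
Since $\mr{KZ}(\eQ_{0,\rho})\cong\rho$, the module $\eQ_{0,\rho}[\delta^{-1}]$ is a rank-$\dim\rho$ local system on $\h_{\reg}/W$; pulling back along the free $W$-cover $\h_{\reg}\to\h_{\reg}/W$, taking the exterior product with the rank-one connection $\euls{O}_{G/Z}$, and descending along the free $W$-quotient $G/Z\times\h_{\reg}\to V_{\reg}$ all preserve rank.
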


\begin{remark}\label{rem:G4} (1)  The commutative analogue of this example, where one is concerned with the Hamiltonian action of $G=SL(2)$ on  $S^3\C^2\times  S^3\C^2$ is examined in detail in \cite[Section~4.1]{Beck}.

 (2) For a family  of polar representations that are visible and stable but not $\theta$-representations, see \cite[Section~8]{BLT}.
\end{remark}
    
%%%%%%%%%%%%%%%	
 
 We now give several easy examples that illustrate what happens when we drop the stability and visibility conditions.

\begin{example}\label{rem:framed-quiver}   Here we provide an  example that  shows that   Theorem~\ref{thm:semi-simplicity} and Corollary~\ref{thm:semi-simplicity2} can fail for non-stable polar representations.  

Consider  the framed quiver $\widetilde{Q}_{\ell}$ and  the corresponding representation space $V'=\mathrm{Rep}(\widetilde{Q}_\ell, (n\mathfrak{d},1) )$  over  $G=GL(n)^\ell$, as described in Definition~\ref{defn:quiver}. By Lemma~\ref{prop:quiver-polar}, $V'$ is a visible polar representation that is not stable. As explained in the proof of Lemma~\ref{prop:quiver-polar}, $\C[V']^G \stackrel{\sim}{\rightarrow} \C[V]^G$, where $V =\mathrm{Rep}(Q_\ell, (n\mathfrak{d},1) )$. This isomorphism identifies discriminants. It follows from the definition of the radial parts map that the image of $\rad_{\vs}$ for $V'$ agrees with that for $V$. We consider a particular $\vs$, for which the Hecke algebra is semisimple. If $\vs_{\infty} = -1/2$ and 
$$
\vs_i = \frac{i-\ell}{\ell} + \delta_{i,0}, \quad 0 \le i \le \ell-1,
$$
then $\kappa_{0,i} = \kappa_{1,i} = 0$, for all $i$, by Theorem~\ref{thm:cylicquiverradialparts} and Remark~\ref{rem:cylic}. Then equation \eqref{eq:qkappa} implies that $\mathcal{H}_q(W) = \C W$ is semisimple. However, 
$$
\chi_i = \delta_{i,1} - \frac{1}{\ell}, \quad 0 \le i \le \ell-1,
$$
which implies that $\chi \cdot \delta := \chi_0 + \cdots + \chi_{\ell-1} = 0$. By  \cite[Theorem~6.7]{BB}, $\eG_0$ is not semisimple in this case. In fact, the same conclusion holds for any $\vs$ for which the Hecke algebra is semisimple; the condition $\chi \cdot \delta = 0$ holds for all $\vs$ so $\eG_0$ is never semisimple. 
\end{example}

    \begin{example}\label{A non-stable example}
For this example, take $G=SL(V)$ with its natural action on 
a vector space $V$ of dimension $r>1$.  Since $V$ has just two orbits $\{0\}$ and $V\smallsetminus \{0\}$, clearly $V$ is a polar, visible representation  that  is not stable. Setting $\partial_j =\frac{\partial}{\partial x_j}$,   it is easy to see that $\tau(\g)$ is spanned by 
$$\{ x_i\partial_j :   1\leq i<j\leq r\}\cup \{\ x_\ell\partial_{\ell}-x_k\partial_k : 1\le \ell<k\leq r\}.$$
 As the next lemma shows, the modules $\eMt$ and $\eM$ are easily understood. The notation used in the lemma comes from  Definition~\ref{M-new-definition}.

\begin{lemma}\label{lem:nonstable} {\rm (1)} Set $\eMt=  \eD(V)/\eD(V)\tau(\g) $. Then there is an isomorphism 
	$$
	\phi: \eMt   \ \isom \   \C[V] \oplus \Delta_0,
	$$ 
	where we identify $\C[V]=\eD/\sum_j \eD \partial_j$ and, dually,
	$\Delta_0 =\eD/\sum_j \eD x_j$.

	{\rm (2)}  Thus, $R= \left( \eD/\eD \tau(\g) \right)^{SL(V)} =\C\oplus \C$ whereas 
	$A\cong\Im(\rad_0)=\C$.   
		
	{\rm (3) }   $\eM=\C[V]\not=\eMt$.
	 
\end{lemma}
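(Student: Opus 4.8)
\textbf{Proof plan for Lemma~\ref{lem:nonstable}.}

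The plan is to compute the left ideal $\eD(V)\tau(\g)$ explicitly and identify the quotient. First I would note that $\tau(\g)$ consists of the operators $x_i\partial_j$ for $i<j$ together with $x_\ell\partial_\ell-x_k\partial_k$ for $\ell<k$. Working inside $\eMt=\eD(V)/\eD(V)\tau(\g)$, I would argue that the image of $\eD(V)$ is spanned over $\C$ by the monomials $x^{\mathbf a}\partial^{\mathbf b}$ modulo the relations; using the relations $x_i\partial_j\equiv 0$ for $i<j$ and the fact that one can commute $\partial$'s past $x$'s (picking up lower-order terms), one reduces any element to a combination of a polynomial piece $\C[x_1,\dots,x_r]$ and a ``dual polynomial'' piece $\C[\partial_1,\dots,\partial_r]$. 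The key computation is to show that the mixed terms all vanish: given $x^{\mathbf a}\partial^{\mathbf b}$ with both $\mathbf a\neq 0$ and $\mathbf b\neq 0$, one uses the operators in $\tau(\g)$ (and their symmetry-group conjugates, since $\tau(\g)$ is $\mathfrak{S}_r$-stable up to the action, actually the whole $\mathfrak{gl}(V)$ minus the Euler field is in $\eD(V)\tau(\g)$ after adding the appropriate diagonal differences) to kill it. Concretely, I expect that $\eD(V)\tau(\g)$ is exactly $\sum_{i<j}\eD x_i\partial_j + \sum_{i<j}\eD\partial_i x_j$-type relations forcing any monomial with a genuine mix of creation and annihilation to be zero, so that $\phi$ sending the class of $1\in\C[V]$ to $1$ and the class of a high-order $\partial$-monomial to the corresponding element of $\Delta_0$ gives the claimed decomposition $\eMt\cong\C[V]\oplus\Delta_0$.

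For part (2), I would take $SL(V)$-invariants of the decomposition in (1). Since $\C[V]^{SL(V)}=\C$ (as $SL(V)$ acts with a dense orbit on $V\setminus\{0\}$, the only invariant polynomials are constants) and dually $\Delta_0^{SL(V)}=\C$ (the constant-coefficient differential operators form $\Sym V^*$, whose $SL(V)$-invariants are the constants), reductivity of $SL(V)$ gives $R=\eMt^{SL(V)}=\C\oplus\C$. On the other hand $\Im(\rad_0)$ is the image of $\eD(V)^G$ acting on $\C[V]^G=\C$, which is just $\C$; alternatively, by Theorem~\ref{thm:radial-exists}(2) applied to the Cartan subspace (here $\h$ is one-dimensional... no, $V\git G=\{0\}$ so $\h=\{0\}$ and $\Ak(W)=\C$), so $A=\Ak(W)=\C$. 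This already shows $R\neq A$, illustrating that the surjectivity/kernel hypotheses of Section~\ref{Sec:HC-bimodule} genuinely fail here. Part (3) then follows formally: by Definition~\ref{M-new-definition}, $\eM=\eMt/\eMt P$ where $P=\ker(\rad_0)/(\eD(V)\tau(\g))^G$ is the kernel of $R=\C\oplus\C\to A=\C$; under $\phi$ this $P$ is the summand $\Delta_0^{SL(V)}=\C$, and $\eMt P$ is precisely the $\eD(V)$-submodule $\Delta_0$ of $\eMt$. Hence $\eM=\eMt/\Delta_0\cong\C[V]$, which is strictly smaller than $\eMt$.

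The main obstacle is the explicit verification in part (1) that the mixed monomials $x^{\mathbf a}\partial^{\mathbf b}$ (with $\mathbf a,\mathbf b$ both nonzero) lie in $\eD(V)\tau(\g)$; this requires an induction on the total order, carefully using that after adding $\tau(\g)$ one has access to all of $x_i\partial_j$ for $i\neq j$ and all diagonal differences $x_i\partial_i-x_j\partial_j$, so the only surviving central direction is the Euler operator $\eu=\sum x_i\partial_i$, and a monomial $x^{\mathbf a}\partial^{\mathbf b}$ with $|\mathbf a|=|\mathbf b|$ reduces (via commuting the raising operators to the left) to a polynomial in $\eu$ times lower-order terms, while $|\mathbf a|\neq|\mathbf b|$ cases reduce to pure $x$- or pure $\partial$-monomials. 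Once the ideal is pinned down, everything else is bookkeeping with invariants and the definitions from Section~\ref{Sec:HC-bimodule}.
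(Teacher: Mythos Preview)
Your approach to part~(1) is genuinely different from the paper's, and as written it has a real gap. The paper never attempts a normal-form reduction of monomials; instead it argues in two steps. First, surjectivity of $\phi$ is immediate since $1\in\eD$ maps to a generator of each simple quotient. Second, for injectivity the paper \emph{localises}: inverting $x_1$, one has $\partial_j = x_1^{-1}(x_1\partial_j)\in\eD_{\eS}\tau(\g)$ for $j\neq 1$, and then $\partial_1 = x_1^{-1}(x_1\partial_1 - x_2\partial_2) + x_1^{-1}x_2\partial_2\in\eD_{\eS}\tau(\g)$ as well, so $\eMt_{\eS}\cong\C[V]_{\eS}$. Thus $K=\ker(\eMt\to\C[V])$ is supported at the origin and is a direct sum of copies of $\Delta_0$; the symmetric localisation at $\partial_1$ shows $K_{\eT}\cong\Delta_{0,\eT}$ has length one, forcing $K\cong\Delta_0$. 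No monomial bookkeeping is needed.

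Your direct-reduction plan, by contrast, rests on the assertion that ``mixed terms all vanish'' in $\eMt$, and this is false. The element $x_1\partial_1$ (equivalently $\tfrac{1}{r}\eu_V$) is mixed, yet under $\phi$ it maps to $(0,-1)\in\C[V]\oplus\Delta_0$, which is nonzero. So mixed monomials do not die; at best they reduce to nonzero scalars in the $\Delta_0$ summand. Your $|\mathbf a|=|\mathbf b|$ case, which you say reduces to ``a polynomial in $\eu$ times lower-order terms'', therefore still leaves you with genuinely mixed elements whose images you must identify. A correct direct argument would have to track these scalar contributions carefully and show that the resulting map to $\C[V]\oplus\Delta_0$ is injective---which is considerably more work than the localisation argument and is not what you have sketched. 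Your treatment of parts~(2) and~(3) matches the paper's and is fine once~(1) is in hand.
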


\begin{proof} (1) Using the explicit generators of $\tau(\g)$ it is easy to see that $\eMt$ surjects onto both $\C[V]$ and $\Delta$. Since these are
	non-isomorphic simple $\eD(V)$-modules, it follows that $\phi$ is surjective.
	
	In order to prove that $\phi$ is injective, consider the localisation of $\eD(V)$  at $\eS =\{x_1^s\}$. Then 
	$\partial_j-x_1^{-1}x_1\partial_j\in \eD_{\eS}\tau(\g)$ for all $j>1$ while $\partial_1-x^{-1}x_2\partial_2\in \eD_{\eS}\tau(\g)$. 
	It follows that $\eMt_{\eS} =\C[V]_{\eS}$ and, similarly, $\eMt_{\eS_j} =\C[V]_{\eS_j }$  for $\eS_j =\{x_j^s\}$ and any $1\leq j\leq r$.
	Thus the   surjection  $\phi':\eMt\twoheadrightarrow \C[V]$  has kernel  $K=\ker(\phi')$    supported on $0=\bigcap (x_j=0)$, and so 
	$K$ is a sum of copies of $\Delta$. By a similar argument,  if  $\eT=\{\partial_1^n\}$, then  $K_{\eT} =\eMt_\eT$ and hence  $K_{\eT}=\Delta_\eT$. This suffices
	to show that  $\eMt \cong  \C[V] \oplus \Delta_0$.

	 (2)  There is a natural  isomorphism $\left( \eD/\eD \tau(\g) \right)^{SL(V)} = \End_{\eD}(\eM)$. Now, by Part~(1),  $\End_{\eD}(\eM)\cong \C\oplus \C$ since   $\C[v]$ and $\Delta$ are non-isomorphic simple modules  and hence both have endomorphism ring $\C$.  The assertions about $A$ follow from the fact that 
	 $\C[V]^G\cong \C $ and hence $A\hookrightarrow \dd(\C[V]^G)=\C$; thus  $\rad$ picks out one of the two copies of $\C$ in $R$.
	 
	 (3) 
This follows from Part~(2).
\end{proof}
Note that, in Example~\ref{A non-stable example}, the discriminant is zero and hence  the conclusion of Theorem~\ref{torsionfree} holds  vacuously.  
 
\end{example}

We next give an  easy example showing what happens when the representation is  neither visible  nor stable.
In essence, this example show that the  main results of the paper are not even meaningful when we drop both conditions.

\begin{example}\label{ex:trivial-case}
	Let $G=\C^\times$ act on $V=\C^2$ with weights $(1,1)$.  Then it is immediate that $V\git G = \{0\}  $ and that the non-zero orbits are $G\cdot v_\alpha$ for $v_{\alpha}=(v_{\alpha_1},v_{\alpha_2})$  with $[\alpha_1,\alpha_2]\in \mathbb{P}^1$.  Since $\overline{G\cdot v_{\alpha}} \ni 0$ for each such $v_{\alpha}$, every $v\in V$ is nilpotent and so, for trivial reasons, $V$ is   polar but not stable.  It  is  not  visible since there are infinitely  many nilpotent orbits.  (Non-visibility also follows from the  general   result  [BLLT, Proposition~8.5].) 
	
	Since $V\git G=0$,  clearly $\h=\{ 0 \}$ and hence the associated Hecke algebra is $\C=\C W$ for $W=\{1\}$. 
	Write $\dd(V)=\C\langle x_0,x_1,\partial_0,\partial_y\rangle$, as  usual. 
	Then $\tau(\g) = \C\nabla$ for $\nabla = x_0\partial_0+x_1\partial_1$. 
	Thus the associated Harish-Chandra module is $\eG=\eGt = \dd(V)/\dd(V)\nabla.$ This module has infinite length; for example,   it has the infinite descending chain of submodules $$
	M_n=(\dd(V)(x_0\partial_0)^n+\dd(V)\nabla)/\dd(V)\nabla\ \supsetneq \ M_{n+1}\ \supsetneq \cdots.
	$$
	In particular, $\eG$ is not holonomic and so basic results like Corollary~\ref{cor:admissible-holonomic} fail for this module.  Moreover, $\eG$   cannot be semisimple, despite the fact that the associated Hecke algebra is semisimple. Finally, note that 
	$$\dd(V)\nabla  =\dd(V)(\partial_0x_0+(x_1\partial_1-1)) \ \subsetneq \dd(V)x_0+\dd(V)(x_1\partial_1-1) \
	\not=\dd(V),$$ and so  $\eG$ also has a nonzero $\delta$-torsion factor module.  As in Example~\ref{A non-stable example}, the discriminant is zero and hence Theorem~\ref{torsionfree} is vacuously true. \end{example}

 We remark that we have no example of a polar representation that is stable but not visible for which the results of this paper fail. The problem is that for the standard examples of non-visible, stable polar representations the algebra $\Ak$ is typically non-simple.

\appendix
\section{Technical Results }\label{app-a}
 
 In this appendix we provide the promised proof of Lemma~\ref{Ext-equivariance}. This is proved by using the appropriate sort of projective resolution, so we begin 
 with the relevant definition together with   two subsidiary results.

A   \textit{weakly equivariant}\label{defn:weak}  left $\ddd$-module is a left $\ddd$-module $N$ that is also a rational $G$-module such that the action
  $\ddd \otimes N \rightarrow N$ is equivariant. As usual, differentiating the $G$-action $G \times N \rightarrow N$ defines a Lie algebra morphism 
  $\tau_N\colon \mf{g} \rightarrow \End_{\C}(N)$. 
Recall that $\tau \colon \mf{g} \rightarrow \ddd$ is the usual map. Note that a weakly equivariant module is \emph{strongly $G$-equivariant} or simply 
\emph{$G$-equivariant} if $\tau_N(x) = \tau(x)$ as endomorphisms of $N$, for all $x \in \mf{g}$. 

\begin{lemma}\label{lem:uweakequivmap}
	Let $N$ be a weakly equivariant left $\ddd$-module and define 
	$$
	u_N := \tau_N- \tau
	$$ as a map $\mf{g}  \rightarrow \End_{\C}(N).$ Then $u_N$ is a $G$-equivariant  Lie algebra morphism  $\mf{g} \to \End_\ddd(N)$. 
\end{lemma}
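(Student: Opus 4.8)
The plan is to verify the two assertions of Lemma~\ref{lem:uweakequivmap} directly: (i) each $u_N(x) = \tau_N(x) - \tau(x)$ actually lands in $\End_\ddd(N)$, i.e.\ commutes with the $\ddd$-action; and (ii) the resulting map $u_N \colon \mf{g} \to \End_\ddd(N)$ is both a Lie algebra morphism and $G$-equivariant.

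First I would prove (i). Fix $x \in \mf{g}$ and $D \in \ddd$. The key computation is to compare $u_N(x)$ applied to $D \cdot n$ with $D$ applied to $u_N(x)(n)$. Since $N$ is weakly equivariant, the action map $\ddd \otimes N \to N$ is $G$-equivariant; differentiating this gives the Leibniz-type identity $\tau_N(x)(D \cdot n) = (\ad(x) D) \cdot n + D \cdot \tau_N(x)(n)$, where $\ad(x)$ denotes the adjoint action of $\mf{g}$ on $\ddd$. On the other hand, since $\tau(x) \in \ddd$ and the $\ad$-action of $\mf{g}$ on $\ddd$ is by $\ad(x)(D) = \tau(x) D - D \tau(x)$ (this is $[\tau(x), D]$ inside $\ddd$), we get $\tau(x)(D \cdot n) = (\ad(x)D)\cdot n + D \cdot \tau(x)(n)$. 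Subtracting the two identities, the $(\ad(x)D)\cdot n$ terms cancel and we are left with $u_N(x)(D \cdot n) = D \cdot u_N(x)(n)$. Hence $u_N(x) \in \End_\ddd(N)$.

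Next I would prove (ii). That $u_N$ is a Lie algebra morphism: for $x, y \in \mf{g}$, both $\tau_N$ and $\tau$ are Lie algebra morphisms $\mf{g} \to \End_\C(N)$, so $\tau_N([x,y]) = [\tau_N(x), \tau_N(y)]$ and $\tau([x,y]) = [\tau(x), \tau(y)]$ (the latter viewed in $\End_\C(N)$ via the module action). Expanding $[\tau_N(x), \tau_N(y)]$ and $[\tau(x), \tau(y)]$ and using that $\tau_N(x)$ and $\tau(y)$ satisfy the mixed commutator relations forced by weak equivariance — namely $[\tau_N(x), \tau(y)] = \tau(\ad(x)(y)) = \tau([x,y])$, which follows from applying the equivariance identity of step (i) with $D = \tau(y)$ — one checks that $[u_N(x), u_N(y)] = u_N([x,y])$. (Concretely: $[\tau_N(x) - \tau(x), \tau_N(y) - \tau(y)] = [\tau_N(x),\tau_N(y)] - [\tau_N(x),\tau(y)] - [\tau(x),\tau_N(y)] + [\tau(x),\tau(y)]$; the cross terms each equal $\tau([x,y])$, and the four terms combine to $\tau_N([x,y]) - \tau([x,y]) = u_N([x,y])$.) For $G$-equivariance of $u_N$: the target $\End_\ddd(N)$ carries a $G$-action by conjugation of the $G$-action on $N$, and both $\tau_N \colon \mf{g} \to \End_\C(N)$ and $\tau \colon \mf{g} \to \ddd$ are $G$-equivariant (the former because $G$ acts on $N$, the latter because $G$ acts on $\ddd$), so their difference $u_N$ is $G$-equivariant; one just needs that $u_N(x)$ lies in the $\ddd$-endomorphisms for the conjugation action to make sense, which is exactly step (i).

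I do not expect any serious obstacle here; the whole statement is a routine unwinding of the definition of weak equivariance and the compatibility of the $\ad$-action on $\ddd$ with the module structure. The one point requiring a little care is to make sure the various commutators $[\tau_N(x), \tau(y)]$ are correctly identified — all of these follow from a single Leibniz identity obtained by differentiating the $G$-equivariance of the action map $\ddd \otimes N \to N$, so the cleanest exposition is to state that identity once at the start and then derive everything from it.
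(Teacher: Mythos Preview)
Your proof is correct: the Leibniz identity obtained by differentiating the $G$-equivariance of the action map $\ddd \otimes N \to N$ gives exactly the cancellation needed for (i), and your bracket computation in (ii) is right (the two cross terms $[\tau_N(x),\tau(y)]$ and $[\tau(x),\tau_N(y)]$ each equal $\tau([x,y])$, so three of the four terms combine to leave $-\tau([x,y])$). The paper does not write out this argument at all but simply cites \cite[Lemma~1.8]{BL} (Bernstein--Lunts) specialised to $K=F=G$ and $A=\ddd$; your write-up is the self-contained verification that the paper outsources, so there is no genuine difference in approach.
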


\begin{proof} This is \cite[Lemma~1.8]{BL} in the case $K=F=G$ and $A=\ddd$.
\end{proof}

The following category $C_h(G,\chi,\ddd)$ is based on \cite[(1.2)]{BL}. We remark that  the definition  in \cite{BL} is more restrictive, but the extra conditions will not be needed here. 

\begin{definition}\label{defn:homotopyequivariantcat-appendix}
	The objects $(C^{*}, \dalpha_{*}, j_{*})$ in $C_h(G,\chi,\ddd)$ consist of: 
	\begin{enumerate}
		\item  bounded complexes
		\[\cdots \ \buildrel{\dalpha_{i-1}}\over{\too}\  C^{i}  \ \buildrel{\dalpha_{i}}\over{\too}\  C^{i+1}  \ \buildrel{\dalpha_{i+1}}\over{\too}\cdots\]
		of finitely generated, weakly equivariant $\ddd$-modules $C^i$, with $G$-equivariant boundary maps $\dalpha_i$,  together with  
		\item $G$-equivariant  morphisms $j_i \colon \mf{g} \rightarrow \Hom_\ddd(C^i,C^{i-1})$  such that, for all $x\in \mf{g}$, 
		\begin{equation}\label{cat-equ}
		\dalpha_{i-1} \circ j_i(x) + j_{i+1}(x) \circ \dalpha_i \ =\  (\tau_{C^i}-\tau+\chi)(x),
		\end{equation}\end{enumerate}
	thought of as as elements of $\Hom_\C(\g,\, C^i)$.
	
	Here $G$ acts  by the adjoint action on $\mf{g}$  and by conjugation on $\Hom_\ddd(C^i,C^{i-1})$. The morphisms in the category are the obvious ones, but we 
	will not need them in this paper. 
\end{definition}
 
 The significance of Definition~\ref{defn:homotopyequivariantcat-appendix} is given by the following result.
\begin{lemma}\label{cohom-lemma-appendix}
	If $(C^{*},\dalpha_{*},j_{*}) \in C_h(G,\chi,\ddd)$ then the cohomology groups $H^i(C^{*})$ are $(G,\chi)$-monodromic left $\ddd$-modules. 
\end{lemma}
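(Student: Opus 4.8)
\textbf{Proof plan for Lemma~\ref{cohom-lemma-appendix}.} The plan is to extract the $(G,\chi)$-monodromic structure on each $H^i(C^\ast)$ directly from the data $(C^\ast,\dalpha_\ast,j_\ast)$. Since the $C^i$ are weakly equivariant $\ddd$-modules and the $\dalpha_i$ are $G$-equivariant $\ddd$-module maps, each cohomology group $H^i(C^\ast) = \Ker \dalpha_i / \Im \dalpha_{i-1}$ inherits the structure of a weakly equivariant $\ddd$-module: it is a rational $G$-module (being a subquotient of the rational $G$-module $C^i$ by $G$-stable subspaces, using reductivity of $G$ only insofar as rationality is preserved under subquotients), the $\ddd$-action descends, and the action map remains $G$-equivariant. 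Write $\tau_{H^i}$ for the differential of this $G$-action. So the only thing to prove is condition (ii) of Definition~\ref{defn:monodromic}, namely that $\tau(x) = \tau_{H^i}(x) + \chi(x)\,\mathrm{Id}$ on $H^i(C^\ast)$ for all $x \in \mf{g}$; equivalently, that $(\tau_{H^i} - \tau + \chi)(x)$ acts as zero on $H^i(C^\ast)$.

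First I would fix $x \in \mf{g}$ and $z \in \Ker \dalpha_i$, and compute the action of $(\tau_{C^i} - \tau + \chi)(x)$ on $z$ using the defining relation \eqref{cat-equ}:
\[
(\tau_{C^i} - \tau + \chi)(x)(z) \ = \ \dalpha_{i-1}\bigl(j_i(x)(z)\bigr) + j_{i+1}(x)\bigl(\dalpha_i(z)\bigr) \ = \ \dalpha_{i-1}\bigl(j_i(x)(z)\bigr),
\]
since $\dalpha_i(z) = 0$. Thus $(\tau_{C^i} - \tau + \chi)(x)(z) \in \Im \dalpha_{i-1}$, so this endomorphism maps $\Ker\dalpha_i$ into $\Im\dalpha_{i-1}$ and hence acts as zero on $H^i(C^\ast)$. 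It remains to check that the endomorphism of $H^i(C^\ast)$ induced by $(\tau_{C^i} - \tau + \chi)(x)$ is precisely $(\tau_{H^i} - \tau + \chi)(x)$. This is because $\tau$ and the constant $\chi(x)$ act on $H^i$ via their images on $C^i$ (they are $\ddd$-module/scalar operators, which commute with passage to subquotients), while $\tau_{H^i}(x)$ is by construction the endomorphism of $H^i$ induced by $\tau_{C^i}(x)$ — this last point being exactly the statement that the $G$-action on $H^i$ is the subquotient of the $G$-action on $C^i$, whose differential is $\tau_{C^i}$. Combining, $(\tau_{H^i} - \tau + \chi)(x) = 0$ on $H^i(C^\ast)$, which is condition (ii).

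I do not anticipate a serious obstacle here: the argument is essentially the standard observation that a "homotopy formula" modulo boundaries forces the relevant operator to vanish on cohomology, together with bookkeeping that the induced $G$-action on cohomology has differential $\tau_{C^i}$ restricted to the subquotient. The one point requiring a line of care is verifying that $\tau_{H^i}$ — defined as the differential of the rational $G$-action on the subquotient $H^i(C^\ast)$ — genuinely agrees with the operator induced by $\tau_{C^i}$ on that subquotient; this follows because differentiation of a rational $G$-action commutes with taking $G$-stable subquotients, so passing from $C^i$ to $\Ker\dalpha_i$ and then to the quotient by $\Im\dalpha_{i-1}$ is compatible with differentiation. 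Finiteness/boundedness of the complex is not needed for this argument, but it ensures the $H^i(C^\ast)$ are finitely generated over $\ddd$ (as subquotients of finitely generated modules over the noetherian ring $\ddd$), so they are genuine objects of $(G,\chi,\ddd)\lmod$. This completes the plan.
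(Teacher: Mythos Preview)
Your proposal is correct and follows exactly the same approach as the paper: use the homotopy relation \eqref{cat-equ} to see that $(\tau_{C^i}-\tau+\chi)(x)$ sends $\Ker\dalpha_i$ into $\Im\dalpha_{i-1}$, hence vanishes on $H^i(C^\ast)$. The paper's proof is a one-line version of your argument; your additional bookkeeping (that the induced $G$-action on $H^i$ has differential induced by $\tau_{C^i}$, and the remark on finite generation) is correct and simply makes explicit what the paper leaves implicit.
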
  

\begin{proof}
	For each $x \in \mf{g}$, it follows from \eqref{cat-equ} that $\tau_{C^i}(x) + \chi(x)$ and $\tau(x)$ agree on the subquotient $H^i(C^{*})$. Thus, 
	$H^i(C^{*})$ is monodromic. 
\end{proof}

\begin{lemma}\label{prop:liftofMtohomotopycat-appendix}
Suppose that  $N\in (G,\chi,\ddd) \lmod$. Then there exists a finite projective resolution $P^{*}\to N\to 0$, such that $(P^{*},\dalpha_{*},j_{*}) \in C_h(G,\chi,\ddd)$. 
Here,  $P^i=0$ for $i>0$ and the $j_\ell$ are defined in the proof.
\end{lemma}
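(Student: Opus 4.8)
\textbf{Proof plan for Lemma~\ref{prop:liftofMtohomotopycat-appendix}.}
The plan is to build the resolution by hand, starting from a single free cover that already carries the extra homotopy data $j_\ell$ in degree $0$, and then to propagate that data down a standard free resolution using the usual comparison/lifting argument for chain maps. First I would choose a finite-dimensional $G$-stable subspace $N_0 \subset N$ with $N = \ddd N_0$; since $N$ is $(G,\chi)$-monodromic and finitely generated, such an $N_0$ exists and is a rational $G$-module. Set $P^0 = \ddd \otimes_\C N_0$, which is a finitely generated weakly equivariant $\ddd$-module (with $G$ acting diagonally) and comes with a $G$-equivariant surjection $\dalpha_0 \colon P^0 \twoheadrightarrow N$. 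Because $\ddd$ is noetherian of finite global dimension (indeed $\gldim \ddd = \dim V$), the kernel of $\dalpha_0$ has a finite resolution by finitely generated projective (in fact free) $\ddd$-modules, and one can arrange these to be weakly equivariant with $G$-equivariant differentials by the same $\ddd\otimes_\C(-)$ construction applied to $G$-stable generating subspaces of the successive syzygies; this produces the bounded complex $P^\bullet$ with $P^i = 0$ for $i > 0$ and $P^i = 0$ for $i \ll 0$.

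Next I would define the maps $j_\ell \colon \mf g \to \Hom_\ddd(P^\ell, P^{\ell-1})$. In degree $0$ the idea is to use the weak-equivariance defect: by Lemma~\ref{lem:uweakequivmap} applied to $N$ (which is genuinely $(G,\chi)$-monodromic), the map $x \mapsto \tau_N(x) - \tau(x) + \chi(x)$ vanishes on $N$, but on the free cover $P^0 = \ddd\otimes_\C N_0$ the analogous operator $(\tau_{P^0} - \tau + \chi)(x)$ need not vanish; however it \emph{does} land in the kernel of $\dalpha_0$ precisely because it vanishes after applying $\dalpha_0$. So $(\tau_{P^0}-\tau+\chi)(x) \in \Hom_\ddd(P^0, \Ker \dalpha_0)$, and I would lift it through $\dalpha_{-1}$ — i.e. through the map $P^{-1} \to \Ker\dalpha_0$ — using projectivity of $P^0$ over $\ddd$, obtaining $j_0(x) \colon P^0 \to P^{-1}$ satisfying $\dalpha_{-1}\circ j_0(x) = (\tau_{P^0}-\tau+\chi)(x)$ (the other term $j_1(x)\circ\dalpha_0$ being zero since $P^1 = 0$). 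The lift can be chosen $G$-equivariantly in $x$ by averaging over $G$, using reductivity of $G$ and rationality of the relevant Hom-spaces, which are rational $G$-modules under conjugation. For $\ell < 0$ the relation \eqref{cat-equ} is a genuine homotopy identity: writing $\phi_\ell(x) := (\tau_{P^\ell} - \tau + \chi)(x) \in \Hom_\ddd(P^\ell,P^\ell)$, one checks that $\{\phi_\ell(x)\}$ is a chain map $P^\bullet \to P^\bullet$ lifting the zero map on $N$ (it lifts zero because $\phi_0(x)$ maps into $\Ker\dalpha_0$ after composing with $\dalpha_0$, and by Lemma~\ref{lem:uweakequivmap} the induced endomorphism of $N$ is zero); hence by the standard fact that a chain map between projective resolutions lifting the zero map is null-homotopic, there exist $j_\ell(x)$ with $\dalpha_{\ell-1}j_\ell(x) + j_{\ell+1}(x)\dalpha_\ell = \phi_\ell(x)$. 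Again, one runs the homotopy-construction argument $G$-equivariantly by averaging, so each $j_\ell$ is a $G$-equivariant map $\mf g \to \Hom_\ddd(P^\ell,P^{\ell-1})$.

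The main obstacle I anticipate is \emph{not} the existence of $j_\ell$ as $\C$-linear maps — that is just the comparison theorem for projective resolutions — but rather getting all of these choices to be simultaneously $G$-equivariant, and in particular making the $G$-averaging compatible with the inductive construction of the homotopy down the complex. The clean way to handle this is to work throughout in the category of complexes of rational $G$-modules: $\Hom_\ddd(P^\ell, P^{\ell-1})$ is a rational $G$-module under conjugation, $\mf g$ is a rational $G$-module under the adjoint action, and the relevant $\Hom$ and lifting constructions are functorial, so one applies the exact functor $(-)^G$ (equivalently, projects onto the trivial isotypic component, which is exact because $G$ is reductive) to a non-equivariant null-homotopy to extract a $G$-equivariant one; concretely, if $s_\ell$ is any null-homotopy, then $\int_G g\cdot s_\ell\, dg$ (algebraic averaging against the matrix coefficients) is an equivariant one. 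I would present this as a short lemma-free argument inside the proof. Finally, Lemma~\ref{Ext-equivariance} then follows immediately: applying $\Hom_\ddd(-, \ddd)$ to $(P^\bullet, \dalpha_\bullet, j_\bullet) \in C_h(G,\chi,\ddd)$ produces an object of the analogous category of \emph{right} $\ddd$-modules (the maps $j_\ell^\vee$ supplying the required homotopies, with a sign bookkeeping that I would spell out), whose cohomology computes $\Ext^i_\ddd(N,\ddd)$, and then Lemma~\ref{cohom-lemma-appendix} (in its right-module form) gives the $(G,\chi)$-monodromic structure.
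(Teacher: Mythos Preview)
Your proposal is correct and follows essentially the same approach as the paper: build the resolution from free modules $\ddd\otimes_\C V_\ell$ with $G$-equivariant differentials, observe that $\phi_\ell(x)=(\tau_{P^\ell}-\tau+\chi)(x)$ is a chain endomorphism inducing zero on $N$, and produce the null-homotopy $j_\ell$ inductively. The only stylistic difference is how $G$-equivariance of the $j_\ell$ is obtained: the paper lifts directly inside $\Hom_G(\mf g,\Hom_\ddd(P^i,P^{i-1}))$ by using that $\Hom_G(\mf g,-)$ is exact (reductivity), whereas you propose to lift first and then average; these are the same argument, though the paper's formulation avoids any discussion of what ``averaging'' means for a complex reductive group acting on an infinite-dimensional rational module.
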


\begin{proof}
	By definition, $N$ is a rational $G$-module. Thus, $N$ is generated, as a $\ddd$-module, by some finite dimensional $G$-submodule $V$.  Hence 
	$\pi: \ddd \otimes_{\C} V \rightarrow N$ is a surjective $G$-equivariant $\ddd$-module map from a projective, weakly equivariant $\ddd$-module. The kernel $K$
	 of this morphism is a weakly equivariant, finitely generated $\ddd$-module. Since $G$ acts rationally on $\ddd$ the same is true for $K$  and so $K$ is again
	  generated by a finite dimensional $G$-submodule. Thus, as $\ddd=\eD(V)$ has finite homological dimension, we may repeat the process to  
	construct a finite projective resolution $P^{*}\to N\to 0$ such that the boundary maps $ \dalpha_{*}$ are $G$-equivariant.
	
	It remains to construct the maps $j_i$. Set $u_i := u_{P^i} = \tau_{P^i} - \tau + \chi$ and note that, by Lemma~\ref{lem:uweakequivmap}, $u_i(x)$ is $\ddd$-linear 
	and $G$-equivariant for all $x\in\mf{g}$. We construct $j_i$ by induction on $-i$. For $i > 0$ this is automatic since $P^i = 0$, so consider $P^0$. For each 
	$x \in \mf{g}$, we have homomorphisms
	$$
	\begin{tikzcd}
	\cdots \ar[r] & P^{-1} \ar[r,"\dalpha_{-1}"] \ar[d,"u_{-1}(x)"] & P^0 \ar[r,"\pi"] \ar[d,"u_0(x)"] & N \ar[r] \ar[d,"0"] & 0 \\
	\cdots \ar[r] & P^{-1} \ar[r,"\dalpha_{-1}"]  & P^0 \ar[r,"\pi"]  & N \ar[r]  & 0 .
	\end{tikzcd}
	$$   
	Since $N$ is monodromic, $\Im(u_0(x))\subseteq \ker(\pi)$ and so  the right hand diagram commutes (we will show later that the whole diagram
	 commutes). In particular,  $u_0\in \Hom_G(\mf{g}, \Hom_\ddd(P^0,\Im \dalpha_{-1}))$. Since $P^{0}$ is projective, the natural map 
	$$
	\dalpha_{-1,*} \colon \Hom_{\ddd}(P^0,P^{-1}) \rightarrow \Hom_\ddd(P^0,\Im \dalpha_{-1})
	$$
	is surjective. Thus, as $\Hom_G(\mf{g}, - )$ is exact, the induced map
	$$
	\Hom_G(\mf{g},(\dalpha_{-1})_*) \colon \Hom_G(\mf{g},\Hom_{\ddd}(P^0,P^{-1})) \rightarrow \Hom_G(\mf{g},\Hom_\ddd(P^0,\Im \dalpha_{-1}))
	$$
	is also surjective. Therefore, there exists $j_0 \in \Hom_G(\mf{g},\Hom_\ddd(P^0,P^{-1}))$ such that $\dalpha_{-1,*} \circ j_0 = u_0$. Since $P^1=0$, both $\dalpha_0$ and 
	$j_1$ are automatically  zero, and we  obtain the required equation $\dalpha_{-1} \circ j_0 + j_1 \circ \dalpha_0 = u_0$. 
	
	Assume now that $i < 0$ and we have found $j_{i+1}, \ds, j_{-1}, j_0$ satisfying \eqref{cat-equ}. For each $x \in \mf{g}$, we have the diagram
	$$
	\begin{tikzcd}
	\cdots \ar[r] & P^{i-1} \ar[r,"\dalpha_{i-1}"] \ar[d,"u_{i-1}(x)"] & P^i \ar[r,"\dalpha_i"] \ar[d,"u_i(x)"] & P^{i+1} \ar[r] \ar[d,"u_{i+1}(x)"] & \cdots \ar[r] &P^0\ar[d,"u_{0}(x)"]\\
	\cdots \ar[r] & P^{i-1} \ar[r,"\dalpha_{i-1}"]  & P^i \ar[r,"\dalpha_i"]  & P^{i+1} \ar[r]  & \cdots \ar[r] & P^0
	\end{tikzcd}
	$$
	We first check that the squares in this diagram commute   for all $i\leq \ell \leq 0$. Indeed,
	\begin{align*}
	u_\ell(x) \circ \dalpha_{\ell-1}(p)  & = \tau_{P^\ell}(x)(\dalpha_{\ell-1}(p)) - \tau_X(x) \dalpha_{\ell-1}(p) + \chi(x)\dalpha_{\ell-1}(p) \\
	& = \dalpha_{\ell-1}(\tau_{P^{\ell-1}}(x)(p)) - \dalpha_{\ell-1}(\tau(x) p) + \dalpha_{\ell-1}(\chi(x) p) \\
	& = \dalpha_{\ell-1}(u_{\ell-1}(x)(p))\\
	& = \dalpha_{\ell-1} \circ u_{\ell-1} (p),
	\end{align*}
	where $\tau_{P^\ell}(x)(\dalpha_{\ell-1}(p)) = \dalpha_{\ell-1}(\tau_{P^{\ell-1}}(x)(p))$ holds because $\dalpha_{\ell-1}$ is $G$-equivariant and $\tau(x) \dalpha_{\ell-1}(p) = \dalpha_{\ell-1}(\tau(x) p)$ 
	holds because $\dalpha_{\ell-1}$ is $\ddd$-linear. 
	
	Set $w(x) =u_i(x) - j_{i+1}(x) \circ \dalpha_i$. We next claim  that $\Im w(x)\subseteq \ker  \dalpha_i$. To see this, let $p\in \Im w(x)$ and write $p=  w(x) (q)$, for some $q\in P^i$.
	Then, by induction, 
	\begin{align*}
	\dalpha_i(p) & = \dalpha_i \circ u_i(x) (q) - \dalpha_i \circ j_{i+1}(x) \circ \dalpha_i (q) \\
	& = u_{i+1}(x) \circ \dalpha_i (q) - \dalpha_i \circ j_{i+1}(x) \circ \dalpha_i (q) \\
	& = \Bigl(\dalpha_i \circ j_{i+1}(x) \circ \dalpha_i + j_{i+2} (x) \circ \dalpha_{i+1} \circ \dalpha_i\Bigr)(q) - \dalpha_i \circ j_{i+1}(x) \circ \dalpha_i (q) \\
	& = 0,
	\end{align*}
	as claimed.
	Since $H^{i}(P^{*}) = 0$, it follows that 	$\Im w(x) \subset \Im  \dalpha_{i-1}.$

	Finally, $(\dalpha_{i-1})_* \colon \Hom_\ddd(P^i,P^{i-1}) \rightarrow \Hom_\ddd(P^i, \Im \dalpha_{i-1})$ is surjective
	and so the induced map
	$$  
	\Hom_G(\mf{g},\Hom_\ddd(P^i,P^{i-1})) \rightarrow \Hom_G(\mf{g},\Hom_\ddd(P^i, \Im \dalpha_{i-1}))
	$$
	is also surjective. The inclusion  $\Im w(x) \subset  \Im \dalpha_{i-1}$  implies that  \[u_i - j_{i+1} \circ \dalpha_i\in \Hom_G(\mf{g},\Hom_\ddd(P^i, \Im \dalpha_{i-1})).\] Hence, there exists 
	$j_i \in \Hom_G(\mf{g},\Hom_\ddd(P^i,P^{i-1}))$ such that 
	$$
	(\dalpha_{i-1})_* (j_i) = \dalpha_{i-1} \circ j_i = u_i - j_{i+1} \circ \dalpha_i.
	$$
	In other words, $u_i = \dalpha_{i-1} \circ j_i + j_{i+1} \circ \dalpha_i$ as required. 
\end{proof}

\begin{lemma}\label{Ext-equivariance-appendix}
	Suppose that  $N$ is a  $(G,\chi)$-monodromic left $\dd(V)$-module. For each $i \ge 0$, the right $\ddd$-module $\Ext^i_\ddd(N,\ddd)$ has a canonical  monodromic structure.
\end{lemma}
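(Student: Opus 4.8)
The statement to prove is Lemma~\ref{Ext-equivariance-appendix}: for a $(G,\chi)$-monodromic left $\dd(V)$-module $N$, each $\Ext^i_\ddd(N,\ddd)$ carries a canonical monodromic structure. The strategy is the standard ``lift to a homotopy-equivariant resolution and push the homotopy data through $\Hom_\ddd(-,\ddd)$'' argument, and all the needed infrastructure has been assembled in the appendix already. The plan is to combine Lemma~\ref{prop:liftofMtohomotopycat-appendix} with Lemma~\ref{cohom-lemma-appendix}, after transporting the category $C_h(G,\chi,\ddd)$ through the contravariant functor $\Hom_\ddd(-,\ddd)$.

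First I would apply Lemma~\ref{prop:liftofMtohomotopycat-appendix} to obtain a finite projective resolution $(P^{*},\dalpha_{*},j_{*})\in C_h(G,\chi,\ddd)$ of $N$, concentrated in nonpositive degrees, so that $\Ext^i_\ddd(N,\ddd)$ is computed as $H^i$ of the complex $Q^{*}:=\Hom_\ddd(P^{-*},\ddd)$, a bounded complex of finitely generated right $\ddd$-modules with differentials $\dalpha_i^{*}$. Each $Q^i=\Hom_\ddd(P^{-i},\ddd)$ is weakly equivariant as a right $\ddd$-module, with $G$ acting by conjugation (using both the $G$-action on $P^{-i}$ and the rational $G$-action on $\ddd$); the associated differential of the $G$-action is $\tau_{Q^i}(x)(\phi)=\tau_{\ddd}(x)\circ\phi-\phi\circ\tau_{P^{-i}}(x)$, where $\tau_\ddd$ denotes left multiplication composed with the adjoint action — this is the precise right-handed analogue of Lemma~\ref{lem:uweakequivmap}, and I would state it as a small sublemma. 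Next I would transpose the maps $j_i\colon\g\to\Hom_\ddd(P^{-i},P^{-i-1})$ to $G$-equivariant maps $j_i^{*}\colon\g\to\Hom_\ddd(Q^{i+1},Q^{i})$ by $\phi\mapsto\phi\circ j_i(x)$, and verify that dualising the defining identity \eqref{cat-equ} of $C_h(G,\chi,\ddd)$ yields the analogous homotopy identity
\[
\dalpha_{i}^{*}\circ j_{i+1}^{*}(x)+j_{i}^{*}(x)\circ\dalpha_{i-1}^{*}=\bigl(\tau_{Q^i}-\tau_\ddd-\chi\bigr)(x)
\]
for all $x\in\g$ (the sign flip on $\chi$ and the swap of $\dalpha_{i-1}$ and $\dalpha_i$ being exactly what one expects from applying a contravariant functor, which also reverses the arrows so that the complex now lives in nonnegative degrees). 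This exhibits $(Q^{*},\dalpha^{*}_{*},j^{*}_{*})$ as an object of the right-module analogue $C_h(G,\chi,\ddd)^{\mathrm{op}}$, or one simply runs the proof of Lemma~\ref{cohom-lemma-appendix} verbatim on the right: the identity shows that $\tau_{Q^i}(x)+(-\chi)(x)$ hmm — let me be careful here — the identity shows $\tau_{Q^i}(x)$ and $\tau_\ddd(x)+\chi(x)$ agree on the subquotient $H^i(Q^{*})$, so that $H^i(Q^{*})=\Ext^i_\ddd(N,\ddd)$ is $(G,\chi)$-monodromic as a right $\ddd$-module. Finally, canonicity follows because any two choices of homotopy-equivariant projective resolution are connected by a chain of quasi-isomorphisms in $C_h(G,\chi,\ddd)$ (standard comparison of projective resolutions, upgraded with the homotopy data as in \cite{BL}), and dualising shows the resulting monodromic structures on cohomology coincide; since we only need existence of a canonical such structure, it suffices to note that the $G$-module structure on $H^i(Q^{*})$ is literally the cohomology of a complex of $G$-modules and hence independent of auxiliary choices, while the compatibility \eqref{cat-equ} forces the monodromic condition.

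The main obstacle is bookkeeping rather than conceptual: one must be scrupulous about the signs and the reversal of degree when applying the contravariant functor $\Hom_\ddd(-,\ddd)$, and about the fact that conjugation actions behave slightly differently for left versus right modules (the $G$-action on $\ddd$ used to twist $\Hom$ must be the one induced from the $G$-action on $V$, and one needs $\ddd$ itself to be $G$-equivariant, not merely weakly so, which holds because $\tau\colon\g\to\ddd$ is $G$-equivariant for the adjoint action). A secondary technical point is checking that $\Hom_G(\g,-)$ applied to the relevant surjections stays surjective so that the $j_i^{*}$ genuinely land in the right $\Hom$-spaces — but this is immediate from the fact that $G$ is reductive and all modules in sight are rational, exactly as used in the proof of Lemma~\ref{prop:liftofMtohomotopycat-appendix}. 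Since $\ddd=\eD(V)$ has finite global dimension, boundedness of $Q^{*}$ is automatic, so there is no issue with the complexes being infinite. I would therefore present the proof as: (i) the sublemma identifying $\tau_{Q^i}$; (ii) the transposition of the $j_i$ and the dual homotopy identity; (iii) invoking (the proof of) Lemma~\ref{cohom-lemma-appendix} on the right.
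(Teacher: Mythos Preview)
Your approach is essentially the same as the paper's: resolve $N$ via Lemma~\ref{prop:liftofMtohomotopycat-appendix}, apply $\Hom_\ddd(-,\ddd)$, transpose the homotopies $j_i$, and verify the dual complex lies in the right-module analogue of $C_h(G,\chi,\ddd)$ so that Lemma~\ref{cohom-lemma-appendix} applies.

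The one place where you go astray is the sign on $\chi$. You assert a ``sign flip on $\chi$'' under dualisation, writing the dual homotopy identity with $(\tau_{Q^i}-\tau-\chi)(x)$ on the right-hand side. The paper's explicit computation shows this is not what happens: since $\chi(x)$ is a scalar, its transpose is itself, and one obtains
\[
(\tau_{P^{-i}}(x)-\tau(x)+\chi(x))^* = \tau_{Q^i}(x)-\tau(x)+\chi(x),
\]
so the dual identity retains $+\chi$. You seem to sense the inconsistency (``hmm --- let me be careful here'') and end up asserting the correct conclusion that $\Ext^i_\ddd(N,\ddd)$ is $(G,\chi)$-monodromic, but the displayed identity and the parenthetical about the ``sign flip on $\chi$ \ldots\ being exactly what one expects'' are wrong and should be corrected. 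The subtlety is that while the contravariant functor reverses arrows and swaps the roles of $\dalpha_{i-1}$ and $\dalpha_i$, the scalar $\chi(x)$ passes through unchanged; what changes is the relation between $\tau_{Q^i}$ and $\tau_{P^{-i}}^*$, which the paper works out carefully via the formula $(\phi\cdot g)(p)=g^{-1}\phi(g\cdot p)g$.
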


\begin{proof}
	Using Lemma~\ref{prop:liftofMtohomotopycat-appendix}, we pick a complex $(P^{*},\dalpha_{*},j_{*})\in C_h(G,\chi,\ddd)$ resolving $N$. We set 
	$Q^i = \Hom_\ddd(P^{-i},\ddd)$, $\dalpha_i' = \dalpha_{-i-1}^*$ and $j_i' = j_{-i +1}^*$. Then $\Ext^i_\ddd(N,\ddd) = H^i(Q^{*})$. Therefore the result follows from right-hand 
	version of Lemma~\ref{cohom-lemma-appendix}, if we can show that $(Q^{*},\dalpha_{*}',j_{*}')$ belongs to $C_h(G,\chi,\ddd)^{op}$. 
	
	Recall that we have 
	$$
	\dalpha_{i-1} \circ j_i(x) + j_{i+1}(x) \circ \dalpha_i = (\tau_{P^i} - \tau + \chi)(x), \quad \forall \, x \in \mf{g}. 
	$$
	Next, 
	\begin{align*}
	\dalpha_{i-1}' \circ j_i'(x) + j_{i+1}'(x) \circ \dalpha_i' & = \dalpha_{-i}^* \circ j_{-i+1}(x)^* + j_{-i}(x)^* \circ \dalpha_{-i-1}^* \\
	& = (j_{-i+1}(x) \circ \dalpha_{-i} + \dalpha_{-i-1} \circ j_{-i}(x))^* \\
	& = (\tau_{P^{-i}} - \tau + \chi)(x)^*. 
	\end{align*}
	If $\phi \in Q^i$, $g \in G$, $D \in \ddd$ and $p \in P^{-i}$, then $(\phi \cdot D)(p) = \phi(p) D$ and 	
	$$
	(\phi \cdot g)(p) = g^{-1} \cdot \phi(g \cdot p) = g^{-1} \phi(g \cdot p) g.
	$$
	This implies that 
	\begin{align*}
	(\phi \tau_{Q^i}(x))(p) & = (\tau_{P^{-i}}(x)^* \phi)(p) - [\tau(x),\phi(p)] \\
	& = (\tau_{P^{-i}}(x)^* \phi)(p) - \phi(\tau(x) p) + \phi(p) \tau(x) \\
	& = (\tau_{P^{-i}}(x) - \tau(x))^*(\phi)(p) + (\phi \tau(x))(p).
	\end{align*}
	Thus, 
	$$
	(\tau_{P^{-i}}(x) - \tau(x) + \chi(x))^*(\phi)(p) = [\phi \cdot (\tau_{Q^i}(x) - \tau(x) + \chi(x))] (p),
	$$
	which implies that 
	$$
	\dalpha_{i-1}' \circ j_i'(x) + j_{i+1}'(x) \circ \dalpha_i' = \tau_{Q^i}(x) - \tau(x) + \chi(x)
	$$
	as required. 
\end{proof}
 
%\clearpage 
 \section*{Index of Notation}\label{index}
  For some of the definitions the reader is referred to the paper \cite{BLNS}.

\begin{multicols}{2}
{\small  \baselineskip 14pt

${\Aak}=\Ak(W)$, spherical  algebra  \hfill\pageref{spherical-defn}

  $\Ak(W_{\lambda})$   \hfill\pageref{spherical-defn2}

    $\euls{A}\subset \h$,  reflecting hyperplanes  of $W$   \hfill\pageref{Hecke-defn}
 
Admissible modules $\eC$, \  $\eCop$ \hfill\pageref{defn:admissible}

local ad-nilpotence  \hfill\pageref{ad-hypotheses}

Auslander conditions  \hfill\pageref{Aus-defn}

$\Bi(S,d),$ \  $\Bi(d,S)$,  \ $\Biad(d,S)$   \hfill\pageref{Bi-definition}

 $B_W$, braid group associated to $W$    \hfill\pageref{Hecke-defn}

$\chi$,  a fixed character of $\g$   \hfill\pageref{chi-defn} 

$\eC_{,\lambda}$,  components of $\eC$ \hfill \pageref{lem:decomposeadmissible}
 
Cartan subspace $\h_v$ \hfill  \cite{BLNS} 

Cherednik algebra $\Hk$  \hfill  \cite{BLNS}

Cohen-Macaulay, $n$-CM module  \hfill\pageref{CM-defn}

Cyclotomic Hecke algebra \hfill\pageref{cyclotomic-defn}

 $\ddd=\eD(V)$ for $V$ polar  \hfill\pageref{R-defn}

$\BD_\ddd(J)   =\Ext^{n+m}_\ddd(J,\, \ddd)$,\  \ $\BD_\ddd^{\mathrm{op}}$    \hfill\pageref{main-notation},\pageref{main-notation2}

  $ \BD_A(L) = \Ext_A^n(L,\, A)$, \  \  $\BD_A^{\mathrm{op}}$ \hfill\pageref{main-notation},\pageref{main-notation2}

$\widetilde{\BD}(L) =\Ext_\ddd^m(L,\ddd) $, \  $\widetilde{\BD}^{op}(L)$  \hfill\pageref{yet-more-notation}

Discriminant $\deltah    \in \C[\h]^W$  \hfill \pageref{eq:discriminant}

Discriminant $\delta   \in \C[V]$  \hfill  \pageref{deltaV-defn}

Euler element $\eu_V$  \hfill\pageref{notation:gradings}

Fourier transform $\mathbb{F}$  \hfill\pageref{defn:Fourier}

$\g_\chi =  \{ \tau(x) - \chi(x) \, | \, x \in \g \}$  \hfill\pageref{notation4.11}

$\eGt _\lambda = \ddd/(\ddd\g_\lambda+\ddd \mf{m}_\lambda)$  \hfill\pageref{defn:Glambda} 

  $\eG_\lambda=\eM\otimes_A\eQ_\lambda$  \hfill\pageref{defn:Glambda}

   $(G,\chi,\ddd)\lmod$,  monodromic modules  \hfill\pageref{defn:monodromic}

  $\grade(L)$,  grade of a module  \hfill\pageref{grade-defn}
  
     $\h$,   a Cartan subspace of $V$ \hfill \cite{BLNS}

$H_Q=\Hom_\ddd(Q,\eM')$, \  $H_{\eK} $ \hfill\pageref{module-notation}

  $\widehat{H}_Q=\Hom_U(H_Q, U)$  \hfill\pageref{module-notation}

$\HHleft(J)=\eM\otimes_AJ$   \hfill\pageref{main-notation},\pageref{main-notation2}

  $ \HHright (L)=L\otimes_A\Ext^m_\ddd(\eM,\, \ddd)$  \hfill\pageref{main-notation},\pageref{main-notation2}
  
$  \BH(X)= \Hom_\ddd(\eM,X)$    \hfill\pageref{summand-endo3}

Harish-Chandra module   $\eG_\lambda$  \hfill\pageref{defn:Glambda}
  
  Hecke algebras  $\euls{H}_q(W_{\lambda})$ 
    \hfill\pageref{Hecke-defn}  
      
  Holonomic  $\Ak(W)$-modules      \hfill\pageref{defn:holonomic}

 Integral symmetric space    \hfill\pageref{nice-space}

  $\eKt \ = \ \ddd/\g_\chi \ddd \ = \  \eK \oplus L_\eK $  \hfill\pageref{lem:K-hyp}
  
  $\kappa$, $\kappa_{H,i}$,   the basic parameters  \hfill  \cite{BLNS}

   Locally free representation \hfill\pageref{defn:locally-free}
   
 $\mr{KZ}_G$, geometric KZ  functor    \hfill\pageref{defn:KZG} 
  
$\mf{m}_\lambda$, a maximal ideal of $(\Sym  \h)^W$  \hfill\pageref{defn:em-lambda}

$\eMt=\ddd/\ddd\g_\chi$,     $\eM =\eMt/\eMt P$  \hfill\pageref{M-defn}

$\eM'=\Ext_\ddd^n(\eM,\,\ddd)$ \hfill\pageref{M'-defn}

$n=\dim\h$,  \ $m=\dim V-\dim \h$ \hfill\pageref{notation4.11}

Minimal extension   \hfill\pageref{minextn-defn}

Moment map $\mu: T^*V\to \g^*$    \hfill\pageref{defn:moment}

Monodromic  modules   \hfill\pageref{defn:monodromic}    

 $\euls{N}(V) := \pi^{-1}(0)$, the nilcone   \hfill\pageref{defn:nilcone}
 
Nice symmetric space    \hfill\pageref{nice-space}

$\Osph=\Osph_{\kappa}(W)$,  
$\Osph_{\llambda } = \Osph_{\kappa,\llambda }(W)$   \hfill\pageref{defn:Osph}

  $ P=\ker(\rad_\vs )/(\ddd\g_\chi )^G$  \hfill\pageref{R-defn}  

 $Q=Q_{\ell} $, cyclic quiver \hfill\pageref{defn:quiver}

$Q$, a progenerator in $(G,\chi,\ddd)\lmod $  \hfill\pageref{module-notation}

Polar representation  \hfill \pageref{defn:polar} 
 
$\eQ_\lambda = A/A\mf{m}_\lambda\in \Osph_\lambda$   \hfill\pageref{M-definition}

$q_{H,j}$,   parameters for $\euls{H}_q(W_{\lambda})$   \hfill\pageref{eq:Heckecrg2}

$R=\ddd^G/(\ddd\g_\chi )^G$   \hfill\pageref{R-defn}

 $\rad_{\vs}$, radial parts map   \hfill\pageref{eq:notation4.11}
 
 Regular loci $V_{\reg}=(\deltav\not=0),\,\  \h_{\reg}$ \hfill\pageref{Vreg-defn}
 
 Regular parameters $p,\kappa,\vs$  \hfill\pageref{regular-parameter} 
 
Robust \ symmetric space \hfill\pageref{nice-space}

$\vs$, parameter    \hfill\pageref{radial-defn2},  [BLNS]
 
  $\Ch M$, characteristic variety of $M$   \hfill   \pageref{singular-defn}

 Stable representation \hfill \pageref{defn:stable}

 Strongly admissible modules $\eC_{,0}$   \hfill\pageref{defn:strongly}
 
 Symmetric pair, symmetric space   \hfill\pageref{defn:symmetric}
 
 $ \shT_{\vs,\vs'}  $, shift functors   \hfill\pageref{defn:T-shift}
  
    $\tau:\g\to \dd(V)$,  $\tau_M : \g\to \End_{\C}(M)$  \hfill\pageref{tau-defn}

$\tau_L,$ $  \tau_R$    \hfill\pageref{tau-left-defn}

Theta representations \hfill\pageref{ex:theta-reps}
    
Trace function     $\mr{Tr}_V$   \hfill\pageref{defn:trace}

$U=\End_\ddd(P) $  \hfill\pageref{module-notation}

Visible representation  \hfill  \pageref{defn:visible}

   Weyl group $W$  of $(V, \h)$  \hfill  [BLNS] 
 
 $W_H$ pointwise stabiliser  of $H\in \mr{A} $   \hfill\pageref{Hecke-defn}

Weakly equivariant \hfill\pageref{defn:weak}

$W_\nu$,  a parabolic subgroup of $W$  \hfill\pageref{defn:c(nu)}   

}
\end{multicols}

% \clearpage

\end{document}